\DeclareFontFamily{T1}{wncyr}{}
\DeclareFontShape{T1}{wncyr}{m}{n}{%
<5><6><7><8><9>gen*wncyr%
<10><10.95><12><14.4><17.28><20.74><24.88>wncyr10}{}
\newcommand{\sha}{\mathbin{\textup{\usefont{T1}{wncyr}{m}{n}\char120 }}}
\newtheorem{thm}{Theorem}[section]
\newtheorem{lem}[thm]{Lemma}
\newtheorem{prop}[thm]{Proposition}
\newtheorem{cor}[thm]{Corollary}
\newtheorem{defn}[thm]{Definition}
\theoremstyle{remark}
\newtheorem{rem}[thm]{Remark}
\newtheorem{ex}[thm]{Example}
\newtheorem{sch}{Step}
\def\gH{\mathfrak H}\def\gM{\mathfrak M}\def\gS{\mathfrak S}\def\gT{\mathfrak T}\def\gB{\mathfrak B}
\def\gK{\mathfrak K}\def\gL{\mathfrak L}
\def\gG{\mathfrak G}\def\gI{\mathfrak I}\def\gJ{\mathfrak J}\def\gV{\mathfrak V}\def\ga{\mathfrak a}
\def\gf{\mathfrak f}\def\gg{\mathfrak g}\def\gh{\mathfrak h}\def\ge{\mathfrak e}
\def\gP{\mathfrak P}\def\gp{\mathfrak p}\def\gr{\mathfrak r}\def\gq{\mathfrak q}\def\gn{\mathfrak n}\def\gm{\mathfrak m}
\def\bD{\mathbf D}\def\bP{\mathbf P}\def\bU{\mathbf U}\def\bC{\mathbf C}\def\bR{\mathbf R}\def\bT{\mathbf T}\def\bJ{\mathbf J}\def\Bb{\mathbf B}
\def\Mot{\mathbf{Mot}}\def\bj{\mathbf j}\def\bd{\mathbf d}\def\be{\mathbf e}\def\bh{\mathbf h}\def\jar{\overline{\mathbf j}}\def\bard{\overline{\mathbf d}}
\def\Ob{\mathtt{Ob}}\def\CAS{\mathit B}\def\alg{\mathit{Alg}}\def\ect{\mathit{Vec}}\def\yst{\mathit{Sys}}\def\mic{\mathit{MIC}}
\def\Endo{\mathit{End}}\def\Auto{\mathit{Aut}}\def\Homo{\mathit{Hom}}\def\bRep{\mathit{Rep}}\def\BI{\mathit B}\def\CI{\mathit C}\def\EI{\mathit E}\def\cry{\mathit{Isoc}}
\def\eF{\mathbf{F}\mathit{Mod}}\def\ef{\mathbf{V}\mathit{Mod}}\def\cris{\mathit{Cris}}\def\set{\mathit{Set}}
\def\A{\mathcal A}\def\Marb{\overline{\mathcal M}}
\def\Parb{\overline{\mathcal P}}\def\Barb{\overline{\mathcal B}}
\def\B{\mathcal B}\def\O{\mathcal O}\def\E{\mathcal E}\def\N{\mathcal N}
\def\D{\mathcal D}\def\P{\mathcal P}\def\F{\mathcal F}\def\G{\mathcal G}\def\I{\mathcal I}\def\H{\mathcal H}
\def\K{\mathcal K}\def\L{\mathcal L}\def\M{\mathcal M}\def\R{\mathcal R}\def\V{\mathcal V}\def\Z{\mathcal Z}
\def\X{\mathcal X}\def\T{\mathcal T}\def\W{\mathcal W}\def\U{\mathcal U}\def\S{\mathcal S}\def\Q{\mathcal Q}\def\ton{\mathit{Tors}}\def\con{\mathit{Conn}}
\def\Ubar{\overline U}\def\Abar{\overline A}
\def\Rbar{\overline R}\def\Cbar{\overline C}\def\Gbar{\overline{G}}
\def\Ybar{\overline{Y}}\def\Xbar{\overline{X}}\def\Pbar{\overline P}
\def\zbar{\overline z}
\def\xbar{\overline x}\def\Ibar{\overline{\mathcal I}}
\def\Gamba{\overline\Gamma}\def\gamba{\overline\gamma}\def\Bapsi{\overline\Psi}\def\Phbar{\overline\Phi}
\def\thevar{\overline\vartheta}\def\bachi{\overline\chi}
\def\bohr{\overline\rho}\def\nbar{\overline\nu}\def\zeba{\overline{\zeta}}\def\mub{\overline\mu}
\def\ebar{\overline{\eta}}\def\iotar{\overline\iota}\def\hbar{\overline h}
\def\Sb{\mathcal S^{basic}}
\def\essbar{\overline{\mathcal S}}\def\peebar{\overline{pr}}\def\barb{\overline b}
\def\val{\overline v}\def\du{\underline d}
\def\l{\mathbb L}\def\a{\mathbb A}\def\d{\mathbb D}\def\q{\mathbb Q}\def\f{\mathbb F}\def\c{\mathbb C}
\def\g{\mathbb G}\def\h{\mathbb H}\def\r{\mathbb R}\def\z{\mathbb Z}\def\n{\mathbb N}\def\s{\mathbb S}\def\t{\mathbb T}
\def\k{\mathbb K}\def\fc{{\mathbb F_p^{ac}}}
\def\zen{\operatorname Z}  \def\UL{\operatorname U}  \def\GU{\operatorname{GU}}  \def\Sp{\operatorname{Sp}}  \def\SO{\operatorname{SO}}
\def\GL{\operatorname{GL}}  \def\GSp{\operatorname{GSp}}      \def\sd{\operatorname{std}}
  \def\diag{\operatorname{diag}}  \def\Int{\operatorname{Int}}  \def\int{\operatorname{int}}
\def\Ad{\operatorname{Ad}}        
\def\Lie{\operatorname{Lie}}  \def\Res{\operatorname{Res}}  \def\tr{\operatorname{tr}}    
\def\fx{\operatorname{Flex}}  \def\Fx{\operatorname{flex}}  \def\sy{\operatorname{Fib}}    \def\End{\operatorname{End}}
\def\Aut{\operatorname{Aut}}  \def\Iso{\operatorname{Iso}}  \def\id{\operatorname{id}}  \def\Hom{\operatorname{Hom}}  \def\Mat{\operatorname{Mat}}
\def\Gal{\operatorname{Gal}}  \def\Pic{\operatorname{Pic}}  \def\Br{\operatorname{Br}}  \def\Spf{\operatorname{Spf}}  \def\Spec{\operatorname{Spec}}  
\def\Card{\operatorname{Card}}  \def\rk{\operatorname{rank}}  \def\ch{\operatorname{char}}  \def\rad{\operatorname{rad}}  \def\pr{\operatorname{pr}}  
        \def\cur{\operatorname{curv}} 
\def\rest{\operatorname{res}}  \def\core{\operatorname{cor}}
\title{PEL Moduli Spaces without $\c$-valued points} 
\author{Oliver B\"ultel}
\thanks{
14K10, 14G35 (MSC 2020), supported by the german research foundation}
\begin{document}
\maketitle

\begin{center}
To my parents, Heinrich and Karola
\end{center}

\begin{abstract}
We give several new moduli interpretations of the fibers of certain Shimura varieties over several prime numbers. As a consequence (of our theorem \ref{uniformizeIV}) one obtains that 
for every prescribed odd prime characteristic $p$ every bounded symmetric domain possesses quotients by arithmetic groups whose models have good reduction at a prime divisor of $p$.
\end{abstract}

\tableofcontents

\section{Introduction}

Let $(G,X)$ be a Shimura datum in the sense of all five axioms \cite[(2.1.1.1)-(2.1.1.5)]{deligne3} of Deligne. Fix any connected component $X^+\subset X$ and let $K$ be a 
compact open subgroup of $G(\a^\infty)$, where $\a^\infty$ is given by $\q\otimes\prod_{\ell}\z_\ell$, in which $\ell$ runs through the set of prime numbers. Following the ideas in, 
and using the notation of \cite[(2.1.2)]{deligne3}, we know that
\begin{eqnarray}
\label{bekanntV}
&&G(\q)\backslash(X\times G(\a^\infty)/K)={_KM}(G,X)=\\
\label{bekanntVI}
&&G(\q)_+\backslash(X^+\times G(\a^\infty)/K)=\coprod_g\Gamma_g\backslash X^+
\end{eqnarray} 
is a quasi-projective complex algebraic variety, where $\Gamma_g$ is a bunch of discrete subgroups lying in the connected component $G^{ad}(\r)^+$ of the neutral element in the 
Lie-group of $\r$-valued points on the adjoint group $G^{ad}$. Well-known mild conditions on $K$ may guarantee that each $\Gamma_g$ is torsion-free, so that \eqref{bekanntV} is smooth, 
and in the special case of an anisotropic $\q$-group $G^{ad}$ each $\Gamma_g$ even turns out to be cocompact, so that \eqref{bekanntV} is projective. Moreover, one knows, that there 
exists a canonical variation $\sy_{Hodge}(\rho)$ of pure Hodge structures over $_KM(G,X)$ for every $\q$-rational linear representation $\rho:G\rightarrow\GL(V/\q)$. In some cases 
these yield so-called ``moduli interpretations'', more specifically if there exists an injective map $\rho:G\rightarrow\GSp(2g)$ with $\rho(X)\subset\gh_g^\pm$, then $_KM(G,X)$ is a moduli 
space of $g$-dimensional abelian varieties with additional structure, i.e. equipped with additional Hodge cycles (on suitable powers). This result is not only theoretically significant, but it also 
has a practical meaning, because it is this particular class of Shimura varietes of Hodge type which are, at least in principle, amenable to the methods of arithmetic algebraic geometry. For 
instance, it is this way that Milne has reobtained Deligne's canonical models over the reflex field $E\subset\c$ in \cite{milne2}. Outside this class of Shimura varieties these methods fail, but 
the work of many people (e.g. \cite{kazhdan}, \cite{borovoi}) has culminated in more general results. Canonical models are finally shown to exist unconditionally in \cite{milne3}. However, 
the proof uses a very amazing reduction to the $\GL(2)$-cases, and again these are treated by means of abelian varieties whose endomorphism rings have a suitable structure.\\
However, it is also very desireable to control the integrality properties of the Shimura variety \eqref{bekanntV}. So let us fix an odd rational prime $p$ and write $\O_{E_\gp}$ for 
the complete valuation rings corresponding to prime divisors $\gp|p$. If $G$ is anisotropic Langlands conjectured the existence of a projective and smooth $\O_{E_\gp}$-model 
$_K\M_\gp$, provided only that the group $K$ can be written as $K^p\times K_p$, where $K_p$ is a hyperspecial subgroup of $G(\q_p)$, and $K^p$ is a sufficiently small 
compact open subgroup of $G(\a^{\infty,p})$, where $\a^{\infty,p}=\q\otimes\prod_{\ell\neq p}\z_\ell$. He also points out the necessity of characterizing $_K\M_\gp$ (be it 
projective or quasi-projective), which would make the model more canonical, so that the conjecture remains meaningful in the isotropic case too \cite[p.411, l.17-19]{langlands}. 
Such a characterization was suggested by Milne, however his early attempt (in \cite[Definition(2.5)]{milne4}) had to be modified (cf. \cite{vasiu2} and \cite[Definition(3.3)]{moonen}). 
Finally, integral canonical models were shown to exist for Shimura varieties of abelian type, see \cite{kisin1}, \cite{kisin} and the references therein.

\subsubsection*{Our results:} In this paper we focus on Langlands original conjecture for Shimura data  $(G,X)$ which do not allow any embeddings of $(G^{ad},X^{ad})$ 
into $(\Sp(2g)/\{\pm1\},\gh_g^{\pm1})$. Such non-preabelian Shimura data are rare, but they do exist. One of the outcomes of the present paper is the following result:

\begin{thm}
\label{uniformizationIII}
Suppose that  $(G,X)$ is a Shimura datum, such that $G^{der}$ is simply connected. Suppose that $f\geq1$ is an integer, and $p>2$ a prime, such that 
and $G$ splits over $K(\f_{p^f})$ (i.e. $\q_p[\exp\frac{2i\pi}{p^f-1}]$) while $G^{ad}$ is simple and quasisplit over $\q_p$. Suppose in addition that $G$ 
satisfies at least one of the following conditions:
\begin{itemize}
\item[(i)]
$G^{ad}\times_\q\r$ possesses more than three times as many compact simple real factors than non-compact ones and is of type $B_l$ or $C_l$.
\item[(ii)]
$G^{ad}\times_\q\r$ possesses more than four times as many compact simple real factors than non-compact ones and is of type $E_7$.
\end{itemize}
Fix an embedding $\iota:K(\f_{p^f})\rightarrow\c$. Then there exists a compact open subgroup $K_p\subset G(\q_p)$ and a scheme $\M$ with a continuous 
right $G(\a^{\infty,p})$-action over $W(\f_{p^f})$, such that $_{K_p}M(G,X)\cong\M\times_{W(\f_{p^f}),\iota}\c$ and every sufficiently small compact open 
subgroup $K^p\subset G(\a^{\infty,p})$ leads to a quotient $_{K^p}\M=\M/{K^p}$ which is a projective and smooth model of ${_{K^pK_p}M}(G,X)$. 
\end{thm}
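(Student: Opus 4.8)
The plan is to realize $\M$ as a solution to a moduli problem of ``abelian-variety-like'' objects that carry enough extra structure to cut out exactly the Shimura variety $_{K_p}M(G,X)$ after base change along $\iota$, even though $(G^{ad},X^{ad})$ admits no symplectic embedding. The conditions (i) and (ii) are exactly the numerical constraints under which one can still build a faithful $\q$-rational representation $\rho:G\hookrightarrow\GL(V)$ together with a collection of tensors (a polarization-type form plus finitely many Hodge cycles on suitable powers of the universal object) whose stabilizer in the appropriate general linear group recovers $G$; the ``more than three (resp. four) times as many compact factors'' hypothesis is what lets the non-compact factors be ``absorbed'' into a unitary or orthogonal group acting on a module over a division algebra, so that the resulting object is of PEL type even though it has no $\c$-points in the naive sense. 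First I would invoke the integral $p$-adic Hodge theory / Breuil--Kisin machinery (as packaged in the work of Kisin, or the displays/$(G,\mu)$-displays formalism) applied to the hyperspecial-at-$p$ datum: since $G$ splits over $K(\f_{p^f})$ and $G^{ad}$ is simple and quasisplit over $\q_p$, one gets a smooth integral model of the local Shimura variety over $W(\f_{p^f})$, and a suitable $K_p$ is chosen to be hyperspecial (or a specific parahoric dictated by the construction).

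Next I would assemble the global moduli space: using the earlier results of the paper (the uniformization theorems, in particular \ref{uniformizeIV} referenced in the abstract and the earlier propositions describing the fibers), I would define $\M$ over $W(\f_{p^f})$ as the moduli stack/scheme of tuples $(A,\lambda,\iota_B,\eta^p,(\text{Hodge tensors}))$ where $A$ is an abelian scheme (or a $1$-motive-like / motivic object in the sense set up earlier in the paper) of the relevant dimension, $\lambda$ a polarization, $\iota_B$ an action of an order in a semisimple $\q$-algebra $B$ with $G$ identified as the associated similitude group, and $\eta^p$ a $K^p$-level structure; rigidity of the Hodge tensors is what forces the generic fiber to be $_{K^pK_p}M(G,X)$. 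The continuous $G(\a^{\infty,p})$-action comes from varying the prime-to-$p$ level structure $\eta^p$ in the usual way, and $_{K^p}\M=\M/K^p$ is representable by a quasi-projective scheme by standard descent/GIT arguments once $K^p$ is small enough to kill automorphisms.

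For properness and smoothness: smoothness is local and follows from the deformation theory of the $p$-divisible group with its crystalline tensors (the Grothendieck--Messing / Faltings argument, valid because $p>2$ and $K_p$ is hyperspecial), together with the fact that the tensors deform uniquely — this is where the ``$G^{der}$ simply connected'' and quasisplit hypotheses are used to guarantee the local model is smooth. Properness is the delicate point: I would check the valuative criterion by a semistable-reduction / Néron-model argument showing that any object over the generic point of a DVR extends, using the anisotropy of $G^{ad}$ over $\q$ (which holds because each $\Gamma_g$ is cocompact, guaranteed by conditions (i),(ii) forcing $G^{ad}(\r)$ to have no $\SL_2(\r)$-factors that would obstruct compactness — more precisely the abundance of compact factors makes $G^{ad}$ anisotropic over $\q$) to rule out the appearance of boundary components. \textbf{The main obstacle} I anticipate is precisely this last step — proving properness — because without a symplectic embedding one cannot directly appeal to the properness of $\mathcal A_g$; one must instead show directly that the moduli objects have potentially good reduction everywhere, which requires controlling the monodromy of the crystalline tensors under the $G(\a^{\infty,p})$-action and ruling out degeneration purely from the structure of $B$ and the signature conditions encoded in (i) and (ii). The hope is that the numerical hypotheses are exactly calibrated so that the relevant $B$ is a division algebra with no zero divisors to produce semiabelian degenerations, making properness formal once that algebraic input is in place.
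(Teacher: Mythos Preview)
Your proposal is essentially the Kisin--Vasiu strategy for Shimura varieties of abelian type, and the whole point of this theorem is that the data $(G,X)$ under consideration are \emph{not} of abelian (or even preabelian) type: there is no embedding of $(G^{ad},X^{ad})$ into $(\Sp(2g)/\{\pm1\},\gh_g^\pm)$, hence no moduli interpretation of $\M$ as abelian varieties with Hodge tensors over $W(\f_{p^f})$. Your sentence ``rigidity of the Hodge tensors is what forces the generic fiber to be $_{K^pK_p}M(G,X)$'' is exactly the step that cannot be carried out here. Likewise, conditions (i) and (ii) have nothing to do with producing a division algebra $B$ whose lack of zero-divisors would prevent degeneration; they are the thresholds (coming from proposition~\ref{polyII} and lemma~\ref{polyIII}) under which one can build a $\vartheta$-gauged $L$-metaunitary datum $\bT$ satisfying the combinatorial constraints (N1)--(N3), (G1)--(G2) needed for the flex construction.

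The paper's route is entirely different and runs in the opposite direction from yours. One first constructs the \emph{special fiber} $_{K^p}\Marb_{\bT,\gp}$ over $\f_{p^f}$: an auxiliary PEL moduli problem $_{K^p}\gM$ is set up (this is the object with ``no $\c$-valued points'' of the title, not the Shimura variety itself), and the $1$-morphism $\fx^\bJ$ of subsection~\ref{exampleXIV} produces a radicial, integral map from the display stack $\Barb(\G,\{\upsilon_\sigma\})$ to $\gB^\bR$; pulling back along this and then applying the Frobenius-descent of theorem~\ref{fake} yields a smooth proper $\f_{p^f}$-scheme carrying a universal adjoint-nilpotent $(\gG_p,\mu_p)$-display with bijective Kodaira--Spencer map. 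Properness comes for free here, since the map to the proper PEL space $_{\tilde U}\S$ is finite (proposition~\ref{properIII}). One then \emph{lifts} to $W(\f_{p^f})$ purely by the deformation theory of displays (corollary~\ref{lift}), using the ample line bundle pulled back from $_{\tilde U}\S$. Finally --- and this is the step your proposal omits entirely --- the identification of the complex fiber with $_{K_p}M(G,X)$ is not tautological: it is obtained by Varshavsky's characterization method (appendix~\ref{characterization}, theorem~\ref{uniformizeIV}), constructing a period map from the universal cover to the compact dual, checking axioms (M1)--(M4) via the monodromy computation of lemma~\ref{maximalholonomy} and the elliptic-point existence of corollary~\ref{7up}, and then invoking Margulis superrigidity plus the congruence subgroup property to pin down the arithmetic lattice. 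None of this is visible from a moduli-of-abelian-varieties-with-tensors perspective.
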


Under the assumptions of the theorem $p$ is unramified and inert in the totally real number field $L^+$ which is needed in order to write $G^{der}$ as a restriction of scalars of some 
simply-connected absolutely simple $L^+$-group, and without any loss of generality this field is an extension of degree $f$ over $\q$. Observe also that the choice of $\iota$ induces a continuous embedding of $E_\gp$ into $K(\f_{p^f})$, where $\gp$ is a prime of $E$ over $p$, and notice that the degree of $E_\gp$ over $\q_p$ is at least five, in fact if $X^+$ is irreducible, then $E$ contains 
the whole splitting field $R^+$ of $L^+$, so that $E_\gp=K(\f_{p^f})$. The unramified inertness of $p$ in $L^+$ is equivalent to $\Gal(R^+/\q)=\{\id_{R^+},\vartheta,\dots,\vartheta^{f-1}\}\Gal(R^+/L^+)$, 
where $\vartheta\in\Gal(R^+/\q)$ stands for a Frobenius element of the $\q$-extension $R^+$ (N.B.: this is independent of choosing $L^+\hookrightarrow R^+$). Thus, on the one hand 
Chebotarev's theorem implies that under the assumptions of theorem \ref{uniformizationIII} we can apply it also to a set of positive density of other rational primes, but on the other hand 
our proof of theorem \ref{uniformizationIII} does not make it easy to compare the results for two different primes. Nevertheless, we do expect that our models $_{K^p}\M$ agree with 
the scalar extensions to $W(\f_{p^f})$ of the conjectured integral canonical models $_{K^pK_p}\M_\gp$ for all such $\gp$, we hope to come back to this problem in a future paper.  

\subsubsection*{Our Methods:} Very much unlike Kisin's and Vasiu's work, our construction of $_{K^p}\M$ sheds no light on its 
conjectural but natural interpretation as a moduli space of ``polarized motives with $G$-structure'', but instead relates its special fiber 
\begin{equation}
\label{modulo}
_{K^p}\Marb={_{K^p}\M}\times_{W(\f_{p^f})}\f_{p^f},
\end{equation}
to the subject of pathological additional structures on abelian varieties, which was pioneered e.g. in \cite[Chapter 9]{katz3}. More specifically, 
our roundabout proof of theorem \ref{uniformizationIII} is preceded by the construction of a family of intermediary moduli schemes
\begin{equation}
\label{magic}
_{K^p}\tilde M\rightarrow{_{K^p}\gM}\rightarrow{_{\tilde U}\S}\rightarrow\A_{g,n},
\end{equation}
whose definitions depend on many choices. At least in principle $_{K^p}\gM$ is an intersection of a carefully chosen PEL-type Shimura subvariety of $\A_{g,n}$ with 
the canonical integral model $_{\tilde U}\S$ of an auxiliary Shimura subdatum $(\tilde G,\tilde X)\hookrightarrow(\GSp(2g),\gh_g^\pm)$ of Hodge type. The classification 
of the endomorphism algebras of a point on a Shimura subvariety is a classical but still unexplored subject, cf. \cite{oort1}, \cite{moritaII}. The scheme $_{K^p}\tilde M$ plays 
the role of a provisional candidate for the $\f_{p^f}$-variety \eqref{modulo}, and the morphism on the left hand-side of  \eqref{magic} turns out to be radicial and universally 
closed. An analogous moduli problem can be put in the category of $p$-divisible groups, which is used to introduce a certain fpqc-stack $\gB$ together with a $p$-adically 
formally \'etale $1$-morphism $_{K^p}\gM\rightarrow\gB$, which is  easily defined by the 'passage to the underlying $p$-divisible group'. The number $g$ is quite large, 
in the special case of a group $G$ of type $E_7$ our construction uses $g=25.650f$, and due to the condition (ii) the number $f$ is at least $[E_\gp:\q_p]\geq6$.\\

Before we give further details we would like to remark that integral canonical models and their special fibers are expected to satisfy many other nice properties. For every $\q_p$-rational 
representation $\rho:G\times_\q\q_p\rightarrow\GL(V/\q_p)$, for instance, there should exist a $F$-isocrystal $\sy_{Cris}(\rho)$ (in the sense of e.g. \cite[VI.3.1.3]{rivano}) over 
$_{K^p}\Marb$, and every $K_p$-invariant lattice in $V$ should determine a natural non-degenerate $F$-crystal (in the sense of e.g. \cite[VI.3.1.1]{rivano}) in a suitable Tate twist of 
$\sy_{Cris}(\rho)$. At least for every closed point $x\in{_{K^p}\Marb}$ one expects that the completed local $W(\f_{p^f})$-algebra $\hat\O_{_{K^p}\Marb,x}$ is determined by such crystalline 
data, and should ideally prorepresent a functor of ``crystals with additional structure'', see e.g. \cite[Th\'eor\`eme(2.1.7), Th\'eor\`eme(2.1.14)]{deligne1} for the ordinary $\SO(19,2)$-case, 
but also \cite[3.2.7 Remarks 8b)]{vasiu2} and \cite[Proposition(4.9)]{moonen}, for the general case. In this optic it seems to be reasonable to try to construct $_{K^p}\M$ together with 
the most general crystalline objects that might exist over it. It is one of the standpoints of the present paper that such should be decoded in a $p$-adically formally \'etale $1$-morphism
\begin{equation}
\label{bekanntVII}
_{K^p}\hat\M\rightarrow\B(\gG_p,\mu_p),
\end{equation}
where the left hand-side is the $W(\f_{p^f})$-functor $R\mapsto\begin{cases}\emptyset&\q\otimes R\neq0\\{_{K^p}\M(R)}&\mbox{ otherwise }\end{cases}$ and the right 
hand-side is a certain fpqc-stack over $W(\f_{p^f})$, which should only depend on the reductive $\z_p$-model with $K_p=\gG_p(\z_p)$ and on the minuscule cocharacter 
$\mu_p$ which is associated with the Shimura-datum. The paper \cite{pappas} suggests a natural definition for $\B(\gG_p,\mu_p)$, which seems to work well at least for those 
pairs $(\gG_p,\mu_p)$ for which every simple factor of $G^{ad}\times_\q\q_p$ contains a simple factor of $G^{ad}\times_\q E_\gp^{ac}$ in which $\mu_p$ is trivial (N.B.: this implies 
that every simple factor of $G^{ad}$ contains a compact simple factor of $G^{ad}\times_\q\r$ and the only PEL-cases with this property are unitary groups). We next introduce a map 
\begin{equation}
\label{trick}
\fx:\B(\gG_p,\mu_p)\times_{W(\f_{p^f})}\f_{p^f}\rightarrow\gB\times_{W(\f_{p^f})}\f_{p^f}
\end{equation}
which is, roughly speaking a formal analog of the left hand-side morphism in \eqref{magic}. This adds a lot of content to the whole picture, and it gives 
us a clue to construct $_{K^p}\Marb$ as the largest $\f_{p^f}$-variety fitting into a $2$-commutative diagram
$$\begin{CD}
{\B(\gG_p,\mu_p)\times_{W(\f_{p^f})}\f_{p^f}}@<<<_{K^p}\Marb\\
@V{\fx}VV@VVV\\
\gB\times_{W(\f_{p^f})}\f_{p^f}@<<<_{K^p}\gM\times_{W(\f_{p^f})}\f_{p^f}
\end{CD}$$
with a formally \'etale map to $\B(\gG_p,\mu_p)\times_{W(\f_{p^f})}\f_{p^f}$ and a radicial map to $_{K^p}\gM\times_{W(\f_{p^f})}\f_{p^f}$. With an eye 
towards \eqref{bekanntVII} we move on to describe $_{K^p}\M$ as a lift of $_{K^p}\Marb$. In the holomorphic category there exists a period map from 
the universal covering space to the associated symmetric Hermitian domain of compact type. Our investigation of this map (and its image) follows 
\cite{varshavsky} very closely and completes the proof of theorem \ref{uniformizationIII}, see also \cite{nori}. In a future paper we hope to compute the group $K_p$.\\

The subsection \ref{twisted} computes the generic Newton-polygon of the universal equicharacteristic deformations over $k[[t_1,\dots,t_d]]$, and it applies this to obtain another 
technical result, which is later on needed for the determination of the monodromy of the complex analytic period map. Subsubsection \ref{steady} constructs a provisional version 
$_{K^p}\tilde M$ of the envisaged special fiber $_{K^p}\Marb$. It uses proposition \ref{properIII} together with subsubsection \ref{ready}. The passage to the actual special fiber and 
the construction of a lift $_{K^p}\M$ are explained in subsubsection \ref{go}. The former uses corollary \ref{smoothIV} together with section \ref{Frobenius}, and the latter is a simple 
consequence of our subsection \ref{meckerei} on deformation theory (in particular corollary \ref{lift} therein). The relation between $_{K^p}\Marb$ and $_{K^p}\gM$ is reminiscent 
of the work \cite{zink3}, which identifies the good reduction of several $\GL(2)$-cases, as strata in the special fiber of a Hilbert-Blumenthal-style moduli problem of bad reduction.\\ 

\subsubsection*{Acknowledgements:} It is a pleasure to thank Prof. M. Kisin for encouragement and for pointing out the references \cite{sansuc}, 
\cite{langlands} and \cite{prasad}. Moreover, I thank Prof. D. Blasius, Mounia Chkirda, Prof. L. Fargues, Claudia Glanemann, Prof. W. Kohnen, Prof. M. Levine, Prof. E. Mantovan, 
Andrea Miller, Prof. R. Noot, Prof. G. Pappas, Prof. N. Schappacher, Ismael Souderes and Roland Torka for interest, and Prof. C. Cheng, Prof. R. Coleman, Prof. W. Goldring, 
Prof. H. Hedayatzadeh Razavi, Prof. C. Huyghe, Prof. I. Ikeda, Prof. R. Kottwitz, Prof. V. Paskunas, Prof. R. Pink, Prof. R. Taylor and Prof. T. Zink for invitations to Nanjing, 
Berkeley, Stockholm, Tehran, Strasbourg, Istanbul, Chicago, Duisburg-Essen, Z\"urich, Harvard and Bielefeld, where some predecessors of this work could be discussed. 
I thank Prof. M. Rapoport for pointing out the reference \cite{zink3} and I thank Prof. J. Milne, Prof. G. Prasad and Prof. Y. Varshavsky for many explanations, and I owe 
many further thanks to the audience of my talk \cite{bultel3}, in which I presented a variant of the map \eqref{trick} for the orthogonal group with one compact and one 
non-compact factor i.e. $\SO(h-2,2)\times\SO(h)$. Finally, I thank the speakers and the audience of a study group on displays, which was organized by Prof. R. Pink 
at the ETH Z\"urich during spring term 2005. I thank the referee of this paper for having written a report with profoundly helpful suggestions. The research on this 
paper was begun in 2003 at Berkeley during the DFG funded scholarship BU-1382/1-1. 

\section{Preliminaries}
This section contains most basic notions and should be consulted only when needed.
\subsection{Functors}
\label{synthIV}
We let $\set$ be the category of sets, and we let $\alg_X$ be the usual category structure on the class of pairs consisting of a commutative ring $R$ together with a morphism from its spectrum 
to some fixed base scheme $X$. Any covariant functor from $\alg_X$ to $\set$ will simply be called a $X$-functor. Notice that every morphism $\phi$ from a (not necessarily affine) scheme $Y$ 
to the scheme $X$ gives rise to an $X$-functor, and that the set of $X$-morphisms between any two $X$-schemes can be recovered from the functorial transformations between their $X$-functors. 
In this optic every $X$-scheme ``is'' (i.e. represents) a $X$-functor. We use the term $X$-category synonymously with fibrations over $\alg_X^{op}$, and a $X$-groupoid is a $X$-category none 
of whose fibers have any non-invertible morphisms, and for an arbitrary $X$-category $\CI$ we write $\CI^*$ for the $X$-groupoid, all of whose fibers are obtained from $\CI(R)$ by discarding 
all non-invertible morphisms. A $X$-groupoid $\CI$ is called discrete if no object of $\CI(R)$ possesses non-trivial automorphisms (i.e. ``is'' a $X$-functor). Moreover, for any $\alg_Y$-object 
$R$ we let $R_{[\phi]}\in\Ob_{\alg_X}$ stand for the altered algebra-structure on the same underlying ring, and for any $X$-category $\CI$ we denote the pull-back along the morphism 
$\phi:X\rightarrow Y$ (i.e. $\CI\times_{\alg_X^{op},[\phi]^{op}}\alg_Y^{op}$) by: $\CI\times_{X,\phi}Y$, we suppress the ``$\phi$'' in the notation whenever the context allows to do that. For every $R\in\Ob_{\alg_X}$ we shall denote the fiber of a $X$-category $\CI$ over $R$ by $\CI(R)$, while $\CI_R$ should stand for the $\Spec R$-category $\CI\times_X\Spec R$. The same notation 
applies to any $X$-functor $\F$, we write $\F\times_{X,\phi}Y$ for the $Y$-functor defined by $\alg_Y\rightarrow\set;R\mapsto\F(R_{[\phi]})$, while $\F_R$ stands for $\F\times_X\Spec R$.
By a fpqc-covering we simply mean any faithfully flat morphism in the category $\alg_X^{op}$. By the big fpqc-site $X_{fpqc}$ we mean $\alg_X^{op}$ equipped with the Grothendieck 
topology induced by the above class of coverings. For instance, a fpqc-sheaf on $X$ is a $X$-functor $\F$ such that the sequence $\F(R)\rightarrow\F(S)\rightrightarrows\F(S\otimes_RS)$ 
is exact for all faithfully flat $X$-algebra morphisms $R\rightarrow S$. The notions of $X$-stacks, and stacks in groupoids over the site $X_{fpqc}$ will be used synonymously. 

\subsection{Witt-vectors and Greenberg Transforms}
\label{old}
By means of the usual addition and multiplication one can regard $\a^1=\Spec\z[x]$ as a ring scheme over $\z$. Now pick a prime number $p$ and recall the ring scheme of Witt vectors 
with respect to $p$: Its underlying scheme is $\Spec\z[x_0,x_1,\dots]$, on which there exists a unique ring scheme structure $W$ which makes each of the so-called ghost coordinates
$$w_k:\Spec\z[x_0,x_1,\dots]\rightarrow\a^1;(x_0,x_1,\dots)\mapsto\sum_{i=0}^kp^ix_i^{p^{k-i}}$$
into a homomorphism from the ring scheme $W$ to the ring scheme $\a^1$. The additive map $V:W\rightarrow W;(x_0,x_1,\dots)\mapsto(0,x_0,\dots)$ is called 
the Verschiebung, and there also exists a unique Frobenius homomorphism $F:W\rightarrow W$, such that $w_k\circ F=w_{k+1}$ for all $k\in\n_0$. Notice that 
$F(V(x))=px$ and $V(xF(y))=yV(x)$ hold for any two Witt-vectors $x$ and $y$. We will write $I_m\subset W$ for the closed subscheme defined by the equations 
$x_0=\dots=x_{m-1}=0$, and $I$ for $I_1$. One also writes $[x]$ for the Teichm\"uller lift, which is the Witt-vector $(x,0,\dots)$.\\
If $\CI$ is a $\Spec W(\f_{p^r})$-category, then we introduce further $\Spec W(\f_{p^r})$-categories 
$^F\CI$ and $^W\CI$ by defining their fibers over an arbitrary $W(\f_{p^r})$-algebra $R$ to be
\begin{eqnarray*}
&&^F\CI(R)=\CI(R_{[F]})\\
&&^W\CI(R)=\CI(W(R)),
\end{eqnarray*}
where the $W(\f_{p^r})$-algebra structure on $W(R)$ is induced by the diagonal $\Delta:W(\f_{p^r})\rightarrow W(W(\f_{p^r}))$. Likewise a 
fibered $\Spec W(\f_{p^r})$-functor $\rho:\CI\rightarrow\EI$ gives rise to fibered $\Spec W(\f_{p^r})$-functors $^F\rho:{^F\CI}\rightarrow{^F\EI}$, and 
$^W\rho:{^W\CI}\rightarrow{^W\EI}$. The $\Spec W(\f_{p^r})$-categories $^{FW}\CI$ and $^{WF}\CI$ are canonically $\Spec W(\f_{p^r})$-equivalent, since the diagram
$$\begin{CD}
W(W(\f_{p^r}))@>F>>W(W(\f_{p^r}))\\
@A{\Delta}AA@A{\Delta}AA\\
W(\f_{p^r})@>F>>W(\f_{p^r})\\
\end{CD}$$
is commutative. Notice, that there are canonical $\Spec W(\f_{p^r})$-functors 
\begin{eqnarray}
\label{FrobI}
&&F:{^W\CI}\rightarrow{^{WF}\CI}\\
\label{FrobII}
&&w_0:{^W\CI}\rightarrow\CI
\end{eqnarray}
induced by the $W(\f_{p^r})$-algebra homomorphisms $F:W(R)\rightarrow W(R)_{[F]}$ and $w_0:W(R)\rightarrow R$. Both $F$ and $W$ commute with base change along 
$W(\f_{p^r})\rightarrow W(\f_{p^f})$, where $f$ is a multiple of $r$, in particular it does not cause confusion to denote $^W(\CI_{W(\f_{p^f})})=(^W\CI)_{W(\f_{p^f})}$ by 
$^W\CI_{W(\f_{p^f})}$ while $^W\CI_{\f_{p^f}}$ is shorthand for nothing but $(^W\CI)_{\f_{p^f}}$. The following representability results are relevant in this paper:
\begin{itemize}
\item
If $\CI$ is representable by an affine $\Spec W(\f_{p^r})$-scheme, then so is $^W\CI$.
\item
The fiber of $^W\ton(\GL(n)_{W(\f_{p^r})})$ over $\Spec\f_{p^r}$ coincides with $\ton(^W\GL(n)_{\f_{p^r}})$.
\end{itemize}
The first statement is well-known from \cite[Proposition 29]{kreidl} (and the references therein) and the second results from Zink's 
theory of Witt descent (\cite[Proposition 33]{zink2}). The assignment $\CI\mapsto{^W\CI}$ seems to have originated in \cite{green}.

\subsection{Weil Restriction} 

Let $X$ be a $W(\f_{p^r})$-scheme, and suppose that $\Delta$ is an abstract group acting on $X$ from the right. By an equivariant right $\Delta$-action 
$\phi$ on some $X$-functor $P$ we mean a family of maps $\phi_R(g):P(R)\rightarrow P(R_{[g]})$ for every $g\in\Delta$ and $R\in\Ob_{\alg_X}$ which satisfy 
$\phi_{R_{[g]}}(h)\circ\phi_R(g)=\phi_R(gh)$ for every $h\in\Delta$ and are compatible with the restriction maps, in the sense that $|_{S_{[g]}}\circ\phi_R(g)=\phi_S(g)\circ|_S$ 
holds for every $R$-algebra $S$ (N.B.: If $P$ is representable by a scheme, this just means that $P$ and $X$ have compatible $\Delta$-actions, from the right). In addition, 
suppose that $\Delta$ acts from the right on some fpqc-sheaf of groups $\G$ over $\Spec W(\f_{p^r})$. We say that $P$ is a $\Delta$-equivariant formal principal homogeneous 
space for $\G$ over $X$, if we are given a $\Delta$-equivariant map of fpqc-sheaves $P\times_{W(\f_{p^r})}\G\rightarrow P$ endowing $P$ with the structure of a formal 
principal homogeneous space for $\G$ over $X$ in the usual sense (N.B.: If $P$ and $\G$ are representable by $W(\f_{p^r})$-schemes, this just means that the $\Delta$-action 
on the former extends to an action of the semidirect product $\G\rtimes\Delta$ such that every non-empty $P(S)$ becomes a principal homogeneous $\G(S)$-set).
We need some notation for Weil restriction: Consider an extension of finite fields $\f_{p^r}\subset\f_{p^s}$, and recall that any $W(\f_{p^s})$-functor $\F$ 
gives rise to its Weil restriction $\Res_{W(\f_{p^s})/W(\f_{p^r})}\F$, which is the $W(\f_{p^r})$-functor given by $R\mapsto\F(W(\f_{p^s})\otimes_{W(\f_{p^r})}R)$, 
and similarly any $W(\f_{p^s})$-functorial transformation $\rho:\E\rightarrow\F$ gives rise to a corresponding transformation 
$\Res_{W(\f_{p^s})/W(\f_{p^r})}\rho:\Res_{W(\f_{p^s})/W(\f_{p^r})}\E\rightarrow\Res_{W(\f_{p^s})/W(\f_{p^r})}\F$. Observe
\begin{eqnarray*}
&&{^F(\Res_{W(\f_{p^s})/W(\f_{p^r})}\F)}\stackrel{\cong}{\rightarrow}\Res_{W(\f_{p^s})/W(\f_{p^r})}{^F\F}\\
&&^W(\Res_{W(\f_{p^s})/W(\f_{p^r})}\F)\stackrel{\cong}{\rightarrow}\Res_{W(\f_{p^s})/W(\f_{p^r})}(^W\F),
\end{eqnarray*}
which is due to the natural isomorphisms
$$W(\f_{p^s})\otimes_{W(\f_{p^r})}R_{[F]}\stackrel{\cong}{\rightarrow}(W(\f_{p^s})\otimes_{W(\f_{p^r})}R)_{[F]};\,a\otimes x\mapsto F(a)\otimes x,$$
as well as $$W(\f_{p^s})\otimes_{W(\f_{p^r})}W(R)\stackrel{\cong}{\rightarrow}W(W(\f_{p^s})\otimes_{W(\f_{p^r})}R).$$
Let $\Delta$ be the cyclic group $r\z/s\z$. If $\F$ commutes with products, then there are canonical isomorphisms: 
\begin{equation*}
(\Res_{W(\f_{p^s})/W(\f_{p^r})}\F)\times_{W(\f_{p^r})}W(\f_{p^s})\cong\prod_{\sigma\in\Delta}{^{F^{-\sigma}}\F}
\end{equation*}
Composing $\times_{W(\f_{p^r})}W(\f_{p^s})$ and $\Res_{W(\f_{p^s})/W(\f_{p^r})}$ in the other order yields $\Delta$-equivariant $W(\f_{p^r})$-functors 
$\Res_{W(\f_{p^s})/W(\f_{p^r})}P_{W(\f_{p^s})}$ equipped with a transformation $P\rightarrow\Res_{W(\f_{p^s})/W(\f_{p^r})}P_{W(\f_{p^s})}$ from arbitrary $W(\f_{p^r})$-functors 
$P$. The $W(\f_{p^r})$-group $\G$ gives rise to a $\Delta$-equivariant $W(\f_{p^r})$-group $\Res_{W(\f_{p^s})/W(\f_{p^r})}\G_{W(\f_{p^s})}$ containing $\G$ as a closed subgroup.

\begin{lem}
\label{galoisI}
For every flat and affine $W(\f_{p^r})$-group $\G$, the functor $$P\mapsto\Res_{W(\f_{p^s})/W(\f_{p^r})}P_{W(\f_{p^s})}$$ defines an equivalence from the category of 
$\G$-torsors over $X$ to the category of $\Delta$-equivariant $\Res_{W(\f_{p^s})/W(\f_{p^r})}\G_{W(\f_{p^s})}$-torsors over $X$, where $\Delta$ acts trivially on $X$.
\end{lem}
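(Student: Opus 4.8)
The plan is to recognize the functor in the statement as the composite of two well-understood equivalences: faithfully flat (Galois) descent along $Y:=X\times_{W(\f_{p^r})}W(\f_{p^s})\to X$, followed by a Shapiro-type adjunction between base change and Weil restriction. Since $W(\f_{p^r})\to W(\f_{p^s})$ is finite \'etale with Galois group $\Gal(W(\f_{p^s})/W(\f_{p^r}))\cong\Delta=r\z/s\z$, the projection $Y\to X$ is a finite \'etale $\Delta$-Galois cover; I would first record the standard equivalence between the category of $\G$-torsors over $X$ and the category of $\Delta$-equivariant $\G_Y$-torsors over $Y$, sending $P$ to $P_Y$ with its canonical descent datum. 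This is effective fpqc descent: a $\Delta$-equivariant object over $Y$ is exactly a descent datum along $Y\to X$ (the \v{C}ech nerve of $Y\to X$ being the nerve of the $\Delta$-action), and since $\G$ is affine every torsor in sight is representable by an affine $X$-scheme, so the descent is effective.

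The second step is to identify $\Delta$-equivariant $\G_Y$-torsors over $Y$ with $\Delta$-equivariant $H$-torsors over $X$, where $H:=\Res_{W(\f_{p^s})/W(\f_{p^r})}\G_{W(\f_{p^s})}$ carries the $\Delta$-action described in the text and $\Delta$ acts trivially on $X$; the functor is $Q\mapsto\Res_{Y/X}Q$. The key point is that $\Res_{Y/X}$ sends a $\G_Y$-torsor to an $H$-torsor, which I would check fpqc-locally on $X$: over such a base $Y$ splits as $\coprod_{\sigma\in\Delta}X$, Weil restriction along it is $\prod_{\sigma\in\Delta}$, a $\G_Y$-torsor is a $\Delta$-indexed family of $\G$-torsors over $X$, and its Weil restriction is the corresponding product torsor for $\prod_{\sigma\in\Delta}\G$ — which matches $H$ in this local picture via the displayed isomorphism $(\Res_{W(\f_{p^s})/W(\f_{p^r})}\G_{W(\f_{p^s})})\times_{W(\f_{p^r})}W(\f_{p^s})\cong\prod_{\sigma\in\Delta}{}^{F^{-\sigma}}\G_{W(\f_{p^s})}$ together with the canonical identification ${}^{F^{-\sigma}}\G_{W(\f_{p^s})}\cong\G_{W(\f_{p^s})}$ coming from $\G$ being defined over $W(\f_{p^r})$. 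An explicit quasi-inverse is available: given a $\Delta$-equivariant $H$-torsor $Q$, base change to $W(\f_{p^s})$, use the decomposition $Q_{W(\f_{p^s})}\cong\prod_{\sigma\in\Delta}Q^{(\sigma)}$ induced by $H_{W(\f_{p^s})}\cong\prod_{\sigma}\G_{W(\f_{p^s})}$, and keep the component $Q^{(0)}$ with the descent datum obtained by transporting the $\Delta$-equivariance of $Q$ (the action of $\tau\in\Delta$ yields isomorphisms $Q^{(0)}_{[\tau]}\cong Q^{(\tau)}$ satisfying the cocycle condition). Composing the two equivalences gives exactly $P\mapsto\Res_{W(\f_{p^s})/W(\f_{p^r})}P_{W(\f_{p^s})}$ with its natural $\Delta$-equivariant structure, which is the claim.

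The step I expect to require the most care is the bookkeeping of the $\Delta$-actions in the second equivalence. In the descent datum $\Delta$ acts \emph{nontrivially} on $Y$ (by the Galois action on the factor $W(\f_{p^s})$), whereas after Weil restriction it must act trivially on $X$ and instead permute the ``coordinates'' of $H$; one has to verify that the unit and counit of the $((-)_Y,\Res_{Y/X})$-adjunction are isomorphisms of $\Delta$-equivariant torsors and are compatible with the standard description of the Galois action on the base change of a Weil restriction. The local splitting of $Y\to X$ and the displayed product decomposition of $\Res_{W(\f_{p^s})/W(\f_{p^r})}\G_{W(\f_{p^s})}$ are the essential inputs that make this routine, and affineness of $\G$ — hence of $H$ and of all torsors occurring — is what guarantees that every descent used above is effective.
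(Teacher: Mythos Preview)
Your proof is correct and is precisely the unpacking of the paper's one-line argument. The paper simply invokes that $\ton(\G)$ is an fpqc-stack over $\Spec W(\f_{p^r})$; your first step (Galois descent along $Y\to X$) is exactly that stack property, and your second step (the Weil-restriction/Shapiro identification of $\G_Y$-torsors over $Y$ with $\Res_{Y/X}\G_Y$-torsors over $X$) is the tautological bookkeeping the paper leaves implicit.
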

\begin{proof}
This follows from $\ton(\G)$ being a $\Spec W(\f_{p^r})$-stack.
\end{proof}

\section{Windows and Displays with additional structure}
\label{bekanntVIII}

Let us fix some $f\in\n$ and a reductive $W(\f_{p^f})$-group $\G$. Recall that a cocharacter $\mu:\g_{m,W(\f_{p^f})}\rightarrow\G$ is called minuscule if all of its weights on 
$\Lie\G$ are contained in the set $\{-1,0,1\}$. For this class of cocharacters \cite[Proposition 3.1.2]{pappas} proves, that there is a unique $W(\f_{p^f})$-rational homomorphism 
\begin{equation*}
^W\G\times_{w_0,\G}U_{\mu^{-1}}^0=:\I^\mu\stackrel{\Phi^\mu}{\rightarrow}{^{WF}\G}
\end{equation*}
such that the image of $\Phi^\mu(g)$ in $\G(\q\otimes W(R))$ agrees with $^F(\mu(\frac1p)g\mu(p))$ for any $W(\f_{p^f})$-algebra $R$ and any $g\in\I^\mu(R)$, please see 
to \eqref{realIV} for the definition of the parabolic subgroup $U_{\mu^{-1}}^0\subset\G$. The $W(\f_{p^f})$-groups $\I^\mu\subset{^W\G}$ turn out to be affine, flat and formally 
smooth but are not of finite type. The present work requires a curious modification of the group $\I^\mu$ and the homomorphism $\Phi^\mu$, which we present in this section.

\begin{lem}
\label{val}
Let $R$ be a $\f_p$-algebra and consider the function:
$$\val_R(x):W(R)\rightarrow\n_0\cup\{\infty\};(x_0,x_1,\dots)\mapsto\inf\{n|x_n\neq0\}$$ 
For any $x,y\in W(R)$ one has $\val_R(x+y)\geq\min\{\val_R(x),\val_R(y)\}$ and $\val_R(xy)\geq\val_R(x)+\val_R(y)$ and $\val_R({^Vx})=1+\val_R(x)$.
\end{lem}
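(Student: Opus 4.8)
The plan is to unwind everything to the level of the ghost components and to exploit the fact that, for an $\f_p$-algebra $R$, the Frobenius endomorphism of $W(R)$ is given coordinate-wise by $(x_0,x_1,\dots)\mapsto(x_0^p,x_1^p,\dots)$, or equivalently that $p=V\circ F = F\circ V$ acts by shifting and raising to the $p$-th power. More concretely, the proof rests on the two classical facts about Witt vectors over an $\f_p$-algebra: first, every element $x$ with $\val_R(x)\geq n$ can be written as $V^n(y)$ for a (unique) $y\in W(R)$, since $\val_R(x)\geq n$ says exactly that $x_0=\dots=x_{n-1}=0$ and the Verschiebung realizes the shift; second, the additivity formula $x V^n(y)=V^n(F^n(x) y)$ — which is the iterate of the identity $V(xF(y))=yV(x)$ recorded in the excerpt — lets one move Verschiebung factors out of products.

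First I would treat $\val_R({^Vx})=1+\val_R(x)$, which is immediate and essentially definitional: if $x=(x_0,x_1,\dots)$ then ${^Vx}=(0,x_0,x_1,\dots)$, so the first possibly-nonzero coordinate of ${^Vx}$ is the old $x_{\val_R(x)}$ sitting in position $1+\val_R(x)$, with the convention $1+\infty=\infty$ covering the case $x=0$. Next I would prove $\val_R(x+y)\geq\min\{\val_R(x),\val_R(y)\}$: setting $m=\min\{\val_R(x),\val_R(y)\}$, both $x$ and $y$ lie in the closed subscheme $I_m\subset W$ cut out by $x_0=\dots=x_{m-1}=0$, which by the above is the image of the (additive!) iterated Verschiebung $V^m$; since $V^m$ is additive, $x+y\in I_m(R)$ as well, i.e. $\val_R(x+y)\geq m$. (Alternatively one can argue on ghost components after base change to a $\z$-flat cover, but the Verschiebung argument is cleaner and stays inside characteristic $p$.)

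The only step with any real content is the multiplicativity estimate $\val_R(xy)\geq\val_R(x)+\val_R(y)$. Here I would write $a=\val_R(x)$ and $b=\val_R(y)$, assume both are finite (otherwise $xy=0$ and there is nothing to prove), and use $x=V^a(x')$, $y=V^b(y')$ with $x',y'\in W(R)$. Then
$$xy = V^a(x')\,V^b(y') = V^a\bigl(x'\,F^a(V^b(y'))\bigr) = V^a\bigl(x'\,V^b(F^a(y'))\bigr) = V^{a+b}\bigl(F^b(x')\,F^a(y')\bigr),$$
where the second and fourth equalities use the projection formula $uV^n(v)=V^n(F^n(u)v)$ iterated, and the third uses $F\circ V^b = V^b\circ F$ together with $p=FV$ — note $F^aV^b = V^b F^a$ when, say, $a\le b$ after absorbing the resulting power of $p$, and in general one rearranges $F$'s and $V$'s freely up to powers of $p$, which only increases the valuation. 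Thus $xy\in I_{a+b}(R)$, i.e. $\val_R(xy)\geq a+b=\val_R(x)+\val_R(y)$, as desired. The main obstacle — such as it is — is just keeping the bookkeeping of $F$'s, $V$'s and the stray powers of $p$ straight; there is no conceptual difficulty once one knows that over an $\f_p$-algebra $\val_R$ behaves like the $p$-adic-style valuation attached to the $V$-adic filtration.
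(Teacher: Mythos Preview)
Your argument is correct, and in fact the paper states this lemma without proof, treating it as a standard fact about Witt vectors over $\f_p$-algebras. So there is no ``paper's proof'' to compare against; your write-up would serve perfectly well as the omitted verification.

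One small clean-up: your parenthetical remark after the displayed computation (``note $F^aV^b = V^bF^a$ when, say, $a\le b$ after absorbing the resulting power of $p$\dots'') is muddled and should be dropped. Over an $\f_p$-algebra the Frobenius $F$ on $W(R)$ is literally the coordinate-wise $p$-th power map $(x_0,x_1,\dots)\mapsto(x_0^p,x_1^p,\dots)$, and the Verschiebung is the shift $(x_0,x_1,\dots)\mapsto(0,x_0,x_1,\dots)$; these visibly commute on the nose, with no stray powers of $p$ to absorb. So your third equality $F^a(V^b(y'))=V^b(F^a(y'))$ holds exactly, and the chain
\[
V^a(x')\,V^b(y')=V^a\bigl(x'\,F^a(V^b(y'))\bigr)=V^a\bigl(x'\,V^b(F^a(y'))\bigr)=V^{a+b}\bigl(F^b(x')\,F^a(y')\bigr)
\]
is already a complete proof of the multiplicativity estimate. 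The hedging about ``rearranging $F$'s and $V$'s up to powers of $p$'' is a leftover from the mixed-characteristic picture and only obscures the argument here.
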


We will call $\val_R(x)$ the $V$-adic valuation of $x$. Notice that the corresponding chain of ideals $I_m(R)=\{x\in W(R)|m\leq\val_R(x)\}$ defines a filtration on $W(R)$ in the sense of 
part \ref{cocha} of the appendix. The following proposition elucidates the significance of the function $\val_R$, please see to part \ref{ballast} of the appendix for the definitions of the 
group $\hat U_{\mu^{-1}}^0(W(R),\val_R)$ and of the endomorphism $L_{\mu^{-1}}(p)$:

\begin{prop}
\label{twistI}
Let $\mu:\g_{m,W(\f_{p^f})}\rightarrow\G$ be a cocharacter all of whose weights are less than or equal to $h\geq1$, where $\G$ is a reductive group over 
$W(\f_{p^f})$. Let us write $\Ibar^\mu$ for the $\f_{p^f}$-scheme representing the closed subgroup functor of $^W\G_{\f_{p^f}}$ defined by 
$R\mapsto\hat U_{\mu^{-1}}^0(W(R),\val_R)$. Then there exists a unique homomorphism $\Phbar^{\mu,h}:\Ibar^\mu\rightarrow{^{WF^h}\G_{\f_{p^f}}}$, 
such that the image of $\Phbar^{\mu,h}(g)$ in $\G(\q\otimes W(R))$ agrees with $^{F^h}(\mu(\frac1p)g\mu(p))$ for any $\f_{p^f}$-algebra $R$ and any 
$g\in\Ibar^\mu(R)$. Moreover, the restriction of $\Phbar^{\mu,h}$ to the special fibre of $^WU_{\mu^{-1}}^0$ agrees with $F^h\circ{^WL_{\mu^{-1}}(p)}$.
\end{prop}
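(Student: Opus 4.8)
The plan is to mimic the construction of $\Phi^\mu$ from \cite[Proposition 3.1.2]{pappas} but replace the étale-localization argument used there by a filtered/pro-finite-level argument controlled by the $V$-adic valuation $\val_R$. First I would reduce to the case where $R$ is a $\f_{p^f}$-algebra and work over the big fpqc site; since $\Ibar^\mu$ is a closed subscheme of $^W\G_{\f_{p^f}}$ and $^{WF^h}\G_{\f_{p^f}}$ is affine, it suffices by descent (and the representability results quoted in subsection \ref{old}, together with Lemma \ref{galoisI}) to define $\Phbar^{\mu,h}$ on $R$-points compatibly with base change and to check that the resulting map lands in $\G(W(R))$ rather than merely in $\G(\q\otimes W(R))$. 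The conjugation formula $g\mapsto {}^{F^h}(\mu(\tfrac1p)g\mu(p))$ makes sense a priori only after inverting $p$, so the whole content is an \emph{integrality} statement: for $g\in\hat U_{\mu^{-1}}^0(W(R),\val_R)$ the element $\mu(\tfrac1p)g\mu(p)$ already lies in $\G(W(R))$.

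Next I would establish that integrality by reduction to the case of the standard representation of $\GL(n)$, or more precisely by choosing a faithful $W(\f_{p^f})$-representation $\G\hookrightarrow\GL(M)$ with $M$ a free module and using that the weights of $\mu$ on $M$ are bounded above by $h$. Decompose $M=\bigoplus_{i\le h} M_i$ into weight spaces for $\mu$; then conjugation by $\mu(p)$ scales the $(i,j)$-block of an endomorphism by $p^{j-i}$, so the dangerous blocks are those with $j<i$, where we multiply by a \emph{negative} power of $p$, at worst $p^{-(h-(-\infty))}$ — and here is where the hypothesis on $\hat U_{\mu^{-1}}^0(W(R),\val_R)$ enters: by the very definition in part \ref{ballast} of the appendix, the off-diagonal entries of $g$ in those blocks lie in $I_m(R)$ for $m$ large enough (governed by $\val_R$) that after multiplication by $p^{-(\text{bounded})}$ the entries are still integral, because $p = {}^F\!V(1)$ raises $V$-adic valuation and one has $p^{-1}I_m(R)\subseteq W(R)$ once $m\ge$ (something linear in $h$) by Lemma \ref{val}. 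I would spell this out: $\val_R(xy)\ge\val_R(x)+\val_R(y)$ and $\val_R({}^Vx)=1+\val_R(x)$ give $p^k W(R)\supseteq I_{k}(R)$-type containments in the relevant range, so that the formula preserves integrality precisely on the locus cut out by the filtration conditions. Uniqueness is immediate: two homomorphisms agreeing after inverting $p$ on a $p$-torsion-free (indeed $p$-adically separated) ring $W(R)$, for $R$ reduced, agree; and the general case follows since $\Ibar^\mu$ and $^{WF^h}\G_{\f_{p^f}}$ are flat over $\f_{p^f}$ and a homomorphism of affine group schemes is determined by its restriction to reduced test rings — alternatively one invokes the density of the generic fibre in $^W\G$ as in \cite{pappas}.

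Finally, for the last assertion I would restrict $\Phbar^{\mu,h}$ to the special fibre of $^WU_{\mu^{-1}}^0\subset{}^W\G$. On this subgroup the valuation conditions are automatically satisfied (the unipotent radical lands in the appropriate $I_m$ with room to spare), and the conjugation $\mu(\tfrac1p)(-)\mu(p)$ restricted to $U_{\mu^{-1}}^0$ is exactly the map denoted $L_{\mu^{-1}}(p)$ in part \ref{ballast} of the appendix — by construction of that endomorphism. Composing with the Frobenius $F^h:{}^W\G\to{}^{WF^h}\G$ of \eqref{FrobI} (iterated $h$ times, using the commuting diagram in subsection \ref{old} to identify $^{WF^h}\G$ with $({}^W\G)$ twisted by $F^h$) then gives $F^h\circ{}^WL_{\mu^{-1}}(p)$, and comparing the two via their images in $\G(\q\otimes W(R))$ — where both are visibly $^{F^h}(\mu(\tfrac1p)g\mu(p))$ — forces equality by the uniqueness just proved. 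The main obstacle I anticipate is purely bookkeeping: matching the exact definition of $\hat U_{\mu^{-1}}^0(W(R),\val_R)$ and of $L_{\mu^{-1}}(p)$ from the appendix with the weight-block estimates above, i.e. checking that the filtration thresholds built into those definitions are exactly (not merely up to a constant) what is needed to make $\mu(\tfrac1p)g\mu(p)$ integral; everything else is a mild generalization of \cite[Proposition 3.1.2]{pappas} with $F$ replaced by $F^h$ and the étale-local trivialization replaced by the $V$-adic filtration.
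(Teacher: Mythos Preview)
There is a genuine gap in your integrality argument. You assert that ``for $g\in\hat U_{\mu^{-1}}^0(W(R),\val_R)$ the element $\mu(\tfrac1p)g\mu(p)$ already lies in $\G(W(R))$'' and then try to justify this via ``$p^{-1}I_m(R)\subseteq W(R)$ once $m\ge$ (something linear in $h$)''. Both claims are false over a non-perfect $\f_{p^f}$-algebra $R$. Concretely, in characteristic $p$ one has $p\cdot(a_0,a_1,\dots)=(0,a_0^p,a_1^p,\dots)$, so $pW(R)=\{(0,b_1,b_2,\dots):b_j\in R^p\}$, which is strictly smaller than $I_1(R)$ whenever $R\neq R^p$. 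For instance $(0,t,0,\dots)\in I_1(\f_p[t])$ is not divisible by $p$ in $W(\f_p[t])$. Thus for $g$ in a positive root space $\U_i$ of $\mu$-weight $h_i>0$ with coordinate $x\in I_{h_i}(R)$, the element $\mu(\tfrac1p)g\mu(p)$, which corresponds to $p^{-h_i}x$, need not lie in $W(R)$ at all.

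What actually produces integrality is the Frobenius twist $F^h$, which you treat as a harmless post-composition. The key identity is $FV=p$: writing $x=V^{h_i}(y)$ one finds
\[
F^h(p^{-h_i}x)=p^{-h_i}F^h(V^{h_i}(y))=p^{-h_i}F^{\,h-h_i}\bigl(F^{h_i}V^{h_i}(y)\bigr)=F^{\,h-h_i}(y)=F^{\,h-h_i}V^{-h_i}(x)\in W(R),
\]
valid precisely because $h_i\le h$. This is exactly what the paper does: after passing to a finite extension so that $\G$ contains a split maximal torus through $\mu$, it invokes Theorem~\ref{triangleI} to exhibit $\Ibar^\mu$ as the product of $^WU_{\mu^{-1}}^0$ with the additive pieces $\U_i(I_{h_i})$, and then \emph{defines} $\Phbar^{\mu,h}$ on each $\U_i(I_{h_i})\cong I_{h_i}$ by the manifestly integral formula $F^{h-h_i}V^{-h_i}$. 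Your $\GL(n)$-embedding strategy can be repaired along these lines, but you must incorporate $F^h$ into the integrality step rather than append it afterwards, and you will in effect recover the root-space calculation anyway.
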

\begin{proof}
The asserted representability of the functor $\Ibar^\mu$ is an immediate consequences of theorem \ref{triangleI}, in fact the description given there shows that it is a reduced affine group 
scheme. It follows that the characterizing property of $\Phbar^{\mu,h}$ could just as well be phrased for reduced test $\f_{p^f}$-algebras, which already establishes its uniqueness, given that 
$W(R)$ is torsion free for reduced $R$. By the same token we may work over a finite extension $\f_{p^{e}}$ of $\f_{p^f}$ and thus find a split maximal torus $\T\subset\G_{W(\f_{p^e})}$ 
which contains the image of $\mu$. Our construction of $\Phbar^{\mu,h}$ follows the ideas of \cite[Proposition 3.1.2]{pappas}: Let us write $\U_1$, ..., $\U_d$ for the root spaces with 
positive $\mu$-weights, say $h_1\leq\dots\leq h_{d-1}\leq h_d\leq h$, so that $\Ibar_{\f_{p^{e}}}^\mu$ is scheme theoretically the product of $^WU_{\mu^{-1},\f_{p^{e}}}^0$ and the 
groups of $I_{h_i}$-valued sections of $\U_i$. The proof is completed by taking $\Phbar^{\mu,h}$ to be $F^{h-h_i} V^{-h_i}$ on each $\U_i(I_{h_i})\cong I_{h_i}$ (N.B.: $\U_i\cong\g_a$).
\end{proof}

To facilitate the notation we write $\I_0^\mu$ for the $W(\f_{p^f})$-group $U_{\mu^{-1}}^0$ and $\Ibar_0^\mu$ for its special fibre. Many groupoids in this paper arise from the following construction:

\begin{ex}
\label{diagram} 
Let $M$ be an abstract monoid, let $\Gamma$ be a subgroup thereof, and let $\phi:\Gamma\rightarrow M$ be a morphism of monoids. Following \cite{pappas} and \cite{hadi} 
we let $[M/_\phi\Gamma]$ be the category whose class of objects is given by $M$ while the set of morphisms between any two of whose elements $U'$ and $U$ is given by:
$$\Hom_{[M/_\phi\Gamma]}(U',U)=\{k\in\Gamma\mid\,U'=k^{-1}U\phi(k)\}$$
We will refer to $[M/_\phi\Gamma]$ as the groupoid corresponding to the diagram $M\supset\Gamma\stackrel{\phi}{\rightarrow}M$, note that there is a faithful functor
$[M/_\phi\Gamma]\rightarrow[\frac1{\Gamma}]$, where the latter denotes the groupoid with a single object whose group of automorphisms is $\Gamma$. Suppose, 
that $M'\supset\Gamma'\stackrel{\phi'}{\rightarrow}M'$ is another such diagram, then giving a commutative diagram 
$$\begin{CD}
[M/_\phi\Gamma]@>>>[M'/_{\phi'}\Gamma']\\
@VVV@VVV\\
[\frac1{\Gamma}]@>>>[\frac1{\Gamma'}]
\end{CD}$$
is equivalent to giving a map $m:M\rightarrow M'$ and a homomorphism $\gamma:\Gamma\rightarrow\Gamma'$, satisfying 
$m(k^{-1}U\phi(k))=\gamma(k)^{-1}m(U)\phi'(\gamma(k))$. Again, the functors between groupoids in this paper will usually be given in this language. 
Notice that the underlying map from $\Ob_{[M/_\phi\Gamma]}$ to the class of objects $\Ob_{[M'/_{\phi'}\Gamma']}$ is described by $U\mapsto m(U)$.
\end{ex}

\subsection{$\Phbar$-data}
\label{pivotalII}

In this subsection we fix a positive divisor $r\mid f$, for any non-empty set $\Sigma\subset\z/r\z$ and any $\sigma\in\z/r\z$ we will use the notations:
\begin{eqnarray*}
&&\bd_\Sigma^+(\sigma):=\min\{h\geq0|h+\sigma\in\Sigma\}\in\n_0\\
&&\bd_\Sigma(\sigma):=\sigma+\bd_\Sigma^+(\sigma)\in\Sigma\\
&&r_\Sigma(\sigma):=\Card(\bd_\Sigma^{-1}(\{\sigma\}))\in\n_0
\end{eqnarray*}
Notice that $\bd_\Sigma(1+\sigma)$ restricts to a cyclic permutation on $\Sigma$, which we denote by 
$$\varpi_\Sigma:\Sigma\rightarrow\Sigma;\,\sigma\mapsto\sigma+\varpi_\Sigma^+(\sigma),$$ 
where $\varpi_\Sigma^+(\sigma):=\bd_\Sigma^+(1+\sigma)+1$. Notice that $\varpi_\Sigma^{-1}(\sigma)=\sigma-r_\Sigma(\sigma)$. 
Also, for a reductive $W(\f_{p^r})$-group $\G$, we need to consider the group: 
$$\G^\Sigma:=\prod_{\sigma\in\Sigma}{^{F^{-\sigma}}\G_{W(\f_{p^f})}}$$ 
Finally, we let $\gg$ and $\gg^\Sigma$ be the respective Lie-algebras of $\G$ and $\G^\Sigma$.

\begin{defn}
\label{tragic}
Let $\G$ be a reductive $W(\f_{p^r})$-group and let $\Sigma$ be a non-empty subset of $\z/r\z$. The pair $(\G,\{\mu_\sigma\}_{\sigma\in\Sigma})$ is called a $\Phbar$-datum if each 
$\mu_\sigma:\g_m\rightarrow{^{F^{-\sigma}}\G_{W(\f_{p^f})}}$ is a cocharacter whose weights on $W(\f_{p^f})\otimes_{F^{-\sigma},W(\f_{p^r})}\gg$ are less than or equal to $r_\Sigma(\sigma)$.
If the weights of each $\mu_\sigma$ are less than or equal to $1$ (i.e. if each $\mu_\sigma$ is minuscule), than  $(\G,\{\mu_\sigma\}_{\sigma\in\Sigma})$ is called a $\Phi$-datum.
\end{defn}

For some fixed $W(\f_{p^f})$-rational $\Phbar$-datum $(\G,\{\mu_\sigma\}_{\sigma\in\Sigma})$ we need to introduce 
further notions and notations, which we will use in the whole paper without further notice. Consider the cocharacter
$$\mu_\Sigma:\g_{m,W(\f_{p^f})}\rightarrow\G^\Sigma,$$
of which the components are given by $\mu_\sigma$, for every $\sigma\in\Sigma$. Occasionally we need to work with the scalar restriction 
\begin{equation}
\label{eins}
\gG:=\Res_{W(\f_{p^r})/\z_p}\G,
\end{equation}
notice that $\G^\Sigma$ is canonically contained in $\gG_{W(\f_{p^f})}=\prod_{\sigma=0}^{r-1}{^{F^{-\sigma}}\G_{W(\f_{p^f})}}$. Observe 
that $\Ibar^{\mu_\Sigma}$ is equal to the product $\prod_{\sigma\in\Sigma}\Ibar^{\mu_\sigma}$, on the factors of which we defined homomorphisms
\begin{equation}
\label{cycleI}
\Phbar^{\mu_\sigma,r_\Sigma(\sigma)}:\Ibar^{\mu_\sigma}\rightarrow{^{WF^{r_\Sigma(\sigma)-\sigma}}\G_{\f_{p^f}}}.
\end{equation}
Since the composition with $\varpi_\Sigma$ yields an isomorphism:
\begin{equation}
\label{cycleII}
\prod_{\sigma\in\Sigma}{^{F^{r_\Sigma(\sigma)-\sigma}}\G}\stackrel{\cong}{\rightarrow}{\G^\Sigma}
\end{equation}
one obtains a homomorphism
\begin{equation*}
\Phbar:\Ibar^{\mu_\Sigma}\rightarrow{^W\G_{\f_{p^f}}^\Sigma},
\end{equation*}
by composing \eqref{cycleII} with the product of the homomorphisms \eqref{cycleI}, and we will refer to 
$${^W\G_{\f_{p^f}}^\Sigma}\hookleftarrow\Ibar^{\mu_\Sigma}\stackrel{\Phbar}{\rightarrow}{^W\G_{\f_{p^f}}^\Sigma}$$
as the reduced Frobenius map. If each $\mu_\sigma$ is minuscule we have additional homomorphisms
\begin{equation}
\label{cycleIV}
F^{r_\Sigma(\sigma)-1}\circ\Phi^{\mu_\sigma}:\I^{\mu_\sigma}\rightarrow{^{WF^{r_\Sigma(\sigma)-\sigma}}\G_{W(\f_{p^f})}}
\end{equation}
at our disposal, and again composing \eqref{cycleII} with the product of the homomorphisms \eqref{cycleIV} yields the homomorphism
\begin{equation*}
\Phi:\I^{\mu_\Sigma}\rightarrow{^W\G^\Sigma},
\end{equation*}
and we will refer to 
$${^W\G^\Sigma}\hookleftarrow\I^{\mu_\Sigma}\stackrel{\Phi}{\rightarrow}{^W\G^\Sigma}$$
as the non-reduced Frobenius map (N.B.: $\Ibar^{\mu_\Sigma}=\I_{\f_{p^f}}^{\mu_\Sigma}$).

\begin{defn}
\label{concept}
If ${^W\G_{\f_{p^f}}^\Sigma}\hookleftarrow\Ibar^{\mu_\Sigma}\stackrel{\Phbar}{\rightarrow}{^W\G_{\f_{p^f}}^\Sigma}$ is the reduced Frobenius map of some $W(\f_{p^f})$-rational 
$\Phbar$-datum $(\G,\{\mu_\sigma\}_{\sigma\in\Sigma})$, then we let $\Barb(\G,\{\mu_\sigma\}_{\sigma\in\Sigma})$ be the $\f_{p^f}$-stack rendering the diagram:
\begin{equation}
\label{poshI}
\begin{CD}
\ton(^W\G_{\f_{p^f}}^\Sigma)@<K<<\Barb(\G,\{\mu_\sigma\}_{\sigma\in\Sigma})\\
@V{\Delta_{\ton(^W\G_{\f_{p^f}}^\Sigma)}}VV@VqVV\\
\ton(^W\G_{\f_{p^f}}^\Sigma)\times_{\f_{p^f}}\ton(^W\G_{\f_{p^f}}^\Sigma)@<{\ton(\Phbar\times\id)\circ\Delta_{\ton(\Ibar^{\mu_\Sigma})}}<<\ton(\Ibar^{\mu_\Sigma})
\end{CD}
\end{equation}
$2$-cartesian. Moreover, if ${^W\G^\Sigma}\hookleftarrow\I^{\mu_\Sigma}\stackrel{\Phi}{\rightarrow}{^W\G^\Sigma}$ is the non-reduced Frobenius map of a 
$\Phi$-datum $(\G,\{\mu_\sigma\}_{\sigma\in\Sigma})$ we let $\B(\G,\{\mu_\sigma\}_{\sigma\in\Sigma})$ be the $W(\f_{p^f})$-stack rendering the diagram:
\begin{equation}
\label{poshII}
\begin{CD}
\ton(^W\G^\Sigma)@<K<<\B(\G,\{\mu_\sigma\}_{\sigma\in\Sigma})\\
@V{\Delta_{\ton(^W\G^\Sigma)}}VV@VqVV\\
\ton(^W\G^\Sigma)\times_{W(\f_{p^f})}\ton(^W\G^\Sigma)@<{\ton(\Phi\times\id)\circ\Delta_{\ton(\I^{\mu_\Sigma})}}<<\ton(\I^{\mu_\Sigma})
\end{CD}
\end{equation}
$2$-cartesian. 
\end{defn}

Since we have $\B(\G,\{\mu_\sigma\}_{\sigma\in\Sigma})_{\f_{p^f}}\cong\Barb(\G,\{\mu_\sigma\}_{\sigma\in\Sigma})$ for any $\Phi$-datum 
$(\G,\{\mu_\sigma\}_{\sigma\in\Sigma})$, the following convention turns out to be quite handy: By saying that $\P$ was a $(\G,\{\mu_\sigma\}_{\sigma\in\Sigma})$-display 
over a $W(\f_{p^f})$-scheme $X$ we mean that at least one of the following two statements holds:
\begin{itemize}
\item[(i)]
$p$ vanishes in $\O_X$, and $\P$ is a $1$-morphism from $X$ to $\Barb(\G,\{\mu_\sigma\}_{\sigma\in\Sigma})$, where $X$ has to be regarded as a $\f_{p^f}$-scheme.
\item[(ii)]
$(\G,\{\mu_\sigma\}_{\sigma\in\Sigma})$ is a $\Phi$-datum and $\P$ is a $1$-morphism from $X$ to $\B(\G,\{\mu_\sigma\}_{\sigma\in\Sigma})$.
\end{itemize}
In each of these two cases we wish to define the underlying locally trivial principal homogeneous space of $\P$ to be $q(\P)$, which is a $X$-valued point in one of 
$\ton(\Ibar^{\mu_\Sigma})$ or $\ton(\I^\mu)$. Let us denote the lowest truncation of $q(\P)$ by $q_0(\P)$, which is a $X$-valued point in $\ton(\I_0^{\mu_\Sigma})$. Finally 
let us say that $\P$ is a banal $(\G,\{\mu_\sigma\}_{\sigma\in\Sigma})$-display over $X$ if and only if $q(\P)$ possesses a global section. For every $W(\f_{p^f})$-algebra 
$R$ we let $\BI_R(\G,\{\mu_\sigma\}_{\sigma\in\Sigma})$ be the groupoid of banal $(\G,\{\mu_\sigma\}_{\sigma\in\Sigma})$-displays over $\Spec R$. Using the 
formalism of example \ref{diagram} this could also be described as $[{^W\G_{\f_{p^f}}^\Sigma}(R)/_{\Phbar_R}\Ibar^{\mu_\Sigma}(R)]$ or 
$[{^W\G_{W(\f_{p^f})}^\Sigma}(R)/_{\Phi_R}\I^{\mu_\Sigma}(R)]$ depending on whether we are in case (i) or (ii). Observe that the groupoid of banal 
$(\G,\{\mu_\sigma\}_{\sigma\in\Sigma})$-displays over an arbitrary $W(\f_{p^f})$-scheme $X$ is just $\BI_{\Gamma(X,\O_X)}(\G,\{\mu_\sigma\}_{\sigma\in\Sigma})$. 

\begin{rem}
Please see to \cite[Section 2]{langer2} and \cite[Subsection 5.3]{lau2} for more information on displays of not necessarily minuscule type. For every $W(\f_{p^f})$-rational $\Phbar$-datum 
$(\G,\{\mu_\sigma\}_{\sigma\in\Sigma})$, there seems to be a canonical morphism from the special fiber of the $W(\f_{p^f})$-stack of $\gG$-displays of type $\mu_\Sigma$ (in the sense 
of Lau) to our $\f_{p^f}$-stack $\Barb(\G,\{\mu_\sigma\}_{\sigma\in\Sigma})$. In the non-minuscule case the former seems to be different from the latter.
\end{rem}

\begin{rem}
\label{merke}
Suppose that $(\G,\{\mu_\sigma\}_{\sigma\in\Sigma})$ is a $W(\f_{p^f})$-rational $\Phi$-datum, so that the notion $(\G,\{\mu_\sigma\}_{\sigma\in\Sigma})$-display 
in our sense agrees with the notion of $(\gG,\mu_\Sigma)$-display in the sense of \cite[Definition 3.2.1]{pappas}, as $\mu_\Sigma$ is minuscule. 
In this case we will say that $X\stackrel{\P}{\rightarrow}\B(\G,\{\mu_\sigma\}_{\sigma\in\Sigma})$ is adjoint nilpotent if $p$ acts locally nilpotent on 
$\O_X$ and $\P$ is an adjoint nilpotent $(\gG,\mu_\Sigma)$-display in the sense of \cite[Definition 3.4.2]{pappas}. This condition defines a full substack 
$\B^{nil}(\G,\{\mu_\sigma\}_{\sigma\in\Sigma})\subset\B(\G,\{\mu_\sigma\}_{\sigma\in\Sigma})$. In the same vein we let
$$\BI_R^{nil}(\G,\{\mu_\sigma\}_{\sigma\in\Sigma})\subset\BI_R(\G,\{\mu_\sigma\}_{\sigma\in\Sigma})$$ 
be the full subcategory of adjoint nilpotent banal $(\G,\{\mu_\sigma\}_{\sigma\in\Sigma})$-displays over some $W(\f_{p^f})$-algebra $R$ with $pR\subset\sqrt{0_R}$. In the 
special case $\Sigma\neq\z/r\z$ every $(\G,\{\mu_\sigma\}_{\sigma\in\Sigma})$-display over a $W(\f_{p^f})$-scheme satisfying $X\times\q=\emptyset$ is adjoint nilpotent.
\end{rem}

\subsection{Functoriality}
\label{spass}
Fix $f$ and let $(\G,\{\mu_\sigma\}_{\sigma\in\Sigma})$ and $(\tilde\G,\{\tilde\mu_\sigma\}_{\sigma\in\tilde\Sigma})$ 
be two $\Phbar$-data, giving rise to reduced Frobenius maps $\Phbar:\Ibar^{\mu_\Sigma}\rightarrow{^W\G_{\f_{p^f}}^\Sigma}$ and 
$\tilde\Phbar:\Ibar^{\tilde\mu_{\tilde\Sigma}}\rightarrow{^W\tilde\G_{\f_{p^f}}^{\tilde\Sigma}}$. Suppose we are given a pair consisting of a group homomorphism 
$$\gamma:\Ibar^{\mu_\Sigma}\rightarrow\Ibar^{\tilde\mu_{\tilde\Sigma}}$$
together with a morphism
$$m:{^W\G}_{\f_{p^f}}^\Sigma\rightarrow{^W{\tilde\G_{\f_{p^f}}^{\tilde\Sigma}}},$$
that renders the diagram
$$\begin{CD}
{^W\G}_{\f_{p^f}}^\Sigma\times_{\f_{p^f}}\Ibar^{\mu_\Sigma}@>{\pr}>>{^W\G}_{\f_{p^f}}^\Sigma\\
@V{m\times\gamma}VV@VmVV\\
^W{\tilde\G_{\f_{p^f}}^{\tilde\Sigma}}\times_{\f_{p^f}}\Ibar^{\tilde\mu_{\tilde\Sigma}}@>{\tilde\pr}>>^W{\tilde\G_{\f_{p^f}}^{\tilde\Sigma}}
\end{CD}$$
commutative where $\pr$ (resp. $\tilde\pr$) is defined by sending $(U,h)$ to the element $h^{-1}U\Phbar(h)$ 
(resp. $h^{-1}U\tilde\Phbar(h)$). Then one obtains a canonical $2$-commutative diagram:
$$\begin{CD}
{^W\G_{\f_{p^f}}^\Sigma}@>>>\Barb(\G,\{\mu_\sigma\}_{\sigma\in\Sigma})@>q>>\ton(\Ibar^{\mu_\Sigma})\\
@V{m}VV@VVV@V{\ton(\gamma)}VV\\
{^W{\tilde\G_{\f_{p^f}}^{\tilde\Sigma}}}@>>>\Barb(\tilde\G,\{\tilde\mu_\sigma\}_{\sigma\in\tilde\Sigma})@>{\tilde q}>>\ton(\Ibar^{\tilde\mu_{\tilde\Sigma}})\\
\end{CD}$$
Many instances of this procedure will occur in this paper, here are some of them (in these examples we fix $r$ too):

\subsubsection{A Covariant Functoriality in $\G$}
\label{Geh}
Let $i:\G\rightarrow\tilde\G$ be a homomorphism of $W(\f_{p^r})$-groups, and assume 
\begin{eqnarray*}
&&\Sigma=\tilde\Sigma\\
&&\forall\sigma\in\Sigma:\,^{F^{-\sigma}}i\circ\mu_\sigma=\tilde\mu_\sigma,
\end{eqnarray*}
in the sequel we will refer to this as a morphism from $(\G,\{\mu_\sigma\}_{\sigma\in\Sigma})$ to $(\tilde\G,\{\tilde\mu_\sigma\}_{\sigma\in\Sigma})$. Consider the homomorphism 
$i^\Sigma:\G^\Sigma\rightarrow\tilde\G^\Sigma$ which is given by the cartesian product of the maps $^{F^{-\sigma}}i:{^{F^{-\sigma}}\G}\rightarrow{^{F^{-\sigma}}\tilde\G}$. 
It is easy to see that the homomorphism $^Wi_{\f_{p^f}}^\Sigma=:m$ from $^W\G_{\f_{p^f}}^\Sigma$ to $^W{\tilde\G_{\f_{p^f}}^\Sigma}$ restricts to a homomorphism 
$\gamma$ from $\Ibar^{\mu_\Sigma}$ to $\Ibar^{\tilde\mu_\Sigma}$. The canonical $1$-morphism which results from the pair $(\gamma,m)$ will be denoted by: 
$$\Barb(i):\Barb(\G,\{\mu_\sigma\}_{\sigma\in\Sigma})\rightarrow\Barb(\tilde\G,\{\tilde\mu_\sigma\}_{\sigma\in\Sigma}).$$

\subsubsection{Two Functorialities in $\{\mu_\sigma\}_{\sigma\in\Sigma}$}
\label{Mueh}
Consider a family $\{g_\sigma\}_{\sigma\in\Sigma}=g\in{\G^\Sigma}(W(\f_{p^f}))$, and assume 
\begin{eqnarray*}
&&\Sigma=\tilde\Sigma\\
&&\G=\tilde\G\\
&&\tilde\mu_\Sigma=\Int^{\G^\Sigma}(g/W(\f_{p^f}))\circ\mu_\Sigma. 
\end{eqnarray*}
Let $\gamma:\Ibar^{\mu_\Sigma}\rightarrow\Ibar^{\tilde\mu_\Sigma}$ be the homomorphism which is obtained by restriction of the inner 
automorphism $\Int^{^W\G^\Sigma}(g/\f_{p^f})$. Finally we describe a map $m$ from $^W\G_{\f_{p^f}}^\Sigma$ to itself as follows: 
$$\{U_\sigma\}_{\sigma\in\Sigma}\mapsto\{g_\sigma U_\sigma{^{F^{\varpi_\Sigma^+(\sigma)}}g_{\varpi_\Sigma(\sigma)}^{-1}}\}_{\sigma\in\Sigma}$$ 
The $1$-morphism which results from this pair $(\gamma,m)$ will be denoted by: 
$$\Barb(g):\Barb(\G,\{\mu_\sigma\}_{\sigma\in\Sigma})\rightarrow\Barb(\G,\{\tilde\mu_\sigma\}_{\sigma\in\Sigma})$$
One more notation seems to be practical: Suppose that $(\G,\{\mu_\sigma\}_{\sigma\in\Sigma})$ and $(\G,\{\tilde\mu_\sigma\}_{\sigma\in\Sigma})$ are $\Phbar$-data such 
that $\mu_\sigma=\alpha_\sigma\tilde\mu_\sigma$ holds for some family $\alpha_\sigma:\g_m\rightarrow{^{F^{-\sigma}}\zen_{W(\f_{p^f})}^{\G}}$ of weight $0$. Then we will write
\begin{equation}
\label{Kuh}
\id_{\alpha_\Sigma}:\Barb(\G,\{\mu_\sigma\}_{\sigma\in\Sigma})\stackrel{\cong}{\rightarrow}\Barb(\G,\{\tilde\mu_\sigma\}_{\sigma\in\Sigma})
\end{equation}
for the canonical isomorphism which is obtained by taking $\gamma=\id_{\Ibar^{\mu_\Sigma}}$ and $m=\id_{^W\G_{\f_{p^f}}^\Sigma}$.

\subsection{Weil restriction revisited}
For every $m\in\n_0$ and $\Sigma\subset\z/r\z$ we let $\Sigma^{(m)}\subset\z/mr\z$ be the inverse image of $\Sigma$ under the canonical projection 
$\z/mr\z\rightarrow\z/r\z$. Recall that we fixed a $W(\f_{p^f})$-rational $\Phbar$-datum $(\G,\{\mu_\sigma\}_{\sigma\in\Sigma})$ and let $s$ be a number 
with $r\mid s\mid f$. Observe that the $W(\f_{p^f})$-rational $\Phbar$-datum $(\G_{W(\f_{p^s})},\{\mu_\sigma\}_{\sigma\in\Sigma^{(\frac sr)}})$ is equipped 
with an action of the cyclic group $\Delta:=r\z/s\z$, in the sense of subsubsection \ref{Geh}. The following is a restatement of lemma \ref{galoisI}:

\begin{lem}
\label{galoisII}
For any $W(\f_{p^f})$-scheme $X$, there is a natural equivalence from the category of $(\G,\{\mu_\sigma\}_{\sigma\in\Sigma})$-displays over $X$ to the 
category of $\Delta$-equivariant $(\G_{W(\f_{p^s})},\{\mu_\sigma\}_{\sigma\in\Sigma^{(\frac sr)}})$-displays over $X$, where $\Delta$ acts trivially on $X$. 
\end{lem}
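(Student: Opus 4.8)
The plan is to deduce Lemma \ref{galoisII} from Lemma \ref{galoisI} together with the explicit description of $\Phbar$-data and their associated stacks. First I would unwind what a $(\G,\{\mu_\sigma\}_{\sigma\in\Sigma})$-display amounts to in terms of torsors: by Definition \ref{concept} it is a $1$-morphism to the stack $\Barb(\G,\{\mu_\sigma\}_{\sigma\in\Sigma})$, which by the $2$-cartesian diagram \eqref{poshI} is the same as a triple consisting of a $^W\G_{\f_{p^f}}^\Sigma$-torsor $Q$, an $\Ibar^{\mu_\Sigma}$-torsor $P$, and an isomorphism identifying $\Phbar_*(P)$ and $Q$ with the induced torsor along the identity, i.e.\ a reduction-of-structure datum patching $Q$ to itself twisted by $\Phbar$. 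The key point is that both group schemes $^W\G_{\f_{p^f}}^\Sigma$ and $\Ibar^{\mu_\Sigma}$, as well as the homomorphism $\Phbar$ between them, are built by Weil restriction in the sense made precise in Subsection \ref{old}--\ref{pivotalII}: the isomorphisms ${^W(\Res_{W(\f_{p^s})/W(\f_{p^r})}\F)}\cong\Res_{W(\f_{p^s})/W(\f_{p^r})}({^W\F})$ and $(\Res_{W(\f_{p^s})/W(\f_{p^r})}\F)\times_{W(\f_{p^r})}W(\f_{p^s})\cong\prod_{\sigma\in\Delta}{^{F^{-\sigma}}\F}$ exhibit $\gG^{\Sigma^{(s/r)}}$-type data over $W(\f_{p^s})$ as the base change of $\gG^\Sigma$-type data over $\z_p$, carrying the residual $\Delta=r\z/s\z$-action.

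Next I would make the identification of index sets explicit. The projection $\z/sr'\z\to\z/r\z$ (with $r'=s/r$, using the notation $\Sigma^{(s/r)}$ introduced just above the lemma) is $\Delta$-equivariant, and one checks that the combinatorial invariants $\bd^+$, $\bd$, $r_\Sigma$, and the cyclic permutation $\varpi_\Sigma$ all pull back compatibly: $r_{\Sigma^{(s/r)}}(\sigma)=r_\Sigma(\bar\sigma)$ and $\varpi_{\Sigma^{(s/r)}}$ covers $\varpi_\Sigma$. This guarantees that $(\G_{W(\f_{p^s})},\{\mu_\sigma\}_{\sigma\in\Sigma^{(s/r)}})$ is again a $\Phbar$-datum (the weight bounds are the same), and that its reduced Frobenius map $\Phbar^{(s/r)}$ is literally the base change along $\z_p\to W(\f_{p^s})$ of $\Phbar$, equivariantly for $\Delta$. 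Consequently the stack $\Barb(\G_{W(\f_{p^s})},\{\mu_\sigma\}_{\sigma\in\Sigma^{(s/r)}})$ is the $2$-fiber product of the base changes of the two legs of \eqref{poshI}, so it carries a natural $\Delta$-action and there is a canonical $1$-morphism $\Barb(\G,\{\mu_\sigma\}_{\sigma\in\Sigma})\to\Barb(\G_{W(\f_{p^s})},\{\mu_\sigma\}_{\sigma\in\Sigma^{(s/r)}})$ compatible with the $\Delta$-equivariant structure, of exactly the form produced by Subsection \ref{spass}.

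Now I would invoke Lemma \ref{galoisI} applied simultaneously to the two flat affine $W(\f_{p^r})$-groups $^W\G_{\f_{p^f}}^\Sigma$ and $\Ibar^{\mu_\Sigma}$ (the latter is affine and flat — indeed reduced affine — by Proposition \ref{twistI}): it says that $\G$-torsors over $X$ correspond to $\Delta$-equivariant $\Res_{W(\f_{p^s})/W(\f_{p^r})}\G_{W(\f_{p^s})}$-torsors over $X$. Since a $(\G,\{\mu_\sigma\})$-display is assembled out of such torsors plus a morphism between them compatible with $\Phbar$, and since that entire package is $\Delta$-equivariantly compatible with Weil restriction by the previous paragraphs, the equivalence of Lemma \ref{galoisI} upgrades to the desired equivalence of display categories. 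Concretely: a display over $X$ descends from its Weil-restricted avatar, which is a $\Delta$-equivariant $(\G_{W(\f_{p^s})},\{\mu_\sigma\}_{\sigma\in\Sigma^{(s/r)}})$-display with trivial $\Delta$-action on $X$; the quasi-inverse is descent in the other direction; full faithfulness follows from the full faithfulness in Lemma \ref{galoisI} applied to the morphism-torsors and naturality of the constructions in Subsection \ref{spass}.

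The main obstacle I anticipate is bookkeeping rather than conceptual: one must verify carefully that the \emph{homomorphism} $\Phbar$ — built in \eqref{cycleI}, \eqref{cycleII} as a composite involving the permutation $\varpi_\Sigma$ and the maps $\Phbar^{\mu_\sigma,r_\Sigma(\sigma)}$ of Proposition \ref{twistI} — is compatible with the index-set projection, i.e.\ that forming $\Phbar$ for $\Sigma$ and then base-changing/Weil-restricting agrees with forming $\Phbar^{(s/r)}$ for $\Sigma^{(s/r)}$ and keeping track of the $\Delta$-action. This requires matching up the cyclic permutations $\varpi_{\Sigma^{(s/r)}}$ and $\varpi_\Sigma$ and the Frobenius-twist exponents $F^{r_\Sigma(\sigma)-\sigma}$ across the covering $\z/sr'\z\to\z/r\z$, together with the isomorphism ${^F(\Res\F)}\cong\Res({^F\F})$ of Subsection \ref{old}. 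Once that compatibility is in hand, the lemma is, as the authors say, a restatement of Lemma \ref{galoisI}, since the whole formalism of Subsection \ref{spass} is functorial and the $2$-cartesian defining diagrams \eqref{poshI} commute with the two operations in question.
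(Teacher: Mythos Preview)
Your proposal is correct and follows the same line the paper indicates: the paper gives no proof at all beyond the sentence ``The following is a restatement of lemma~\ref{galoisI}'', and what you have written is precisely the unpacking of that restatement. One small imprecision: you call $^W\G_{\f_{p^f}}^\Sigma$ and $\Ibar^{\mu_\Sigma}$ ``$W(\f_{p^r})$-groups'', but they are $\f_{p^f}$-group schemes (resp.\ $W(\f_{p^f})$-group schemes in the $\Phi$-datum case); this is harmless, since the only content of Lemma~\ref{galoisI} is that $\ton(\G)$ is a stack, and that holds over any base --- in particular the identification $(\G_{W(\f_{p^s})})^{\Sigma^{(s/r)}}\cong(\G^\Sigma)^{s/r}$ with $\Delta$ permuting the copies reduces the question to exactly this descent statement.
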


\subsection{Windows and Cartier's Diagonal}
\label{win}

Let us fix a $W(\f_{p^f})$-rational $\Phi$-datum $(\G,\{\mu_\sigma\}_{\sigma\in\Sigma})$. This subsection is a commentary on how to read and apply the results 
of \cite{hadi} and \cite{zink1} in the $(\G,\{\mu_\sigma\}_{\sigma\in\Sigma})$-language, which is used in the main part of this paper. Let $A$ be a torsionfree and 
$p$-adically separated and complete $W(\f_{p^f})$-algebra and let $\tau:A\rightarrow A$ be a lift of the Frobenius endomorphism. For every $p$-adically open pd-ideal 
$J\subset A$ we consider the product $\G^\Sigma(A)\supset\Gamma=\prod_{\sigma\in\Sigma}\Gamma_{\mu_\sigma^{-1}}^{A,J}$, of which the factors are defined as 
in \eqref{realI}. Let $\hat\Phi_A:\Gamma\rightarrow\G^\Sigma(A)$ be the composition of the cyclic permutation \eqref{cycleII} with the product of the homomorphisms:
\begin{equation}
\label{cycleIII}
\Gamma_{\mu_\sigma^{-1}}^{A,J}\rightarrow\G(A_{[F^{r_\Sigma(\sigma)-\sigma}]});\,g\mapsto\tau^{r_\Sigma(\sigma)}(\mu_\sigma(\frac1p)g\mu_\sigma(p))
\end{equation}
This map is well-defined, because 
\begin{itemize}
\item
$\tau(J)\subset pA$, so that $\tau(\Gamma_{\mu_\sigma^{-1}}^{A,J})\subset\Gamma_{\tau(\mu_\sigma^{-1})}^{A,pA}$
\item
$\Gamma_{\tau(\mu_\sigma^{-1})}^{A,pA}\subset\G(A_{[F^{1-\sigma}]})\cap\tau(\mu_\sigma(p))\G(A_{[F^{1-\sigma}]})\tau(\mu_\sigma(\frac1p))$ according to \cite[Corollary 2.22]{hadi}
\end{itemize}
Following the formalism of example \ref{diagram} we define $\hat\CAS_{A,J}(\G,\{\mu_\sigma\}_{\sigma\in\Sigma})$ to be $[\G^\Sigma(A)/_{\hat\Phi_A}\Gamma]$. Let us write 
$\hat\delta_\tau:A\rightarrow W(A)$ for Cartier's diagonal, which is the unique ring homomorphism satisfying $w_n\circ\hat\delta_\tau=\tau^n$. This gadget induces a functor
$$\hat\CAS_{A,J}(\G,\{\mu_\sigma\}_{\sigma\in\Sigma})\stackrel{\hat\delta_\tau}{\rightarrow}\BI_{A/J}(\G,\{\mu_\sigma\}_{\sigma\in\Sigma}),$$
and we let
$$\hat\CAS_{A,J}^{nil}(\G,\{\mu_\sigma\}_{\sigma\in\Sigma})\subset\hat\CAS_{A,J}(\G,\{\mu_\sigma\}_{\sigma\in\Sigma})$$
denote the full subcategory which is obtained as the inverse image of $\BI_{A/J}^{nil}(\G,\{\mu_\sigma\}_{\sigma\in\Sigma})$ 
via the aforementioned functor. In view of \cite{hadi} and the dictionary of remark \ref{merke} we have the following:

\begin{lem}
\label{winI}
If $(A,J,\tau)$ is as above, then Cartier's diagonal induces an equivalence:
$$\hat\CAS_{A,J}^{nil}(\G,\{\mu_\sigma\}_{\sigma\in\Sigma})\stackrel{\cong}{\rightarrow}\BI_{A/J}^{nil}(\G,\{\mu_\sigma\}_{\sigma\in\Sigma})$$
\end{lem}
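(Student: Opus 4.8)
The plan is to reduce the statement to the known correspondence of \cite{hadi} between the banal-window category attached to a diagram of the shape $[\G^\Sigma(A)/_{\hat\Phi_A}\Gamma]$ and the category of adjoint-nilpotent banal displays, and to check that passing to the scalar-restricted group $\gG=\Res_{W(\f_{p^r})/\z_p}\G$ and unwinding our $(\G,\{\mu_\sigma\}_{\sigma\in\Sigma})$-language is exactly a relabelling of \emph{loc. cit.} First I would observe that it suffices to produce a quasi-inverse to $\hat\delta_\tau$, and that by the very definition (via example \ref{diagram}) both sides are described by explicit groupoids coming from monoid diagrams: the source is $[\G^\Sigma(A)/_{\hat\Phi_A}\Gamma]$ with $\Gamma=\prod_{\sigma\in\Sigma}\Gamma_{\mu_\sigma^{-1}}^{A,J}$, and the target is $\BI_{A/J}^{nil}(\G,\{\mu_\sigma\}_{\sigma\in\Sigma})=[{^W\G_{W(\f_{p^f})}^\Sigma}(A/J)/_{\Phi_{A/J}}\I^{\mu_\Sigma}(A/J)]^{nil}$. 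So one has to exhibit (a) a map of underlying object sets, (b) a compatible map on automorphism groups $\Gamma\to\I^{\mu_\Sigma}(A/J)$, and (c) show these are mutually inverse up to the given functor $\hat\delta_\tau$.

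The key steps, in order, are: (1) recall that Cartier's diagonal $\hat\delta_\tau:A\to W(A)$ satisfies $w_n\circ\hat\delta_\tau=\tau^n$ and is a pd-morphism with respect to $J$ and the kernel filtration on $W(A)$, hence carries $\Gamma_{\mu_\sigma^{-1}}^{A,J}$ into (the $A/J$-points of) $\I_0^{\mu_\sigma}$ after composing with $w_0$; this is the content behind the bulleted facts ``$\tau(J)\subset pA$'' and \cite[Corollary 2.22]{hadi} quoted just before the lemma, and it is what makes $\hat\Phi_A$ intertwine with $\Phi_{A/J}$ under $\hat\delta_\tau$. (2) Translate the adjoint-nilpotence condition: a window lies in $\hat\CAS_{A,J}^{nil}$ precisely when its image lies in $\BI_{A/J}^{nil}$, so on the nilpotent loci the functor $\hat\delta_\tau$ is by construction ``full on objects'' in the sense that every nilpotent banal display is hit. (3) Invoke \cite{hadi}: for $\gG$-displays of minuscule type $\mu_\Sigma$ (which is our situation since each $\mu_\sigma$ is minuscule, i.e. we have a $\Phi$-datum and can appeal to remark \ref{merke}) the window-to-display functor of \cite{hadi} is an equivalence on the adjoint-nilpotent parts for every torsionfree, $p$-adically complete frame $(A,J,\tau)$; our $\hat\delta_\tau$ is literally that functor read through the dictionary $\B(\G,\{\mu_\sigma\})\leftrightarrow(\gG,\mu_\Sigma)$-displays. (4) Conclude that $\hat\delta_\tau$ is fully faithful (the automorphism bijection $\Gamma\xrightarrow{\sim}\I^{\mu_\Sigma}(A/J)$ compatible with the twisted conjugation actions) and essentially surjective, hence an equivalence. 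A small amount of care is needed to handle the product over $\sigma\in\Sigma$ and the cyclic permutation $\varpi_\Sigma$ appearing in \eqref{cycleII}--\eqref{cycleIII}, but this is bookkeeping: the permutation is the same on both sides.

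The main obstacle I expect is step (1)/(3): verifying that our groupoid $\hat\CAS_{A,J}(\G,\{\mu_\sigma\}_{\sigma\in\Sigma})$, defined through the somewhat ad hoc maps \eqref{cycleIII} and the $\Gamma_{\mu_\sigma^{-1}}^{A,J}$ of \eqref{realI}, coincides \emph{on the nose} (not just up to equivalence) with the window category to which Hadi-Bhattacharya--Zink attach displays, so that the equivalence can simply be quoted rather than reproved. Concretely, one must match the factor-wise normalisation $g\mapsto\tau^{r_\Sigma(\sigma)}(\mu_\sigma(\tfrac1p)g\mu_\sigma(p))$ and the decomposition of $\Gamma$ against the $U$-group datum used in \cite{hadi}, and check that Cartier's diagonal sends the frame $(A,J,\tau)$ to the Witt frame $(W(A),I_A,F)$ in a way that is compatible with all the $^{F^{-\sigma}}$-twists — i.e. that the diagram relating $\hat\delta_\tau$, $\hat\Phi_A$ and $\Phi_{A/J}$ genuinely commutes. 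Once that identification is in place, fully faithfulness and essential surjectivity on the nilpotent loci are exactly \cite{hadi} and the lemma follows; if the identification required more than formal manipulation, the fallback would be to reprove the equivalence directly by the usual obstruction-theoretic induction along the pd-filtration of $J$, using that $\hat\delta_\tau$ is compatible with base change and that both sides are, in the notation of the appendix, controlled by the same tangent complex.
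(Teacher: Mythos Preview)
Your proposal is correct and follows exactly the paper's approach: the paper gives no separate proof at all, merely prefacing the lemma with ``In view of \cite{hadi} and the dictionary of remark \ref{merke} we have the following,'' i.e.\ it treats the statement as an immediate consequence of the window--display equivalence of \cite{hadi} read through the identification of $(\G,\{\mu_\sigma\}_{\sigma\in\Sigma})$-displays with $(\gG,\mu_\Sigma)$-displays. Your write-up simply unpacks that citation in more detail than the paper bothers to.
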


\section{Connections}
\label{GaussManin}

Consider the field $K(k):=W(k)[\frac1p]$, where $k$ is a perfect field of characteristic $p$. We have to recall and apply some of the key-concepts of \cite{rapoport}: On the set of 
$K(k)$-valued points of a connected reductive algebraic $\q_p$-group $G$ one defines the important equivalence relation of $F$-conjugacy by requiring that $b'\sim b$ if and only if 
$b'=g^{-1}b{^Fg}$, for some $g\in G(K(k))$, we write $B_k(G):=G(K(k))/\sim$ for the set of $F$-conjugacy classes, and $B(G):=B_\fc(G)$. If $k$ is algebraically closed we have $B_k(G)=B(G)$ by 
\cite[Lemma 1.3]{rapoport}, and the same independence result is valid for the Newton points that we introduce next: Write $\bD$ for the pro-algebraic torus whose character group is $\q$ and put
$$\N(G):=(G(K(k))\backslash\Hom(\bD_{K(k)},G_{K(k)}))^{<F>},$$
where the left $G(K(k))$-action is defined by composition with interior automorphisms, and where the action of the infinite cyclic group 
$<F>$ is defined by $\nu\mapsto{{^F}\nu}$. Every element $b\in G(K(k))$ gives rise to an interesting representable $\Spec\q_p$-group
\begin{equation}
\label{thejack}
\Ob_{\alg_{\Spec\q_p}}\ni R\mapsto J_b(R)=\{g\in G(R\otimes_{\q_p}K(k))|(\id_R\otimes F)g=b^{-1}gb\},
\end{equation}
and to the slope homomorphism $\nu_b:\bD_{K(k)}\rightarrow G_{K(k)}$, for  which we refer the reader to loc.cit. Its formation is canonical in the sense that 
$\Int^G(g/K(k))^{-1}\circ\nu_b=\nu_{g^{-1}b{^Fg}}$, and the centralizer of $\nu_b$ is equal to $J_{b,K(k)}$. Consequently, we may introduce the Newton-map:
$$\nbar:B(G)\rightarrow\N(G);\barb\mapsto\nbar(\barb):=\overline{\nu_b}$$
Here notice that the fractionary cocharacters ${^F}\nu_b$ and $\nu_b$ are lying in the same conjugacy class. Let $\Omega$ be the Weyl group of a Levi section $T$ of a $\q_p^{ac}$-rational 
Borel group $B\subset G_{\q_p^{ac}}$. The group $\Omega\rtimes\Gal(\q_p^{ac}/\q_p)$ acts on the associated lattice of cocharacters $X_*(T)$ from the left, and there are natural inclusions:
$$\N(G)\subset(\Omega\backslash X_*(T)_\q)^{\Gal(\q_p^{ac}/\q_p)}\subset\Omega\backslash X_*(T)_\q$$
Moreover, the set $\Omega\backslash X_*(T)_\r$ is partially ordered in a very natural way, as one puts $\Omega x\prec\Omega x'$ if and only if $x$ lies in the convex hull of the 
$\Omega$-orbit of $x'$, cf. \cite[Lemma 2.2(i)]{rapoport}. The same notation is used for the partial orders on $\N(G)$ and $B(G)$ which are immediately inherited via the Newton map.

\subsection{$\mu$-Ordinarity}
\label{twisted}
From now on we suppose that $(\gG,\mu)$ is a $W(\f_{p^f})$-rational display datum with $G=\gG_{\q_p}$. Let $\P$ be a $(\gG,\mu)$-display over an extension $k\supset\f_{p^f}$, so 
that the banal display $\P_{k^{ac}}$ allows a representative $U\in\gG(W(k^{ac}))$, where $k^{ac}$ denotes an algebraic closure of $k$. Let us say that $\P$ is $\mu$-ordinary if and 
only if $\nu_{U{^F\mu(\frac1p)}}$ lies in the conjugacy class $\mub$ of the average  of $\mu$ over the group $\Gal(\q_p^{ac}/\q_p)$, the action of which factors through $\Gal(\fc/\f_p)$. 
Notice that this is equivalent to $\mub\prec\nu_{U{^F\mu(\frac1p)}}$, since Mazur's inequality (cf. \cite[Theorem 4.2(ii)]{rapoport}) implies that $\nu_{U{^F\mu(\frac1p)}}\prec\mub$ 
holds under no assumption on $U\in\gG(W(k^{ac}))$. For the purposes of this paper it is necessary to translate the outcome of \cite{wedhorn} to the fpqc-stack $\B(\gG,\mu)$:

\begin{prop}
\label{Hodgepoint}
For every $(\gG,\mu)$-display $\P$ over some field extension $k\supset\f_{p^f}$ there exists a finite separable field extension $l\supset k$ 
together with a $(\gG,\mu)$-display over $l[[t]]$ whose special fiber agrees with $\P_l$ while its geometric generic fiber is $\mu$-ordinary.
\end{prop}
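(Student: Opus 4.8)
The plan is to reduce the statement to a fact about a single, well-chosen deformation and then to invoke \cite{wedhorn}. First I would work over $k^{ac}$, so that $\P_{k^{ac}}$ is represented by an element $U \in \gG(W(k^{ac}))$, and recall that the $\mu$-ordinary locus in the adjoint-nilpotent deformation space is the \emph{generic} Newton stratum: by Mazur's inequality $\nu_{U\,{^F\mu(\frac1p)}} \prec \mub$ always holds, and by the main result of \cite{wedhorn} the stratum where equality holds (i.e. the $\mu$-ordinary stratum) is open and dense in the (adjoint-nilpotent) deformation space of any $(\gG,\mu)$-display. So it suffices to produce one $(\gG,\mu)$-display over $l[[t]]$, for $l/k$ finite separable, whose closed fiber is $\P_l$ and whose generic fiber hits that open dense locus at a geometric point.

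The second step is the construction of such a one-parameter family. Using the banal description $\BI_R(\G,\{\mu_\sigma\}) = [{^W\G^\Sigma}(R) /_{\Phi_R} \I^{\mu_\Sigma}(R)]$ and lemma \ref{galoisII} to pass between $(\gG,\mu)$ and the $\Phi$-datum language, I would take the constant family obtained by base change of $\P$ along $k \hookrightarrow k[[t]]$ and then \emph{twist} it by a well-chosen $t$-dependent element of $\I^{\mu_\Sigma}(k[[t]])$ (or, equivalently, modify the representative $U$ by a unit-parameter in the unipotent radical directions), so that at $t=0$ one recovers $\P$ but at the generic point the display becomes $\mu$-ordinary. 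Concretely, one can first pass to $k^{ac}$, choose $g \in \gG(W(k^{ac}))$ conjugating the Newton point of some banal display to the $\mu$-ordinary one, and interpolate between $U$ and $g^{-1}U\,{^F g}$ along a line in the deformation parameters; the adjoint-nilpotency (automatic here if $\Sigma \neq \z/r\z$, cf. remark \ref{merke}, and otherwise imposed) guarantees this line lies in the locus covered by the versal deformation, and genericity of the $\mu$-ordinary stratum forces the generic fiber into it. A descent argument then replaces $k^{ac}$ by a finite separable $l$: the $\mu$-ordinary condition is geometric and the family can be defined over a finitely generated, hence finite separable (after enlarging), subextension.

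The third step is bookkeeping: verifying that the twisted datum genuinely defines a $1$-morphism $\Spec l[[t]] \to \B(\gG,\mu)$ (equivalently a banal display over $l[[t]]$, since $l[[t]]$ is local), that its restriction along $t \mapsto 0$ is $\P_l$, and that its geometric generic fiber — a banal $(\gG,\mu)$-display over an algebraically closed field containing $l((t))$ — has Newton point equal to $\mub$. The last point is where one cites \cite{wedhorn} in the form: the Newton stratification on the equicharacteristic universal deformation of a $(\gG,\mu)$-display has a unique open dense stratum, namely the $\mu$-ordinary one; so as long as the constructed curve is not entirely contained in a smaller stratum (which the explicit twist arranges), its generic point is $\mu$-ordinary.

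\textbf{Main obstacle.} The genuine difficulty is the \emph{translation} asserted in the sentence preceding the proposition: \cite{wedhorn} is phrased for $p$-divisible groups (or $F$-crystals) with additional structure, not for the fpqc-stack $\B(\gG,\mu)$ of displays, so the real work is to match the Newton stratification and the $\mu$-ordinary locus on $\B(\gG,\mu)$ with Wedhorn's, and to know that the universal equicharacteristic deformation in the display world has the same generic Newton polygon. Given the display-theoretic dictionary already set up in subsection \ref{win} (lemma \ref{winI}) and the computation of generic Newton polygons promised in subsection \ref{twisted}, this should be available, but it is the substantive input; the interpolation/descent argument itself is comparatively routine.
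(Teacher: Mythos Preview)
Your overall architecture (produce a one-parameter family specializing to $\P$ and with $\mu$-ordinary generic fiber) is right, but two things go wrong in the execution, and the paper's proof takes a rather different route that avoids your ``main obstacle'' entirely.

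First, your explicit interpolation target in Step~2 is confused. The element $g^{-1}U\,{^F g}$ is $\sigma$-conjugate to $U$, so it represents the \emph{same} $F$-isocrystal class and has the \emph{same} Newton point as $U$; interpolating from $U$ to a $\sigma$-conjugate of $U$ cannot produce anything $\mu$-ordinary unless $U$ already was. What you actually need is a genuinely different display $\P'$ that \emph{is} $\mu$-ordinary, and then to connect $\P$ to $\P'$ by a curve.

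Second, and more importantly, you treat the translation of \cite{wedhorn} to $\B(\gG,\mu)$ as the substantive input. The paper does the opposite: it gives a self-contained argument that \emph{is} the display-theoretic analog of Wedhorn's result, and does not cite Wedhorn in the proof at all. The three steps are:
\begin{itemize}
\item[\textbf{(1)}] Construct an explicit $\mu$-ordinary $\f_{p^f}$-point $\P'$ of $\B(\gG,\mu)$ by pushing forward any $\f_{p^f}$-point of $\B(\gT,\tilde\mu^{-1})$ along $\B(\gT,\tilde\mu^{-1})\to\B(\gG,\tilde\mu^{-1})\to\B(\gG,\mu)$, where $\gT\subset\gG$ is a maximal torus and $\tilde\mu$ is a dominant conjugate of $\mu^{-1}$.
\item[\textbf{(2)}] After a finite separable extension $l/k$ over which $\gG$ splits, use the big cell of the Bruhat decomposition to write representatives $U=u(x_1,\dots)$ and $U'=u(x'_1,\dots)$ of $\P$ and $\P'$, and interpolate \emph{linearly in the Witt coordinates}: set $z_i=x_i+[t](x'_i-x_i)$. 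This defines a banal display over an open $U\subset\Spec l[t]$ containing $0$ and $1$, specializing to $\P_l$ and $\P'_l$.
\item[\textbf{(3)}] Apply Grothendieck's specialization theorem for Newton polygons (\cite[Theorem 3.6(ii)]{rapoport}): since the fiber at $t=1$ is $\mu$-ordinary and $\mub$ is maximal under $\prec$ (Mazur), the generic fiber is $\mu$-ordinary too.
\end{itemize}
So the key lemma is Grothendieck specialization, not Wedhorn density, and the key construction is the torus point $\P'$ plus the explicit big-cell interpolation --- neither of which appears in your sketch. Your approach via ``universal deformation $+$ density of the $\mu$-ordinary stratum'' would work if that density were already known for $\B(\gG,\mu)$, but establishing it is precisely what this proposition is for.
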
 
\begin{proof}
It does no harm to assume that $T$ is the generic fiber of the normalizer $\gT\subset\gG$ of some maximal 
$\z_p$-rational split torus of $\gG$ and that the Borel group $B$ is $\z_p$-rational. We prove the proposition in several steps:
\begin{sch}
There exists a $\mu$-ordinary $\f_{p^f}$-valued point $\P'$ of $\B(\gG,\mu)$: We may choose a cocharacter 
$\tilde\mu:\g_{m,K(\f_{p^f})}\rightarrow\gT_{K(\f_{p^f})}$ which is conjugated to $\mu^{-1}$ and dominant with respect to 
$B_{K(\f_{p^f})}$. By composing the $1$-morphisms of the subsubsections \ref{Geh} and \ref{Mueh} we obtain:
$$\B(\gT,\tilde\mu^{-1})\rightarrow\B(\gG,\tilde\mu^{-1})\rightarrow\B(\gG,\mu)$$
It is easy to see that this sends any $\f_{p^f}$-valued point of the left hand side to a $\mu$-ordinary object of the right hand side.
\end{sch}
\begin{sch}
For a suitable finite separable extension $l\supset k$, we claim that there exists some $(\gG,\mu)$-display $\tilde\P$ over some open neighbourhood $U\subset\Spec l[t]$ of 
$\{0,1\}$, that specializes to $\P_l$ and $\P'_l$ in the points $0$ and $1$: It is harmless to assume that $\gG$ splits over $W(l)$, so that the big cell of the Bruhat decomposition 
yields an open immersion $v:(\g_m^d\times\a^c)_{W(l)}\hookrightarrow\gG_{W(l)}$, where $d$ is the rank, and $d+c$ the relative dimension of $\gG_{W(l)}$. The map 
\begin{eqnarray*}
&u:(\g_m^{2d}\times\a^{2c})_{W(l)}\rightarrow\gG_{W(l)};\,(x_1,\dots,x_{2d+2c})\mapsto&\\
&v(x_1,\dots,x_d,x_{2d+1},\dots,x_{2d+c})v(x_{d+1},\dots,x_{2d},x_{2d+c+1},\dots,x_{2d+2c})&
\end{eqnarray*}
is smooth and surjective. Now choose representatives $U$ and $U'$ for $\P$ and $\P'$. Over a suitable finite separable field extension, these can be 
written as $U=u(x_1,\dots,x_{2d+2c})$, and $U'=u(x'_1,\dots,x'_{2d+2c})$, where $x_{2d+1},\dots,x_{2d+2c},x'_{2d+1},\dots,x'_{2d+2c}\in W(l)$ and 
$x_1,\dots,x_{2d},x'_1,\dots,x'_{2d}\in W(l)^\times$. Consider $z_i:=x_i+[t](x'_i-x_i)\in W(l[t])$ and observe that the $0$th ghost coordinate of $\prod_{i=1}^{2d}z_i$ 
is equal to the $\pmod p$ reduction $\hbar\in l[t]$ of the polynomial $\prod_{i=1}^{2d}(x_i+t(x'_i-x_i))$, so that $z_1,\dots,z_{2d}\in W(l[t]_{\hbar})^\times$ while 
$z_{2d+1},\dots,z_{2d+2c}\in W(l[t]_{\hbar})$. Let $\tilde\P$ be the banal  $(\gG,\mu)$-display over $\Spec l[t]_{\hbar}$ which is represented by the $\gG(W(l[t]_{\hbar}))$-element 
$u(z_1,\dots,z_{2d+2c})$.
\end{sch}
\begin{sch}
By Grothendieck's specialization theorem (cf. \cite[Theorem 3.6(ii)]{rapoport}) the generic fiber of $\tilde\P$ is $\mu$-ordinary as well.
\end{sch}
\end{proof}

\subsection{Automorphisms} 
\label{karre}
In this subsection we study a characteristic $0$ analog of the group \eqref{thejack}. Let $N$ be a discretely valued complete field 
containing $K(\f_{p^f})$, and let us assume that the residue field $l=\O_N/\gm_N$ is algebraically closed. By means of Witt's diagonal 
$\Delta:W(l)\rightarrow W(W(l))$ one can regard $W(\O_N)$ as an augmented $W(l)$-algebra, we have to start with the following observation:

\begin{lem}
\label{obstIII}
Consider some $\BI_l^{nil}(\gG,\mu)$-object represented by $O\in\gG(W(l))$. Let $U\in\gG(W(\O_N))$ be a lift of $O$ 
and consider the $F$-centralizers of the elements $O{^F\mu}(\frac1p)$ and $U{^F\mu}(\frac1p)$, i.e. the groups:
\begin{eqnarray*}
&&\{k\in\gG(W(l)[\frac1p])\mid O{^F\mu}(\frac1p)=k^{-1}O{^F(\mu(\frac1p)k)}\}=:\Aut^0(O)\\
&&\{\hat k\in\gG(W(\O_N)[\frac1p])\mid U{^F\mu}(\frac1p)=\hat k^{-1}U{^F(\mu(\frac1p)\hat k)}\}=:\hat\Aut^0(U)
\end{eqnarray*}
For every pair $(n_1,n_2)\in\n_0\times\z$, there exists an element $m$ of $\gG(N)$ such that the diagram 
$$\xymatrix{{\hat\Aut^0(U)} \ar[r]^{\mod\gm_N} \ar[d] &
{\Aut^0(O)} \ar[r] &
{\gG(K(l))} \ar[d]^{\int^{\gG(N)}(m)\circ F^{n_2}} \\
{\gG(W(\O_N)[\frac1p])} \ar[rr]^{w_{n_1}} && {\gG(N)}}$$
commutes.
\end{lem}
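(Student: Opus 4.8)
The plan is to reduce the statement to a concrete computation in Witt vectors by choosing good representatives, and then to interpret the comparison between the two $F$-centralizers via the augmentation $W(\O_N)\to W(l)$. First I would observe that $\hat\Aut^0(U)$ and $\Aut^0(O)$ are the groups of $R$-points (for $R=\q_p$, after the usual base change) of the representable $\q_p$-groups $J_{b'}$ and $J_b$ attached — in the sense of \eqref{thejack} — to the elements $b'=U{^F\mu}(\frac1p)\in\gG(W(\O_N)[\frac1p])$ and $b=O{^F\mu}(\frac1p)\in\gG(W(l)[\frac1p])$ respectively, where one uses that $W(l)[\frac1p]=K(l)$ and that $W(\O_N)[\frac1p]$ is a $K(l)$-algebra (even a complete discretely valued field, since $\O_N$ is) whose residue map $\mod\gm_N$ recovers $K(l)$. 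Under this identification the top horizontal composite in the diagram is simply the structure map $J_{b'}(W(\O_N)[\tfrac1p])\to J_{b'}(K(l))\to\gG(K(l))$ induced by $\mod\gm_N$ followed by the inclusion $J_b\hookrightarrow\gG$, using that $b'$ reduces to $b$ and hence the reduction of an element of $\hat\Aut^0(U)$ lands in $\Aut^0(O)$.

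Next I would trivialize the situation over $W(\O_N)[\frac1p]$. Since $O$ defines a \emph{banal nilpotent} display, Lemma \ref{winI} (Cartier's diagonal, applied with $A=W(l)$ — or, more to the point, the explicit structure of $\I^\mu$ and $\Phi^\mu$ from \cite[Proposition 3.1.2]{pappas}) gives, after possibly adjusting $U$ by an element of $\I^\mu(W(\O_N))$, a normal form in which $b'=U{^F\mu}(\tfrac1p)$ and $b=O{^F\mu}(\tfrac1p)$ become comparable by an explicit element. Concretely, because $W(\O_N)[\frac1p]$ is a complete discretely valued field with algebraically closed residue field and $F$ acts on it, the $F$-conjugacy class of $b'$ is controlled by its slope homomorphism $\nu_{b'}$, which coincides with $\nu_b$ (both equal the Newton point of the display). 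Hence there exists $c\in\gG(W(\O_N)[\frac1p])$ with ${^F c}\cdot b'=\nu_{b'}(p)\cdot c$ in a suitable normalization, and likewise over $K(l)$; transporting structure by $c$ identifies $\hat\Aut^0(U)$ with the centralizer of $\nu_{b'}$ intersected with the appropriate $F$-fixed subgroup, and similarly downstairs.

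The element $m$ is then produced as follows. Apply $w_{n_1}:W(\O_N)[\frac1p]\to N$ to the trivializing cocycle $c$; this gives a specific element $w_{n_1}(c)\in\gG(N)$. The ghost-component relation $w_n\circ\hat\delta_\tau=\tau^n$, together with the commutativity of the square relating $\Delta$ and $F$ recorded just before \eqref{FrobI}–\eqref{FrobII}, forces $w_{n_1}$ to intertwine $F$ on $W(\O_N)[\frac1p]$ with a \emph{twist} of $F^{n_2}$ on $N$ governed precisely by conjugation by an element depending on $w_{n_1}(c)$ and $\nu_{b'}(p)^{n_1}$; that element, multiplied appropriately by the image of the reduction cocycle over $K(l)$, is the desired $m\in\gG(N)$. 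One then checks the diagram commutes on a generic point by a direct substitution, and — since all maps in sight are morphisms of schemes and $\gG(N)\to\gG(\text{bigger field})$ is injective for $N$ a field — this suffices. I expect the main obstacle to be the bookkeeping of Frobenius twists: keeping straight which copy of $F$ (on $W(\O_N)$, on $\O_N$ via $\tau$, on $N$) appears where, so that the pair $(n_1,n_2)$ enters correctly and the conjugating element $m$ is genuinely defined over $N$ rather than over a larger ring. Everything else is the formal yoga of $F$-centralizers, slope homomorphisms, and the ghost coordinates $w_n$ already set up in Sections \ref{old} and \ref{GaussManin}.
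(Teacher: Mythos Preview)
Your proposal has a genuine gap at its core. You assert that $W(\O_N)[\frac1p]$ is ``a complete discretely valued field'' and then invoke the slope/$F$-conjugacy classification there to produce a trivializing element $c$. But $\O_N$ has characteristic $0$, so $W(\O_N)$ is \emph{not} the Witt vectors of a perfect field of characteristic $p$; it is a much larger ring (with many nonzero ghost components), and $W(\O_N)[\frac1p]$ is certainly not a field. The Dieudonn\'e--Manin type classification of $F$-conjugacy classes by Newton points, and the representability of the twisted centralizer $J_b$ as in \eqref{thejack}, are stated for $b\in G(K(k))$ with $k$ perfect of characteristic $p$; they do not transfer to $b'\in\gG(W(\O_N)[\frac1p])$. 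So the step ``there exists $c\in\gG(W(\O_N)[\frac1p])$ with ${}^Fc\cdot b'=\nu_{b'}(p)\cdot c$'' is unsupported, and with it the construction of $m$ collapses.

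The paper's argument avoids this entirely and is much more elementary. First it shows the statement is \emph{independent} of $(n_1,n_2)$: the defining equation for $\hat k$ gives $w_{n_1}(b')=w_{n_1}(\hat k)^{-1}w_{n_1}(b')\,w_{n_1+1}(\hat k)$, so changing $n_1$ amounts to conjugating by $w_{n_1}(b')$, and similarly for $n_2$ via $b$. This, together with $^{F^f}\mu=\mu$, lets one assume $\Delta(O)\equiv U\pmod{p\O_N}$, where $\Delta:W(l)\to W(W(l))\to W(\O_N)$ is Cartier's diagonal (the augmentation mentioned at the start of \S\ref{karre}). Now the deformation result \cite[Theorem 3.5.4]{pappas} produces a unique $h\in\gG(W(p\O_N))$ with $\Delta(b)=h^{-1}b'{}^F h$, and rigidity of quasi-isogenies (\cite[Proposition 4.2.3(ii)]{hadi}) upgrades the congruence $h\Delta(k)h^{-1}\equiv\hat k\pmod{\gm_N}$ to an equality. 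Since $w_n\circ\Delta=F^n$ on $W(l)$, this immediately yields that $w_{n}(\hat k)$ and ${}^{F^n}k$ are conjugate by $m=w_n(h)$. You should replace the slope-decomposition strategy with this diagonal-plus-rigidity argument; the ``bookkeeping of Frobenius twists'' you anticipated is handled cleanly by the first reduction step.
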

\begin{proof}
The content of the lemma does not depend on the particular pair $(n_1,n_2)$, due to $w_{n_1}(U{^F\mu}(\frac1p))=w_{n_1}(\hat k)^{-1}w_{n_1}(U{^F\mu}(\frac1p))w_{n_1+1}(\hat k)$ 
and a similar formula relating any $k\in\Aut^0(O)$ to its image under $F$. Furthermore if $U$ is replaced by $^{F^f}U$ the content of the corollary does not change either, because we have 
$^{F^f}\mu=\mu$. We conclude that we may assume $\Delta(O)\equiv U\mod p\O_N$, because their images under sufficiently large iterates of the Frobenius would satisfy this. According 
to \cite[Theorem 3.5.4]{pappas} there exists a unique $h\in\gG(W(p\O_N))$ with $\Delta(O{^F\mu}(\frac1p))=h^{-1}U{^F(\mu(\frac1p)h)}$. Consider some $\hat k\in\hat\Aut^0(U)$ 
with $\mod\gm_N$-reduction $k\in\Aut^0(O)$. According to the rigidity of quasi-isogenies we obtain $h\Delta(k)h^{-1}=\hat k$ from $h\Delta(k)h^{-1}\equiv\hat k\mod\gm_N$ 
(cf. \cite[Proposition 4.2.3(ii)]{hadi}). This implies that $^{F^n}k$ and $w_n(\hat k)$ are conjugated as homomorphisms on $\hat\Aut^0(U)$ and the lemma is proven.
\end{proof}

\begin{lem} 
\label{obstX}
Consider some $\BI_l^{nil}(\gG,\mu)$-object represented by $O\in\gG(W(l))$. Let $U\in\gG(W(\O_N))$ be a lift of $O$ and consider the group:
\begin{eqnarray*}
&&\Aut^0(U):=\hat\Aut^0(U)\cap U_{\mu^{-1}}^0(W(\O_N)[\frac1p])=\\
&&\{\hat k\in U_{\mu^{-1}}^0(W(\O_N)[\frac1p])\mid U{^F\mu}(\frac1p)=\hat k^{-1}U{^F(\mu(\frac1p)\hat k)}\}
\end{eqnarray*}
Then there exist
\begin{itemize}
\item
a $\q_p$-algebraic group $\hat\Gamma_U$ in which $\hat\Gamma_U(\q_p)$ is Zariski-dense.
\item
a $N$-rational inclusion $j_U:\hat\Gamma_{U,N}\hookrightarrow U_{\mu^{-1},N}^0$
\end{itemize}
such that $j_U$ maps $\hat\Gamma_U(\q_p)$ onto $w_0(\Aut^0(U))$. Moreover $j_U^{-1}(w_0(\Aut(U))$ is a compact 
open subgroup of $\hat\Gamma_U(\q_p)$, and neither $(\hat\Gamma_U,j_U)$ nor $\Aut(U)$ change upon enlarging $N$.
\end{lem}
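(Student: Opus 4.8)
Throughout write $b:=O{^F\mu}(\frac1p)$, so that $\Aut^0(O)=J_b(\q_p)$ in the notation of \eqref{thejack}; the plan is to realise $\Aut^0(U)$ inside $J_b$. \emph{Step 1 (transporting Lemma \ref{obstIII}).} After a harmless Frobenius twist — as in the proof of Lemma \ref{obstIII} one may replace $U$ by ${^{F^m}U}$ for a suitable multiple $m$ of $f$, which, since ${^{F^m}\mu}=\mu$, only conjugates the whole situation by an element of $\gG(N)$ — we may assume $\Delta(O)\equiv U\pmod{p\O_N}$ and pick the unique $h\in\gG(W(p\O_N))$ of \cite[Theorem 3.5.4]{pappas} with $\Delta(O{^F\mu}(\frac1p))=h^{-1}U{^F(\mu(\frac1p)h)}$. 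Using $\Delta\circ F=F\circ\Delta$ together with this relation, one checks directly that $k\mapsto h\Delta(k)h^{-1}$ is a group homomorphism $J_b(\q_p)\rightarrow\hat\Aut^0(U)$, inverse to reduction modulo $\gm_N$, whose injectivity is the rigidity of quasi-isogenies \cite[Proposition 4.2.3(ii)]{hadi}; since $w_0\circ\Delta=\id$ we get $w_0(h\Delta(k)h^{-1})=g_1kg_1^{-1}$ with $g_1:=w_0(h)\in\gG(\O_N)$. Thus $w_0$ identifies $\hat\Aut^0(U)$ with the image of the composite $J_b(\q_p)\hookrightarrow J_b(N)\xrightarrow{\kappa}\gG(N)$, where $\kappa:=\Int^{\gG_N}(g_1)|_{J_{b,N}}\colon J_{b,N}\hookrightarrow\gG_N$ is a closed $N$-immersion; and under this identification $w_0(\Aut^0(U))$ corresponds to $\Theta:=\{k\in J_b(\q_p)\mid h\Delta(k)h^{-1}\in U_{\mu^{-1}}^0(W(\O_N)[\frac1p])\}$.

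\emph{Step 2 (the group $\hat\Gamma_U$).} Since $U_{\mu^{-1}}^0$ is a parabolic, $\gG/U_{\mu^{-1}}^0$ is proper; combining this with the injectivity of the ghost map $W(\O_N)[\frac1p]\hookrightarrow\prod_{n\geq0}N$ shows that $h\Delta(k)h^{-1}\in U_{\mu^{-1}}^0(W(\O_N)[\frac1p])$ if and only if every ghost component $w_n(h\Delta(k)h^{-1})=a_nka_n^{-1}$ — where $a_n\in\gG(N)$ is built from $g_1$ and the ghost components of $U{^F\mu}(\frac1p)$ exactly as in Lemma \ref{obstIII} — lies in $U_{\mu^{-1}}^0(N)$. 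Hence $\Theta=J_b(\q_p)\cap\bigcap_{n\geq0}Q_n(N)$ with $Q_n:=J_{b,N}\cap a_n^{-1}U_{\mu^{-1},N}^0a_n$ a closed $N$-subgroup of $J_{b,N}$; as $J_{b,N}$ is Noetherian this intersection is finite, so $\Theta=J_b(\q_p)\cap Q(N)$ for one closed $N$-subgroup $Q$. Now take $\hat\Gamma_U\subseteq J_b$ to be the Zariski closure of $\Theta$: it is defined over $\q_p$ because $\Theta\subset J_b(\q_p)$, and — this is the point that requires input beyond soft arguments — because $\Theta$ is the full group of $\q_p$-points of the automorphism group scheme of the banal $(\gG,\mu)$-display $\P_U$ over $\O_N$ (an affine $\q_p$-group, by the representability results underlying \cite{pappas} and \cite{hadi}), no new rational point appears in the closure: $\hat\Gamma_U(\q_p)=\Theta$, and $\Theta$ is Zariski-dense in $\hat\Gamma_U$ by construction. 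Put $j_U:=\kappa|_{\hat\Gamma_{U,N}}$; since $\kappa(\Theta)=w_0(\Aut^0(U))\subset U_{\mu^{-1}}^0(\O_N)$, Zariski-density of $\hat\Gamma_U(\q_p)$ and closedness of $U_{\mu^{-1},N}^0$ in $\gG_N$ force $j_U$ to factor through $U_{\mu^{-1},N}^0$ and to carry $\hat\Gamma_U(\q_p)$ onto $w_0(\Aut^0(U))$.

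\emph{Step 3 (the remaining properties).} Writing $\Theta_0:=\{k\in\Theta\mid h\Delta(k)h^{-1}\in U_{\mu^{-1}}^0(W(\O_N))\}$ one has $j_U^{-1}(w_0(\Aut(U)))=\Theta_0$; since $U_{\mu^{-1}}^0(W(\O_N))$ is open in $U_{\mu^{-1}}^0(W(\O_N)[\frac1p])$ this subgroup is open, hence closed, in $\hat\Gamma_U(\q_p)$, and it is bounded — its $w_0$-image lies in $g_1\gG(\O_N)g_1^{-1}$, which meets $J_b(\q_p)$ in a bounded set — so it is compact. Finally, on enlarging $N$ to $N'$ the element $h$ is unchanged by the uniqueness in \cite[Theorem 3.5.4]{pappas}, and since $\O_N\rightarrow\O_{N'}$ is faithfully flat — a property inherited by $W(\O_N)\rightarrow W(\O_{N'})$ and by inverting $p$ — membership of $h\Delta(k)h^{-1}$ in the closed subscheme $U_{\mu^{-1}}^0$ over $W(\O_N)[\frac1p]$, respectively in $U_{\mu^{-1}}^0(W(\O_N))$, may be tested after base change to $N'$; hence $\Theta$, $\Theta_0$, $\hat\Gamma_U$ and $j_U$ are unchanged, and so is $\Aut(U)\cong\Theta_0$.

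I expect the main obstacle to be the claim in Step 2 that $\Theta$ is the \emph{full} group of $\q_p$-points of an algebraic group, i.e.\ that no new rational points are gained in passing to the Zariski closure; this is where the representability of the automorphism functor of $\P_U$ supplied by the crystalline theory of $(\gG,\mu)$-displays must be invoked. The only other non-formal point is the verification that $k\mapsto h\Delta(k)h^{-1}$ maps onto $\hat\Aut^0(U)$ (so that reduction modulo $\gm_N$ is surjective onto $J_b(\q_p)$, not merely injective), which is a direct but slightly delicate calculation; everything after that follows formally from Lemma \ref{obstIII} and the properness of $\gG/U_{\mu^{-1}}^0$.
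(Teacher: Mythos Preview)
Your argument is essentially correct, but the place you flag as the ``main obstacle'' is both misdiagnosed and given a faulty justification. You claim that $\hat\Gamma_U(\q_p)=\Theta$ because $\Theta$ is already the $\q_p$-point set of ``the automorphism group scheme of the banal $(\gG,\mu)$-display $\P_U$ over $\O_N$ (an affine $\q_p$-group)''. The references \cite{pappas},\cite{hadi} give representability of that automorphism functor over $\O_N$, not over $\q_p$; they do not hand you a $\q_p$-algebraic group whose rational points are $\Theta$, so this step as written is circular. The good news is that the conclusion is much easier than you think: Zariski closure of a subset of $J_b(\q_p)$ commutes with the flat base change $\q_p\to N$, so $\hat\Gamma_{U,N}$ is the closure of $\Theta$ inside $J_{b,N}$ and therefore lies in your closed subgroup $Q$; hence $\hat\Gamma_U(\q_p)\subset J_b(\q_p)\cap Q(N)=\Theta$, and the reverse inclusion is trivial. (Incidentally, the properness of $\gG/U_{\mu^{-1}}^0$ plays no role---membership in a closed subscheme can always be tested after an injective ring extension.) Your Step~3 also works once one observes that, for $k\in\Theta$, $h\Delta(k)h^{-1}\in\gG(W(\O_N))$ if and only if $k\in\gG(W(l))$, so $\Theta_0=\Theta\cap\gG(W(l))$; this makes openness and compactness immediate without having to name a topology on $W(\O_N)[\frac1p]$.

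The paper proceeds differently in both places. It does not take the Zariski closure of $\Theta$; instead it \emph{defines} $\hat\Gamma_U$ as the largest $\q_p$-rational subgroup of $J_b$ whose base change to $N$ is contained in the single parabolic $U_{\tilde\mu^{-1}}^0=m^{-1}U_{\mu^{-1}}^0m$, so that $\hat\Gamma_U$ is a genuine $\q_p$-group by construction and no ghost-component intersection is needed; the identification with $w_0(\Aut^0(U))$ is then read off from Lemma~\ref{obstIII}. For the compact--openness the paper does not use any $p$-adic topology on Witt vectors but reformulates $\Aut(U)$ via the cartesian square with $\Gamba=\{g\in\gG(W(\O_N))\mid g\bmod p\in\I^\mu(\O_N/p\O_N)\}$ and invokes the deformation-theoretic Lemma~\ref{geeI} (lifting $p$-th powers of automorphisms across a square-zero thickening). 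Your route through all ghost components and the explicit identification $\Theta_0=\Theta\cap\gG(W(l))$ is more elementary once the misstep above is fixed; the paper's route avoids ghost coordinates entirely but pays for this with the extra lemma.
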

\begin{proof}
The condition on the Zariski density is imposed only to achieve the uniqueness of $\hat\Gamma_U$, and can be ignored for proving 
its existence. Lemma \ref{obstIII} and the commutative diagram
$$\begin{CD}
\Aut^0(U)@>>>\hat\Aut^0(U)@>{\mod\gm_N}>>\Aut^0(O)\\
@V{w_0}VV@V{w_0}VV@VVV\\
U_{\mu^{-1}}^0(N)@>>>{\gG(N)}@<{\int^{\gG(N)}(m)}<<{\gG(K(l))}
\end{CD}$$
confirm that $j_U$ can be taken to be the restriction of $\Int^\gG(m/N)$ to $\hat\Gamma_U$ being the largest $\q_p$-rational subgroup of $J_{O{^F\mu(\frac1p)}}$ 
(cf. \eqref{thejack}) of which the base change to $N$ is contained in the $N$-rational subgroup $U_{\tilde\mu^{-1}}^0$, where $\tilde\mu=\Int^\gG(m^{-1}/N)\circ\mu$. 
The openness of $w_0(\Aut(U))$ in the image of $j_U$ (i.e. $w_0(\Aut^0(U))$) is a little bit subtle: Let us write $\Gamba\subset\gG(W(\O_N))$ for the subgroup 
of elements of which the $\pmod p$-reduction is contained in $\I^\mu(\O_N/p\O_N)$. According to the deformation theory \cite[Theorem 3.5.4]{pappas}, the group 
of automorphisms of the $\pmod p$-reduction of $U$ agrees with $\hat\Aut^0(U)\cap\Gamba$, in fact the latter coincides with the automorphisms of $U$ when 
regarded as a $(\gG,\mu)$-triple for the pd-ideal $p\O_N\subset\O_N$ (in the sense of \cite[Definition 4.26]{hadi}). By the same token, we have a cartesian diagram:
$$\begin{CD}
\Aut(U)@>>>\hat\Aut^0(U)\cap\Gamba\\
@V{w_0}VV@V{w_0}VV\\
U_{\mu^{-1}}^0(\O_N)@>>>{\gG(\O_N)}
\end{CD}$$
From the definition of $j_U$ it follows, that it is enough to check the $p$-adic openness of the natural image of $\hat\Aut^0(U)\cap\Gamba$ in 
$\Aut(O)$, which however follows from (possibly several applications of) the auxiliary lemma \ref{geeI} below. The independence of $N$ is clear.
\end{proof}

\begin{lem}
\label{geeI}
Let $\P$ be a $(\gG,\mu)$-display over a $W(\f_{p^f})$-algebra $R$ containing ideals $J\supset I$ satisfying $JI=pI=0_R$. Then, the 
$p$th power of any element in the kernel of $\Aut(\P_{R/I})\rightarrow\Aut(q_0(\P_{R/J}))$ allows a lift to an automorphism of $\P$.
\end{lem}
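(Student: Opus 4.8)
The plan is to reduce the statement to a computation with the explicit parametrization of $\gG$-displays coming from Pappas' deformation theory \cite[Theorem 3.5.4]{pappas}, applied to the two pd-thickenings $R\to R/I$ and $R\to R/J$ that are forced on us by the hypotheses $JI=pI=0_R$. First I would unwind the definitions: an automorphism of $\P$ is (by the dictionary of remark \ref{merke} and example \ref{diagram}) an element of $\I^\mu$ over the appropriate Witt-vector ring that conjugates the representing element to itself via the twisted conjugation $\hat\Phi$, while an automorphism of $q_0(\P)$ is just an element of $\Ibar_0^\mu=U_{\mu^{-1}}^0$ modulo the relevant ideals; so the kernel of $\Aut(\P_{R/I})\to\Aut(q_0(\P_{R/J}))$ consists of those automorphisms that are congruent to the identity both along the $\I^\mu$-direction modulo $J$ and along the lowest-truncation direction. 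The key point is that $\P$ over $R$ itself is recovered from $\P_{R/I}$ (or from $\P_{R/J}$) together with finitely much deformation data, and that this deformation datum is controlled by the divided-power structure on $I\subset R$, which exists precisely because $pI=0_R$ makes $I$ a square-zero ideal admitting the trivial pd-structure.

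The central step is therefore the following lifting argument. Let $\hat k\in\Aut(\P_{R/I})$ lie in the prescribed kernel. I would first lift $\hat k$ arbitrarily to an element $\tilde k$ of the ambient group $\I^\mu(W(R))$ (or $\gG(W(R))$ in the banal picture), which is possible since the relevant group schemes are (formally) smooth --- recall from section \ref{bekanntVIII} that $\I^\mu$ is affine, flat and formally smooth. The obstruction to $\tilde k$ actually being an automorphism of $\P$ lives in a module built out of $\Lie\gG\otimes I$ with a twisted Frobenius-linear operator, exactly as in the computation of \cite[Theorem 3.5.4]{pappas}; this is where the hypothesis $pI=0_R$ does its work, collapsing the Frobenius contribution on $I$ and making the relevant operator (semi)linear-nilpotent. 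The condition that $\hat k$ die in $\Aut(q_0(\P_{R/J}))$ forces the leading term of the obstruction cocycle, the component of lowest $V$-adic valuation, to vanish after passing to $R/J$; combined with $JI=0_R$ this lets me solve the resulting (semi)linear equation for a correction term $\delta\in\Lie\gG\otimes I$, so that $\tilde k\cdot(1+\delta)$ --- or rather $\tilde k$ adjusted by $\exp$ of $\delta$ in the appropriate unipotent-type subgroup --- is a genuine automorphism of $\P$. Taking $p$th powers is what makes this clean: since $pI=0_R$, the difference between $\tilde k^{\,p}$ and a corrected lift of $\hat k^{\,p}$ is a sum of $p$ conjugates of a term in $I$-coefficients on which the obstruction vanishes term-by-term, so no genuine solvability issue remains and one gets the lift of $\hat k^{\,p}$ outright.

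The main obstacle I anticipate is bookkeeping rather than conceptual: one must track carefully how the two distinct ideals $I$ and $J$ interact with (i) the $V$-adic filtration on $W(R)$ from lemma \ref{val}, (ii) the truncation map $q\mapsto q_0$, and (iii) the divided powers, so that "kernel modulo $J$ on $q_0$" translates into exactly the vanishing one needs for the obstruction in $\Lie\gG\otimes I$. The relation $JI=pI=0_R$ is used in two different roles --- $pI=0$ to trivialize the Frobenius twist on $I$-coefficients and to get the trivial pd-structure, $JI=0$ to ensure the correction term computed over $R/J$ genuinely lifts back to $R$ --- and keeping these straight while invoking \cite[Theorem 3.5.4]{pappas} and \cite[Proposition 4.2.3(ii)]{hadi} in the correct form is the delicate part. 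Once that dictionary is set up, the actual verification that $\tilde k^{\,p}$ corrects to an automorphism of $\P$ is a short $p$-linear-algebra computation of the kind I would not grind through here.
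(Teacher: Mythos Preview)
Your outline has the right architecture --- lift $\hat k$ to some $\tilde k\in\I^\mu(R)$ by formal smoothness, write the obstruction to $\tilde k$ being an automorphism as $N:=\tilde k^{-1}U\Phi^\mu(\tilde k)U^{-1}\in\gG(W(I))$ in the banal picture, and then argue that the obstruction for $\tilde k^{\,p}$ dies --- but the mechanism you propose for killing it is not the one that actually works, and the roles you assign to the hypotheses are off. The paper does not invoke \cite[Theorem~3.5.4]{pappas} or any pd-structure on $I$; it is a short Witt-vector computation, and the obstruction lives in $W(I)\otimes\Lie\gG$, not in $I\otimes\Lie\gG$.

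The step you are missing is a commutation: one shows $[\tilde k,N]=1$. Zink's logarithmic ghost coordinates (applicable since $I^2\subset JI=0$ and $pI=0$) give $(W(J)+I(R))\cdot W(I)=0$. The hypothesis that $\hat k$ lies in the kernel of $\Aut(\P_{R/I})\to\Aut(q_0(\P_{R/J}))$ says precisely that $w_0(\tilde k)\equiv1\pmod J$, i.e.\ $\tilde k-1$ has entries in $W(J)+I(R)=w_0^{-1}(J)$; since $N-1$ has entries in $W(I)$, commutation follows. From $[\tilde k,N]=1$ one gets $\tilde k^{-p}U\Phi^\mu(\tilde k^{\,p})=N^pU$, and $N^p=1$ because $W(I)^2=0$ and $p\cdot W(I)=0$. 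Your phrase ``sum of $p$ conjugates'' is gesturing at the right object, but without the commutation those conjugates $\Ad(\tilde k^{-i})(N-1)$ are a priori distinct and you have given no reason for their sum to vanish; and your proposed use of $JI=0$ --- to lift a correction term $\delta$ solved for over $R/J$ --- is a red herring, since there is no semilinear equation to solve (indeed it need not be solvable for $\tilde k$ itself) and the hypothesis on $J$ enters only through the annihilation above.
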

\begin{proof}
Let $k$ be an automorphism of $\P_{R/I}$ such that $q_0(k)$ is neutral $\mod J$. Observe that the map $\Aut(q(\P))\rightarrow\Aut(q(\P_{R/I}))$ is surjective, so let 
$\tilde k$ be a preimage of $q(k)$ under this map. We claim that $\tilde k^p$ is contained in the subgroup $\Aut(\P)\hookrightarrow\Aut(q(\P))$. The question being local, 
we may assume that $\P$ is banal, and a choice of global section of $q(\P)$ gives rise to a representative $U\in\gG(W(R))$ of $\P$, moreover the element $\tilde k$ satisfies 
$\tilde k^{-1}U\Phi^\mu(\tilde k)=NU$ for some $N\in\gG(W(I))$. Observe that Zink's description of $W(I)$ in terms of logarithmic ghost coordinates yields the mutual annihilation 
of the ideals $W(J)+I(R)$ and $W(I)$. It follows that the elements $\tilde k$ and $N$ commute with each other, so that we may infer $\tilde k^{-p}U\Phi^\mu(\tilde k^p)=N^pU=U$.
\end{proof}

\subsection{Deformations of adjoint nilpotent $(\gG,\mu)$-displays}
\label{meckerei}

Let $X$ be a $W(\f_{p^f})$-scheme such that $p$ acts Zariski-locally nilpotent on $\O_X$ and fix an adjoint nilpotent $(\gG,\mu)$-display $\Q$ over some closed subscheme
$X_0\hookrightarrow X$ the square of whose ideal sheaf vanishes. By slight abuse of notation we write $\O_{X_0}$ (resp. $\O_X$) for the fpqc sheaf on $X$ which attains 
the value $R_0$ (resp. $R$) on some $R\in\Ob_{\alg_X}$ where $\Spec R_0\hookrightarrow\Spec R$ is the closed immersion arising from $X_0\hookrightarrow X$ 
by change of base. We also write $\N$ for the kernel of the natural surjection from $\O_X$ to $\O_{X_0}$, notice that $\N$ is a sheaf of $\O_{X_0}$-modules. 
By a lift of $\Q$ to $R\in\Ob_{\alg_X}$ we mean a pair $(\P,\delta)$ where $\P$ is a $(\gG,\mu)$-display over $R$, and $\delta$ is an isomorphism from $\Q_{R_0}$ to 
$\P_{R_0}$. Isomorphisms between lifts are expected to preserve the respective $\delta$'s. In particular no lift has any automorphisms other than the identity, thanks to 
\cite[Corollary 3.5.6]{pappas}. Let $\D_{\Q,X}(R)$ be the set of isomorphism classes of lifts over $R$. In the same vein one proves that $\D_{\Q,X}$ is a fpqc-sheaf on $X$. 
Let $D_{\Q,X}=\Gamma(X,\D_{\Q,X})$ denote the set of isomorphism classes of global lifts. We also record the following (cf. \cite[Theorem 3.5.11, Corollary 3.5.12]{pappas}):  
 
\begin{thm}
\label{obstII}
Let $\Q$ and $X_0\hookrightarrow X$ be as in the beginning of this subsection.
\begin{itemize}
\item[(i)]
$\D_{\Q,X}(R)$ is non-empty for every $R\in\alg_X$
\item[(ii)]
$\D_{\Q,X}$ possesses the structure of a locally trivial principal homogeneous space for the fpqc-sheaf $T_\Q\otimes_{\O_{X_0}}\N=\Homo_{\O_{X_0}}(\check T_{\Q},\N)$, 
in particular $D_{\Q,X}$ is either empty or it is a principal homogeneous space for the group $\Gamma(X,T_\Q\otimes_{\O_{X_0}}\N)=\Hom_{\O_{X_0}}(\check T_{\Q},\N)$.
\end{itemize}
\end{thm}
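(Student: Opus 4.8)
The plan is to reduce Theorem \ref{obstII} to the corresponding statements in \cite{pappas} (namely Theorem 3.5.11 and Corollary 3.5.12) by translating them into the $(\gG,\{\mu_\sigma\}_{\sigma\in\Sigma})$-language set up in subsection \ref{pivotalII}. Since $\Q$ is an \emph{adjoint nilpotent} $(\gG,\mu)$-display, remark \ref{merke} identifies it with an adjoint nilpotent $(\gG,\mu_\Sigma)$-display in the sense of \cite[Definition 3.4.2]{pappas}, and likewise for the lifts; the hypothesis that $p$ acts Zariski-locally nilpotently on $\O_X$ guarantees we stay inside $\B^{nil}$. Hence the sheaf $\D_{\Q,X}$ introduced here coincides, fpqc-locally on $X$, with the deformation functor studied in loc.\ cit.

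First I would verify the sheaf-theoretic claims that precede the theorem (these are asserted in the excerpt but it is worth being explicit in the proof). Working fpqc-locally, pick an affine $\Spec R\subset X$ over which $\Q$ becomes banal; then a lift of $\Q$ over an $R$-algebra $S$ is represented, after choosing a trivialization, by an element $U\in\gG(W(S))$ reducing to the chosen representative over $S_0$, modulo the equivalence by $\I^{\mu_\Sigma}(W(S))$-elements that are neutral over $S_0$. Since no lift has nontrivial automorphisms (by \cite[Corollary 3.5.6]{pappas}, invoked in the excerpt), the prestack of lifts is already a sheaf of sets, so $\D_{\Q,X}$ is an fpqc sheaf and $D_{\Q,X}=\Gamma(X,\D_{\Q,X})$; the value on an affine is the set of isomorphism classes of lifts.

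Next, for part (i): local non-emptiness of $\D_{\Q,X}(R)$ is exactly the existence part of \cite[Theorem 3.5.11]{pappas} applied over $\Spec R$, after the dictionary of remark \ref{merke}. One writes $R_0=R/\N(R)$ with $\N(R)^2=0$ (here $\N$ is the ideal sheaf of $X_0\hookrightarrow X$, whose square vanishes by hypothesis), chooses a $p$-adic, pd-filtered presentation of the pair $(W(R),I_{R_0})$ so that Zink's/Lau's deformation machinery applies, and lifts the display. For part (ii): the torsor structure is again the content of \cite[Theorem 3.5.11, Corollary 3.5.12]{pappas}, which produces a simply transitive action of $T_\Q\otimes_{\O_{X_0}}\N$ on the nonempty fibers $\D_{\Q,X}(R)$; here $T_\Q$ is the tangent sheaf of the display $\Q$ (a locally free $\O_{X_0}$-module, the ``$\ad$''-Lie-algebra part cut out by $\mu$) and $\check T_\Q$ its dual, so $T_\Q\otimes_{\O_{X_0}}\N=\Homo_{\O_{X_0}}(\check T_\Q,\N)$. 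Compatibility of this action with fpqc descent gives the ``locally trivial principal homogeneous space'' assertion, and taking global sections yields that $D_{\Q,X}$ is either empty or a $\Hom_{\O_{X_0}}(\check T_\Q,\N)$-torsor.

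The main obstacle is not any single hard estimate but making the translation \emph{honest}: one must check that ``adjoint nilpotent $(\gG,\mu)$-display'' in the sense of this paper (a $1$-morphism to $\B^{nil}(\gG,\{\mu_\sigma\})$, via remark \ref{merke}) matches \cite{pappas}'s notion closely enough that the cited deformation theorems apply verbatim, including the identification of their tangent space with the present $T_\Q$, and that the square-zero hypothesis on $\N$ lands one in the regime where the obstruction vanishes and the torsor is by the linear term alone. Once this bookkeeping is in place --- and since everything is fpqc-local on $X$, there is no global obstruction to assembling the local torsor structures --- the theorem follows formally.
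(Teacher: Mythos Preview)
Your proposal is essentially correct and matches the paper's approach: the theorem is simply recorded as a consequence of \cite[Theorem 3.5.11, Corollary 3.5.12]{pappas}, with no further argument given. One small point: in subsection \ref{meckerei} the pair $(\gG,\mu)$ is already a display datum in the sense of \cite{pappas} (see the opening of subsection \ref{twisted}), so the translation via remark \ref{merke} that you invoke is unnecessary here --- the cited results apply directly without any dictionary.
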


We wish to introduce another definition: Fix some $1$-morphism 
$$X\stackrel{\P}{\rightarrow}\B(\gG,\mu),$$
and let $X'$ be the spectrum of the $\O_X$-algebra $\O_X\oplus\Omega_{X/W(\f_{p^f})}^1$. The inclusion $\O_X\subset\O_X\oplus\Omega_{X/W(\f_{p^f})}^1$ 
(resp. the map $x\mapsto x+d_{X/W(\f_{p^f})}(x)$) gives rise to a projection $\peebar_1$ (resp. $\peebar_2$) from $X'$ to $X$: Consequently there exists 
a unique $k_\P\in\Hom_{\O_X}(\check T_\P,\Omega_{X/W(\f_{p^f})}^1)$ which measures the difference between $\P\times_{X,\peebar_2}X'$ and 
$\P\times_{X,\peebar_1}X'$ when regarded as global lifts in $D_{\P,X'}$. We will call $k_\P$ the Kodaira-Spencer map of $\P$.

\begin{lem}
\label{smoothIII}
Fix a positive integer $\nu\in\n$ and let $\P$ be an adjoint nilpotent $(\gG,\mu)$-display over some $W_\nu(\f_{p^f})$-scheme $X$.
\begin{itemize}
\item[(i)]
If the classifying morphism of $\P$ renders $X$ relatively formally smooth over $\B(\G,\mu)_{W_\nu(\f_{p^f})}$, then $X$ is a formally smooth $W_\nu(\f_{p^f})$-scheme.
\item[(ii)]
The classifying morphism of $\P$ renders $X$ relatively formally \'etale over $\B(\gG,\mu)_{W_\nu(\f_{p^f})}$, 
if and only if $k_\P$ is a bijection and $X$ is formally smooth over $W_\nu(\f_{p^f})$.
\end{itemize}
\end{lem}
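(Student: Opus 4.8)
The plan is to deduce Lemma \ref{smoothIII} from the deformation theory packaged in Theorem \ref{obstII}, together with the universal property of the Kodaira--Spencer map $k_\P$. The key is the standard reformulation of ``formally smooth'' (resp. ``formally \'etale'') relative to a stack: a morphism $X\to\B(\gG,\mu)_{W_\nu(\f_{p^f})}$ is formally smooth (resp. formally \'etale) precisely when, for every square-zero extension $R\twoheadrightarrow R_0$ of $X$-algebras with kernel $\N$, the map from lifts of the $X$-structure to lifts of the $\B(\gG,\mu)$-structure is surjective (resp. bijective). The lifts of the $\B(\gG,\mu)$-structure are classified by $\D_{\P_{R_0},X}(R)$, which by Theorem \ref{obstII}(i) is always non-empty and by (ii) is a torsor under $\Hom_{\O_{X_0}}(\check T_{\P},\N)$. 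The lifts of the $X$-structure over $W_\nu(\f_{p^f})$ (equivalently, the $W_\nu(\f_{p^f})$-algebra homomorphisms $\O_X\to R$ reducing to the given one mod $\N$) form a torsor under $\Hom_{\O_{X_0}}(\Omega^1_{X/W_\nu(\f_{p^f})}\otimes\O_{X_0},\N)=\Hom_{\O_{X_0}}(\Omega^1_{X/W_\nu(\f_{p^f})},\N)$, whenever that set is non-empty; and $X$ is formally smooth over $W_\nu(\f_{p^f})$ exactly when it is always non-empty.

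First I would make precise that the comparison map between these two torsor-structures is induced by the Kodaira--Spencer map. Concretely, given a square-zero thickening $R_0\hookrightarrow R$ with ideal $\N$ classified by $e\in\Hom_{\O_{X_0}}(\Omega^1_{X/W_\nu(\f_{p^f})},\N)$ (viewing the thickening, Zariski-locally, as pulled back from the universal one $X'=\Spec(\O_X\oplus\Omega^1_{X/W(\f_{p^f})})$ along a map determined by $e$), the two lifts $\P\times_{X,\peebar_1}X'$ and $\P\times_{X,\peebar_2}X'$ differ by $k_\P$ in $D_{\P,X'}$, so the induced class on the $e$-thickening is $e^*k_\P=e\circ k_\P\in\Hom_{\O_{X_0}}(\check T_\P,\N)$. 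Thus on torsor-difference-classes the comparison map is precisely post-composition with $k_\P\colon\check T_\P\to\Omega^1_{X/W(\f_{p^f})}$ (restricted mod $p^\nu$), i.e. the $\O_{X_0}$-linear map $\Hom(\Omega^1_{X/W_\nu},\N)\to\Hom(\check T_\P,\N)$.

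With this dictionary in place the two assertions are formal. For (i): if $X\to\B(\G,\mu)_{W_\nu(\f_{p^f})}$ is formally smooth, then for every square-zero thickening the map on lifts is surjective; since lifts of the $\B$-structure always exist (Theorem \ref{obstII}(i)) and the comparison is equivariant over the torsor map ``$\circ\, k_\P$'', surjectivity of the lift-map forces $\Hom(\Omega^1_{X/W_\nu},\N)$ to be non-empty for all such $\N$, which is exactly formal smoothness of $X$ over $W_\nu(\f_{p^f})$. (One should note that $\gG=\Res_{W(\f_{p^r})/\z_p}\G$, so $\B(\gG,\mu)=\B(\G,\mu)$ in the notation used, hence ``relatively formally smooth over $\B(\G,\mu)$'' and ``over $\B(\gG,\mu)$'' are the same condition; I would remark this explicitly.) For (ii): formally \'etale over $\B(\gG,\mu)_{W_\nu(\f_{p^f})}$ means the lift-map is a bijection for every square-zero thickening. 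Bijectivity decomposes, via the torsor structures and the equivariance just established, into: (a) both lift-sets are simultaneously non-empty (so $X$ is formally smooth over $W_\nu(\f_{p^f})$, using again that $\B$-lifts always exist), and (b) for non-empty thickenings the map of difference-torsors $\Hom(\Omega^1_{X/W_\nu},\N)\to\Hom(\check T_\P,\N)$ is a bijection for all $\O_{X_0}$-modules $\N$. Since these Hom-functors are represented by $\Omega^1_{X/W_\nu}$ and $\check T_\P$, Yoneda says (b) holds for all $\N$ iff $k_\P\colon\check T_\P\xrightarrow{\ \sim\ }\Omega^1_{X/W_\nu}$ is an isomorphism, equivalently $k_\P$ is a bijection. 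Combining gives the stated iff.

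The main obstacle, and the step deserving the most care, is establishing the compatibility in the second paragraph: that the comparison of the two infinitesimal-lifting torsors is \emph{literally} post-composition with $k_\P$ as defined via $X'=\Spec(\O_X\oplus\Omega^1_{X/W(\f_{p^f})})$ and the two projections $\peebar_1,\peebar_2$. This requires (i) reducing a general square-zero $X$-thickening, Zariski-locally, to a pullback of the universal square-zero extension $X\hookrightarrow X'$ along the map classified by its derivation $e$ — which is the standard fact that $\Omega^1$ co-represents derivations and $X'$ is the universal such thickening; (ii) checking that $\D_{\Q,-}$ transforms this pullback correctly, i.e. that forming lifts commutes with the relevant base change, which is exactly the fpqc-sheaf property of $\D_{\Q,X}$ recorded before Theorem \ref{obstII} together with functoriality of the torsor structure in $\N$ from Theorem \ref{obstII}(ii); and (iii) matching the $\O_{X_0}$-module/torsor conventions so that the sign/linearity is right. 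All three are routine but fiddly; once they are nailed down, (i) and (ii) follow mechanically from Theorem \ref{obstII} and Yoneda as above.
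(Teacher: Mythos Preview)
Your proposal is correct and follows essentially the same route as the paper: the paper also reduces (i) to the existence of $\B$-lifts furnished by Theorem \ref{obstII}(i), and for (ii) reduces to the affine case, parametrizes the $X$-lifts as $\alpha_t(x)=\alpha_0(x)+t\circ d_{A/W_\nu}x$ with $t\in\Hom_A(\Omega^1_{A/W_\nu},\ga)$, and computes directly that the induced map on $\B$-lifts is $t\mapsto t\circ k_\P$. Your torsor/Yoneda packaging and your factorization through the universal square-zero thickening $X'=\Spec(\O_X\oplus\Omega^1_X)$ are just a cleaner way of saying the same thing; the only cosmetic difference is that the paper first invokes affineness of the diagonal of $\B(\gG,\mu)$ (from \cite[Lemma 3.2.9b)]{pappas}) to justify checking formal smoothness via the infinitesimal lifting criterion, which you might want to cite explicitly.
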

\begin{proof}
It is known that the diagonal of $\B(\gG,\mu)$ is affine (cf. \cite[Lemma 3.2.9b)]{pappas}), so that the $1$-morphism $X\stackrel{\P}{\rightarrow}\B(\gG,\mu)_{W_\nu(\f_{p^f})}$ 
is formally smooth if and only if it satisfies the infinitesimal lifting criterion, so let us begin with the proof of (i) and thus consider a commutative diagram
$$\begin{CD}
X@<<<\Spec R/\ga\\
@VVV@VVV\\
\Spec W_\nu(\f_{p^f})@<<<\Spec R
\end{CD},$$   
where $\ga\subset R$ is an ideal of vanishing square. According to part (i) of theorem \ref{obstII} we may choose 
a $(\gG,\mu)$-display $\P'$ over $R$, such that $\P'_{R/\ga}\cong\P_{R/\ga}$. We obtain a $2$-commutative diagram
$$\begin{CD}
X@<<<\Spec R/\ga\\
@V{\P}VV@VVV\\
\B(\G,\mu)_{W_\nu(\f_{p^f})}@<{\P'}<<\Spec R
\end{CD},$$
and the formal smoothness of $\P$ yields the requested lift $\Spec R\rightarrow X$. In order to prove (ii) we may assume that $X$ is formally smooth. 
Observe that the property of $\P$ being formally \'etale is local for the Zariski topology of $X$, as is the bijectivity of $k_\P$. It follows that we may 
even assume $X$ to be the spectrum of some formally smooth $W_\nu(\f_{p^f})$-algebra $A$. So let us consider a $2$-commutative diagram:
$$\begin{CD}
\Spec A@<<<\Spec R/\ga\\
@V{\P}VV@VVV\\
\B(\gG,\mu)_{W_\nu(\f_{p^f})}@<{\P'}<<\Spec R
\end{CD}$$
Choose a lift $\alpha_0:A\rightarrow R$ of the given $A$-structure on $R/\ga$, and observe that all other lifts are of the form $\alpha_t(x)=\alpha_0(x)+t\circ d_{A/W_\nu(\f_{p^f})}x$, 
where $t$ runs through $\Hom_A(\Omega_{A/W_\nu(\f_{p^f})}^1,\ga)$. Pulling-back $\P$ along the homomorphism $\alpha_t$ gives rise to a $(\gG,\mu)$-display $\P_t$ 
over $R$, moreover each of these is a lift of $\P'_{R/\ga}$ and it is easy to see that the difference between $\P_t$ and $\P_0$ is given by the canonical map:
\begin{equation}
\label{smoothV}
\Hom_A(\Omega_{A/W_\nu(\f_{p^f})}^1,\ga)
\rightarrow\Hom_A(\check T_\P,\ga);t\mapsto t\circ k_\P
\end{equation}
We deduce that $\P$ renders $\Spec A$ formally \'etale over $\B(\gG,\mu)_{W_\nu(\f_{p^f})}$ if and only if \eqref{smoothV} 
is an isomorphism for every $0=\ga^2\subset\ga\subset R$ over $A$, i.e. if and only if $k_\P$ is an isomorphism.
\end{proof}

\begin{cor}
\label{smoothIV}
Let $X$ be a $\f_{p^f}$-scheme whose reduced induced subscheme $X_{red}$ is of finite type over $\f_{p^f}$ and let $\P$ be an adjoint nilpotent $(\gG,\mu)$-display over 
$X$ whose classifying morphism renders $X$ formally \'etale over $\B(\gG,\mu)_{\f_{p^f}}$. Then $X_{red}$ is smooth over $\f_{p^f}$ and $k_{\P_{X_{red}}}$ is a bijection.
\end{cor}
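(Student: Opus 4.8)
The plan is to read everything off part (ii) of Lemma~\ref{smoothIII}, supplemented only by two soft facts about formal smoothness over a field. Since $X$ is a $\f_{p^f}$-scheme it is in particular a $W_1(\f_{p^f})$-scheme, the display $\P$ is adjoint nilpotent, and by hypothesis its classifying morphism renders $X$ formally \'etale over $\B(\gG,\mu)_{\f_{p^f}}=\B(\gG,\mu)_{W_1(\f_{p^f})}$. Hence the ``only if'' direction of Lemma~\ref{smoothIII}(ii), applied with $\nu=1$, immediately gives that $k_\P$ is a bijection and that $X$ is a formally smooth $\f_{p^f}$-scheme. So no further deformation-theoretic work is needed; what remains is to upgrade the abstract ``formally smooth $\f_{p^f}$-scheme'' produced this way to an honest smooth scheme, and to identify $k_\P$ with $k_{\P_{X_{red}}}$.

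Next I would invoke the classical fact that a scheme formally smooth over a field is geometrically regular, in particular reduced; over the perfect field $\f_{p^f}$ this is especially transparent, since then $\O_X$ is Zariski-locally a filtered colimit of smooth $\f_{p^f}$-algebras, and such colimits are reduced. It is important that this is applied for arbitrary, not necessarily Noetherian, $\f_{p^f}$-algebras, because at this point we do not yet know that $X$ is Noetherian; but the statement holds in that generality. It follows that $X=X_{red}$, whence $\P_{X_{red}}=\P$ and $k_{\P_{X_{red}}}=k_\P$ is a bijection, which is the second assertion. Moreover $X=X_{red}$ is now of finite type, hence locally of finite presentation, over the Noetherian ring $\f_{p^f}$, and it is formally smooth over $\f_{p^f}$; by the characterisation ``smooth $=$ formally smooth $+$ locally of finite presentation'' we conclude that $X_{red}$ is smooth over $\f_{p^f}$.

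I do not expect a genuine obstacle: the substantive input is already encapsulated in Lemma~\ref{smoothIII}(ii), and the rest is formal. The only point that requires a little care is the order of operations in the middle paragraph, namely that one must apply the reducedness of formally smooth algebras over a field \emph{before} one is entitled to replace $X$ by its reduced subscheme or to use the finiteness hypothesis; this is legitimate precisely because that reducedness statement needs no finiteness assumption on the $\f_{p^f}$-algebra in question.
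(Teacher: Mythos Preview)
Your first step is fine: Lemma~\ref{smoothIII}(i)+(ii) do give that $X$ is formally smooth over $\f_{p^f}$ and that $k_\P$ is bijective. The gap is in the second step, where you assert that a formally smooth algebra over a perfect field is automatically reduced, on the grounds that it is ``Zariski-locally a filtered colimit of smooth $\f_{p^f}$-algebras.'' That filtered-colimit description is a N\'eron--Popescu desingularization statement, and it requires the algebra in question to be Noetherian; the standard equivalence ``formally smooth over a field $\Leftrightarrow$ geometrically regular'' likewise carries a Noetherian hypothesis. You explicitly acknowledge that $X$ is not yet known to be Noetherian, so you are invoking the implication precisely in the regime where your justification fails. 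Since the corollary is later applied to $_{K^p}\tilde M_{\bT,\gp}$, which the paper explicitly warns ``might be non-noetherian'' (see subsubsection~\ref{go}), this is not a cosmetic issue.

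The paper's own argument avoids the question of whether $X=X_{red}$ altogether. Instead it works locally at each closed point $\gm$: formal \'etaleness over $\B(\gG,\mu)_{\f_{p^f}}$ forces the proartinian completion $\hat A_\gm$ to prorepresent the deformation functor of $\P_{A/\gm}$, hence to be a power series ring over the finite residue field. Being reduced, this completion coincides with the $\gm$-adic completion of $(A_{red})_\gm$, which \emph{is} Noetherian by the finite-type hypothesis on $X_{red}$; regularity of these completions then gives smoothness of $X_{red}$ over the perfect field $\f_{p^f}$. A separate comparison of completed K\"ahler differentials is needed to carry the bijectivity of $k_\P$ over to $k_{\P_{X_{red}}}$ --- a step your shortcut $k_{\P_{X_{red}}}=k_\P$ via $X=X_{red}$ would have made unnecessary, but which the paper cannot dispense with.
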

\begin{proof}
Without loss of generality, we may assume that $X$ is affine, and we want to consider some maximal ideal $\gm$ of its coordinate ring $A:=\Gamma(X,\O_X)$. Notice that the 
proartinian completion $\hat A_\gm$ of the localization $A_\gm$ is isomorphic to an algebra of powerseries over the finite field $A/\gm$, since it represents the formal deformation 
functor of $\P_{A/\gm}$. We infer that $\hat A_\gm$ also agrees with the $\gm$-adic completion of $A_{\gm}/\sqrt{0_{A_\gm}}$, given that the former is already reduced (this means 
nothing but $\sqrt{0_{A_\gm}}\subset\bigcap_{\nu=1}^\infty\gm^\nu$). The smoothness of $X_{red}$ follows easily, as all of its completed local rings are regular while $\f_{p^f}$ 
is perfect. One also obtains an isomorphism between the $\gm$-adic completions of $\Omega_{A/\f_{p^f}}^1$ and $\Omega_{A_{red}/\f_{p^f}}^1$, observe the factorizations: 
$$\Omega_{A/\gm^{\nu+1}/\f_{p^f}}^1\rightarrow A/\gm^\nu\otimes_A\Omega_{A/\f_{p^f}}^1\rightarrow\Omega_{A/\gm^\nu/\f_{p^f}}^1$$
At last, the requested bijectivity of $k_{\P_{A_{red}}}$ may be deduced from $\Omega_{A_{red}/\f_{p^f}}^1$ being isomorphic to $A_{red}\otimes_A\Omega_{A/\f_{p^f}}^1$, 
which in turn can be deduced from considerations over completed local rings.
\end{proof}

\begin{cor}
\label{lift}
Let $\Parb$ be an adjoint nilpotent $(\gG,\mu)$-display over a smooth $\f_{p^f}$-variety $\Xbar$ of which the classifying morphism renders $\Xbar$ formally \'etale over $\B(\gG,\mu)_{\f_{p^f}}$. 
\begin{itemize}
\item[(i)]
For every $\nu\in\n$ there exists a $(\gG,\mu)$-display $\P$ over a smooth $W_\nu(\f_{p^f})$-scheme $\X$ together with isomorphisms 
\begin{eqnarray*}
&&\Xbar\cong\X\times_{W_\nu(\f_{p^f})}\f_{p^f}\\
&&\Parb\cong\P\times_{\X^{(\nu)}}\Xbar,
\end{eqnarray*}
moreover the quadruple consisting of $\X$, $\P$ together with the two isomorphisms above is unique up to a unique isomorphism. 
\item[(ii)]
In addition, suppose that $\Xbar$ is proper and that there exists a character $\chi:\I_0^\mu\rightarrow\g_{m,W(\f_{p^f})}$ such that 
$\omega_{q_0(\Parb)}(\chi)$ is an ample line bundle on $\Xbar$. Then $\Xbar$ allows a smooth and proper lift $\X\rightarrow\Spec W(\f_{p^f})$ such 
that $\Parb$ allows lifts to $(\gG,\mu)$-displays $\{\P^{(\nu)}\}_{\nu\in\n}$ over $\X\times_{W(\f_{p^f})}W_\nu(\f_{p^f})=\X^{(\nu)}$ for each $\nu\in\n$.
\end{itemize}
\end{cor}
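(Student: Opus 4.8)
The plan is to prove (i) by induction on $\nu$ along the square-zero thickenings $W_\nu(\f_{p^f})\hookrightarrow W_{\nu+1}(\f_{p^f})$, and then to obtain (ii) by algebraizing the resulting formal lift. I repeatedly use the following observation: if $\P$ is a $(\gG,\mu)$-display over a smooth $W_j(\f_{p^f})$-scheme $Y$ whose Kodaira--Spencer map reduces modulo $p$ to a bijection, then $k_\P$ itself is a bijection (Nakayama, since $\check T_\P$ and $\Omega_{Y/W_j(\f_{p^f})}^1$ are locally free of the same rank), so by lemma \ref{smoothIII}(ii) the classifying morphism of $\P$ is formally \'etale over $\B(\gG,\mu)_{W_j(\f_{p^f})}$; the hypothesis that $k_{\Parb}$ be bijective is furnished by corollary \ref{smoothIV}. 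The base case $\nu=1$ of (i) is $\X=\Xbar$, $\P=\Parb$.

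For the inductive step, assume $(\X^{(\nu)},\P^{(\nu)})$ lifts $(\Xbar,\Parb)$ with $\X^{(\nu)}$ smooth over $W_\nu(\f_{p^f})$, and cover $\Xbar$ by finitely many affine opens $\Ubar_i$. Each $\X^{(\nu)}|_{\Ubar_i}$, being smooth affine, lifts to a smooth affine $W_{\nu+1}(\f_{p^f})$-scheme $\X_i$, over which theorem \ref{obstII}(i) produces a $(\gG,\mu)$-display $\P_i$ restricting to $\P^{(\nu)}$; by the observation above the classifying morphism $\X_i\to\B(\gG,\mu)_{W_{\nu+1}(\f_{p^f})}$ is formally \'etale. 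On an overlap $\Ubar_{ij}$ the morphisms $\X_i|_{\Ubar_{ij}}$ and $\X_j|_{\Ubar_{ij}}$ to $\B(\gG,\mu)_{W_{\nu+1}(\f_{p^f})}$ are formally \'etale and restrict over $W_\nu(\f_{p^f})$ to one and the same $\X^{(\nu)}|_{\Ubar_{ij}}$; a formally \'etale $1$-morphism to $\B(\gG,\mu)$ being both formally smooth and formally unramified (the diagonal of $\B(\gG,\mu)$ is affine, cf.\ the proof of lemma \ref{smoothIII}), it lifts uniquely up to unique isomorphism along a square-zero thickening of its source, so there is a unique isomorphism $\X_i|_{\Ubar_{ij}}\cong\X_j|_{\Ubar_{ij}}$ over $\B(\gG,\mu)_{W_{\nu+1}(\f_{p^f})}$ inducing the identity on $\X^{(\nu)}|_{\Ubar_{ij}}$. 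Uniqueness forces the cocycle condition, so the $\X_i$ glue to a smooth $W_{\nu+1}(\f_{p^f})$-scheme $\X^{(\nu+1)}$ carrying the glued $(\gG,\mu)$-display $\P^{(\nu+1)}$, which is adjoint nilpotent (remark \ref{merke}) and lifts $(\X^{(\nu)},\P^{(\nu)})$. Iterating the same unique-lifting property along $\Spec\f_{p^f}\hookrightarrow\Spec W_2(\f_{p^f})\hookrightarrow\dots\hookrightarrow\Spec W_\nu(\f_{p^f})$ yields the uniqueness statement in (i).

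For (ii), glue the $\X^{(\nu)}$ of (i) along their canonical identifications into a formal scheme $\hat\X$ over $\Spf W(\f_{p^f})$. Each $\X^{(\nu)}$ is proper over $W_\nu(\f_{p^f})$ — finite type, separatedness and universal closedness being insensitive to the nilpotent thickenings $\Spec\f_{p^f}\hookrightarrow\Spec W_\nu(\f_{p^f})$ and $\Xbar\hookrightarrow\X^{(\nu)}$, they may be checked on $\Xbar\to\Spec\f_{p^f}$ — so $\hat\X$ is a proper formal $W(\f_{p^f})$-scheme. Since $q_0(\P^{(\nu)})$ lifts $q_0(\Parb)$, the line bundles $\omega_{q_0(\P^{(\nu)})}(\chi)$ assemble into an invertible sheaf $\hat\L$ on $\hat\X$ whose restriction to $\Xbar$ is the ample bundle $\omega_{q_0(\Parb)}(\chi)$. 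By Grothendieck's algebraization theorem (EGA III) there is a proper $W(\f_{p^f})$-scheme $\X$, unique up to unique isomorphism, together with an invertible $\L$, such that the formal completion of $(\X,\L)$ along the special fibre is $(\hat\X,\hat\L)$; in particular $\X\times_{W(\f_{p^f})}W_\nu(\f_{p^f})\cong\X^{(\nu)}$, and $\P^{(\nu)}$ is thus a $(\gG,\mu)$-display over $\X^{(\nu)}$ lifting $\Parb$, as required. Finally $\X\to\Spec W(\f_{p^f})$ is flat (all its truncations $\X^{(\nu)}$ are flat over $W_\nu(\f_{p^f})$) with smooth special fibre $\Xbar$, so its smooth locus is open and contains $\Xbar$; the complementary closed set is proper over $W(\f_{p^f})$ and disjoint from the special fibre, hence has empty image in $\Spec W(\f_{p^f})$, so $\X$ is smooth over $W(\f_{p^f})$.

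I expect the algebraization step in (ii) to be the main obstacle: the formal lift $\hat\X$ built in (i) is a priori only a formal scheme, and a proper formal scheme over $\Spf W(\f_{p^f})$ need not be the completion of an algebraic one — this is exactly where the ampleness of $\omega_{q_0(\Parb)}(\chi)$ is needed. The subtle point in (i) is the gluing of the local lifts $\X_i$, which a priori carry no canonical identifications on overlaps; it is the display and its formal \'etaleness over $\B(\gG,\mu)$ (with the affine diagonal of that stack) that rigidify the situation and produce the canonical cocycle.
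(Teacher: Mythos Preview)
Your proof is correct and follows essentially the same approach as the paper: local existence on affines via theorem \ref{obstII}(i), gluing on overlaps using the formal \'etaleness over $\B(\gG,\mu)$ (whose affine diagonal makes the lifts unique up to unique isomorphism), and then Grothendieck's algebraization theorem for (ii) using the ample line bundle $\omega_{q_0(\Parb)}(\chi)$ lifted via the truncations $q_0(\P^{(\nu)})$. Your version is somewhat more explicit than the paper's about the Nakayama argument propagating formal \'etaleness to higher truncations and about the properness and smoothness checks for the algebraized $\X$, while the paper organizes (i) by proving uniqueness first and then using it to glue; these are cosmetic differences.
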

\begin{proof}
We choose an affine open covering $\Xbar=\bigcup_l\Ubar_l$ in order to begin with the uniqueness assertion of (i): Let us fix $\nu$ and let us 
consider $(\gG,\mu)$-displays $\P$ and $\P'$ that lift $\Parb$ to certain smooth $W_\nu(\f_{p^f})$-schemes $\X$ and $\X'$ whose special fibers 
are $\Xbar$. Let $U_l=\Spec A_l$ be the affine open subscheme of $\X$ whose underlying point set agrees with $\Ubar_l$ and consider the diagram:
$$\begin{CD}
\Spec A_l/pA_l@>>>\X'\\
@VVV@VVV\\
\Spec A_l@>>>\B(\gG,\mu)_{W_\nu(\f_{p^f})}
\end{CD}$$
The formal \'etaleness of the vertical arrow on the right yields a unique morphism $\phi_l:\Spec A_l\rightarrow\X'$ lifting 
the open immersion $\Ubar_l\subset\Xbar$. In order to construct the requested isomorphism $\phi:\X\rightarrow\X'$ it remains to check 
$\phi_l\vert_{U_l\cap U_m}=\phi_m\vert_{U_l\cap U_m}$, which can be accomplished upon passage to an affine open covering of $U_l\cap U_m$ together 
with the aforementioned uniqueness of the morphisms $\phi_l$. Notice, that the invertibility of $\phi$ follows from swapping the roles of $\X$ and $\X'$.\\
The existence of $\X$ is evident in the affine case, while the existence of $\P$ follows from repeated applications of part (i) of theorem \ref{obstII}. 
In the general non-affine case the existence of $\X$ and $\P$ follows from an obvious patching, which uses the previously established uniqueness 
of $\X$ and $\P$. The part (ii) of our corollary follows immediately from part (i) together with \cite[Th\'eor\`eme (5.4.5)]{egaiii}, observe that the 
lift of the line bundle $\omega_{q_0(\Parb)}(\chi)$ is automatic, since we can work with the lowest truncations of the $(\gG,\mu)$-displays $\P^{(\nu)}$.
\end{proof}

\subsection{Crystals}

Recall that a triple $(R,\ga,\gamma)$ is called a pd-thickening (of $R/\ga$) if $\gamma$ is a divided-power structure on an ideal $\ga$ in a ring $R$, 
in which $p$ is nilpotent. Fix a $W(\f_{p^f})$-scheme $X$. Our concept of the big crystalline site is based on the stacks project, i.e. Definition 59.8.1 of 
\href{https://stacks.math.columbia.edu/tag/07IF}{Tag 07IF} (please consult \cite[1.1]{berthelot} for a slightly different setup). We define $(X/W(\f_{p^f}))_{cris}$ 
to be the opposite of the obvious category structure on the class of pd-thickenings $(R,\ga,\gamma)$ that are equipped with two additional data, namely
\begin{itemize}
\item
a morphism from $W(\f_{p^f})$ to $R$ and
\item
a morphism from $\Spec R/\ga$ to $X$, such that the diagram
$$\begin{CD}
\Spec R/\ga@>>>\Spec R\\
@VVV@VVV\\
X@>>>\Spec W(\f_{p^f})
\end{CD}$$
commutes.
\end{itemize}
We choose to endow $(X/W(\f_{p^f}))_{cris}$ with the crystalline fpqc-topology, which is induced from the big fpqc-site 
$(\Spec W(\f_{p^f}))_{fpqc}$, by means of the important functor (cf. \cite[Lemme 1.1.2]{berthelot}): 
\begin{equation}
\label{fpqcris}
(X/W(\f_{p^f}))_{cris}\rightarrow(\Spec W(\f_{p^f}))_{fpqc};(R,\ga,\gamma)\mapsto R
\end{equation}
This endows $(X/W(\f_{p^f}))_{cris}$ with a structure sheaf, thus turning it into a locally ringed site, but it also has the consequence 
that the composition of any sheaf $\F$ on the big fpqc site of $\Spec W(\f_{p^f})$ with the said functor gives automatically a sheaf on 
$(\Spec W(\f_{p^f})/W(\f_{p^f}))_{cris}$, and we will denote it by: $\F_{cris}$. Recall the concept of a crystal $\h$ over $(X/W(\f_{p^f}))_{cris}$ taking 
values in some fibration, say $\CI\stackrel{G}{\rightarrow}(\Spec W(\f_{p^f}))_{fpqc}$, being a functor from $(X/W(\f_{p^f}))_{cris}$ to $\CI$ such that
\begin{itemize}
\item[(i)]
$H:=G\circ\h$ is isomorphic to \eqref{fpqcris}, and
\item[(ii)]
the image under $\h$ of any $(X/W(\f_{p^f}))_{cris}$-morphism $w$ from $V$ to $U$ 
induces an isomorpism $\h(V)\stackrel{\cong}{\rightarrow}\h(U)\times_{H(U),H(w)}H(V)$.
\end{itemize}
The last condition is essential and means that $\h$ is a $(X/W(\f_{p^f}))_{cris}$-valued cartesian section (cf. \cite[Chapter III, Definition(3.6)]{messing}). Finally note that a crystal over 
$(X/W(\f_{p^f}))_{cris}$ in $\ton(\gG)$ is just a locally trivial principal homogeneous space for $\gG_{W(\f_{p^f}),cris}$ over the big crystalline site $(X/W(\f_{p^f}))_{cris}$. Sheaves on 
$(X/W(\f_{p^f}))_{cris}$ as well as crystals in $\CI$ thereon, can be pulled back to $(X'/W(\f_{p^f}))_{cris}$, where $X'$ is a $X$-scheme. In a somewhat different direction every sheaf $\E$ on 
$(X/W(\f_{p^f}))_{cris}$ has an inverse image $\F=i_{X/W(\f_{p^f})}^*(\E)$ on $X_{fpqc}$, defined by $\F(R):=\E(R,0,0)$ (see to \cite[1.1.4.a)]{berthelot} for this and \cite[1.1.4.b)]{berthelot} for its 
right-adjoint). The results of \cite{hadi} can be used to attach to an adjoint nilpotent $(\gG,\mu)$-display $\P$ over a $W(\f_{p^f})$-scheme $X$ its Witt-crystal $\k_\P$ over $X$, which is a crystal in 
$\ton({^W\gG})$: Simply lift $\P_{R/\ga}$ to a $(\gG,\mu)$-display $\tilde\P$ over $R$, and put $\k_\P(R,\ga,\gamma):=K(\tilde\P)$. Following \href{https://stacks.math.columbia.edu/tag/07J5}{Tag 07J5} 
we note that the locally trivial principal homogeneous $^W\gG_{W(\f_{p^f})}$-spaces $\k_\P(R,\ga,\gamma)$ are equipped with canonical integrable connections, which we sketch in the banal case: 
Let us write $O'\in\gG(W(R\oplus\Omega_R^1))$ for the image under $\peebar_2:R\rightarrow R\oplus\Omega_R^1$ of some $O\in\gG(W(R))$ satisfying the adjoint nilpotence condition. According 
to \cite[Proposition 4.6]{hadi} one can find an element $x_O\in\gG(W(\Omega_R^1))$ with 
$$x_O^{-1}O\Psi^\mu(x_O)=O',$$ 
where the map $\Psi^\mu$ is defined as the composition of $\Phi^\mu$ with the projector $W(\Omega_R^1)\rightarrow I(\Omega_R^1)$ which results from the canonical decomposition $W(\Omega_R^1)=\Omega_R^1\oplus I(\Omega_R^1)$. By the same token one sees that $x_O$ depends only on the $\mod\ga$-reduction of $O$ and thus describes the canonical integrable 
connection on $\k_\P(R,\ga,\gamma)$, where $\P$ is the banal $(\gG,\mu)$-display which is represented by the image of $O$ in $\gG(W(R/\ga))$. In the sequel we will call $x_O$ the 
Witt-connection of (the $\mod\ga$-reduction of) $O$. When thinking of $\gG(W(\Omega_R^1))$ as the additive group $W(\Omega_R^1)\otimes_{\z_p}\gg$, the equation above metamorphoses into: 
\begin{equation}
\label{synthII}
\Ad^\gG(O)\Psi^\mu(x_O)-x_O=O'O^{-1}
\end{equation} 
Similarily, if $U\in\gG(W(R))$ is an alternative representative, so that $O=k^{-1}U\Phi^\mu(k)$ holds for some element $k\in\I^\mu(R)$, then $\Ad^\gG(k)(x_O)$ is the sum of 
$x_U$ and $k'k^{-1}$ in $\gG(W(\Omega_R^1))$. Our next result presents an alternative approach to connections, which is more classical: Consider a Frobenius lift $\tau$ on a 
torsionfree, $\ga$-adically separated and complete $W(\f_{p^f})$-algebra $A$, where $\ga$ is an ideal containing some power of $p$. We consider the module of formal $m$-forms
$$\hat\Omega_A^m:=\lim_{\leftarrow}\bigwedge_{A/\ga^n}^m\Omega_{A/\ga^n}^1,$$ 
which is isomorphic to the $\ga$-adic completion of $\bigwedge_A^m\Omega_A^1$. Let us write $\tau_1$ (resp. $\tau_m$) for the unique $\ga$-adically 
continuous $\tau$-linear endomorphism on $\hat\Omega_A^1$ (resp. on $\hat\Omega_A^m$) given by $\tau_1(d_Ax):=d_A\frac{\tau(x)-x^p}{p}+x^{p-1}d_Ax$ 
(resp. $\tau_m(d_Ax_1\wedge\dots\wedge d_Ax_m):=\tau_1(d_Ax_1)\wedge\dots\wedge\tau_1(d_Ax_m)$). The Lie-algebra of $\gG$ is equipped with a 
decomposition $\bigoplus_{l\in\z}\gg_l=W(\f_{p^f})\otimes_{\z_p}\gg$, according to the weights of $\mu:\g_{m,W(\f_{p^f})}\rightarrow\gG_{W(\f_{p^f})}$. On the space 
$$\bigoplus_{m\geq1\geq l}\hat\Omega_A^m\otimes_{W(\f_{p^f})}\gg_l=\bigoplus_{m\geq1}\hat\Omega_A^m\otimes_{\z_p}\gg$$
of $\gg$-valued formal differential forms of positive degree we consider yet another endomorphism given by 
$\psi_A^\mu(\omega\otimes x)=p^{m-l}\tau_m(\omega)\otimes\tau(x)$ for all $\omega\in\hat\Omega_A^m$ and $x\in\gg_l$. In the 
following result $\eta^\gG$ denotes the canonical right-invariant $\gg$-valued Cartan-Maurer $1$-form on $\gG/\z_p$. Notice that its pull-back 
$$\eta^\gG\circ U\in\hat\Omega_A^1\otimes_{\z_p}\gg\cong\gG(\hat\Omega_A^1),$$
along some $A$-valued point $U$ of $\gG$ describes the quotient $U'U^{-1}$ as an element of the above additive group, where 
$U'\in\gG(A\oplus\hat\Omega_A^1)$ stands for the image of $U\in\gG(A)$ under the map $A\rightarrow A\oplus\hat\Omega_A^1;x\mapsto x+d_Ax$.

\begin{lem}
\label{synthVII}
If $U\in\gG(A)$ satisfies the $\mod\ga$-adjoint nilpotence condition, then there exists a unique element $D_U\in\hat\Omega_A^1\otimes_{\z_p}\gg$ with:
\begin{equation}
\label{synthI}
\Ad^\gG(U)(\psi_A^\mu(D_U))-D_U=\eta^\gG\circ U
\end{equation}
Moreover, let $n$ be a positive integer and let $O$ be the image of $\hat\delta_\tau(U)\in\gG(W(A))$ in $\gG(W(A/\ga^n))$, 
where $\hat\delta_\tau$ is as in subsection \ref{win}. Then $O$ represents an adjoint nilpotent banal $(\gG,\mu)$-display over $\Spec A/\ga^n$ 
whose Witt-connection $x_O$ is described by the element of $\gG(W(\Omega_{A/\ga^n}^1))=(\prod_{k=0}^\infty\Omega_{A/\ga^n,[w_k]}^1)\otimes_{\z_p}\gg$, 
of which the $k$th component is equal to the image of $\tau_1^k(D_U)$ in $\Omega_{A/\ga^n}^1\otimes_{\z_p}\gg$.
\end{lem}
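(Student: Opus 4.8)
The plan is to produce $D_U$ by inverting an operator on $\hat\Omega^1_A\otimes_{\z_p}\gg$ via a geometric series, and then to identify $x_O$ by verifying that the element built from the $\tau_1^k(D_U)$ satisfies the defining equation \eqref{synthII} of the Witt-connection, which is known to have a unique solution. For the first part I would rewrite \eqref{synthI} as $(T-\id)(D_U)=\eta^\gG\circ U$ with $T:=\Ad^\gG(U)\circ\psi_A^\mu$, an additive operator of the $\ga$-adically complete module $\hat\Omega^1_A\otimes_{\z_p}\gg$ (note $\psi_A^\mu$ preserves the form-degree, so the whole equation stays in degree $1$). It then suffices to show that $T$ is topologically nilpotent for the $\ga$-adic topology, for then $D_U:=-\sum_{j\ge0}T^j(\eta^\gG\circ U)$ is the unique solution. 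I would argue the nilpotency along the $\mu$-weight decomposition $\gg=\bigoplus_l\gg_l$ (with $l\in\{-1,0,1\}$ since $\mu$ is minuscule): on the summands of weight $l\le0$ the factor $p^{1-l}$ occurring in $\psi_A^\mu$ makes $T$ divisible by $p$, which is $\ga$-adically contracting after finitely many iterations because $\ga\supseteq p^cA$ for some $c$; the weight-$1$ part is the only one this does not cover, and there I would reduce modulo $\ga$ and use the $\mod\ga$-adjoint nilpotence hypothesis on $U$ to see that the induced $\tau$-semilinear endomorphism is nilpotent. Combining the two, any sufficiently long composite of copies of $T$ either passes through a weight-$\le0$ slot (gaining a factor $p$) or iterates the weight-$1$ feedback often enough (landing in $\ga$), so some power of $T$ maps $\hat\Omega^1_A\otimes\gg$ into $\ga\cdot(\hat\Omega^1_A\otimes\gg)$ and the series converges. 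This is just the de Rham incarnation of \cite[Proposition 4.6]{hadi}, i.e. of \eqref{synthII}.

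For the second part I would first record that, $\ga$ containing a power of $p$, the ring $A$ is $p$-adically complete and $(A,pA,\tau)$ is a frame as in subsection \ref{win}; the datum $U$ with its standard $\mu$-parabolic structure is an object of $\hat\CAS_{A,pA}(\gG,\mu)$ whose image under Cartier's diagonal, reduced modulo $\ga^n$, is $O$, and since $(\ga/\ga^n)^n=0$ the ideal $\ga/\ga^n$ lies in the nilradical of $A/\ga^n$, so by Lemma \ref{winI} and remark \ref{merke} the element $O$ represents an adjoint nilpotent banal $(\gG,\mu)$-display over $\Spec A/\ga^n$, whose Witt-connection $x_O\in\gG(W(\Omega^1_{A/\ga^n}))$ is the unique solution of \eqref{synthII}. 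Let $x$ be the element of $W(\Omega^1_{A/\ga^n})\otimes_{\z_p}\gg$ with $w_k(x)=\tau_1^k(D_U)\bmod\ga^n$; it remains to check that $x$ solves \eqref{synthII}, for then $x=x_O$ by uniqueness.

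The componentwise check is where the real work sits. Applying $w_k$ and using $w_k\circ\hat\delta_\tau=\tau^k$ gives $w_k(O)=\tau^k(U)\bmod\ga^n$, while the identity $d_A\circ\tau=p\,\tau_1\circ d_A$ — which is exactly what the formula defining $\tau_1$ encodes — lets one express $w_k(O'O^{-1})$ through $\tau_1^k$ applied to $\eta^\gG\circ U$; on the other side $\Phi^\mu$ shifts the ghost index by one and rescales the weight-$l$ part by $p^{-l}$, and the projection onto $I(\Omega^1_{A/\ga^n})$ built into $\Psi^\mu$ annihilates the $w_0$-component and renders the $p^{-1}$ on the weight-$1$ part integral. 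Tracking these operations, the $w_0$-component of \eqref{synthII} for $x$ should close up and become precisely the degree-$1$ identity \eqref{synthI} for $D_U$, forcing $w_0(x)=D_U\bmod\ga^n$, while the higher ghost components collapse to the recursion $w_{k+1}(x)=\tau_1(w_k(x))$ coming from the ghost shift in $\Phi^\mu$. I expect this last reconciliation to be the only delicate point: matching the de Rham packaging $\psi_A^\mu(\omega\otimes x)=p^{m-l}\tau_m(\omega)\otimes\tau(x)$ against the Witt packaging $\Psi^\mu=(\text{kill }w_0)\circ\Phi^\mu$, keeping careful track weight by weight of how the $\mu(p)$-twist hidden in $\Psi^\mu$ accounts for the power $p^{1-l}$. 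Once this is done, uniqueness of the Witt-connection finishes the argument; integrability is part of the already-established notion of Witt-connection and requires nothing further.
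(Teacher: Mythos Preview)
Your first part is essentially the paper's argument: it phrases the same thing as the existence of a unique fixed point of the $\ga$-adically contractive map $D\mapsto\Ad^\gG(U)(\psi_A^\mu(D))-\eta^\gG\circ U$, which is just your geometric series in disguise.

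For the second part your componentwise verification via ghost coordinates is correct in principle, but the paper takes a cleaner route that avoids the weight-by-weight bookkeeping you flag as delicate. Instead of checking each $w_k$ separately, it extends Cartier's diagonal to a ring homomorphism
\[
\hat\delta:A\oplus\hat\Omega_A^1\longrightarrow W(A\oplus\hat\Omega_A^1)\cong W(A)\oplus\prod_{k\ge0}\hat\Omega_{A,[w_k]}^1
\]
by declaring $\hat\delta|_A=\hat\delta_\tau$ and $\hat\delta(\omega)=(\omega,\tau_1(\omega),\tau_1^2(\omega),\dots)$ on forms, and then checks the single commutativity $\hat\delta\circ\peebar_2=W(\peebar_2)\circ\hat\delta_\tau$. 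Once that holds, pushing equation \eqref{synthI} through $\hat\delta$ produces \eqref{synthII} in one stroke: the ring homomorphism automatically intertwines $\psi_A^\mu$ with $\Psi^\mu$ and $\eta^\gG\circ U$ with $O'O^{-1}$, so no separate reconciliation of the $p^{1-l}$ factors against the $\mu(p)$-twist in $\Psi^\mu$ is needed. What you gain from your approach is explicitness about where each power of $p$ goes; what the paper's approach buys is that all of that is absorbed into the single verification that $\hat\delta$ is a ring map compatible with $\peebar_2$, after which the translation from \eqref{synthI} to \eqref{synthII} is formal.
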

\begin{proof}
The first assertion is analogous to \cite[Lemma 2.8]{habil}. The adjoint nilpotence condition implies that $D\mapsto\Ad^\gG(U)(\psi_A^\mu(D))-\eta^\gG\circ U$ 
is a contractive map for the $\ga$-adic topology, so there exists a unique fixedpoint $D_U$. Towards the second assertion we define a homomorphism
$$\hat\delta:A\oplus\hat\Omega_A^1\rightarrow W(A\oplus\hat\Omega_A^1)\cong W(A)\oplus\prod_{k=0}^\infty{\hat\Omega_{A,[w_k]}}^1$$ 
by letting $\hat\delta\vert_A$ be the Cartier homomorphism $\hat\delta_\tau$ and decreeing $\hat\delta(\omega):=(\omega,\tau_1(\omega),\tau_1^2(\omega),\dots)$ 
for any $\omega\in\hat\Omega_A^1$. The diagram
$$\begin{CD}
A\oplus\hat\Omega_A^1@>{\hat\delta}>>W(A\oplus\hat\Omega_A^1)\\
@A{\peebar_2}AA@A{W(\peebar_2)}AA\\
A@>{\hat\delta_\tau}>>W(A)
\end{CD}$$
is commutative and it follows that \eqref{synthI} implies \eqref{synthII}.
\end{proof}

By slight abuse of terminology we will call the said element $D_U$ of the previous lemma the Dieudonn\'e connection of $U$. 

\subsection{Monodromy}
\label{thedetailI}
We want to specialize to $A:=W(k_0)[[t_1,\dots,t_d]]$ and $\ga:=pA+\sum_{i=1}^dt_iA$, where the field $k_0$ is an algebraically 
closed or an algebraic extension of $\f_{p^f}$ and $d:=\rk_{W(\f_{p^f})}\gg_1$. Let us fix the Frobenius lift determined by $\tau(t_i):=t_i^p$. 
As in \cite{katz1} we need to invoke an important subalgebra of the power series algebra in $d$ indeterminates over $K(k_0)$:
\begin{equation*}
K(k_0)\{\{t_1,\dots,t_d\}\}:=\{\sum_{\underline n}a_{\underline n}\underline t^{\underline n}|\,|a_{\underline n}|_pC^{n_1+\dots+n_d}\to0\,\forall C<1\}
\end{equation*}
One has $A[\frac1p]\subset K(k_0)\{\{t_1,\dots,t_d\}\}\subset K(k_0)[[t_1,\dots,t_d]]$, and it is very straightforward to extend the endomorphism $\tau$ to each of these 
$K(k_0)$-algebras. We write $\Aut(A/W(k_0))$ for the $W(k_0)$-linear automorphisms of $A$. This group acts naturally on $K(k_0)\{\{t_1,\dots,t_d\}\}$ and on 
$\bigoplus_{i=1}^dK(k_0)\{\{t_1,\dots,t_d\}\}dt_i$ from the left (in fact it is well-known that the latter objects allow more canonical descriptions without explicit variables, 
e.g. as spaces of rigid functions or Kaehler differentials on the generic fiber of $\Spf A$). From this $\Aut(A/W(k_0))$-action one derives an important semi-direct product
$$\t:=\gG(K(k_0)\{\{t_1,\dots,t_d\}\})\rtimes\Aut(A/W(k_0)).$$ 
Any display $\P_0$ with $(\gG,\mu)$-structure over $\Spec k_0$ is automatically banal, and hence represented by some $U_0\in\gG(W(k_0))$. Choose a basis 
$\{\epsilon_1,\dots,\epsilon_d\}$ of $\gg_1$. Notice that the element $U_1:=\exp(\sum_{i=1}^dt_i\epsilon_i)U_0$ satisfies the $\mod\ga$-adjoint nilpotence 
condition, so that it possesses a well-defined Dieudonn\'e connection $D_1\in\hat\Omega_A^1\otimes_{\z_p}\gg$, in the sense of and 
according to lemma \ref{synthVII}. Our prime tool is the trick of Dwork, which provides us with an element 
$\Theta\in\gG(K(k_0)\{\{t_1,\dots,t_d\}\})$ satisfying:
\begin{eqnarray*}
&&\Theta(0,\dots,0)=1\\
&&\Theta^{-1}b_1\tau(\Theta)=b_0\\
&&\eta^\gG\circ\Theta=-D_1
\end{eqnarray*}
where $b_0=U_0{^F\mu(\frac 1p)}$ and $b_1=U_1{^F\mu(\frac 1p)}$ (N.B.: the last equation has to be interpreted in $\bigoplus_{i=1}^dK(k_0)\{\{t_1,\dots,t_d\}\}dt_i$). Notice 
that the subgroup $\Theta(\gG(K(k_0))\times\Aut(A/W(k_0)))\Theta^{-1}\subset\t$ consists of exactly all elements $(u,s)\in\t$ which are solutions to the differential equation:
\begin{equation}
\label{synthIII}
\Ad(u)s(D_1)-D_1=\eta^\gG\circ u,
\end{equation}
here the right-hand side is again the image of $\eta^\gG$ under the map 
$\Omega_\gG^1\rightarrow\bigoplus_{i=1}^dK(k_0)\{\{t_1,\dots,t_d\}\}dt_i$ induced by $u$, and the left-hand side uses the natural left action of $\Aut(A/W(k_0))$. Also, notice that the element $U_{uni}:=\hat\delta(U_1)\in\Ob_{\B(\gG,\mu)(A)}$ 
represents the universal formal mixed characteristic deformation $\P_{uni}$ 
of $\P_0$, which has a rich amount of symmetry: For every $\gamma\in\Aut(\P_0)$, there exists a unique pair $(h_\gamma,s_\gamma)\in\I^\mu(A)\rtimes\Aut(A/W(k_0))$ with:
\begin{itemize}
\item
$s_\gamma(U_{uni})=h_\gamma^{-1}U_{uni}\Phi^\mu(h_\gamma)$
\item
$h_\gamma$ is a lift of $\gamma\in\I^\mu(k_0)$
\end{itemize}
Passage to the lowest truncation $u_\gamma:=w_0(h_\gamma)$ yields elements $(u_\gamma,s_\gamma)\in\I_0^\mu(A)\rtimes\Aut(A/W(k_0))$ which are horizontal in the 
sense that \eqref{synthIII} holds (N.B.: Lemma \ref{synthVII} shows that $D_1$ depends only on $\P_{uni}$ and not on the choice of $\tau$). After these preparatory remarks 
we are able to state and prove the technical fact that $D_1$ tends to be complicated as can be, here $K(k_0)^{ac}$ (resp. $K(k_0)^{ac}\{\{t_1,\dots,t_d\}\}$, $A^{ac}$, 
$\hat\Omega_{A^{ac}/K(k_0)^{ac}}^1$, etc.) denotes an algebraic closure of $K(k_0)$ (resp. the $p$-adically incomplete tensor products 
$K(k_0)^{ac}\otimes_{K(k_0)}K(k_0)\{\{t_1,\dots,t_d\}\}$, $K(k_0)^{ac}\otimes_{K(k_0)}A$, $K(k_0)^{ac}\otimes_{W(k_0)}\hat\Omega_{A/W(\f_{p^f})}^1$, etc.).

\begin{lem}
\label{maximalholonomy}
Let $G^{spc}\triangleleft\gG_{\q_p}$ be the smallest $\q_p$-rational normal subgroup, such that no $\mu$-weight of $\Lie\gG_{\q_p}/\Lie G^{spc}$ 
is positive. Then $D_1\in\hat\Omega_A^1\otimes_{\z_p}\Lie(G^{spc})$ holds. Assume in addition that the following holds:
\begin{itemize}
\item[(i)]
The group $\gG$ is reductive and of adjoint type.
\item[(ii)]
The field $k_0$ is finite.
\item[(iii)]
There exists some $s\in\n$ with $b_0{^F}b_0\cdots{^{F^{s-1}}}b_0=1$
\end{itemize}
Then there do not exist any proper subgroups $H\subset G_{K(k_0)^{ac}}^{spc}$ for which there are elements $u\in\gG(A^{ac})$ with: 
$$\Ad^\gG(u)(D_1)-\eta^\gG\circ u\in\hat\Omega_A^1\otimes_{W(k_0)}\Lie H$$
\end{lem}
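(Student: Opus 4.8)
\emph{Part one.} By Lemma~\ref{synthVII} the element $D_1$ is the unique fixed point, for the $\ga$-adic topology, of the contraction $D\mapsto\Ad^\gG(U_1)(\psi_A^\mu(D))-\eta^\gG\circ U_1$ on $\hat\Omega_A^1\otimes_{\z_p}\gg$; it is therefore enough to check that the closed, and hence complete, submodule $\hat\Omega_A^1\otimes_{\z_p}\Lie G^{spc}$ is carried into itself by this map. Now $\Lie G^{spc}\subseteq\gg$ is an ideal (since $G^{spc}$ is a $\q_p$-rational normal subgroup of $\gG$), hence stable under $\Ad^\gG$, compatible with the $\mu$-weight grading, and $F$-stable on $W(\f_{p^f})\otimes_{\z_p}\gg$; consequently $\Ad^\gG(U_1)$ and $\psi_A^\mu$ both preserve $\hat\Omega_A^m\otimes_{\z_p}\Lie G^{spc}$. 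Finally $\eta^\gG\circ U_1=(d_AU_1)U_1^{-1}=\frac{1-e^{-\ad T}}{\ad T}\bigl(\sum_i\epsilon_i\,dt_i\bigr)$ with $T:=\sum_it_i\epsilon_i$, and this lies in $\hat\Omega_A^1\otimes_{\z_p}\Lie G^{spc}$ because $\epsilon_i\in\gg_1\subseteq\Lie G^{spc}$ while $\ad(\gg_{\geq1})$ preserves $\gg_{\geq1}\subseteq\Lie G^{spc}$. Hence the fixed point $D_1$ lies in $\hat\Omega_A^1\otimes_{\z_p}\Lie G^{spc}$.

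\emph{Part two: reformulation.} Impose now (i)--(iii). As $\gG$ is adjoint, $G^{spc}$ is the product of those $\q_p$-simple factors of $\gG$ on which $\mu$ is non-trivial, hence is itself adjoint; moreover $\gg_1$ generates $\Lie G^{spc}$ as an ideal (this holds in the situations to which the lemma is applied), so that any reductive $K(k_0)^{ac}$-subalgebra of $\Lie G^{spc}_{K(k_0)^{ac}}$ which is normalised by $G^{spc}_{K(k_0)^{ac}}$ and contains $\gg_1$ must be all of $\Lie G^{spc}_{K(k_0)^{ac}}$. Suppose, for a contradiction, that $\Ad^\gG(u)(D_1)-\eta^\gG\circ u\in\hat\Omega_A^1\otimes_{\z_p}\Lie H$ for some $u\in\gG(A^{ac})$ and some proper $K(k_0)^{ac}$-subgroup $H\subsetneq G^{spc}_{K(k_0)^{ac}}$; after replacing $u$ by $u(0)^{-1}u$ and $H$ by a conjugate we may assume $u(0)=1$. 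For the element $\Theta\in\gG(K(k_0)\{\{t_1,\dots,t_d\}\})$ furnished by Dwork's trick, which satisfies $\Theta(0)=1$, $\eta^\gG\circ\Theta=-D_1$ and $\Theta^{-1}b_1\tau(\Theta)=b_0$, the cocycle identity for the right-invariant Cartan--Maurer form gives $\eta^\gG\circ(u\Theta)=\eta^\gG\circ u+\Ad^\gG(u)(\eta^\gG\circ\Theta)=-\bigl(\Ad^\gG(u)(D_1)-\eta^\gG\circ u\bigr)\in\hat\Omega_A^1\otimes_{\z_p}\Lie H$, together with $(u\Theta)(0)=1$; comparing coefficients along the $(t_1,\dots,t_d)$-adic filtration — a regular function on $\gG$ vanishing on $H$ has vanishing pull-back along $u\Theta$ — one gets $u\Theta\in H(K(k_0)^{ac}\{\{t_1,\dots,t_d\}\})$. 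In other words, the overconvergent $F$-isocrystal $(D_1,b_1)$ acquires a reduction of structure group to $H$ by the gauge transformation $u\in\gG(A^{ac})$.

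\emph{Part two: the Frobenius argument, and the main obstacle.} From here I would derive a contradiction out of the Frobenius functional equation $\Theta^{-1}b_1\tau(\Theta)=b_0$ together with condition (iii), following the $p$-adic big-monodromy arguments of \cite[Chapter~9]{katz3}, \cite{katz1}, \cite{varshavsky} and \cite{nori}. Condition (iii), $b_0{^F}b_0\cdots{^{F^{s-1}}}b_0=1$, says that the constant isocrystal $b_0$ is basic, indeed trivial over $\f_{p^s}$, so $J_{b_0}$ is an inner form of the adjoint group $\gG$; the symmetries $(u_\gamma,s_\gamma)$ of subsection~\ref{thedetailI}, together with the constants $c_\gamma\in\gG(K(k_0)^{ac})$ relating $u_\gamma\Theta$ to $s_\gamma(\Theta)$, then sweep, as $\gamma$ runs over $\Aut(\P_0)$ — which by Lemmas~\ref{obstX} and \ref{obstIII} meets $J_{b_0}(\q_p)$ in a Zariski-dense subset — over a Zariski-dense subset of $G^{spc}$, and by symmetry each $c_\gamma$ conjugates the $\gG(A^{ac})$-minimal reduction of $(D_1,b_1)$ into itself; that minimal reduction is reductive (overconvergent $F$-isocrystals over a finite residue field being semisimple), so its Lie algebra is an ideal of $\Lie G^{spc}_{K(k_0)^{ac}}$ normalised by $G^{spc}_{K(k_0)^{ac}}$. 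On the other hand $b_1=U_1{^F\mu}(\tfrac1p)=\exp\bigl(\sum_it_i\epsilon_i\bigr)b_0$ perturbs $b_0$ exactly in the weight-one directions $\epsilon_1,\dots,\epsilon_d$ spanning $\gg_1$, and since the factor $\mu(\tfrac1p)$ multiplies these directions by $p$ at each step of the Frobenius recursion $\Theta=b_1\tau(\Theta)b_0^{-1}$, while — using (iii) and the computation of the $\mu$-ordinary generic Newton polygon in subsection~\ref{twisted} (proposition~\ref{Hodgepoint}) — no other direction contributes to the failure of $\Theta$ to lie in $\gG(A^{ac})$, no element of $\gG(A^{ac})$ can flatten any $\epsilon_i$; thus the Lie algebra of the minimal reduction contains $\gg_1$. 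Being an ideal that contains $\gg_1$, it equals $\Lie G^{spc}_{K(k_0)^{ac}}$, so $H=G^{spc}_{K(k_0)^{ac}}$, contradicting its properness. The main obstacle is precisely this quantitative step: extracting from $\Theta^{-1}b_1\tau(\Theta)=b_0$ and (iii) the assertion that no element of $\gG(A^{ac})$ absorbs the weight-one part of $\Theta$. For this I intend to transcribe Katz's Frobenius-antecedent estimates and Varshavsky's monodromy bookkeeping, with their $\GL_n$-coefficient systems replaced by the $\gG$-bundle $(D_1,b_1)$.
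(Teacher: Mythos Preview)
Your Part~one is correct and essentially identical to the paper's argument.

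For Part~two, you have correctly identified the reformulation step: after a gauge transformation one gets $\tilde\Theta:=u\Theta u_0^{-1}\in H(N\{\{t_1,\dots,t_d\}\})$ for a suitable finite extension $N$ of $K(k_0)$ over which $H$ and $u$ are defined (the paper does not normalise $u(0)=1$, it carries $u_0:=u(0)$ along). You also correctly see that the symmetries $(h_\gamma,s_\gamma)$ of the universal deformation are the source of the Zariski-density input. But the mechanism you then propose --- showing that $\gg_1$ cannot be ``flattened'' by any $u\in\gG(A^{ac})$ via overconvergence estimates on $\Theta$, and appealing to a well-defined minimal reductive reduction --- is not what the paper does, and you rightly flag it as incomplete.

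The paper's argument is more direct and avoids any quantitative growth analysis of $\Theta$. From $\tilde\Theta^{-1}\tilde b\,\tau(\tilde\Theta)=u_0b_0\tau(u_0^{-1})$ together with assumption~(iii), taken with $s$ a multiple of $[k_0:\f_p]$, one gets
\[
\tilde\Theta\,\tau^s(\tilde\Theta^{-1})=\tilde b\,\tau(\tilde b)\cdots\tau^{s-1}(\tilde b)\in H(N\otimes_{W(k_0)}A).
\]
Passing to the Witt ring via Cartier's diagonal $\hat\delta_N$ and setting $\tilde b_{uni}:=\hat\delta_N(\tilde b)$, $\tilde h_\gamma:=\hat\delta_N(u)\,h_\gamma\,\hat\delta_N(s_\gamma(u^{-1}))$, one has $s_\gamma(\tilde b_{uni})=\tilde h_\gamma^{-1}\tilde b_{uni}F(\tilde h_\gamma)$. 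The crucial object is then the \emph{Newton slope homomorphism} $\tilde\nu$ of $\tilde b_{uni}$. Since $H$ is defined over constants it is $s_\gamma$-invariant, and one infers that each conjugate $\tilde h_\gamma^{-1}\tilde\nu\,\tilde h_\gamma$ is a cocharacter of $H$. As the $h_\gamma$ are Zariski-dense (their special fibres sweep out a $p$-adically open subgroup of a $\q_p$-form of $\gG$ by \cite[Proposition~1.12]{rapoport}), $H$ must contain the normal subgroup generated by the image of $\tilde\nu$. Finally Proposition~\ref{Hodgepoint} says the generic Newton point is $\mu$-ordinary, so $\tilde\nu$ is conjugate to the $\Gal(\q_p^{ac}/\q_p)$-average of $\mu$, which is nontrivial on every simple factor of $G^{spc}$; hence that normal subgroup is all of $G^{spc}$, contradicting properness of $H$.

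So the role of Proposition~\ref{Hodgepoint} is not to control which directions of $\Theta$ escape integrality, but to identify the slope cocharacter $\tilde\nu$ as one whose normal closure is everything. This replaces your ``main obstacle'' entirely: no Frobenius-antecedent estimates, no appeal to semisimplicity of overconvergent $F$-isocrystals, and no uniqueness of a minimal reduction are needed.
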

\begin{proof}
By the right-invariance of the Cartan-Maurer form one has: 
$$\eta^\gG\circ U_1=\eta^\gG\circ\exp(\sum_{i=1}^dt_i\epsilon_i)\in\hat\Omega_A^1\otimes_{\z_p}\Lie(G^{spc})$$ 
Hence the $p$-adically contractive map $D\mapsto\Ad^{\gG}(U_1)(\psi_A^\mu(D))-\eta^\gG\circ U_1$ preserves 
$\hat\Omega_A^1\otimes_{\z_p}\Lie(G^{spc})$, because $\Lie(G^{spc})$ is a Lie ideal.\\
Note that $K(k_0)$ will allow a sufficiently large Galois extension $N\subset K(k_0)^{ac}$ over which both of $H$ and $u$ are defined, furthermore we 
can pick an extension of the absolute Frobenius on $K(k_0)$ to $N$ (hence an extension of $\tau$ to $N\{\{t_1,\dots,t_d\}\}$). In order to simplify the 
notation we may also assume $k_0=\O_N/\gm_N$, so that $N$ is totally ramified over $K(k_0)$. Let $u_0\in\gG(N)$ be the evaluation of $u$ at the 
specific point $t_1=\dots=t_d=0$. Consider the elements $\Ad^\gG(u)(D_1)-\eta^\gG\circ u=:\tilde D\in\hat\Omega_A^1\otimes_{W(k_0)}\Lie H$, 
$\tilde b=ub_1\tau(u^{-1})$, and $\tilde\Theta:=u\Theta u_0^{-1}$. Since $\tilde\Theta$ is neutral at the origin and satisfies the differential equation 
$-\tilde D=\eta^\gG\circ\tilde\Theta$, we are allowed to deduce $\tilde\Theta\in H(N\{\{t_1,\dots,t_d\}\})$.\\ 
Observe $\tilde\Theta^{-1}\tilde b\tau(\tilde\Theta)=u_0b_0\tau(u_0^{-1})$, and if the assumption (iii) is satisfied by some positive multiple $s$ of the 
degree of $k_0$ we get: $\tilde\Theta\tau^s(\tilde\Theta^{-1})=\tilde b\tau(\tilde b)\cdots\tau^{s-1}(\tilde b)\in H(N\otimes_{W(k_0)}W(k_0)[[t_1,\dots,t_d]])$. 
Towards shifting our attention to the Witt ring we introduce the map $\hat\delta_N:=\id_N\otimes\hat\delta$ from $N\otimes_{W(k_0)}A$ to 
$N\otimes_{W(k_0)}W(A/pA)$, where $\hat\delta$ is Cartier's diagonal associated to the Frobenius lift $\tau$. If we let
\begin{eqnarray*}
&&\tilde b_{uni}:=\hat\delta_N(\tilde b)\\
&&\tilde h_\gamma:=\hat\delta_N(u)h_\gamma\hat\delta_N(s_\gamma(u^{-1}))\\
&&\tilde u_\gamma:=uu_\gamma s_\gamma(u^{-1})
\end{eqnarray*}
for each $\gamma\in\Aut(\P_0)$, we find: 
$$s_\gamma(\tilde b_{uni})=\tilde h_\gamma^{-1}\tilde b_{uni}F(\tilde h_\gamma)$$
The key to the proof is the slope homomorphism $\tilde\nu$ of $\tilde b_{uni}$. Since $H$ is $s_\gamma$-invariant, we infer that each 
$\tilde h_\gamma^{-1}\tilde\nu\tilde h_\gamma$ is a cocharacter of $H$. The $h_\gamma$'s are Zariski-dense by inspection of their special 
fibers, observe that $\Aut(\P_0)$ is a $p$-adically open subgroup in a certain $\q_p$-form of $\gG$ by \cite[Proposition 1.12]{rapoport}. 
It follows that $H$ contains a normal subgroup containing $\tilde\nu$, and proposition \ref{Hodgepoint} finishes the proof.
\end{proof}

\section{Auxiliary results on $\GL(n)$}

The units of $\Mat(n\times n,R)$ define the $\z$-group $\GL(n)$, on which we define the standard involution to be the automorphism
\begin{equation}
\label{involution}
\GL(n)\stackrel{\cong}{\rightarrow}\GL(n);\,\alpha_{i,j}\mapsto\check\alpha_{i,j},
\end{equation}
where $\check\alpha_{i,j}$ are the entries of the inverse to the $n\times n$-matrix having the element $\alpha_{n-j+1,n-i+1}$ 
in its $i$th row and $j$th column, for $i,j\in\{1,\dots,n\}$. The image of the injection
\begin{equation}
\delta:\g_m\hookrightarrow\GL(n);a\mapsto\left(\begin{matrix}a&\dots&0\\
\vdots&\ddots&\vdots\\0&\dots&a\end{matrix}\right).
\end{equation}
coincides with $\zen^{\GL(n)}$. By the $W(\f_{p^r})$-similarity group we mean the group representing the $W(\f_{p^r})$-functor:
\begin{equation}
\label{similarity}
\GU(n,R)=\{(m,A)|m\in R^\times,\,A\in\GL(n,R\otimes_{W(\f_{p^r})}W(\f_{p^{2r}})),\,\bar A=m\check A\},
\end{equation}
where the conjugation $\bar\empty$ stands for the $R$-linear extension of the $r$th iterate of the 
absolute Frobenius on $W(\f_{p^{2r}})$. Notice that $\zen^{\GU(n)}$ agrees with the image of the injection
\begin{equation}
\label{dilatation}
\zeta:\Res_{W(\f_{p^{2r}})/W(\f_{p^r})}\g_{m,W(\f_{p^{2r}})}\hookrightarrow\GU(n);a\mapsto(a\bar a,a),
\end{equation}
which we will call the dilatation homomorphism. The canonical surjection 
$$\chi:\GU(n)\twoheadrightarrow\g_{m,W(\f_{p^r})};(m,A)\mapsto m$$ 
is called the multiplier character, and the canonical homomorphism 
$$\rho:\GU(n)_{W(\f_{p^{2r}})}\rightarrow\GL(n)_{W(\f_{p^{2r}})};(m,A)\mapsto A$$
is called the tautological representation, notice that $\chi_{W(\f_{p^{2r}})}\oplus\rho$ defines an isomorphism from $\GU(n)_{W(\f_{p^{2r}})}$ 
to $\g_{m,W(\f_{p^{2r}})}\times\GL(n)_{W(\f_{p^{2r}})}$ and that $\chi\otimes\check\bohr\cong\rho$ (and $\bachi\cong\chi$) holds. For a 
$\Phbar$-datum of the form $(\GU(n),\{\upsilon_\sigma\}_{\sigma\in\Sigma})$, we will often have to consider two associated $\Phbar$-data, namely 
$(\GL(n)_{W(\f_{p^{2r}})},\{\upsilon_\sigma^{(2)}\}_{\sigma\in\Sigma^{(2)}})$ and $(\g_{m,W(\f_{p^r})},\{\upsilon_\sigma^{(1)}\}_{\sigma\in\Sigma})$, 
where $\upsilon_\sigma^{(2)}:=(^{F^{-\sigma}}\rho)\circ\upsilon_\sigma$ and $\upsilon_\sigma^{(1)}:=(^{F^{-\sigma}}\chi)\circ\upsilon_\sigma$. 
We call $(\GL(n)_{W(\f_{p^{2r}})},\{\upsilon_\sigma^{(2)}\}_{\sigma\in\Sigma^{(2)}})$ (resp. 
$(\g_{m,W(\f_{p^r})},\{\upsilon_\sigma^{(1)}\}_{\sigma\in\Sigma})$) the tautological linear (resp. 
multiplicative) $\Phbar$-datum of $(\GU(n),\{\upsilon_\sigma\}_{\sigma\in\Sigma})$, notice that giving 
a $W(\f_{p^f})$-rational $\Phbar$-datum for the standard unitary $W(\f_{p^r})$-group $\GU(n)$ is actually 
equivalent to giving $(\GL(n)_{W(\f_{p^{2r}})},\{\upsilon_\sigma^{(2)}\}_{\sigma\in\Sigma^{(2)}})$, subject to 
the condition that $\frac{\check\upsilon_{\sigma+r}^{(2)}}{\upsilon_\sigma^{(2)}}$ be a homothety (namely 
$\upsilon_\sigma^{(1)}$). Here are some $\Phbar$-data, which will acquire a special importance in this paper:

\begin{defn}
\label{standard}
Let $r$ be a divisor of $f$. 
\begin{itemize}
\item[(i)]
The pair $(\GL(n)_{W(\f_{p^r})},\{\upsilon_\sigma\}_{\sigma\in\Sigma})$ is called a standard linear (resp. multiplicative) $\Phbar$-datum 
if the weights of $\upsilon_\sigma$ are contained in the interval $[0,r_\Sigma(\sigma)]$ (resp. are equal to $r_\Sigma(\sigma)$). 
\item[(ii)]
Two pairs $(\GL(n)_{W(\f_{p^r})},\{\upsilon_\sigma\}_{\sigma\in\Sigma})$ and 
$(\GL(n)_{W(\f_{p^r})},\{\upsilon_\sigma^t\}_{\sigma\in\Sigma})$ of standard linear $\Phbar$-data are Cartier duals of each other if and only if
$$\upsilon_\sigma^t=\delta_\sigma\check\upsilon_\sigma$$ 
holds for all $\sigma\in\Sigma$, where $(\g_{m,W(\f_{p^r})},\{\delta_\sigma\}_{\sigma\in\Sigma})$ is the standard multiplicative $\Phbar$-datum.
\item[(iii)]
The pair $(\GU(n),\{\upsilon_\sigma\}_{\sigma\in\Sigma})$ is called a standard unitary $\Phbar$-datum if $2r$ is a divisor of $f$, and the 
aforementioned tautological linear and multiplicative $\Phbar$-data $(\GL(n)_{W(\f_{p^{2r}})},\{\upsilon_\sigma^{(2)}\}_{\sigma\in\Sigma^{(2)}})$ 
and $(\g_{m,W(\f_{p^r})},\{\upsilon_\sigma^{(1)}\}_{\sigma\in\Sigma})$ are standard linear and multiplicative ones. 
\end{itemize}
\end{defn}

\subsection{The functor $\fx^{\bj,\{\upsilon_\sigma\}_{\sigma\in\Sigma}}$}
\label{weirdVI}

The purpose of this subsection is to give a more conceptual approach to \cite[Subsubsection 2.2.1]{habil}, by using the 
fpqc-stacks $\Barb(\GL(n)_{W(\f_{p^r})},\{\upsilon_\sigma\}_{\sigma\in\Sigma})$ of our definition \ref{concept}.

\begin{thm}
Fix integers $l_1\geq\dots\geq l_n$, let $h:=l_1-l_n$. Consider the cocharacter $\upsilon:\g_m\rightarrow\GL(n)$ given by:
$$z\mapsto\left(\begin{matrix}z^{l_1}&\dots&0\\\vdots&\ddots&\vdots\\0&\dots&z^{l_n}\end{matrix}\right)$$
Then the group of $R$-valued points of $\Ibar^\upsilon$ consists of all matrices $A=(a_{i,j})_{1\leq i,j\leq n}$ with $a_{i,j}\in I_{\max\{l_i-l_j,0\}}(R)$, and for 
every such $A\in\Ibar^\upsilon(R)=\hat U_{\upsilon^{-1}}^0(W(R),\val_R)$ one has $\Phbar^{\upsilon,h}(A)=B=(b_{i,j})_{1\leq i,j\leq n}\in\GL(n,W(R))$ with 
$$b_{i,j}=\begin{cases}{^{F^h}a_{i,j}}p^{l_j-l_i}&l_i\leq l_j\\
{^{F^{h+l_j-l_i}V^{l_j-l_i}}a_{i,j}}&\mbox{ otherwise}\end{cases}.$$
Furthermore, if 
$$\upsilon':\g_m\rightarrow\GL(n);z\mapsto\left(\begin{matrix}z^{l'_1}&\dots&0\\\vdots&\ddots&\vdots\\0&\dots&z^{l'_n}\end{matrix}\right)$$
is another cocharacter with $l'_1\geq\dots\geq l'_n$ and $h':=l'_1-l'_n$, then $\Phbar^{\upsilon,h}(\Ibar^{\upsilon\upsilon'}(R))\subset\Ibar^{\upsilon'}(R)$ 
and $\Phbar^{\upsilon',h'}(\Phbar^{\upsilon,h}(A))=\Phbar^{\upsilon\upsilon',h+h'}(A)$ holds for every $A\in\Ibar^{\upsilon\upsilon'}(R)$.
\end{thm}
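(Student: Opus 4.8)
The plan is to reduce every assertion to the characterizing property of $\Phbar^{\upsilon,h}$ recorded in proposition \ref{twistI} and to the product decomposition of $\Ibar^\upsilon$ recalled in its proof; the only genuine computation is a $V$-adic valuation estimate at the very end. First I would pin down $\Ibar^\upsilon$. The $\upsilon$-weight of the root space $E_{i,j}$ is $l_i-l_j$, so $U_{\upsilon^{-1}}^0$ is the ``lower block triangular'' parabolic (the blocks being the maximal index ranges on which $l_\bullet$ is constant), while the root spaces of positive $\upsilon$-weight are the $E_{a,b}$ with $l_a>l_b$, of weight $l_a-l_b$. By proposition \ref{twistI} the scheme $\Ibar^\upsilon$ is the product of $^WU_{\upsilon^{-1}}^0$ with the groups of $I_{l_a-l_b}$-valued sections of those $E_{a,b}$; multiplying such a factorization out and using $I_m(R)I_{m'}(R)\subset I_{m+m'}(R)$ from lemma \ref{val} (together with $l_a\geq l_i$ and $l_a>l_b$ for the indices that occur), one checks that the product lands inside $\{A\in\GL(n,W(R)):a_{i,j}\in I_{\max\{l_i-l_j,0\}}(R)\}$, the reverse inclusion being an $LU$-type factorization. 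This also exhibits $\Ibar^\upsilon$ as reduced, a point I would use freely: a morphism out of $\Ibar^\upsilon$ is determined by its values on reduced test algebras $R$, for which $W(R)$ is $p$-torsion free, so that $W(R)\hookrightarrow W(R)[\frac1p]$ -- precisely the uniqueness device already employed in the proof of proposition \ref{twistI}.

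For the formula, proposition \ref{twistI} tells us that the image of $\Phbar^{\upsilon,h}(A)$ in $\GL(n,W(R)[\frac1p])$ is $^{F^h}(\upsilon(\frac1p)A\upsilon(p))$, whose $(i,j)$-entry is $F^h(p^{l_j-l_i}a_{i,j})$ since $\upsilon(\frac1p)=\diag(p^{-l_1},\dots,p^{-l_n})$ and $\upsilon(p)=\diag(p^{l_1},\dots,p^{l_n})$. When $l_i\leq l_j$ this equals $p^{l_j-l_i}\cdot{^{F^h}a_{i,j}}\in W(R)$, which is the first case. When $l_i>l_j$, put $m:=l_i-l_j>0$ and write $a_{i,j}=V^m(c)$ with $c={^{V^{-m}}a_{i,j}}$ (legitimate since $V$ is injective on $W(R)$); then $F^mV^m=p^m$ (iterate $F(V(x))=px$) together with $h=l_1-l_n\geq m$ gives $F^h(p^{-m}V^m(c))=F^{h-m}(c)={^{F^{h-m}V^{-m}}a_{i,j}}\in W(R)$, which is the second case. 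Since this is valid on reduced $R$, and both sides are morphisms of schemes out of the reduced scheme $\Ibar^\upsilon$, the formula holds over all $R$.

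For the compatibility, note that $\upsilon\upsilon'=\diag(z^{l_1+l'_1},\dots,z^{l_n+l'_n})$ again has weakly decreasing exponents, that $h+h'=(l_1+l'_1)-(l_n+l'_n)$, and that $\Ibar^{\upsilon\upsilon'}\subset\Ibar^\upsilon$. Feeding $A\in\Ibar^{\upsilon\upsilon'}(R)$, i.e.\ $a_{i,j}\in I_{\max\{(l_i+l'_i)-(l_j+l'_j),0\}}(R)$, into the formula just proved and bounding $\val_R(b_{i,j})$ in the two cases $l_i\leq l_j$ and $l_i>l_j$ -- using $\val_R(p^kx)=k+\val_R(x)$ and $\val_R({^{V^{-m}}x})=\val_R(x)-m$ from lemma \ref{val}, and that $F$ preserves $\val_R$ on reduced test algebras (which suffice) -- yields $\val_R(b_{i,j})\geq\max\{l'_i-l'_j,0\}$, i.e.\ $\Phbar^{\upsilon,h}(\Ibar^{\upsilon\upsilon'}(R))\subset\Ibar^{\upsilon'}(R)$. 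Granting this, both $\Phbar^{\upsilon',h'}(\Phbar^{\upsilon,h}(A))$ and $\Phbar^{\upsilon\upsilon',h+h'}(A)$ have image in $\GL(n,W(R)[\frac1p])$ equal to $^{F^{h+h'}}((\upsilon\upsilon')(\frac1p)A(\upsilon\upsilon')(p))$: one applies the characterizing property of proposition \ref{twistI} successively to $\upsilon'$, to $\upsilon$, and to $\upsilon\upsilon'$, using that $^{F^h}$ is a ring homomorphism fixing the diagonal matrices $\upsilon'(\frac1p)$ and $\upsilon'(p)$ (whose entries are integral powers of $p$) and that $\upsilon$ and $\upsilon'$ commute; the reducedness of $\Ibar^{\upsilon\upsilon'}$ then promotes this equality from reduced test algebras to arbitrary $R$. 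I expect the valuation bookkeeping behind the inclusion $\Phbar^{\upsilon,h}(\Ibar^{\upsilon\upsilon'}(R))\subset\Ibar^{\upsilon'}(R)$, together with the $LU$-factorization in the first step, to be the only places needing genuine care; everything else is formal once proposition \ref{twistI} is in hand.
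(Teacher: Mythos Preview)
Your proposal is correct and follows essentially the same route as the paper's own proof: reduce to reduced test algebras $R$ (so that $W(R)\hookrightarrow W(R)[\frac1p]$), and then read everything off from the identity $\Phbar^{\upsilon,h}(A)={^{F^h}}(\upsilon(\tfrac1p)A\upsilon(p))$ in $\GL(n,W(R)[\frac1p])$. The paper compresses this into a single sentence, whereas you spell out the entrywise computation, the $LU$-type identification of $\Ibar^\upsilon$, and the $\val_R$-bookkeeping for the inclusion $\Phbar^{\upsilon,h}(\Ibar^{\upsilon\upsilon'}(R))\subset\Ibar^{\upsilon'}(R)$; none of this departs from the paper's intended argument. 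One small remark: the statement $\val_R(p^kx)=k+\val_R(x)$ is not literally in lemma~\ref{val}, but for reduced $R$ it follows from $p=VF$ together with the fact that $F$ (being induced by the $p$th power map on coordinates) preserves $\val_R$, which you do invoke.
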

\begin{proof}
One can reduce all statements to the special case of reduced base $\f_p$-algebras $R$, in which case they 
follow from $^{F^h}(\upsilon(\frac 1p)A\upsilon(p))=\Phbar^{\upsilon,h}(A)$, which holds in $\GL(n,W(R)[\frac1p])$.
\end{proof}

\subsubsection{Definition and Schematicness of $\fx^{\bj,\{\upsilon_\sigma\}_{\sigma\in\Sigma}}$}
\label{Universum}
Let $r$ be a divisor of $f$, and let $(\GL(n)_{W(\f_{p^r})},\{\upsilon_\sigma\}_{\sigma\in\Sigma})$ be a 
standard linear $\Phbar$-datum. Let $\bj:\z/r\z\rightarrow\n_0$ be a function satisfying the following:
\begin{itemize}
\item[(i)]
For every $\sigma\in\z/r\z$ one has $0\leq\bj(\sigma)\leq r_\Sigma(\bd_\Sigma(\sigma))-1$. 
\item[(ii)]
The assignment $\z/r\z\ni\sigma\mapsto(\bd_\Sigma(\sigma),\bj(\sigma))$ is injective.
\end{itemize}
Define new cocharacters, by means of the operator $H_0$, as introduced in part \ref{cocha} of the appendix:
\begin{equation}
\label{soul}
H_0(\frac{^{F^{\bd_\Sigma^+(\sigma)}}\upsilon_{\bd_\Sigma(\sigma)}}{\delta^{\bj(\sigma)}})=:\tilde\upsilon_\sigma:\g_{m,W(\f_{p^f})}\rightarrow\GL(n)_{W(\f_{p^f})}
\end{equation}
Note that $(\GL(n)_{W(\f_{p^r})},\{\tilde\upsilon_\sigma\}_{\sigma\in\z/r\z})$ is a standard linear $\Phi$-datum, as the weights of $\tilde\upsilon_\omega$ are contained in $\{0,1\}$, while 
\begin{equation}
\label{heart}
\upsilon_\sigma=\prod_{\bd_\Sigma(\omega)=\sigma}{^{F^{-\bd_\Sigma^+(\omega)}}\tilde\upsilon_\omega}
\end{equation}
holds. In this section we are going to establish a certain $2$-commutative diagram, namely:
$$\begin{CD}
^W\GL(n)_{W(\f_{p^f})}^\Sigma @>m>>^W\GL(n)_{W(\f_{p^f})}^r\\
@VVV@VVV\\
\Barb(\GL(n)_{W(\f_{p^r})},\{\upsilon_\sigma\}_{\sigma\in\Sigma})
@>{\fx^{\bj,\{\upsilon_\sigma\}_{\sigma\in\Sigma}}}>>\Barb(\GL(n)_{W(\f_{p^r})},\{\tilde\upsilon_\sigma\}_{\sigma\in\z/r\z})\\
@V{q}VV@V{\tilde q}VV\\
\ton(\Ibar^{\upsilon_\Sigma})@>{\ton(\gamma)}>>\ton(\Ibar^{\tilde\upsilon})
\end{CD}$$
We follow subsection \ref{spass}, starting with a description of $\gamma:\Ibar^{\upsilon_\Sigma}\rightarrow\Ibar^{\tilde\upsilon}$, to this end consider the cocharacters
\begin{equation*}
\prod_{\omega=\sigma+1}^{\bd_\Sigma(\sigma)}{^{F^{-\bd_\Sigma^+(\omega)}}\tilde\upsilon_\omega}=:\Upsilon_\sigma:\g_{m,W(\f_{p^f})}\rightarrow\GL(n)_{W(\f_{p^f})},
\end{equation*}
and let $\gamma:\prod_{\sigma\in\Sigma}\Ibar^{\upsilon_\sigma}\rightarrow\prod_{\omega=0}^{r-1}\Ibar^{\tilde\upsilon_\omega}$ be the homomorphism whose $\omega$th 
coordinate is given by the formula $\gamma_\omega(\{k_\sigma\}_{\sigma\in\Sigma})=\Phbar^{\Upsilon_\omega,\bd_\Sigma^+(\omega)}(k_{\bd_\Sigma(\omega)})$. 
Let $m:{^W\GL(n)_{W(\f_{p^f})}^\Sigma}\rightarrow{^W\GL(n)_{W(\f_{p^f})}^r}$ be the inclusion.

\begin{prop}
\label{new}
The $1$-morphism $\fx^{\bj,\{\upsilon_\sigma\}_{\sigma\in\Sigma}}$ is schematic, quasicompact and separated.
\end{prop}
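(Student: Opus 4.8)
The plan is to verify all three properties after an fpqc base change of the target, where $\fx^{\bj,\{\upsilon_\sigma\}_{\sigma\in\Sigma}}$ becomes an explicit quotient of affine schemes. Schematicness, quasicompactness and separatedness of a $1$-morphism of fpqc-stacks can be checked fpqc-locally on the target (affine morphisms, hence schemes, descend along fpqc base change; quasicompactness and separatedness do as well). The canonical $1$-morphism ${^W\GL(n)_{W(\f_{p^f})}^r}\rightarrow\Barb(\GL(n)_{W(\f_{p^r})},\{\tilde\upsilon_\sigma\}_{\sigma\in\z/r\z})$ of subsection \ref{spass} is representable by faithfully flat affine morphisms: it presents the target as the fpqc-quotient of ${^W\GL(n)_{W(\f_{p^f})}^r}$ by the $\tilde\Phbar$-twisted conjugation action of $\Ibar^{\tilde\upsilon}$, and $\Ibar^{\tilde\upsilon}$ is flat and affine by proposition \ref{twistI} (and theorem \ref{triangleI}) together with the first representability result of subsection \ref{old}. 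Hence it suffices to prove that
$$Z:=\Barb(\GL(n)_{W(\f_{p^r})},\{\upsilon_\sigma\}_{\sigma\in\Sigma})\times_{\fx^{\bj,\{\upsilon_\sigma\}_{\sigma\in\Sigma}}}{^W\GL(n)_{W(\f_{p^f})}^r}$$
is an affine ${^W\GL(n)_{W(\f_{p^f})}^r}$-scheme.

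\textbf{Computing $Z$.} Next I would pull back the banal atlas ${^W\GL(n)_{W(\f_{p^f})}^\Sigma}\rightarrow\Barb(\GL(n)_{W(\f_{p^r})},\{\upsilon_\sigma\}_{\sigma\in\Sigma})$ to $Z$. Using that, by the construction of the $1$-morphism, the composite ${^W\GL(n)_{W(\f_{p^f})}^\Sigma}\rightarrow\Barb(\GL(n)_{W(\f_{p^r})},\{\tilde\upsilon_\sigma\}_{\sigma\in\z/r\z})$ factors through $m$, one identifies this pull-back with ${^W\GL(n)_{W(\f_{p^f})}^\Sigma}\times_{{^W\GL(n)_{W(\f_{p^f})}^r},m}\bigl({^W\GL(n)_{W(\f_{p^f})}^r}\times_{\Barb(\GL(n)_{W(\f_{p^r})},\{\tilde\upsilon_\sigma\}_{\sigma\in\z/r\z})}{^W\GL(n)_{W(\f_{p^f})}^r}\bigr)$, which by banality of the second factor is just the affine scheme ${^W\GL(n)_{W(\f_{p^f})}^\Sigma}\times\Ibar^{\tilde\upsilon}$. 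This affine scheme is an $\Ibar^{\upsilon_\Sigma}$-torsor over $Z$ for the action $k\star(U,h)=(k^{-1}U\Phbar(k),\,h\,\gamma(k))$, and the structure morphism $Z\rightarrow{^W\GL(n)_{W(\f_{p^f})}^r}$ is induced by $(U,h)\mapsto h\,m(U)\,\tilde\Phbar(h)^{-1}$. The action is \emph{free}: if $h\,\gamma(k)=h$ then $\gamma(k)=1$, and $\gamma$ is injective because its $\tau$-component, for $\tau\in\Sigma$, is the inclusion $\Ibar^{\upsilon_\tau}\hookrightarrow\Ibar^{\tilde\upsilon_\tau}$ (indeed $\Upsilon_\tau=1$ and $\bd_\Sigma^+(\tau)=0$, so $\Phbar^{\Upsilon_\tau,\bd_\Sigma^+(\tau)}$ is the identity). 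Thus $Z$ is the fpqc-quotient of the affine scheme ${^W\GL(n)_{W(\f_{p^f})}^\Sigma}\times\Ibar^{\tilde\upsilon}$ by a free action of the flat affine group $\Ibar^{\upsilon_\Sigma}$.

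\textbf{Trivialising the torsor.} To conclude I would trivialise this torsor. By the explicit formulas for $\gamma_\omega$ and $\Upsilon_\omega$ together with the scheme-theoretic ``big-cell'' product decompositions of the groups $\Ibar^{\upsilon_\tau}$ and $\Ibar^{\tilde\upsilon_\omega}$ provided by theorem \ref{triangleI}, the homomorphism $\gamma$ is a closed immersion which realises $\Ibar^{\upsilon_\Sigma}$ inside $\Ibar^{\tilde\upsilon}=\prod_{\omega\in\z/r\z}\Ibar^{\tilde\upsilon_\omega}$ as the graph of a morphism over the closed subgroup $\prod_{\tau\in\Sigma}\Ibar^{\upsilon_\tau}\subset\prod_{\tau\in\Sigma}\Ibar^{\tilde\upsilon_\tau}$. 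From the graph structure one obtains an isomorphism of schemes $\Ibar^{\tilde\upsilon}\cong\gamma(\Ibar^{\upsilon_\Sigma})\times\mathcal C$, where the affine scheme $\mathcal C$ collects the ``complementary coordinates'' (the factors $\Ibar^{\tilde\upsilon_\omega}$ with $\omega\notin\Sigma$, and, inside each $\Ibar^{\tilde\upsilon_\tau}$ with $\tau\in\Sigma$, the matrix entries not already imposed by $\Ibar^{\upsilon_\tau}$); in particular the right-coset projection $\Ibar^{\tilde\upsilon}\rightarrow\Ibar^{\tilde\upsilon}/\gamma(\Ibar^{\upsilon_\Sigma})\cong\mathcal C$ admits a section. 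Feeding this section into the free quotient yields $Z\cong{^W\GL(n)_{W(\f_{p^f})}^\Sigma}\times\mathcal C$, an affine scheme; being affine over the affine scheme ${^W\GL(n)_{W(\f_{p^f})}^r}$, it is in particular a quasicompact separated scheme, which proves the proposition. The main obstacle is exactly this last step: establishing that $\gamma$ is a closed immersion admitting such a scheme-theoretic complement, which requires a careful bookkeeping with the matrix descriptions of $\Ibar^\upsilon$ and of the twisted Frobenius maps $\Phbar^{\upsilon,h}$ recorded just above, and with the product decomposition of theorem \ref{triangleI}.
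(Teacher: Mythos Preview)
Your proposal is correct and rests on the same core observation as the paper: for $\tau\in\Sigma$ one has $\bd_\Sigma(\tau)=\tau$ and $\Upsilon_\tau=1$, so the $\tau$-component of $\gamma$ is the inclusion $\Ibar^{\upsilon_\tau}\hookrightarrow\Ibar^{\tilde\upsilon_\tau}$, which makes $\gamma$ a closed immersion and identifies its image as the graph of a morphism into $\prod_{\omega\notin\Sigma}\Ibar^{\tilde\upsilon_\omega}$.

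The execution, however, differs from the paper's. You pull back along the banal atlas and aim to show that the fibre $Z$ is \emph{affine}, which forces you to produce a global section of $\Ibar^{\tilde\upsilon}\to\Ibar^{\tilde\upsilon}/\gamma(\Ibar^{\upsilon_\Sigma})$; this is the step you flag as the main obstacle. The paper avoids this explicit section entirely. It argues instead via the factorisation $\tilde q\circ\fx^{\bj,\{\upsilon_\sigma\}_{\sigma\in\Sigma}}=\ton(\gamma)\circ q$: the representability of the quotient $\Ibar^{\tilde\upsilon}/\gamma(\Ibar^{\upsilon_\Sigma})$ (as a locally trivial principal homogeneous space under $\prod_{\omega\notin\Sigma}\Ibar^{\tilde\upsilon_\omega}$ over $\prod_{\omega\in\Sigma}\Ibar^{\tilde\upsilon_\omega}/\Ibar^{\upsilon_\omega}$) already gives that $\ton(\gamma)$ is schematic, quasicompact and separated; then \cite[Lemma 3.2.9a)]{pappas} yields the same for $\tilde q\circ\fx^{\bj,\{\upsilon_\sigma\}_{\sigma\in\Sigma}}$; and finally one concludes for $\fx^{\bj,\{\upsilon_\sigma\}_{\sigma\in\Sigma}}$ itself using that the diagonal of the target stack is a closed immersion. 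Your route is more self-contained (it does not invoke the external lemma from \cite{pappas}) and gives the stronger conclusion that the base-changed morphism is affine, at the price of the bookkeeping you mention; the paper's route is shorter because mere representability of the quotient, rather than the existence of a global trivialisation, suffices.
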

\begin{proof}
Notice that for every $\omega\in\Sigma$ we have $\bd_\Sigma(\omega)=\omega$, $\Upsilon_\omega=1$ and a commutative diagram:
$$\begin{CD}
\Ibar^{\upsilon_\Sigma}@>\gamma>>\Ibar^{\tilde\upsilon}\\
@AAA@VVV\\
\Ibar^{\upsilon_\omega}@>\id>>\Ibar^{\tilde\upsilon_\omega}
\end{CD}$$
It follows that $\gamma:\Ibar^{\upsilon_\Sigma}\rightarrow\Ibar^{\tilde\upsilon}$ is a closed immersion, moreover the quotient of $\Ibar^{\tilde\upsilon}$ by the image of $\gamma$ is 
representable, simply because it can be regarded as a locally trivial principal homogeneous space under the group $\prod_{\omega\notin\Sigma}\Ibar^{\tilde\upsilon_\omega}$ over the ground 
$\f_{p^f}$-scheme $\prod_{\omega\in\Sigma}\Ibar^{\tilde\upsilon_\omega}/\Ibar^{\upsilon_\omega}$. This proves that $\ton(\gamma)$ is schematic, quasicompact and separated. Using the ideas of 
\cite[Lemma 3.2.9a)]{pappas} the same holds for the map from $\Barb(\GL(n)_{W(\f_{p^r})},\{\upsilon_\sigma\}_{\sigma\in\Sigma})$ to $\ton(\Ibar^{\tilde\upsilon})$ which is obtained by composing 
the canonical projection $\tilde q:\Barb(\GL(n)_{W(\f_{p^r})},\{\tilde\upsilon_\sigma\}_{\sigma\in\z/r\z})\rightarrow\ton(\Ibar^{\tilde\upsilon})$ with $\fx^{\bj,\{\upsilon_\sigma\}_{\sigma\in\Sigma}}$. 
To finish the proof one only needs to observe that the diagonal of the former is again schematic, quasicompact and separated (in fact it is a closed immersion).
\end{proof}

\begin{rem}
\label{weirdVIII}
The earlier defined $\id_{\tilde\upsilon^{-1}\upsilon_\Omega}$ (cf. \eqref{Kuh}) can be recovered as any one of our functors $\fx^{\bj,\{\upsilon_\omega\}_{\omega\in\Omega}}$, 
provided only that $\upsilon_\omega$ lies in $\zen^{\GL(n)}$ for every $\omega\in\Omega$. Thus we have 
$\id_{\delta^{-1}\delta_\Omega}\cong\fx^{\bj,\{\delta_\omega\}_{\omega\in\Omega}}$ in the standard multiplicative case, where the choice of $\bj$ is arbitrary. 
\end{rem}

\subsubsection{Composition with the modular character} 
\label{Haar}

Later on we need to consider the character $\bachi_\sigma^\bj$ arising from precomposing $\gamma$ with the truncation to 
$\Ibar_0^{\tilde\upsilon}$ followed by the character $\prod_{\omega=0}^{r-1}\chi_{\tilde\upsilon_\omega^{-1}}^{\GL(n)}(-1,\sd)$, i.e.:
$$\begin{CD}
\Ibar_0^{\tilde\upsilon}@>{\chi_{\tilde\upsilon^{-1}}^{\GL(n)^r}(-1,\sd^r)}>>
\g_{m,\f_{p^f}}\\
@AAA@A{\bachi_\sigma^\bj}AA\\
\Ibar^{\tilde\upsilon}@<{\gamma|_{\Ibar^{\upsilon_\sigma}}}<<\Ibar^{\upsilon_\sigma}
\end{CD},$$
please see to part \ref{ballast} of the appendix for the definition of $\chi_{\tilde\upsilon_\omega^{-1}}^{\GL(n)}(-1,\sd)$. 
Furthermore, it turns out that $\bachi_\sigma^\bj$ factors through the $\pmod p$-reduction of a certain character 
\begin{equation*}
\prod_{l\in\z}\chi_{\upsilon_\sigma^{-1}}^{\GL(n)}(-l,\sd)^{d_{\sigma,l}}=\chi_\sigma^\bj:\I_0^{\upsilon_\sigma}\rightarrow\g_{m,W(\f_{p^f})},
\end{equation*}
in which the exponents are given by the nifty formulae: 
\begin{equation*}
d_{\sigma,l}:=\sum_{\bd_\Sigma(\omega)=\sigma,\,\bj(\omega)<l}p^{\bd_\Sigma^+(\omega)}
\end{equation*}
for every $\sigma\in\Sigma$.

\subsubsection{Compatibility with direct sums} 

If $\upsilon_\sigma=\left(\begin{matrix}\upsilon_\sigma^{(1)}&0\\0&\upsilon_\sigma^{(2)}\end{matrix}\right)$ is a decomposition into matrix blocks of 
size $n^{(1)}\times n^{(1)},\dots,n^{(2)}\times n^{(2)}$, with each $\upsilon_\sigma^{(i)}$ being a cocharacter of $\GL(n^{(i)})_{W(\f_{p^f})}$, then so is 
$\tilde\upsilon_\sigma=\left(\begin{matrix}\tilde\upsilon_\sigma^{(1)}&0\\0&\tilde\upsilon_\sigma^{(2)}\end{matrix}\right)$, with each $\tilde\upsilon_\sigma^{(i)}$ being 
the cocharacter of $\GL(n^{(i)})_{W(\f_{p^f})}$ gotten from the family $\upsilon_\sigma^{(i)}$ by using \eqref{soul}. We have a canonical $2$-commutative diagram
$$\begin{CD}
\Barb(\GL(n)_{W(\f_{p^r})},\{\upsilon_\sigma\}_{\sigma\in\Sigma})@<<<\prod_{i=1}^2\Barb(\GL(n^{(i)})_{W(\f_{p^r})},\{\upsilon_\sigma^{(i)}\}_{\sigma\in\Sigma})\\
@V{\fx^{\bj,\{\upsilon_\sigma\}_{\sigma\in\Sigma}}}VV@V{\prod_{i=1}^2\fx^{\bj,\{\upsilon_\sigma^{(i)}\}_{\sigma\in\Sigma}}}VV\\
\Barb(\GL(n)_{W(\f_{p^r})},\{\tilde\upsilon_\sigma\}_{\sigma\in\z/r\z})@<<<\prod_{i=1}^2\Barb(\GL(n^{(i)})_{W(\f_{p^r})},\{\tilde\upsilon_\sigma^{(i)}\}_{\sigma\in\z/r\z})
\end{CD},$$
where the horizontal arrows are induced from morphisms of $\Phbar$-data
\begin{eqnarray*}
&&((\GL(n^{(1)})\times\GL(n^{(2)}))_{W(\f_{p^r})},\{\upsilon_\sigma\}_{\sigma\in\Sigma})\rightarrow(\GL(n)_{W(\f_{p^r})},\{\upsilon_\sigma\}_{\sigma\in\Sigma})\\ 
&&((\GL(n^{(1)})\times\GL(n^{(2)}))_{W(\f_{p^r})},\{\tilde\upsilon_\sigma\}_{\sigma\in\z/r\z})\rightarrow(\GL(n)_{W(\f_{p^r})},\{\tilde\upsilon_\sigma\}_{\sigma\in\z/r\z}).
\end{eqnarray*}

\subsubsection{Compatibility with Cartier-Duality} 
\label{dualityII}

Consider the homomorphism $i:\g_m\times\GL(n)\rightarrow\GL(n);\,(m,A)\mapsto m\check A$, and let $(\g_{m,W(\f_{p^r})},\{\delta_\sigma\}_{\sigma\in\Sigma})$ 
be a standard multiplicative $\Phbar$-datum. Notice that $i$ induces maps between two pairs of $\Phbar$-data, namely:
\begin{eqnarray*}
&&((\g_m\times\GL(n))_{W(\f_{p^r})},\{(\delta_\sigma,\upsilon_\sigma)\}_{\sigma\in\Sigma})\rightarrow(\GL(n)_{W(\f_{p^r})},\{\upsilon_\sigma^t\}_{\sigma\in\Sigma})\\
&&((\g_m\times\GL(n))_{W(\f_{p^r})},\{(\delta,\tilde\upsilon_\sigma)\}_{\sigma\in\z/r\z})\rightarrow(\GL(n)_{W(\f_{p^r})},\{\tilde\upsilon_\sigma^t\}_{\sigma\in\z/r\z})
\end{eqnarray*}
where $\{\upsilon_\sigma^t\}_{\sigma\in\Sigma}$ and $\{\tilde\upsilon_\sigma^t\}_{\sigma\in\z/r\z}$ are the duals of $\{\upsilon_\sigma\}_{\sigma\in\Sigma}$ 
and $\{\tilde\upsilon_\sigma\}_{\sigma\in\z/r\z}$ in the sense of part (ii) of definition \ref{standard}. Consider the function 
$$\check\bj(\sigma):=r_\Sigma(\bd_\Sigma(\sigma))-\bj(\sigma)-1.$$ 
Since $\tilde\upsilon_\sigma$ is the cocharacter of $\GL(n)_{W(\f_{p^f})}$ obtained from the family $\upsilon_\sigma$ 
by using \eqref{soul}, we find that each $\tilde\upsilon_\sigma^t$ is obtained similarly from the family $\upsilon_\sigma^t$, 
provided that one uses $\check\bj$ instead of $\bj$. We have a canonical $2$-commutative diagram
$$\begin{CD}
\Barb(\GL(n)_{W(\f_{p^r})},\{\upsilon_\sigma^t\}_{\sigma\in\Sigma})@<<<\Barb((\g_m\times\GL(n))_{W(\f_{p^r})},\{(\delta_\sigma,\upsilon_\sigma)\}_{\sigma\in\Sigma})\\
@V{\fx^{\check\bj,\{\upsilon_\sigma^t\}_{\sigma\in\Sigma}}}VV@V{\fx^{\{\delta_\sigma\}_{\sigma\in\Sigma}}}\times\fx^{\bj,\{\upsilon_\sigma\}_{\sigma\in\Sigma}}VV\\
\Barb(\GL(n)_{W(\f_{p^r})},\{\tilde\upsilon_\sigma^t\}_{\sigma\in\z/r\z})@<<<\Barb((\g_m\times\GL(n))_{W(\f_{p^r})},\{(\delta,\tilde\upsilon_\sigma)\}_{\sigma\in\z/r\z})
\end{CD},$$
where the horizontal arrows are the ones that are induced from $i$. At last we wish to explain the unitary version of $\fx^{\bj,\{\upsilon_\sigma\}_{\sigma\in\Sigma}}$, so 
suppose that $2r$ is a divisor of $f$, that $(\GU(n),\{\upsilon_\sigma\}_{\sigma\in\Sigma})$ is a standard unitary $\Phbar$-datum, and that $\bj$ is a $\n_0$-valued function on 
$\z/2r\z$ with $\bj(r+\sigma)=r_\Sigma(\bd_\Sigma(\sigma))-\bj(\sigma)-1$ (satisfying (i) and (ii)). Whence it follows that there is a canonical $2$-commutative diagram
$$\begin{CD}
\Barb(\GU(n),\{\upsilon_\sigma\}_{\sigma\in\Sigma})
@>{\chi_{W(\f_{p^{2r}})}\oplus\rho}>>\Barb((\g_m\times\GL(n))_{W(\f_{p^{2r}})},\{(\upsilon^{(1)}_\sigma,\upsilon_\sigma^{(2)})\}_{\sigma\in\Sigma^{(2)}})\\
@V{\fx^{\bj,\{\upsilon_\sigma\}_{\sigma\in\Sigma}}}VV@V{\fx^{\bj,\{(\upsilon_\sigma^{(1)},\upsilon_\sigma^{(2)})\}_{\sigma\in\Sigma}}}VV\\
\Barb(\GU(n),\{\tilde\upsilon_\sigma\}_{\sigma\in\z/r\z})
@>{\chi_{W(\f_{p^{2r}})}\oplus\rho}>>\Barb((\g_m\times\GL(n))_{W(\f_{p^{2r}})},\{(\tilde\upsilon^{(1)}_\sigma,\tilde\upsilon_\sigma^{(2)})\}_{\sigma\in\z/2r\z})
\end{CD},$$
according to lemma \ref{galoisII}. 

\subsection{Skew-Hermitian variants}

We wish to introduce a couple of auxiliary notions: In the following $C^+$ stands for $W(\f_{p^r})$ and $C$ stands for one of the two $C^+$-algebras 
$W(\f_{p^{2r}})$ or $W(\f_{p^r})\oplus W(\f_{p^r})$ and we write $\bar\empty:C\rightarrow C$ for the involution given by $x\mapsto\tr_{C/C^+}(x)-x$. 
By a $C$-skew-Hermitian display of height $n$ over $R$ we mean a structure consisting of the following data:

\begin{itemize}
\item
a display $(M,N,F,V^{-1})$ over $R$ together with an action of the $\z_p$-algebra $C$, such that the rank of $M$ is equal to $2rn$,
\item
a multiplicative display $(K,I(R)K,F,V^{-1})$ together with an action of the $\z_p$-algebra $C^+$ such that $K$ is invertible, when regarded as a module over $C^+\otimes_{\z_p}W(R)$,
\item
a perfect skew-Hermitian pairing
$$-\check\Bapsi=\Psi:M\stackrel{\cong}{\rightarrow}\Hom_{C\otimes_{\z_p}W(R)}(\bar M,C\otimes_{C^+}K)$$
and such that $\Psi(N,N)\subset I(R)K$ and $\Psi(V^{-1}(x),V^{-1}(y))=V^{-1}(\Psi(x,y))$ holds for all $x,y\in N$.
\end{itemize}

Let us write $C_{an}$ for the set of $\z_p$-linear homomorphism $\iota:C\rightarrow W(\f_{p^f})$, where $f$ is a fixed multiple 
of $r$ or $2r$ depending on whether or not $C=C^+\oplus C^+$. Following \cite[Subsection 2.3]{pappas} we notice:

\begin{thm}
\label{exampleV}
Consider a perfect skew-Hermitian pairing 
$$-\check\Bapsi=\Psi:\V\stackrel{\cong}{\rightarrow}\check{\bar\V}$$ 
on some free $C$-module $\V$ of finite rank $n$ and let $\upsilon$ be a $W(\f_{p^f})$-rational cocharacter of the reductive $\z_p$-group 
$\Res_{C^+/\z_p}\GU(\V/C,\Psi)$ of which the $\iota$-component of its image in the $\z_p$-group $\Res_{C/\z_p}\GL(\V/C)$ lies in the conjugacy class of
\begin{equation}
\label{exampleVIII}
\g_m\rightarrow\GL(n);z\mapsto\diag(\overbrace{z,\dots,z}^{d_\iota},\overbrace{1,\dots,1}^{n-d_\iota}),
\end{equation}
for some family $\du=\{d_\iota\}_{\iota\in C_{an}}$ of non-negative integers satisfying $d_{\iotar}=n-d_\iota$. Then $(\Res_{C^+/\z_p}\GU(\V/C,\Psi),\upsilon)$
is a $W(\f_{p^f})$-rational display datum and the fiber of the $W(\f_{p^f})$-stack $\B(\Res_{C^+/\z_p}\GU(\V/C,\Psi),\upsilon)$ over a 
$p$-adically separated and complete $W(\f_{p^f})$-algebra $R$ is canonically equivalent to the groupoid of $C$-skew-Hermitian displays 
$(M,N,F,V^{-1})$ of height $n$ over $R$ such that the $\iota$-eigenspace of $M/N$ is a projective $R$-module of rank $d_\iota$.
\end{thm}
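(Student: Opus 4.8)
The plan is to obtain the asserted equivalence by pushing a $(\gG,\upsilon)$-display forward along the tautological representation and the multiplier character and re-assembling the outcome, using the functoriality of Subsection \ref{spass} together with the comparison between $(\GL,\mu)$-displays and ordinary displays of \cite[Subsection 2.3]{pappas} (see also \cite{zink1} and \cite{hadi}). Write $\G:=\GU(\V/C,\Psi)$, so that $\gG=\Res_{C^+/\z_p}\G$, and let $\rho:\gG\rightarrow\Res_{C/\z_p}\GL(\V/C)$ and $\chi:\gG\rightarrow\Res_{C^+/\z_p}\g_{m,C^+}$ denote the $\z_p$-homomorphisms induced by the tautological representation and the multiplier character.

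The first point is that $(\gG,\upsilon)$ is a display datum, i.e.\ that $\upsilon$ is minuscule. After base change to $W(\f_{p^f})$, the Lie algebra of $\gG$ embeds into $\Lie(\Res_{C/\z_p}\GL(\V/C))\otimes_{\z_p}W(\f_{p^f})\cong\prod_{\iota\in C_{an}}\mathfrak{gl}_n$, and on the $\iota$-th factor $\upsilon$ acts through its $\iota$-component, which by hypothesis is conjugate to the cocharacter \eqref{exampleVIII}; the weights of the latter on $\mathfrak{gl}_n$ lie in $\{-1,0,1\}$, so the weights of $\upsilon$ on $\Lie\gG$ lie in $\{-1,0,1\}$. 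Hence $\upsilon$ is minuscule, and since $\gG$ is reductive by hypothesis, $(\gG,\upsilon)$ is a $\Phi$-datum, hence a $W(\f_{p^f})$-rational display datum in the sense of the convention following Definition \ref{concept}.

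For the equivalence I would first note that both sides are stacks for the fpqc-topology on the category of $p$-adically separated and complete $W(\f_{p^f})$-algebras: the left hand side by construction, the right hand side because a display admits a normal decomposition and a $\GU(\V/C,\Psi)$-torsor admits a trivialisation, both fpqc-locally. It therefore suffices to produce a functorial equivalence on banal objects respecting descent. A banal $(\gG,\upsilon)$-display over $R$ is represented by an element $U$ of ${^W\G^\Sigma}(R)$, well-defined up to the relation $U\sim k^{-1}U\Phi_R(k)$ with $k\in\I^{\upsilon_\Sigma}(R)$. Pushing $U$ forward along $\rho$ (resp.\ along $\chi$), using the functoriality of Subsubsection \ref{Geh} and lemma \ref{galoisII} to handle the passage between the relevant residue degrees, and invoking the quoted comparison, produces a display $(M,N,F,V^{-1})$ of rank $2rn$ carrying an action of $C$ (resp.\ an invertible multiplicative display $(K,I(R)K,F,V^{-1})$ carrying an action of $C^+$). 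The residual information, namely that the torsor underlying $U$ reduces its structure group from $\Res_{C/\z_p}\GL(\V/C)\times\Res_{C^+/\z_p}\g_m$ down to $\gG$, amounts --- after twisting $\V$ and its form --- to a perfect identification $-\check\Bapsi=\Psi:M\stackrel{\cong}{\rightarrow}\Hom_{C\otimes_{\z_p}W(R)}(\bar M,C\otimes_{C^+}K)$, while the defining relation of a banal $(\gG,\upsilon)$-display translates exactly into $\Psi(N,N)\subset I(R)K$ and $\Psi(V^{-1}x,V^{-1}y)=V^{-1}\Psi(x,y)$. The condition on the $\iota$-components of $\upsilon$ says that the $\iota$-eigenspace of $M/N$ --- which is the weight-one part of the $\iota$-component of $\upsilon$ on the normal decomposition --- is projective of rank $d_\iota$, and the relation $d_{\iotar}=n-d_\iota$ is automatic, being forced by the compatibility of $\Psi$ with $\bar\empty$. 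Conversely a $C$-skew-Hermitian display of height $n$ of the prescribed type determines a locally trivial $\GU(\V/C,\Psi)$-torsor carrying $(F,V^{-1})$, hence fpqc-locally an element of ${^W\G^\Sigma}(R)$ well-defined up to $\I^{\upsilon_\Sigma}(R)$; the two constructions are mutually quasi-inverse, compatible with base change, and carry the morphism set $\{k\in\I^{\upsilon_\Sigma}(R)\mid U'=k^{-1}U\Phi_R(k)\}$ onto the $C$-linear isomorphisms of displays respecting $K$, $\Psi$, $F$ and $V^{-1}$. Patching over an fpqc cover then yields the statement for arbitrary $p$-adically separated and complete $R$.

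The step I expect to be the main obstacle is the bookkeeping, within the banal case, of the passage between a $\GU(\V/C,\Psi)$-torsor equipped with $(F,V^{-1})$ and the concrete self-dual Dieudonn\'e module together with its multiplicative companion $K$: one must check that the similitude factor of the Frobenius is absorbed correctly into $K$, that perfectness of the pairing survives the comparison, and --- when $C=C^+\oplus C^+$ is split --- that the construction degenerates to the $\GL$-case already treated in \cite[Subsection 2.3]{pappas}. This is precisely the point where perfectness of $\Psi$ is used.
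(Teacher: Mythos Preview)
Your proposal is correct and follows exactly the approach the paper intends: the paper does not give a proof of this theorem at all, but simply introduces it with ``Following \cite[Subsection 2.3]{pappas} we notice:'' and states the result. Your write-up is a faithful unpacking of that citation---pushing forward along the tautological representation and the multiplier, invoking the $\GL$-case from \cite[Subsection 2.3]{pappas}, and reading off the skew-Hermitian pairing from the reduction of structure group---so there is nothing to add.
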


We note in passing that $d_\iota\in\{0,n\}$ for at least one $\iota\in C_{an}$ implies the adjoint nilpotence of all $(\Res_{C^+/\z_p}\GU(\V/C,\Psi),\upsilon)$-displays! 
Let us fix $r\in\n$ and $f\in\n$ and consider an index set $\pi$ of finite and odd cardinality together a family of skew-Hermitian 
display data $\{(\V_i,\Psi_i,\upsilon_i)\}_{i\in\pi}$, whose dimension invariants $\du_i=\{d_{i,\iota}\}_{\iota\in C_{an}}$ satisfy

\begin{eqnarray}
\label{exampleIII}
&&\Card(\{i\in\pi\mid\,d_{i,\iota}=n_i\})\geq\frac{\Card(\pi)-1}2\\
\label{exampleIV}
&&\Card(\{i\in\pi\mid\,d_{i,\iota}=0\})\geq\frac{\Card(\pi)-1}2
\end{eqnarray}

for every $\iota\in C_{an}$. We write $\Psi^\pi$ for the natural skew-Hermitian pairing on the $C$-linear tensor product $\V^\pi$ of the $C$-modules $\V_i$ and we
write $\upsilon^\pi$ for the $W(\f_{p^r})$-rational cocharacter of $\Res_{C^+/\z_p}\GU(\V^\pi/C,\Psi^\pi)$ arising from the tensor product of the $\upsilon_i$, divided 
by the central cocharacter $\delta^{\frac{\Card(\pi)-1}{2}}$. The validity of \eqref{exampleIII} and \eqref{exampleIV} implies that $(\V^\pi,\Psi^\pi,\upsilon^\pi)$ is a 
$W(\f_{p^f})$-rational skew-Hermitian display datum, of which the dimension invariants $\du_\pi=\{d_{\pi,\iota}\}_{\iota\in C_{an}}$ satisfy
\begin{equation}
\label{exampleI}
\sum_{\Card(I)=\frac{\Card(\pi)+1}2}\prod_{i\in I}d_{i,\iota}\prod_{i\notin I}(n_i-d_{i,\iota})=d_{\pi,\iota}
\end{equation}
while $n=\prod_{i\in\pi}n_i$. Moreover, the tensor product induces a natural morphism of display data
\begin{equation*}
g^\pi:(\prod_{i\in\pi}\Res_{C^+/\z_p}\GU(\V_i/C,\Psi_i),\upsilon_i)\rightarrow(\Res_{C^+/\z_p}\GU(\V^\pi/C,\Psi^\pi),\delta^{\frac{\Card(\pi)-1}{2}}\upsilon^\pi),
\end{equation*}
thus inducing a $1$-morphism:
\begin{eqnarray}
\label{exampleVII}
&&\id_{\delta^{\frac{\Card(\pi)-1}{2}}}\circ\B(g^\pi):\B(\prod_{i\in\pi}\Res_{C^+/\z_p}\GU(\V_i/C,\Psi_i),\upsilon_i)\\
\label{exampleXI}
&&\rightarrow\B(\Res_{C^+/\z_p}\GU(\V^\pi/C,\Psi^\pi),\upsilon^\pi)
\end{eqnarray}

\begin{defn}
\label{exampleX}
A  $C$-multi-skew-Hermitian display datum over $W(\f_{p^f})$ consists of a family $\{(\V_i,\Psi_i,\upsilon_i)\}_{i\in\Lambda}$ as in theorem 
\ref{exampleV} together with a family of involutive $C$-algebras $\R_\pi$, where $\pi$ runs through a subset $\Pi$ of the power set of $\Lambda$ each 
of whose elements have odd cardinality and satisfy \eqref{exampleIII} and \eqref{exampleIV} (where $\du_i$ are the dimension invariants of $\upsilon_i$ 
as in \eqref{exampleVIII}). We write $\gB^{(\{(\V_i,\Psi_i,\upsilon_i)\}_{i\in\Lambda},\{\R_\pi\}_{\pi\in\Pi})}$ for the fpqc-stack over $W(\f_{p^f})$ assigning to 
any $W(\f_{p^f})$-algebra $R$ with $pR\subset\sqrt{0_R}$ the groupoid consisting of a $(\prod_{i\in\Lambda}\Res_{C^+/\z_p}\GU(\V_i/C,\Psi_i),\upsilon_i)$-display 
$\S$ over $R$ together with $C$-linear and $*$ preserving actions $s_\pi$ of $\R_\pi$ on the canonical $C$-skew-Hermitian $R$-displays $(M^\pi,N^\pi,F,V^{-1})$ 
over $R$ arising from \eqref{exampleVII} followed by an application of theorem \ref{exampleV} to \eqref{exampleXI}. If, in addition 
\begin{equation}
\label{exampleVI}
\exists\,\iota\in C_{an}\,\forall i\in\Lambda:d_{i,\iota}\in\{0,n_i\}
\end{equation}
holds, then the family $(\{(\V_i,\Psi_i,\upsilon_i)\}_{i\in\Lambda},\{\R_\pi\}_{\pi\in\Pi})$ is called multicompact.
\end{defn}

\subsubsection{Deformations with constant Newton polygon}
Let $k$ be an algebraically closed field of characteristic $p$ and fix $r\in\n$. Let $\P=(\{M_\sigma\}_{\sigma\in\z/r\z},\{N_\sigma\}_{\sigma\in\z/r\z},F,V^{-1})$ be an adjoint 
nilpotent $\z/r\z$-graded display over $k$ and let $\{\Psi_\sigma\}_{\sigma\in\z/r\z}$ be a skew-Hermitian structure thereon (N.B.: In this subsubsection it does no harm to take 
$K_\sigma=W(k)$, so that $\Psi_\sigma$ is an isomorphism from $M_\sigma$ to $\check M_{\sigma+\frac r2}$). Let us write $s_1\geq\dots\geq s_n$ for the $V$-slopes of $M$, in 
the sense of \cite{zink4}, notice that $1-s_{n-i+1}=s_i\in]0,1[$. Let us write $\q\otimes M_\sigma=\bigoplus_{l\in\q}M_{l,\sigma}$ for the $V$-slope decomposition of $\q\otimes M$. 
One knows that for some integer $0<s\equiv0\pmod r$ and suitable elements $e_{i,\sigma}\in\q\otimes M_\sigma$ the following assertions hold for all $\sigma\in\{0,\dots,\frac r2-1\}$:
\begin{itemize}
\item
$e_{1,\sigma},\dots,e_{n,\sigma}$ is a $K(k)$-basis for $\q\otimes M_\sigma$
\item
$F^s(e_{i,\sigma})=p^{(1-s_i)s}e_{i,\sigma}$
\item
$\Psi_\sigma(e_{i,\sigma},e_{j,\sigma+\frac r2})=\delta_{i+j,n+1}$
\end{itemize}
With respect to a choice of bases as above, let $A_\sigma\in\Mat(n\times n,K(k))$ be the matrix describing the homomorphism $V^\sharp$ from 
$\bigoplus_{i=1}^nK(k)e_{i,\sigma}=\q\otimes M_\sigma$ to $\bigoplus_{i=1}^nK(k)\otimes e_{i,\sigma+1}=K(k)\otimes_{F,W(k)}M_{\sigma+1}$, 
due to the slope decomposition it has the shape of a diagonal block matrix and satisfies:
$${^{F^{s-1}}A_{s-1}}\dots{^FA_1}A_0=\diag(p^{s_1s},\dots,p^{s_ns})$$
From now on we assume that $M$ has at least two different slopes, say $s_m>s_{m+1}$ for some fixed index $m$, and we want to pay attention to the two corresponding submatrices
$B_\sigma\in\Mat(b\times b,K(k))$ and $D_\sigma\in\Mat(d\times d,K(k))$ as $\sigma$ varies. Consider the $W(k)[[t]]$-module $W(k)[[t]]\otimes_{W(k)}M=:\tilde M$ and the frame 
$(W(k)[[t]],pW(k)[[t]],\tau)$ where $\tau$ is defined by $\tau(t)=t^p$. We will use the technique of Norman to define (a family of) $\z/r\z$-graded skew-Hermitian $W(k)[[t]]$-windows whose 
underlying $W(k)[[t]]$-module is $\tilde M$, and whose special fiber is $M$. Let $H=(H_0,\dots,H_{\frac r2-1})$ be a $\frac r2$-tuple of $b\times d$-matrices over $K(k)$ and proceed further 
as follows: Define a $\q\otimes W(k)[[t]]$-valued element $U_\sigma$ of $\GL(M_\sigma/W(k))$ by decreeing $U_\sigma-\id_{\q\otimes\tilde M_\sigma}$ to be an upper triangular block matrix 
with a single non-zero block, namely $tH_\sigma$ (being the $b\times d$-block lying above the $m+1$st row and to the right of the $m$th column). For the time being we fix $H$ and declare 
$U_\sigma$ to be $\check U_{\sigma-\frac r2}$ for all $\sigma\in\{\frac r2,\dots,r-1\}$. Note that $U_\sigma\equiv\id_{\q\otimes\tilde M_\sigma}\mod t$. If $H\in\Mat(b\times d,K(k))^\frac r2$ 
is $p$-adically sufficiently small, then $U_\sigma$ is a $W(k)[[t]]$-valued element of $\GL(M_\sigma/W(k))$, so that precomposing $F$ and $V^{-1}$ with $U_\sigma$ yields some 
$\z/r\z$-graded skew-Hermitian $W(k)[[t]]$-window $_H\tilde M$, whose special fiber is $M$. Also, note that the corresponding matrix for its Verschiebung $V^\sharp|_{\tilde M_\sigma}$ from 
$_H\tilde M_\sigma$ to $W(R)\otimes_{F,W(R)}{_H\tilde M_{\sigma+1}}$ is $A_\sigma U_\sigma^{-1}=\tilde A_\sigma\in\Mat(n\times n,W(k)[[t]])$, of which the restriction to the subquotient
\begin{equation}
 \label{immernochnicht}
(M_\sigma\cap\bigoplus_{l\geq s_{m+1}}M_{l,\sigma}))/(M_\sigma\cap\bigoplus_{l>s_m}M_{l,\sigma}),
\end{equation}
gives rise to some block matrix $\left(\begin{matrix}B_\sigma&C_\sigma\\0&D_\sigma\end{matrix}\right)$, and 
$C_\sigma=-tB_\sigma H_\sigma$ holds for $\sigma\in\{0,\dots,\frac r2-1\}$. Furthermore, the restriction of the block matrix
$${^{F^{s-1}}\tilde A_{s-1}}\dots{^F\tilde A_1}\tilde A_0=:\tilde A$$
to \eqref{immernochnicht} is given by
$$\left(\begin{matrix}p^{s_ms}&C\\0&p^{s_{m+1}s}\end{matrix}\right),$$ 
where $C={^{F^{s-1}}B_{s-1}}\dots{^FB_1}C_0+\dots+{^{F^{s-1}}C_{s-1}}{^{F^{s-2}}D_{s-2}}\dots D_0$. Next we study whether $_H\tilde M$ is an isotrivial deformation 
of $M$, so suppose that some $\z/r\z$-graded isogeny $h_\sigma:\q\otimes{_H\tilde M_\sigma}\rightarrow\q\otimes{_0\tilde M_\sigma}$ lifts $\id_{\q\otimes M}$. Again 
we want to look at the matrix of $h_\sigma$: Due to slope reasons this is an upper triangular block matrix, and due to the rigidity of unit root crystals, its diagonal entries 
are equal to $1$ (cf. \cite[Lemma 42]{zink2}). Furthermore, the $\z/r\z$-graded homomorphism that is induced on \eqref{immernochnicht} is again described by a family 
of block matrices $\left(\begin{matrix}1&k_\sigma\\0&1\end{matrix}\right)$ for certain $k_\sigma\in\q\otimes\Mat(b\times d,tW(k)[[t]])$ and an inspection of $h_0$ shows: 
$$p^{(s_m-s_{m+1})s}k_0-{^{F^s}k_0}=\frac C{p^{s_{m+1}s}}$$ 
This equation has no solution unless $H_0=\dots=H_{\frac r2-1}=0$, in which case it has a unique solution being $k_0=0$.

\begin{lem}
\label{isogdefo}
Let $\gamma:\P_1\rightarrow\P_2$ be a $\z/r\z$-graded isogeny between two $\z/r\z$-graded displays (possibly equipped with skew-Hermitian 
structures) over an algebraically closed field $k$. If $\P_1$ (and hence $\P_2$) possesses more than one Newton slope, then there exists 
at least one non-isotrivial deformation $\tilde\gamma:\tilde\P_1\rightarrow\tilde\P_2$ with a constant Newton-polygon over $k[[t]]$.
\end{lem}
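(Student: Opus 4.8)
The plan is to deduce this lemma from the explicit Norman-style construction of the $\z/r\z$-graded (skew-Hermitian) $W(k)[[t]]$-windows $_H\tilde M$ carried out just before its statement, by running that construction \emph{along} the isogeny $\gamma$. First I would put $V_\sigma:=\q\otimes M_{1,\sigma}$ and identify it with $\q\otimes M_{2,\sigma}$ via $\gamma$; this identification respects the $\z/r\z$-grading, the operators $F$ and $V^{-1}$, the $V$-slope decompositions, and — when present — the skew-Hermitian pairings. Then I would fix an integer $0<s\equiv0\pmod r$, choose on each $V_\sigma$ a $K(k)$-basis adapted to the slope decomposition as above (so that the slopes $s_1\geq\dots\geq s_n$ and the matrices $A_\sigma$, $B_\sigma$, $D_\sigma$ are as there), and fix two distinct adjacent slopes $s_m>s_{m+1}$, which is possible precisely because $\P_1$, and hence $\P_2$, has more than one Newton slope.

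Next I would choose a tuple $H$ of $b\times d$-matrices over $K(k)$ with $H_0\neq0$ and all remaining components zero; since $A_0$ is invertible and block diagonal its diagonal block $B_0$ is invertible, so $C_0=-tB_0H_0\neq0$. Shrinking $H_0$ $p$-adically I can arrange that the associated $U_\sigma\in\GL(V_\sigma/W(k))(W(k)[[t]])$ restricts to an integral element of both $\GL(M_{1,\sigma}/W(k))$ and $\GL(M_{2,\sigma}/W(k))$ over $W(k)[[t]]$, for every $\sigma$; in the skew-Hermitian case I additionally impose $U_\sigma=\check U_{\sigma-\frac r2}$ for $\sigma\in\{\frac r2,\dots,r-1\}$, exactly as in the construction of $_H\tilde M$. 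Precomposing $F$ and $V^{-1}$ of $\P_1$ (resp.\ of $\P_2$) with the $U_\sigma$ then produces $\z/r\z$-graded (skew-Hermitian) $W(k)[[t]]$-windows $\tilde\P_1:={_H\tilde M_1}$ and $\tilde\P_2:={_H\tilde M_2}$ over $k[[t]]$ with special fibres $\P_1$ and $\P_2$. Because $\gamma$ is $\z/r\z$-graded and intertwines the structure maps of $\P_1$ and $\P_2$, and because one and the same endomorphism $U_\sigma$ of $V_\sigma$ is used on both sides, I expect the base change $\tilde\gamma:=\id_{W(k)[[t]]}\otimes\gamma$ to intertwine the structure maps of $\tilde\P_1$ and $\tilde\P_2$ (the verification being a rational identity $U_{2,\sigma}\circ\gamma_\sigma=\gamma_\sigma\circ U_{1,\sigma}$ in $V_\sigma$), hence to be a $\z/r\z$-graded isogeny $\tilde\P_1\to\tilde\P_2$ over $k[[t]]$, compatible with skew-Hermitian structures when present, and specialising to $\gamma$; this exhibits $\tilde\gamma$ as a deformation of $\gamma$.

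It then remains to verify constancy of the Newton polygon and non-isotriviality, both of which should follow directly from the computations preceding the lemma. For the Newton polygon I would use that the $s$-fold iterated Verschiebung matrix $\tilde A={^{F^{s-1}}\tilde A_{s-1}}\cdots\tilde A_0$ with $\tilde A_\sigma=A_\sigma U_\sigma^{-1}$ is block upper triangular for the slope ordering, with diagonal blocks $\diag(p^{s_1s},\dots,p^{s_ns})$, each $U_\sigma^{-1}$ being block upper triangular unipotent; hence the $F$-isocrystal of $\tilde\P_1$ at a geometric generic point of $\Spec k[[t]]$ carries a filtration with isoclinic graded pieces of slopes $s_1\geq\dots\geq s_n$, forcing its Newton polygon to agree with that of the special fibre $\P_1$, and since $\Spec k[[t]]$ has only two points the polygon is constant along it (and the same then holds for the isogenous $\tilde\P_2$; compare \cite[Theorem 3.6(ii)]{rapoport}). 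For non-isotriviality I would argue that an isotrivial $\tilde\gamma$ would in particular make $\q\otimes\tilde\P_1$ a constant isocrystal after enlarging the base, producing a $\z/r\z$-graded isogeny $h_\sigma:\q\otimes{_H\tilde M_{1,\sigma}}\to\q\otimes{_0\tilde M_{1,\sigma}}$ lifting $\id_{\q\otimes M_1}$; but the calculation just before the lemma — examining the matrix of $h_0$ on the subquotient \eqref{immernochnicht}, which leads to $p^{(s_m-s_{m+1})s}k_0-{^{F^s}k_0}=C/p^{s_{m+1}s}$ with $C\neq0$ — shows that such an $h$ exists only when $H=0$, a contradiction. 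The only genuinely new point, and the one I expect to need the small bit of care, is that a single tuple $H\neq0$ can be chosen that is admissible for $\P_1$ and $\P_2$ at once and still keeps the obstruction class $C$ nonzero; this is arranged by the $p$-adic shrinking above together with the invertibility of $B_0$. Finally I would remark that the whole argument applies verbatim in the absence of a skew-Hermitian structure, the only modification being that the components $H_\sigma$ then range over all of $\z/r\z$ without the reality constraint $U_\sigma=\check U_{\sigma-\frac r2}$; everything else is a transcription of the analysis already carried out.
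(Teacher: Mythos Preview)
Your proposal is correct and follows essentially the same approach as the paper. The paper's proof is terser: it picks a sufficiently small $H\neq0$ for $\P_1$, then an integer $c$ with $p^c\gamma^{-1}$ integral, and uses the deformation parameter $p^cH$ on $\P_1$ together with the transported deformation matrix $\id_{\P_2}+\gamma\circ tH\circ(p^c\gamma^{-1})$ on $\P_2$; your ``identify the rational spaces via $\gamma$ and shrink $H$ so that the single $U_\sigma$ is integral on both lattices'' is exactly this transport, phrased intrinsically. The constancy of the Newton polygon and the non-isotriviality are left implicit in the paper (they are built into the construction of $_H\tilde M$ analyzed just before the lemma), whereas you spell them out.
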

\begin{proof}
We begin with choosing a $p$-adically sufficiently small $0\neq H$, which gives rise to a deformation of $\P_1$, simply by following the algorithm explained above. Next, we choose $c\in\n_0$ such 
that $p^c\gamma^{-1}$ constitutes a $\z/r\z$-graded isogeny from $\P_2$ to $\P_1$. Observe that $p^cH$ gives rise to another non-isotrivial deformation $\tilde\P_1$ of $\P_1$, which however can be manufactured into a corresponding deformation $\tilde\P_2$ of $\P_2$, namely by composing $V^\sharp$ with the inverse of the deformation matrix $U=(\id_{\P_2}+\gamma\circ tH\circ(p^c\gamma^{-1}))$.
\end{proof}

\section{Realizations of displays with additional structure}
\label{Klamauk}

Let us complement the proposition \ref{twistI} with the following result, whose proof follows along identical lines:

\begin{lem}
\label{twistII}
Let $\upsilon:\g_{m,W(\f_{p^f})}\rightarrow\G$ be a cocharacter all of whose weights are less than or equal to $h\geq1$, where $\G$ is a reductive group over 
$W(\f_{p^f})$. Let $\rho:\G\rightarrow\GL(n)_{W(\f_{p^f})}$ be a representation, such that $\rho\circ\upsilon$ has no positive weight, so that there exists an effective 
cocharacter $\beta:\a_{W(\f_{p^f})}^1\rightarrow\Mat(n\times n)_{W(\f_{p^f})}$ with $\beta|_{\g_{m,W(\f_{p^f})}}=\rho\circ\upsilon^{-1}$. Then, one has an equality 
$$F^h\circ(\beta(p){^W\rho|_{\Ibar^\upsilon}})=(^{WF^h}\rho\circ\Phbar^{\upsilon,h}){^{F^h}\beta(p)}$$
of functions from $\Ibar^\upsilon$ to $^W\Mat(n\times n)_{\f_{p^f}}$. 
\end{lem}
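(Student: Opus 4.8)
The plan is to reduce the identity to the characterizing property of $\Phbar^{\upsilon,h}$ supplied by proposition \ref{twistI}, in the same spirit in which that proposition was itself reduced to \cite[Proposition 3.1.2]{pappas}. Both sides of the claimed equation are morphisms of $\f_{p^f}$-functors from $\Ibar^\upsilon$ to ${}^W\Mat(n\times n)_{\f_{p^f}}$, and by theorem \ref{triangleI} the source $\Ibar^\upsilon$ is represented by a reduced affine $\f_{p^f}$-scheme; by Yoneda it therefore suffices to test the identity on the universal point over $S_0:=\Gamma(\Ibar^\upsilon,\O_{\Ibar^\upsilon})$, or, equivalently, on $R$-valued points for arbitrary reduced $\f_{p^f}$-algebras $R$. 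For such $R$ the Witt ring $W(R)$ is $\z$-torsion free, so the desired equality of matrices over $W(R)$ may be verified after base change to $\Mat(n,\q\otimes W(R))$.

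So fix a reduced $\f_{p^f}$-algebra $R$ and $g\in\Ibar^\upsilon(R)\subset{^W\G}(R)=\G(W(R))$. By proposition \ref{twistI} the image of $\Phbar^{\upsilon,h}(g)$ in $\G(\q\otimes W(R))$ equals ${}^{F^h}(\upsilon(\frac1p)g\upsilon(p))$. Since $\rho$ is a morphism of $W(\f_{p^f})$-group schemes it is compatible with the Witt-vector Frobenius, in the sense that ${}^{WF^h}\rho$ carries ${}^{F^h}$ of an element to ${}^{F^h}$ of its $\rho$-image; combining this with the multiplicativity of $\rho$ gives, inside $\GL(n,\q\otimes W(R))$,
$${}^{WF^h}\rho\bigl(\Phbar^{\upsilon,h}(g)\bigr)={}^{F^h}\!\bigl(\rho(\upsilon(\frac1p))\cdot\rho(g)\cdot\rho(\upsilon(p))\bigr).$$
Now by hypothesis $\rho\circ\upsilon^{-1}$ is the restriction to $\g_m$ of the monoid homomorphism $\beta$, whence $\rho(\upsilon(p))=(\rho\circ\upsilon)(p)=\beta(p)^{-1}$ and $\rho(\upsilon(\frac1p))=(\rho\circ\upsilon)(\frac1p)=\beta(p)$ in $\GL(n,\q\otimes W(\f_{p^f}))$; here $\beta(p)$ is an honest matrix over $W(\f_{p^f})$ precisely because $\rho\circ\upsilon$ has no positive weight, so that $\beta$ is defined on all of $\a^1$. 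Substituting and using that $F^h$ is a ring endomorphism turns the displayed identity into
$${}^{WF^h}\rho\bigl(\Phbar^{\upsilon,h}(g)\bigr)={}^{F^h}\beta(p)\cdot{}^{F^h}(\rho(g))\cdot\bigl({}^{F^h}\beta(p)\bigr)^{-1}.$$

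Multiplying this on the right by ${}^{F^h}\beta(p)$, the two outer factors cancel and the right-hand side of the lemma becomes ${}^{F^h}\beta(p)\cdot{}^{F^h}(\rho(g))={}^{F^h}\bigl(\beta(p)\cdot{}^W\rho(g)\bigr)$, which is exactly the value at $g$ of the left-hand side; note also that this last matrix is manifestly integral, so that both sides are functions to ${}^W\Mat(n\times n)_{\f_{p^f}}$ as claimed. The main point demanding care is the bookkeeping of the various Frobenius twists ${}^F$, ${}^W$, ${}^{WF^h}$ --- in particular the compatibility of ${}^{WF^h}\rho$ with ${}^{F^h}$ invoked above and the correct side on which each factor $\beta(p)^{\pm1}$ appears. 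A reader preferring an argument literally parallel to that of proposition \ref{twistI} may instead pass to a splitting field $\f_{p^e}$, decompose $\Ibar^\upsilon_{\f_{p^e}}$ as the product of ${}^WU^0_{\upsilon^{-1},\f_{p^e}}$ with the root-space factors $\U_i(I_{h_i})\cong I_{h_i}$, and check the identity blockwise from the description $\Phbar^{\upsilon,h}=F^{h-h_i}V^{-h_i}$ on each $\U_i(I_{h_i})$; the computation above is just the coordinate-free repackaging of that.
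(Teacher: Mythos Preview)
Your proof is correct and follows essentially the same route the paper has in mind: the paper only says the proof ``follows along identical lines'' as proposition~\ref{twistI}, and the key line there is precisely the reduction to reduced test algebras (using that $\Ibar^\upsilon$ is reduced), so that $W(R)$ is torsion-free and the identity may be checked in $\G(\q\otimes W(R))$, where it follows from the characterizing property of $\Phbar^{\upsilon,h}$. Your remark at the end, that one may alternatively split $\G$ and check on root-space factors, is the more literal parallel to the construction in proposition~\ref{twistI}, but the direct argument you give is the natural one once $\Phbar^{\upsilon,h}$ already exists.
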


The previous result is going to enter into the construction of certain realization functors, which 
we define in this section. We want to begin with a short description of their target categories:

\begin{defn}
\label{moreflexIV}
Let $A$ be a $p$-adically separated and complete ring, and let $\tau:A\rightarrow A$ be an endomorphism satisfying $\tau(x)\equiv x^p\pmod p$ for all $x\in A$.
\begin{itemize}
\item
By a $F$-module (resp. $V$-module) over $(A,\tau)$ we mean a pair $(M,F^\sharp)$ (resp. $(M,V^\sharp)$) which consists of a finitely generated and projective 
$A$-module $M$ together with an $A$-linear map $F^\sharp:A\otimes_{\tau,A}M\rightarrow M$ (resp. $V^\sharp:M\rightarrow A\otimes_{\tau,A}M$). For every $0\geq n\in\z$ 
(resp. $n\in\n_0$) we let $A(n)$ be the $F$-module (resp. $V$-module) over $(A,\tau)$ which is given by the pair $(A,p^{-n}\id_A)$ (resp. by $(A,p^n\id_A)$).
\item
By a $\tau$-crystal over $(A,\tau)$, we mean a triple $(M,F^\sharp,V^\sharp)$ consisting of a finitely generated and projective $A$-module $M$, together with a pair of 
mutually invers $A$-linear bijections $F^\sharp:A[\frac1p]\otimes_{\tau,A}M\rightarrow\q\otimes M$ and $V^\sharp:\q\otimes M\rightarrow A[\frac1p]\otimes_{\tau,A}M$
\item
The category $\eF_{A,\tau}$ (resp. $\ef_{A,\tau}$) consisting of all $F$-modules over $(A,\tau)$ (resp. $V$-modules over 
$(A,\tau)$) is defined by decreeing its morphisms to be the class of $A$-linear maps $h:N\rightarrow M$ rendering the diagram
$$\begin{CD}
A\otimes_{\tau,A}M@>{F_M^\sharp}>>M\\
@A{\id_A\otimes h}AA@AhAA\\
A\otimes_{\tau,A}N@>{F_N^\sharp}>>N\\
\end{CD}\qquad(\mbox{resp. }\,\,\begin{CD}
M@>{V_M^\sharp}>>A\otimes_{\tau,A}M\\
@AhAA@A{\id_A\otimes h}AA\\
N@>{V_N^\sharp}>>A\otimes_{\tau,A}N\\
\end{CD})$$
commutative, for some objects $(M,F_M^\sharp)$ and $(N,F_N^\sharp)$ of $\eF_{A,\tau}$ (resp. $(M,V_M^\sharp)$ and $(N,V_N^\sharp)$ 
of $\ef_{A,\tau}$).The category $\cris_{A,\tau}$ consisting of all $\tau$-crystals over $(A,\tau)$ is defined by decreeing its morphisms to 
be the class of $A$-linear maps $h:N\rightarrow M$ such that any of the above diagrams commutes upon tensorization with $\q$.
\end{itemize}
\end{defn}

It is clear that all of $\eF_{A,\tau}$, $\ef_{A,\tau}$ and $\cris_{A,\tau}$ are $\z_p$-linear additive and Karoubian categories. Also, notice that there exists a biadditive, natural, associative, commutative 
and unital $\otimes$-structure on each of them. Observe that $(\cris_{A,\tau},\otimes)$ is rigid, while $(\eF_{A,\tau},\otimes)$ and $(\ef_{A,\tau},\otimes)$ are not. Instead, passage to the dual 
underlying module yields a natural anti-equivalence between $(\eF_{A,\tau},\otimes)$ and $(\ef_{A,\tau},\otimes)$. Finally notice that there exist natural faithful forgetful $\ect(\Spec A)$-valued 
$\otimes$-functors on $\eF_{A,\tau}$, $\ef_{A,\tau}$ and $\cris_{A,\tau}$, and it will not cause confusion to denote all of them by $\omega^A$. For any $r\in\n$ we denote by 
$\cris_{A,\tau}^{W(\f_{p^r})}$ (resp. $\ef_{A,\tau}^{W(\f_{p^r})}$ or $\eF_{A,\tau}^{W(\f_{p^r})}$) the additive and  Karoubian $W(\f_{p^r})$-linear $\otimes$-category consisting of $\tau$-crystals (resp. 
$V$- or $F$-modules) over $(A,\tau)$ which are endowed with a $W(\f_{p^r})$-action. In this paper the rings $A$ will usually have the structure of a $W(\f_{p^f})$-algebra, where $f\in\n$, in which case 
we want to write the composition $\eF_{A,\tau}^{W(\f_{p^r})}\rightarrow\eF_{A,\tau}\stackrel{\omega^A}{\rightarrow}\ect(\Spec A)$ (which is faithful but not a $\otimes$-functor) as a direct sum 
$\bigoplus_{\sigma\in\z/r\z}\omega_\sigma^A$, provided that $r$ divides $f$. Each of these (non-faithful $\otimes$-functors) $\omega_\sigma^A(M)$ arise as the largest subspace of $\omega^A(M)$ 
on which the $W(\f_{p^r})$-operation agrees with the scalar multiplication composed with the map $W(\f_{p^r})\stackrel{F^{-\sigma}}{\rightarrow}W(\f_{p^r})$. A particular class of 
$\eF_{A,\tau}^{W(\f_{p^r})}$-objects arises as follows: Pick a non-negative integer $n$. We want to regard $M_\sigma:=\Mat(n\times 1,A)$ as a $W(\f_{p^r})\otimes_{\z_p}A$-module, by letting $A$ 
act according to the obvious multiplication, while letting $W(\f_{p^r})$ act via the embedding $W(\f_{p^r})\rightarrow A; a\mapsto F^{-\sigma}(a)$. Let $M$ be the $W(\f_{p^r})\otimes_{\z_p}A$-module 
$\bigoplus_{\sigma\in\z/r\z}M_\sigma$. Now, pick arbitrary $B_\sigma\in\Mat(n\times n,A)$, and consider the map
$$A\otimes_{\tau,A}M_{\sigma+1}\rightarrow M_\sigma;a\otimes x\mapsto aB_\sigma\tau(x)$$ 
(this means: compose the effect of plain matrix multiplication with the map obtained by applying the absolute Frobenius to each of the entries in the $n\times1$-matrix $x$). 
The sum of these maps defines a $F$-module structure on $M$. It is clear that a $F$-module with $W(\f_{p^r})$-operation over $A$ arises in this way if and only if all eigenspaces of the 
$W(\f_{p^r})$-operation are free $A$-modules of the same rank. If $M_\sigma'$ denotes another such $F$-module with $W(\f_{p^r})$-operation over $A$, that is gotten in the same way from a 
second bunch of $n\times n$-matrices $B_\sigma'$, having again all of its entries in $A$, then it is easy to see that the isomorphisms $M\rightarrow M'$ are given by families $h_\sigma\in\GL(n,A)$ with 
$B_\sigma=h_\sigma^{-1}B_\sigma'\tau(h_{\sigma+1})$. So the groupoid of $F$-modules with $W(\f_{p^r})$-operation over $A$, all of whose eigenspaces are free of rank $n$ is equivalently given by
$$[\Mat(n\times n,A)^r/_\phi\GL(n,A)^r],$$
(in the sense of example \ref{diagram}) where $\phi$ is the map $(U_0,U_1,\dots,U_{r-1})\mapsto(\tau(U_1),\dots,\tau(U_{r-1}),\tau(U_0))$. In the same 
vein, the groupoid of $\tau$-crystals with $W(\f_{p^r})$-operation over $A$, all of whose eigenspaces are free of rank $n$ is equivalently given by
\begin{equation*}
[\GL(n,A[\frac1p])^r/_\phi\GL(n,A)^r],
\end{equation*}
where $\phi$ is defined in the same way. If $F$ stands for the absolute Frobenius on  the Witt ring over a commutative $\f_{p^f}$-algebra $R$, then we 
denote $\cris_{W(R),F}^{W(\f_{p^r})}$ (resp. $\ef_{W(R),F}^{W(\f_{p^r})}$ or $\eF_{W(R),F}^{W(\f_{p^r})}$) by $\cris^{W(\f_{p^r})}(R)$ (resp. $\ef^{W(\f_{p^r})}(R)$ 
or $\eF^{W(\f_{p^r})}(R)$). As $R$ varies within the category of $\f_{p^f}$-algebras the formation of $\cris^{W(\f_{p^r})}(R)$ (resp. $\ef^{W(\f_{p^r})}(R)$ or 
$\eF^{W(\f_{p^r})}(R)$) builds up natural $\Spec\f_{p^f}$-fibered $W(\f_{p^r})$-linear $\Spec\f_{p^f}$-$\otimes$-categories $\cris^{W(\f_{p^r})}$ (resp. $\ef^{W(\f_{p^r})}$ or 
$\eF^{W(\f_{p^r})}$), in the sense of \cite[I.4.5.5]{rivano}, and again, there are natural forgetful $\Spec\f_{p^f}$-fibered $^W\ect$-valued fiber functors defined on each of 
$\cris^{W(\f_{p^r})}$, $\ef^{W(\f_{p^r})}$ and $\eF^{W(\f_{p^r})}$, and it will not cause confusion to denote all of them by $\omega_\sigma$, for every $\sigma\in\z/r\z$.

\begin{prop}
\label{real}
Let $(\G/W(\f_{p^r}),\{\upsilon_\sigma\}_{\sigma\in\Sigma})$ be a $W(\f_{p^f})$-rational $\Phbar$-datum. Let $\rho:\G\rightarrow\GL(n)_{W(\f_{p^r})}$ 
be a representation and assume that all weights of all of the cocharacters $\rho\circ\upsilon_\sigma^{-1}$ are non-negative 
numbers, so that they extend to effective cocharacters $\beta_\sigma:\a_{W(\f_{p^f})}^1\rightarrow\Mat(n\times n)_{W(\f_{p^f})}$. 
Let $\gamma:\Ibar^{\upsilon_\Sigma}\rightarrow{^W\GL(n)_{\f_{p^f}}^r}$ be the function whose $\omega$th component is given by
\begin{equation*}
\gamma_\omega:\{k_\sigma\}_{\sigma\in\Sigma}\mapsto(^{WF^{-\omega}}\rho)(^{F^{\bd_\Sigma^+(\omega)}}k_{\bd_\Sigma(\omega)}),
\end{equation*}
for any $\omega\in\z/r\z$, and let $m:{^W\G_{\f_{p^f}}^\Sigma}\rightarrow{^W\Mat(n\times n)_{\f_{p^f}}^r}$ be the function whose $\omega$th component is given by
\begin{equation*}
m_\omega:\{U_\sigma\}_{\sigma\in\Sigma}\mapsto
\begin{cases}(^{WF^{-\omega}}\rho)(U_\omega){^{F^{\varpi_\Sigma^+(\omega)}}\beta_{\varpi_\Sigma(\omega)}}(p)&\omega\in\Sigma\\
1&\mbox{ otherwise}\end{cases}
\end{equation*}
for any $\omega\in\z/r\z$. Then there exists a unique $\Spec\f_{p^f}$-fibered functor 
$$\sy^-(\rho):\Barb(\G,\{\upsilon_\sigma\}_{\sigma\in\Sigma})\rightarrow\eF^{W(\f_{p^r})}$$ 
of which the restriction to the groupoid $\BI_R(\G,\{\upsilon_\sigma\}_{\sigma\in\Sigma})$ is given by the 
pair of functions $(\gamma_R,m_R)$ in the sense of example \ref{diagram}, for any $\f_{p^f}$-algebra $R$.
\end{prop}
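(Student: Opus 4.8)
The plan is to verify that the pair of functions $(\gamma,m)$ fits the formalism of \ref{diagram} and \ref{spass}, and then to show that the resulting functor lands in $\eF^{W(\f_{p^r})}$ and is fibered over $\Spec\f_{p^f}$. First I would check that $\gamma:\Ibar^{\upsilon_\Sigma}\to{^W\GL(n)_{\f_{p^f}}^r}$ actually lands in the units: for each $\omega$, the element ${^{F^{\bd_\Sigma^+(\omega)}}k_{\bd_\Sigma(\omega)}}$ lies in ${^{WF^{\bd_\Sigma^+(\omega)-\bd_\Sigma(\omega)}}\G}={^{WF^{-\omega}}\G}$ (using $\bd_\Sigma(\omega)=\omega+\bd_\Sigma^+(\omega)$), so applying $^{WF^{-\omega}}\rho$ gives a well-defined section of $^{WF^{-\omega}}\GL(n)$, i.e. an invertible Witt matrix; together these make up a section of $^W\GL(n)^r$. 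Next I would check the key compatibility square of \ref{spass}, namely that $m(h^{-1}U\Phbar(h)) = \gamma(h)^{-1}m(U)\,\tilde\pr\text{-twist}$, where the target diagram is the one defining $\eF^{W(\f_{p^r})}$ as $[\Mat(n\times n,A)^r/_\phi\GL(n,A)^r]$ with $\phi(U_0,\dots,U_{r-1})=(\tau(U_1),\dots,\tau(U_0))$. Concretely, $\Phbar$ is built by composing the cyclic permutation \eqref{cycleII} with the product of the maps $\Phbar^{\mu_\sigma,r_\Sigma(\sigma)}$ of \eqref{cycleI}, and what must be shown is that $^{WF^{-\omega}}\rho$ intertwines the $\sigma$-component of $\Phbar$ with the map $B\mapsto{^{F^{\varpi_\Sigma^+(\omega)}}\beta_{\varpi_\Sigma(\omega)}}(p)^{-1}\cdot\tau(B)\cdot(\text{stuff})$; this is exactly where proposition \ref{twistI} and lemma \ref{twistII} enter. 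The identity of lemma \ref{twistII}, $F^h\circ(\beta(p){^W\rho|_{\Ibar^\upsilon}})=(^{WF^h}\rho\circ\Phbar^{\upsilon,h}){^{F^h}\beta(p)}$, says precisely that conjugating the $\Phbar$-twist by the diagonal factor $\beta(p)$ turns it into the honest Frobenius-semilinear map defining the $F$-module, so the $\beta_\sigma(p)$-factors in the definition of $m_\omega$ are exactly the change-of-basis needed to make everything match; I would assemble the global statement from the local one on root spaces as in the proof of \ref{twistI}.

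Having the compatibility in hand, the $2$-commutative diagram machinery of \ref{spass} produces a canonical $1$-morphism $\Barb(\G,\{\upsilon_\sigma\}_{\sigma\in\Sigma})\to\eF^{W(\f_{p^r})}$ whose restriction to $\BI_R$ is $(\gamma_R,m_R)$, and uniqueness is automatic because $\BI_R$ is the banal subcategory and every object of $\Barb$ is fpqc-locally banal, while $\eF^{W(\f_{p^r})}$ is a stack (being a category of modules with semilinear structure, which satisfies fpqc descent). One small point to record is that $m_\omega$ genuinely lands in $^W\Mat(n\times n)$ rather than the units — that's fine, since an $F$-module only needs $F^\sharp$ to be $A$-linear, not invertible — and that the hypothesis that all weights of $\rho\circ\upsilon_\sigma^{-1}$ are non-negative is exactly what guarantees the effective cocharacter $\beta_\sigma$ exists, hence that the formula for $m_\omega$ makes sense over $W(\f_{p^f})$ and not just generically.

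The step I expect to be the main obstacle is verifying the compatibility square at the level of the cyclic bookkeeping: one must track carefully how the permutation $\varpi_\Sigma$, the offsets $\bd_\Sigma^+$ and $r_\Sigma(\sigma)$, and the various Frobenius twists $F^{-\omega}$, $F^{\varpi_\Sigma^+(\omega)}$ interact, and confirm that the index shuffling in $\gamma_\omega$ (which reads off the $\bd_\Sigma(\omega)$-component) matches the shuffling built into $\Phbar$ via \eqref{cycleII}. The cleanest way to do this is to reduce, as in the proof of \ref{twistI}, to reduced test algebras $R$ (so $W(R)$ is torsion-free and $W(R)[\tfrac1p]$ embeds everything), pass to a finite extension splitting $\G$ and a maximal torus containing all $\mu_\sigma$, decompose into root spaces $\U_i\cong\g_a$, and then the assertion becomes the scalar identity ${^{F^h}(\upsilon(\tfrac1p)A\upsilon(p))}=\Phbar^{\upsilon,h}(A)$ already used in the $\GL(n)$-theorem above, combined with the observation that $\rho$ carries $\Phbar$ to the $F$-module Frobenius after the $\beta(p)$-twist. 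Once the torus case is settled, the general case follows by the usual density/functoriality argument. I would conclude by noting that compatibility with base change along $\f_{p^f}$-algebra maps is immediate from the functorial description of $(\gamma_R,m_R)$, so the constructed functor is indeed $\Spec\f_{p^f}$-fibered.
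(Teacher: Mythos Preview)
Your proposal is essentially correct and identifies the right key input (lemma \ref{twistII}), but it is more laborious than the paper's argument and has one imprecision in the descent step.

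The paper's proof is much shorter on the bookkeeping you flag as the ``main obstacle''. It orders $\Sigma$ via a monotone bijection $j\mapsto\sigma_j$ and observes that for $\omega\in[\sigma_j+1,\sigma_{j+1}-1]$ one has $\gamma_\omega=F\circ\gamma_{\omega+1}$ while $m_\omega=1$, so the compatibility $m(k^{-1}U\Phbar(k))=\gamma(k)^{-1}m(U)\,\phi(\gamma(k))$ at these $\omega$ is automatic. Only $\omega=\sigma_j\in\Sigma$ requires work, and there the needed identity
\[
\gamma_{\sigma_j}(k)^{-1}m_{\sigma_j}(U)\,{^{F^{\sigma_{j+1}-\sigma_j}}\gamma_{\sigma_{j+1}}(k)}=m_{\sigma_j}(k^{-1}U\Phbar(k))
\]
is a direct application of lemma \ref{twistII} to $\beta_{\sigma_{j+1}}$. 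No root-space decomposition, splitting extension, or reduction to $\g_a$-factors is needed, since lemma \ref{twistII} is already stated for an arbitrary representation $\rho$; your proposed route would in effect rederive that lemma rather than use it. This costs you nothing in correctness, only in efficiency.

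On the globalization: the target category $\eF^{W(\f_{p^r})}$ consists of $W(R)$-modules with semilinear structure, not $R$-modules, so ordinary fpqc descent for $R$ does not directly give you the stack property. The paper invokes Zink's Witt descent (\cite[Proposition 33]{zink2}) precisely for this step. Your assertion that ``$\eF^{W(\f_{p^r})}$ is a stack (being a category of modules with semilinear structure, which satisfies fpqc descent)'' elides this point and should be sharpened to cite Witt descent.
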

\begin{proof}
We choose a monotone bijection $\z\rightarrow\Sigma^{(0)};j\mapsto\sigma_j$. In view of 
$\gamma_\omega=F\circ\gamma_{\omega+1}$ for all $\omega\in[\sigma_j+1,\sigma_{j+1}-1]$, all we have to do is prove that 
$\gamma_{\sigma_j}(k)^{-1}m_{\sigma_j}(U){^{F^{\sigma_{j+1}-\sigma_j}}(\gamma_{\sigma_{j+1}}(k))}=m_{\sigma_j}(k^{-1}U\Phbar(k))$ 
holds for elements $k$ of $\Ibar^{\upsilon_\Sigma}$ and $U$ of $^W\G_{\f_{p^f}}^\Sigma$. This follows, if we apply the lemma 
\ref{twistII} to $\beta_{\sigma_{j+1}}$. The reduction to the banal situation is achieved by Witt descent (\cite[Proposition 33]{zink2}).
\end{proof}

\subsection{Two Variants}
\label{realII}
We also have to work with the following convention: If the dual $\check\rho$, rather than $\rho$ satisfies the assumptions of the previous proposition, then we will write 
\begin{equation*}
\sy^+(\rho):\Barb(\G,\{\upsilon_\sigma\}_{\sigma\in\Sigma})\rightarrow\ef^{W(\f_{p^r})}
\end{equation*}
for the composition of the canonical contravariant functor from $\eF^{W(\f_{p^r})}$ to $\ef^{W(\f_{p^r})}$, which is defined by passage to the dual object, the covariant functor 
$\sy^-(\check\rho):\Barb(\G,\{\upsilon_\sigma\}_{\sigma\in\Sigma})\rightarrow\eF^{W(\f_{p^r})}$ and the canonical self-antiequivalence of $\Barb(\G,\{\upsilon_\sigma\}_{\sigma\in\Sigma})$, which is 
defined by reversing the isomorphisms while being the identity on the objects. Suppose that $(\G,\{\upsilon_\sigma\}_{\sigma\in\Sigma})$ is a $\Phi$-datum and that $(A,J,\tau)$ is a frame, where 
$A$ is a $W(\f_{p^f})$-algebra. In this scenario we associate to an arbitrary representation $\rho\in\Ob_{\bRep_0(\G)}$ a covariant functor
\begin{equation}
\label{realIII}
\sy_{A,\tau}(\rho):\hat\CAS_{A,J}(\G,\{\upsilon_\sigma\}_{\sigma\in\Sigma})\rightarrow\cris_{A,\tau}^{W(\f_{p^r})},
\end{equation}
by using the same formulae for $\gamma$ and $m$. 

\subsection{Compatibility with $\omega_\sigma$ and $\otimes$}

For every $\sigma\in\Sigma$ there is a natural commutative diagram:
$$\begin{CD}
\Barb(\G,\{\upsilon_\sigma\}_{\sigma\in\Sigma})@>\sy^-(\rho)>>\eF^{W(\f_{p^r})}\\
@VVV@V{\omega_\sigma}VV\\
\ton(^{WF^{-\sigma}}\G_{\f_{p^f}})@>{^{WF^{-\sigma}}\rho}>>^W\ect\\
\end{CD}$$
and similar ones for $\sy^+$ and $\sy$. Also, there are natural isomorphisms
\begin{equation*}
\sy^-(\rho\otimes_{W(\f_{p^r})}\rho',\P)\cong\sy^-(\rho,\P)\otimes_{W(\f_{p^r})\otimes_{\z_p}W(R)}\sy^-(\rho',\P),
\end{equation*} 
whenever one of the two sides (hence both of them) are well-defined, and similarly for $\sy^+$ and $\sy$.

\subsection{Compatibility with $\fx^{\bj,\{\upsilon_\sigma\}_{\sigma\in\Sigma}}$}
\label{rectify} 

\begin{prop}
\label{klein}
There exists a canonical family of natural transformations:
$$\Fx^{\bj,\{\upsilon_\sigma\}_{\sigma\in\Sigma}}:\sy^+(\sd)\circ\fx^{\bj,\{\upsilon_\sigma\}_{\sigma\in\Sigma}}\rightarrow\sy^+(\sd)$$ 
(indexed by the set of all standard linear $\Phbar$-data $(\GL(n)_{W(\f_{p^r})},\{\upsilon_\sigma\}_{\sigma\in\Sigma})$ together with a function $\bj$ as in subsection 
\ref{weirdVI}) such that the following properties hold, for any $(\GL(n)_{W(\f_{p^r})},\{\upsilon_\sigma\}_{\sigma\in\Sigma})$-display $\P$ over any $\f_{p^f}$-algebra:
\begin{itemize}
\item
Whenever $\P=\P^{(1)}\times\P^{(2)}$ holds for $(\GL(n^{(i)})_{W(\f_{p^r})},\{\upsilon_\sigma^{(i)}\}_{\sigma\in\Sigma})$-displays $\P^{(i)}$, where 
$\left(\begin{matrix}\upsilon_\sigma^{(1)}&0\\
0&\upsilon_\sigma^{(2)}\end{matrix}\right)$ is a matrix block decomposition of $\upsilon_\sigma$, then 
$$\begin{CD}
\sy^+(\sd,\tilde\P)@<<<\sy^+(\sd,\tilde\P^{(1)})\oplus\sy^+(\sd,\tilde\P^{(2)})\\
@V{\Fx_{\P}^{\bj,\{\upsilon_\sigma\}_{\sigma\in\Sigma}}}VV
@V{\Fx_{\P^{(1)}}^{\bj,\{\upsilon_\sigma^{(1)}\}_{\sigma\in\Sigma}}\oplus\Fx_{\P^{(2)}}^{\bj,\{\upsilon_\sigma^{(2)}\}_{\sigma\in\Sigma}}}VV\\
\sy^+(\sd,\P)@<<<\sy^+(\sd,\P^{(1)})\oplus\sy^+(\sd,\P^{(2)})
\end{CD}$$
commutes, where $\tilde\P=\fx^{\bj,\{\upsilon_\sigma\}_{\sigma\in\Sigma}}(\P)$ and $\tilde\P^{(i)}=\fx^{\bj,\{\upsilon_\sigma^{(i)}\}_{\sigma\in\Sigma}}(\P^{(i)})$ (for $i\in\{1,2\}$).
\item
For all standard multiplicative $\Phbar$-data $(\g_{m,W(\f_{p^r})},\{\delta_\sigma\}_{\sigma\in\Sigma})$ and all 
$\Barb(\g_{m,W(\f_{p^r})},\{\delta_\sigma\}_{\sigma\in\Sigma})$-objects $\K$ the map $\Fx_\K^{\bj,\{\delta_\sigma\}_{\sigma\in\Sigma}}$ 
is independent of the choice of $\bj$ (and we will suppress it in the notation).
\item
Whenever $\{\delta_\sigma\}_{\sigma\in\Sigma}$ and $\K$ are as above and $\P'$ is the $\K$-dual of $\P$, then the diagram
$$\begin{CD}
\sy^+(\sd,\tilde\K)@<<<\sy^+(\sd,\tilde\P)\otimes_{W(\f_{p^r})}\sy^+(\sd,\tilde\P')\\
@V{\Fx_{\K}^{\{\delta_\sigma\}_{\sigma\in\Sigma}}}VV
@V{\Fx_{\P}^{\bj,\{\upsilon_\sigma\}_{\sigma\in\Sigma}}\otimes\Fx_{\P'}^{\check\bj,\{\upsilon_\sigma'\}_{\sigma\in\Sigma}}}VV\\
\sy^+(\sd,\K)@<<<\sy^+(\sd,\P)\otimes_{W(\f_{p^r})}\sy^+(\sd,\P')
\end{CD}$$
commutes, where $\tilde\K=\fx^{\{\delta_\sigma\}_{\sigma\in\Sigma}}(\K)$ and 
$\tilde\P'=\fx^{\check\bj,\{\upsilon_\sigma'\}_{\sigma\in\Sigma}}(\P')$ (for $\upsilon_\sigma'=\delta_\sigma\check\upsilon_\sigma$).
\end{itemize}
Finally, the image of $\Fx^{\bj,\{\upsilon_\sigma\}_{\sigma\in\Sigma}}$ under $\omega_\sigma$ is an isomorphism from 
$\omega_\sigma\circ\sy^+(\sd)\circ\fx^{\bj,\{\upsilon_\sigma\}_{\sigma\in\Sigma}}$ to the functor $\omega_\sigma\circ\sy^+(\sd)$ for every $\sigma\in\Sigma$.
\end{prop}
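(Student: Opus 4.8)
The plan is to construct $\Fx^{\bj,\{\upsilon_\sigma\}_{\sigma\in\Sigma}}$ explicitly on the banal groupoids $\BI_R(\GL(n)_{W(\f_{p^r})},\{\upsilon_\sigma\}_{\sigma\in\Sigma})$ over an $\f_{p^f}$-algebra $R$, and then descend. Recall that by Proposition \ref{real} (applied to $\check\sd$, which is the dual of the tautological representation) the functor $\sy^+(\sd)$ on such a banal display $\P$, represented by $\{U_\sigma\}_{\sigma\in\Sigma}\in {^W\GL(n)_{\f_{p^f}}^\Sigma}(R)$, has underlying module $\bigoplus_{\sigma}\Mat(n\times 1, W(R))$ and $V$-module structure governed by the matrices $m_\omega(\{U_\sigma\})$ involving the effective cocharacters $\beta_\sigma$ attached to $\rho\circ\upsilon_\sigma^{-1}=\sd\circ\upsilon_\sigma^{-1}$. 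Likewise $\sy^+(\sd)\circ\fx^{\bj,\{\upsilon_\sigma\}_{\sigma\in\Sigma}}$ is computed from the transported display $\tilde\P$, represented by the image under the map $m$ of subsubsection \ref{Universum} together with the cocharacters $\tilde\upsilon_\sigma$ of \eqref{soul}. Since $m$ is simply the inclusion ${^W\GL(n)_{W(\f_{p^f})}^\Sigma}\hookrightarrow {^W\GL(n)_{W(\f_{p^f})}^r}$, the two $V$-modules live on the same ambient module, and I expect the natural transformation to be given componentwise by a fixed matrix built out of powers of $p$ — concretely, the block-diagonal matrix $\tilde\beta_\sigma(p)\beta_\sigma(p)^{-1}$ that compares the normalizations coming from $\tilde\upsilon_\sigma$ and $\upsilon_\sigma$; compatibility with the $F$- resp.\ $V$-module maps will follow from the relation \eqref{heart} between $\upsilon_\sigma$ and $\prod_{\bd_\Sigma(\omega)=\sigma}{^{F^{-\bd_\Sigma^+(\omega)}}\tilde\upsilon_\omega}$ and from Lemma \ref{twistII} (which relates $\Phbar^{\upsilon,h}$ to the twisted action of $\mu$ after multiplying by $\beta(p)$), just as in the proof of Proposition \ref{real}.

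First I would write down this candidate transformation on $\BI_R$, check it commutes with the $V$-module structure maps using Lemma \ref{twistII} and \eqref{heart} — this is the only genuine computation and the place where the precise bookkeeping of the exponents $\bd_\Sigma^+(\omega)$, $r_\Sigma(\sigma)$, and the passage through the cyclic permutation $\varpi_\Sigma$ must be kept straight — and check it is functorial in morphisms $k\in\Ibar^{\upsilon_\Sigma}(R)$, i.e.\ compatible with the diagram data $(\gamma,m)$ of subsection \ref{spass} on both source and target. Then, since both $\sy^+(\sd)\circ\fx^{\bj,\{\upsilon_\sigma\}_{\sigma\in\Sigma}}$ and $\sy^+(\sd)$ are $\Spec\f_{p^f}$-fibered functors and the formation of banal displays is local for the fpqc-topology on $R$ (with the stack $\Barb(\GL(n)_{W(\f_{p^r})},\{\upsilon_\sigma\}_{\sigma\in\Sigma})$ being the stackification of $R\mapsto\BI_R$), Zink's Witt descent (\cite[Proposition 33]{zink2}) promotes the transformation from banal displays to all displays; one only needs to verify the cocycle condition, which is automatic from the functoriality just checked.

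The three listed compatibilities are then formal. Compatibility with direct sums follows because the candidate transformation is block-diagonal and the identification $\upsilon_\sigma=\left(\begin{smallmatrix}\upsilon_\sigma^{(1)}&0\\0&\upsilon_\sigma^{(2)}\end{smallmatrix}\right)$ induces the analogous block decomposition of $\tilde\upsilon_\sigma$ and of $\beta_\sigma(p)$, so the comparison matrices are compatible with the morphisms of $\Phbar$-data used in subsubsection ``Compatibility with direct sums''. Independence of $\bj$ in the standard multiplicative case holds because $\tilde\delta_\sigma=\delta$ regardless of $\bj$ (the cocharacters lie in $\zen^{\GL(n)}$, cf.\ Remark \ref{weirdVIII}), so the comparison matrix is the scalar $\delta(p)^{d}$ with $d=r_\Sigma(\sigma)-1$ independent of the chosen function. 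Compatibility with Cartier duality follows from $\check\bj(\sigma)=r_\Sigma(\bd_\Sigma(\sigma))-\bj(\sigma)-1$ and the identity $\upsilon_\sigma^t=\delta_\sigma\check\upsilon_\sigma$: under passage to the dual underlying module the comparison matrix for $\P'$ is the inverse-transpose of that for $\P$ up to the central twist recorded by $\Fx_\K$, which is exactly what makes the pairing square commute. Finally, the claim that $\omega_\sigma(\Fx^{\bj,\{\upsilon_\sigma\}_{\sigma\in\Sigma}})$ is an isomorphism is immediate from the explicit description: on the $\sigma$-eigenspace with $\sigma\in\Sigma$ one has $\bd_\Sigma(\sigma)=\sigma$ and $\bj$ contributes $\upsilon_\sigma^{(1)}$-weight $0$ by condition (i) of subsubsection \ref{Universum}, so the comparison matrix $\tilde\beta_\sigma(p)\beta_\sigma(p)^{-1}$ is $p^0=1$, i.e.\ the transformation is the identity there; the main obstacle throughout is simply keeping the index arithmetic of $\bd_\Sigma^+$, $\varpi_\Sigma^+$ and the weights consistent with the conventions fixed in subsection \ref{pivotalII} and subsubsection \ref{Universum}.
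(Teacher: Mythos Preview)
Your overall strategy coincides with the paper's: reduce to banal displays, pass to the dual picture $\sy^-(\check\sd)\to\sy^-(\check\sd)\circ\fx^{\bj,\{\upsilon_\sigma\}_{\sigma\in\Sigma}}$, write down an explicit family of matrices $k_\sigma$ intertwining the two $r$-tuples $B_\sigma$ and $\tilde B_\sigma$ produced by Proposition~\ref{real}, and globalize by Witt descent. The three compatibilities and the bijectivity statement are then read off from the explicit formula.

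The genuine gap is your candidate formula. The matrix $\tilde\beta_\sigma(p)\beta_\sigma(p)^{-1}$ cannot be the comparison map: $\beta_\sigma$ is only defined for $\sigma\in\Sigma$ (it extends $\check\sd\circ\upsilon_\sigma^{-1}$), whereas $k_\sigma$ must be given at every $\sigma\in\z/r\z$; and even at $\sigma\in\Sigma$ it is not the identity, since $\tilde\upsilon_\sigma=H_0(\upsilon_\sigma/\delta^{\bj(\sigma)})$ has weights in $\{0,1\}$ while $\upsilon_\sigma$ may have weights up to $r_\Sigma(\sigma)$. After enumerating $\Sigma^{(0)}$ monotonically as $j\mapsto\sigma_j$, the paper's correct formula is
\[
k_\sigma\;=\;\prod_{\omega=\sigma+1}^{\sigma_j}\bigl({}^{F^{\omega-\sigma}}\tilde\beta_\omega\bigr)(p)
\qquad\text{for }\sigma\in[\sigma_{j-1}+1,\sigma_j],
\]
a product of \emph{Frobenius-twisted} $\tilde\beta$'s running up to the next element of $\Sigma$. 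The intertwining relation $k_\sigma B_\sigma=\tilde B_\sigma\,{}^{F}k_{\sigma+1}$ then reduces, at the index $\sigma_{j-1}$, to the identity $\beta_{\sigma_j}=\prod_{\omega=\sigma_{j-1}+1}^{\sigma_j}({}^{F^{\omega-\sigma_j}}\tilde\beta_\omega)$, which is exactly \eqref{heart} transported through $\check\sd$. With this formula in hand the last claim is immediate: for $\sigma=\sigma_j\in\Sigma$ the product is empty, so $k_{\sigma_j}=1$. Your justification for that step (``$\bj$ contributes weight $0$ by condition~(i)'') is not valid, since condition~(i) of subsubsection~\ref{Universum} only says $0\le\bj(\sigma)\le r_\Sigma(\bd_\Sigma(\sigma))-1$ and does not force $\bj(\sigma)=0$.
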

\begin{proof}
It is clear that it is enough to do the banal case, so let $U\in\G^\Sigma(W(R))$ stand for a $(\GL(n)_{W(\f_{p^r})},\{\upsilon_\sigma\}_{\sigma\in\Sigma})$-display 
$\P$ over an affine $\f_{p^f}$-scheme $\Spec R$, and let $\tilde\P$ be $\fx^{\bj,\{\upsilon_\sigma\}_{\sigma\in\Sigma}}(\P)$. Again we choose a monotone bijection 
$\z\rightarrow\Sigma^{(0)};j\mapsto\sigma_j$, so that $U$ can be written as a $z$-tuple $(U_0,\dots,U_{z-1})$ with $U_j\in{^{WF^{-\sigma_j}}\G(R)}$. We switch to the dual 
situation $\sy^-(\check\sd)\rightarrow\sy^-(\check\sd)\circ\fx^{\bj,\{\upsilon_\sigma\}_{\sigma\in\Sigma}}$, and we write $\beta_\sigma$ and $\tilde\beta_\sigma$ for the two 
$r$-tuples of  effective cocharacters that come up when applying the proposition \ref{real} to $\P$ and $\tilde\P$. It does no harm to think of $\check\sd$ as the standard involution 
\eqref{involution}, in particular one sees that the cocharacters $\tilde\beta_\sigma$ arise from the cocharacters $\beta_\sigma$ by the procedure described in \eqref{soul}. Finally, 
let $B_\sigma$ and $\tilde B_\sigma$ be the two $r$-tuples of matrices that come up when applying the proposition \ref{real} to $\P$ and $\tilde\P$. The former looks like: 
$$B_\sigma=\dots,\check U_{j-1}(^{F^{\sigma_j-\sigma_{j-1}}}\beta_{\sigma_j})(p),\dots,1,\check U_j(^{F^{\sigma_{j+1}-\sigma_j}}\beta_{\sigma_{j+1}})(p),\dots,$$
where the ``$1$'' is in the $\sigma_j-1$th position, and the latter looks like:
$$\tilde B_\sigma=\dots,\check U_{j-1}(^F\tilde\beta_{\sigma_{j-1}+1})(p),\dots,(^F\tilde\beta_{\sigma_j})(p),\check U_j(^F\tilde\beta_{\sigma_j+1})(p),\dots.$$
Let us define $r$-tuples of matrices by $k_\sigma=\prod_{\omega=\sigma+1}^{\sigma_j}(^{F^{\omega-\sigma}}\tilde\beta_\omega)(p)$ whenever 
$\sigma_{j-1}+1\leq\sigma\leq\sigma_j$, and notice that $\beta_{\sigma_j}=\prod_{\omega=\sigma_{j-1}+1}^{\sigma_j}(^{F^{\omega-\sigma_j}}\tilde\beta_\omega)$
holds, because of \eqref{heart}. It follows that $k_\sigma B_\sigma=\tilde B_\sigma{^Fk_{\sigma+1}}$ is true and we are done, the bijectivity 
of $\omega_{\sigma_j}(\Fx^{\bj,\{\upsilon_\sigma\}_{\sigma\in\Sigma}})$ follows from $k_{\sigma_j}=1$.
\end{proof}

\subsection{Faithfulness of some realizations}
\label{faith}

\begin{defn}
\label{exampleIX}
Let $B$ be a discrete valuation ring of mixed characteristic or a field of characteristic $0$, let $\G$ be a reductive group scheme over $B$ and let $C$ be a finite and \'etale 
$B$-algebra of degree $2$. Fix a subset $\Pi$ of the power set of some index set $\Lambda$ where $\Card(\Lambda)<\infty$ and $\Card(\pi)\equiv1\pmod2$ for all $\pi\in\Pi$. By 
a $C$-multi-unitary collection for $\G/B$ we mean a pair $\bC$ of families $\{(\V_i/C,\Psi_i,\rho_i)\}_{i\in\Lambda}$ and $\{(\R_\pi,\iota_\pi)\}_{\pi\in\Pi}$ with the following properties:
\begin{itemize}
\item[(i)]
Each $\R_\pi$ is a free $C$-algebra of finite rank which is equipped with an involution fitting into the diagram
$$\begin{CD}
\R^{op}@>*>>\R\\
@AAA@AAA\\
C@>{\bar\empty}>>C
\end{CD},$$
where the involution $\bar\empty$ is given by: $\xbar:=\tr_{C/C^+}(x)-x$
\item[(ii)]
Each $-\check\Bapsi_i=\Psi_i:\V_i\stackrel{\cong}{\rightarrow}\check{\overline\V_i}$ is a perfect skew-Hermitian pairing on some free $C$-module $\V_i$ of finite rank.
\item[(iii)]
Each $\rho_i$ is a $C$-unitary representation, i.e. a homomorphism from $\G$ to $\GU(\V_i/C,\Psi_i)$ and each $\iota_\pi$ 
is a $C$-linear $*$-preserving homomorphism from $\R_\pi$ to $\End_\G(\V^\pi)$ where $\V^\pi$ stands for the $C$-module 
$\bigotimes_{i\in\pi}\V_i$ (which is equipped with self-explanatory $\G$-action $\rho^\pi$ and perfect skew-Hermitian pairing $\Psi^\pi$)
\item[(iv)]
The product 
$$\rho_\Lambda:=\prod_{i\in\Lambda}\rho_i:\G\rightarrow\prod_{i\in\Lambda}\GU(\V_i/C,\Psi_i)$$
is a closed immersion.
\item[(v)]
The (pointwise) stabilizer of $\bigcup_{\pi\in\Pi}\R_\pi$ in the generic fiber of $\prod_{i\in\Lambda}\GU(\V_i/C,\Psi_i)$ is contained in the image of $\rho_\Lambda$.
\end{itemize}
\end{defn}

\begin{rem}
\label{hyperspecial}
For any $(\{(\V_i/C,\Psi_i,\rho_i)\}_{i\in\Lambda},\{(\R_\pi,\iota_\pi)\}_{\pi\in\Pi})$, as above we would like to point out the $1$-morphism
\begin{equation}
\label{exampleXII}
\ton(\G)\rightarrow\ect^\bC
\end{equation}
where the groupoid of $R$-valued sections of the fpqc-stack $\ect^\bC$ consists of (the natural groupoid structure on the class of) pairs $(\T,\{t_\pi\}_{\pi\in\Pi})$, where $\T$ is a 
$\prod_{i\in\Lambda}\GU(\V_i/C,\Psi_i)$-torsor over $R$ and each $t_\pi$ is a $C$-linear $*$-preserving homomorphism $t_\pi:\R_\pi\rightarrow\End_R(\omega_\T(\rho^\pi))$.
If one assumes $\frac12\in B$ and that $\G$ is reductive, then $\rho_\Lambda$ is a closed immersion if and only if its generic fiber has that property (\cite[Corollary 1.3]{prasad}).
\end{rem}

In the above scenario we write $\chi_i:\G\rightarrow\g_{m,B}$ for the composition of the multiplier character of $\GU(\V_i/C,\Psi_i)$ with $\rho_i$. It is easy to see that 
$\G^1:=\bigcap_{i\in\Lambda}\ker(\chi_i)$ is a smooth $B$-group, in fact we will frequently use that $\Res_{C/B}\g_{m,C}^\Lambda$ is canonically immersed into the center 
of $\G$, while $\g_{m,B}^\Lambda$ intersects $\G^1$ in $\{\pm1\}^\Lambda$, so that $\G$ is canonically isomorphic to $(\g_{m,B}^\Lambda\times_B\G^1)/\{\pm1\}^\Lambda$. 
Furthermore, the properties (iii), (iv) and (v) imply that $\G$ (resp. $\G_C^1$) agrees with the Zariski-closure of its generic fiber in $\prod_{i\in\Lambda}\GU(\V_i/C,\Psi_i)$ 
(resp. in $\prod_{i\in\Lambda}\GL(\V_i/C)$), which in turn implies that one can pin down a constant $c$ such that $\G$ (resp. $\G_C^1$) is the stabilizer in 
$\prod_{i\in\Lambda}\GU(\V_i/C,\Psi_i)$ (resp. in $\prod_{i\in\Lambda}\GL(\V_i/C)$) of $\O_{Z_c}=\End_G(\bigotimes_{i\in\Lambda}\V_i^{\otimes c})$, cf. \cite[Proposition(1.3.2)]{kisin}. 
We focus on the case $B=W(\f_{p^r})$ with $C$ being one of $W(\f_{p^{2r}})$ or $W(\f_{p^r})\oplus W(\f_{p^r})$.

\subsection{Definition of $\fx^\bJ$}
\label{exampleXIV}
From now on, and for the rest of this section we fix a $W(\f_{p^f})$-rational $\Phi$-datum $(\G,\{\upsilon_\sigma\}_{\sigma\in\Sigma})$, 
choose a monotone bijection $\z\rightarrow\Sigma^{(0)};j\mapsto\sigma_j$, and let $C$ be as in the end of the previous subsection.

\begin{defn}
Let $\{\bj_i\}_{i\in\Lambda}$ be a family of functions from $\z$ to $\n_0$, such that each $\bj_i$ maps every interval $[\sigma_{j-1}+1,\sigma_j]$ onto $[0,\sigma_j-\sigma_{j-1}-1]$ 
and satisfies $\bj_i(r+\sigma)=\begin{cases}\bj_i(\sigma)&C=W(\f_{p^r})\oplus W(\f_{p^r})\\r_\Sigma(\bd_\Sigma(\sigma))-\bj_i(\sigma)-1&C=W(\f_{p^{2r}})\end{cases}$ 
for every $\sigma\in\z/r\z$ (cf. subsubsection \ref{Universum}). We say that 
$$\bJ=(\{(\V_i/C,\Psi_i,\rho_i,\bj_i)\}_{i\in\Lambda},\{(\R_\pi,\iota_\pi)\}_{\pi\in\Pi}),$$ 
is a gauged $C$-multi-unitary collection for $(\G,\{\upsilon_\sigma\}_{\sigma\in\Sigma})$ if the following additional requirements are fulfilled:
\begin{itemize}
\item
$\bC=(\{(\V_i/C,\Psi_i,\rho_i)\}_{i\in\Lambda},\{(\R_\pi,\iota_\pi)\}_{\pi\in\Pi})$ is a $C$-multi-unitary collection for the group $\G$ in the 
sense of definition \ref{exampleIX} and each $\rho_i$ induces a morphism from $(\G,\{\upsilon_\sigma\}_{\sigma\in\Sigma})$ to a certain 
standard unitary $\Phbar$-datum $(\GU(\V_i/C,\Psi_i),\{\upsilon_{i,\sigma}\}_{\sigma\in\Sigma})$ in the sense of definition \ref{standard}.
\item
$\bR=(\{(\V_i/C,\Psi_i,\tilde\upsilon_i)\}_{i\in\Lambda},\{\R_\pi\}_{\pi\in\Pi})$ is a multicompact $C$-multi-skew Hermitian display 
datum in the sense of definition \ref{exampleX}, where $\tilde\upsilon_i$ is derived from $\upsilon_i$ and $\bj_i$ via \eqref{soul}, 
\item
If $R_\Sigma:=\max\{r_\Sigma(\sigma)|\sigma\in\z/r\z\}$, then one has $\R_\pi\subset C+p^{(R_\Sigma-1)\Card(\pi)+1}\End_C(\V^\pi)$.
\end{itemize}
\end{defn}

We continue to denote $\Res_{W(\f_{p^r})/\z_p}\G$ by $\gG$. The purpose of introducing gauged $C$-multi-unitary collections lies in a certain $2$-commutative diagram:
$$\begin{CD}
\gB^{\bR}@>>>\prod_{i\in\Lambda}\B(\GU(\V_i/C,\Psi_i),\{\tilde\upsilon_{i,\sigma}\}_{\sigma\in\z/r\z})\\
@A{\fx^\bJ}AA@A{\prod_{i\in\Lambda}\fx^{\bj_i,\{\upsilon_{i,\sigma}\}_{\sigma\in\Sigma}}}AA\\
{\Barb(\G,\{\upsilon_\sigma\}_{\sigma\in\Sigma})}@>{\Barb(\rho_\Lambda)}>>\prod_{i\in\Lambda}\Barb(\GU(\V_i/C,\Psi_i),\{\upsilon_{i,\sigma}\}_{\sigma\in\Sigma})
\end{CD},$$
where $\gB^{\bR}$ is the $W(\f_{p^f})$-stack which was introduced in definition \ref{exampleX}. Here is the construction of 
$\fx^\bJ$: We start out from $\Barb(\rho_i,\P)=:\P_i\in\Ob_{\Barb(\GU(\V_i/C,\Psi_i),\{\upsilon_{i,\sigma}\}_{\sigma\in\Sigma})(R)}$ and 
$\Barb(\chi_i,\P)=:\K_i\in\Ob_{\Barb(\g_{m,W(\f_{p^r})},\{\delta_\sigma\}_{\sigma\in\Sigma})(R)}$, which are gotten from some fixed 
$\Spec R\stackrel{\P}{\rightarrow}\Barb(\G,\{\upsilon_\sigma\}_{\sigma\in\Sigma})$, by extensions of its structure group. On the one hand each of the 
representations $\rho_i$ and $\chi_i$ gives rise to a graded realization within the categories $\ef^{W(\f_{p^{2r}})}(R)$ or $\ef^{W(\f_{p^r})}(R)$, together 
with the canonical $\sy^+(\chi_i,\P)$-valued sesquilinear  perfect pairing on $\sy^+(\rho_i,\P)$, which is induced from the isomorphism 
$\chi_i\otimes\check\bohr_i\cong\rho_i$. On the other hand, we may look at the regularizations 
\begin{eqnarray*}
&&\tilde\P_i:=\fx^{\bj_i,\{\upsilon_{i,\sigma}\}_{\sigma\in\Sigma}}(\P_i)\\
&&\tilde\K_i:=\fx^{\{\delta_\sigma\}_{\sigma\in\Sigma}}(\K_i),
\end{eqnarray*}
which are displays in the usual sense whose underlying graded modules are given by $\sy^+(\sd,\tilde\P_i)$ 
and $\sy^+(\sd,\tilde\K_i)$. As we have seen already the results are related by specific isogenies
\begin{eqnarray*}
&&\Fx_{\P_i}^{\bj_i,\{\upsilon_{i,\sigma}\}_{\sigma\in\Sigma}}:\sy^+(\sd,\tilde\P_i)\rightarrow\sy^+(\rho_i,\P)\\
&&\Fx_{\K_i}^{\{\delta_\sigma\}_{\sigma\in\Sigma}}:\sy^+(\sd,\tilde\K_i)\rightarrow\sy^+(\chi_i,\P),
\end{eqnarray*}
preserving the $V$-actions, the gradations, and the canonical sesquilinear perfect pairings, which are defined in both of the two scenarios. It is easy to see that 
there exists a map from $\sy^+(\chi_i,\P)$ to $\sy^+(\sd,\tilde\K_i)$ of which the composition with $\Fx_{\K_i}^{\{\delta_\sigma\}_{\sigma\in\Sigma}}$  in any order 
is equal to the multiplication by $p^{R_\Sigma-1}$. Whence one obtains a map from $\sy^+(\rho_i,\P)$ back to $\sy^+(\sd,\tilde\P_i)$ of which the composition 
and precomposition with $\Fx_{\P_i}^{\bj_i,\{\upsilon_{i,\sigma}\}_{\sigma\in\Sigma}}$ yields the $p^{R_\Sigma-1}$th multiples of the identities of $\sy^+(\rho_i,\P)$ 
and $\sy^+(\sd,\tilde\P_i)$. The same reasoning can be applied to tensor products of these maps giving rise to gradation and $V$-preserving maps: 
$$\sy^+(\sd,\tilde\P^\pi)(\frac{\Card(\pi)-1}2)\rightleftarrows\sy^+(\rho^\pi,\P)$$
whose products are $p^{(R_\Sigma-1)\Card(\pi)}$, where $\tilde\P^\pi$ is defined to be the restricted 
tensor product $\dot\bigotimes_{i\in\pi}\tilde\P_i$ and $\rho^\pi:=\bigotimes_{i\in\pi}\rho_i$. In total there is an action of 
$p^{(R_\Sigma-1)\Card(\pi)}\End_\G(\V^\pi)$ on $\sy^+(\sd,\tilde\P^\pi)$ and hence there is an action of $\R_\pi$ on $\tilde\P^\pi$, since we have the following:

\begin{lem}
Let $\P=(M,N,F,V^{-1})$ be a display over a $\f_p$-algebra $R$, and suppose that $h$ is a $W(R)$-linear endomorphism of $M$, that renders at least one of the diagrams
$$\begin{CD}
W(R)\otimes_{F,W(R)}M@>{F^\sharp}>>M\\
@A{\id_{W(R)}\otimes h}AA@AhAA\\
W(R)\otimes_{F,W(R)}M@>{F^\sharp}>>M\\
\end{CD}
\qquad\text{or  }\,\,\,\,\,\,\,
\begin{CD}
M@>{V^\sharp}>>W(R)\otimes_{F,W(R)}M\\
@AhAA@A{\id_{W(R)}\otimes h}AA\\
M@>{V^\sharp}>>W(R)\otimes_{F,W(R)}M\\
\end{CD}$$
commutative. Then the $W(R)$-linear map $s:M\rightarrow M;x\mapsto ph(x)$ is an endomorphism of $\P$.
\end{lem}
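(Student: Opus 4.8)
The plan is to check that $s=ph$ fulfils the defining conditions of a display endomorphism of $\P=(M,N,F,V^{-1})$: it is $W(R)$-linear (clear), it maps $N$ into $N$, it commutes with $F$, and it satisfies $s\circ V^{-1}=V^{-1}\circ s|_N$. The decisive input is that $R$ is an $\f_p$-algebra. Since $V$ is functorial and ${}^V1=p$ holds in $W(\f_p)=\z_p$, one has ${}^V1=p$ in $W(R)$; hence $p\in I(R)$, so $pM\subseteq I(R)M\subseteq N$, and therefore $s(M)=ph(M)\subseteq pM\subseteq N$. This disposes of $s(N)\subseteq N$ once and for all. Two further consequences of ${}^V1=p$ will be used throughout: the display identity $V^{-1}({}^V\xi\cdot x)=\xi F(x)$ specialises (at $\xi=1$) to $V^{-1}(px)=F(x)$ for all $x\in M$, and combining this with the $F$-semilinearity of $V^{-1}$ on $N$ gives $F(y)=p\,V^{-1}(y)$ for all $y\in N$.

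First I treat the case in which the left square commutes, i.e. $F\circ h=h\circ F$. Then $s\circ F=phF=pFh=Fph=F\circ s$, using that $F$ is $F$-semilinear and ${}^Fp=p$. For the $V^{-1}$-compatibility fix $y\in N$; since $ph(y)\in pM\subseteq N$ one computes $V^{-1}(s(y))=V^{-1}(p\cdot h(y))=F(h(y))=h(F(y))=h(p\,V^{-1}(y))=p\,h(V^{-1}(y))=s(V^{-1}(y))$, the successive equalities using $V^{-1}(px)=F(x)$, the hypothesis $Fh=hF$, the identity $F(y)=p\,V^{-1}(y)$ for $y\in N$, and $W(R)$-linearity of $h$. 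This settles the first case.

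Now I treat the case in which the right square commutes, i.e. $V^\sharp\circ h=h^{(F)}\circ V^\sharp$, where $h^{(F)}:=\id_{W(R)}\otimes h$ acts on $M^{(F)}:=W(R)\otimes_{F,W(R)}M$ and $V^\sharp\colon M\to M^{(F)}$ is the linearised Verschiebung of the display. I shall use the standard structural identities $F^\sharp\circ V^\sharp=p\cdot\id_M$ and $V^\sharp\circ F^\sharp=p\cdot\id_{M^{(F)}}$, where $F^\sharp\colon M^{(F)}\to M$, $F^\sharp(\xi\otimes x)=\xi F(x)$, is the linearisation of $F$, together with the identity $V^\sharp(V^{-1}(y))=1\otimes y$ for $y\in N$. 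Granting these: for the $F$-compatibility it is enough to prove $ph\circ F^\sharp=pF^\sharp\circ h^{(F)}$ as maps $M^{(F)}\to M$ (since $F(x)=F^\sharp(1\otimes x)$), and this follows by precomposing $V^\sharp h=h^{(F)}V^\sharp$ with $F^\sharp$ to get $V^\sharp(hF^\sharp)=p\,h^{(F)}$, then postcomposing with $F^\sharp$ and using $F^\sharp V^\sharp=p$. For the $V^{-1}$-compatibility, fix $y\in N$ and apply the hypothesis to $V^{-1}(y)\in M$: $V^\sharp\bigl(h(V^{-1}(y))\bigr)=h^{(F)}\bigl(V^\sharp(V^{-1}(y))\bigr)=h^{(F)}(1\otimes y)=1\otimes h(y)$; applying $F^\sharp$ and using $F^\sharp V^\sharp=p$ and $F^\sharp(1\otimes h(y))=F(h(y))$ yields $p\,h(V^{-1}(y))=F(h(y))=V^{-1}(p\cdot h(y))=V^{-1}(s(y))$, as required.

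The bulk of this is routine bookkeeping with semilinearity and elementary Witt-vector identities. The one point demanding genuine care --- and which I expect to be the main obstacle --- is pinning down $V^\sharp$ via a normal decomposition $M=L\oplus T$, $N=L\oplus I(R)T$ (so that $F^\sharp$ is $\Phi^\sharp$ twisted by $(p,1)$ and $V^\sharp$ by $(1,p)$, using $F|_L=p\,V^{-1}|_L$) and verifying the identity $V^\sharp\circ V^{-1}=(y\mapsto 1\otimes y)$ on $N$, which rests on the relations $1\otimes(bx)={}^Fb\otimes x$ in $M^{(F)}$ and ${}^F({}^V\xi)=p\xi$ in $W(R)$; the whole of the second case is channelled through this identity, while everything else is formal manipulation of the two commuting squares.
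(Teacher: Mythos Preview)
Your argument is correct. In the first case (the $F$-commuting square) your computation is identical to the paper's: one uses $V^{-1}(px)=F(x)$ and $F(y)=pV^{-1}(y)$ on $N$ to get $V^{-1}(sy)=F(hy)=hF(y)=sV^{-1}(y)$.

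The paper treats the second case differently: it simply says ``by duality we are allowed to assume that $h$ commutes with $F$'', invoking Cartier duality of displays (which exchanges the roles of $F^\sharp$ and $V^\sharp$). Your direct computation via $V^\sharp(V^{-1}(y))=1\otimes y$ on $N$ and the relations $F^\sharp V^\sharp=p=V^\sharp F^\sharp$ is a legitimate alternative and avoids appealing to duality; the identity you flag as the delicate point is indeed the crux, and your indicated verification through a normal decomposition is the right way to see it. The trade-off is that the paper's one-line reduction is slicker but presupposes the duality formalism, while your route is longer but entirely self-contained.

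One small redundancy: your separate verification of $sF=Fs$ is not needed. Over an $\f_p$-algebra one has $F(x)=V^{-1}(px)$ for \emph{every} $x\in M$ (since $px\in I(R)M\subseteq N$), so compatibility with $V^{-1}$ on $N$ together with $s(N)\subseteq N$ already forces compatibility with $F$. This is presumably why the paper's proof omits any mention of the $F$-condition.
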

\begin{proof}
By duality we are allowed to assume that $h$ commutes with $F$, it is clear that $s$ preserves 
$N$, and the proof is finished by $V^{-1}(sx)=F(hx)=hF(x)=sV^{-1}(x)$ for every $x\in N$.
\end{proof}

\subsection{Local properties of $\fx^\bJ$}

\begin{lem}
\label{faithfulIII}
The $1$-morphism $\fx^\bJ$ is schematic, quasicompact and radicial for every gauged $C$-multi-unitary collection $\bJ$. 
\end{lem}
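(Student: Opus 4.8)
The strategy is to factor $\fx^\bJ$ through the product of the unitary functors $\fx^{\bj_i,\{\upsilon_{i,\sigma}\}_{\sigma\in\Sigma}}$ and a suitable base-change, and then to reduce the three assertions (schematicness, quasicompactness, radiciality) to the corresponding assertions for the $\fx^{\bj_i,\{\upsilon_{i,\sigma}\}_{\sigma\in\Sigma}}$, which are already available from Proposition \ref{new} (together with lemma \ref{galoisII} and the unitary $2$-commutative diagram at the end of subsection \ref{dualityII}). Concretely, in the $2$-commutative diagram defining $\fx^\bJ$, the upper horizontal arrow $\gB^{\bR}\to\prod_{i\in\Lambda}\B(\GU(\V_i/C,\Psi_i),\{\tilde\upsilon_{i,\sigma}\}_{\sigma\in\z/r\z})$ is, by the very definition of $\gB^{\bR}$ in definition \ref{exampleX}, the forgetful $1$-morphism that discards the $\R_\pi$-actions $s_\pi$; I claim this forgetful morphism is schematic, quasicompact and a closed immersion. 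Indeed, over any test algebra $R$, fixing an object of the target pins down the canonical $C$-skew-Hermitian displays $(M^\pi,N^\pi,F,V^{-1})$, and giving a compatible family $\{s_\pi\}_{\pi\in\Pi}$ is the datum of $C$-linear $*$-preserving ring homomorphisms $\R_\pi\to\End(M^\pi,N^\pi,F,V^{-1})$; since each $\R_\pi$ is a \emph{free} $C$-algebra of \emph{finite rank}, this is cut out inside the affine space of endomorphisms by the (finitely many) equations expressing that the images of a $C$-basis of $\R_\pi$ satisfy the multiplication table of $\R_\pi$, are fixed by $*$, and commute with $F$ and $V^{-1}$ — hence a closed subscheme. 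So the top arrow is a (schematic, quasicompact) closed immersion.

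Next I would treat the product $\prod_{i\in\Lambda}\fx^{\bj_i,\{\upsilon_{i,\sigma}\}_{\sigma\in\Sigma}}$ of the unitary regularization functors. By Proposition \ref{new} each factor $\fx^{\bj,\{\upsilon_\sigma\}_{\sigma\in\Sigma}}$ (in its $\GL$-form) is schematic, quasicompact and separated; passing to the unitary form via the $2$-commutative square at the end of subsubsection \ref{dualityII} and lemma \ref{galoisII} — i.e. realizing the unitary $\fx$ as a Weil-restriction-descent of a $\GL$-theoretic $\fx$ composed with $\chi_{W(\f_{p^{2r}})}\oplus\rho$ — transfers schematicness and quasicompactness to $\fx^{\bj_i,\{\upsilon_{i,\sigma}\}_{\sigma\in\Sigma}}$, and a finite product of schematic quasicompact $1$-morphisms of stacks is again schematic and quasicompact. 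For radiciality of the unitary factors: radiciality is a universal injectivity statement on geometric points and on residue-field extensions, and it is enough to check it on the banal level, where $\fx^{\bj,\{\upsilon_\sigma\}_{\sigma\in\Sigma}}$ is induced by the pair $(\gamma,m)$ with $m$ the closed immersion $^W\GL(n)_{W(\f_{p^f})}^\Sigma\hookrightarrow{^W\GL(n)_{W(\f_{p^f})}^r}$ and $\gamma$ a closed immersion as exhibited in the proof of Proposition \ref{new}; on a field $k\supset\f_{p^f}$ the induced map of banal groupoids is then injective on isomorphism classes because the $\Phbar$-conjugacy class of an element is determined by its image under the closed immersion $m$ together with the constraint coming from $\gamma$ — this is exactly the kind of bookkeeping already implicit in Proposition \ref{new}, and I would spell it out by chasing the explicit formulae \eqref{heart}, \eqref{soul} for $\tilde\upsilon_\sigma$.

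Finally, I would assemble: $\fx^\bJ$ sits over $\Barb(\rho_\Lambda)$, which is a closed immersion by part (iv) of definition \ref{exampleIX} together with remark \ref{hyperspecial} and functoriality of $\Barb(-)$ in $\G$ (subsubsection \ref{Geh}); composing with the closed immersion $\gB^{\bR}\hookrightarrow\prod_i\B(\GU(\V_i/C,\Psi_i),\{\tilde\upsilon_{i,\sigma}\}_{\sigma\in\z/r\z})$ and using the $2$-commutativity of the defining square, $\fx^\bJ$ becomes a $1$-morphism whose composite with a schematic quasicompact morphism equals the composite of two closed immersions with $\prod_i\fx^{\bj_i,\{\upsilon_{i,\sigma}\}_{\sigma\in\Sigma}}$; since the latter is schematic, quasicompact and radicial and the former properties are stable under composition and under the "$2$-out-of-$3$"-type descent along a monomorphism (here the top closed immersion), one deduces that $\fx^\bJ$ itself is schematic, quasicompact and radicial. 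The main obstacle I anticipate is the last descent step: one must verify that the diagram genuinely exhibits $\fx^\bJ$ as (a base change of) $\prod_i\fx^{\bj_i,\{\upsilon_{i,\sigma}\}_{\sigma\in\Sigma}}$ along $\Barb(\rho_\Lambda)$ \emph{after} adjoining the $\R_\pi$-actions — i.e. that the square is $2$-cartesian onto its image, equivalently that the $\R_\pi$-action on $\tilde\P^\pi$ constructed in subsection \ref{exampleXIV} is \emph{uniquely} determined by $\P$ and the given data. This uniqueness is where the inclusion $\R_\pi\subset C+p^{(R_\Sigma-1)\Card(\pi)+1}\End_C(\V^\pi)$ and the isogenies $\Fx$ with their $p^{(R_\Sigma-1)\Card(\pi)}$-products enter decisively, and verifying that they force the action to be canonical (so no extra moduli are introduced) is the technical heart of the argument.
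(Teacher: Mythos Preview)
Your factorization strategy for schematicness and quasicompactness is close to the paper's, but two of your structural claims are false as stated. First, $\Barb(\rho_\Lambda)$ is \emph{not} a closed immersion: a closed immersion of group schemes $\G\hookrightarrow H$ never induces a monomorphism on display stacks, since the fibre of $\Barb(i)$ over a banal point is controlled by the quotient $^WH/^W\G$, which is positive-dimensional here. The paper instead argues that $\Barb(\rho_\Lambda)$ is \emph{affine}, by using the affineness of $\prod_i\GU(\V_i/C,\Psi_i)/\G$ for the reductive subgroup $\G$ together with the argument of \cite[Lemma 3.2.9a)]{pappas}; combined with the \emph{separatedness} (not closed-immersion-ness) of the forgetful top arrow and Proposition \ref{new} for the right vertical, this suffices for schematicness and quasicompactness of $\fx^\bJ$.

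The radiciality argument, however, has a genuine gap. You propose to deduce radiciality of $\fx^\bJ$ from radiciality of the individual unitary regularizations $\fx^{\bj_i,\{\upsilon_{i,\sigma}\}_{\sigma\in\Sigma}}$, but these are \emph{not} radicial: Proposition \ref{new} gives only schematic, quasicompact and separated, and its proof exhibits the quotient $\Ibar^{\tilde\upsilon}/\gamma(\Ibar^{\upsilon_\Sigma})$ as a genuine positive-dimensional scheme (a torsor under $\prod_{\omega\notin\Sigma}\Ibar^{\tilde\upsilon_\omega}$), so the fibres of each $\fx^{\bj_i}$ are not single points. Radiciality of $\fx^\bJ$ is therefore not a formal consequence of the diagram; it requires the full strength of conditions (iv) and especially (v) of definition \ref{exampleIX}, namely that the generic fibre of $\G$ is the pointwise stabilizer of $\bigcup_{\pi\in\Pi}\R_\pi$. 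The paper checks radiciality by proving full faithfulness of $\fx^\bJ$ on $k$-points for algebraically closed $k$, via a tannakian diagram that compares $\BI_k(\G,\{\upsilon_\sigma\}_{\sigma\in\Sigma})$ simultaneously to isocrystals with $\gG$-structure (where condition (v) gives the key full faithfulness $\Bb_k(\gG)\to\Bb_k^{\bR}$) and to $\G^\Sigma$-torsors over $W(k)$ (where condition (iv) gives full faithfulness), together with the integrality constraint $\G^\Sigma(W(k))\cap\upsilon_\Sigma(p)\G^\Sigma(W(k))\upsilon_\Sigma(\tfrac1p)=\I^{\upsilon_\Sigma}(k)$. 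Your anticipated ``technical heart'' about uniqueness of the $\R_\pi$-action on $\tilde\P^\pi$ is correct in spirit but is not where the work lies: that action is canonical by construction from $\P$, and what needs to be shown is rather that distinct $(\G,\{\upsilon_\sigma\}_{\sigma\in\Sigma})$-displays cannot yield isomorphic objects of $\gB^\bR$, which is exactly where the stabilizer condition (v) enters.
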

\begin{proof}
Notice, that the upper horizontal $1$-morphism in the diagram at the beginning of subsection \ref{exampleXIV} is separated. So in order to obtain the quasicompact schematicness 
$\fx^\bJ$ it remains to prove the same property for $\Barb(\rho_\Lambda)$, since we have already shown proposition \ref{new}. In fact, this map is affine, which can be deduced from 
the well-known affinity of the quotient of the product $\prod_{i\in\Lambda}\GU(\V_i/C,\Psi_i)$ by the reductive subgroup $\G$ together with the methods of \cite[Lemma 3.2.9a)]{pappas}. 
The radiciality of $\fx^\bJ$ is equivalent to the full faithfulness of the induced functors between the groupoids of $k$-valued sections, where $k$ is an arbitrary algebraically 
closed field. Let $\cry_k$ be the $\q_p$-linear tannakian category of $F$-isocrystals over $k$ and let us write $\Bb_k(\gG)$ for (the natural groupoid structure on) the class of 
$F$-isocrystals with $\gG$-structure over $k$ (i.e. $\q_p$-linear exact and faithful rigid $\otimes$-functors from $\bRep_0(\gG)$ to $\cry_k$). Let $\Bb_k^{\bR}$ denote (the 
natural groupoid structure on) the class of pairs $(M,\{m_\pi\}_{\pi\in\Pi})$ where $M$ is an object of $\Bb_k(\prod_{i\in\Lambda}\Res_{W(\f_{p^r})/\z_p}\GU(\V_i/C,\psi_i))$ 
while each $m_\pi$ is a $C$-linear $*$-preserving homomorphism from $\R_\pi$ to $M(\End_C(\V^\pi))$, which is a $C$-algebra object in $\cry_k$. The $W(\f_{p^f})$-algebra 
$W(\f_{p^f})\otimes_{F^{-\sigma},W(\f_{p^r})}C$ shall be denoted by $C_\sigma$. Consider the $C_\sigma$-multi-unitary collection $\bC_\sigma$ which is obtained by base 
change from $\bC$ via $F^{-\sigma}:W(\f_{p^r})\rightarrow W(\f_{p^f})$, where $\sigma$ runs through $\z/r\z$. According to \eqref{exampleXII} there is an associated morphism:
\begin{equation}
\label{exampleXIII}
\ton({^{F^{-\sigma}}\G_{W(\f_{p^f})}})\rightarrow\ect^{\bC_\sigma}
\end{equation} 
Moreover, consider the (partial) products $\gV:=\prod_{\sigma\in\z/r\z}\ect^{\bC_\sigma}$ (resp. 
$\gV^\Sigma:=\prod_{\sigma\in\Sigma}\ect^{\bC_\sigma}$) and the natural $2$-commutative diagram:
\[
\begin{tikzcd}
{\BI_k(\G,\{\upsilon_\sigma\}_{\sigma\in\Sigma})}\ar[dddd]\ar[rrr,"\fx^\bJ"]\arrow[dr]&&&{\gB^{\bR}(k)}\arrow[dl]\ar[dd]\\
&{\Bb_k(\gG)}\ar[r]\ar[d]&{\Bb_k^{\bR}}\ar[d]\\
&{\ton(\gG)(K(k))}\ar[d]\ar[r]&{\gV(K(k))}\ar[d]&{\gV(W(k))}\ar[l]\ar[dd]\\
&{\ton(\G^\Sigma)(K(k))}\ar[r]&{\gV^\Sigma(K(k))}\\
{\ton(\G^\Sigma)(W(k))}\arrow[ur]\ar[rrr]&&&{\gV^\Sigma(W(k))}\arrow[ul]
\end{tikzcd}
\]
The requested full faithfulness of the upper horizontal arrow follows from the following observations:
\begin{itemize}
\item
The natural functor $\Bb_k(\gG)\rightarrow\Bb_k^{\bR}$ is fully faithful, because the generic fiber of $\gG$ is the stabilizer of $\bigcup_{\pi\in\Pi}\R_\pi$.
\item
The $1$-morphism $\ton(\G^\Sigma)(W(k))\rightarrow\gV^\Sigma(W(k))$ is fully faithful, because $\G$ is a closed subgroup of $\prod_{i\in\Lambda}\GU(\V_i/C,\Psi_i)$.
\item
The $1$-morphism 
$$\BI_k(\G,\{\upsilon_\sigma\}_{\sigma\in\Sigma})\rightarrow\Bb_k(\gG)\times_{\ton(\G^\Sigma)(K(k))}\ton(\G^\Sigma)(W(k))$$ 
is fully faithful, which is due to: 
$$\G^\Sigma(W(k))\cap\upsilon_\Sigma(p)\G^\Sigma(W(k))\upsilon_\Sigma(\frac1p)=\I^{\upsilon_\Sigma}(k)$$
\end{itemize}

\end{proof}

\subsection{Finiteness properties of $\fx^\bJ$}

\begin{lem}
\label{properIV}
Fix a $\f_{p^f}$-scheme $X$, and a $X$-scheme $Y$. Assume that 
$$\begin{CD}
Y@>>>\Barb(\G,\{\upsilon_\sigma\}_{\sigma\in\Sigma})\\
@VVV@V{\fx^\bJ}VV\\
X@>{\S}>>\gB^\bR
\end{CD},$$
is a $2$-commutative diagram.
\begin{itemize}
\item[(i)]
Assume in addition, that $Y$ is reduced. Then there is at most one such $2$-commutative diagram (i.e. there exist unique isomorphisms to any other ones) 
\item[(ii)]
Assume in addition, that $Y$ is faithfully flat and quasicompact over $X$. If $Y\times_XY$ is reduced, then $\S$ factors through $\Barb(\G,\{\upsilon_\sigma\}_{\sigma\in\Sigma})$.
\end{itemize}
\end{lem}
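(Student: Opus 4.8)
The statement is a descent/unique-factorization result for the morphism $\fx^\bJ$, and the natural strategy is to reduce it to the corresponding statements for the component morphisms that were already analyzed: $\Barb(\rho_\Lambda)$ (full faithfulness on $k$-points, already essentially contained in the proof of lemma \ref{faithfulIII}) and $\fx^{\bj_i,\{\upsilon_{i,\sigma}\}_{\sigma\in\Sigma}}$ (schematic, quasicompact, separated by proposition \ref{new}). The key observation is that $\fx^\bJ$ sits in the $2$-commutative square at the beginning of subsection \ref{exampleXIV}, in which the upper horizontal arrow $\prod_i\B(\GU(\V_i/C,\Psi_i),\{\tilde\upsilon_{i,\sigma}\})\leftarrow\gB^\bR$ is a monomorphism (it forgets nothing, only adds the $\R_\pi$-actions), the right vertical arrow is a product of the $\fx^{\bj_i,\ldots}$, and the bottom arrow $\Barb(\rho_\Lambda)$ is affine (shown in the proof of lemma \ref{faithfulIII}). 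So $\fx^\bJ$ is itself schematic, quasicompact and separated, and its radiciality was proven in lemma \ref{faithfulIII}; in particular the fibers over any reduced scheme are rigid.

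\textbf{Part (i).} First I would treat the case where $Y=\Spec L$ for a field $L$: then $Y$ embeds in its algebraic closure $\bar L$, and over $\bar L$ the radiciality of $\fx^\bJ$ (equivalently, the full faithfulness of the induced functor on $\bar L$-groupoids established in the proof of lemma \ref{faithfulIII}) shows that any two $2$-commutative diagrams differ by a unique isomorphism over $\bar L$; Galois descent along $\bar L/L$ (using that the category of $(\G,\{\upsilon_\sigma\})$-displays over $L$ is obtained by Galois descent, which is built into the definition via Witt descent, cf. \cite[Proposition 33]{zink2}) brings that isomorphism back to $L$, and its uniqueness over $\bar L$ forces uniqueness over $L$. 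For general reduced $Y$, I would glue: cover $Y$ by affines $\Spec A$ with $A$ reduced; a reduced ring injects into the product of its residue fields at its minimal primes, so two such diagrams over $\Spec A$, agreeing after base change to each minimal prime by the field case, already agree, because the sheaf of isomorphisms between two $(\G,\{\upsilon_\sigma\})$-displays is separated (this is where I would invoke that the diagonal of $\Barb$ is affine, hence separated, as in \cite[Lemma 3.2.9]{pappas}, together with separatedness of $\fx^\bJ$ just established). Uniqueness of the isomorphisms over the affine pieces then lets them be glued.

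\textbf{Part (ii).} Given $Y\to X$ faithfully flat quasicompact with $Y\times_X Y$ reduced, the plan is classical fpqc-descent of a $1$-morphism into a stack. The composite $Y\to\Barb(\G,\{\upsilon_\sigma\})$ together with the given $2$-isomorphism over $Y$ makes $Y$ into an object of the fiber product $X\times_{\gB^\bR,\fx^\bJ}\Barb(\G,\{\upsilon_\sigma\})$; to descend it to $X$ one needs a descent datum, i.e.\ an isomorphism between the two pullbacks to $Y\times_X Y$ satisfying the cocycle condition on $Y\times_X Y\times_X Y$. But by hypothesis $Y\times_X Y$ is reduced, so part (i) applies to it and produces a \emph{unique} such isomorphism; uniqueness is exactly what forces the cocycle identity on the triple product (both the composite and the identity are isomorphisms of the same two objects over the — possibly non-reduced — triple product, but they are pulled back from isomorphisms determined over the reduced double products, so they coincide after the two projections $Y\times_X Y\times_X Y\to Y\times_X Y$ whose images are reduced). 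Since $\Barb(\G,\{\upsilon_\sigma\})$ is an fpqc-stack, the descent datum is effective and yields a $1$-morphism $X\to\Barb(\G,\{\upsilon_\sigma\})$ through which $\S$ factors.

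\textbf{Main obstacle.} The delicate point is \emph{not} the abstract descent machinery but the bookkeeping around reducedness: in (ii) the triple fiber product $Y\times_X Y\times_X Y$ need not be reduced, so one cannot simply apply (i) there, and one must argue that the cocycle condition still holds because the two isomorphisms in play are pulled back along $Y\times_X Y\times_X Y\to Y\times_X Y$ from the unique isomorphisms granted by (i) over the reduced double product. Getting this compatibility stated cleanly — tracking which isomorphism lives over which product and invoking uniqueness in (i) at the right spot — is where the real care is needed; the rest is formal, using separatedness and quasicompactness of $\fx^\bJ$ and the stack property of $\Barb(\G,\{\upsilon_\sigma\})$.
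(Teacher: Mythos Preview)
Your overall strategy (use radiciality of $\fx^\bJ$ for (i), then fpqc descent for (ii)) matches the paper's, but the execution is considerably more elaborate than necessary and your handling of the ``main obstacle'' does not actually close the gap.

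The paper's argument fits in two sentences. Since $\fx^\bJ$ is schematic and radicial (lemma~\ref{faithfulIII}), the fibre $Z:=\Barb(\G,\{\upsilon_\sigma\}_{\sigma\in\Sigma})\times_{\gB^\bR,\S}X$ is a \emph{scheme} and $Z\to X$ is radicial, so the relative diagonal $\Delta_{Z/X}:Z\to Z\times_XZ$ is surjective and hence induces $Z_{red}\cong(Z\times_XZ)_{red}$. A $2$-commutative square as in the statement is exactly an $X$-morphism $Y\to Z$; two of them give a map from the reduced scheme $Y$ into $Z\times_XZ$, which therefore factors through $\Delta_{Z/X}$, proving (i). For (ii) one only needs the two pullbacks of $f:Y\to Z$ to $Y\times_XY$ to agree, which is (i); since $Z$ is an fpqc sheaf, $f$ descends to $X$ --- no cocycle, no triple product.

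The obstacle you single out is an artefact of descending an \emph{object} of the stack $\Barb(\G,\{\upsilon_\sigma\}_{\sigma\in\Sigma})$ rather than a \emph{morphism} to the scheme $Z$. In your framing the cocycle on $Y\times_XY\times_XY$ is a genuine equation between two isomorphisms, and your resolution does not verify it: uniqueness from (i) is available only over reduced bases, so you cannot directly conclude that the two isomorphisms agree over the possibly non-reduced triple product; saying they ``coincide after the projections'' points the wrong way, as the projections go from the triple to the double product. Passing to the scheme $Z$ dissolves the issue, since the descent datum becomes a literal equality of morphisms and the cocycle is vacuous. A smaller slip in (i): ``Galois descent along $\bar L/L$'' is ill-posed in characteristic $p$, but the detour through $\bar L$ is unnecessary anyway, because radiciality of the scheme morphism $Z\to X$ already gives injectivity of $Z(L)\to X(L)$ for every field $L$.
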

\begin{proof}
If $Z\rightarrow X$ is a radicial morphisms of schemes, then its relative diagonal induces an isomorphism from $Z_{red}$ to $(Z\times_XZ)_{red}$, 
as $\Delta_{Z/X}:Z\rightarrow Z\times_XZ$ is a surjection. Therefore part (i) follows easily and part (ii) follows from descent theory and part (i).
\end{proof}

Our next result in this subsection deals with fields:

\begin{prop}
\label{properI}
Consider an extension $\f_{p^f}\subset k$ of fields with finite degree of imperfection, and fix a $1$-morphism $\Spec k\stackrel{\S}{\rightarrow}\gB^\bR$. If the $k$-scheme 
$$\Barb(\G,\{\upsilon_\sigma\}_{\sigma\in\Sigma})\times_{\fx^\bJ,\gB^\bR,\S}\Spec k$$ 
is non-empty, then it possesses a point over a finite extension of $k$.
\end{prop}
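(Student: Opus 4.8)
The plan is to reduce to a statement about banal objects over a perfect (or nearly perfect) base, and then to invoke the full faithfulness established in Lemma \ref{faithfulIII} together with the schematic, quasicompact and radicial nature of $\fx^\bJ$ proved there. First I would observe that since $\fx^\bJ$ is schematic and quasicompact (Lemma \ref{faithfulIII}), the fiber product
$$Z:=\Barb(\G,\{\upsilon_\sigma\}_{\sigma\in\Sigma})\times_{\fx^\bJ,\gB^\bR,\S}\Spec k$$
is represented by a quasicompact $k$-scheme; so it suffices to produce a point of $Z$ over a finite (not necessarily separable) extension of $k$, i.e. to show $Z$ has a closed point whose residue field is finite over $k$. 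Because $k$ has finite degree of imperfection, the perfection $k^{perf}$ is obtained from $k$ by a finite tower of $p$-th root extensions together with a possibly infinite purely inseparable piece, but in any case $Z(k')\neq\emptyset$ for some finite extension $k'/k$ as soon as $Z(\bar k)\neq\emptyset$, provided $Z$ is locally of finite type over $k$ — which holds since $\fx^\bJ$ is quasicompact and the target fpqc-stacks have schematic quasicompact diagonals. Thus the real content is: \emph{the hypothesis ``$Z$ non-empty'' is equivalent to ``$Z(\bar k)\neq\emptyset$''}, and then a standard Nullstellensatz-type argument gives a point over a finite extension.

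Next I would pin down why $Z(\bar k)\neq\emptyset$ follows from $Z\neq\emptyset$. Pass to an algebraically closed field $\Omega\supset k$; then $Z_\Omega\neq\emptyset$. By Lemma \ref{faithfulIII} the functor $\fx^\bJ$ induces a \emph{fully faithful} functor on groupoids of $\Omega$-valued points (radiciality over algebraically closed fields is precisely this full faithfulness, as spelled out in the proof of that lemma via the big $2$-commutative diagram with $\Bb_k(\gG)$ and $\gV^\Sigma$). Concretely, giving an $\Omega$-point of $Z$ is the same as giving $\S_\Omega\in\gB^\bR(\Omega)$ together with a lift to $\Barb(\G,\{\upsilon_\sigma\}_{\sigma\in\Sigma})(\Omega)$, and the analysis in Lemma \ref{faithfulIII} shows that such a lift exists if and only if the associated $F$-isocrystal with $\gG$-structure together with the $\R_\pi$-actions is ``of the right type'' — a condition that is insensitive to further extension of the algebraically closed base, because $F$-isocrystals over algebraically closed fields of characteristic $p$ and their automorphism groups (the $J_b$ of \eqref{thejack}) do not change under such extensions (cf. \cite[Lemma 1.3]{rapoport}). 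Hence $Z(\bar k)\neq\emptyset$ already, and moreover the set of isomorphism classes descends: there is a model over a subfield of $\bar k$ finitely generated over $k$, hence over a finite extension.

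Finally I would assemble the descent to a finite extension: since $Z$ is a quasicompact $k$-scheme with $Z(\bar k)\neq\emptyset$, pick a closed point $z\in Z$; its residue field $\kappa(z)$ is a finitely generated field extension of $k$, and — using again that $k$ has finite imperfection degree and that $Z$ is of finite type — after possibly replacing $z$ by a specialization one may assume $\kappa(z)$ is algebraic, hence finite, over $k$. (Alternatively: a nonempty finite-type $k$-scheme has a point over some finite extension of $k$ by the classical Nullstellensatz, with no perfectness needed.) This $\kappa(z)$-point of $Z$ is the desired point over a finite extension of $k$, completing the proof. \emph{The main obstacle} is the middle step — verifying carefully that the criterion for the existence of a lift through $\fx^\bJ$, extracted from the diagram in Lemma \ref{faithfulIII}, is genuinely independent of enlarging the algebraically closed ground field; this rests on the independence-of-base-field properties of $B_k(\gG)$, the Newton stratification, and the groups $J_b$, together with the fact that the $\R_\pi$-action data live in $C$-algebra objects of $\cry_k$ whose formation commutes with extension of $k$.
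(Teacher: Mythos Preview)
Your argument has a genuine gap at its load-bearing step: the fiber $Z$ is \emph{not} of finite type over $k$, so the Nullstellensatz does not apply. The morphism $\fx^\bJ$ is only known to be schematic, quasicompact and separated (Proposition~\ref{new}, Lemma~\ref{faithfulIII}); its construction goes through the Greenberg-transform groups $\Ibar^{\tilde\upsilon_\omega}\subset{^W\GL(n)}$, which are explicitly \emph{not} of finite type (this is flagged at the opening of Section~\ref{bekanntVIII}). Consequently $Z\to\Spec k$ is a quasicompact, radicial, integral morphism, but integral is weaker than finite: the unique point of $Z$ has residue field a purely inseparable extension of $k$ that could a priori be infinite (think $k^{perf}$, which is infinite over $k$ even when $k$ has finite degree of imperfection). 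Your sentence ``$Z$ is of finite type over $k$ --- which holds since $\fx^\bJ$ is quasicompact and the target fpqc-stacks have schematic quasicompact diagonals'' is simply false, and everything downstream of it collapses.

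The paper's proof is therefore of an entirely different nature and does real work. One passes to $k$ separably closed, chooses a Cohen ring $(A,\tau)$ for $k$ with a Frobenius lift, and rephrases both the given $\S$ and a $k^{ac}$-point $\P^\circ$ of the fiber in terms of $\z/r\z$-graded windows. The key maneuver is to show that the $\O_{Z_c}$-action, a priori defined only after base-change to $W(k^{ac})$, in fact preserves the $A$-lattice $M_\sigma$ for $\sigma\in\Sigma$: this uses that unit-root $F$-crystals are generated by their skeleton over a strictly henselian base. One then forms, for each $\sigma\in\Sigma$, a $^{F^{-\sigma}}\G$-torsor $\gS_\sigma$ over $A$ encoding the $\O_{Z_c}$-equivariant isomorphisms; it is trivial because $A$ is strictly henselian. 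Choosing sections and adjusting the banalization forces the representative $U_\sigma$ of $\P^\circ$ to land in $^{F^{-\sigma}}\G(W(\sqrt[p^m]l))$ for some finite separable $l/k$ and some $m$, via the elementary identity $W(k^{ac})\cap p^{-1}W(l)=W(\sqrt[p^m]l)$. This produces the required point over the finite extension $\sqrt[p^m]l$. The finite imperfection hypothesis is what makes the Cohen ring and the window formalism available and well-behaved; it is not a Nullstellensatz input.
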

\begin{proof}
By lemma \ref{properIV}, it is clear that we may assume $k$ to be separably closed, and due to the same reason we are allowed to assume that there exists 
a display $\P^\circ$ with $\Barb(\G,\{\upsilon_\sigma\}_{\sigma\in\Sigma})$-structure over the algebraic closure $k^{ac}$, together with a $2$-isomorphism: 
$$\zeta^\circ:\S_{k^{ac}}\stackrel{\cong}{\rightarrow}\fx^\bJ(\P^\circ)$$
By lemma \ref{galoisII} we may assume $C=W(\f_{p^r})\oplus W(\f_{p^r})$, in which case a simple application of Morita 
equivalence tells us that our ``unitary'' groups $\GU(\V_i/C,\Psi_i)$ are isomorphic to $\g_m\times\GL(n_i)$ and that our "unitary" representations read 
$\rho_i\oplus\chi_i\otimes_{W(\f_{p^r})}\check\rho_i$ for some $W(\f_{p^r})$-rational homomorphisms $\rho_i:\G\rightarrow\GL(n_i)_{W(\f_{p^r})}$ and 
characters $\chi_i:\G\rightarrow\g_{m,W(\f_{p^r})}$. In the 
proof we will work directly with $\rho_i$ and $\chi_i$ rather than $\rho_i\oplus\chi_i\otimes_{W(\f_{p^r})}\check\rho_i$. Likewise, a $k$-valued 
point of $\gB^\bR$ is determined by a family $s_\pi$ of $\R_\pi$-operations on the family of restricted tensor products 
$\S^\pi:=\dot\bigotimes_{i\in\pi}\S_i$ coming from some other family $\S_i$ of displays with 
$(\GL(n_i)_{W(\f_{p^r})},\{\tilde\upsilon_{i,\sigma}\}_{\sigma\in\z/r\z})$-structure over $k$, together with some multiplicative family of no significance. In this 
optic $\zeta^\circ$ is given by a family of $\{s_\pi\}_{\pi\in\Pi}$-preserving isomorphisms
$$\zeta_i^\circ:\S_{i,k^{ac}}\stackrel{\cong}{\rightarrow}\tilde\P_i^\circ:=\fx^{\bj_i,\{\upsilon_{i,\sigma}\}_{\sigma\in\Sigma}}(\P_i^\circ),$$ 
where $\P_i^\circ$ stands for the image of $\P^\circ$ under the canonical $1$-morphism
$$\Barb(\rho_i):\Barb(\G,\{\upsilon_\sigma\}_{\sigma\in\Sigma})\rightarrow\Barb(\GL(n_i)_{W(\f_{p^r})},\{\upsilon_{i,\sigma}\}_{\sigma\in\Sigma}).$$
Recall that $\sy^+(\sd,\P_i^\circ)=\sy^+(\rho_i,\P^\circ)=:N_i^\circ=\bigoplus_{\sigma\in\z/r\z}N_{i,\sigma}^\circ$ is a $\z/r\z$-graded $V$-module over $W(k^{ac})$ 
canonically associated to $\P^\circ$, and in the category of $\z/r\z$-graded $V$-modules over $W(k^{ac})$ we have a canonical $\z/r\z$-graded isogeny: 
$$\Fx_{\P_i^\circ}^{\bj_i,\{\upsilon_{i,\sigma}\}_{\sigma\in\Sigma}}:\sy^+(\sd,\tilde\P_i^\circ)\rightarrow N_i^\circ$$
Let $\tau$ be a choice of Frobenius lift on a choice of Cohen ring $A$ for the (in general non-perfect) 
field $k$, so that $(A,pA,\tau)$ is a frame over $W(\f_{p^f})$. We have a canonical commutative diagram:
$$\begin{CD}
W(\f_{p^f})@>>>A@>>>W(k)@>>>W(k^{ac})\\
@A{F}AA@A{\tau}AA@A{F}AA@A{F}AA\\
W(\f_{p^f})@>>>A@>>>W(k)@>>>W(k^{ac})
\end{CD}$$
Let $M_i$ denote the $\z/r\z$-graded $(A,pA,\tau)$-windows to $\S_i$. Composition produces further isogenies
$$W(k^{ac})\otimes_AM_i\stackrel{u_i^\circ}{\rightarrow}N_i^\circ,$$ 
paving the way for a transport of structure, that turns the natural action $\iota$ of $\O_{Z_c}$ on the $\cris^{W(\f_{p^r})}(k^{ac})$-object 
$\sy^+((\bigotimes_{i\in\Lambda}\rho_i)^{\otimes_{W(\f_{p^r})}c},\P^\circ)=:N^\circ$, whose $\sigma$-eigenspaces are 
$N_\sigma^\circ=(\bigotimes_{i\in\Lambda}N_{i,\sigma}^\circ)^{\otimes_{W(k^{ac})}c}$, into its analog $s:Z_c\rightarrow\q\otimes\End_{W(\f_{p^r})}(W(k^{ac})\otimes_AM)$ 
for the $\cris_{A,\tau}^{W(\f_{p^r})}$-object $M=\bigoplus_{\sigma\in\z/r\z}M_\sigma$ defined by the formula 
$$M_\sigma:=(\bigotimes_{i\in\Lambda}M_{i,\sigma})^{\otimes_Ac}$$ 
(within which $M_i$ is to be interpreted in $\cris_{A,\tau}^{W(\f_{p^r})}$). We are going to use, and will now have to check, that $s$ yields an action on $\q\otimes M$, 
rather than $K(k^{ac})\otimes_AM$: To this end we write $H_\sigma/A[\frac 1p]$ for the common stabilizer of the family of $\R_\pi$-actions $s_\pi$, this is a smooth and 
affine $A[\frac 1p]$-group contained in the product of the groups $\GL(\q\otimes M_i/A[\frac 1p])$. If $\Z_\sigma$ denotes the $A$-algebra $\End_{H_\sigma}(M_\sigma)$ 
of $H_\sigma$-invariant, $A$-linear endomorphisms of $M_\sigma$, then $\bigoplus_{\sigma\in\z/r\z}\Z_\sigma$ forms a $\z/r\z$-graded $\tau$-crystal over $A$, and the 
composition of endomorphisms makes a $\cris_{A,\tau}^{W(\f_{p^r})}$-morphism. In contrast, $\bigoplus_{\sigma\in\z/r\z}\End_{^{F^{-\sigma}}\G_{W(k^{ac})}}(N_\sigma^\circ)$ 
is a $\cris^{W(\f_{p^r})}(k^{ac})$-object of slope $0$, in fact it is equal to $W(k^{ac})\otimes_{\z_p}\O_{Z_c}$ with Frobenius acting in the obvious way. Comparing these, 
we find the existence of at least one Frobenius-invariant $A$-lattice in $\q\otimes\bigoplus_{\sigma\in\z/r\z}\Z_\sigma$, leading the latter to be generated by its skeleton 
$S:=(\q\otimes\bigoplus_{\sigma\in\z/r\z}\Z_\sigma)^{F=\id}$ (N.B.: the residue field of $A$ is separably closed). This shows that the image of $s$ is contained in 
$\q\otimes\End_{W(\f_{p^r})}(M)$, as the skeleton of $K(k^{ac})\otimes_A\bigoplus_{\sigma\in\z/r\z}\Z_\sigma$ is clearly also equal to $S$.\\ 
Notice, that every element $\{U_\sigma\}_{\sigma\in\Sigma}=U\in\G^\Sigma(W(k^{ac}))$ representing the object $\P^\circ$ (i.e. a choice of banalization) will induce specific 
isomorphisms $W(k^{ac})^{n_i}\cong N_{i,\sigma}^\circ$, and due to the algebraic closedness of $k^{ac}$, there exist a lot of banalizations. Moreover observe, that the images 
of $A^{n_i}$ under those very isomorphisms provide us with non-canonical $A$-module structures $N_{i,\sigma}\subset N_{i,\sigma}^\circ$ on the $W(k^{ac})$-modules 
$N_{i,\sigma}^\circ$, which are canonically associated to $\P^\circ$. Also notice, that a switch between two different banalizations, has the effect of altering the artificial 
$A$-structures on the canonical $W(k^{ac})$-modules $N_{i,\sigma}^\circ$ according to the formulae in proposition \ref{real}. When looking more specifically at 
$\sigma\in\Sigma$ a switch from, say $U$, to an alternative representative given by, say $O:=h^{-1}U\Phbar(h)$, has the effect of switching from $N_{i,\sigma}$ to 
$(^{F^{-\sigma}}\rho_i)(h_\sigma)^{-1}N_{i,\sigma}$, where $h=\{h_\sigma\}_{\sigma\in\Sigma}$. For the time being we do fix $U$ and the artificial $A$-structures 
$N_{i,\sigma}$ going with it.\\
The isogeny $u_i^\circ$ induces isomorphisms $u_{i,\sigma}^\circ:W(k^{ac})\otimes_AM_{i,\sigma}\stackrel{\cong}{\rightarrow}N_{i,\sigma}^\circ$ for all $\sigma\in\Sigma$ 
being due to an analogous property of $\Fx_{\P_i^\circ}^{\bj_i,\{\upsilon_{i,\sigma}\}_{\sigma\in\Sigma}}$ (cf. proposition \ref{klein}). This demonstrates already, that the 
$Z_c$-action $s$ on the isocrystal $\q\otimes M$ can be restricted to an integral action of $\O_{Z_c}$ on the $A$-module $M_\sigma$, provided that $\sigma$ is contained in 
$\Sigma$. We will shortly derive an even stronger result, but before we do that we have to introduce a certain principal homogeneous space $\gS_\sigma$ for $^{F^{-\sigma}}\G$ 
over $A$, for every $\sigma\in\Sigma$: If $Q$ is a $A$-algebra, then we let $\gS_\sigma(Q)$ be the set of families $\{q_i\}_{i\in\Lambda}$ with the following two properties: 
\begin{itemize}
\item
Each $q_i$ is a $Q$-linear isomorphism from $Q\otimes_AM_{i,\sigma}$ to $Q\otimes_AN_{i,\sigma}$.
\item
The isomorphism $(\bigotimes_{i\in\Lambda}q_i)^{\otimes c}=q:Q\otimes_AM_\sigma\rightarrow Q\otimes_AN_\sigma$ is $Q\otimes_{W(\f_{p^r})}\O_{Z_c}$-linear
\end{itemize}
Clearly, one has $\{u_{i,\sigma}^\circ\}_{i\in\Lambda}\in\gS_\sigma(W(k^{ac}))$ for every $\sigma\in\Sigma$. Since $W(k^{ac})$ is faithfully flat over $A$, we get the local triviality of 
the $\gS_\sigma$'s. As $A$ is strictly henselian we get the existence of global sections, i.e. some $A\otimes_{W(\f_{p^r})}\O_{Z_c}$-preserving family of $A$-linear isomorphisms
$$u_{i,\sigma}:M_{i,\sigma}\stackrel{\cong}{\rightarrow}N_{i,\sigma},$$
for each $\sigma\in\Sigma$. As $i$ varies the family $u_{i,\sigma}^\circ\circ u_{i,\sigma}^{-1}$ constitutes a $W(k^{ac})$-valued point of $^{F^{-\sigma}}\G$. One 
can choose a subfield $l\subset k^{ac}$ of finite degree over $k$, together with elements $\{h_\sigma^\circ\}_{\sigma\in\Sigma}=h^\circ\in\G^\Sigma(W(l))$ such that 
$$u_{i,\sigma}^\circ\circ u_{i,\sigma}^{-1}\equiv(^{F^{-\sigma}}\rho_i)(h_\sigma^\circ)\pmod p$$ 
and let $h_\sigma\in{^{F^{-\sigma}}\G(pW(k^{ac}))}$ be the elements with 
$(^{F^{-\sigma}}\rho_i)(h_\sigma^{-1}\circ h_\sigma^\circ)=u_{i,\sigma}^\circ\circ u_{i,\sigma}^{-1}$. Observe that the diagonal map in the diagram 
$$\begin{CD}
W(k^{ac})\otimes_AN_{i,\sigma}
@>{^{F^{-\sigma}}\rho_i(h_\sigma)}>>W(k^{ac})\otimes_AN_{i,\sigma}\\
@A{u_{i,\sigma}^\circ}AA@A{^{F^{-\sigma}}\rho_i(h_\sigma^\circ)}AA\\
M_{i,\sigma}@>{u_{i,\sigma}}>>N_{i,\sigma}
\end{CD}$$
carries $M_{i,\sigma}$ to $W(l)\otimes_AN_{i,\sigma}$, furthermore by the aforementioned remark on banalizations, we may assume $h_\sigma=1$ without loss 
of generality. We deduce that $u_{i,\sigma}^\circ$ carries $M_{i,\sigma}$ to $W(l)\otimes_AN_{i,\sigma}$, so that the vertical arrows in the commutative diagram
$$\begin{CD}
W(k^{ac})\otimes_AN_{i,\sigma}@>{(^{F^{\varpi_\Sigma^+(\sigma)}}\upsilon_{\varpi_\Sigma(\sigma)})(p)(^{F^{-\sigma}}\rho_i)(U_\sigma^{-1})}>>
W(k^{ac})\otimes_{\tau^{\varpi_\Sigma^+(\sigma)},A}N_{i,\varpi_\Sigma(\sigma)}\\
@A{u_{i,\sigma}^\circ}AA@A{u_{i,\varpi_\Sigma(\sigma)}^\circ}AA\\
W(l)\otimes_AM_{i,\sigma}@>{(V^\sharp)^{\varpi_\Sigma^+(\sigma)}}>>W(l)\otimes_{\tau^{\varpi_\Sigma^+(\sigma)},A}M_{i,\varpi_\Sigma(\sigma)}
\end{CD}$$
preserve the $W(l)$-structure, forcing its upper horizontal arrow to preserve that $W(l)$-structure too. For elements $\sigma\in\Sigma$ 
and for every $i$ it follows, that $(^{F^{-\sigma}}\rho_i)(U_\sigma^{-1})\in\GL(n_i,W(\sqrt[p^m]l))$ holds for a sufficiently large integer $m$, this is 
because of $W(k^{ac})\cap\frac{W(l)}{p^m}=W(\sqrt[p^m]l)$. Due to property (iv) of definition \ref{exampleIX} we finally deduce $U_\sigma\in(^{F^{-\sigma}}\G)(W(\sqrt[p^m]l))$, i.e. 
$\{U_\sigma\}_{\sigma\in\Sigma}$ represents indeed a display $\P$ with $(\G,\{\upsilon_\sigma\}_{\sigma\in\Sigma})$-structure over the field extension $\sqrt[p^m]l$ of $k$. 
The same type of argument settles the $\sqrt[p^m]l$-rationality of the $\zeta_i^\circ$'s, thus culminating in a $\sqrt[p^m]l$-valued point of 
$$\Barb(\G,\{\upsilon_\sigma\}_{\sigma\in\Sigma})\times_{\fx^\bJ,\gB^\bR,\S}\Spec k.$$ 
\end{proof}

\begin{prop}
\label{properII}
Let $\bJ$ be a gauged $C$-multi-unitary collection for a $W(\f_{p^f})$-rational $\Phi$-datum $(\G,\{\upsilon_\sigma\}_{\sigma\in\Sigma})$. 
Suppose that the field $k$ is an algebraically closed extension of $\f_{p^f}$, and fix a $1$-morphism: 
$$\Spec k[[t]]\stackrel{\S}{\rightarrow}\gB^\bR$$
The $k[[t]]$-scheme $\Barb(\G,\{\upsilon_\sigma\}_{\sigma\in\Sigma})\times_{\fx^\bJ,\gB_{\f_{p^f}}^\bR,\S}\Spec k[[t]]$ 
possesses a section if and only if there exists a $2$-commutative diagram:
$$\begin{CD}
\Spec k((t))@>>>\Barb(\G,\{\upsilon_\sigma\}_{\sigma\in\Sigma})\\
@VVV@V{\fx^\bJ}VV\\
\Spec k[[t]]@>{\S}>>\gB^\bR
\end{CD}$$
\end{prop}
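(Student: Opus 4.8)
The ``only if'' direction is immediate: a section of
$$Y:=\Barb(\G,\{\upsilon_\sigma\}_{\sigma\in\Sigma})\times_{\fx^\bJ,\gB_{\f_{p^f}}^\bR,\S}\Spec k[[t]]$$
restricts along $\Spec k((t))\hookrightarrow\Spec k[[t]]$ to the displayed $2$-commutative square. So assume such a square is given; I must produce a section of $Y\rightarrow\Spec k[[t]]$. By lemma \ref{faithfulIII} the scheme $Y$ is quasicompact and radicial over $k[[t]]$, so $|Y|$ injects into $\{\eta,s\}$ (the generic and the closed point of $\Spec k[[t]]$), and by lemma \ref{properIV}(i) the given $k((t))$-valued point of $Y$ is unique together with its $2$-isomorphism. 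The plan is to exhibit, over the integral closure $\O_L$ of $k[[t]]$ in some finite extension $L$ of $k((t))$ (again a complete discrete valuation ring), an $\O_L$-valued point of $Y$; this suffices, by the following standard argument. Such a point makes $Y$ meet both fibres, so the reduced closure $Z$ of the unique point of $Y$ above $\eta$ is integral with function field $k((t))$ and possesses a rational point above $s$; the local ring of $Z$ at that point is squeezed between the discrete valuation ring $k[[t]]$ and its fraction field $k((t))$, hence equals $k[[t]]$, and the induced morphism $\Spec k[[t]]\rightarrow Z\hookrightarrow Y$ is — again by radiciality and the separatedness of $\Spec k[[t]]$ — the sought section.

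To construct the $\O_L$-point I pass to a banal description. Since $k[[t]]$ is henselian local with algebraically closed residue field and the groups $^W\G^\Sigma$, $\Ibar^{\upsilon_\Sigma}$ (and the unitary groups together with their $\Ibar$'s that enter $\bR$) are pro-smooth and affine, the $\gB^\bR$-object $\S$ is banal over $k[[t]]$; after enlarging $L$ I may likewise banalise the given lift and represent it by a tuple $\{U_\sigma\}_{\sigma\in\Sigma}=U\in{^W\G_{\f_{p^f}}^\Sigma}(L)$, well defined up to $\Ibar^{\upsilon_\Sigma}(L)$. As $\bJ$ is multicompact, every $(\G,\{\upsilon_\sigma\}_{\sigma\in\Sigma})$-display occurring is adjoint nilpotent, so that the crystalline realizations and the rigidity results used below apply. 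In these terms, the $2$-commutativity of the square says that after applying the $1$-morphisms $\Barb(\rho_i)$ and $\Barb(\chi_i)$, passing through the isogenies $\Fx_{\P_i}^{\bj_i,\{\upsilon_{i,\sigma}\}_{\sigma\in\Sigma}}$ and $\Fx_{\K_i}^{\{\delta_\sigma\}_{\sigma\in\Sigma}}$ of proposition \ref{klein}, and incorporating the $\R_\pi$-actions built into $\fx^\bJ$, one obtains — over $L$ — the graded $V$-modules underlying the displays $\S_i$, together with the $\R_\pi$-actions on their tensor products, that $\S$ supplies over $k[[t]]$; and the latter data is defined over $\O_L$.

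The crux is to show that already $U\in{^W\G_{\f_{p^f}}^\Sigma}(\O_L)$, i.e.\ that every Witt coordinate of every $U_\sigma$ is integral. This is the analogue over the complete discrete valuation ring $\O_L$ of the computation in the proof of proposition \ref{properI}, and I would run that argument mutatis mutandis, working over $k((t))^{ac}$ (whose valuation ring has algebraically closed residue field) and descending at the end to a finite subextension. Concretely, the realization formulae of propositions \ref{real} and \ref{klein} express the display matrices of the $\P_i$ as products of the $\check U_\sigma$ with the matrices $\beta_\sigma(p)$, whose entries are $p$-power multiples of units of bounded exponent; the last clause in the definition of a gauged collection, $\R_\pi\subset C+p^{(R_\Sigma-1)\Card(\pi)+1}\End_C(\V^\pi)$, together with the rigidity of the endomorphism algebra $\O_{Z_c}=\End_\G(\bigotimes_{i\in\Lambda}\V_i^{\otimes c})$ (over a separably closed residue field its skeleton generates it, exactly as in the proof of proposition \ref{properI}), forces these $p$-divisibilities to hold already over $\O_L$ rather than merely over $W(L)[\frac1p]$; since $\O_L$ is a domain $W(\O_L)$ is $p$-torsion free, so the corresponding divisions stay inside $W(\O_L)$; and property (iv) of definition \ref{exampleIX} (the closed immersion $\rho_\Lambda$, which recovers the integrality of the $U_\sigma$ from that of all $(^{F^{-\sigma}}\rho_i)(U_\sigma)$) then gives $U_\sigma\in({^{F^{-\sigma}}\G})(W(\O_L))$ for each $\sigma\in\Sigma$ — precisely as at the end of the proof of proposition \ref{properI}, with $(\O_L,l)$ in the role of $(\sqrt[p^m]l,k^{ac})$.

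Consequently $\{U_\sigma\}_{\sigma\in\Sigma}$ represents a $(\G,\{\upsilon_\sigma\}_{\sigma\in\Sigma})$-display $\P$ over $\O_L$, and the same reasoning makes the comparison $2$-isomorphism $\fx^\bJ(\P)\cong\S|_{\O_L}$ defined over $\O_L$; this is the required $\O_L$-valued point of $Y$, and by the first paragraph the proposition follows. The one genuinely non-trivial step is the integrality statement of the third paragraph: it is where multicompactness of $\bJ$ (ensuring adjoint nilpotence, hence the applicability of the rigidity of quasi-isogenies) and the built-in $p$-adic divisibility of the algebras $\R_\pi$ are indispensable, and it amounts to rerunning the argument of proposition \ref{properI} over the complete discrete valuation ring $\O_L$ in place of a field.
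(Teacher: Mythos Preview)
Your reduction in the first paragraph (descending from an $\O_L$-point to a $k[[t]]$-section via the reduced closure) is fine, and the ``only if'' direction is indeed trivial. The gap is in the third paragraph, where you claim that the argument of proposition~\ref{properI} carries over ``mutatis mutandis'' to show $U_\sigma\in({^{F^{-\sigma}}\G})(W(\O_L))$. It does not, and the paper's proof uses substantially different ingredients precisely here.

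In proposition~\ref{properI} the base is a field $k$ and one works over its Cohen ring $A$, a $p$-torsion-free complete local ring with \emph{separably closed} residue field; the skeleton argument for slope-$0$ objects then shows that the $\O_{Z_c}$-action on $\q\otimes(W(k^{ac})\otimes_A M)$ is already defined over $\q\otimes M$. Over $k[[t]]$ the correct analogue of the Cohen ring is the two-variable frame $A=W(k)[[t]]$ (with $\tau(t)=t^p$), together with its localisation $A_{\{t\}}$; the paper passes to windows over these frames via lemma~\ref{winI}. The skeleton argument does \emph{not} suffice to pass from $A_{\{t\}}$ to $A$: the paper invokes de Jong's full faithfulness theorem (\cite{meszin}, \cite{deJo}) to obtain that the action $s$ lands in $\q\otimes\End_{W(\f_{p^r})}(M)$ rather than merely in the $A_{\{t\}}$-version. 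Your sketch never lifts to mixed characteristic and never cites de Jong; the sentence ``the rigidity of the endomorphism algebra $\O_{Z_c}$ \dots forces these $p$-divisibilities to hold already over $\O_L$'' is exactly the step that requires this input and is not justified by the properI argument alone.

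There is a second missing ingredient. After knowing the $\O_{Z_c}$-action is $A$-rational, the paper must still trivialise the $\G$-torsors $\gS_\sigma$ over $A$. Over a strictly henselian local ring (as in properI) this is automatic, but $A=W(k)[[t]]$ is not strictly henselian at its generic points. The paper produces rational points over $A_{\{t\}}$ (from $u_{i,\sigma}^\circ$) and over $K(k)\{\{t\}\}$ (Dwork's trick), concludes local triviality on $\Spec A\setminus\Spec k$, and then appeals to the theorem of Colliot-Th\'el\`ene--Sansuc \cite{sansuc} to obtain global triviality. Only after adjusting the banalisation via a global section of $\gS_\sigma$ does one get $U_\sigma\in({^{F^{-\sigma}}\G})(A)$, using $A_{\{t\}}\cap\frac{A}{p}=A$. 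None of this is available in your equal-characteristic approach over $W(\O_L)$, and the phrase ``property (iv) of definition~\ref{exampleIX} \dots then gives $U_\sigma\in({^{F^{-\sigma}}\G})(W(\O_L))$'' presupposes the torsor trivialisation that you have not supplied.
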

\begin{proof}
Let $\P^\circ$ be a display with $(\G,\{\upsilon_\sigma\}_{\sigma\in\Sigma})$-structure over $k((t))$, and let 
$$\zeta^\circ:\S_{k((t))}\stackrel{\cong}{\rightarrow}\fx^\bJ(\P^\circ)$$
be a $2$-isomorphism. Just as in the proof of the previous proposition we may and we will assume that $C=W(\f_{p^r})\oplus W(\f_{p^r})$, 
furthermore adopting the previous notation, we have to start out from a family $(\{\S_i\}_{i\in\Lambda},\{s_\pi\}_{\pi\in\Pi})$ 
of  $\R_\pi$-operations $s_\pi$ on a family of displays $\S_i$ with $(\GL(n_i)_{W(\f_{p^r})},\{\tilde\upsilon_{i,\sigma}\}_{\sigma\in\z/r\z})$-structure 
over $k[[t]]$ (along with an insignificant multiplicative family). In the same vein $\zeta^\circ$ is given by a family:
$$\zeta_i^\circ:\S_{i,k((t))}\stackrel{\cong}{\rightarrow}\tilde\P_i^\circ:=\fx^{\bj_i,\{\upsilon_{i,\sigma}\}_{\sigma\in\Sigma}}(\P_i^\circ),$$ 
where $\P_i^\circ$ stands for the image of $\P^\circ$ under the canonical $1$-morphism
$$\Barb(\rho_i):\Barb(\G,\{\upsilon_\sigma\}_{\sigma\in\Sigma})\rightarrow\Barb(\GL(n_i)_{W(\f_{p^r})},\{\upsilon_{i,\sigma}\}_{\sigma\in\Sigma}).$$
We set up Frobenius lifts on both $A:=W(k)[[t]]$ and the $p$-adic completion $A_{\{t\}}$ of $A[\frac 1t]$, by decreeing $t\mapsto t^p$, and it does not cause 
confusion to denote both of these lifts by $\tau$. Note that $(A,pA,\tau)$ is a frame over $W(\f_{p^f})$ and dito for $(A_{\{t\}},pA_{\{t\}},\tau)$. In view of 
part (ii) of lemma \ref{properIV} we may assume the banality of $\P^\circ$, so that it is induced by a unique isomorphism class in the category 
$\hat\CAS_{A_{\{t\}},pA_{\{t\}}}(\G,\{\upsilon_\sigma\}_{\sigma\in\Sigma})$ of $(A_{\{t\}},pA_{\{t\}},\tau)$-windows with 
$(\G,\{\upsilon_\sigma\}_{\sigma\in\Sigma})$-structure, by lemma \ref{winI}. Its image under the functor 
$\sy_{A_{\{t\}},\tau}(\rho_i)$ (cf. \eqref{realIII}) shall be denoted by $\bigoplus_{\sigma\in\z/r\z}N_{i,\sigma}^\circ=N_i^\circ\in\Ob_{\ef_{A_{\{t\}},\tau}^{W(\f_{p^r})}}$. 
Any specific choice of $\{U_\sigma\}_{\sigma\in\Sigma}=U\in\G^\Sigma(A_{\{t\}})$ which represents the window for $\P^\circ$ determines coordinate systems 
$A_{\{t\}}^{n_i}\cong N_{i,\sigma}^\circ$, and we denote the images of $A^{n_i}$ under our (``non-canonical'') coordinate systems by: $N_{i,\sigma}\subset N_{i,\sigma}^\circ$. 
Notice that the isomorphism class of $N_i^\circ$ depends merely on $\P^\circ$ while different representatives give rise to different $A$-structures on $N_{i,\sigma}$, 
and at least for the elements of $\Sigma$, one can describe the transition from the representative $U$ to an alternative, say $O:=h^{-1}U\hat\Phi_{A_{\{t\}}}(h)$ by 
replacing $N_{i,\sigma}$ with $(^{F^{-\sigma}}\rho_i)(h_\sigma)^{-1}N_{i,\sigma}$. Let $\bigoplus_{\sigma\in\z/r\z}M_{i,\sigma}=M_i$ denote the $\z/r\z$-graded 
$(A,pA,\tau)$-windows to $\S_i$, and notice that just as in the proof of proposition \ref{properI} there are canonical $\cris_{A_{\{t\}},\tau}^{W(\f_{p^r})}$-isogenies
$$u_i^\circ:A_{\{t\}}\otimes_AM_i\rightarrow N_i^\circ,$$
induced by our $(A_{\{t\}},pA_{\{t\}},\tau)$-window to $\P_i^\circ$ and proposition \ref{klein}. By construction, their $\sigma$th components $u_{i,\sigma}^\circ$ 
are isomorphisms between the $A_{\{t\}}$-modules $A_{\{t\}}\otimes_AM_{i,\sigma}$ and $N_{i,\sigma}^\circ$, for $\sigma\in\Sigma$ only. Let $M$ be the 
$\z/r\z$-graded $\tau$-crystal over $A$ given by the formula $M_\sigma:=(\bigotimes_{i\in\Lambda}M_{i,\sigma})^{\otimes_Ac}$ for every $\sigma\in\z/r\z$. 
As soon as having noticed, that transport of structure establishes an action $s:Z_c\rightarrow\q\otimes\End_{W(\f_{p^r})}(A_{\{t\}}\otimes_AM)$ satisfying:
\begin{itemize}
\item[(i)]
By restriction $s$ induces an action of $\O_{Z_c}$ on the $A_{\{t\}}$-module $A_{\{t\}}\otimes_AM_\sigma$, for all $\sigma\in\Sigma$.
\item[(ii)]
The image of $s$ is contained in $\q\otimes\End_{W(\f_{p^r})}(M)$ 
\item[(iii)]
By restriction $s$ induces an action of $\O_{Z_c}$ on the $A$-module $M_\sigma$, for all $\sigma\in\Sigma$.
\end{itemize}
it is about time to invoke the principal homogeneous space $\gS_\sigma$ for $^{F^{-\sigma}}\G$ over $A$, of which the $Q$-valued points are the $\O_{Z_c}$-preserving families of $Q$-linear isomorphisms $Q\otimes_AM_{i,\sigma}\stackrel{\cong}{\rightarrow}Q\otimes_AN_{i,\sigma}$, for each $\sigma\in\Sigma$ of course. Here are the reasons for (i),(ii), and (iii): (i) follows from the aforementioned bijectivity of $u_{i,\sigma}^\circ$, (ii) is an application of the results of \cite{meszin}, \cite{deJo}, and (iii) follows from (i) together with (ii). Over each of $K(k)\{\{t\}\}$ and $A_{\{t\}}$ there exist rational points of $\gS_\sigma$, the former is due to Dwork's trick and the latter is due to the existence of the family $\{u_{i,\sigma}^\circ\}_{i\in\Lambda}$. Following the ideas in \cite{kisin} we see 
that we obtain the local triviality of $\gS_\sigma|_{\Spec A-\Spec k}$, and hence the global triviality of $\gS_\sigma$ by \cite{sansuc} together with the algebraic closedness of $k$: Just as in the proof of proposition \ref{properI}, any global section $\{u_{i,\sigma}\}_{i\in\Lambda}$ of $\gS_\sigma$ can be used for a careful adjustment of $N_{i,\sigma}$ in order to achieve the integrality of $u_{i,\sigma}^\circ$ for each $\sigma\in\Sigma$. Finally, we get $\{U_\sigma\}_{\sigma\in\Sigma}=U\in\G^\Sigma(A)$ from property (iv) of definition \ref{exampleIX} and we are done, notice $A_{\{t\}}\cap\frac{A}{p}=A$.
\end{proof}

\begin{prop}
\label{properIII}
Let $\bJ$ be a gauged $C$-multi-unitary collection for a $W(\f_{p^f})$-rational $\Phi$-datum $(\G,\{\upsilon_\sigma\}_{\sigma\in\Sigma})$. Let 
$X\rightarrow\gB_{\f_{p^f}}^\bR$ be a $1$-morphism, where $X$ is a noetherian $\f_{p^f}$-scheme. Then there exists a $X$-scheme $Y$ and a $2$-cartesian diagram
$$\begin{CD}
Y@>>>X\\
@VVV@VVV\\
\Barb(\G,\{\upsilon_\sigma\}_{\sigma\in\Sigma})@>{\fx^\bJ}>>\gB_{\f_{p^f}}^\bR
\end{CD},$$
furthermore, $Y$ is radicial, affine and integral over $X$. Assume in addition that:
\begin{itemize}
\item
$X$ is a Nagata scheme
\item
The degree of imperfection of the residue fields of each maximal point of $X$ is finite.
\end{itemize}
Then $Y_{red}$ is finite over $X$. 
\end{prop}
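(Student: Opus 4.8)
The plan is to build the scheme $Y$ and analyse it in three stages: first construct $Y$ and check its basic finiteness properties over $X$, then reduce the final assertion to a statement about residue fields at the generic points of $X$, and finally invoke the Nagata hypothesis to pass from finiteness at the generic points to global finiteness of $Y_{red}$.

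First I would produce $Y$ and the $2$-cartesian square by simply forming the fibre product $Y:=\Barb(\G,\{\upsilon_\sigma\}_{\sigma\in\Sigma})\times_{\fx^\bJ,\gB_{\f_{p^f}}^\bR}X$. By lemma \ref{faithfulIII} the $1$-morphism $\fx^\bJ$ is schematic, quasicompact and radicial; since $X$ is a scheme, $Y$ is therefore a scheme, and $Y\to X$ inherits quasicompactness and radiciality from $\fx^\bJ$ by base change. For affineness one argues as in the proof of lemma \ref{faithfulIII}: the upper horizontal map in the diagram of subsection \ref{exampleXIV} is separated (in fact affine, being a closed immersion on diagonals plus a torsor under $\prod_{\omega\notin\Sigma}\Ibar^{\tilde\upsilon_\omega}$ over an affine base, cf. proposition \ref{new}), and $\Barb(\rho_\Lambda)$ is affine because the quotient of $\prod_{i\in\Lambda}\GU(\V_i/C,\Psi_i)$ by the reductive subgroup $\G$ is affine; hence $\fx^\bJ$ is affine and so is $Y\to X$. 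Integrality over $X$ then follows from the combination of affineness with properness of $Y_{red}\to X$, which is precisely what the last clause will establish, or alternatively: a radicial affine morphism is integral as soon as it is, say, of finite type on reduced subschemes — but I would rather state integrality as a consequence of the valuative criterion using proposition \ref{properII} below, so that affine plus universally closed gives integral directly.

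Next I would address the main point: finiteness of $Y_{red}\to X$ under the Nagata and finite-imperfection-degree hypotheses. Because $Y\to X$ is radicial and affine, $Y_{red}\to X_{red}$ is universally closed once we show it is closed, and the cleanest route to closedness is the valuative criterion of properness. Since $Y\to X$ is already quasicompact and (radicial hence) separated, it suffices to verify the valuative criterion for complete discrete valuation rings with algebraically closed residue field mapping to $X$; but that is exactly what proposition \ref{properII} delivers — a lift $\Spec k((t))\to\Barb(\G,\{\upsilon_\sigma\}_{\sigma\in\Sigma})$ over $\Spec k[[t]]\to\gB^\bR$ extends to a section over $k[[t]]$ — combined with the uniqueness of such lifts over reduced bases from part (i) of lemma \ref{properIV} (applied to $\Spec k[[t]]$, which is reduced). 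So $Y_{red}\to X$ is proper. It is also affine (as a closed subscheme of the affine $Y$-over-$X$, which is affine over $X$) and quasi-finite: quasi-finiteness holds because the fibres are spectra of radicial algebras over a field of finite imperfection degree which, by proposition \ref{properI}, acquire a point over a \emph{finite} extension, forcing the fibres to be finite as sets; together with $X$ noetherian this gives quasi-finiteness of $Y_{red}\to X$. A proper quasi-finite morphism is finite, which concludes the argument. The role of the Nagata hypothesis is to guarantee that $Y_{red}$, which is integral over $X$ and radicial, is actually module-finite and not merely integral — i.e. one needs that the normalisation-type integral closures occurring are finite $\O_X$-modules; concretely, at a generic point $\eta$ of $X$ with residue field $\kappa$, the fibre $Y_{red,\eta}$ is $\Spec$ of a purely inseparable finite extension $\kappa^{1/p^m}$ (finite by the finite-imperfection hypothesis via proposition \ref{properI}), and Nagata-ness ensures the integral closure of $\O_{X,\eta}$ in such a purely inseparable extension spreads out to a finite $\O_X$-algebra over a neighbourhood, which one then glues.

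The main obstacle, I expect, is the last clause — upgrading ``proper + affine + radicial + quasi-finite'' to genuine module-finiteness without circularity. The subtlety is that properness via the valuative criterion uses only discrete valuation rings and thus a priori controls codimension-one behaviour; to get finiteness over \emph{all} of $X$ one really does need the noetherian and Nagata hypotheses to rule out pathologies like an infinite purely inseparable tower, and one must check that the finite bound $p^m$ on the inseparability exponent found pointwise in propositions \ref{properI} and \ref{properII} can be made locally uniform on the Nagata noetherian scheme $X$. I would handle this by reducing to $X$ affine, reduced and irreducible with generic point $\eta$, showing $\O_Y(Y)$ is a finite $\O_X(X)$-module by exhibiting it inside the integral closure of $\O_X(X)$ in the finite purely inseparable field extension $\kappa(\eta)\subset\kappa(Y_{red,\eta})$ — which is finite over $\O_X(X)$ by the Nagata property — and then using the uniqueness part of lemma \ref{properIV}(i) to see that $Y_{red}$ is contained in the normalisation of $X$ in that extension, hence finite; the general case follows by noetherian induction on $X$.
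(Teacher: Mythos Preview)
Your overall architecture matches the paper's: closedness via proposition~\ref{properII}, integrality from ``affine $+$ universally closed'', and finiteness from embedding $Y_{red}$ into an integral closure over a Nagata ring. But there are two genuine gaps.

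First, you assert that $\fx^\bJ$ is affine, and this is not established anywhere in the paper --- lemma~\ref{faithfulIII} and proposition~\ref{new} only give schematic, quasicompact, separated, and radicial. Your parenthetical justification conflates the upper horizontal map of the diagram in subsection~\ref{exampleXIV} with the flex maps $\fx^{\bj_i}$ themselves; neither is known to be affine. The paper instead proves affineness of $Y\to X$ only after base change to a noetherian $X$, by a bare-hands argument: once $Y\to X$ is known to be closed (from proposition~\ref{properII} via the criterion \cite[Proposition~7.2.1]{egaii}) and radicial, it is a homeomorphism onto its image, so the images of an affine open cover of $Y$ form an open cover of the image in $X$; refining by basic opens $D(f_i)$ and pulling back the $f_i$ to $g_i\in\Gamma(Y,\O_Y)$, one checks each $Y_{g_i}$ is affine, hence so is $Y$. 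Without this step your ``affine plus universally closed gives integral'' has no affine input.

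Second, your ``proper $+$ quasi-finite $\Rightarrow$ finite'' shortcut does not go through: proposition~\ref{properI} applies only to fields of finite imperfection degree, and the hypothesis concerns only the \emph{maximal} points of $X$, so you cannot conclude quasi-finiteness at arbitrary points. If that shortcut worked, the Nagata hypothesis would be superfluous --- which you yourself half-notice in the next sentence. The paper's route, which your final paragraph eventually reaches, is the correct one: reduce to $X=\Spec A$ reduced, localize at the multiplicative set $S$ of non-zerodivisors so that $S^{-1}A=\prod k_i$ is a product of fields (these are precisely the residue fields at maximal points, where the imperfection hypothesis applies), apply proposition~\ref{properI} to get $S^{-1}B_{red}=\prod l_i$ with each $l_i/k_i$ finite purely inseparable, and then observe that $B_{red}$ sits inside the integral closure of $A$ in $\prod l_i$, which is finite because $A$ is Nagata. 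You should present this as the argument, not as a repair of the quasi-finite approach.
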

\begin{proof}
Since all assertions are local on $X$ we may assume $X=\Spec A$, recall that we proved that $\fx^\bJ$ is schematic, quasicompact and separated, hence the $2$-cartesian diagram 
for some quasicompact and separated $A$-scheme $Y$. First of all we note that $Y$ is closed over $\Spec A$, simply because the proposition \ref{properII} gives us the applicable criterion 
\cite[Proposition 7.2.1]{egaii} for closedness (N.B.: $Y$ is quasicompact and $A$ is noetherian). Without loss of generality we may assume the surjectivity of $Y\rightarrow\Spec A$, otherwise there 
was an ideal $\sqrt I=I\subset A$ such that $\Spec A/I$ was the image of $Y$ and it did no harm to replace $A$ by $A/I$. We continue with the affineness: Let $U_i\subset Y$ be an open and affine 
covering of $Y$, and let $V_i$ be the image of $U_i$ in $\Spec A$. Since bijectivity and closedness implies the openness of a morphism we know that the $V_i$'s form an open covering of $\Spec A$, 
so let us choose a refinement consisting of sets of the form $D(f_i)$, for certain elements $f_i\in A$. Let us define the set of sections $g_i\in\Gamma(Y,\O_Y)$ as the pull-backs of $f_i$, and 
let us note that $Y_{g_i}$ is clearly an open covering of $Y$. Moreover, every $Y_{g_i}$ has to be affine: Just choose some $j$ with $D(f_i)\subset V_j$, so that $Y_{g_i}\subset U_j$ and 
$Y_{g_i}=\Spec\Gamma(U_j,\O_Y)_{g_i|_{U_j}}$ as desired. Hence $Y=\Spec B$, and the integrality of $B$ over $A$ is clear, in view of the entirety criterion \cite[Corollaire 7.1.8]{egaii}.\\
Notice that the finiteness assertion was already shown for the special case of a field (of finite degree of imperfection). In order to prove the asserted greater generality we assume without loss of 
generality that $A$ is reduced (so that the map from $A$ to $B$ is injective). Let $S$ be the set of non-zerodivisors of $A$. The elements of $S$ are still non-zerodivisors of $B$, here notice, that a universal homeomorphism maps points of height $0$ to points of height $0$. Now, since the fiber of $\fx^\bJ$ over $\Spec A$ is $\Spec B$, its fiber over $\Spec S^{-1}A$ is clearly $\Spec S^{-1}B$ 
(being the fiber of $\Spec B\rightarrow\Spec A$ over $\Spec S^{-1}A$). However $S^{-1}A$ is a finite product of fields, say $\prod_{i=1}^nk_i$, so that $S^{-1}B_{red}=\prod_{i=1}^nl_i$, where 
$l_i$ is a (purely inseparable and) finite field extension of $k_i$. It follows that $B_{red}$ is contained in the integral closure of $A$ in $\prod_{i=1}^nl_i$, and this is finite over the Nagata ring $A$.
\end{proof}

\section{On $\fx^{\bd^+}$}
\label{Frobenius}

Let us say that a function $\bd^+:\z/r\z\rightarrow\n_0$ is a $\pmod r$-multidegree if $\bd^+(\omega)\leq\bd^+(\omega+1)+1$ holds for every $\omega\in\z/r\z$. In this case the function
\begin{equation*}
\bd:\z/r\z\rightarrow\z/r\z;\,\omega\mapsto\omega+\bd^+(\omega)
\end{equation*}
induces a bijection between the following subsets of $\z/r\z$:
\begin{eqnarray}
\label{shiftII}
&&\Omega=\{\omega\in\z/r\z|\bd^+(\omega)\leq\bd^+(\omega+1)\}\\
\label{shiftI}
&&\Sigma=\bd(\z/r\z)
\end{eqnarray}
More specifically, let $\sigma_1<\dots<\sigma_z<\sigma_1+r$ be the elements of a set of representatives for the $\pmod r$ congruence classes in $\Sigma$, 
where $z=\Card(\Sigma)$. Then $\Omega$ consists of the $\pmod r$-congruence classes of the elements $\omega_1<\dots<\omega_z<\omega_1+r$, where 
$\omega_j:=\max\{\omega|\,\omega+\bd^+(\omega)\leq\sigma_j\}$. In this scenario we would like to discuss a certain type of functoriality, which is expressed by 
the $2$-commutative diagram:
$$\begin{CD}
\G^\Sigma@>>>\Barb(\G,\{\mu_\sigma\}_{\sigma\in\Sigma})@>>>\ton(\Ibar^{\mu_\Sigma})\\
@VmVV@V{\fx^{\bd^+}}VV@V{\ton(\gamma)}VV\\
\G^{\Omega}@>>>\Barb(\G,\{\tilde\mu_\sigma\}_{\sigma\in\Omega})@>>>\ton(\Ibar^{\tilde\mu_{\Omega}})\\
\end{CD},$$
where $\G$ is a reductive group scheme over $W(\f_{p^r})$ while $(\G,\{\mu_\sigma\}_{\sigma\in\Sigma})$ and 
$(\G,\{\tilde\mu_\omega\}_{\omega\in\Omega})$ are $\Phi$-data satisfying: 
$$^{F^{\bd^+(\omega)}}\mu_{\bd(\omega)}=\tilde\mu_\omega$$
We define this by writing down a pair $(\gamma,m)$ as follows: 
\begin{eqnarray*}
&&m:\{U_\sigma\}_{\sigma\in\Sigma}\mapsto\{{^{F^{\bd^+(\omega)}}U_{\bd(\omega)}}\}_{\omega\in\Omega}\\
&&\gamma:\{k_\sigma\}_{\sigma\in\Sigma}\mapsto\{{^{F^{\bd^+(\omega)}}k_{\bd(\omega)}}\}_{\omega\in\Omega}
\end{eqnarray*}
Recall that to any $(\G,\{\mu_\sigma\}_{\sigma\in\Sigma})$-display $\P$ over some $\f_{p^f}$-algebra $B$, we associated a vector bundle $T_{\P}$, which possesses 
a natural decomposition $T_{\P}=\bigoplus_{\sigma\in\Sigma}T_{\P,\sigma}$. The differential of $\fx^{\bd^+}$ induces a map from $T_\P$ to $T_{\fx^{\bd^+}(\P)}$, 
in view of our deformation theory. The following infinitesimal property is crucial: For each $\sigma\in\Sigma$ the differential of $\fx^{\bd^+}$ kills $T_{\P,\sigma}$ unless 
$\bd^+(\sigma)=0$, in which case it induces a $B$-linear isomorphism $T_{\P,\sigma}\stackrel{\cong}{\rightarrow}T_{\fx^{\bd^+}(\P),\sigma}$, here notice that $\bd^+(\sigma)=0$ 
implies $\sigma\in\Omega$ and $\bd(\sigma)=\sigma$. In the following lemma $\tilde\bd^+$ denotes the function $\bd_{\Omega}^+$, which is defined as in subsection \ref{pivotalII}.

\begin{lem}
\label{thedetailII}
Let $\P$ be a $(\G,\{\mu_\sigma\}_{\sigma\in\Sigma})$-display over a finitely generated $K$-algebra $B$, where the field $K$ is an algebraically closed or an algebraic extension of 
$\f_{p^f}$. Assume that for each maximal ideal $\gn\subset B$ the $\gn$-adic completion $\P_{\hat B_\gn}$ agrees with the universal formal equicharacteristic deformation of its special 
fiber $\P_{B/\gn}$. Suppose that our $\pmod r$-multidegree satisfies $\bd^+\leq1+\tilde\bd^+$ and consider the $K$-subalgebra $C$, which is defined to be to be the kernel of the derivation: 
$$B\stackrel{d_{B/K}}{\rightarrow}\Omega_{B/K}^1\stackrel{\kappa_\P^{-1}}{\rightarrow}\bigoplus_{\sigma\in\Sigma}\check T_{\P,\sigma}
\twoheadrightarrow\bigoplus_{\sigma\in\Sigma\backslash\Omega}\check T_{\P,\sigma}.$$
Then there exists a $(\G,\{\tilde\mu_\omega\}_{\omega\in\Omega})$-display $\tilde\P$ with over $C$ together with an isomorphism 
$\fx^{\bd^+}(\P)\stackrel{\cong}{\rightarrow}\tilde\P_B$, furthermore $\tilde\P$ is unique up to a unique isomorphism and each 
localization $\tilde\P_{\hat C_\gm}$ agrees with the universal formal equicharacteristic deformation of its special fiber $\tilde\P_{C/\gm}$.
\end{lem}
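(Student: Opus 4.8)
The plan is to descend the display $\fx^{\bd^+}(\P)$ from $B$ to $C$ one completed local ring at a time, to identify the descended object with the universal formal deformation of its special fibre, and then to glue; the uniqueness assertion will follow formally from rigidity. I would dispose of uniqueness first. If $\tilde\P_1,\tilde\P_2$ are two $(\G,\{\tilde\mu_\omega\}_{\omega\in\Omega})$-displays over $C$ equipped with isomorphisms $\fx^{\bd^+}(\P)\stackrel{\cong}{\rightarrow}(\tilde\P_i)_B$, then over every maximal ideal $\gm\subset C$ both $(\tilde\P_i)_{\hat C_\gm}$ are --- by the last clause still to be proven --- the universal formal equicharacteristic deformation of the common special fibre $\fx^{\bd^+}(\P)_{C/\gm}$, hence canonically isomorphic; since a $(\G,\{\tilde\mu_\omega\})$-display over an $\f_{p^f}$-algebra has no automorphisms fixing its reduction modulo the nilradical (\cite[Corollary 3.5.6]{pappas}), these canonical isomorphisms are compatible on overlaps and glue to a single isomorphism $\tilde\P_1\cong\tilde\P_2$ over $C$, itself without automorphisms.

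For existence I would first work formally. Fix a maximal ideal $\gn\subset B$ and put $\gm:=\gn\cap C$; as $B$ is of finite type over $K$ its residue fields are finite, hence algebraic over $\f_{p^f}$, so this causes no difficulty. By hypothesis $\P_{\hat B_\gn}$ is the universal formal equicharacteristic deformation of $\P_{B/\gn}$, so by Theorem \ref{obstII} (applied to the $(\gG,\mu_\Sigma)$-display underlying $\P$ via Lemma \ref{galoisII}) the completed Kodaira--Spencer map $\kappa_\P$ identifies $\hat B_\gn$ with the power series ring over $\kappa(\gn)$ on variables dual to $\bigoplus_{\sigma\in\Sigma}T_{\P_{B/\gn},\sigma}$, and the very definition of $C$ identifies $\hat C_\gm$ with the sub--power series ring on the variables dual to the directions surviving the differential of $\fx^{\bd^+}$. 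I then claim that $\fx^{\bd^+}(\P_{\hat B_\gn})$ is the pull--back along $\hat C_\gm\hookrightarrow\hat B_\gn$ of a display $\tilde\P^{(\gm)}$, and that $\tilde\P^{(\gm)}$ realizes the universal formal equicharacteristic deformation of $\fx^{\bd^+}(\P_{B/\gn})$.

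The hard part, as I see it, is upgrading the infinitesimal statement recalled before the lemma --- that $d\fx^{\bd^+}$ annihilates $T_{\P,\sigma}$ for the non--surviving $\sigma$ --- to the assertion that $\fx^{\bd^+}(\P_{\hat B_\gn})$ is independent of the corresponding variables \emph{to all orders}. For this I would make the construction explicit: present the universal deformation as the window attached to $\hat\delta\big(\exp(\sum_i t_i\epsilon_i)\,U_0\big)$ in the sense of Lemma \ref{synthVII}, apply the homomorphisms $m$ and $\gamma$ defining $\fx^{\bd^+}$, and trace through the formulae of Section \ref{Frobenius}. A variable $t_i$ attached to a non--surviving direction $\sigma\in\Sigma$ (one with $\bd^+(\sigma)>0$) enters the resulting window only through an $F^{\bd^+(\sigma)}$--twist, which carries $I(R)$ into $p^{\bd^+(\sigma)}W(R)$; the inequality $\bd^+\leq1+\tilde\bd^+$ is precisely the bookkeeping bound that lets one absorb all such contributions by a $p$--adically contractive change of the surviving coordinates --- a fixed--point argument of the same flavour as in Lemmas \ref{synthVII} and \ref{maximalholonomy} --- after which the window has its coefficients in $\hat C_\gm$. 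This produces $\tilde\P^{(\gm)}$; that it is the universal deformation of $\fx^{\bd^+}(\P_{B/\gn})$ then follows from the surviving--directions part of the infinitesimal property, which makes its Kodaira--Spencer map a bijection onto $\Omega^1_{\hat C_\gm/K}$, so that Lemma \ref{smoothIII}(ii) (equivalently \cite[Corollary 3.5.12]{pappas}) applies.

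It remains to assemble the $\tilde\P^{(\gm)}$ into a display over $C$. Because the coordinate change above is canonical, the explicit window it produces already has coefficients in $C$ itself, and therefore defines a $(\G,\{\tilde\mu_\omega\}_{\omega\in\Omega})$-display $\tilde\P$ over $C$ whose base change to $B$ is canonically isomorphic to $\fx^{\bd^+}(\P)$ and whose completion at each $\gm$ is $\tilde\P^{(\gm)}$, hence universal; combined with the uniqueness already established this finishes the proof. I expect the one genuinely delicate point to be the convergence estimate behind the coordinate change --- i.e.\ that the multidegree bound $\bd^+\leq1+\tilde\bd^+$ really does make the relevant $p$--adic iteration contract --- everything else being formal manipulation of the window calculus and of the deformation theory of Subsection \ref{meckerei}.
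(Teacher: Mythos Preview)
Your plan has the right overall shape --- local descent, then globalization, with uniqueness from rigidity --- and it matches the paper's. But you have misidentified where the work lies, and there is a genuine gap in your globalization step.

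\textbf{Local descent is easier than you think.} The ``all orders'' independence you flag as the hard part is in fact immediate from the explicit universal deformation of Subsection~\ref{thedetailI}. Over $\hat B_\gn\cong k_0[[t_1,\dots,t_d]]$ the display is represented by $\hat\delta_\tau(\exp(\sum t_i\epsilon_i)U_0)$ with $\tau(t_i)=t_i^p$; since for an $\f_p$-algebra $R$ the Frobenius on $W(R)$ is functorially induced by the absolute Frobenius of $R$, one has ${}^{F}\hat\delta_\tau(x)=\hat\delta_\tau(\tau(x))$. Applying the defining formulae of $\fx^{\bd^+}$ therefore replaces each $U_{\bd(\omega)}$ by the same expression with $t_i\mapsto t_i^{p^{\bd^+(\omega)}}$. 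Under the hypothesis $\bd^+\le1+\tilde\bd^+$ (which forces $\bd^+(\omega)\in\{0,1\}$ on $\Omega$) the image lands in the subring of $p$-th powers in exactly the non-surviving variables, which is $\hat C_\gm$. There is no fixed-point iteration and nothing $p$-adic about it; your remark that $F$ ``carries $I(R)$ into $p^{\bd^+(\sigma)}W(R)$'' is the wrong mechanism (and is not quite correct as stated).

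\textbf{The real gap is globalization.} Your last paragraph asserts that the explicit local window ``already has coefficients in $C$ itself''. It does not: the local computation used a coordinate chart at one closed point, and there is no reason these charts patch to a global object over $C$. The paper proceeds differently. Once local descent and universality are established, $C$ is smooth, hence $C\hookrightarrow B$ is finite, radicial and faithfully flat; in particular the kernel of the codiagonal $B\otimes_CB\to B$ is nilpotent. One then constructs $\tilde\P$ by fpqc descent of $\fx^{\bd^+}(\P)$ along $C\to B$: the required descent datum over $B\otimes_CB$ is obtained by lifting the obvious one over $\hat B\otimes_{\hat C}\hat B$ (coming from the product display $\hat\P=\prod_\gn\tilde\P_\gn$ over $\hat C=\prod_\gn\hat C_\gn$) via \cite[Lemma~3.7.6]{pappas}, which applies precisely because that codiagonal kernel is nilpotent. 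This is the step you are missing.

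Finally, your uniqueness argument is circular as written: it invokes the universality of $(\tilde\P_i)_{\hat C_\gm}$, which is part of what is to be proven for an \emph{arbitrary} $\tilde\P$ with $\tilde\P_B\cong\fx^{\bd^+}(\P)$. The paper's route --- compare $\tilde\P_B\cong\tilde\P'_B$ over the faithfully flat extension $C\to B$ and use nilpotence of the codiagonal kernel together with rigidity --- avoids this.
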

\begin{proof}
Observe that the universality of the completions $\tilde\P_{\hat C_\gm}$ implies the smoothness of the $K$-algebra $C$, so that the map from $C$ to $B$ is not only finite and 
radicial, but also faithfully flat by \cite[Proposition (6.1.5)]{egaiv}. The uniqueness assertion follows easily from descent theory and rigidity, since another display $\tilde\P'$ 
with the requested properties would give rise to isomorphisms $\tilde\P'_B\cong\fx^{\bd^+}(\P)\cong\tilde\P_B$, while the kernel $\ga$ of the codiagonal from $B\otimes_CB$ 
to $B$ is nilpotent. For the rest of the proof, it does no harm to identify the sets of maximal ideals of $B$ and $C$. Moreover, the explicit description of universal formal 
deformation spaces as given in \cite[Subsubsection 3.5.9]{pappas} or in our subsection \ref{thedetailI} reveals that each $\fx^{\bd^+}(\P_{\hat B_\gn})$ descends uniquely 
to some $(\G,\{\tilde\mu_\omega\}_{\omega\in\Omega})$-display $\tilde\P_\gn$ over $\hat C_\gn$, which is again a universal formal equicharacteristic deformation of 
its special fiber. Consider the product $(\G,\{\tilde\mu_\omega\}_{\omega\in\Omega})$-display $\hat\P:=\prod_\gn\tilde\P_\gn$ over the proartinian completion 
$\hat C:=\prod_\gn\hat C_\gn$. At last we construct the display $\tilde\P$ with descent theory, where the descent datum $\phi:\pr_1^*\P\rightarrow\pr_2^*\P$ arises from 
applying \cite[Lemma 3.7.6]{pappas} to the extension $B\otimes_CB\subset\hat B\otimes_{\hat C}\hat B$, here notice again, that the kernel $\ga$ of the codiagonal is 
nilpotent, while the existence of $\hat\P$ grants a descent datum, say $\hat\phi$ from $(\pr_1^*\P)_{\hat B\otimes_{\hat C}\hat B}$ to $(\pr_2^*\P)_{\hat B\otimes_{\hat C}\hat B}$.
\end{proof}

Let $B$ be a commutative ring satisfying $pB=0$. For every non-negative integer $m$ we will write $\sqrt[p^m]B$ for the $B$-algebra whose underlying ring is $B$, 
and whose $B$-algebra structure is defined by the homomorphism $x\mapsto x^{p^m}$, the $m$th iterate of the absolute Frobenius, notice that their direct limit
$$\lim_\rightarrow\sqrt[p^m]B=:\sqrt[p^\infty]B$$
is the usual perfection of $B/\sqrt{0_B}$. For an element $x$ of $B$ we use the notation $\sqrt[p^m]x\in\sqrt[p^m]B$ to express the alteration of $B$-algebra 
structure on the same ring (the $p^m$-th power of $\sqrt[p^m]x$ agrees with the image of $x$ under the structural morphism $B\rightarrow\sqrt[p^m]B$).

\begin{thm}
\label{fake}
Let $\tilde\P$ be a display with $(\G,\{\tilde\mu_\sigma\}_{\sigma\in\Omega})$-structure over a finitely generated $K$-algebra $A$, where the field $K$ is 
an algebraically closed or an algebraic extension of $\f_{p^f}$. Assume that for each maximal ideal $\gm\subset A$ the $\gm$-adic completion $\tilde\P_{\hat A_\gm}$ 
agrees with the universal formal equicharacteristic deformation of its special fiber $\tilde\P_{A/\gm}$, for every maximal ideal $\gm\subset A$. Then there exists a 
display $\P$ with $(\G,\{\mu_\sigma\}_{\sigma\in\Sigma})$-structure over a finite $A$-subalgebra $B\subset\sqrt[p^\infty]A$, with the following additional properties:
\begin{itemize}
\item[(i)]
$\fx^{\bd^+}(\P)\cong\tilde\P_B$
\item[(ii)]
The $\gn$-adic completion $\P_{\hat B_\gn}$ agrees with the universal formal equicharacteristic 
deformation of its special fiber $\P_{B/\gn}$, for every maximal ideal $\gn\subset B$.
\end{itemize}
\end{thm}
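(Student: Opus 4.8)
The plan is to exploit that $\fx^{\bd^+}$ becomes an equivalence over perfect rings. First I would record that $K$ is perfect, so that each residue field $k(\gm)$ of a maximal ideal $\gm\subset A$ is perfect and that the hypothesis forces $\hat A_\gm$ to be a power series ring over $k(\gm)$ (subsection \ref{thedetailI}); hence $A$ is regular, in particular reduced, and, being of finite type over a perfect field, finite over $A^{p^m}$ for every $m$, so that each $\sqrt[p^m]A$ is a finite reduced $A$-algebra inside $\sqrt[p^\infty]A$. Next I would observe that over a perfect ring $R$ the Frobenius of $W(R)$ is bijective, so the maps $m$ and $\gamma$ defining $\fx^{\bd^+}$ are isomorphisms and $\fx^{\bd^+}$ is an \emph{equivalence} over $R$ (use the $2$-cartesian description \eqref{poshI} together with the functoriality recipe of subsection \ref{spass}). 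Applied to $R=\sqrt[p^\infty]A$ this produces a $(\G,\{\mu_\sigma\}_{\sigma\in\Sigma})$-display $\P^\infty$ over $\sqrt[p^\infty]A$, unique up to a unique isomorphism, with $\fx^{\bd^+}(\P^\infty)\cong\tilde\P_{\sqrt[p^\infty]A}$.

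\textbf{Step 2 (descent to a finite $A$-subalgebra).} Form the fibre product
$$Y:=\Barb(\G,\{\mu_\sigma\}_{\sigma\in\Sigma})\times_{\fx^{\bd^+},\,\Barb(\G,\{\tilde\mu_\omega\}_{\omega\in\Omega}),\,\tilde\P}\Spec A.$$
As in \cite[Lemma 3.2.9]{pappas} the stack $\Barb(\G,\{\mu_\sigma\}_{\sigma\in\Sigma})$ has affine diagonal, from which one gets that $\fx^{\bd^+}$ is schematic, quasicompact and separated, so $Y$ is a quasicompact separated $A$-scheme. By Step 1 the morphism $Y\to\Spec A$ is surjective and, being an equivalence after the purely inseparable base change $A\to\sqrt[p^\infty]A$, radicial. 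The valuative and finiteness arguments of Propositions \ref{properII} and \ref{properIII} go through for $\fx^{\bd^+}$ (they only use that $\fx^{\bd^+}$ is formal-locally a partial Frobenius, see Step 3, hence finite and universally closed), so $Y=\Spec B$ with $B$ finite over $A$; as $A$ is reduced and $B/A$ is finite and radicial, $B$ embeds into the integral closure of $A$ in the product of the purely inseparable closures of exponent $\le\max_\omega\bd^+(\omega)$ of the function fields of the components of $\Spec A$, whence $B\subset\sqrt[p^m]A\subset\sqrt[p^\infty]A$. The tautological morphism $Y\to\Barb(\G,\{\mu_\sigma\}_{\sigma\in\Sigma})$ now gives a display $\P$ over $B$ with $\fx^{\bd^+}(\P)\cong\tilde\P_B$, which is (i). (Alternatively $\P$ can be glued from the local universal deformations of Step 3 along $B\otimes_A B\subset\hat B\otimes_{\hat A}\hat B$ by rigidity and nilpotence of the codiagonal kernel, exactly as in the proof of Lemma \ref{thedetailII}.)

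\textbf{Step 3 (universality at closed points).} Fix $\gn\subset B$ over $\gm\subset A$ (unique, by radiciality) and put $k_0:=k(\gm)=k(\gn)$, which is perfect. By Step 1 there is a unique $(\G,\{\mu_\sigma\}_{\sigma\in\Sigma})$-display $\P_0$ over $k_0$ with $\fx^{\bd^+}(\P_0)\cong\tilde\P_{A/\gm}$, and $\P_{B/\gn}\cong\P_0$. Let $\P^u$ over $k_0[[t_\bullet]]$ and $\tilde\P^u$ over $k_0[[\tilde t_\bullet]]$ be the universal formal equicharacteristic deformations of $\P_0$ and of $\tilde\P_{A/\gm}$; the coordinates $t_\bullet$ (resp. $\tilde t_\bullet$) split into blocks indexed by $\sigma\in\Sigma$ (resp. $\omega\in\Omega$), of sizes the ranks of the weight-one spaces of $\mu_\sigma$ (resp. of $\tilde\mu_\omega={}^{F^{\bd^+(\omega)}}\mu_{\bd(\omega)}$, which is the $\bd(\omega)$-block size). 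The deformation $\fx^{\bd^+}(\P^u)$ is classified by a local $k_0$-homomorphism $\theta\colon k_0[[\tilde t_\bullet]]\to k_0[[t_\bullet]]$; using the Dieudonn\'e-connection presentation of these deformations (subsection \ref{thedetailI}) and the infinitesimal property of $\fx^{\bd^+}$ — its differential kills $T_{\P,\sigma}$ when $\bd^+(\sigma)\neq0$ and is an isomorphism $T_{\P,\sigma}\xrightarrow{\sim}T_{\fx^{\bd^+}(\P),\sigma}$ when $\bd^+(\sigma)=0$ — I would check that in adapted coordinates $\theta$ is the partial Frobenius $\tilde t^{(a)}_\omega\mapsto\bigl(t^{(a)}_{\bd(\omega)}\bigr)^{p^{\bd^+(\omega)}}$. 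Consequently $\hat B_\gn$ pro-represents the functor of deformations $\P'$ of $\P_0$ with $\fx^{\bd^+}(\P')\cong\tilde\P_{\hat B_\gn}$, that is the functor over $\hat A_\gm=k_0[[\tilde t_\bullet]]$ of families $x_\bullet$ with $\bigl(x^{(a)}_\omega\bigr)^{p^{\bd^+(\omega)}}=\tilde t^{(a)}_\omega$, which is pro-represented by $k_0[[\tilde t_\bullet]][t_\bullet]/\bigl((t^{(a)}_\omega)^{p^{\bd^+(\omega)}}-\tilde t^{(a)}_\omega\bigr)\cong k_0[[t_\bullet]]$. Hence $\hat B_\gn\cong k_0[[t_\bullet]]$ is regular and is the universal deformation ring of $\P_{B/\gn}$, and $\P_{\hat B_\gn}\cong\P^u$; this is (ii). The same computation also supplies the quasi-finiteness of $Y\to\Spec A$ used in Step 2.

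\textbf{Main obstacle.} The crux is the identification in Step 3 of $\fx^{\bd^+}$ on universal deformation spaces with an honest partial Frobenius: this needs a careful unwinding of the Dwork/Dieudonn\'e-connection description of subsection \ref{thedetailI} together with the block-wise infinitesimal behaviour of $\fx^{\bd^+}$. The only other point requiring care is the verification that $\fx^{\bd^+}$ is schematic and that the analogues of Propositions \ref{properII}, \ref{properIII} hold for it; this is routine once one knows that, locally, $\fx^{\bd^+}$ is a finite partial Frobenius.
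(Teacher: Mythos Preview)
Your approach is genuinely different from the paper's, and the heart of it---the local identification in Step~3 of $\fx^{\bd^+}$ on universal formal deformation rings with a partial Frobenius $\tilde t_\omega^{(a)}\mapsto\bigl(t_{\bd(\omega)}^{(a)}\bigr)^{p^{\bd^+(\omega)}}$---is correct and is in fact implicit in the paper as well (it is exactly the explicit local computation invoked in the proof of Lemma~\ref{thedetailII}). Where the two routes part company is in the passage from local to global.

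The paper never forms the fibre product $Y$ and never needs to know that $\fx^{\bd^+}$ is schematic or proper. Instead it works by induction on $\bd^+$, reducing to the case $\bd^+\leq1+\tilde\bd^+$, and then uses a short trick: choose a second multidegree $\be^+$ so that $\fx^{\be^+}\circ\fx^{\bd^+}$ is the full Frobenius. Lemma~\ref{thedetailII} (the \emph{descent} direction, which is easy) applied to $\be^+$ shows that $\fx^{\be^+}(\tilde\P)$ descends to some $\P'$ over a subalgebra $C\subset A$ containing $A^p$; then $\P$ is obtained from $\P'$ by the single base change $C\to\sqrt[p]{\,}C=:B$. In other words, the ascent for $\fx^{\bd^+}$ is manufactured out of a descent for $\fx^{\be^+}$ plus inversion of the absolute Frobenius, and no stack-theoretic finiteness properties of $\fx^{\bd^+}$ are required.

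By contrast, your Step~2 leans on two assertions that are not established in the paper for $\fx^{\bd^+}$: that $\fx^{\bd^+}$ is schematic (the affine-diagonal statement in \cite{pappas} gives affineness of the diagonal of $\Barb$, not of $\fx^{\bd^+}$; the argument in Proposition~\ref{new} uses that $\gamma$ is a closed immersion with representable quotient, which fails here since $\gamma$ involves iterated Witt-vector Frobenii), and that the valuative/Nagata arguments of Propositions~\ref{properII}--\ref{properIII} transfer from $\fx^\bJ$ to $\fx^{\bd^+}$. Both claims are plausible and could likely be proved (for instance, $\gamma$ is a product of iterated absolute Frobenii on affine $\f_{p^f}$-schemes, hence integral and radicial), but they require separate arguments you have not supplied. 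Your alternative gluing suggestion does not escape the difficulty, since it presupposes a global $B$ already at hand. The payoff of the paper's inductive/Frobenius-completion argument is precisely that it avoids all of this: the global object $B$ appears as $\sqrt[p]{\,}C$ for a subalgebra $C$ produced by descent, and the construction of $\P$ over $B$ is a one-line base change.
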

\begin{proof}
Again, observe that the $K$-algebras $A$ and $B$ are automatically smooth, and that the morphism $A\rightarrow B$ is 
automatically faithfully flat, moreover $\P$ is unique (up to a unique isomorphism, once the isomorphism in (i) is fixed).\\
We will induct on $\bd^+$ in order to construct $\P/B$ in stages, with each stage being such that $\bd^+\leq1+\tilde\bd^+$, 
so that one can find another $\pmod r$-multidegree $\be^+\leq1+\tilde\be^+$, such that the composition
$$\Barb(\G,\{\mu_\sigma\}_{\sigma\in\Sigma})\stackrel{\fx^{\bd^+}}{\rightarrow}
\Barb(\G,\{\tilde\mu_\sigma\}_{\sigma\in\Omega})\stackrel{\fx^{\be^+}}{\rightarrow}\Barb(\G,\{{^F\mu}_{\sigma+1}\}_{\sigma\in{\Sigma-1}})$$
agrees with the Frobenius. In this situation we may apply lemma \ref{thedetailII} to find that the display $\fx^{\be^+}(\tilde\P)$ descends to some $\P'$ satisfying (ii) over a 
$K$-subalgebra $C$ of $A$, which contains $A^p$. The construction of $\P$ from $\P'$ is achieved by a change of base along $C\stackrel{\sqrt[p]\empty}{\rightarrow}\sqrt[p]C=:B$.
\end{proof}

\subsection{On the isogeny classes of $\fx^{\bj,\{\upsilon_\omega\}_{\omega\in\Omega}}$ and $\fx^{\bd^+}$}
\label{weirdoIII}

For any reductive $\z_p$-group $\gG$ we let $L\gG$ be the $\f_p$-functor defined by $R\mapsto\gG(\q\otimes W(R))$, which is in fact a 
fpqc-scheaf of groups on $\Spec\f_p$, in the sense of subsection \ref{synthIV}. We let $\B^0(\gG)$ be the $\f_p$-stack rendering the diagram
\begin{equation}
\label{poshIII}
\begin{CD}
{\ton(L\gG)}@<<<{\B^0(\gG)}\\
@V{\Delta_{\ton(L\gG)}}VV@VqVV\\
{\ton(L\gG)\times_{\f_p}\ton(L\gG)}@<{\ton(F\times\id)\circ\Delta_{\ton(L\gG)}}<<{\ton(L\gG)}
\end{CD}
\end{equation}
$2$-cartesian. A $1$-morphism from a $\f_p$-scheme $X$ to $\B^0(\gG)$ is called a $\gG$-isodisplay over $X$, and it is 
called banal if its image in $\ton(L\gG)$ is a trivial $L\gG$-torsor over $X$. The groupoid of banal $\gG$-isodisplays over an 
$\f_p$-algebra $R$ is canonically equivalent to $[\gG(\q\otimes W(R))/_F\gG(\q\otimes W(R))]$. For any reductive $W(\f_{p^r})$-group 
$\G$ and any $W(\f_{p^f})$-rational $\Phbar$-datum $(\G,\{\mu_\sigma\}_{\sigma\in\Sigma})$ we wish to define a $1$-morphism
\begin{equation}
\label{poshIV}
\bh_{\mu_\Sigma}^0:\Barb(\G,\{\mu_\sigma\}_{\sigma\in\Sigma})\rightarrow\B^0(\Res_{W(\f_{p^r})/\z_p}\G)
\end{equation}
by decreeing its impact on banal objects as follows: Every $\G^\Sigma(W(R))$-element $U=\{U_\sigma\}_{\sigma\in\Sigma}$, which represents 
a banal $(\G,\{\mu_\sigma\}_{\sigma\in\Sigma})$-display over an $\f_{p^f}$-algebra $R$, is sent to the $\G(K(\f_{p^r})\otimes_{\z_p}W(R))$-element 
$m(U):=\mu_\Sigma(\frac1p)U$. Moreover, every isomorphism $\{h_\sigma\}_{\sigma\in\Sigma}=h\in\Ibar^{\mu_\Sigma}=\prod_{\sigma\in\Sigma}\Ibar^{\mu_\sigma}$ 
from $h^{-1}U\Phbar(h)=U'$ to $U$ is sent to the $\G(K(\f_{p^r})\otimes_{\z_p}W(R))$-element $\gamma(h):=\{k_\sigma\}_{\sigma\in\z/r\z}$, where
$$^{F^{\bd^+(\sigma)}}(\mu_{\bd_\Sigma(\sigma)}(\frac1p)h_{\bd_\Sigma(\sigma)}\mu_{\bd_\Sigma(\sigma)}(p))=k_\sigma$$
here notice that $\prod_{\sigma=0}^{r-1}{^{F^{-\sigma}}\G_{W(\f_{p^f})}}=(\Res_{W(\f_{p^r})/\z_p}\G)_{W(\f_{p^f})}$.

\begin{prop}
\label{weirdIV}
Fix positive integers $r\mid f$, a standard linear $\Phbar$-datum $(\GL(n)_{W(\f_{p^r})},\{\upsilon_\omega\}_{\omega\in\Omega})$, in the sense of part (i) of 
definition \ref{standard}, and a function $\bj:\z/r\z\rightarrow\n_0$ satisfying the conditions (i) and (ii) of subsubsection \ref{Universum}. There exists a $2$-isomorphism: 
\begin{equation}
\label{weirdV}
J:\bh_{\upsilon_\Omega}^0\stackrel{\cong}{\rightarrow}\bh_{\tilde\upsilon}^0\circ\fx^{\bj,\{\upsilon_\omega\}_{\omega\in\Omega}}
\end{equation}
where $\tilde\upsilon$ is defined as in \eqref{soul}. Moreover, in the standard multiplicative case, the component of \eqref{weirdV} 
at some $\omega\in[\omega_{j-1}+1,\omega_j]$ is given by multiplication with $p^{\omega-\omega_{{j-1}}-1}$, please recall 
$\fx^{\bj,\{\upsilon_\omega\}_{\omega\in\Omega}}\cong\id_{\delta^{-1}\upsilon_\Omega}$ in that case (cf. remark \ref{weirdVIII}).
\end{prop}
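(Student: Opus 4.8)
The plan is to construct $J$ on banal displays and then to descend. Since $\B^0(\Res_{W(\f_{p^r})/\z_p}\GL(n))$ is a stack, an isomorphism between the two $1$-morphisms $\bh_{\upsilon_\Omega}^0$ and $\bh_{\tilde\upsilon}^0\circ\fx^{\bj,\{\upsilon_\omega\}_{\omega\in\Omega}}$ is a sheaf on $\Barb(\GL(n)_{W(\f_{p^r})},\{\upsilon_\omega\}_{\omega\in\Omega})$, and every display there is fpqc-locally banal; so it is enough to give, for each $\f_{p^f}$-algebra $R$, a $2$-isomorphism between the restrictions of the two $1$-morphisms to $\BI_R(\GL(n)_{W(\f_{p^r})},\{\upsilon_\omega\}_{\omega\in\Omega})$, by a rule defined over $W(\f_{p^f})$ (whence compatibility with base change, and the glueing cocycle, is automatic).

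First I would spell out both $1$-morphisms in the language of example \ref{diagram}. Write $\omega_1<\dots<\omega_z<\omega_1+r$ for the representatives of $\Omega$ and let $U=\{U_\omega\}_{\omega\in\Omega}\in\GL(n)^\Omega(W(R))$ represent a banal display $\P$. Then $\bh_{\upsilon_\Omega}^0(\P)$ is represented by the element of $\GL(n)(\q\otimes W(R))^{\z/r\z}$ whose $\omega$-component is $\upsilon_\omega(\frac1p)U_\omega$ for $\omega\in\Omega$ and $1$ elsewhere, while $\fx^{\bj,\{\upsilon_\omega\}_{\omega\in\Omega}}(\P)$ is represented by the inclusion $m(U)$ (the $r$-tuple agreeing with $U$ on $\Omega$ and equal to $1$ elsewhere), so that $\bh_{\tilde\upsilon}^0\circ\fx^{\bj,\{\upsilon_\omega\}_{\omega\in\Omega}}(\P)$ is represented by the element with $\sigma$-component $\tilde\upsilon_\sigma(\frac1p)(m(U))_\sigma$ — recall that the $\Phi$-datum $(\GL(n)_{W(\f_{p^r})},\{\tilde\upsilon_\sigma\}_{\sigma\in\z/r\z})$ has $\bd_{\z/r\z}^+\equiv0$. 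For $\sigma\in[\omega_{j-1}+1,\omega_j]$ (indices mod $r$, $\omega_j=\bd_\Omega(\sigma)$) I set
\[
\Xi_\sigma:=\prod_{\omega_{j-1}<\tau<\sigma}{^{F^{\tau-\sigma}}\tilde\upsilon_\tau},\qquad J_\sigma:=\Xi_\sigma(p)\in\GL(n)(\q\otimes W(R)),
\]
(empty product, hence $J_{\omega_{j-1}+1}=1$), and $J:=\{J_\sigma\}_{\sigma\in\z/r\z}$.

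It then remains to check that this constant $J$ is a $2$-isomorphism. Denoting the two representing $r$-tuples by $b$ and $\tilde b$, one must verify (a) $b_\sigma=J_\sigma^{-1}\tilde b_\sigma\,{^F}J_{\sigma+1}$ for all $\sigma$, and (b) the naturality relation $\gamma_2(k)=J\gamma_1(k)J^{-1}$ for every isomorphism $k\in\Ibar^{\upsilon_\Omega}(R)$, where $\gamma_1,\gamma_2$ are the transformations on morphisms induced by $\bh_{\upsilon_\Omega}^0$ and by $\bh_{\tilde\upsilon}^0\circ\fx^{\bj,\{\upsilon_\omega\}_{\omega\in\Omega}}$. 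By \eqref{heart} the cocharacter $\upsilon_{\bd_\Omega(\sigma)}$ factors as $\Theta_\sigma\Upsilon_\sigma$ with $\Theta_\sigma:=\prod_{\omega_{j-1}<\tau\le\sigma}{^{F^{-\bd_\Omega^+(\tau)}}\tilde\upsilon_\tau}$; all cocharacters in sight are diagonal with respect to a fixed split maximal torus (the datum being standard linear) and hence commute. Passing to $\GL(n)(\q\otimes W(R))$ and invoking the characterizing property of $\Phbar^{\Upsilon_\sigma,\bd_\Omega^+(\sigma)}$ from proposition \ref{twistI} together with the composition rule for the $\Phbar^{\upsilon,h}$ in the theorem of subsection \ref{weirdVI}, both (a) and (b) collapse to the single cocharacter identity $\Xi_\sigma={^{F^{\bd_\Omega^+(\sigma)}}\Theta_\sigma}-\tilde\upsilon_\sigma$, which is immediate from the displayed formulas; since these are equalities of regular functions they may be checked on reduced test algebras, where $W(R)$ is $p$-torsion free.

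Finally, in the standard multiplicative case every $\tilde\upsilon_\tau$ equals $\delta$, so $\Xi_\sigma=\delta^{\,\sigma-\omega_{j-1}-1}$ for $\sigma\in[\omega_{j-1}+1,\omega_j]$ and $J_\sigma$ is multiplication by $p^{\sigma-\omega_{j-1}-1}$, which is the asserted description (the target $1$-morphism being identified via remark \ref{weirdVIII}). The step I expect to be the real obstacle is (b): one has to track precisely how $\bh_{\mu_\Sigma}^0$ acts on isomorphisms — through the Frobenius twists ${^{F^{\bd_\Omega^+(\sigma)}}}$ in its defining $\gamma$ — against the composite of the $\gamma$ of $\fx^{\bj,\{\upsilon_\omega\}_{\omega\in\Omega}}$ (built from the $\Phbar^{\Upsilon_\omega,\bd_\Omega^+(\omega)}$) with the $\gamma$ of $\bh_{\tilde\upsilon}^0$, and recognise that conjugation by the constant $J$ intertwines them; this is exactly where \eqref{heart} and the composition law for $\Phbar^{\upsilon,h}$ do the work.
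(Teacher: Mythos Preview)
Your proposal is correct and follows essentially the same route as the paper: your constant family $J_\sigma=\Xi_\sigma(p)=\prod_{\omega_{j-1}<\tau<\sigma}{^{F^{\tau-\sigma}}\tilde\upsilon_\tau}(p)$ is exactly the paper's $k_\omega$, and your verification of (a) via \eqref{heart} is the identity $k_\omega B_\omega=\tilde B_\omega\,{^Fk_{\omega+1}}$ that constitutes the paper's entire proof. Your extra care about naturality (b) is something the paper leaves implicit, but it is not a different argument---it reduces, as you note, to the same cocharacter identity and the characterizing property of $\Phbar^{\Upsilon_\sigma,\bd_\Omega^+(\sigma)}$ on reduced algebras.
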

\begin{proof}
Let $\{U_\omega\}_{\omega\in\Omega}$ represent a $\Barb(\GL(n)_{W(\f_{p^r})},\{\upsilon_\omega\}_{\omega\in\Omega})$-object $\P$, 
and let $(B_0,\dots,B_{r-1})$ and $(\tilde B_0,\dots,\tilde B_{r-1})$ be the $r$-tuples of matrices that result when applying the two functors 
$\bh_{\upsilon_\Omega}^0$ and $\bh_{\tilde\upsilon}^0\circ\fx^{\bj,\{\upsilon_\omega\}_{\omega\in\Omega}}$ to $\P$. The former looks like
\begin{eqnarray*}
&&(\dots,B_{\omega_{j-1}},B_{\omega_{j-1}+1},\dots,B_{\omega_j},\dots)=\\
&&(\dots,\upsilon_{\omega_{j-1}}(\frac1p)U_{\omega_{j-1}},1,\dots,\upsilon_{\omega_j}(\frac1p)U_{\omega_j},\dots),
\end{eqnarray*}
where the ``$1$'' stands in the $\omega_{j-1}+1$st position, and the latter looks like:
\begin{eqnarray*}
&&(\dots,\tilde B_{\omega_{j-1}},\tilde B_{\omega_{j-1}+1},\dots,\tilde B_{\omega_j},\dots)=\\
&&(\dots,\tilde\upsilon_{\omega_{j-1}}(\frac1p)U_{\omega_{j-1}},\tilde\upsilon_{\omega_{j-1}+1}(\frac1p),\dots,\tilde\upsilon_{\omega_j}(\frac1p)U_{\omega_j},\dots)
\end{eqnarray*}
Let us define a sequence of matrices by 
$k_\omega=\prod_{\sigma=\omega_{j-1}+1}^{\omega-1}{^{F^{\sigma-\omega}}\tilde\upsilon_\sigma(p)}$ for $\omega\in[\omega_{j-1}+1,\omega_j]$, 
and notice that $\upsilon_{\omega_j}=\prod_{\omega=\omega_{j-1}+1}^{\omega_j}{^{F^{\omega-\omega_j}}\tilde\upsilon_\omega}$ 
holds for all $j$, so that $k_\omega B_\omega=\tilde B_\omega{^Fk_{\omega+1}}$ is true for every $\omega\in\z/r\z$ and we are done.
\end{proof}

Our next result concerns a similar property of the functor
$$\fx^{\bd^+}:\Barb(\G,\{\mu_\sigma\}_{\sigma\in\Sigma})\rightarrow\Barb(\G,\{\tilde\mu_\omega\}_{\omega\in\Omega}),$$
where we require that:
\begin{itemize}
\item
$\G$ is a reductive $W(\f_{p^r})$-group and $(\G,\{\mu_\sigma\}_{\sigma\in\Sigma})$ and $(\G,\{\tilde\mu_\omega\}_{\omega\in\Omega})$ are $W(\f_{p^f})$-rational $\Phi$-data
\item
$\bd^+:\z/r\z\rightarrow\n_0$ is a $\pmod r$-multidegree, giving rise to a function $\bd:\z/r\z\rightarrow\z/r\z;\,\sigma\mapsto\sigma+\bd^+(\sigma)$ whose image is $\Sigma$.
\item
$\Omega=\{\omega\in\z/r\z|\bd^+(\omega)\leq\bd^+(\omega+1)\}$ and $^{F^{\bd^+(\omega)}}\mu_{\bd(\omega)}=\tilde\mu_\omega$ holds for any $\omega\in\Omega$
\end{itemize}
Observe that $\Omega$ consists of the $\pmod r$-congruence classes of the elements $\omega_j:=\max\{\sigma|\sigma+\bd^+(\sigma)\leq\sigma_j\}$.

\begin{prop}
\label{weirdIII}
There exists a $2$-isomorphism: 
\begin{equation}
\label{weirdoI}
D:\bh_{\tilde\mu_\Omega}^0\circ\fx^{\bd^+}\stackrel{\cong}{\rightarrow}\bh_{\mu_\Sigma}^0
\end{equation}
Moreover, in the special case $\G=\g_{m,W(\f_{p^r})}$ and $\tilde\mu_\omega=\delta_\omega$, the component of \eqref{weirdoI} at some 
$\omega\in[\omega_{j-1}+1,\omega_j]\cap[\sigma_{k-1}+1,\sigma_k]$ is given by multiplication with $p^{\omega_{k-1}-\omega_{j-1}}$, please recall 
$\fx^{\bd^+}\cong\id_{\delta_\Omega^{-1}\mu_\Sigma}$ in the case at hand.
\end{prop}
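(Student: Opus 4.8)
The plan is to run the argument of Proposition \ref{weirdIV} essentially verbatim: both $\bh_{\tilde\mu_\Omega}^0\circ\fx^{\bd^+}$ and $\bh_{\mu_\Sigma}^0$ are defined purely combinatorially on banal objects (via pairs $(\gamma,m)$ in the sense of example \ref{diagram}), and $\fx^{\bd^+}$ is, like $\fx^{\bj,\{\upsilon_\omega\}_{\omega\in\Omega}}$, a Frobenius shuffling $\{U_\sigma\}_{\sigma\in\Sigma}\mapsto\{{}^{F^{\bd^+(\omega)}}U_{\bd(\omega)}\}_{\omega\in\Omega}$ of representatives. So first I would reduce to banal objects: both $1$-morphisms commute with restriction along $\f_{p^f}$-algebra maps and respect the relevant groupoid structures, hence by Witt descent (\cite[Proposition 33]{zink2}) it suffices to produce a functorial $2$-isomorphism $D$ on the groupoid of banal objects over an arbitrary $\f_{p^f}$-algebra $R$.

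Second, I would fix such an $R$ and a representative $U=\{U_\sigma\}_{\sigma\in\Sigma}\in\G^\Sigma(W(R))$ of a banal $(\G,\{\mu_\sigma\}_{\sigma\in\Sigma})$-display $\P$, and compute the two $\z/r\z$-tuples $(B_0,\dots,B_{r-1})$ and $(\tilde B_0,\dots,\tilde B_{r-1})$ of elements of $\gG(K(\f_{p^r})\otimes_{\z_p}W(R))$ representing $\bh_{\mu_\Sigma}^0(\P)$ and $\bh_{\tilde\mu_\Omega}^0(\fx^{\bd^+}(\P))$ respectively: unwinding \eqref{poshIV} and the definition of $\fx^{\bd^+}$, these agree in all positions outside the $z$ chosen representatives $\sigma_1<\dots<\sigma_z$ (resp.\ $\omega_1<\dots<\omega_z$) up to the inserted central/cocharacter factors, while at the representative slots they carry the appropriately Frobenius-twisted copies $U_{\sigma_j}$, resp.\ ${}^{F^{\bd^+(\omega_j)}}U_{\sigma_j}$. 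Then, exactly as in Proposition \ref{weirdIV}, I would set
$$k_\omega:=\prod_{\sigma=\omega_{j-1}+1}^{\omega-1}{}^{F^{\sigma-\omega}}\tilde\mu_\sigma(p)\qquad\text{for }\omega\in[\omega_{j-1}+1,\omega_j],$$
and verify the intertwining relation $k_\omega\tilde B_\omega=B_\omega\,{}^Fk_{\omega+1}$ for every $\omega\in\z/r\z$. As in the cited proof this reduces to the factorization $\mu_{\sigma_j}=\prod_{\omega=\omega_{j-1}+1}^{\omega_j}{}^{F^{\omega-\sigma_j}}\tilde\mu_\omega$ (the analogue of \eqref{heart}), which holds because $\tilde\mu_\omega={}^{F^{\bd^+(\omega)}}\mu_{\bd(\omega)}$ and $\bd$ restricts to the bijection $\Omega\to\Sigma$, $\omega_j\mapsto\sigma_j$. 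The tuple $\{k_\omega\}_{\omega\in\z/r\z}$ (up to the obvious inversion dictated by the orientation of $D$) then is the desired $2$-isomorphism, and its naturality in $\P$ and $R$ — hence descent to all of $\Barb(\G,\{\mu_\sigma\}_{\sigma\in\Sigma})$ — is automatic.

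Finally, for the addendum I would specialize to $\G=\g_{m,W(\f_{p^r})}$ and $\tilde\mu_\omega=\delta_\omega$, so that $\fx^{\bd^+}\cong\id_{\delta_\Omega^{-1}\mu_\Sigma}$ and ${}^{F^{\bullet}}\delta_\sigma(p)=p$; then every $k_\omega$ collapses to a pure power of $p$, and a careful comparison of the positions carrying a $\delta$-factor on the $B$-side (the $\Sigma$-representatives) against those on the $\tilde B$-side (the $\Omega$-representatives) for $\omega\in[\omega_{j-1}+1,\omega_j]\cap[\sigma_{k-1}+1,\sigma_k]$ yields the stated exponent $\omega_{k-1}-\omega_{j-1}$. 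I expect the only real work to be this last piece of index bookkeeping together with the verification of the cocycle identity $k_\omega\tilde B_\omega=B_\omega\,{}^Fk_{\omega+1}$; there is no conceptual obstacle, the whole proposition being the $\bh^0$-shadow of the combinatorics of $\fx^{\bd^+}$ already set up in section \ref{Frobenius}.
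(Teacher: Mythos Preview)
Your approach has a genuine gap. The analogy with Proposition \ref{weirdIV} breaks down because of a structural difference between $\fx^{\bj,\{\upsilon_\omega\}_{\omega\in\Omega}}$ and $\fx^{\bd^+}$: on objects the former is an \emph{inclusion} (its map $m$ simply pads $\{U_\omega\}_{\omega\in\Omega}$ with $1$'s), whereas $\fx^{\bd^+}$ applies Frobenius twists, sending $\{U_\sigma\}_{\sigma\in\Sigma}$ to $\{{}^{F^{\bd^+(\omega)}}U_{\bd(\omega)}\}_{\omega\in\Omega}$. Hence the $r$-tuples $B$ and $\tilde B$ differ not merely in the placement of cocharacter factors $\mu(\frac1p)$ but in the positions \emph{and} Frobenius twists of the $U_\sigma$ themselves, and any $2$-isomorphism between them must depend on $U$. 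Your proposed $k_\omega=\prod_{\sigma=\omega_{j-1}+1}^{\omega-1}{}^{F^{\sigma-\omega}}\tilde\mu_\sigma(p)$ cannot do this; moreover $\tilde\mu_\sigma$ is only defined for $\sigma\in\Omega$ and the interval $[\omega_{j-1}+1,\omega-1]$ contains no element of $\Omega$, so under any reasonable convention your product is empty and all $k_\omega=1$.

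To see the failure concretely, take $r=4$ with $\bd^+(0)=0,\bd^+(1)=1,\bd^+(2)=1,\bd^+(3)=0$, so that $\Sigma=\{0,2,3\}$ and $\Omega=\{0,1,3\}$. Then $B_1=1$ while $\tilde B_1={}^F(\mu_2(\frac1p)U_2)$, and $B_2=\mu_2(\frac1p)U_2$ while $\tilde B_2=1$; any family $\{k_\omega\}$ satisfying your cocycle relation at $\omega=1,2$ is forced to involve $U_2$. The paper's proof proceeds differently: it first factors $\bh_{\tilde\mu_\Omega}^0\circ\fx^{\bd^+}=\fx^{0,\bd^+}\circ\bh_{\mu_\Sigma}^0$ for an auxiliary endofunctor $\fx^{0,\bd^+}$ of $\B^0(\Res_{W(\f_{p^r})/\z_p}\G)$ (sending a banal object $\{b_\sigma\}$ to $\{\prod_{i=\bd^+(\omega)}^{\bd^+(\omega+1)}{}^{F^i}b_{\omega+i}\}_\omega$), and then exhibits $\fx^{0,\bd^+}\cong\id$ via the \emph{object-dependent} family $h_\omega:=\prod_{i=0}^{\bd^+(\omega)-1}{}^{F^i}b_{\omega+i}$. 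The multiplicative addendum is then obtained by plugging in the specific $b$ produced by $\bh_{\mu_\Sigma}^0$, whose components lie in $\{1,\frac1p\}$, so that each $h_\omega$ collapses to the stated power of $p$.
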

\begin{proof}
We would like to begin with the obviously $2$-commutative diagram
$$\begin{CD}
\B^0(\Res_{W(\f_{p^r})/\z_p}\G)@>{\fx^{0,\bd^+}}>>\B^0(\Res_{W(\f_{p^r})/\z_p}\G)\\
@A{\bh_{\mu_\Sigma}^0}AA@A{\bh_{\tilde\mu_\Omega}^0}AA\\
\Barb(\G,\{\mu_\sigma\}_{\sigma\in\Sigma})@>{\fx^{\bd^+}}>>\Barb(\G,\{\tilde\mu_\omega\}_{\omega\in\Omega})
\end{CD},$$
where $\fx^{0,\bd^+}$ is defined by sending banal objects represented by families $\{b_\sigma\}_{\sigma\in\z/r\z}=b\in(\Res_{W(\f_{p^r})/\z_p}\G)(W(R)[\frac1p])$ to the families
$$\{\prod_{i=\bd^+(\omega)}^{\bd^+(\omega+1)}{^{F^i}b_{\omega+i}}\}_{\omega\in\z/r\z}=:\tilde b,$$
between two of which the morphisms represented by $\{g_\sigma\}_{\sigma\in\z/r\z}$ get sent to the families: $$\{{^{F^{\bd^+(\omega)}}g}_{\bd(\omega)}\}_{\omega\in\z/r\z}$$
It remains to check $\fx^{0,\bd^+}\cong\id_{\B^0(\Res_{W(\f_{p^r})/\z_p}\G)}$, indeed the formula
$$h_\omega:=\prod_{i=0}^{\bd^+(\omega)-1}{^{F^i}b_{\omega+i}}$$
defines a functorial homomorphism $\{h_\omega\}_{\omega\in\z/r\z}=h$ between the banal objects represented by $\tilde b$ and $b$.
\end{proof}

Now fix a diagram 
$$\begin{CD}
\G@>{\rho}>>\GU(n)\\
@AAA@A{\zeta}AA\\
\zen^\G@<i<<\Res_{W(\f_{p^{2r}})/W(\f_{p^r})}\g_m
\end{CD},$$
where $\GU(n)$ is the $W(\f_{p^r})$-similarity group as defined in \eqref{similarity} and $\zeta$ its dilatation homomorphism \eqref{dilatation}. 
In addition, we require that $\rho$ induces a morphism (cf. subsubsection \ref{Geh}) from $(\G,\{\tilde\mu_\omega\}_{\omega\in\Omega})$ to a 
$W(\f_{p^f})$-rational $\Phbar$-datum $(\GU(n)_{W(\f_{p^r})},\{\upsilon_\omega\}_{\omega\in\Omega})$, which we assume to be standard unitary 
in the sense of part (iii) of definition \ref{standard}. Moreover, we fix a function $\bj:\z/2r\z\rightarrow\n_0$ satisfying the conditions (i) and (ii) of 
subsubsection \ref{Universum} along with $\bj(r+\omega)=r_\Omega(\bd_\Omega(\omega))-\bj(\omega)-1$, as in subsubsection \ref{dualityII}.\\
At last we also fix a $W(\f_{p^f})$-rational cocharacter $\alpha$ of $\Res_{W(\f_{p^{2r}})/\z_p}\g_m$ of which the $\sigma$-components have weights, say $a_\sigma$. We require that:
\begin{eqnarray*}
&&\forall\sigma\in\z/2r\z:\,1-\Card(\bd^{-1}(\{\sigma\})=a_{\sigma+r}+a_\sigma\\
&&\Card(\{1,\dots,r\}\backslash\bd^{-1}(\{1,\dots,r\}))\equiv\sum_{\sigma=1}^ra_\sigma\pmod 2
\end{eqnarray*}

\begin{prop}
\label{weirdI}
There exists a $2$-commutative diagram
\[
\begin{tikzcd}
\B^0(\Res_{W(\f_{p^r})/\z_p}\G)\ar[rr,"\B^0(\chi\circ\rho)"]\arrow[dr,"\B^0(\rho)"]&&\B^0(\Res_{W(\f_{p^r})/\z_p}\g_m)\\
&\B^0(\Res_{W(\f_{p^r})/\z_p}\GU(n))\arrow[ur,"\B^0(\chi)"]\\
&\Barb(\Res_{W(\f_{p^r})/\z_p}\GU(n),\tilde\upsilon)\ar[u,"\bh_{\tilde\upsilon}^0"]\ar[dr,"\Barb(\chi)"]\\
\Barb(\Res_{W(\f_{p^r})/\z_p}\G,\alpha\mu_\Sigma)\ar[uuu,"\bh_{\alpha\mu_\Sigma}^0"]\ar[ur,"\fx_\alpha^{\bd^+,\bj}(\rho)"]\ar[rr,"\Barb(\chi\circ\rho)"]
&&\Barb(\Res_{W(\f_{p^r})/\z_p}\g_m,\delta)\ar[uuu,"\bh_\delta^0"]
\end{tikzcd},
\]
where $\fx_\alpha^{\bd^+,\bj}(\rho):=\fx^{\bd^+,\bj}(\rho)\circ\id_{\alpha}$ while $\fx^{\bd^+,\bj}(\rho)$ stands for the composition of the following morphisms:
\begin{eqnarray*}
&&\Barb(\Res_{W(\f_{p^r})/\z_p}\G,\mu_\Sigma)\stackrel{\cong}{\rightarrow}\Barb(\G,\{\mu_\sigma\}_{\sigma\in\Sigma})
\stackrel{\fx^{\bd^+}}{\rightarrow}\Barb(\G,\{\tilde\mu_\omega\}_{\omega\in\Omega})\\
&&\stackrel{\Barb(\rho)}{\rightarrow}\Barb(\GU(n),\{\upsilon_\omega\}_{\omega\in\Omega})
\stackrel{\fx^{\bj,\{\upsilon_\omega\}_{\omega\in\Omega}}}{\rightarrow}\Barb(\Res_{W(\f_{p^r})/\z_p}\GU(n),\tilde\upsilon)
\end{eqnarray*}
\end{prop}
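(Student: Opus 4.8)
The plan is to realize the asserted diagram by pasting together elementary $2$-commutative faces, each of which is either a formal instance of the functoriality of the constructions $\B^0$ and $\Barb$ in the group argument, or an application of one of the two preceding propositions \ref{weirdIV} and \ref{weirdIII}. First I would treat the two faces living entirely among isodisplay stacks. The triangle with vertices $\B^0(\Res_{W(\f_{p^r})/\z_p}\G)$, $\B^0(\Res_{W(\f_{p^r})/\z_p}\GU(n))$ and $\B^0(\Res_{W(\f_{p^r})/\z_p}\g_m)$ carries a canonical $2$-isomorphism $\B^0(\chi\circ\rho)\cong\B^0(\chi)\circ\B^0(\rho)$; spelling out the banal objects through the formalism of example \ref{diagram}, this is merely the assertion that composable group homomorphisms induce composable $1$-morphisms, the coherence $2$-cell being assembled from the tautological identities among the associated pairs $(m,\gamma)$. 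Next, for the morphism of $\Phi$-data $\rho\colon(\G,\{\tilde\mu_\omega\}_{\omega\in\Omega})\to(\GU(n)_{W(\f_{p^r})},\{\upsilon_\omega\}_{\omega\in\Omega})$ in the sense of subsubsection \ref{Geh}, the description \eqref{poshIV} of $\bh^0$ on banal objects (where the rule $U\mapsto\mu_\Sigma(\tfrac1p)U$ is manifestly functorial in the structure group) produces a canonical $2$-isomorphism $\B^0(\rho)\circ\bh^0_{\tilde\mu_\Omega}\cong\bh^0_{\upsilon_\Omega}\circ\Barb(\rho)$; the same recipe with $\chi$ in place of $\rho$ gives $\B^0(\chi)\circ\bh^0_{\tilde\upsilon}\cong\bh^0_\delta\circ\Barb(\chi)$, once one recalls that, by part (iii) of definition \ref{standard}, the tautological multiplicative datum of the standard unitary datum $(\GU(n),\{\upsilon_\omega\}_{\omega\in\Omega})$ is the standard multiplicative datum $(\g_{m,W(\f_{p^r})},\{\delta_\omega\}_{\omega\in\Omega})$.

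The heart of the matter is the left face $\B^0(\rho)\circ\bh^0_{\alpha\mu_\Sigma}\cong\bh^0_{\tilde\upsilon}\circ\fx_\alpha^{\bd^+,\bj}(\rho)$. I would factor $\fx_\alpha^{\bd^+,\bj}(\rho)$ exactly as in its definition, namely $\Barb(\Res\G,\alpha\mu_\Sigma)\xrightarrow{\id_\alpha}\Barb(\Res\G,\mu_\Sigma)\cong\Barb(\G,\{\mu_\sigma\})\xrightarrow{\fx^{\bd^+}}\Barb(\G,\{\tilde\mu_\omega\})\xrightarrow{\Barb(\rho)}\Barb(\GU(n),\{\upsilon_\omega\})\xrightarrow{\fx^{\bj,\{\upsilon_\omega\}_{\omega\in\Omega}}}\Barb(\Res\GU(n),\tilde\upsilon)$, and paste in, stage by stage, the following $2$-cells: for the central twist, the identification of $\id_\alpha$ with an $\fx$-functor for the central cocharacter $\alpha$ (remark \ref{weirdVIII}) together with \eqref{poshIV} gives a $2$-isomorphism comparing $\bh^0_{\alpha\mu_\Sigma}$ with $\bh^0_{\mu_\Sigma}\circ\id_\alpha$, whose degree-$\sigma$ component is multiplication by the power of $p$ dictated by the weight $a_\sigma$ of $\alpha$; for $\fx^{\bd^+}$, the $2$-isomorphism $D\colon\bh^0_{\tilde\mu_\Omega}\circ\fx^{\bd^+}\xrightarrow{\cong}\bh^0_{\mu_\Sigma}$ of proposition \ref{weirdIII}; for $\Barb(\rho)$, the naturality square constructed in the previous paragraph; and for $\fx^{\bj,\{\upsilon_\omega\}_{\omega\in\Omega}}$, the $2$-isomorphism $J\colon\bh^0_{\upsilon_\Omega}\xrightarrow{\cong}\bh^0_{\tilde\upsilon}\circ\fx^{\bj,\{\upsilon_\omega\}_{\omega\in\Omega}}$ of proposition \ref{weirdIV}. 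Since $(\GU(n),\{\upsilon_\omega\}_{\omega\in\Omega})$ is standard \emph{unitary} rather than linear, I would deduce this last cell from the linear case via lemma \ref{galoisII} (Weil restriction), together with the compatibility of $\fx^{\bj}$ with the tautological linear datum, with direct sums, and with Cartier duality in subsubsection \ref{dualityII}; the requirement $\bj(r+\omega)=r_\Omega(\bd_\Omega(\omega))-\bj(\omega)-1$ is exactly what legitimizes this reduction. Composing the four cells along the factorization, and then pasting with the $\B^0$-faces of the first paragraph, yields the whole diagram; the remaining bottom square $\bh^0_\delta\circ\Barb(\chi\circ\rho)\cong\B^0(\chi)\circ\B^0(\rho)\circ\bh^0_{\alpha\mu_\Sigma}$ then follows, using in addition the compatibility of $\fx^{\bj}$ with post-composition by the modular character (subsubsection \ref{Haar}) to identify $\Barb(\chi)\circ\fx_\alpha^{\bd^+,\bj}(\rho)$ with $\Barb(\chi\circ\rho)$ up to a canonical $2$-cell.

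I expect the real obstacle to lie not in any individual face but in reconciling the three sources of scalar twists: $J$ contributes factors of the shape $p^{\omega-\omega_{j-1}-1}$, $D$ contributes factors of the shape $p^{\omega_{k-1}-\omega_{j-1}}$, and the central cocharacter $\alpha$ contributes factors $p^{a_\sigma}$, and one must check that these amalgamate into a single well-defined scalar in each graded piece, compatibly with the isodisplay-level functoriality. This is precisely where the two displayed conditions on $\alpha$, namely $1-\Card(\bd^{-1}(\{\sigma\}))=a_{\sigma+r}+a_\sigma$ and $\Card(\{1,\dots,r\}\setminus\bd^{-1}(\{1,\dots,r\}))\equiv\sum_{\sigma=1}^r a_\sigma\pmod 2$, are used: the first normalizes the degree-wise exponents so that $\alpha\mu_\Sigma$ matches the $\delta$-corrected normalization built into the standard unitary picture, and the second disposes of the sign ambiguity introduced by the dilatation homomorphism $\zeta$. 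Once these cancellations are in place, coherence needs no separate verification, since each elementary $2$-cell is uniquely determined on banal objects by its explicit scalar description, and the $2$-cells therefore glue by fpqc descent.
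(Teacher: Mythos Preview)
Your ingredients are exactly right — the faces you list are the ones the paper uses, and Propositions~\ref{weirdIV} and~\ref{weirdIII} are indeed the substance — but the architecture of your argument leaves the decisive step unperformed, and in one place misidentifies it.

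The paper does \emph{not} build all five faces independently and then check that they paste. It regards the prism as a constraint: the upper triangle $a$, the right trapezoid $c$ and the outer rectangle $e$ are declared to be the canonical $2$-cells (the ones forced by strict functoriality of $\B^0$, $\Barb$, and $\bh^0$ in the group argument), and the lower triangle $d$ is obtained from the small square $\Barb(\chi)\circ\fx^{\bd^+,\bj}(\rho)\cong\id_{\delta^{-1}\kappa_\Sigma}\circ\Barb(\chi\circ\rho)$ — not from the modular character of~\ref{Haar}, which concerns characters of $\Ibar_0^{\tilde\upsilon}$ and plays no role here. The left trapezoid $b$ is then \emph{solved for} from the prism identity~\eqref{weirdVII}. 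Your pasted candidate — call it $\tilde b$, obtained from $D$, $J$, and naturality in $\rho$ — does \emph{not} satisfy this identity on the nose: pushing it through $\B^0(\chi)$ differs from the required answer by a $2$-automorphism $\gamma$ of $\bh_{\kappa_\Sigma}^0$ whose $\sigma$-component is $p^{g_\sigma}$ with $g_\sigma=\sigma-\omega_{k-1}-1$ for $\sigma\in[\sigma_{k-1}+1,\sigma_k]$ (this is exactly what the explicit ``Moreover'' clauses of Propositions~\ref{weirdIV} and~\ref{weirdIII} compute). The content of the proof is to show that this discrepancy can be lifted to a $2$-automorphism $\lambda$ at the level of $\bh_{\mu_\Sigma}^0$, i.e.\ one landing in the central $\Res_{W(\f_{p^{2r}})/W(\f_{p^r})}\g_m\subset\zen^\G$, with components $p^{l_\sigma}$ satisfying $l_{\sigma+r}+l_\sigma=g_\sigma$ and $l_{\sigma+1}-l_\sigma=a_\sigma$. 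The first hypothesis on $\alpha$ guarantees these two systems are compatible; the parity hypothesis is what makes the resulting $l_0=\tfrac12(g_0-\sum_{i=0}^{r-1}a_i)$ an \emph{integer}, so that $p^{l_\sigma}$ is a genuine element and not a formal square root. Your reading of the parity condition as resolving a ``sign ambiguity from $\zeta$'' is therefore not quite right: it is an integrality obstruction for the exponent, not a $\{\pm1\}$-ambiguity. Once $\lambda$ exists, one sets $b:=(\B^0(\rho).\lambda\circ\tilde b).\id_\alpha$ and verifies $\B^0(\chi\circ\rho).\lambda=\gamma.\Barb(\chi\circ\rho)$, which closes the prism.

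In short: your outline is correct up to the point where you write ``the remaining bottom square then follows''; that is precisely where the non-formal computation lives, and you must actually produce the $l_\sigma$'s from the two hypotheses on $\alpha$ rather than invoke them qualitatively.
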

\begin{proof}
Establishing the $2$-commutativity of the two triangular (resp. trapezoidal) subdiagrams requires certain $2$-isomorphisms $a$ and $d$ (resp. $b$ and $c$), 
labelled from top to bottom (resp. left to right). Here we use the convention that $a$, $b$, $c$ and $d$ rotate counter clockwise between the $1$-morphisms of 
the above diagram, when viewed from their sources. Given the prismatic shape of this diagram, each choice of $a$, $b$, $c$ and $d$ yields a $2$-isomorphism
\begin{equation}
\label{weirdVII}
e=(a.\bh_{\mu_\Sigma\alpha}^0)\circ(\B^0(\chi).b)\circ(c.\fx_\alpha^{\bd^+,\bj}(\rho))\circ(\bh_\delta^0.d)
\end{equation}
rendering the outer rectangular subdiagram $2$-commutative too, where a "$.$" indicates the left and right actions of the $1$-morphisms on the 
$2$-ones. Notice that $e$ is a $2$-isomorphism between $\bh_\delta^0\circ\Barb(\chi\circ\rho)$ and $\B^0(\chi\circ\rho)\circ\bh_{\mu_\Sigma\alpha}^0$, 
which are $1$-morphisms from $\Barb(\Res_{W(\f_{p^r})/\z_p}\G,\mu_\Sigma\alpha)$ to $\B^0(\Res_{W(\f_{p^r})/\z_p}\g_m)$. Observe that $a$, 
$c$ and $e$ are already fixed by canonical conventions, and that $d$ may be obtained from the previously established commutative diagram:
$$\begin{CD}
\Barb(\Res_{W(\f_{p^r})/\z_p}\GU(n),\tilde\upsilon)@>{\Barb(\chi)}>>\Barb(\Res_{W(\f_{p^r})/\z_p}\g_m,\delta)\\
@A{\fx^{\bd^+,\bj}(\rho)}AA@A{\id_{\delta^{-1}\kappa_\Sigma}}AA\\
\Barb(\Res_{W(\f_{p^r})/\z_p}\G,\mu_\Sigma)@>{\Barb(\chi\circ\rho)}>>\Barb(\Res_{W(\f_{p^r})/\z_p}\g_m,\kappa_\Sigma)
\end{CD}$$
It remains to solve \eqref{weirdVII}, regarded as an equation in the variable $b$! Composition of \eqref{weirdoI} with the inverse of \eqref{weirdV} yields a certain $2$-isomorphism 
$$\tilde b:\bh_{\tilde\upsilon}^0\circ\fx^{\bd^+,\bj}(\rho)\rightarrow\B^0(\rho)\circ\bh_{\mu_\Sigma}^0,$$ 
from which we immediately obtain another $2$-isomorphism $\B^0(\chi).\tilde b$ from 
$\bh_\delta^0\circ\Barb(\chi\circ\rho)\circ\id_{\alpha^{-1}}=\bh_\delta^0\circ\id_{\delta^{-1}\kappa_\Sigma}\circ\Barb(\chi\circ\rho)$ to $\bh_{\kappa_\Sigma}^0\circ\Barb(\chi\circ\rho)$, 
which are functors from $\Barb(\Res_{W(\f_{p^r})/\z_p}\G,\mu_\Sigma)$ to $\B^0(\Res_{W(\f_{p^r})/\z_p}\g_m)$. Now, let us write $g_\sigma=\sigma-\omega_{k-1}-1$ 
for all $\sigma\in[\sigma_{k-1}+1,\sigma_k]$, notice that $g_{\sigma+r}=g_\sigma$ and $g_{\sigma+1}-g_\sigma=1-\Card(\bd^{-1}(\{\sigma\})=a_{\sigma+r}+a_\sigma$. 
The additional assertions of the propositions \ref{weirdIV} and \ref{weirdIII} allow to rewrite $\B^0(\chi).\tilde b$ as $\gamma^{-1}.\Barb(\chi\circ\rho)$, where 
$\gamma$ is the $2$-isomorphism from $\bh_{\kappa_\Sigma}^0$ to $\bh_\delta^0\circ\id_{\delta^{-1}\kappa_\Sigma}$ of which the $\sigma$-component is given by 
$p^{g_\sigma}$. Furthermore, the parity condition implies the existence of a family $\{l_\sigma\}_{\sigma\in\z/2r\z}\in\z^{2r}$ with $g_\sigma=l_{\sigma+r}+l_\sigma$ 
and $a_\sigma=l_{\sigma+1}-l_\sigma$ and we let $\lambda:\bh_{\mu_\Sigma}^0\rightarrow\bh_{\mu_\Sigma\alpha}^0\circ\id_{\alpha^{-1}}$ be the 
$2$-isomorphism whose $\sigma$-components are given by $p^{l_\sigma}$ (N.B.: source and target of $\lambda$ are regarded as functors from 
$\Barb(\Res_{W(\f_{p^r})/\z_p}\G,\mu_\Sigma)$ to $\B^0(\Res_{W(\f_{p^r})/\z_p}\G)$ and recall that $\zen^\G$ contains $\Res_{W(\f_{p^{2r}})/W(\f_{p^r})}\g_m$). 
Also, notice that $\B^0(\chi\circ\rho)\circ\bh_{\mu_\Sigma}^0\cong\bh_{\kappa_\Sigma}^0\circ\Barb(\chi\circ\rho)$, at last we define $b$ to be:
$$(\B^0(\rho).\lambda\circ\tilde b).\id_\alpha:\bh_{\tilde\upsilon}^0\circ\fx_\alpha^{\bd^+,\bj}(\rho)\rightarrow\B^0(\rho)\circ\bh_{\mu_\Sigma\alpha}^0$$
In view of $\B^0(\chi\circ\rho).\lambda=\gamma.\Barb(\chi\circ\rho)$ this is what we wanted.
\end{proof}

\begin{cor}
\label{weirdII}
Notice that $\mu_\Sigma\alpha$ factors through the subgroup $\gG:=(\g_m\times\Res_{W(\f_{p^r})/\z_p}\G^1)/\{\pm1\}$, where $\G^1:=\ker(\chi\circ\rho)$, so that 
proposition \ref{weirdI} induces a $1$-morphism: $$\Barb(\gG,\mu_\Sigma\alpha)\rightarrow\Barb((\g_m\times\Res_{W(\f_{p^r})/\z_p}\UL)/\{\pm1\},\tilde\upsilon)$$
\end{cor}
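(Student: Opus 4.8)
The plan is to obtain the asserted $1$-morphism by restricting the $1$-morphism $\fx_\alpha^{\bd^+,\bj}(\rho)$ of proposition \ref{weirdI} to the central amalgams in source and target; the only substantive input is a multiplier computation, everything else being the functoriality of the $\Barb(-)$-construction from subsubsection \ref{Geh}.

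First I would identify the subgroups. As in subsection \ref{faith}, the single $C$-unitary representation $\rho$ equips $\G$ with a canonical isomorphism $\G\cong(\g_{m,W(\f_{p^r})}\times\G^1)/\{\pm1\}$, where $\G^1=\ker(\chi\circ\rho)$ and $\g_{m,W(\f_{p^r})}$ sits inside $\zen^\G$ via $i$ restricted to the constant subtorus $\g_{m,W(\f_{p^r})}\subset\Res_{W(\f_{p^{2r}})/W(\f_{p^r})}\g_m$; by the commutativity of the square preceding proposition \ref{weirdI} the homomorphism $\chi\circ\rho$ restricts to the squaring map on this $\g_{m,W(\f_{p^r})}$. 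Applying $\Res_{W(\f_{p^r})/\z_p}$ and passing to the constant diagonal copy $\g_{m,\z_p}\subset\Res_{W(\f_{p^r})/\z_p}\g_{m,W(\f_{p^r})}$ exhibits $\gG=(\g_m\times\Res_{W(\f_{p^r})/\z_p}\G^1)/\{\pm1\}$ as the closed subgroup of $\Res_{W(\f_{p^r})/\z_p}\G$ equal to the preimage under $\Res_{W(\f_{p^r})/\z_p}(\chi\circ\rho)$ of the diagonal subtorus $\g_{m,\z_p}$ (the $\{\pm1\}$ entering because the squaring map is a degree-two isogeny). In the very same way $\UL=\ker(\chi\colon\GU(n)\to\g_{m,W(\f_{p^r})})$ presents $(\g_m\times\Res_{W(\f_{p^r})/\z_p}\UL)/\{\pm1\}$ as the preimage under $\Res_{W(\f_{p^r})/\z_p}(\chi)$ of the same diagonal $\g_{m,\z_p}\subset\Res_{W(\f_{p^r})/\z_p}\GU(n)$; and since $\rho$ carries the central $\g_{m,W(\f_{p^r})}$ of $\G$ onto the central $\g_{m,W(\f_{p^r})}$ of $\GU(n)$ (again the square before proposition \ref{weirdI}) and $\G^1$ into $\UL$, it carries $\gG$ into $(\g_m\times\Res_{W(\f_{p^r})/\z_p}\UL)/\{\pm1\}$.

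Next I would check that $\mu_\Sigma\alpha$ lands in $\gG$, i.e. that its multiplier cocharacter $\Res_{W(\f_{p^r})/\z_p}(\chi\circ\rho)\circ(\mu_\Sigma\alpha)$ is the diagonal one. Reading off the $\sigma$-component for $\sigma\in\z/r\z$: the contribution of $\mu_\Sigma$ is the $(\chi\circ\rho)$-weight of $\mu_\sigma$ if $\sigma\in\Sigma$ and $0$ otherwise; by the standardness of $(\GU(n)_{W(\f_{p^r})},\{\upsilon_\omega\}_{\omega\in\Omega})$ together with ${}^{F^{-\omega}}\rho\circ\tilde\mu_\omega=\upsilon_\omega$ and $\tilde\mu_\omega={}^{F^{\bd^+(\omega)}}\mu_{\bd(\omega)}$ this weight equals $r_\Omega(\omega)$, where $\omega\in\Omega$ is the unique element with $\bd(\omega)=\sigma$, and one checks $r_\Omega(\omega)=\Card(\bd^{-1}(\{\sigma\}))$ (which also covers the case $\sigma\notin\Sigma$, where both sides vanish). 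The contribution of $\alpha$ is $a_\sigma+a_{\sigma+r}$, which by the first displayed condition on $\alpha$ equals $1-\Card(\bd^{-1}(\{\sigma\}))$. Hence the two contributions sum to the constant $1$, so the multiplier cocharacter is $(1,1,\dots,1)$, the diagonal one, and $\mu_\Sigma\alpha$ factors through $\gG$. By the same token, the standardness of the $\Phi$-datum $\tilde\upsilon$ for $\Res_{W(\f_{p^r})/\z_p}\GU(n)$ gives that its multiplier is $(1,\dots,1)$ as well, so $\tilde\upsilon$ factors through $(\g_m\times\Res_{W(\f_{p^r})/\z_p}\UL)/\{\pm1\}$.

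Finally I would assemble the $1$-morphism. The inclusion $\gG\hookrightarrow\Res_{W(\f_{p^r})/\z_p}\G$, compatible with $\mu_\Sigma\alpha$, yields $\Barb(\gG,\mu_\Sigma\alpha)\to\Barb(\Res_{W(\f_{p^r})/\z_p}\G,\mu_\Sigma\alpha)$ by subsubsection \ref{Geh}, which I compose with $\fx_\alpha^{\bd^+,\bj}(\rho)$ of proposition \ref{weirdI}; the resulting composite lands in the substack $\Barb((\g_m\times\Res_{W(\f_{p^r})/\z_p}\UL)/\{\pm1\},\tilde\upsilon)$ because, by the compatibility $\Barb(\chi)\circ\fx^{\bd^+,\bj}(\rho)=\id_{\delta^{-1}\kappa_\Sigma}\circ\Barb(\chi\circ\rho)$ recorded inside proposition \ref{weirdI}, the $\GU(n)$-multiplier display attached to any object of the image is the standard multiplicative one, hence diagonal, and a $\Res_{W(\f_{p^r})/\z_p}\GU(n)$-display with diagonal multiplier is precisely one whose structure group reduces to the preimage $(\g_m\times\Res_{W(\f_{p^r})/\z_p}\UL)/\{\pm1\}$ of the diagonal $\g_{m,\z_p}$ — this is the one point that uses more than bookkeeping, and it is immediate from the amalgam description of the second paragraph. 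The main obstacle, such as it is, is exactly the weight computation of the third paragraph: one must verify that the two defining conditions on $\alpha$ are calibrated so that the multiplier of $\mu_\Sigma\alpha$ becomes the constant cocharacter; granted this, the corollary is a formal consequence of proposition \ref{weirdI} and the functoriality of $\Barb(-)$.
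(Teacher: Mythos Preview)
Your argument is correct and is precisely the unpacking that the paper leaves implicit: the corollary is stated without proof as an immediate consequence of proposition~\ref{weirdI}, and your three steps (identification of the amalgams as multiplier-preimages of the diagonal $\g_{m,\z_p}$, the weight computation showing that the condition $a_{\sigma+r}+a_\sigma=1-\Card(\bd^{-1}(\{\sigma\}))$ forces the multiplier of $\mu_\Sigma\alpha$ to be constant, and the factorization via the lower trapezoid of the diagram in proposition~\ref{weirdI}) are exactly what one has to check. The only point worth a further word is your final assertion that a $\Res_{W(\f_{p^r})/\z_p}\GU(n)$-display with diagonal multiplier reduces to $(\g_m\times\Res_{W(\f_{p^r})/\z_p}\UL)/\{\pm1\}$; this is indeed a formal consequence of the fibre-product description $(\g_m\times\Res\UL)/\{\pm1\}\cong\Res\GU(n)\times_{\Res\g_m}\g_m$ together with the compatibility of the $\Barb$-construction with such pullbacks, but you might spell that out in a clause rather than leave it as ``immediate''.
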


\section{Moduli spaces of abelian varieties with additional structure}
\label{twistingV}
Let $L$ be a totally imaginary quadratic extension of a totally real number field $L^+$ and write $*$ for the non-trivial element of $\Gal(L/L^+)$. By a skew-Hermitian $L$-vector space we mean a finite-dimensional $L$-vector space $V$ together with a $L$-linear isomorphism $\Psi:V\rightarrow\check V_{[*]}$ satisfying $-\Psi(y,x)=\Psi(x,y)^*$ for all $x,y\in V$. Let us fix such a pair $(V,\Psi)$. A place $v$ of $L$ is called inert if it is fixed by $*$, and for each of these one obtains a skew-Hermitian $L_v$-vector space $(L_v\otimes_LV,\Psi_v)$, by localisation. Let $\UL(V/L,\Psi)$ represent the group functor
$$R\mapsto\{g\in\GL_{L\otimes_{L^+}R}(V\otimes_{L^+}R)|\Psi(gx,gy)=\Psi(x,y)\,\forall x,y\in V\otimes_{L^+}R\},$$
and occasionally we have to work with
$$(\g_m\times\Res_{L^+/\q}\UL(V/L,\Psi))/\{\pm1\}=\GU(V/L,\tr_{L/\q}\Psi),$$
i.e. the $\q$-subgroup of $\GL(V/\q)$ which is generated by its center and $\Res_{L^+/\q}\UL(V/L,\Psi)$. In this section we need a few Hodge-theoretic preliminaries, recall that the real algebraic group 
$\s:=\Res_{\c/\r}\g_m$ is called the Deligne torus. Let us also write $\s^1$ for the kernel of the norm $\s\rightarrow\g_{m,\r}$. Notice that 
$\s=(\g_{m,\r}\times_\r\s^1)/\{\pm1\}$ holds, and we also want to choose $\sqrt{-1}\in\c$ once and for all. A $\q$-Hodge structure of weight $-1$ on $V$ is called skew-Hermitian if and only if
\begin{itemize}
\item
the associated homomorphism $h:\s\rightarrow\GL(\r\otimes V/\r)$ factors through $\GU(V/L,\tr_{L/\q}\Psi)_\r$, and
\item
the symmetric form $(\tr_{L/\q}\Psi)(h(\sqrt{-1})x,y)$ is positive definite on $\r\otimes V$.
\end{itemize} 
Fix a triple $(V,\Psi,h)$ as above, and note that there are Hodge decompositions $V_\iota=\bigoplus_{p+q=-1}V_\iota^{p,q}$, where 
$V_\iota=\c\otimes_{\iota,L}V$ stands for the eigenspace as $\iota$ is running through the set $L_{an}=\Spec L(\c)$ of embeddings of $L$ into $\c$, 
observe that $L_{an}$ carries a natural left $\Gal(R/\q)$-action commuting with the complex conjugation, which could be viewed as acting from the right, 
the subfield $R$ stands for the normal closure. Let us denote the Hodge numbers $\dim_\c V_\iota^{p,q}$ by $h_\iota^{p,q}$. The $\c$-vector spaces 
$V_\iota$ carry natural skew-Hermitian forms $\Psi_\iota$, obtained by extension of scalars. Notice that $\overline V_\iota^{p,q}=V_{\iota\circ*}^{q,p}$, 
so that $h_\iota^{p,q}=h_{\iota\circ*}^{q,p}$. One more piece of terminology will prove useful: Consider a reductive $\q$-group which can be written in the form
\begin{equation}
\label{restrictI}
G=(\g_m\times\Res_{L^+/\q}G^1)/\{\pm1\}
\end{equation}
for some connected $L^+$-group $G^1$, which is assumed to be an inner form of a totally compact form and some $\q$-rational normal subgroup $\{\pm1\}\triangleleft\g_m\times\Res_{L^+/\q}G^1$, 
which is assumed to be of order $2$ and not contained in $\g_m$ nor $\Res_{L^+/\q}G^1$. We say that $(G,h)$ is a Shimura datum with coefficients in $L^+$, provided that 
$h:\s\rightarrow G_\r$ satisfies the usual axioms (cf. \cite{deligne4}) while it restricts to the identity on $\g_{m,\r}\subset\s$, in terms of \eqref{restrictI}. Recall that the inverse of
$h\vert_{\g_{m,\r}}:\g_m\rightarrow G$ is usually called the weight-cocharacter $w$ and observe that in the said scenario $\{1,w(-1)\}$ agrees with the intersection of $\g_{m,\q}$ 
with $\Res_{L^+/\q}G^1$ in $G$. As usual we write $\mu:\g_{m,\c}\rightarrow G_\c$ for the restriction of the complexification of $h$ to the first factor in the canonical decomposition 
$\s_\c\cong\g_{m,\c}^2$. Notice that $G_\r$ is contained in the product of the groups $(\g_{m,\r}\times_\r G_\iota^1)/\{\pm1\}$ where $G_\iota^1:=G^1\times_{L^+,\iota}\r$ as $\iota$ runs 
through $L_{an}^+$. Accordingly we let $\mu_\iota:\g_{m,\c}\rightarrow(\g_{m,\c}\times_\c G_{\iota,\c}^1)/\{\pm1\}$ be obtained as the image of $\mu$ in the factor that corresponds to $\iota$. 
Let $E\subset\c$ be the smallest field over which the conjugacy class of $\mu$ is defined, observe that the conjugacy class of each $\mu_\iota$ is defined over some subfield of the composite 
$ER$. At last, suppose that $(V,\Psi)$ is a skew-Hermitian $L$-vector space. Then we will say that a $L^+$-homomorphism $\rho:G^1\rightarrow\UL(V/L,\Psi)$ is a unitary representation of type 
\begin{equation}
\label{restrictII}
\{(-b_\iota,b_\iota-1),\dots,(-a_\iota,a_\iota-1)\},
\end{equation}
if the following holds:
\begin{itemize}
\item[(U1)]
$\rho(w(-1))=-\id_V$
\item[(U2)] 
If $\varrho:G\rightarrow\GL(V/\q)$ denotes the extension of $\Res_{L^+/\q}\rho$ that restricts to the identity on the subgroup $\g_m$, 
then $(V,\Psi,\varrho_\r\circ h)$ is a skew-Hermitian Hodge structure of type \eqref{restrictII}.
\end{itemize}
Here, it is understood that $\{a_\iota\}_{\iota\in L_{an}}$ is some family of integers satisfying $a_\iota\leq b_\iota:=1-a_{\iota\circ*}$. We define the 
character $\chi_\rho$ of a unitary representation $\rho$ to be the composition $G_L^1\stackrel{\rho}{\rightarrow}\GL(V/L)\stackrel{\tr}{\rightarrow}\a_L^1$, 
it is a $L$-rational class function satisfying $\chi_\rho^*(\gamma)=\chi_\rho(\gamma^{-1})$.

\subsection{$\vartheta$-gauged representations}
\label{twistingVI}
In order to describe an important operation on the set of isometry classes of skew-Hermitian Hodge structures, we need to introduce certain combinatorial data: Fix an element 
$\vartheta\in\Gal(R/\q)$. Minimal non-empty $\vartheta$-invariant subsets of $L_{an}$ are called cycles. A cycle is called inert if it is invariant under composition with $*$, and 
otherwise it is called split. A set $S\subset L_{an}$ is called a semi-cycle if and only if the sets $S\circ*$ and $S$ are disjoint, and their union $S\circ*\cup S$ is a (necessarily inert) 
cycle. A function $\bd^+:L_{an}\rightarrow\z$ with $\bd^+(\vartheta\circ\iota)\leq\bd^+(\iota)+1$ and $\bd^+(\iota\circ*)=\bd^+(\iota)\geq0$ will be called a $\vartheta$-multidegree.

\begin{defn}
\label{vqflex}
Consider a family of integers $\{a_\iota\}_{\iota\in L_{an}}$ satisfying $a_\iota\leq b_\iota:=1-a_{\iota\circ*}$ and let $\bd:L_{an}\rightarrow L_{an}$ 
be the function defined by $\iota\mapsto\vartheta^{-\bd^+(\iota)}\circ\iota$, for some fixed $\vartheta$-multidegree $\bd^+$.
\begin{itemize}
\item
We say that a function $\bj:L_{an}\rightarrow\z$ is a $\vartheta$-gauge of type \eqref{restrictII} if the following properties hold:
\begin{itemize}
\item[(G1)]
For each $\iota\in L_{an}$ one has $\bj(\iota\circ*)=-\bj(\iota)$.
\item[(G2)]
For every $\iota\in L_{an}$ and $l\in[a_\iota,b_\iota-1]$, there exists a unique $\kappa\in L_{an}$, with $\bd(\kappa)=\iota$ and $\bj(\kappa)=l$. 
\end{itemize}
\item
We will say that the family $\{a_\iota\}_{\iota\in L_{an}}$ is normalized if the following properties hold:
\begin{itemize}
\item[(N1)]
For every $\iota\in L_{an}$ one has: $b_\iota-a_\iota=|\bd^{-1}(\iota)|$
\item[(N2)]
For every semi-cycle $S\subset L_{an}$ one has:
$$\Card(\{\kappa\in S|\bd(\kappa)\notin S\})\equiv\sum_{\iota\in S}a_\iota\pmod2$$
\item[(N3)]    
For every cycle $\Theta\subset L_{an}$ one has: $0=\sum_{\iota\in\Theta}a_\iota$
\end{itemize}
\end{itemize}
\end{defn}

Suppose that the condition (N1) holds. Then (N3) holds if and only if it holds for all split cycles, while (N2) holds if and only if it holds 
for one arbitrary choice of semi-cycle $S$, within each inert cycle. The remarkable parity condition (N2) already entered into a $p$-adic 
consideration (namely in proposition \ref{weirdI}), and now it is going to enter into a real analytic consideration in the proof of following.

\begin{lem}
\label{twistingVII}
Fix integers $a_\iota\leq b_\iota=1-a_{\iota\circ*}$, and a pair $(\bd^+,\bj)$ satisfying the conditions (N1), (N2), (N3), (G1) and (G2) in definition 
\ref{vqflex}. Consider a skew-Hermitian Hodge structure $(V,\Psi,h)$, such that the Hodge decomposition of $V_\iota$ is of type contained in 
\eqref{restrictII} and let $h_\iota^{p,q}$ be its Hodge numbers. Then there exists a skew-Hermitian Hodge structure $(\tilde V,\tilde\Psi,\tilde h)$ such that:
\begin{itemize}
\item[(i)]
For every finite inert place $v$ of $L$ there exists a $L_v$-linear isometry from $(L_v\otimes_LV,\Psi_v)$ 
to the skew-Hermitian $L_v$-vector space $(L_v\otimes_L\tilde V,\tilde\Psi_v)$.
\item[(ii)]
The Hodge numbers of $(\tilde V,\tilde\Psi,\tilde h)$ are given by 
\begin{equation}
\label{twistingIV}
\tilde h_\kappa^{\tilde p,\tilde q}=
\begin{cases}\sum_{p<-\bj(\kappa)}h_{\bd(\kappa)}^{p,q}&(\tilde p,\tilde q)=(-1,0)\\
\sum_{p\geq-\bj(\kappa)}h_{\bd(\kappa)}^{p,q}&(\tilde p,\tilde q)=(0,-1)\\
0&(\tilde p,\tilde q)\notin\{(-1,0),(0,-1)\}\end{cases}
\end{equation}
for every $\kappa\in L_{an}$.
\end{itemize}
\end{lem}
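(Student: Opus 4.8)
The plan is to realize $(\tilde V,\tilde\Psi,\tilde h)$ by a Hasse-principle construction. First note that \eqref{twistingIV} forces $\dim_L\tilde V=\dim_LV=:n$ (as $\bd$ is onto with $|\bd^{-1}(\iota)|=b_\iota-a_\iota$ by (N1), while $\sum_ph_\iota^{p,-1-p}=n$), so I may take $\tilde V:=V$ as a plain $L$-vector space and only build a new form and a new $h$. The preliminary point is that \eqref{twistingIV} is the Hodge table of a genuine skew-Hermitian type of weight $-1$, i.e. $\tilde h_\kappa^{p,q}=\tilde h_{\kappa\circ*}^{q,p}$: this comes from the $*$-symmetry $h_\iota^{p,q}=h_{\iota\circ*}^{q,p}$, the ($*$-equivariant) relation $\bd(\kappa\circ*)=\bd(\kappa)\circ*$, and the gauge condition (G1) $\bj(\kappa\circ*)=-\bj(\kappa)$, which reflects the cut $p<-\bj(\kappa)$ into its mirror. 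Granting this, producing $(\tilde V,\tilde\Psi,\tilde h)$ reduces to producing a skew-Hermitian $L$-form $\tilde\Psi$ on $V$ that is isometric to $\Psi$ after localisation at every finite inert place (this gives (i)) and whose signature at each archimedean place $\iota$ is the one dictated by the positivity axiom together with the numbers $\tilde h_\iota^{\bullet}$; then $\tilde h$ is obtained by the standard recipe, choosing a $*$-compatible filtration $V_\kappa=V_\kappa^{-1,0}\oplus V_\kappa^{0,-1}$ of the prescribed ranks adapted to $\tilde\Psi_\iota$ — adaptedness being possible precisely because we matched the signatures — so that $\tilde h$ factors through $\GU(V/L,\tr_{L/\q}\tilde\Psi)_\r$ and is a polarisation by construction.

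For the existence of $\tilde\Psi$ I would invoke the Hasse principle for (skew-)Hermitian forms over the CM extension $L/L^+$ (Landherr): such a form is determined by rank, discriminant in $L^{+\times}/N_{L/L^+}L^\times$, and its signatures at the real places of $L^+$, and a compatible family of local forms (equal rank, almost all unramified) globalises iff the reciprocity relation $\sum_v\mathrm{inv}_v=0$ in $\z/2$ holds, the sum over all places $v$ of $L^+$ that do not split in $L$, archimedean places included. At a finite inert $v$ I prescribe $\tilde\Psi_v:=\Psi_v$; at a split $v$ the rank-$n$ form is unique; at an archimedean $\iota$ the $\z/2$-invariant is the one read off \eqref{twistingIV}, namely $\tilde h_\iota^{-1,0}\bmod2$ up to the $n$-dependent constant common to $V$ and $\tilde V$. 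Since $(V,\Psi,h)$ itself obeys the reciprocity relation and the finite-inert contributions of $\Psi$ and $\tilde\Psi$ coincide, globalisability of $\tilde\Psi$ collapses to the single congruence
\[
\sum_{\kappa\in L_{an}}\tilde h_\kappa^{-1,0}\ \equiv\ \sum_{\iota\in L_{an}}\bigl(\text{negative index of the hermitian form attached to }\Psi_\iota\bigr)\ \equiv\ \sum_{\iota\in L_{an}}\ \sum_{p\ \mathrm{odd}}h_\iota^{p,-1-p}\pmod 2 .
\]

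The verification of this congruence is the heart of the lemma and the place where (N2) is spent. Expanding $\tilde h_\kappa^{-1,0}=\sum_{p<-\bj(\kappa)}h_{\bd(\kappa)}^{p,-1-p}$, grouping the $\kappa$ over $\iota=\bd(\kappa)$, and using (G2) — as $\kappa$ ranges over $\bd^{-1}(\iota)$, $\bj(\kappa)$ ranges bijectively over the interval $[a_\iota,b_\iota-1]$ of length $b_\iota-a_\iota=|\bd^{-1}(\iota)|$ by (N1) — a short reindexing rewrites the left side as an explicit integral combination of the $h_\iota^{p,-1-p}$. One then organises both sides by the $\vartheta$-cycles of $L_{an}$: on a split cycle the two sides agree modulo $2$ by (N3) together with $b_\iota=1-a_{\iota\circ*}$; on an inert cycle one writes it as $S\sqcup S\circ*$ for a semi-cycle $S$, folds both sums onto $S$ via the $*$-symmetry, and reads off that the residual parity is exactly $\Card\{\kappa\in S:\bd(\kappa)\notin S\}+\sum_{\iota\in S}a_\iota\bmod 2$, which vanishes by (N2).

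I expect that cycle-by-cycle bookkeeping — and, more annoyingly, lining up the archimedean $i^{p-q}$-sign conventions for the polarisation with the combinatorics of $\bd^+$ and $\bj$ — to be the only genuine difficulty; the appeal to Landherr's theorem and the construction of $\tilde h$ from $\tilde\Psi$ are by now routine in the theory of skew-Hermitian Hodge structures. Once $\tilde\Psi$ is in hand, (i) holds by the choice of $\tilde\Psi_v$ at finite inert $v$ and (ii) holds because the filtration defining $\tilde h$ was built with ranks $\tilde h_\kappa^{p,q}$.
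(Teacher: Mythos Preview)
Your approach is essentially the paper's: reduce to the existence of the skew-Hermitian space $(\tilde V,\tilde\Psi)$ via Landherr's Hasse principle, prescribe $\tilde\Psi_v=\Psi_v$ at finite inert places, and check a single archimedean parity congruence using (N2) and (N3). The construction of $\tilde h$ from $\tilde\Psi$ with the prescribed ranks is indeed routine once the signatures are right.

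There is, however, a slip in your displayed congruence: the archimedean invariants of a Hermitian form over $L/L^+$ are indexed by the real places of $L^+$, i.e.\ by $L_{an}^+\cong L_{an}/\!*$, not by $L_{an}$. As written, your sums run over all of $L_{an}$, and then both sides are identically equal to $n\cdot[L^+:\q]$ (pair $\kappa$ with $\kappa\circ*$ and use $\tilde h_\kappa^{-1,0}+\tilde h_{\kappa\circ*}^{-1,0}=n$, and similarly on the right), so the congruence is vacuous and does not detect the Landherr obstruction. The meaningful congruence is over a choice of half $S\subset L_{an}$ with $L_{an}=S\sqcup S\circ*$; your later cycle-by-cycle discussion (folding onto a semi-cycle on each inert cycle) implicitly acknowledges this, but the displayed reduction step is not the correct one.

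The paper organises this by fixing a global $S$ from the outset and checking
\[
\sum_{\iota\in S}\sum_{p+q=-1}q\,h_\iota^{p,q}\ \equiv\ \sum_{\kappa\in S}\tilde h_\kappa^{0,-1}\pmod 2,
\]
via the clean pointwise identity
\[
-a_{\iota\circ*}\,n\ =\ \sum_{p+q=-1}q\,h_\iota^{p,q}\ +\ \sum_{\bd(\kappa)=\iota}\tilde h_\kappa^{0,-1},
\]
(obtained from (G2) and the definition of $\tilde h$), summed over $\iota\in S$. The left side then reduces $\pmod 2$ to $\Card(S\setminus T)\,n$ with $T=\bd^{-1}(S)$, by (N2) and (N3); the discrepancy between $\sum_{\kappa\in S}$ and $\sum_{\kappa\in T}$ on the right is handled by noting that each $\kappa$ in the symmetric difference contributes together with $\kappa\circ*$, giving $\tilde h_\kappa^{0,-1}+\tilde h_{\kappa\circ*}^{0,-1}=n$. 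This is exactly the bookkeeping you anticipate; just make sure the sums are over $S$, not $L_{an}$.
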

\begin{proof}
Recall that the signatures (resp. discriminants) of the envisaged forms $\sqrt{-1}\tilde\Psi_\kappa$ have to be equal to 
$\tilde h_\kappa^{-1,0}-\tilde h_\kappa^{0,-1}$ (resp. $(-1)^{\tilde h_\kappa^{0,-1}}$), while the signatures (resp. discriminants) of the given 
forms $\sqrt{-1}\Psi_\iota$ are equal to $\sum_{p+q=-1}(-1)^qh_\iota^{p,q}$ (resp. $\prod_{p+q=-1}(-1)^{qh_\iota^{p,q}}$). Choose a disjoint union 
$L_{an}=S\circ*\cup S$. In fact the only issue is the existence of a skew-Hermitian space $(\tilde V,\tilde\Psi)$, and all we have to do is check the congruence: 
$$\sum_{\iota\in S}\sum_{p+q=-1}qh_\iota^{p,q}\equiv\sum_{\kappa\in S}\tilde h_\kappa^{0,-1}\pmod2$$ 
It is easy to see that we have
$$-a_{\iota\circ*}\dim_LV=\sum_{p+q=-1}qh_\iota^{p,q}+\sum_{\bd(\kappa)=\iota}\tilde h_\kappa^{0,-1},$$ 
for every $\iota$. Summation over all $\iota\in S$ yields
$$(-\sum_{\iota\circ*\in S}a_\iota)\dim_LV=\sum_{\iota\in S}\sum_{p+q=-1}qh_\iota^{p,q}+\sum_{\kappa\in T}\tilde h_\kappa^{0,-1},$$
where $T:=\bd^{-1}(S)$. When calculating $\pmod2$ we find that the left-hand side agrees with $\Card(S-T)\dim_LV$, according to the conditions (N2) and (N3). It remains to show that 
$\sum_{\kappa\in S}\tilde h_\kappa^{0,-1}+\sum_{\kappa\in T}\tilde h_\kappa^{0,-1}$ agrees with $\Card(S-T)\dim_LV$  too. In these sums one can ignore $S\cap T$ 
and $L_{an}-(S\cup T)$, and the contribution from each element $\kappa$ in the difference set is precisely $\tilde h_\kappa^{0,-1}+\tilde h_{\kappa\circ*}^{0,-1}=\dim_LV$.
\end{proof}

Fix integers $a_\iota\leq b_\iota=1-a_{\iota\circ*}$ and a $\vartheta$-multidegree $\bd^+$, and let $(V,\rho,\Psi)$ (resp. $\bj$) be a unitary representation (resp. $\vartheta$-gauge) 
of type \eqref{restrictII}. For any $\iota\in L_{an}-\bd(L_{an})$, the conditions (U2) and (G2) imply that $(-a_\iota,a_\iota-1)=(-b_\iota,b_\iota-1)$ is the sole bi-weight occuring 
in the Hodge decomposition of $V_\iota$, while for general elements $\iota\in\bd(L_{an})$ the endpoints $a_\iota\leq b_\iota$ for which the conditions (U2) and (G2) hold 
simultaneously, are certainly not always unique. Let us say that $(V,\rho,\Psi,\bj)$ is a $\vartheta$-gauged $L$-unitary representation if (U1), (U2), (G1) and (G2) hold 
for a family of integers $a_\iota\leq1-a_{\iota\circ*}=:b_\iota$ satisfying the condition (N1). In this case the intervals $[a_\iota,b_\iota]$ are maximal in the sense that 
$a_\iota=\min\{\bj(\kappa)|\bd(\kappa)=\iota\}$ and $b_\iota-1=\max\{\bj(\kappa)|\bd(\kappa)=\iota\}$ holds for all $\iota\in\bd(L_{an})$, and in particular the family 
$\{a_\iota\}_{\iota\in L^{an}}$ is already uniquely determined (namely by the Hodge numbers of $(V,h)$ and the pair $(\bd^+,\bj)$ together with $\vartheta$).

\begin{rem}
\label{tricky}
From now onwards we always assume that condition (N1) is fulfilled. By the preceding comments this will allow the convention 
of saying ``$(V,\rho,\Psi,\bj)$ satisfies (N2) or/and (N3)'' if and only if $\{a_\iota\}_{\iota\in L_{an}}$ satisfies (N2) or/and (N3).
\end{rem}

\subsection{Construction of $_{K^p}\M_{\bT,\gp}$}
\label{special}

The input datum for poly-unitary moduli problems consists of a fixed element $\vartheta\in\Gal(R/\q)$ together with the following:

\begin{defn}
\label{twistingI}
Let $(G,h)$ be a Shimura datum with coefficients in $L^+$, as in the beginning of this section. A pair of families 
$$\bP=(\{(V_i,\Psi_i,\rho_i,\tilde V_i,\tilde\Psi_i,\tilde h_i,\varepsilon_i^\infty)\}_{i\in\Lambda},\{(R_\pi,\iota_\pi)\}_{\pi\in\Pi}),$$
is called a $\vartheta$-poly-$L$-unitary Shimura datum for $(G,h)$ if it enjoys the following properties:
\begin{itemize}
\item[(P0)]
The index set $\Lambda$ has finite cardinality, for each of its elements $i$ the triple $(\tilde V_i,\tilde\Psi_i,\tilde h_i)$ is a 
skew-Hermitian Hodge structure, the triple $(V_i,\rho_i,\Psi_i)$ is a $L$-unitary representation of $(G,h)$ and $\varepsilon_i^\infty$ is a 
$\a^\infty\otimes L$-linear isometry from $\a^\infty\otimes V_i$ to $\a^\infty\otimes\tilde V_i$. Moreover, every $\vartheta$-cycle contains 
at least one element $\iota$ for which all but at most one Hodge numbers $\tilde h_{i,\iota}^{p,q}$ of any $(\tilde V_i,\tilde h_i)$ vanish.
\item[(P1)]
Each element of the set $\Pi$ is a subset $\pi\subset\Lambda$ of odd cardinality, such that the triples $(\tilde V^\pi,\tilde\Psi^\pi,\tilde h^\pi)$ are skew-Hermitian Hodge 
structures of type $\{(-1,0),(0,-1)\}$, where $\tilde h^\pi$ (resp. $\tilde\Psi^\pi$) denote the canonical Hodge (resp. skew-Hermitian) structure on the tensor product: 
\begin{equation*}
\tilde V^\pi:=(\bigotimes_{i\in\pi}\tilde V_i)(\frac{1-\Card(\pi)}2),
\end{equation*}
which is formed in the $L$-linear $\otimes$-category of skew-Hermitian Hodge structures with coefficients in $L$, cf. \cite[section 3.1]{habil}. 
\item[(P2)]
$\{R_\pi\}_{\pi\in\Pi}$ is a family of $L$-algebras with positive involution of the second kind, and $(\{(V_i,\Psi_i,\rho_i)\}_{i\in\Lambda},\{(R_\pi,\iota_\pi)\}_{\pi\in\Pi})$ 
is a $L$-multi-unitary collection for the $L^+$-group $(\g_{m,L^+}^\Lambda\times_{L^+}G^1)/\{\pm1\}^\Lambda$ in the sense of definition \ref{exampleIX}. 
Furthermore, all singletons are elements of $\Pi$, and the $L$-algebra of $L$-linear endomorphisms of the $L$-unitary representation $(V_i,\rho_i,\Psi_i)$ 
is equal to the image of $\iota_{\{i\}}$ for every $i\in\Lambda$.
\end{itemize}
We say that $\bT=(\bd^+,\{(V_i,\Psi_i,\rho_i,\bj_i)\}_{i\in\Lambda},\{(R_\pi,\iota_\pi)\}_{\pi\in\Pi})$ is a $L$-metaunitary datum for $(G,h)$ if there exists a 
family $\{(\tilde V_i,\tilde\Psi_i,\tilde h_i,\varepsilon_i^\infty)\}_{i\in\Lambda}$, for which (P0)-(P2) together with the following three additional requirements hold:
\begin{itemize}
\item[(P3)]
$\bd^+$ is a $\vartheta$-multidegree, each quadruple $(V_i,\Psi_i,\rho_i,\bj_i)$ is a $\vartheta$-gauged $L$-unitary representation 
in the  sense that the conditions (U1), (U2), (G1), (G2), (N1), (N2) and (N3) of the previous subsection \ref{twistingVI} hold.
\item[(P4)]
If $\varrho_i$ stands for the extension of $\Res_{L^+/\q}\rho_i$, as in (U2), then the Hodge numbers of $(\tilde V_i,\tilde\Psi_i,\tilde h_i)$ are obtained from the 
Hodge numbers of $(V_i,\Psi_i,\varrho_{i,\r}\circ h)$ by means of the formula \eqref{twistingIV}, when using the $\vartheta$-gauge $\bj_i$ together with $\bd^+$.
\end{itemize}
\end{defn}

From now on, we suppose that $\vartheta$ fixes some maximal ideal $\gr\subset\O_R$ and induces the absolute Frobenius on $\O_R/\gr$, of which the 
characteristic is also assumed to be an odd prime $p$ which is unramified in $\O_L$. We will say that $\bP$ (resp. $\bT$) is unramified at $p$ if and only if 
$(\{(V_i,\Psi_i,\rho_i)\}_{i\in\Lambda},\{(R_\pi,\iota_\pi)\}_{\pi\in\Pi})$ is unramified at $p$ in the sense of part \ref{hilfssaetze} of the appendix. From now on we do 
fix choices of $(\tilde V_i,\tilde\Psi_i,\tilde h_i,\varepsilon_i^\infty)$. By formation of $\bigotimes_{i\in\pi}$ and $\bigoplus_{i\in\Lambda}$ one obtains further isometries 
\begin{eqnarray*}
&&\a^\infty\otimes V^\pi\stackrel{\varepsilon^{\pi,\infty}}{\rightarrow}\a^\infty\otimes\tilde V^\pi\\
&&\a^\infty\otimes V\stackrel{\varepsilon^\infty}{\rightarrow}\a^\infty\otimes\tilde V,
\end{eqnarray*}
when using the notations of (P1) along with $\tilde V:=\bigoplus_{i\in\Lambda}\tilde V_i$ and $V:=\bigoplus_{i\in\Lambda}V_i$. 

\subsubsection{First Step}
\label{ready}
We need to associate a couple of further structures to an unramified $\vartheta$-poly-$L$-unitary Shimura datum $\bP$, which we assume 
to be equipped with self-dual $\z_{(p)}\otimes\O_L$-lattices $\gV_{i,p}\subset V_i$ giving rise to a hyperspecial compact open subgroup
\begin{equation}
\label{twistingIII}
\bigcap_{i\in\Lambda}\varrho_i^{-1}(U_{i,p})=:U_p\subset G(\q_p),
\end{equation}
where the compact open subgroup $U_{i,p}\subset\GU(V_i/L,\tr_{L/\q}\Psi_i)(\q_p)$ is the stabilizer of $\gV_{i,p}$, as in \eqref{twistingII} (cf. lemma \ref{polyV}). Let us write 
$\tilde X^\pi$ for the $\tilde G^\pi(\r)$-conjugacy class of $\tilde h^\pi$, where $\tilde G^\pi:=\GU(\tilde V^\pi/L,\tr_{L/\q}\Psi^\pi)$. This sets up a family of canonical PEL-type Shimura 
data $(\tilde G^\pi,\tilde X^\pi)$. Moreover, regarding $\tilde V$ as a skew-Hermitian module over the $*$-algebra $L^\Lambda=\underbrace{L\oplus\dots\oplus L}_\Lambda$ 
yields yet another PEL-type Shimura datum $(\tilde G,\tilde X)$, where $\tilde G$ is the $\q$-group of $L^\Lambda$-linear similitudes of $\tilde V$. For later 
reference we put $\ge_i\subset\O_L^\Lambda$ for the ideal generated by the idempotent $(1,\dots,1,0,1,\dots,1)$ with the ``$0$'' in the $i$ th position, and we 
let $\ge_\iota\subset\O_R\otimes\O_L$ be the kernel of $\O_R\otimes\O_L\stackrel{\id_R\otimes\iota}{\rightarrow}\O_R$. Finally observe that there exist canonical 
morphisms of Shimura data $g^\pi:(\tilde G,\tilde X)\rightarrow(\tilde G^\pi,\tilde X^\pi)$, and hence canonical $\tilde G(\a^\infty)$-equivariant morphisms of Shimura varieties:
\begin{equation}
\label{bekanntI}
M(\tilde G,\tilde X)\stackrel{g^\pi}{\rightarrow}M(\tilde G^\pi,\tilde X^\pi)
\end{equation}
We need to collect further facts on integrality: Any compact open subgroup $K^p\subset G(\a^{\infty,p})$ can be written as an inverse image of some compact open subgroup 
$\tilde U^p\subset\tilde G(\a^{\infty,p})$, via the canonical embedding $\tilde\varrho^{\infty,p}:G_{\a^{\infty,p}}\rightarrow \tilde G_{\a^{\infty,p}}$, which we obtain immediately from 
$\varepsilon^{\infty,p}$. The previously introduced $\varepsilon_p^\pi$'s and $\varepsilon_p$ yield specific hyperspecial subgroups 
$\tilde U_p^\pi\subset\tilde G^\pi(\q_p)$ and $\tilde U_p\subset\tilde G(\q_p)$ by ``transport of structure'' (as we have already fixed our integral structure 
$\{\gV_{i,p}\}_{i\in\Lambda}$). Observe that $\tilde U_p^\pi$ contains $g^\pi(\tilde U_p)$, and let $\tilde U^\pi$ be any compact open subgroup of $\tilde G^\pi(\a^\infty)$ 
containing $g^\pi(\tilde U^p)\tilde U_p^\pi$. Let $\tilde U\subset\tilde G(\a^\infty)$ be the product of $\tilde U^p$ and $\tilde U_p$, so that \eqref{bekanntI} induces a morphism from 
$_{\tilde U}M(\tilde G,\tilde X)$ to $_{\tilde U^\pi}M(\tilde G^\pi,\tilde X^\pi)$. Let $_{\tilde U}\S/\O_{R_\gr}$ and $_{\tilde U^\pi}\S^\pi/\O_{R_\gr}$ be the usual moduli interpretations 
for these unitary group Shimura varieties, which are smooth and proper over $\O_{R_\gr}$ according to \cite{kottwitz1} and \cite{morita} (the latter only because of the last sentence 
in (P0)). We write $_{\tilde U}\essbar/\O_R/\gr$ and $_{\tilde U^\pi}\essbar^\pi/\O_R/\gr$ for their respective special fibers. We fix a set $S_p$ of extensions to $L$ of the primes of 
$L^+$ over $p$, and for each $\gq\in S_p$ we let $r_\gq$ be the degree of $\gq^+:=\gq\cap\O_{L^+}$, and we fix an embedding $\iota_\gq:L\hookrightarrow R$ with 
$\iota_\gq(\gq)\subset\gr$, so that: 
$$\vartheta^{r_\gq}\circ\iota_\gq=\begin{cases}\iota_\gq\circ*&\gq^*=\gq\\\iota_\gq&\text{otherwise}\end{cases}$$
At least up to a $\z_{(p)}$-isogeny we have a universal abelian scheme over $_{\tilde U}\S$ at our 
disposal, and picking some representative $Y$ allows us to introduce the important line bundles
\begin{equation}
\label{bundleI}
\det((\Lie Y[\ge_i])[\ge_{\vartheta^{-\sigma}\circ\iota_\gq}])^{-1}=:\L_{i,\gq,\sigma}\in\Pic(_{\tilde U}\S),
\end{equation} 
whose product $\L:=\bigotimes_{i\in\Lambda}\bigotimes_{\gq\in S_p}\bigotimes_{\sigma=0}^{r_\gq-1}\L_{i,\gq,\sigma}$ is ample, according to \cite{mori}. Following 
\cite[Theorem 4.8]{habil}, we write $Y^\pi$ for the pull-back of the universal abelian scheme on $_{\tilde U^\pi}\S^\pi$ to $_{\tilde U}\S$ by means of the canonical extension
\begin{equation}
\label{bekanntII}
{_{\tilde U}\S}\stackrel{g^\pi}{\rightarrow}{_{\tilde U^\pi}\S^\pi},
\end{equation}
of \eqref{bekanntI}. We tacitly omit the mentioning of level structures, but do notice that $\z_{(p)}\otimes\End_L(Y_S^\pi)$ is well-defined for every $S$-valued 
point of $_{\tilde U}\S$. Let $\P_\gq^\pi$ be the possibly skew-Hermitian, graded $_{\tilde U}\essbar$-display to $Y^\pi[\gq^\infty]\times_{\O_{R_\gr}}\O_R/\gr$, here 
note that the methods of loc.cit. are applicable only because the $p$-rank of the $\mod\gr$-reductions of at least one of $Y^\pi[\gq^\infty]$ or $Y^\pi[\gq^{*\infty}]$ 
vanishes for every $\gq\in S_p$, we denote $Y[\ge_i]=Y^{\{i\}}=:Y_i$ and $\P_\gq^{\{i\}}=:\P_{i,\gq}$. Moreover, there exist canonical comparison isomorphisms:
\begin{equation}
\label{Preisvergleich}
m_\gq^\pi:\dot\bigotimes_{i\in\pi}\P_{i,\gq}\rightarrow\P_\gq^\pi
\end{equation}
of possibly skew-Hermitian, graded $_{\tilde U}\essbar$-displays. Finally, the reduced induced subscheme structure on the Zariski closed subset of 
$_{\tilde U}\essbar$-points all of whose $\P_{i,\gq}$'s are isoclinal is denoted by $_{\tilde U}\Sb$. Now and again we need to invoke the projective limit 
$_{\tilde U_p}\S=\lim_{\tilde U^p\to1}{_{\tilde U}\S}$, which is a scheme with a right $\tilde G(\a^{\infty,p})$-action. At last, we need to introduce a family of certain orders  
$$\iota_\pi^{-1}(\z_{(p)}\otimes\O_L+\gf_\pi\End_{\O_L}(\gV_p^\pi))=:\R_{\gf,\pi}\subset R_\pi,$$
associated to ideals $\gf_\pi\subset\z_{(p)}\otimes\O_{L^+}$, where the self-dual $\z_{(p)}\otimes\O_L$-lattice 
$\gV_p^\pi$ stands for the $\O_L$-linear tensor product of the lattices $\gV_{i,p}\subset V_i$.\\ 
For a family of ideals $\gf_\pi\subset\z_{(p)}\otimes\O_{L^+}$ we let the $\O_{R_\gr}$-scheme $_{U^p}\gM_{\bP,\gr}^{\{\gf_\pi\}_{\pi\in\Pi}}$ represent the functor 
that sends a connected pointed base scheme $(S,s_0)$ to the set of $4+\Card(\Pi)$-tuples $(Y,\lambda,\iota,\ebar,\{y_\pi\}_{\pi\in\Pi})$ with the following properties:
\begin{itemize}
\item[(i)]
$(Y,\lambda,\iota,\ebar)$ is a $\z_{(p)}$-isogeny class of: homogeneously $p$-principally polarized abelian $S$-schemes $(Y,\lambda)$ together with a 
$*$-invariant action $\iota:\O_L^\Lambda\rightarrow\z_{(p)}\otimes\End(Y)$ satisfying the determinant condition with respect to the skew-Hermitian 
$L^\Lambda$-module $\tilde V$, and a $\pi_1^{\mathaccent 19 et}(S,s_0)$-invariant $U^p$-orbit $\ebar$ of $\O_L^\Lambda$-linear similitudes 
$$\bigoplus_i\eta_i=\eta:\a^{\infty,p}\otimes V\stackrel{\cong}{\rightarrow}H_1^{\mathaccent19 et}(Y_{s_0},\a^{\infty,p}).$$
\item[(ii)]
$y_\pi:\R_{\gf_\pi,\pi}\rightarrow\z_{(p)}\otimes\End_L(Y_S^\pi)$ is a $\O_L$-linear $*$-preserving 
homomorphism such that $\ebar$ contains at least one element $\eta$ rendering the diagrams
$$\begin{CD}
\a^{\infty,p}\otimes\bigotimes_{i\in\pi}\End_L(V_i)@<<<\R_{\gf_\pi,\pi}\\
@VVV@V{y_\pi}VV\\
\End(\bigotimes_{i\in\pi}H_1^{\mathaccent19 et}(Y_{i,s_0},\a^{\infty,p})(\frac{1-\Card(\pi)}2))@<{H_1^{\mathaccent19 et}}<<\End_L^0(Y_{s_0}^\pi)
\end{CD}$$
commutative, simultaneously for all $\pi\in\Pi$ where the vertical map on the left is given by:
$$\bigotimes_{i\in\pi}\phi_i\mapsto\bigotimes_{i\in\pi}\eta_i\circ\phi_i\circ\eta_i^{-1},$$
for some family of endomorphisms $\phi_i\in\a^{\infty,p}\otimes\End_L(V_i)$.
\end{itemize}

The projective limit $$\gM_{\bP,\gr}^{\{\gf_\pi\}_{\pi\in\Pi}}=\lim_{K^p\to1}{_{K^p}\gM_{\bP,\gr}^{\{\gf_\pi\}_{\pi\in\Pi}}}$$ is equipped with a right 
$G(\a^{\infty,p})$-action. Drawing on the above canonical embedding $\varepsilon^{\infty,p}$, there is a $G(\a^{\infty,p})$-equivariant morphism 
\begin{equation}
\label{forget}
\gM_{\bP,\gr}^{\{\gf_\pi\}_{\pi\in\Pi}}\rightarrow{_{\tilde U_p}\S},
\end{equation}
which at the finite levels recovers the tautological forgetful morphisms $(Y,\lambda,\iota,\ebar,\{y_\pi\}_{\pi\in\Pi})\mapsto(Y,\lambda,\iota,\ebar)$,
from $_{K^p}\gM_{\bP,\gr}^{\{\gf_\pi\}_{\pi\in\Pi}}$ to $_{\tilde U}\S$. 

\begin{rem}
\label{twistingIX}
These notations are justified as the above moduli problem does not depend on the choice of $\tilde U^p$, but only on its inverse 
image $K^p$. Notice that the introduction of the map \eqref{forget} does require the isomorphisms $\varepsilon_i^{\infty,p}$.
\end{rem}

Finally, the product of the line bundles \eqref{bundleI} remains ample, when pulled back to each finite layer $_{K^p}\gM_{\bP,\gr}^{\{\gf_\pi\}_{\pi\in\Pi}}$,
and these schemes are proper. This follows from the finiteness of the aforementioned forgetful maps, as the corresponding facts hold for 
$_{\tilde U}\S$ by \cite{mori} and \cite{morita}. The idea of $L$-poly-unitary moduli problems was implicitely present in \cite[Definition 5.3]{habil}.

\subsubsection{Second Step}
\label{steady}
Once and for all we fix a $\vartheta$-multidegree $\bd^+:L_{an}\rightarrow\n_0$ and a tuple 
$$(\{(V_i,\rho_i,\Psi_i,\bj_i,\tilde V_i,\tilde\Psi_i,\tilde h_i,\varepsilon_i^\infty)\}_{i\in\Lambda},\{(R_\pi,\iota_\pi)\}_{\pi\in\Pi}),$$ 
that satisfies (P0)-(P4) together with an integral structure $\{\gV_{i,p}\}_{i\in\Lambda}$ and a family of auxiliary ideals $\gf_\pi\subset\z_{(p)}\otimes\O_{L^+}$. Observe that the $L^+$-group 
$\Res_{L/L^+}\g_{m,L}^\Lambda$ is canonically contained in $(\g_{m,L^+}^\Lambda\times_{L^+}G^1)/\{\pm1\}^\Lambda$, and we shall use this scenario to introduce a cocharacter
\begin{equation*}
\alpha:\g_{m,\c}\rightarrow(\Res_{L/\q}\g_{m,L}^\Lambda)_\c\cong\g_{m,\c}^{\Lambda\times L_{an}} 
\end{equation*}
by decreeing its $(i,\iota)$-component to be given by $\alpha_{i,\iota}(z)=z^{a_{i,\iota}}$. Later on we need 
a family of $\pmod{r_\gq}$-multidegrees related to our $\vartheta$-multidegree by means of the formula 
$$\bd_\gq^+(\sigma):=\bd^+(\vartheta^{-\sigma}\circ\iota_\gq),$$ 
and we also put $r_\gq(\sigma):=\Card(\bd^{-1}(\{\vartheta^{-\sigma}\circ\iota_\gq\}))=\Card(\bd_\gq^{-1}(\{\sigma\}))$, where 
$\bd_\gq(\sigma)=\sigma+\bd_\gq^+(\sigma)$, so that $\bd(\vartheta^{-\sigma}\circ\iota_\gq)=\vartheta^{-\bd_\gq(\sigma)}\circ\iota_\gq$. Choose a prime $\gp\in\Spec\O_{ER}$ lying 
above $\gr\in\Spec\O_R$, let $f:=[\O_{ER}/\gp:\f_p]$ be its degree, and let us write $\f_{p^r}\subset\O_{ER}/\gp$ for the subfield of cardinality $p^r$, for any positive divisor $r\mid f$. Moreover, for arbitrary $\gq\in S_p$ we let $\G_\gq^1$ be the reductive $W(\f_{p^{r_\gq}})$-model of $G_\gq^1=G^1\times_{L^+,\iota_\gq}K(\f_{p^{r_\gq}})$ 
which is determined by the hyperspecial subgroup $U_\gq^1:=U_p\cap G^1(L_\gq^+)$ (cf. \eqref{twistingIII}). Let us explain how to recover the cocharacter 
$\alpha$ in these local settings: For every $i\in\Lambda$ and $\gq^*=\gq\in S_p$ (resp. $\gq^*\neq\gq\in S_p$), there are canonical inclusions 
$$\frac1{^{F^{r_\gq}}\zeta_{i,\gq}^1}=\zeta_{i,\gq}^1:\g_{m,W(\f_{p^{2r_\gq}})}\hookrightarrow\G_{\gq,W(\f_{p^{2r_\gq}})}^1,$$
(resp. $\zeta_{i,\gq}^1:\g_{m,W(\f_{p^{r_\gq}})}^\Lambda\hookrightarrow\G_\gq^1$) which are obtained by composing the respective 
scalar extension of the dilatation homomorphisms (cf. \eqref{dilatation}) with $\g_m\rightarrow\g_m^2;z\mapsto(z,\frac1z)$. Putting 
$a_{i,\gq,\sigma}:=a_{i,\vartheta^{-\sigma}\circ\iota_\gq}$, and $b_{i,\gq,\sigma}:=b_{i,\vartheta^{-\sigma}\circ\iota_\gq}$ determines cocharacters:
\begin{eqnarray*}
&&\alpha_{\gq,\sigma}:\g_{m,W(\f_{p^f})}\rightarrow(\g_{m,W(\f_{p^f})}\times_{W(\f_{p^f})}{^{F^{-\sigma}}\G_{\gq,W(\f_{p^f})}^1})/\{\pm1\};\\
&&z\mapsto\pm(z^{\frac{1-r_\gq(\sigma)}2},\prod_{i\in\Lambda}{^{F^{-\sigma}}\zeta_{i,\gq}^1}(z^{\frac{a_{i,\gq,\sigma}+b_{i,\gq,\sigma}-1}2}))
\end{eqnarray*}
For any $\gq\in S_p$ and any $\sigma\in\z/r_\gq\z$ we require the cocharacters 
$$\mu_{\gq,\sigma}:\g_{m,W(\f_{p^f})}\rightarrow(\g_{m,W(\f_{p^f})}\times_{W(\f_{p^f})}\G_{\gq,W(\f_{p^f})}^1)/\{\pm1\}$$
to lie in the conjugacy class of the cocharacters $\mu_{\vartheta^{-\sigma}\circ\iota_\gq}$, as introduced at the beginning of this section. 
Starting out from $\bd_\gq^+$ we form subsets $\Sigma_\gq$ and $\Omega_\gq$ of $\z/r_\gq\z$ according to the formulae \eqref{shiftI} and 
\eqref{shiftII}. Notice, that $\mu_{\gq,\sigma}=\alpha_{\gq,\sigma}$ for all $\sigma\notin\Sigma_\gq$. The $\Phi$-datum we wish to work with is:
\begin{equation}
\label{Fifi}
(\G_\gq,\{\upsilon_{\gq,\omega}\}_{\omega\in\Omega_\gq}),
\end{equation}
where $(\g_{m,W(\f_{p^{r_\gq}})}^\Lambda\times_{W(\f_{p^{r_\gq}})}\G_\gq^1)/\{\pm1\}^\Lambda=:\G_\gq$, and 
$^{F^{\bd_\gq^+(\omega)}}(\frac{\mu_{\gq,\bd_\gq(\omega)}}{\alpha_{\gq,\bd_\gq(\omega)}})=:\upsilon_{\gq,\omega}$ 
for all $\omega\in\Omega_\gq$. Consider the rings 
$$W(\f_{r_\gq})\otimes_{\iota_\gq,\O_{L^+}}\O_L=C_\gq=\begin{cases}W(\f_{p^{2r_\gq}})&\gq^*=\gq\\W(\f_{p^{r_\gq}})\oplus W(\f_{p^{r_\gq}})&\mbox{ otherwise}\end{cases}.$$
Next we will construct a family of gauged $C_\gq$-multi-unitary collections 
\begin{eqnarray*}
&&\bJ_\gq(\{\gf_\pi\}_{\pi\in\Pi})=\\
&&(\{(\V_{i,\gq},\Psi_{i,\gq},\rho_{i,\gq},\bj_{i,\gq})\}_{i\in\Lambda},\{(W(\f_{p^{r_\gq}})\otimes_{\iota_\gq,\O_{L^+}}\R_{\gf_\pi,\pi},\iota_{\pi,\gq})\}_{\pi\in\Pi})
\end{eqnarray*}
for each of the aforementioned $W(\f_{p^f})$-rational $\Phi$-data \eqref{Fifi}. Let $\Psi_{i,\gq}(x,y)=-\Psi_{i,\gq}(y,x)^*$ denote the perfect pairings on 
the selfdual $C_\gq$-lattices $\V_{i,\gq}:=W(\f_{r_\gq})\otimes_{\iota_\gq,\O_{L^+}}\gV_{i,p}$ and ditto for $\Psi_\gq^\pi(x,y)=-\Psi_\gq^\pi(y,x)^*$ on the 
$C_\gq$-linear tensor products $\V_\gq^\pi:=\bigotimes_{i\in\pi}\V_{i,\gq}$. The relation between the local gauges and the global ones is given by
\begin{equation}
\label{anfang}
\bj_{i,\gq}(\sigma)=\bj_i(\vartheta^{-\sigma}\circ\iota_\gq)-a_{i,\gq,\bd_\gq(\sigma)},
\end{equation} 
notice that $a_{i,\gq,\bd_\gq(\sigma)}=a_{i,\bd(\vartheta^{-\sigma}\circ\iota_\gq)}$. We are now in a position to appeal to proposition 
\ref{properIII} in order to introduce the provisional formally smooth $\f_{p^f}$-scheme $_{K^p}\tilde M_{\bT,\gp}$ rendering the diagram
\begin{equation}
\label{smoothVII}
\begin{CD}\prod_{\gq\in S_p}\Barb(\G_\gq,\{\upsilon_{\gq,\omega}\}_{\omega\in\Omega_\gq})@<{\prod_{\gq\in S_p}\tilde\P_\gq}<<{_{U^p}\tilde M_{\bT,\gp}}\\
@V{\prod_{\gq\in S_p}\fx^{\bJ_\gq(\{\gf_\pi\}_{\pi\in\Pi})}}VV@VVV\\
\prod_{\gq\in S_p}\gB_{\f_{p^f}}^{\bR_\gq(\{\gf_\pi\}_{\pi\in\Pi})}@<<<{_{K^p}\gM_{\bP,\gr}^{\{\gf_\pi\}_{\pi\in\Pi}}}
\end{CD}
\end{equation}
$2$-cartesian, as the Serre-Tate theorem (see e.g. \cite{katz2}) grants $_{K^p}\gM_{\bP,\gr}^{\{\gf_\pi\}_{\pi\in\Pi}}$ to be formally 
\'etale over $\prod_{\gq\in S_p}\gB_{\f_{p^f}}^{\bR_\gq(\{\gf_\pi\}_{\pi\in\Pi})}$. We take $\bR_\gq(\{\gf_\pi\}_{\pi\in\Pi})$ to be
$$(\{(\GU(\V_{i,\gq}/C_\gq,\Psi_{i,\gq}),\{\tilde\upsilon_{i,\gq,\sigma}\}_{\sigma\in\z/r_\gq\z}\}_{i\in\Lambda},
\{(W(\f_{p^{r_\gq}})\otimes_{\iota_\gq,\O_{L^+}}\R_{\gf_\pi,\pi},*)\}_{\pi\in\Pi}),$$ 
where $(\GU(\V_{i,\gq}/C_\gq,\Psi_{i,\gq}),\{\tilde\upsilon_{i,\gq,\sigma}\}_{\sigma\in\z/r_\gq\z})$ is the standard $\Phi$-datum arising from the standard 
$\Phbar$-datum $(\GU(\V_{i,\gq}/C_\gq,\Psi_{i,\gq}),\{(^{F^{-\omega}}\rho_{i,\gq})\circ\upsilon_{\gq,\omega}\}_{\omega\in\Omega_\gq})$ by plugging the function 
\eqref{anfang} into our formalism \eqref{heart} of subsubsection \ref{Universum}. The above $2$-cartesian diagram \eqref{smoothVII} tells us, that the $1$-morphism 
$$_{K^p}\tilde M_{\bT,\gp}\rightarrow\prod_{\gq\in S_p}\Barb(\G_\gq,\{\upsilon_{\gq,\omega}\}_{\omega\in\Omega_\gq})$$ 
is formally \'etale, and this property is shared by the limit $\tilde M_{\bT,\gp}=\lim_{K^p\to1}{_{K^p}\tilde M_{\bT,\gp}}$. In addition the $G(\a^{\infty,p})$-equivariance 
of the natural $1$-morphism from $\gM_{\bP,\gr}^{\{\gf_\pi\}_{\pi\in\Pi}}$ to $\prod_{\gq\in S_p}\gB^{\bR_\gq(\{\gf_\pi\}_{\pi\in\Pi})}$ is inherited by the 
$(\G_\gq,\{\upsilon_{\gq,\omega}\}_{\omega\in\Omega_\gq})$-displays $\tilde\P_\gq$ over the $G(\a^{\infty,p})$-scheme $\tilde M_{\bT,\gp}$.

\subsubsection{Third Step}
\label{go}
The previous two steps culminated in the construction of the provisional $\f_{p^f}$-schemes $_{K^p}\tilde M_{\bT,\gp}$. Let us sum up what this means:
\begin{itemize}
\item
Proposition \ref{properIII} implies, that the reduced induced subscheme structure on $_{K^p}\tilde M_{\bT,\gp}$ is finite over $_{K^p}\gM_{\bP,\gr}^{\{\gf_\pi\}_{\pi\in\Pi}}$, 
in particular the product of the line bundles \eqref{bundleI} remains ample, when pulled back to $_{K^p}\tilde M_{\bT,\gp}$ (cf. \cite[Chapter III, Exercise 5.9]{hartshorne}). 
\item
By corollary \ref{smoothIV} $_{K^p}\tilde M_{\bT,\gp,red}$ is smooth over $\f_{p^f}$, and for every closed point $x\in{_{K^p}\tilde M_{\bT,\gp}}$ 
the complete local ring $\hat\O_{_{K^p}\tilde M_{\bT,\gp},x}$ prorepresents the universal equicharacteristic deformation of 
the fiber of $\prod_{\gq\in S_p}\tilde\P_\gq$ over $x$ (N.B.: $_{K^p}\tilde M_{\bT,\gp}$ might be non-noetherian).
\end{itemize}
Theorem \ref{fake} yields the $2$-commutative diagram
$$\begin{CD}
\prod_{\gq\in S_p}\Barb(\G_\gq,\{\frac{\mu_{\gq,\sigma}}{\alpha_{\gq,\sigma}}\}_{\sigma\in\Sigma_\gq})@<{\prod_{\gq\in S_p}\Parb_\gq}<<{_{K^p}\Marb_{\bT,\gp}}\\
@V{\prod_{\gq\in S_p}\fx^{\bd_\gq^+}}VV@VVV\\
\prod_{\gq\in S_p}\Barb(\G_\gq,\{\upsilon_{\gq,\omega}\}_{\omega\in\Omega_\gq})@<{\prod_{\gq\in S_p}\tilde\P_\gq}<<{_{K^p}\tilde M_{\bT,\gp,red}}
\end{CD},$$
which becomes $2$-cartesian upon evaluation on algebraically closed fields, as does the diagram
$$\begin{CD}
\Barb(\gG_p,\mu_p)@<\Parb<<{_{K^p}\Marb_{\bT,\gp}}\\
@VVV@VVV\\
\prod_{\gq\in S_p}\gB_{\f_{p^f}}^{\bR_\gq(\{\gf_\pi\}_{\pi\in\Pi})}@<<<{_{K^p}\gM_{\bP,\gr,\f_{p^f}}^{\{\gf_\pi\}_{\pi\in\Pi}}}
\end{CD},$$
which can be obtained from corollary \ref{weirdII} and the canonical morphism 
$_{K^p}\gM_{\bP,\gr,\f_{p^f}}^{\{\gf_\pi\}_{\pi\in\Pi}}\rightarrow\Spec W(\f_{p^f})\rightarrow\Barb(\g_{m,\z_p},\delta)$ together with the diagram
$$\begin{CD}
\prod_{\gq\in S_p}\Barb(\G_\gq,\{\frac{\mu_{\gq,\sigma}}{\alpha_{\gq,\sigma}}\}_{\sigma\in\Sigma_\gq})@<<<\Barb(\gG_p,\mu_p)\\
@VVV@VVV\\
\prod_{\gq\in S_p}\Barb(\g_{m,W(\f_{r_\gq})},\{\delta\}_{\sigma\in\z/r_\gq\z})@<<<\Barb(\g_{m,\z_p},\delta)
\end{CD},$$
which is $2$-cartesian too. In view of corollary \ref{lift} to describe a lift $_{K^p}\M_{\bT,\gp}$ of $_{K^p}\Marb_{\bT,\gp}$ going with it, all we have to do is define certain characters: 
$\chi_{i,\gq,\sigma}:\I_0^{\mu_{\gq,\sigma}}\rightarrow\g_{m,W(\f_{p^f})}$ for $\gq\in S_p$, $\sigma\in[0,r_\gq-1]$ and $i\in\Lambda$, such that $\omega_{q_0(\Parb_\gq)}(\chi_{i,\gq,\sigma})$ 
agrees with the line bundle $\L_{i,\gq,\sigma}$, which is given by the formula \eqref{bundleI}. We let $\chi_{i,\gq,\sigma}$ be the $\I_0^{\mu_{\gq,\sigma}}$-character defined by 
$\prod_{l\in\z}\chi_{\mu_{\gq,\sigma}^{-1}}(-l,{^{F^{-\sigma}}\rho_{i,\gq}})^{d_{i,\gq,\sigma,l}}$ where 
$$d_{i,\gq,\sigma,l}:=\sum_{\bd_\gq(\omega)=\sigma,\bj_{i,\gq}(\omega)<l}p^{\bd_\gq^+(\omega)}.$$
At last we apply the said corollary to the $\I_0^{\mu_p}$-character defined by
$$\prod_{i\in\Lambda}\prod_{\gq\in S_p}\prod_{\sigma=0}^{r_\gq-1}\chi_{i,\gq,\sigma}=\chi:\I_0^{\mu_p}\rightarrow\g_{m,W(\f_{p^f})},$$ 
and we are done because of the subsubsection \ref{Haar}, here notice, that we have a canonical inclusion:
$$\I_0^{\mu_p}\hookrightarrow\prod_{\gq\in S_p}\prod_{\sigma=0}^{r_\gq-1}\I_0^{\mu_{\gq,\sigma}}$$ 
Observe the $2$-commutativity of the diagram:
$$\begin{CD}
\B(\gG_p,\mu_p)@<\P^{(\nu)}<<{_{K^p}\M_{\bT,\gp}^{(\nu)}}\\
@VVV@VVV\\
\prod_{\gq\in S_p}\gB_{\f_{p^f}}^{\bR_\gq(\{\gf_\pi\}_{\pi\in\Pi})}@<<<{_{K^p}\gM_{\bP,\gr,\f_{p^f}}^{\{\gf_\pi\}_{\pi\in\Pi}}}
\end{CD}$$
Accordingly we write $\P^{(\nu)}$ for the canonical $(\gG_p,\mu_p)$-display over $_{K^p}\M_{\bT,\gp}^{(\nu)}={_{K^p}\M_{\bT,\gp,W_\nu(\f_{p^f})}}$, and let 
$_{K^p}\F_{\bT,\gp}^{(\nu)}\in\ton(\I_0^{\mu_p})(_{K^p}\M_{\bT,\gp}^{(\nu)})$ be its lowest truncation. An analogous truncation of Witt-connections yields canonical elements 
$$\nabla_{\bT,\gp}^{(\nu)}\in \con_{W_\nu(\f_{p^f})}({_{U^p}\F_{\bT,\gp}^{(\nu)}}\times^{\I_0^{\mu_p}}\gG_p/{_{K^p}\M_{\bT,\gp}^{(\nu)}}),$$ 
whose curvatures vanish. Let $({_{K^p}\F_{\bT,\gp}},\nabla_{\bT,\gp})$ be the limit of the projective system $({_{K^p}\F_{\bT,\gp}^{(\nu)}},\nabla_{\bT,\gp}^{(\nu)})$ as 
$\nu\to\infty$. In all of this the omission of level structures indicates passage to the limit, i.e. $\M_{\bT,\gp}:=\lim_{K^p\to1}{_{K^p}\M_{\bT,\gp}}$, note that $\F_{\bT,\gp}$ 
is a $G(\a^{\infty,p})$-equivariant locally trivial principal homogeneous space for $\I_0^{\mu_p}$ over $\M_{\bT,\gp}$ and $G(\a^{\infty,p})$ fixes $\nabla_{\bT,\gp}$.

\subsection{Set-theoretic properties}
\label{overFq}
In this subsection we study quasi-isogenies: Let us write $\gamma:Y'\dashrightarrow Y$ for an element of $\Hom^0(Y',Y)$, whenever $Y$ and $Y'$ 
are abelian schemes over some base $S$. The smallest integer $n$ such that $\gamma\in\frac 1n\Hom(Y',Y)$ is called the denominator of $\gamma$. 
If $Y$ and $Y'$ are equipped with polarizations $\lambda:Y\rightarrow\check Y$ and $\lambda':Y'\rightarrow\check Y'$ we say that $\gamma$ is a 
quasi-isogeny provided that the pull-back of $\lambda$ along say $n\gamma:Y'\rightarrow Y$ agrees with $\lambda'$ up to a multiple. We say that 
$\gamma$ is a $p'$-quasi-isogeny if the denominators of $\gamma$ and $\gamma^{-1}$ are coprime to $p$. Let $\gamma:Y_1\dashrightarrow Y_2$ and 
$\gamma':Y'_1\dashrightarrow Y'_2$ be quasi-isogenies over an algebraically closed field $k$. We say that they possess a common generization if there exists 
\begin{itemize}
\item
two $k$-valued points $\xi$ and $\xi'$ on some irreducible normal $k$-variety $S$,
\item
a quasi-isogeny $\gamma'':Y''_1\dashrightarrow Y''_2$ over $S$
\end{itemize}
such that $\gamma''\times_{S,\xi}k\cong\gamma$ and $\gamma''\times_{S,\xi'}k\cong\gamma'$ holds. If $\ch(k)=p$ we write $\d(Y)$ for the covariant Grothendieck-Messing 
crystalline Dieudonn\'e theory of an abelian variety $Y$ over $k$, and we let $\d^0(Y)$ be the (non-effective) $F$-isocrystal whose underlying $K(k)$-space is $\q\otimes\d(Y)$, 
and whose Frobenius operator $K(k)\otimes_{F,K(k)}\d^0(Y)\rightarrow\d^0(Y)$ is the inverse of $\d(F_{Y/k})$, where $F_{Y/k}:Y\rightarrow Y\times_{k,F}k;y\mapsto y^p$ is the 
relative Frobenius. Suppose that $k$ is an algebraically closed extension of $\O_R/\gr$. Note that to any $k$-valued point $\xi$ of $_{\tilde U}\essbar$ one can associate an isomorphism
$$m_{\xi,\mathaccent19 et}^\pi:\bigotimes_{\a^{\infty,p}\otimes L}H_1^{\mathaccent19 et}(Y_{i,\xi},\a^{\infty,p})
\stackrel{\cong}{\rightarrow}H_1^{\mathaccent19 et}(Y_\xi^\pi,\a^{\infty,p})(\frac{\Card(\pi)-1}2),$$ 
and as a consequence of Deligne's theory of absolute Hodge cycles, \cite{deligne5} this assignment is functorial in $k$ 
(Sketch: Once one chooses a $\c$-lift $\kappa$ of $\xi$ one obtains an $\O_L$-linear isomorphism between the Hodge structures 
$\bigotimes_{\O_L}H_1(Y_{i,\kappa}(\c),\z)$ and $H_1(Y_\kappa^\pi(\c),\z)(\frac{\Card(\pi)-1}2)$ and $Y_\xi^\pi$ is the reduction of $Y_\kappa^\pi$, 
see \cite[Section 4.4]{habil} for some more details). The special fiber of \eqref{Preisvergleich} over $\xi$ sets up a further canonical isomorphism:
$$m_{\xi,cris}^\pi:\bigotimes_{K(k)\otimes L}\d(Y_{i,\xi})\stackrel{\cong}{\rightarrow}\d(Y_\xi^\pi)(\frac{\Card(\pi)-1}2)$$ 
We need two more functoriality properties which are slightly less immediate consequences of Deligne's theory of absolute Hodge cycles:

\begin{thm}
\label{bekanntIII}
Let $\xi$ and $\pi$ be as above, then the involution preserving algebra maps
$$op_{\xi,\mathaccent 19 et}^{\pi}:\bigotimes_{\a^{\infty,p}\otimes L}\End_{\a^{\infty,p}\otimes L}(H_1^{\mathaccent 19 et}(Y_{i,\xi},\a^{\infty,p}))
\rightarrow\End_{\a^{\infty,p}\otimes L}(H_1^{\mathaccent 19 et}(Y_\xi^\pi,\a^{\infty,p}))$$
and
$$op_{\xi,cris}^\pi:\bigotimes_{K(k)\otimes L}\End_{K(k)\otimes L}(\d^0(Y_{i,\xi}))\rightarrow\End_{K(k)\otimes L}(\d^0(Y_\xi^\pi))$$
that are defined by $(\dots,f_i,\dots)\mapsto\bigotimes_{i\in\pi}f_i$ with the help of the above comparison isomorphisms, send 
$\bigotimes_L\End_L^0(Y_{i,\xi})$ into $\End_L^0(Y_\xi^\pi)$, moreover the two maps thus induced are equal.
\end{thm}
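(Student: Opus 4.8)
The plan is to reduce the statement to characteristic $0$, where it becomes Riemann's theorem that a morphism of $\q$-Hodge structures between the homologies of abelian varieties is induced by an isogeny, and then to descend the resulting compatibilities back to $\xi$ by means of de Jong's full faithfulness of the crystalline Dieudonn\'e functor together with Deligne's theorem that Hodge cycles on abelian varieties are absolutely Hodge. First I would make two harmless reductions: by $\q\otimes L$-linearity of all the maps in question it suffices to treat tuples $(f_i)_{i\in\pi}$ of honest $\O_L$-linear endomorphisms (the involution-preserving part is then automatic, as the tensor product of $*$-compatible maps is $*$-compatible, the skew-Hermitian structures being multiplicative); and, since $\xi$ and the $f_i$ are defined over a subfield $k_0\subset k$ finitely generated over $\O_R/\gr$, and since algebraic endomorphisms, their realizations and the injectivity of the realization functors all commute with the base changes $k_0\hookrightarrow\bar k_0\hookrightarrow k$, I may assume $k=\bar k_0$. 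Note also that once one exhibits a \emph{single} $g\in\End_L^0(Y_\xi^\pi)$ whose \'etale realization equals $op_{\xi,\mathaccent 19 et}^\pi(\bigotimes_{i\in\pi}f_i)$ and whose crystalline realization equals $op_{\xi,cris}^\pi(\bigotimes_{i\in\pi}f_i)$, both assertions follow, the second because $\End_L^0(Y_\xi^\pi)$ injects into the endomorphism ring of either realization.

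Next I would pass to a characteristic $0$ model. Since $p$ is unramified in $\O_R$, the ring $\O_{R_\gr}$ is absolutely unramified, and $_{\tilde U}\S$ is smooth over it; hence the $k$-point $\xi$ of $_{\tilde U}\essbar$ lifts, by Hensel, to a point of $_{\tilde U}\S$ valued in a complete discrete valuation ring with residue field $k$, and after restricting to a subfield of $k$ small enough to carry the finitely generated data one obtains a $\c$-valued point $\kappa$ of $_{\tilde U}\S$ reducing to $\xi$. Then $Y_{i,\kappa}$, $Y_\kappa^\pi$ are lifts of $Y_{i,\xi}$, $Y_\xi^\pi$, and $m_{\xi,\mathaccent 19 et}^\pi$ (resp.\ $m_{\xi,cris}^\pi$) is by construction the reduction of the \'etale (resp.\ de Rham) realization of the Betti comparison isomorphism
\[
m_\kappa^\pi:\bigotimes_{i\in\pi}H_1(Y_{i,\kappa}(\c),\q)\stackrel{\cong}{\rightarrow}H_1(Y_\kappa^\pi(\c),\q)(\tfrac{1-\Card(\pi)}{2}),
\]
which is a morphism of $\q$-Hodge structures because the morphism of Shimura data $g^\pi$ is induced by the tensor-and-twist morphism $\bigoplus_{i\in\Lambda}\tilde V_i\to\tilde V^\pi$ of Hodge-theoretic objects. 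The target being of type $\{(-1,0),(0,-1)\}$ by (P1), Riemann's theorem shows that tensor products of Hodge-class endomorphisms are algebraic, so over $\c$ the map $op_\kappa^\pi$ lands in $\End_L^0(Y_\kappa^\pi)$; and Deligne's theorem \cite{deligne5}, applied to $\prod_{i\in\pi}Y_{i,\kappa}\times Y_\kappa^\pi$, guarantees that its \'etale and de Rham realizations are realizations of one and the same algebraic cycle, independently of the choice of $\kappa$.

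The one genuinely delicate point — and the main obstacle — is that the $f_i$ need \emph{not} lift to $Y_{i,\kappa}$ (a supersingular reduction being the obvious warning), so the characteristic $0$ conclusion cannot be transported fibrewise; one must argue at the level of realizations. The endomorphism $\bigotimes_{i\in\pi}f_i$ of $\d^0(Y_\xi^\pi)$ obtained via $m_{\xi,cris}^\pi$ commutes with Frobenius, since each $f_i$ does; after clearing a denominator, de Jong's full faithfulness of the functor from $p$-divisible groups to Dieudonn\'e crystals \cite{deJo} makes it the crystalline realization of an element $g_p$ of $\End^0(Y_\xi^\pi[p^\infty])$, which lies in $\End_L^0(Y_\xi^\pi)\otimes_\q\q_p$ by the Tate conjecture for $p$-divisible groups over the finitely generated field $k_0$ (again \cite{deJo}); likewise $\bigotimes_{i\in\pi}f_i$ acting on $H_1^{\mathaccent 19 et}(Y_\xi^\pi,\a^{\infty,p})$ commutes with $\Gal(\bar k_0/k_0)$, so by the Tate conjecture for abelian varieties over finitely generated fields of positive characteristic (Tate over finite fields, Zarhin in general) it comes from an element of $\End_L^0(Y_\xi^\pi)\otimes_\q\a^{\infty,p}$. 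It remains to match these: the $f_i$ are themselves algebraic on $Y_{i,\xi}$, so their realizations, and hence (after transport along the comparison isomorphisms $m_{\xi,\cdot}^\pi$) those of $\bigotimes_{i\in\pi}f_i$, form a compatible system; the comparison isomorphisms are, by the previous paragraph, the \'etale and crystalline realizations of the single absolutely-Hodge isomorphism $m_\kappa^\pi$; therefore the unique $L$-linear algebraic $g\in\End_L^0(Y_\xi^\pi)$ pinned down in the $\ell$-adic realization by Tate–Zarhin must, by injectivity of each realization functor, have crystalline realization equal to $g_p$. This common $g$ is exactly what was wanted. The substantive content of the argument, beyond invoking \cite{deligne5}, \cite{deJo} and Tate–Zarhin as black boxes, is this last matching step — making precise the sense in which the tensor construction's two comparison isomorphisms are realizations of one motivic isomorphism, much as in the discussion preceding the theorem and in \cite[Section 4.4]{habil}.
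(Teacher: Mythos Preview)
Your diagnosis of the central difficulty is exactly right: the $f_i$ need not lift to $Y_{i,\kappa}$, so the characteristic-zero statement cannot be transported fibrewise. The gap is in your proposed fix. Tate--Zarhin (and de Jong at $p$) only tell you that $op_{\xi,\mathaccent 19 et}^\pi(\bigotimes f_i)$ lies in $\End_L^0(Y_\xi^\pi)\otimes_\q\q_v$ for each place $v$; they do not ``pin down'' a rational $g\in\End_L^0(Y_\xi^\pi)$. To glue the local elements $g_v$ to a rational one you would need to know that the maps $op_{\xi,v}^\pi$, restricted to $\bigotimes_L\End_L^0(Y_{i,\xi})$, are all base-changes of a single $\q$-linear map into $\End_L^0(Y_\xi^\pi)$ --- which is precisely the statement being proved. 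Your appeal to $m_\kappa^\pi$ being absolutely Hodge does not close this: the rational map $op_\kappa^\pi$ it produces has domain $\bigotimes_L\End_L^0(Y_{i,\kappa})$, strictly smaller than $\bigotimes_L\End_L^0(Y_{i,\xi})$, and there is no mechanism given for extending it. The ``compatible system'' language hides this circularity rather than resolving it.

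The paper bypasses the lifting problem by an elementary Cayley-transform trick rather than by realization-theoretic descent. After reducing by multilinearity to $*$-eigenvectors $f_i^*=(-1)^{c_i}f_i$ in $\z_{(p)}\otimes\End_L(Y_{i,\xi})$, one sets
\[
\gamma_i:=\frac{1+pd^{1-c_i}f_i}{1-pd^{1-c_i}f_i}
\]
for a fixed $d\in\O_L$ with $d^*=-d$. Then $\gamma_i^*\gamma_i=1$ and the denominator is a unit in $\z_{(p)}\otimes\End_L(Y_{i,\xi})$, so each $\gamma_i$ is an $\O_L$-linear $p'$-quasi-isogeny. Unlike general endomorphisms, $p'$-quasi-isogenies always lift to characteristic zero (between possibly different lifts of $Y_{i,\xi}$), so theorem \ref{bekanntIV} for $p'$-quasi-isogenies --- known by the remark preceding the proof --- gives that $1\otimes\cdots\otimes\gamma_i\otimes\cdots\otimes1$ lands in $\End_L^0(Y_\xi^\pi)$ with matching realizations. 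Since each slot map $h\mapsto 1\otimes\cdots\otimes h\otimes\cdots\otimes1$ is an $L$-algebra homomorphism, and $\gamma_i+1$ is invertible, one recovers $f_i=(\gamma_i-1)/(pd^{1-c_i}(\gamma_i+1))$ inside the image. This argument needs no Tate--Zarhin, no de Jong, and no compatibility of realizations beyond the characteristic-zero definition of $Y^\pi$.
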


\begin{thm}
\label{bekanntIV}
To any two points $\xi_1,\xi_2\in{_{\tilde U}\essbar(k)}$, and to every $\O_L^\Lambda$-linear quasi-isogeny $\gamma:Y_{\xi_1}\dashrightarrow Y_{\xi_2}$ 
there is a canonical quasi-isogeny $g^\pi(\gamma):Y_{\xi_1}^\pi\dashrightarrow Y_{\xi_2}^\pi$, such that application of $\d^0$ and 
$H_1^{\mathaccent 19 et}(\dots,\a^{\infty,p})$ recovers the usual tensor-products (with coefficients in $L$).
\end{thm}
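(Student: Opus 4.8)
The plan is to produce $g^\pi(\gamma)$ as the unique element of $\Hom^0(Y_{\xi_1}^\pi,Y_{\xi_2}^\pi)$ whose cohomological realizations are the $\pi$-fold tensor products of the corresponding realizations of $\gamma$, using the comparison isomorphisms already at our disposal together with the Tate conjecture over finite fields.

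First I would decompose the quasi-isogeny. Since $\gamma$ is $\O_L^\Lambda$-linear and $Y_{\xi_j}=\bigoplus_{i\in\Lambda}Y_{i,\xi_j}$ with $Y_{i,\xi_j}=Y_{\xi_j}[\ge_i]$, we may write $\gamma=\bigoplus_{i\in\Lambda}\gamma_i$ with each $\gamma_i:Y_{i,\xi_1}\dashrightarrow Y_{i,\xi_2}$ an $\O_L$-linear quasi-isogeny. Applying the covariant crystalline Dieudonn\'e functor $\d^0$ and the prime-to-$p$ \'etale homology $H_1^{\mathaccent19 et}(-,\a^{\infty,p})$ yields $\O_L$-linear isomorphisms of $F$-isocrystals, resp.\ of $\a^{\infty,p}\otimes L$-modules. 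Transporting their $\pi$-fold tensor products through the canonical comparison isomorphisms $m_{\xi_j,cris}^\pi$ and $m_{\xi_j,\mathaccent19 et}^\pi$ of the previous subsection produces candidate isomorphisms
$$f_{cris}:\d^0(Y_{\xi_1}^\pi)\stackrel{\cong}{\rightarrow}\d^0(Y_{\xi_2}^\pi),\qquad f_{\mathaccent19 et}:H_1^{\mathaccent19 et}(Y_{\xi_1}^\pi,\a^{\infty,p})\stackrel{\cong}{\rightarrow}H_1^{\mathaccent19 et}(Y_{\xi_2}^\pi,\a^{\infty,p}),$$
the Tate twists by $\tfrac{\Card(\pi)-1}{2}$ being absorbed on both sides; one also checks, using (P1), that $f_{cris}$ and $f_{\mathaccent19 et}$ respect the polarization pairings up to the same scalar as $\gamma$ does. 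It then remains to exhibit $g^\pi(\gamma)\in\Hom^0(Y_{\xi_1}^\pi,Y_{\xi_2}^\pi)$ realizing $f_{cris}$ and $f_{\mathaccent19 et}$.

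Next I would descend and invoke the Tate conjecture. The points $\xi_1,\xi_2$ and the finitely many $\gamma_i$ are defined over a finite subfield $\f_q\subset k$, and the comparison isomorphisms $m^\pi$ are canonical and functorial in $k$ — this functoriality is exactly where Deligne's theory of absolute Hodge cycles enters, as in the construction of $m_{\xi,\mathaccent19 et}^\pi$ and in theorem \ref{bekanntIII} — so $f_{cris}$, $f_{\mathaccent19 et}$ descend to $\f_q$ and commute with the $q$-power Frobenius, resp.\ are $\Gal(\f_q^{ac}/\f_q)$-equivariant. By Tate's theorem for abelian varieties over finite fields (and its crystalline counterpart), for every $\ell\neq p$ the space $\Hom^0_{\f_q}(Y_{\xi_1}^\pi,Y_{\xi_2}^\pi)\otimes_\q\q_\ell$ is the space of Frobenius-equivariant maps $V_\ell(Y_{\xi_1}^\pi)\to V_\ell(Y_{\xi_2}^\pi)$, and likewise $p$-adically with Frobenius-equivariant isocrystal maps; hence $f_{\mathaccent19 et}$ is induced by some $g^\pi(\gamma)\in\Hom^0$. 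Since over a finite field the image of $\Hom^0$ in each realization is cut out by Frobenius-invariance, $g^\pi(\gamma)$ is determined by any one of its realizations; and because $f_{cris}$ and $f_{\mathaccent19 et}$ arise from the single tensor product $\bigotimes_{i\in\pi}\gamma_i$ through the mutually compatible comparison isomorphisms $m^\pi$, the element $g^\pi(\gamma)$ also realizes $f_{cris}$. Uniqueness by realizations then gives at once independence of the auxiliary $\f_q$, compatibility with base change along $k$, and multiplicativity ($g^\pi(\id)=\id$, $g^\pi(\gamma'\circ\gamma)=g^\pi(\gamma')\circ g^\pi(\gamma)$), i.e.\ the asserted canonicity.

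I expect the genuine obstacle to be not the Tate-conjecture step but the assertion that $f_{cris}$ and $f_{\mathaccent19 et}$ are two realizations of a single motivic isomorphism — equivalently, that in characteristic $p$ the various realizations of the comparison isomorphisms $m^\pi$ form a compatible family, now for a correspondence between two distinct points $\xi_1,\xi_2$ rather than the single-point endomorphism situation of theorem \ref{bekanntIII}. I would reduce this to characteristic $0$: lift $\xi_1,\xi_2$ to $\c$-valued points $\kappa_1,\kappa_2$ of ${}_{\tilde U}\S$ (possible since ${}_{\tilde U}\S$ is smooth and proper over $\O_{R_\gr}$), so that $Y_{\xi_j}^\pi$ is the reduction of $Y_{\kappa_j}^\pi$ and all realizations of $Y_{\xi_j}^\pi$ are identified with those of $Y_{\kappa_j}^\pi$; over $\c$ the needed compatibility is the statement that the Betti, de Rham and $\ell$-adic realizations of the tensor-product Hodge structure $\tilde V^\pi=(\bigotimes_{i\in\pi}\tilde V_i)(\tfrac{1-\Card(\pi)}{2})$ coincide with the tensor products of the realizations of the $\tilde V_i$, which is Deligne's theorem on absolute Hodge cycles on abelian varieties, applied exactly as in the sketch for $m_\xi^\pi$. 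Propagating this compatibility back to characteristic $p$ is the same specialization argument underlying theorem \ref{bekanntIII} and the construction of $m_{\xi,\mathaccent19 et}^\pi$, and completes the proof.
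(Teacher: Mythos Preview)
Your argument has a genuine gap at its core, and the attempted fix in the last paragraph does not close it.

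First, the Tate step does not produce a rational element. Over $\f_q$, Tate's theorem (or Mori--Zarhin) identifies the Frobenius-equivariant maps on $V_\ell$ with $\Hom^0\otimes_\q\q_\ell$; applied to all $\ell\neq p$ simultaneously it gives $f_{\mathaccent19 et}\in\Hom^0(Y_{\xi_1}^\pi,Y_{\xi_2}^\pi)\otimes_\q\a^{\infty,p}$, and the crystalline version gives $f_{cris}\in\Hom^0\otimes_\q\q_p$. Neither of these is an element of $\Hom^0$, and there is no mechanism by which a collection of elements in the various completions assembles into a rational element unless you already know they agree with a single $g\in\Hom^0$. The paper's principle-B lemma uses exactly this adelic element, but only to propagate rationality from a point where it is already known to hold, never to manufacture rationality from nothing.

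Second, and more fundamentally, your reduction to characteristic $0$ does not go through. You lift the points $\xi_1,\xi_2$ to $\kappa_1,\kappa_2$, but the quasi-isogeny $\gamma:Y_{\xi_1}\dashrightarrow Y_{\xi_2}$ does not lift to a quasi-isogeny $Y_{\kappa_1}\dashrightarrow Y_{\kappa_2}$ in general: only $p'$-quasi-isogenies lift. Without a lift of $\gamma$ there is no Betti realization $f_{Betti}$ through which to compare $f_{cris}$ and $f_{\mathaccent19 et}$, so Deligne's theorem on absolute Hodge cycles has nothing to act on. Compatibility of the comparison isomorphisms $m_\xi^\pi$ at the individual points $\xi_1,\xi_2$ says nothing about a correspondence between them. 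This is precisely why the paper singles out $p'$-quasi-isogenies in its preliminary remark: those are exactly the quasi-isogenies for which your argument works.

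The paper's proof therefore takes a completely different route. It first treats the case where $\xi_1,\xi_2$ lie in the isoclinal locus $_{\tilde U}\Sb$ with isomorphic $p$-divisible groups: there the Tate conjecture gives $\End_L(\d^0(Y_{i,\xi}))=\z_p\otimes\End_L(Y_{i,\xi})$, which lets one factor $\gamma_i$ as a $p'$-quasi-isogeny composed with a $\q$-rational self-quasi-isogeny, reducing to the lifting argument plus theorem~\ref{bekanntIII}. The general case is then connected to this special case by a chain of deformations: one uses the Raynaud-type lemma to push the Newton polygon upward, Viehmann's connectedness results to reach a point with the desired $p$-divisible group, and a principle-B argument to propagate the statement along irreducible normal subvarieties of the isoclinal locus. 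None of this machinery can be bypassed by a direct appeal to Tate over $\f_q$.
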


\begin{rem}
None of the proofs for these two results are easy, however their analogs in characteristic zero do follow directly from the definition of 
$Y^\pi$. In particular, one knows already that the theorem \ref{bekanntIV} holds for $p'$-quasi-isogenies, because one can lift them.
\end{rem}

\begin{proof}
We begin with the proof of theorem \ref{bekanntIII} by following \cite[proof of Proposition 5.1]{habil} very closely: In order to check the assertion for an 
arbitrary family of endomorphisms $f_i\in\z_{(p)}\otimes\End_L(Y_{i,\xi})$ it suffices to restrict to the eigenspaces under $*$, i.e. $f_i^*=(-1)^{c_i}f_i$ for some 
$c_i\in\{0,1\}$. Now pick an auxiliary element with $-d^*=d\in\O_L\backslash\{0\}$ and observe that $\gamma_i:=\frac{1+pd^{1-c_i}f_i}{1-pd^{1-c_i}f_i}$ is 
a $\O_L$-linear $p'$-quasi-isogeny. According to the preceding remark we are allowed to use the assertion for the family 
$\gamma_i$, by using appropriate lifts $\gamma'_i$ of $\gamma_i$ between two possibly different lifts  $Y_{i,\kappa_1}$ and $Y_{i,\kappa_2}$ of $Y_{i,\xi}$. 
We may deduce the assertion for the family $f_i=\frac{\gamma_i-1}{pd^{1-c_i}(\gamma_i+1)}$, and the proof of theorem \ref{bekanntIII} is complete.\\
Before we embark in the proof of theorem \ref{bekanntIV} we note that it holds at least under the following two additional assumptions: 
\begin{itemize}
\item[(i)]
$\xi_1$ and $\xi_2$ factor through $_{\tilde U}\Sb$ and,
\item[(ii)]
$Y_{\xi_1}[p^\infty]\cong Y_{\xi_2}[p^\infty]$ (as homogeneously $p$-principally polarized $p$-divisible groups with $\O_L^\Lambda$-operation)
\end{itemize}
The reason being: If some family of isomorphisms $u_i:Y_{i,\xi_1}[p^\infty]\stackrel{\cong}{\rightarrow}Y_{i,\xi_2}[p^\infty]$ preserves the $\O_L$-actions 
together with the polarization, then $h_i:=u_i^{-1}\circ\gamma_i[p^\infty]$ constitutes a $\q_p$-valued point in the group scheme $I_{i,\xi}/\z_{(p)}$ 
that represents the functor 
$$\alg_{\Spec\z_{(p)}}\ni R\mapsto\{f\in R\otimes\End_L(Y_{i,\xi})|f^*f\in R^\times\},$$ 
simply because $\End_L(\d^0(Y_{i,\xi}))=\z_p\otimes\End_L(Y_{i,\xi})$ holds for all $k$-valued points of $_{\tilde U}\Sb$ as a mediate consequence of \cite{tate}. 
Whence it follows that there exist factorizations $h_i=v_i\circ\beta_i$ with $v_i\in I_{i,\xi}(\z_p)$ and $\beta_i\in I_{i,\xi}(\q)$, as $I_{i,\xi}(\z_p)$ is open while 
$I_{i,\xi}(\q)$ is dense in $I_{i,\xi}(\q_p)$. The assertion follows by applying the said remark to the $p'$-quasi-isogenies $\gamma_i\circ\beta_i^{-1}=u_i\circ v_i$ 
and applying theorem \ref{bekanntIII} to $\beta_i\in\End_L^0(Y_{i,\xi_1})$.\\
Our proof of the general case of theorem \ref{bekanntIV} does not follow \cite[proof of Theorem 4.10]{habil}, instead we appeal to the two 
lemmas \ref{Delignetrick} and \ref{Raynaudtrick} below, so that it only remains to check the special case of quasi-isogenies satisfying (i) alone. 
We argue as follows: Methods of \cite[page 321, line 16- page 323, line 13]{viehmann} provide us with a $k$-valued point $\xi_0$ with:
\begin{itemize}
\item
$\xi_2$ and $\xi_0$ are lying in the same connected component of $_{\tilde U}\Sb$ and,
\item
$Y_{\xi_1}[p^\infty]\cong Y_{\xi_0}[p^\infty]$ (as homogeneously $p$-principally polarized $p$-divisible groups with $\O_L^\Lambda$-operation)
\end{itemize}
Pick a sequence of say $n$ points $\xi_2$, $\xi_3$,...,$\xi_{n+1}=\xi_0$, such that the points in each of the consecutive pairs $\{\xi_2,\xi_3\}$,...,
$\{\xi_n,\xi_{n+1}\}$ factor through common irreducible normal subvarieties say $S_2$,...,$S_n$ of ${_{\tilde U}\Sb}$. By induction one obtains a sequence of 
$n$ quasi-isogenies $Y_{\xi_1}\dashrightarrow Y_{\xi_2},\dots,Y_{\xi_{n+1}}$ of which each consecutive pair possesses a common $\O_L^\Lambda$-linear generization 
(note that the abelian scheme $Y$ is isotrivial over each of $S_2$,...,$S_n$). We know already that $Y_{\xi_1}\dashrightarrow Y_{\xi_0}$ satisfies the theorem, so we get 
it for $Y_{\xi_1}\dashrightarrow Y_{\xi_2}$ from a $n-1$-fold application of lemma \ref{Delignetrick}.
\end{proof}

The following can be regarded as an analog of a special case of Deligne's principle B \cite[Theorem 2.12]{deligne5}:

\begin{lem}[principle B]
\label{Delignetrick}
Let $S$ be a irreducible normal algebraic variety over $\O_R/\gr$, and let $\xi_1$ and $\xi_2$ be two morphisms from $S$ to $_{\tilde U}\S$, and let 
$\gamma:Y_{\xi_1}\dashrightarrow Y_{\xi_2}$ be a quasi-isogeny over $S$. If $\gamma\times_{S,\xi}k:Y_{\xi_1\circ\xi}\dashrightarrow Y_{\xi_2\circ\xi}$ 
satisfies the claim in theorem \ref{bekanntIV} for some geometric point $\xi$, then it does so for all other ones.
\end{lem}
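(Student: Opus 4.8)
The plan is to reduce the statement to a comparison of tensor constructions with absolute Hodge cycles over $\c$, and then propagate the resulting identity along the normal base $S$ by a spreading-out argument. First I would fix a geometric point $\xi$ of $S$ for which $\gamma\times_{S,\xi}k$ is known to satisfy the conclusion of theorem \ref{bekanntIV}, i.e. the canonical quasi-isogeny $g^\pi(\gamma\times_{S,\xi}k)\colon Y_{\xi_1\circ\xi}^\pi\dashrightarrow Y_{\xi_2\circ\xi}^\pi$ exists and induces the expected $L$-linear tensor products on $\d^0$ and on $H_1^{\mathaccent19 et}(-,\a^{\infty,p})$. Since $g^\pi$ of a $p'$-quasi-isogeny is always defined (it lifts to characteristic zero, see the remark before the proof of theorem \ref{bekanntIII}), the only thing that can fail for $\gamma$ over $S$ is that $n\gamma$ might have $p$ in its denominator; but the whole assertion is insensitive to replacing $\gamma$ by $p^m\gamma$, so I may harmlessly assume from the outset that $\gamma$ and $\gamma^{-1}$ are honest isogenies whose degrees are coprime to $p$, after a Zariski-localisation on $S$ this is automatic away from a closed subset, and since $S$ is irreducible and normal I can even work over a dense open. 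Then $g^\pi(\gamma)$ is simply the tensor product $\bigotimes_{i\in\pi}\gamma_i$ transported through the canonical comparison $m_\cdot^\pi$ of \eqref{Preisvergleich}, which is visibly a quasi-isogeny $Y_{\xi_1}^\pi\dashrightarrow Y_{\xi_2}^\pi$ over $S$.

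Next I would observe that the two claimed properties --- compatibility with $\d^0$ and with $H_1^{\mathaccent19 et}(-,\a^{\infty,p})$ --- are each statements about a morphism of lisse $\widehat\z^p$-sheaves (respectively of $F$-isocrystals) over $S$, namely that a certain section of an $\mathcal Hom$-sheaf, which is defined over the generic point by the tensor formula, coincides with the section coming from $g^\pi(\gamma)$. Over the $\ell$-adic side ($\ell\neq p$) this is immediate: both sides are morphisms of lisse sheaves agreeing at one geometric point $\xi$, and $S$ being connected forces equality everywhere, using that $H_1^{\mathaccent19 et}(Y_\xi^\pi,\a^{\infty,p})(\tfrac{\Card(\pi)-1}2)$ is the monodromy-equivariant tensor product $\bigotimes_{\a^{\infty,p}\otimes L}H_1^{\mathaccent19 et}(Y_{i,\xi},\a^{\infty,p})$ by the very definition of $Y^\pi$ and $m^\pi_{\cdot,\mathaccent19 et}$ (this is the content of theorem \ref{bekanntIII}, already known, and of the characteristic-zero construction of $Y^\pi$). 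The crystalline side over $S$ requires the compatibility of $m^\pi_{\cdot,cris}$ with Gauss--Manin/Dieudonné connections, which is exactly what the displays language of \eqref{Preisvergleich} gives us: the comparison isomorphism of $_{\tilde U}\essbar$-displays is horizontal, so $g^\pi(\gamma)=\bigotimes_i\gamma_i$ acts on $\d^0(Y^\pi)$ compatibly with the connection, and once it matches the expected map at one point it matches on the whole connected normal $S$ by rigidity of horizontal sections.

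The subtle point, and the one I expect to be the main obstacle, is that theorem \ref{bekanntIV} does not merely assert compatibility with the two realizations --- it asserts the existence of a \emph{canonical} $g^\pi(\gamma)$ as an actual quasi-isogeny of abelian schemes (a morphism in $\Hom^0$), and the genuine algebraicity must be checked, not just the realization-level identity. Here I would argue that the tensor construction $\bigotimes_{i\in\pi}\gamma_i$ in the category of skew-Hermitian motives, which a priori lives only in the category of absolute Hodge motives over $\c$ after choosing a $\c$-lift $\kappa$ of $\xi$, descends to an actual quasi-isogeny because $Y^\pi$ is itself an abelian scheme and $\End^0$ of an abelian variety is captured by its Hodge/étale realization (full faithfulness of the realization functor on abelian motives, the input used throughout \cite{habil} and invoked already in theorem \ref{bekanntIII}). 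Concretely: lift $\xi_1,\xi_2$ and $\gamma$ generically to a $\c$-point (after base change to a suitable henselian or analytic neighbourhood, using that $S$ is a variety over $\O_R/\gr$ and the smooth proper family $Y$ lifts), form $\bigotimes_i\gamma_i$ on Betti cohomology, invoke Deligne's absolute Hodge theory \cite{deligne5} to see that this tensor is an absolute Hodge cycle hence algebraic, i.e. comes from a genuine homomorphism $Y_{\xi_1\circ\kappa}^\pi\to Y_{\xi_2\circ\kappa}^\pi$, and then spread this homomorphism out over a dense open of $S$ and extend across codimension one by normality and the Néron property; its reduction at every geometric point of $S$ is forced to agree with the prescribed realization data by the already-established $\ell$-adic rigidity, which in particular pins it down at the originally given $\xi$ and shows the resulting global object restricts correctly everywhere. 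Assembling these pieces --- algebraicity over a generic $\c$-point, spreading out, extension by normality, and $\ell$-adic rigidity to identify fibers --- yields the canonical $g^\pi(\gamma)$ over all of $S$ with the two functoriality properties, which is precisely the claim.
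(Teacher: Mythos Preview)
Your first paragraph contains a fatal error that undermines the whole argument. You claim that replacing $\gamma$ by $p^m\gamma$ allows you to assume $\gamma$ and $\gamma^{-1}$ are isogenies of degree prime to $p$. This is false: multiplying by $p^m$ composes with the $p$-power isogeny $[p^m]$, which \emph{adds} to the $p$-part of the degree rather than removing it. If the kernel of (an integer multiple of) $\gamma$ has nontrivial $p$-part --- equivalently, if $Y_{\xi_1}[p^\infty]$ and $Y_{\xi_2}[p^\infty]$ are not isomorphic as $p$-divisible groups --- then no scalar multiple of $\gamma$ is a $p'$-quasi-isogeny. The entire content of theorem \ref{bekanntIV}, and hence of this lemma, concerns precisely the quasi-isogenies that are \emph{not} $p'$-quasi-isogenies; the $p'$-case was already dispatched by lifting to characteristic zero in the remark you cite. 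This error propagates to your third paragraph: the plan to ``lift $\xi_1,\xi_2$ and $\gamma$ generically to a $\c$-point'' is simply unavailable for a genuine $p$-isogeny, so the absolute-Hodge argument never gets off the ground.

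The paper avoids any lifting of $\gamma$. Its mechanism is the characteristic-$p$ Tate conjecture (Mori--Zarhin, \cite[Chapitre XII, Th.~2.5]{mori}): the $\ell$-adic tensor product $\bigotimes_i\gamma_i$ always makes sense on \'etale cohomology and is $\pi_1(S)$-equivariant via the comparison isomorphisms $m_{\mathaccent19 et}^\pi$, so Mori--Zarhin yields an \emph{adelic} element $\gamma_{\mathaccent19 et}^\pi\in\a^{\infty,p}\otimes\Hom_L^0(Y_{\xi_1}^\pi,Y_{\xi_2}^\pi)$ over $S$ itself. At the good geometric point $\xi$ this element is known to lie in $\Hom_L^0(Y_{\xi_1\circ\xi}^\pi,Y_{\xi_2\circ\xi}^\pi)$. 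The spreading step is then the intersection identity
\[
\Hom_L^0(Y_{\xi_1}^\pi,Y_{\xi_2}^\pi)=\Hom_L^0(Y_{\xi_1\circ\xi}^\pi,Y_{\xi_2\circ\xi}^\pi)\cap\bigl(\a^{\infty,p}\otimes\Hom_L^0(Y_{\xi_1}^\pi,Y_{\xi_2}^\pi)\bigr),
\]
which forces $\gamma_{\mathaccent19 et}^\pi$ to be rational over $S$. This is the missing idea in your argument: rather than lifting the isogeny, one produces it directly in characteristic $p$ as an adelic homomorphism and then rationalises. Your second paragraph's remarks on \'etale and display rigidity are essentially correct for the realization compatibilities, and indeed the paper uses display rigidity (\cite[Proposition 40]{zink2}) for the crystalline compatibility of the resulting $g^\pi(\gamma)$; but they only become relevant once $g^\pi(\gamma)$ exists as an honest quasi-isogeny over $S$, which is exactly the step your reduction fails to supply.
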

\begin{proof}
From the Galois equivariance of the comparison isomorphisms $m_{\mathaccent 19 et}^\pi$ and the theorem of Mori-Zarhin \cite[Chapitre XII, Th. 2.5]{mori} we obtain 
an adelic $\a^{\infty,p}\otimes L$-linear map $\gamma_{\mathaccent 19 et}^\pi\in\a^{\infty,p}\otimes\Hom_L^0(Y_{\xi_1}^\pi,Y_{\xi_2}^\pi)$, with the desired properties. The 
lemma follows, as any $\xi:\Spec k\rightarrow S$ satisfies 
$$\Hom_L^0(Y_{\xi_1}^\pi,Y_{\xi_2}^\pi)=\Hom_L^0(Y_{\xi_1\circ\xi}^\pi,Y_{\xi_2\circ\xi}^\pi)\cap(\a^{\infty,p}\otimes\Hom_L^0(Y_{\xi_1}^\pi,Y_{\xi_2}^\pi)),$$
the intersection taking place in the group $\a^{\infty,p}\otimes\Hom_L^0(Y_{\xi_1\circ\xi}^\pi,Y_{\xi_2\circ\xi}^\pi)$. Note that the compatibility 
of $\gamma\mapsto g^\pi(\gamma)$ with $\d^0$ follows from the rigidity result \cite[Proposition 40]{zink2}, as
$$\begin{CD}
\dot\bigotimes_{i\in\pi}\S_{i,\gq}\times_{\xi_1}S@>{\bigotimes_{i\in\pi}g_i(\gamma)[\gq^\infty]}>>\dot\bigotimes_{i\in\pi}\S_{i,\gq}\times_{\xi_2}S\\ 
@V{m_{\xi_1,\gq}^\pi}VV@V{m_{\xi_2,\gq}^\pi}VV\\
\S_\gq^\pi\times_{\xi_1}S@>{g^\pi(\gamma)[\gq^\infty]}>>\S_\gq^\pi\times_{\xi_2}S
\end{CD}$$
commutes only if and if any of its specializations does that.
\end{proof}

The following lemma is well-known:

\begin{lem}
\label{Raynaudtrick}
Let $\xi_1$ and $\xi_2$ be $k$-valued points of $_{\tilde U}\essbar\backslash{_{\tilde U}\Sb}$ and consider a quasi-isogeny 
$\gamma:Y_{\xi_1}\dashrightarrow Y_{\xi_2}$. Then there exists another quasi-isogeny $\gamma':Y_{\xi'_1}\dashrightarrow Y_{\xi'_2}$ enjoying the following properties:
\begin{itemize}
\item
The Newton-polygon of $Y_{\xi'_1}$ lies strictly above the Newton-polygon of $Y_{\xi_1}$
\item
$\gamma$ and $\gamma'$ possess a common $\O_L^\Lambda$-linear generization.
\end{itemize}
\end{lem}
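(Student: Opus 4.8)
The statement to be proven (Lemma \ref{Raynaudtrick}) asserts that for two $k$-valued points $\xi_1,\xi_2\in{_{\tilde U}\essbar}\backslash{_{\tilde U}\Sb}$ connected by a quasi-isogeny $\gamma:Y_{\xi_1}\dashrightarrow Y_{\xi_2}$, there is a second quasi-isogeny $\gamma':Y_{\xi'_1}\dashrightarrow Y_{\xi'_2}$ sharing a common $\O_L^\Lambda$-linear generization with $\gamma$, but such that the Newton polygon of $Y_{\xi'_1}$ lies strictly above that of $Y_{\xi_1}$. The phrase ``well-known'' and the stated purpose (a deformation trick reducing the general case of Theorem \ref{bekanntIV} to the isoclinic locus $_{\tilde U}\Sb$) indicate that the idea is Raynaud's classical argument: one deforms $Y_{\xi_1}$, with its polarization and its $\O_L^\Lambda$-action, along a curve inside $_{\tilde U}\essbar$ in such a way that the quasi-isogeny $\gamma$ deforms simultaneously, and one arranges the deformation so that the generic fibre has a strictly more degenerate (higher) Newton polygon.

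The plan is as follows. First I would reduce to a statement purely about $p$-divisible groups: since $\xi_1$ does not lie on the isoclinic locus, the $p$-divisible group $Y_{\xi_1}[p^\infty]$ (with its $\O_L^\Lambda\otimes\z_p$-action and its homogeneous $p$-principal polarization) has at least two distinct Newton slopes; concretely, for some prime $\gq\in S_p$ the graded display $\P_{1,\gq}$ attached to $Y_{\xi_1}[\gq^\infty]$ is not isoclinal. Then I would invoke Lemma \ref{isogdefo}: given the $\z/r\z$-graded (possibly skew-Hermitian) isogeny which $\gamma[\gq^\infty]$ induces at the display level, and since $\P_{1,\gq}$ has more than one Newton slope, there is a non-isotrivial deformation $\tilde\gamma$ of this isogeny with \emph{constant} Newton polygon over $k[[t]]$. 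The explicit Norman-style construction preceding Lemma \ref{isogdefo} (with the matrix $H$ chosen $p$-adically small and nonzero) manufactures the deformed window together with the deformed isogeny, and the computation there with the block $\left(\begin{matrix}B_\sigma&C_\sigma\\0&D_\sigma\end{matrix}\right)$ and the equation $p^{(s_m-s_{m+1})s}k_0-{^{F^s}k_0}=C/p^{s_{m+1}s}$ shows the deformation is genuinely non-isotrivial. Having a deformation of the polarized $\O_L^\Lambda$-equivariant $p$-divisible-group-with-quasi-isogeny over $k[[t]]$, the Serre--Tate theorem (as already used in subsubsection \ref{steady}) lifts it to a deformation of $(Y_{\xi_1},\lambda,\iota)\dashrightarrow(Y_{\xi_2},\lambda,\iota)$ over $k[[t]]$ which stays inside $_{\tilde U}\essbar$ (the determinant/polarization/endomorphism conditions being closed and already satisfied on the special fibre, hence on the formal neighbourhood by the moduli description); algebraizing this formal curve — or just taking $S=\Spec k[[t]]$ and passing to its generic point — produces a quasi-isogeny $\gamma'$ over $k((t))^{ac}$, i.e. over an algebraically closed field, with special point $\gamma$. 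By construction the two fit into a single quasi-isogeny over the irreducible normal $k$-variety $S$ (a suitable étale-local or formal-then-algebraized model), giving the required common $\O_L^\Lambda$-linear generization.

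The remaining point is to ensure that the Newton polygon of $Y_{\xi'_1}$ lies \emph{strictly above} that of $Y_{\xi_1}$, rather than merely weakly above. This is where I would be slightly careful: the deformation of Lemma \ref{isogdefo} has constant Newton polygon, which is the wrong direction. The correct move is to \emph{not} impose constant Newton polygon — instead, using Grothendieck's specialization theorem (cited as \cite[Theorem 3.6(ii)]{rapoport}, and invoked e.g. in the proof of Proposition \ref{Hodgepoint}), any small non-isotrivial equicharacteristic deformation of $Y_{\xi_1}[p^\infty]$ that moves the point off the stratum it sits in has a generic Newton polygon lying strictly above the special one; one only needs that $\xi_1$ is not already on the $\mu_p$-ordinary (generic) stratum of the relevant formal deformation space, which is guaranteed since $\xi_1\notin{_{\tilde U}\Sb}$ forces $Y_{\xi_1}$ to be non-isoclinic whereas on the local model the generic point is isoclinic on the relevant factors. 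Concretely: take the universal equicharacteristic deformation of $Y_{\xi_1}$ (available because $_{\tilde U}\essbar$ is smooth and the completed local ring is a power-series ring), restrict to a generic curve through $\xi_1$, and use that $\gamma$ itself deforms — the quasi-isogeny lifts along the curve because $\Hom^0$ of $p$-divisible groups is rigid under the deformation once one works up to $\z_{(p)}$-isogeny, or alternatively deform $\gamma$ first using Lemma \ref{isogdefo}'s construction and then further perturb $H$ generically. I expect the main obstacle to be precisely this bookkeeping: exhibiting one curve in $_{\tilde U}\essbar$ through $\xi_1$ along which (a) $Y$ is isotrivial or at least $\gamma$ survives, and (b) the Newton polygon jumps up strictly — reconciling the ``constant Newton polygon'' flavour of Lemma \ref{isogdefo} with the ``strictly higher'' requirement of the present lemma. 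Once that single curve is produced, irreducibility and normality of $S$ are automatic (replace $S$ by the normalization of an irreducible component), and the common generization is immediate.
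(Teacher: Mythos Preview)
You correctly identify Lemma \ref{isogdefo} as the starting point, but then go astray by regarding the constant-Newton-polygon deformation as ``the wrong direction'' and abandoning it. In fact the paper's argument keeps precisely that deformation: one takes the non-isotrivial constant-NP family $\tilde\gamma$ over $k[[t]]$, descends its generic fibre to a finitely generated field $N\supset k$, and then lets $S$ be the normalization of $_{\tilde U}\essbar_k^2$ in $N$. Since $_{\tilde U}\essbar$ is proper (a standing hypothesis on $(\tilde G,\tilde X)$ from subsubsection \ref{ready}), $S$ is a proper normal irreducible $k$-variety carrying the whole quasi-isogeny $\gamma''$, and its generic Newton polygon equals that of $Y_{\xi_1}$. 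Now comes the actual Raynaud input, which your plan omits entirely: over a proper normal irreducible base, constancy of the Newton polygon forces the abelian scheme to be \'etale-locally isotrivial. Since $\tilde\gamma$ was constructed non-isotrivial, the Newton polygon cannot be constant on $S$; together with Grothendieck specialization this produces a closed point $\xi'\in S(k)$ whose Newton polygon lies strictly above the generic one, i.e.\ strictly above that of $Y_{\xi_1}$.

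Your alternative proposal also contains a direction error: under Grothendieck specialization the Newton polygon rises upon \emph{specialization}, so on any deformation of $Y_{\xi_1}$ the generic fibre has Newton polygon at or below that of $Y_{\xi_1}$, never strictly above it. A generic curve through $\xi_1$ therefore cannot produce the desired $\xi'_1$, and in any case a generic curve would not carry the quasi-isogeny $\gamma$ --- which is exactly why one needs Lemma \ref{isogdefo} in the first place. The missing idea is properness plus Raynaud's isotriviality criterion, not a different choice of deformation direction.
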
 
\begin{proof}
The lemma \ref{isogdefo} gives us some non-isotrivial deformation $\tilde\gamma:Y_{\tilde\xi_1}\dashrightarrow Y_{\tilde\xi_2}$ over $k[[t]]$. The generic fiber of the 
triple $(\tilde\xi_1,\tilde\xi_2,\tilde\gamma)$ is certainly definable over some subfield $N\subset k((t))$ finitely generated over, and containing $k$. Let $S$ be the 
normalization of $_{\tilde U}\essbar_k^2$ in $N$, we will write $(\xi''_1,\xi''_2,\gamma'')$ for the extension to $S$ of our chosen model of $(\tilde\xi_1,\tilde\xi_2,\tilde\gamma)$ 
over $N$. Recall that the properness of $_{\tilde U}\essbar$ is conveniently implied by the overall assumptions on the Shimura datum $(\tilde G,\tilde X)$, as introduced 
in subsubsection \ref{ready}. It follows that $S$ is proper over $k$, because it is finite over $_{\tilde U}\essbar_k^2$. From the construction of $S$ it is clear, that the 
Newton-polygons of $Y_{\xi_1}$ and $Y_{\xi_2}$ agree with the generic one. The existence of some $\xi'\in S(k)$ with strictly larger Newton-polygon follows from the 
well-known fact, due to Raynaud, that over a proper, normal and irreducible basis the constancy of the Newton-polygon implies \'etale locally the isotriviality of the abelian scheme.  
\end{proof}

We fix a homogeneously polarized $L^\Lambda$-abelian $k$-variety $(Y,\lambda,\iota)$ up to $\q$-isogeny, say in the sense of 
\cite[chapter 9]{kottwitz1}, and we write $Y[\ge_i]$ for its homogeneously polarized $L$-abelian factor corresponding to $i\in\Lambda$ . We will 
say that $(Y,\lambda,\iota)$ is of type $\tilde V$ if the homogeneously polarized $\q$-isogeny class of $Y$ contains at least one member for which
\begin{itemize}
\item[(K1)]
the polarization $\lambda:Y\rightarrow\check Y$ is $p$-principal, and
\item[(K2)] 
the homomorphism $L^\Lambda\rightarrow\End^0(Y)$ is $p$-integral and satisfies the determinant condition with respect to the skew-Hermitian $L^\Lambda$-module $\tilde V$.
\end{itemize}
Verifying that $\iota$ be of type $\tilde V$ is certainly equivalent to finding a choice of $\O_L^\Lambda$-invariant selfdual Dieudonn\'e lattice in $\d^0(Y)$, with the correct 
determinant condition. Furthermore, any choice of a full $\tilde U^p$-level structure $\tilde\eta^p$ completes this to a PEL-quadruple, say $(Y,\lambda,\iota,\tilde\eta)$ constituting 
a $k$-valued point $\xi\in{_{\tilde U}\essbar}$. From now onwards we will write $\dot\bigotimes_{i\in\pi}Y[\ge_i]$ to denote the homogeneously $p$-principally polarized 
$\z_{(p)}$-isogeny class of abelian $k$-varieties derived by discarding the level structure on the homogeneously $p$-principally polarized $\z_{(p)}$-isogeny class of $Y_\xi^\pi$, 
this is meaningful, since the formation of $\dot\bigotimes_{i\in\pi}Y[\ge_i]$ is independent of the choice of $\tilde\eta^p$. Notice that the canonical $*$-invariant action 
$\iota^\pi:\O_L\rightarrow\z_{(p)}\otimes\End(\dot\bigotimes_{i\in\pi}Y[\ge_i])$ satisfies the determinant condition with respect to the skew-Hermitian $L$-module $\tilde V^\pi$.\\
However, the theorem \ref{bekanntIV} tells us that the $\q$-isogeny class of $\dot\bigotimes_{i\in\pi}Y[\ge_i]$ depends merely on the homogeneously 
polarized $L$-abelian $\q$-isogeny class of the family $\{Y[\ge_i]\}_{i\in\pi}$, it has an action $\iota^\pi:L\rightarrow\End^0(\dot\bigotimes_{i\in\pi}Y[\ge_i])$ 
(of type $\tilde V^{\pi}$), and it is canonically homogeneously polarized too. These observations give sense to the following definition:

\begin{defn}
\label{LEVEL}
Recall, that we have fixed a $L$-metaunitary datum $\bT=(\bd^+,\{(V_i,\rho_i,\Psi_i,\bj_i)\}_{i\in\Lambda},\{(R_\pi,\iota_\pi)\}_{\pi\in\Pi})$ for 
a Shimura datum with $L^+$-coefficients $(G,h)$, and let $k$ be an algebraically closed extension of $\f_{p^f}$: A homogeneously polarized 
$\q$-isogeny class of $L^\Lambda$-abelian $k$-varieties of type $\bT$ is a quadruple $(Y,\lambda,\iota,\{y_\pi\}_{\pi\in\Pi})$, where
\begin{itemize}
\item[(T1)]
$(Y,\lambda,\iota)$ is a homogeneously polarized $\q$-isogeny class of $L^\Lambda$-abelian $k$-varieties $(Y,\lambda,\iota)$ of type $\bigoplus_{i\in\Lambda}\tilde V_i$, and
\item[(T2)]
$y_\pi:R_\pi\rightarrow\End_L^0(\dot\bigotimes_{i\in\pi}Y[\ge_i])$ is a homomorphism which commutes with $L$ and preserves $*$, for every $\pi\in\Pi$.
\end{itemize}
We fix such a quadruple $y=(Y,\lambda,\iota,\{y_\pi\}_{\pi\in\Pi})$. A full level structure $\eta$ is a $L^\Lambda$-linear similitude 
$$\sum_{i\in\Lambda}\eta_i=\eta:\a^{\infty,p}\otimes\bigoplus_{i\in\Lambda}V_i\stackrel{\cong}{\rightarrow}H_1^{\mathaccent19 et}(Y,\a^{\infty,p}),$$ 
such that the diagrams
$$\begin{CD}
\a^{\infty,p}\otimes_{\q}\bigotimes_{i\in\pi}\End_L(V_i)@<<<R_\pi\\
@V{\bigotimes_{i\in\pi}\eta_i}VV@V{y_\pi}VV\\
\End(\bigotimes_{i\in\pi}H_1^{\mathaccent19 et}(Y[\ge_i],\a^{\infty,p})(\frac{1-\Card(\pi)}2))@<{H_1^{\mathaccent19 et}}<<\End_L^0(\dot\bigotimes_{i\in\pi}Y[\ge_i])
\end{CD}$$
are commutative for all $\pi\in\Pi$. 
\end{defn}

From now on we assume that $\bT$ is unramified at $p$, so that $G$ possesses a reductive $\z_p$-model $\gG_p$ which is compatible with certain self-dual 
$\z_{(p)}\otimes\O_L$-lattices $\gV_{i,p}\subset V_i$, in fact we may fix a $W(\f_{p^f})$-rational $\Phi$-datum $(\gG_p,\mu_p)$ which is compatible with the Shimura datum 
$(G,h)$, as done in subsubsection \ref{go}. Let us write $\B_k(\gG)$ for the (natural groupoid structure on the) class of $F$-crystals with $\gG$-structure over $k$, which are 
classically defined to be $\z_p$-linear exact and faithful rigid $\otimes$-functors from $\bRep_0(\gG_p)$ to $\cris_{W(k),F}$. We need to associate to any $(\gG_p,\mu_p)$-display 
$\P$ over $k$ an $F$-crystal with $\gG$-structure $M_\P$ over $k$. We do this by specializing subsection \ref{realII} to the case at hand and thus obtain:
$$M_\P:\bRep_0(\gG_p)\rightarrow\cris_{W(k),F};\,\varrho\mapsto\sy_{W(k),F}(\varrho,\P)$$
(N.B.: This is meaningful because $\BI_k(\gG_p,\mu_p)$ is canonically equivalent to the groupoid $\hat\CAS_{W(k),I(k)}(\gG,\mu_p)$, which shows up in \eqref{realIII})

\begin{defn}
\label{LEVELP}
Let $\bT=(\bd^+,\{(V_i,\rho_i,\Psi_i,\bj_i)\}_{i\in\Lambda},\{(R_\pi,\iota_\pi)\}_{\pi\in\Pi})$ be an unramified $L$-metaunitary Shimura datum for a 
Shimura datum with $L^+$-coefficients $(G,h)$. We fix an algebraically closed extension $k$ of $\f_{p^f}$ and a choice of integral structures for 
$\bT$ as above. By a $\mu_p$-fake integral structure on a homogeneously polarized $\q$-isogeny class of $L^\Lambda$-abelian $k$-varieties 
$y=(Y,\lambda,\iota,\{y_\pi\}_{\pi\in\Pi})$ of type $\bT$ we mean a $(\gG_p,\mu_p)$-display $\P$ together with a $L^\Lambda$-linear similitude
$$\sum_{i\in\Lambda}\zeta_i=\zeta:\q\otimes\bigoplus_{i\in\Lambda}M_\P(\varrho_i)\stackrel{\cong}{\rightarrow}\d^0(Y),$$ 
of which the induced similitudes from $\q\otimes M_\P(\varrho^\pi)$ to $\d^0(\dot\bigotimes_{i\in\pi}Y[\ge_i])$ are $R_\pi$-linear for every $\pi\in\Pi$. 
\end{defn}

We write $\Mot_{\mu_p}^\bT(k)$ for the (discrete!) groupoid consisting of homogeneously polarized $\q$-isogeny classes of $L^\Lambda$-abelian 
$k$-varieties $y=(Y,\lambda,\iota,\{y_\pi\}_{\pi\in\Pi})$ of type $\bT$, together a full level structure $\eta$ and a $\mu_p$-fake integral structure 
$(\P,\zeta)$. Observe that there is a canonical functor $\Mot_{\mu_p}^\bT(k)\rightarrow\BI_k(\gG_p,\mu_p)$, as $\P$ is banal.

\begin{rem}
\label{verlaengert}
Another way of looking at $\mu_p$-fake integral structures on some homogeneously polarized $\q$-isogeny class $y$ of $L^\Lambda$-abelian 
$k$-varieties of type $\bT$ is the following: Let us write $\Iso(y)$ for the set of $K(k)\otimes L^\Lambda$-linear similitudes 
$$\sum_{i\in\Lambda}\zeta_i=\zeta:K(k)\otimes\bigoplus_{i\in\Lambda}V_i\stackrel{\cong}{\rightarrow}\d^0(Y)$$
which induce further $R_\pi$-linear ones from $K(k)\otimes V^\pi$ to $\d^0(\dot\bigotimes_{i\in\pi}Y[\ge_i])$. The group $G(K(k))$ acts 
on $\Iso(y)$ simply transitively, from the right. It follows that every $\zeta\in\Iso(y)$ gives rise to an element $b_{y,\zeta}\in G(K(k))$ satisfying 
$^F\zeta=\zeta\circ b_{y,\zeta}$. Let us write $\Iso^{\mu_p}(y)$ for the set of pairs $(O,\zeta)\in\gG(W(k))\times\Iso(y)$ rendering the diagram
\begin{equation}
\label{Uuh}
\begin{CD}
K(k)\otimes V
@<{\varrho(O{^F\mu_p(\frac1p)}))\circ(F\otimes\id_V)}<<K(k)\otimes V\\
@V{\zeta}VV@V{\zeta}VV\\
\d^0(Y)@<F<<\d^0(Y)\\
\end{CD}
\end{equation}
commutative. The subgroup $\I^{\mu_p}(k)\subset G(K(k))$ acts on $\Iso^{\mu_p}(y)$ from the right according to $(O,\zeta)\circ h:=(O',\zeta')$ where:
\begin{eqnarray}
\label{Iih}
&&O'=h^{-1}O\Phi^{\mu_p}(h)\\
\label{Eeh}
&&\zeta'=\zeta\circ\varrho(h)
\end{eqnarray}
for any $h\in\I^{\mu_p}(k)$. Since $O$ is determined by $\zeta$ one could think of this as the restriction of the $G(K(k))$-action on $\Iso(y)$ to the $\I^{\mu_p}(k)$-invariant 
subset $\{\zeta\in\Iso(y)\mid\,b_{y,\zeta}{^F\mu_p(p)}\in\gG(W(k))\}$. Now a $\mu_p$-fake integral structure on $y$ is just an element of $\Iso^{\mu_p}(y)/\I^{\mu_p}(k)$.
\end{rem}

Our next aim is the study of the set $\Marb_{\bT,\gp}(k)$:

\begin{thm}
\label{uniformizeII}
There is a $G(\a^{\infty,p})$-equivariant bijection between the $G(\a^{\infty,p})$-set of sextuples $(Y,\lambda,\iota,\{y_\pi\}_{\pi\in\Pi},\eta,\zeba)$ comprising of 
\begin{itemize}
\item
a $\mu_p$-fake integral structure $\zeba$ on 
\item
a homogeneously polarized $\q$-isogeny class of $L^\Lambda$-abelian $k$-varieties $(Y,\lambda,\iota,\{y_\pi\}_{\pi\in\Pi})$ of type $\bT$ with
\item
a full level structure $\eta$,
\end{itemize}
and the $G(\a^{\infty,p})$-set $_{U^p}\Marb_{\bT,\gp}(k)$. Moreover, the bijection is functorial in $k$.
\end{thm}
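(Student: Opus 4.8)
The idea is to unwind all four layers of the construction of $_{K^p}\Marb_{\bT,\gp}$, namely the third step in subsubsection \ref{go}, the second step in subsubsection \ref{steady}, the first step in subsubsection \ref{ready}, and the classical PEL-moduli interpretation of $_{\tilde U}\S$ and of the stacks $\gB^{\bR_\gq}$, and to match the data produced on each level against the corresponding datum in the sextuple. First I would trace through the defining $2$-cartesian diagrams. The diagram at the start of subsubsection \ref{go} exhibits $_{K^p}\Marb_{\bT,\gp}$ (over algebraically closed $k$) as the $2$-fibre product of $\Barb(\gG_p,\mu_p)$ and $_{K^p}\gM_{\bP,\gr,\f_{p^f}}^{\{\gf_\pi\}_{\pi\in\Pi}}$ over $\prod_{\gq\in S_p}\gB_{\f_{p^f}}^{\bR_\gq(\{\gf_\pi\}_{\pi\in\Pi})}$; hence a $k$-point of $_{K^p}\Marb_{\bT,\gp}$ is the same as a triple consisting of a $k$-point of $_{K^p}\gM_{\bP,\gr}^{\{\gf_\pi\}_{\pi\in\Pi}}$, a $(\gG_p,\mu_p)$-display $\P$ over $k$ (automatically banal, i.e. an object of $\BI_k(\gG_p,\mu_p)$), and a $2$-isomorphism identifying the images of both in $\prod_{\gq\in S_p}\gB_{\f_{p^f}}^{\bR_\gq(\{\gf_\pi\}_{\pi\in\Pi})}$. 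Note the morphism $_{K^p}\Marb_{\bT,\gp}\to{_{K^p}\gM_{\bP,\gr,\f_{p^f}}^{\{\gf_\pi\}_{\pi\in\Pi}}}$ factors through $_{K^p}\tilde M_{\bT,\gp,red}$ via $\prod_{\gq}\fx^{\bd_\gq^+}$ and $\prod_{\gq}\fx^{\bJ_\gq}$, but on $k$-points those functors are (the relevant diagrams become $2$-cartesian over algebraically closed fields, and $\fx^\bJ$ is radicial by lemma \ref{faithfulIII}) essentially bijective on objects, so no information is lost; it will be cleaner to compose all of it and read a $k$-point of $_{K^p}\Marb_{\bT,\gp}$ directly off the two outer $2$-cartesian squares.

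\textbf{Key steps.} (1) By the classical interpretation, a $k$-point of $_{K^p}\gM_{\bP,\gr}^{\{\gf_\pi\}_{\pi\in\Pi}}$ is a $\z_{(p)}$-isogeny class $(Y,\lambda,\iota,\ebar,\{y_\pi\}_{\pi\in\Pi})$ satisfying (i) and (ii) of subsubsection \ref{ready}; forgetting the $p$-adic lattice in $\ebar$ and recording only the $\tilde U^p$-orbit through $\varepsilon^{\infty,p}$ turns $\ebar$ into a full level structure $\eta$ in the sense of definition \ref{LEVEL}, and one checks, using theorems \ref{bekanntIII} and \ref{bekanntIV} together with the comparison isomorphisms $m_{\xi,\mathaccent19 et}^\pi$, that condition (ii) on $\ebar$ is exactly the commutativity requirement on $\eta$ in definition \ref{LEVEL}, and that the $y_\pi$ restrict to homomorphisms $R_\pi\to\End_L^0(\dot\bigotimes_{i\in\pi}Y[\ge_i])$ of type $\tilde V^\pi$ as in (T2). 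Simultaneously one reads off from the universal abelian scheme that $(Y,\lambda,\iota)$ has type $\bigoplus_i\tilde V_i$, i.e. (T1), using the determinant conditions (K1)-(K2). (2) The image of this point in $\prod_\gq\gB_{\f_{p^f}}^{\bR_\gq}$ is, by theorem \ref{exampleV} and the Serre-Tate/Grothendieck-Messing dictionary, precisely the collection $\{\P_{i,\gq}\}$ of (skew-Hermitian, $\z/r_\gq\z$-graded) displays attached to the $Y[\gq^\infty]$, equipped with the $\R_{\gf_\pi,\pi}$-actions $s_\pi$; in crystalline terms this is $\{\d(Y_\xi[\gq^\infty])\}$ with its extra structure. (3) A $(\gG_p,\mu_p)$-display $\P$ over $k$ together with the $2$-isomorphism in the fibre product is the same as: an object $\P$ of $\BI_k(\gG_p,\mu_p)$, plus an identification of the family of realizations $\{\sy^+(\rho_i,\P)\}$ (run through $\fx^{\bJ_\gq}$ and $\fx^{\bd_\gq^+}$, using the natural transformations $\Fx$ of proposition \ref{klein}, and compatibility with $\omega_\sigma$, $\otimes$) with the displays $\{\P_{i,\gq}\}$ respecting the $\R_\pi$-actions. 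Passing to isocrystals, i.e. tensoring with $\q$ and invoking $M_\P(\varrho_i)=\sy_{W(k),F}(\varrho_i,\P)$ of definition \ref{LEVELP}, this is exactly a $L^\Lambda$-linear similitude $\zeba:\q\otimes\bigoplus_i M_\P(\varrho_i)\xrightarrow{\cong}\d^0(Y)$ which is $R_\pi$-linear on each $\pi$-tensor factor, i.e. a $\mu_p$-fake integral structure on $y$ in the sense of definition \ref{LEVELP}; here remark \ref{verlaengert} gives the bookkeeping that turns the display-level datum into the pair $(O,\zeta)$ modulo $\I^{\mu_p}(k)$. (4) Assembling (1)-(3) gives the desired map in both directions; one then checks it is a bijection (each reconstruction step is an equivalence of groupoids, and all the relevant groupoids $\Mot_{\mu_p}^\bT(k)$, $\BI_k(\gG_p,\mu_p)$, ${_{\tilde U}\essbar}(k)$ are discrete), and finally that everything is strictly compatible with the right $G(\a^{\infty,p})$-actions — on the moduli side this action is through $\tilde\varrho^{\infty,p}$ on $\ebar$, on the target side it is the tautological action on $\eta$ in definition \ref{LEVEL}, and these agree by construction of the maps in subsubsection \ref{ready}. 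Functoriality in $k$ is inherited from the functoriality in $k$ of each ingredient: the crystalline realization, the comparison isomorphisms (which are functorial in $k$ by Deligne's absolute Hodge cycle theory as recorded after theorem \ref{bekanntIV}), and the moduli interpretations.

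\textbf{Main obstacle.} The delicate point is step (3), matching the extra-structure part: one must verify that the $R_\pi$-action carried by $\tilde\P^\pi$ (built inside the definition of $\gB^{\bR_\gq}$ via the restricted tensor products and the maps $p^{(R_\Sigma-1)\Card(\pi)}\End_\G(\V^\pi)$) corresponds, under the realization functors $\sy^+$ and the natural transformations $\Fx^{\bj,\{\upsilon_\sigma\}}$ of proposition \ref{klein}, to the action $y_\pi$ on $\dot\bigotimes_{i\in\pi}Y[\ge_i]$, and that the isocrystal-level similitude $\zeba$ one extracts is genuinely integral in the sense that $(O,\zeta)\in\gG(W(k))\times\Iso(y)$ and not merely rational — this is the content hidden in the openness/integrality arguments in the proofs of propositions \ref{properI} and \ref{properII} and in the last sentence of remark \ref{verlaengert}, and it is where property (iv) of definition \ref{exampleIX} and the condition $\R_\pi\subset C+p^{(R_\Sigma-1)\Card(\pi)+1}\End_C(\V^\pi)$ must be used. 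A secondary nuisance is to make the $2$-isomorphisms strictly commute rather than only up to coherent isomorphism, but since all groupoids in sight are discrete this collapses to an equality check.
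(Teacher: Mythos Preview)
The paper does not supply a separate proof of this theorem; it is stated and then immediately used (corollary \ref{gekuerzt}, proposition \ref{FOF}, lemma \ref{Fanta}), the understanding being that it is a bookkeeping exercise in unwinding the layered construction of $\Marb_{\bT,\gp}$. Your plan is exactly the right one and is essentially the implicit argument: pass to infinite level, use that the two squares in subsubsection \ref{go} become $2$-cartesian on algebraically closed fields, that $\fx^{\bJ_\gq}$ and $\fx^{\bd_\gq^+}$ are radicial (lemma \ref{faithfulIII}, theorem \ref{fake}), hence bijective on $k$-points, and then invoke remark \ref{verlaengert} to translate the display-side datum into a pair $(O,\zeta)\in\gG(W(k))\times\Iso(y)$ modulo $\I^{\mu_p}(k)$, i.e.\ a $\mu_p$-fake integral structure. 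Your identification of the delicate point (step (3), and in particular the integrality $O\in\gG(W(k))$ rather than $\gG(K(k))$) is also correct, and your references to propositions \ref{properI}, \ref{properII} and property (iv) of definition \ref{exampleIX} are the right ones.

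Two small corrections to your write-up. First, in step (1) there is no ``$p$-adic lattice in $\ebar$'' to forget: the level structure $\ebar$ in the moduli problem of subsubsection \ref{ready} already lives only on $\a^{\infty,p}\otimes V$ (see item (i) there), and at infinite level one simply takes an element $\eta\in\ebar$ rather than the $K^p$-orbit; this $\eta$ is literally the full level structure of definition \ref{LEVEL}. No composition with $\varepsilon^{\infty,p}$ is needed at this stage --- the isometries $\varepsilon_i^{\infty,p}$ enter only into the forgetful map \eqref{forget} to $_{\tilde U_p}\S$, cf.\ remark \ref{twistingIX}. Second, when you say ``going back'' you should be slightly more careful about why the inverse direction exists: given $(y,\eta,\zeba)$ one must first produce a $\z_{(p)}$-integral representative of $(Y,\lambda,\iota)$ satisfying (K1)--(K2), which uses that $\zeba$ gives a selfdual Dieudonn\'e lattice with the correct determinant condition, and then check that $y_\pi$ lands in $\z_{(p)}\otimes\End_L(Y^\pi)$ for this representative; this is where the condition on $\R_\pi\subset C+p^{(R_\Sigma-1)\Card(\pi)+1}\End_C(\V^\pi)$ and the choice of $\gf_\pi$ quietly enter. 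With these adjustments your sketch is a complete proof.
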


Let $y=(Y,\iota,\lambda,\{y_\pi\}_{\pi\in\Pi})$ be a homogeneously polarized $\q$-isogeny class of $L^\Lambda$-abelian $k$-varieties of type 
$\bT$. Let us write $I_y$ for the $\q$-algebraic group $(\g_m\times\Res_{L^+/\q}I_y^1)/\{\pm1\}$ where $I_y^1$ represents the $\Spec L^+$-functor:
\begin{eqnarray*}
R\mapsto&&\{(\dots,f_i,\dots)\in R^\times\times\prod_{i\in\Lambda}R\otimes_{L^+}\End_L^0(Y[\ge_i])^\times|\\
&&\forall\pi\in\Pi:\,\dot\bigotimes_{i\in\pi}f_i\in R\otimes_{L^+}\End_{R_\pi}^0(\bigotimes_{i\in\pi}Y[\ge_i])^\times\\
&&\forall i\in\Lambda:\,f_i^*f_i=1\}
\end{eqnarray*}
Notice that every choice of full level structure $\eta$ as in definition \ref{LEVEL} furnishes $I_{y,\a^{\infty,p}\otimes L^+}^1$ with a $\a^{\infty,p}\otimes L^+$-rational group 
homomorphism, say $i_{y,\eta}$, to $G_{\a^{\infty,p}\otimes L^+}^1$ and that one has $\Int^{G^1}(\gamma^p/\a^{\infty,p}\otimes L^+)^{-1}\circ i_{y,\eta}=i_{y,\eta\circ\gamma^p}$ for all 
$\gamma^p\in G^1(\a^{\infty,p}\otimes L^+)$. Note also that every choice of $\zeta\in\Iso(y)$ furnishes $I_{y,\q_p}$ with a $\q_p$-rational group homomorphism, say $j_{y,\zeta}$, to 
the twisted centralizer $J_{b_{y,\zeta}}$ in the sense of \eqref{thejack}, and any $\gamma_p\in G(K(k))$ satisfies $\Int^G(\gamma_p/K(k))^{-1}\circ j_{y,\zeta}=j_{y,\zeta\circ\gamma_p}$, 
here observe that $J_{b_{y,\zeta},K(k)}$ is a subgroup of $G_{K(k)}$. The stabilizer $\gI_{y,\zeba}$ in $I_y(\q)$ of some $\zeba\in\Iso^{\mu_p}(y)/\I^{\mu_p}(k)$ 
is a congruence subgroup therein, because it arises from intersecting with the $p$-adically bounded open group $j_{y,\zeta}^{-1}(\I^{\mu_p}(k))\subset I_y(K(k))$. Note also, that 
$$J_{b_{y,\zeta}}(\q_p)\cap\I^{\mu_p}(k)=J_{b_{y,\zeta}}(\q_p)\cap\gG_p(W(k))=:\gJ_{y,\zeba}$$
holds, and this is just the automorphism group of the crystalline realization of $(y,\zeba)$, when regarded as $\B_k(\gG_p)$-object as
$$J_{b_{y,\zeta}}(\q_p)=\{h\in G(K(k))|h^{-1}b_{y,\zeta}{^Fh}=b_{y,\zeta}\}.$$ 
Also, we have $\gI_{y,\zeba}=I_y(\q)\cap j_{y,\zeta}^{-1}(\gJ_{y,\zeba})$ and what we found is:

\begin{cor}
\label{gekuerzt}
If $\xi\in\Marb_{\bT,\gp}(k)$ corresponds to $(y,\eta,\zeba)$ by means of the bijection of theorem 
\ref{uniformizeII}, then the group $i_{y,\eta}(\gI_{y,\zeba})$ agrees with the stabilizer of $\xi$ in $G(\a^{\infty,p})$. 
\end{cor}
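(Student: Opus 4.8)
The statement is essentially a bookkeeping consequence of the explicit bijection in Theorem \ref{uniformizeII} together with the "restriction of the $G(K(k))$-action" description of $\mu_p$-fake integral structures in Remark \ref{verlaengert}. I would start by unwinding what "stabilizer of $\xi$ in $G(\a^{\infty,p})$" means in terms of the sextuple $(Y,\lambda,\iota,\{y_\pi\}_{\pi\in\Pi},\eta,\zeba)$. The $G(\a^{\infty,p})$-action on $\Marb_{\bT,\gp}(k)$ corresponds, under the bijection, to the action on the full level structure $\eta\mapsto\eta\circ\gamma^p$, leaving $(Y,\lambda,\iota,\{y_\pi\},\zeba)$ untouched. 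Hence $\gamma^p\in G(\a^{\infty,p})$ fixes $\xi$ if and only if there is an isomorphism of the underlying data; since $Y$, its polarization, its $\O_L^\Lambda$-action, the $y_\pi$'s and the $\mu_p$-fake integral structure are all literally the same, such an isomorphism is precisely an automorphism $\phi$ of $(Y,\lambda,\iota,\{y_\pi\})$ in the homogeneously polarized $\q$-isogeny category which is compatible with $\zeba$ and carries $\eta$ to $\eta\circ\gamma^p$. The automorphisms of $(Y,\lambda,\iota,\{y_\pi\}_{\pi\in\Pi})$ compatible with nothing else are exactly $I_y(\q)$, by the very definition of $I_y^1$ recalled just before the corollary (the conditions $f_i^*f_i=1$, $\O_L$-linearity, and $R_\pi$-linearity of $\dot\bigotimes_{i\in\pi}f_i$).

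**Key steps.** First I would record that $\phi\in I_y(\q)$ acts on $H_1^{\mathaccent19 et}(Y,\a^{\infty,p})$ and that, via $\eta$, this action is transported to an element $i_{y,\eta}(\phi)\in G^1(\a^{\infty,p}\otimes L^+)\subset G(\a^{\infty,p})$; the compatibility $\Int^{G^1}(\gamma^p)^{-1}\circ i_{y,\eta}=i_{y,\eta\circ\gamma^p}$ (stated in the text) then shows that $\phi\cdot\eta=\eta\circ\gamma^p$ is equivalent to $\gamma^p=i_{y,\eta}(\phi)$. So the image $i_{y,\eta}(\{\phi\in I_y(\q): \phi\text{ fixes }\zeba\})$ is exactly the stabilizer of $\xi$. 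Second, I would identify the set $\{\phi\in I_y(\q):\phi\text{ fixes }\zeba\}$ with $\gI_{y,\zeba}$: by Remark \ref{verlaengert}, $\zeba$ is a point of $\Iso^{\mu_p}(y)/\I^{\mu_p}(k)$, and the $p$-component of $\phi$ acts on it through $j_{y,\zeta}:I_{y,\q_p}\to J_{b_{y,\zeta}}$, so $\phi$ fixes $\zeba$ iff $j_{y,\zeta}(\phi)\in\I^{\mu_p}(k)\cap J_{b_{y,\zeta}}(\q_p)=\gJ_{y,\zeba}$, i.e. iff $\phi\in I_y(\q)\cap j_{y,\zeta}^{-1}(\gJ_{y,\zeba})=\gI_{y,\zeba}$ — which is precisely the displayed identity for $\gI_{y,\zeba}$ in the text. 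Combining the two steps gives $\operatorname{stab}_{G(\a^{\infty,p})}(\xi)=i_{y,\eta}(\gI_{y,\zeba})$.

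**Where the work is.** The two genuinely substantive points are: (a) verifying that a $\q$-isogeny $\phi$ of $Y$ preserving all of $\lambda,\iota,\{y_\pi\}$ and $\zeba$, and inducing on $H_1^{\mathaccent19 et}$ the transported action, really is the only way to produce an equality of two sextuples — this uses that the bijection of Theorem \ref{uniformizeII} is a bijection of $G(\a^{\infty,p})$-sets and that the groupoid $\Mot_{\mu_p}^\bT(k)$ is discrete, so "isomorphic" means "equal after moving by $\phi$"; and (b) the book-keeping that the adelic action of $\phi$ on $\eta$ is $i_{y,\eta}(\phi)$ on the nose, together with the sign/center subtlety that $I_y=(\g_m\times\Res_{L^+/\q}I_y^1)/\{\pm1\}$ while the level structure sees only the $I_y^1$-part — here I would note that the $\g_m$-factor is the similitude factor which acts on $H_1^{\mathaccent19 et}$ by scaling and is absorbed into the already-chosen normalization $\rho_i(w(-1))=-\id$ (condition (U1)), so no extra contribution to the stabilizer arises. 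I expect (a) to be the main obstacle, since it requires being careful about the difference between "the same $\q$-isogeny class with the same extra structure" and "an isomorphism class in $\Mot_{\mu_p}^\bT(k)$"; but given that $\Mot_{\mu_p}^\bT(k)$ was explicitly declared discrete and that $\Iso^{\mu_p}(y)/\I^{\mu_p}(k)$ is a genuine $I_y(\q_p)$-set via $j_{y,\zeta}$, this reduces to the transitivity statements already established in the text and is not deep.
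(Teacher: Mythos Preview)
Your proposal is correct and follows exactly the approach implicit in the paper. In fact the paper gives no separate proof for this corollary at all: the sentence immediately preceding it (``Also, we have $\gI_{y,\zeba}=I_y(\q)\cap j_{y,\zeta}^{-1}(\gJ_{y,\zeba})$ and what we found is:'') together with the definition of $\gI_{y,\zeba}$ as the stabilizer of $\zeba$ in $I_y(\q)$ constitute the entire argument, and your steps (a) and (b) simply make explicit the bookkeeping that the paper leaves to the reader.
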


For any $\gq\in S_p$ we write 
$$\sum_{i\in\Lambda}\zeta_{i,\gq}=\zeta_\gq:K(k)\otimes_{\iota_\gq,L}\bigoplus_{i\in\Lambda}V_i\stackrel{\cong}{\rightarrow}\d^0(Y[\gq^\infty])[\ge_{\iota_\gq}],$$
for the restriction of some $\zeta\in\Iso(y)$ to the $\iota_\gq$-eigenspace, and we write $j_{y,\zeta,\gq}:I_{y,K(k)}^1\rightarrow G_{\gq,K(k)}^1$ 
for the restriction of the previously introduced $j_{y,\zeta,K(k)}$ to the $\iota_\gq$-eigenspace, i.e. the map induced by $\zeta_\gq$.

\begin{prop}
\label{FOF}
Let $y=(Y,\lambda,\iota,\{y_\pi\}_{\pi\in\Pi})$ be a homogeneously polarized $\q$-isogeny class of 
$L^\Lambda$-abelian $k$-varieties of type $\bT$, and assume that there exists at least one full level structure $\eta$. 
\begin{itemize}
\item[(i)]
The image in the abelianization $G^{1ab}$, of the restriction of $i_{y,\eta}$, to the neutral component $I_y^{1\circ}$ is $L^+$-rational 
and independent of $\eta$ (and thus gives rise to a canonical homomorphism $i_y^{ab}:I_y^{1\circ}\rightarrow G^{1ab}$).
\item[(ii)]
For each $i\in\Lambda$, the composition of the character $\chi_{\rho_i}$ with $i_{y,\eta}$ is $L$-rational and independent of $\eta$ (and thus gives rise to a canonical 
class function $\chi_i$ on $I_{y,L}^1$).
\item[(iii)]
For any $\gq\in S_p$ and $\zeta\in\Iso(y)$ we have $\chi_{\rho_i}\circ j_{y,\zeta,\gq}=\chi_i$.
\end{itemize}
\end{prop}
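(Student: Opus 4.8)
The plan is to exploit that the maps $i_{y,\eta}$ and $j_{y,\zeta,\gq}$ are merely the realizations, on the various cohomology theories of $Y$, of one and the same ``motivic'' automorphism group $I_y^1$, and that composing them with a conjugation-invariant construction --- the abelianization $G^1\to G^{1ab}$, or the trace character $\chi_{\rho_i}$ --- produces something that no longer depends on the auxiliary datum and is already rational over $L^+$ (resp. over $L$). First I would settle the independence assertions. For two full level structures $\eta'=\eta\circ\gamma^p$ with $\gamma^p\in G^1(\a^{\infty,p}\otimes L^+)$, so $i_{y,\eta'}=\Int^{G^1}(\gamma^p)^{-1}\circ i_{y,\eta}$ by the formula recorded just before the proposition; composing with $G^1\to G^{1ab}$ annihilates all inner automorphisms, and composing with $\chi_{\rho_i}$ --- a trace, hence a class function --- is conjugation-invariant, so both composites are independent of $\eta$. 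The companion formula $j_{y,\zeta\circ\gamma_p}=\Int^G(\gamma_p)^{-1}\circ j_{y,\zeta}$ disposes of the independence of $\chi_{\rho_i}\circ j_{y,\zeta,\gq}$ of $\zeta$.

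For (ii) and (iii) I would argue directly with traces of endomorphisms. Given $f\in I_y^1$ with $i$-component $f_i\in\End_L^0(Y[\ge_i])$, the $L$-linear trace of $f_i$ on the first homology of $Y[\ge_i]$ lies in $L$ and, by the classical theory of abelian varieties, is computed compatibly in every $\ell$-adic ($\ell\neq p$) and in the crystalline realization. I would then simply define $\chi_i$ on $I_{y,L}^1$ by $f\mapsto\tr(f_i)$; it is $L$-rational and level-independent by construction. Unravelling the definition of $i_{y,\eta}$ (the action on $H_1(Y[\ge_i],\a^{\infty,p})$ transported along $\eta_i$) then identifies $\chi_{\rho_i}\circ i_{y,\eta}$ with the image of $\chi_i$ in $\a^{\infty,p}\otimes L$, which is (ii). Similarly, unravelling the definition of $j_{y,\zeta,\gq}$ (the action on $\d^0(Y[\gq^\infty])[\ge_{\iota_\gq}]$ transported along $\zeta_\gq$) and decomposing the crystalline homology of $Y[\ge_i]$ along the \'etale $\q$-algebra $L\otimes_\q K(k)$ --- legitimate because $p$ is unramified in $\O_L$ and $k$ is algebraically closed --- identifies $\chi_{\rho_i}\circ j_{y,\zeta,\gq}$ with $\chi_i$ after base change along $\iota_\gq$, which is (iii).

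For (i) I would proceed as follows. By the first paragraph the map $(\mathrm{ab})\circ i_{y,\eta}|_{I_y^{1\circ}}$ is a well-defined, $\eta$-independent homomorphism from $I_{y,\a^{\infty,p}\otimes L^+}^{1\circ}$ to $G^{1ab}_{\a^{\infty,p}\otimes L^+}$, and since $I_y^{1\circ}$ is connected reductive (the unit groups $\End_L^0(Y[\ge_i])^\times$ are reductive, and the unitarity and $R_\pi$-linearity conditions cut out reductive subgroups) it factors through the torus quotient $I_y^{1\circ,\mathrm{ab}}$. Thus I must descend a point over $\a^{\infty,p}\otimes L^+$ of the \'etale-locally-constant $L^+$-group scheme $\underline{\Hom}(X^*(G^{1ab}),X^*(I_y^{1\circ,\mathrm{ab}}))$; such a point is a compatible family, indexed by the finite places $v\nmid p$ of $L^+$, of maps of cocharacter lattices carrying the action of $\Gal(\overline{L^+_v}/L^+_v)$, namely those underlying the $\ell$-adic realizations ($\ell=\mathrm{char}\,v$). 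The plan is then to invoke the compatibility of the $\ell$-adic realizations of $Y$ together with its $L^+$-rational endomorphisms, so that all these local lattice maps are the restrictions of a single $\Gal(\overline{L^+}/L^+)$-equivariant one, hence of a single $L^+$-point, which by \'etale descent yields the desired $i_y^{\mathrm{ab}}\colon I_y^{1\circ}\to G^{1ab}$. The main obstacle I expect is precisely this last descent: the ring $\a^{\infty,p}\otimes L^+$ is not of finite type over $L^+$, so ordinary faithfully flat descent does not apply, and one genuinely has to use that the cocharacter of the motivic automorphism group into $G^{1ab}$ is realization-independent --- classical in substance, but the careful bookkeeping over the global Galois group is the delicate point. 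By contrast (ii) and (iii) are soft, since traces of endomorphisms on homology are the prototypical realization-independent, $L$-rational motivic invariants.
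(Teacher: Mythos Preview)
Your treatment of (ii) and (iii) and of the independence assertions is essentially the paper's own: it simply cites the classical fact (Demazure, \emph{Lectures on $p$-divisible groups}, Chapter~V) that the $L$-linear trace of an element of $\End_L^0(Y[\ge_i])$ on first homology is an element of $L$ computed compatibly in every $\ell$-adic and in the crystalline realization, together with the conjugation formulae you quote.

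For (i), however, your descent step is not complete, and the paper's argument is different and more concrete. You reduce to descending an $\a^{\infty,p}\otimes L^+$-point of the \'etale $L^+$-scheme $\underline{\Hom}(I_y^{1\circ,\mathrm{ab}},G^{1ab})$ to an $L^+$-point, and then appeal to ``compatibility of the $\ell$-adic realizations''. But for an \'etale $L^+$-scheme, the existence of a point over every $L^+_v$ with $v\nmid p$ does not by itself produce an $L^+$-point; one needs to know that the underlying maps of character lattices over the various $\overline{L^+_v}$ are literally the same abstract map and hence $\Gal(\overline{L^+}/L^+)$-equivariant. The phrase ``compatibility of realizations'' names this desideratum but does not supply it: the classical compatibility statements concern traces and determinants of endomorphisms, not arbitrary homomorphisms into an abelianized torus.

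The paper sidesteps this by \emph{replacing} $G^{1ab}$ with a concrete receptacle built from determinants. Let $Z$ be the center of $\prod_{i\in\Lambda}R_{\{i\}}$ and let $\omega:G^1\to\Res_{Z/L^+}\g_{m,Z}$ be the character given by the action of $G^1$ on $\det_Z(V)$. One checks that $G^{1der}$ is the neutral component of $\ker(\omega)$, so $\omega$ detects $G^{1ab}$ up to isogeny, and it suffices to prove the $L^+$-rationality and $\eta$-independence of $\omega\circ i_{y,\eta}|_{I_y^{1\circ}}$. But $\omega$ is a determinant of the tautological representation, so this composite is computed by the $Z$-linear determinant of the action of $f\in I_y^1$ on $H_1(Y,-)$ --- exactly the kind of invariant to which the Mumford-type compatibility statements \emph{do} apply. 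In other words, the paper's device is to reduce (i) to the same flavour of argument that already handles (ii) and (iii), rather than attempting an abstract descent of a torus homomorphism from the finite ad\`eles.
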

\begin{proof}
Parts (ii) and (iii) are folklore, cf. \cite[chapter V]{demazure}. Towards proving (i) we let $Z$ be the center of $\prod_{i\in\Lambda}R_{\{i\}}$, 
and we let $\omega:G^1\rightarrow\Res_{Z/L^+}\g_{m,Z}$ be the homomorphism induced by the natural action of $G^1$ on the determinant 
$\det_Z(V)\in\Pic(\Spec Z)$ of $V$ (as a $\ect(\Spec Z)$-object). It is easy to see that $G^{1der}$ is the neutral component of $\ker(\omega)$, so 
that it suffices to check (i) for the composition $\omega\circ i_{y,\zeta}|_{I_y^{1\circ}}$, which follows again from the aforementioned methods (of Mumford).
\end{proof}

\subsection{Existence of elliptic points}

Let $K$ be a finite Galois extension of $\q$, and let $A$ be a finitely generated torsionfree abelian group with a left $\Gal(K/\q)$-action. Recall the global Tate-Nakayama isomorphism:
$$\hat H^{-1}(\Gal(K/\q),A)\rightarrow H^1(\Gal(K/\q),A\otimes C_K),$$
where $C_K$ is the id\'ele group of $K$. Now, let $r$ be a place of $\q$, and let $v|r$ be a place of $K$. Then we also have a local Tate-Nakayama isomorphism:
$$\hat H^{-1}(\Gal(K_v/\q_r),A)\rightarrow H^1(\Gal(K_v/\q_r),A\otimes K_v^\times),$$
and in either case one easily computes the left hand side Tate cohomology groups as the torsion subgroups of the groups of Galois coinvariants of 
$A$, because $A$ is torsionfree. In order to describe the relation between the global and local isomorphisms we must choose one single place of 
$K$ over each of the places of $\q$, and we write $S$ for the set of places obtained in that manner: Then there is a natural commutative diagram:
$$\begin{CD}
\bigoplus_{v\in S}(A_{\Gal(K_v/\q_r)})_{tors}@>{TN}>>\bigoplus_{v\in S}H^1(\Gal(K_v/\q_r),A\otimes K_v^\times)\\
@VVV@VVV\\
(A_{\Gal(K/\q)})_{tors}@>{TN}>>H^1(\Gal(K/\q),A\otimes C_K)
\end{CD},$$
where the vertical arrow on the left is defined by the summation over $S$, while the $v$-component of the vertical arrow on the right is 
defined by the Shapiro isomorphism between $H^1(\Gal(K_v/\q_r),A\otimes K_v^\times)$ and $H^1(\Gal(K/\q),A\otimes(K\otimes\q_r)^\times)$, 
followed by a map induced from the inclusions $(K\otimes\q_r)^\times\subset C_K$. Furthermore the long exact cohomology sequence to 
$$1\rightarrow A\otimes K^\times\rightarrow A\otimes(K\otimes\a)^\times\rightarrow A\otimes C_K\rightarrow1,$$ 
implies that every element in the kernel of the left hand-side vertical arrow maps to an element coming from $H^1(\Gal(K/\q),A\otimes K^\times)$. We are going to use this 
in the following way: Fix a finite prime $p$, and suppose $\mu$ is an element of $A$ for which $\n_{K_v/\q_p}(\mu)$ and $\n_{K_v/\r}(\mu)$ are both trivial (where $v$ 
indicates the divisor in $S$ of $p$ or $\infty$). Now consider the element $\mub\in\bigoplus_{v\in S}(A_{\Gal(K_v/\q_r)})_{tors}$ of which the $v$-component is given by: 
$$\mub_v=\begin{cases}[\mu]_\infty&v|\infty\\-[\mu]_p&v|p\\0&\mbox{ otherwise}\end{cases},$$
where $[\mu]_r$ denotes the class of $\mu$ in the group of coinvariants $(A_{\Gal(K_v/\q_r)})_{tors}$. Let us say that some 
$c\in H^1(\Gal(K/\q),A\otimes K^\times)$ is a Tate-Nakayama class for $\mu$ if the product of the local Tate-Nakayama isomorphisms send $\mub$ to the 
image of $c$ in $\bigoplus_{v\in S}H^1(\Gal(K_v/\q_r),A\otimes K_v^\times)$. Finally, let us point out two consequences from Galois descent: First giving a 
$\Gal(K/\q)$-module $A$ is equivalent to giving a $\q$-torus $T$ splitting over $K$, second $H^1(\Gal(K/\q),A\otimes K^\times)$ classifies isomorphism classes 
of principal homogeneous spaces for $T$ over $\Spec\q$ becoming trivial over $\Spec K$. More specifically, when fixing a $K$-valued point $\eta$ on some 
$P\in\ton(T)(\q)$, then $d_\eta(s)=\eta^{-1}{^s}\eta$ is a derivation on $\Gal(K/\q)$ with coefficients in $T(K)=A\otimes K^\times$, and vice versa. We say that 
$P\in\ton(T)(\q)$ is a principal homogeneous space of type $\mu$ if its cohomology class with respect to some choice of $K$ is a Tate-Nakayama class for $\mu$. 
For any CM-algebra $H$, we write $H^1$ for the $\q$-torus whose group of $K$-valued points are given by $\{\gamma\in H\otimes K|\gamma\gamma^*=1\}$ 
and $H^+=\{x\in H\mid\,x^*=x\}$, where $*$ is complex conjugation. We are now going to describe a certain class of tori which satisfy the Hasse-principle: 

\begin{lem}
\label{parityII}
Consider a CM algebra $H=\prod_{i=1}^nH_i$, where $H_1$, ..., $H_n$ are CM fields. Assume that $L\subset H$ is a CM subfield, 
such that $L^+$ possesses a place which is inert in each $H_1^+$, ..., $H_n^+$ and in $L$. Then $\sha(H^1/L^1)$ is trivial.
\end{lem}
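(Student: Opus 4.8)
The plan is to compute $\sha(H^1/L^1)$ via the Tate--Nakayama description set up in the preceding discussion, i.e.\ by realizing $H^1$ (and the relevant norm-one tori) as $\q$-tori split over a finite Galois extension $K/\q$ containing $H$ (and hence $L$), and identifying $\sha$ as the kernel of the localization map on $H^1(\Gal(K/\q),A\otimes K^\times)$ for the cocharacter lattice $A=X_*(H^1)$. The key combinatorial input is the hypothesis that $L^+$ has a place $w_0$ which is inert in each $H_i^+$ and in $L$; I would pick the place of $K$ over $w_0$ to belong to the chosen set $S$ of places, so that the decomposition group $\Gal(K_{v_0}/\q_p)$ (where $p$ is the rational prime below $w_0$) is as large as possible — in particular it surjects onto enough of $\Gal(H/\q)$, resp.\ $\Gal(H_i/\q)$, to control the coinvariants.

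First I would recall the standard exact sequence $1\to H^1\to \Res_{H/\q}\g_m \xrightarrow{\;\mathrm{Nm}\;} \Res_{H^+/\q}\g_m$ and the description of $X_*(H^1)$ as the kernel of the norm on the permutation module $X_*(\Res_{H/\q}\g_m)=\z[\Hom(H,K)]$; since $H=\prod H_i$ this splits as a direct sum over $i$, so it suffices to treat each CM field $H_i$ separately. Then, by Tate--Nakayama and Poitou--Tate duality (or directly via the argument already sketched in the excerpt around the exact sequence $1\to A\otimes K^\times\to A\otimes(K\otimes\a)^\times\to A\otimes C_K\to1$), $\sha(H_i^1/L^1_{(i)})$ is dual to the kernel of the natural map on $H^1$ with coefficients in the character lattice, which by the local--global compatibility diagram is governed by whether the global coinvariants inject into the sum of the local ones. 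The crucial point is that at the distinguished inert place the decomposition group acts on $\Hom(H_i,K)$ transitively enough (because $w_0$ is inert in $H_i^+$, the decomposition group contains the element realizing complex conjugation composed with a generator covering $\Gal(H_i^+/\q)$ through $w_0$) that $\bigl(X_*(H_i^1)_{\Gal(K_{v_0}/\q_p)}\bigr)_{\mathrm{tors}}$ already surjects onto $\bigl(X_*(H_i^1)_{\Gal(K/\q)}\bigr)_{\mathrm{tors}}$; hence the left vertical map in that commutative diagram is surjective, forcing the kernel controlling $\sha$ to vanish.

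Concretely, I would argue as follows. The torsion in the Galois coinvariants of the norm-one lattice of a CM field $H_i$ is a cyclic group of order $2$ (generated by the class of a difference $\iota - \iota\circ *$ of a CM type), and it maps isomorphically to the corresponding local contribution at $w_0$ precisely because $*$ lies in the decomposition group there (inertness of $w_0$ in $H_i^+$ forces $*$ into $\Gal(H_{i,v_0}/H^+_{i,w_0})$, while inertness in $L$ ensures the place behaves uniformly across the factors and the diagonal $L^1$ is not an obstruction). Thus each local-to-global map on these $\z/2$'s is an isomorphism, the summation map $\bigoplus_{v\in S}(\cdots)_{\mathrm{tors}}\to (\cdots)_{\mathrm{tors}}$ is (split) surjective with the $w_0$-component already surjective, and so — chasing the commutative square with the two Tate--Nakayama isomorphisms and using that every class in the kernel of the left vertical arrow comes from $H^1(\Gal(K/\q),A\otimes K^\times)$, i.e.\ is a potential $\sha$ class, but must then also be locally trivial hence in the image of a coinvariant that maps to $0$ — we conclude every element of $\sha(H^1/L^1)$ is trivial.

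\textbf{Main obstacle.}
The routine parts are the identification of $X_*(H^1)$ and the computation that its Galois coinvariants have torsion $\z/2$; the delicate point I expect to spend real care on is the bookkeeping with the subtorus $L^1\subset H^1$ — one is computing $\sha$ of the quotient, or rather of $H^1$ relative to $L^1$, not of $H^1$ alone — and making sure the distinguished place $w_0$ of $L^+$ controls the cohomology of the quotient lattice $X_*(H^1)/X_*(L^1)$ simultaneously for all $i$. I would handle this by passing to the long exact sequence associated to $1\to L^1\to H^1\to H^1/L^1\to 1$ and checking that the connecting maps and the local restrictions at $w_0$ are compatible, using inertness of $w_0$ in $L$ to guarantee that $\Gal(K_{v_0}/\q_p)$ acts on the $L$-part through the full $\Gal(L/\q)$. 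Everything else should follow formally from the global/local Tate--Nakayama compatibility diagram recalled in the text.
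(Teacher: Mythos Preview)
Your plan has a genuine gap. The reduction ``it suffices to treat each CM field $H_i$ separately'' does not work for the torus $H^1/L^1$: the diagonal copy of $L^1$ ties the factors together, so $X_*(H^1/L^1)$ is \emph{not} a direct sum of the $X_*(H_i^1/L^1)$, and there is no such splitting of the problem. You recognize this yourself in the ``Main obstacle'' paragraph, but that means the main body of your plan is computing the wrong thing --- essentially $\sha(H^1)$, which is trivially zero for norm-one CM tori and uses no hypothesis at all. The Tate--Nakayama diagram recalled just before the lemma is there to \emph{construct} classes with prescribed local components, not to compute $\sha$; the torsion in $X_*(H_i^1)_{\Gal}$ (i.e.\ $\hat H^{-1}$) is not the invariant governing $\sha^1$, and surjectivity of a single local coinvariant map onto the global one does not by itself force $\sha(H^1/L^1)=0$.

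The paper's proof does start from the long exact sequence you mention at the end, but its content is an explicit parity computation you have not supplied. One identifies $H^1(\q,L^1)$ with $\ker(\Br(L^+)\to\Br(L))$ and $H^2(\q,L^1)$ with $\ker(\Br(L)\to\Br(L^+))$; the latter satisfies the Hasse principle, so any locally trivial class in $H^1(\q,H^1/L^1)$ lifts to some $c\in H^1(\q,H^1)$. Both $H^1(\q,L^1)$ and $H^1(\q,H^1)$ are then described by families of elements of $\z/2\z$ indexed by inert places (of $L^+$, resp.\ of the $H_i^+$) subject to a global parity constraint, and the map between them is multiplication by the local degrees $[H_{i,v}^+:L_r^+]$. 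The argument finishes by a case split: if some $[H_i^+:L^+]$ is odd, the parity of the lifting family $\{x_r\}$ is forced to be even automatically; if all $[H_i^+:L^+]$ are even, one uses the distinguished place $r_0$ --- the point being that there is a \emph{unique} place $v_i$ of $H_i^+$ above $r_0$ with $[H_{i,v_i}^+:L_{r_0}^+]=[H_i^+:L^+]\equiv 0\pmod 2$, so $x_{r_0}$ can be altered freely without affecting $\{y_{i,v}\}$, which lets one correct the global parity. Your sketch never isolates this dichotomy or the role of the local degrees, and that is where the hypothesis on $r_0$ actually enters.
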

\begin{proof}
Observe that the exact sequence $1\rightarrow L^1\rightarrow H^1\rightarrow H^1/L^1\rightarrow1$ induces a long exact cohomology sequence:
\begin{equation}
\label{parityIII}
H^1(\q,L^1)\rightarrow H^1(\q,H^1)\rightarrow H^1(\q,H^1/L^1)\rightarrow H^2(\q,L^1)
\end{equation}
Let us write $\l$ (resp. $\l^1$) for the $L^+$-torus $\Res_{L/L^+}\g_{m,L}$ (resp. for the kernel of $\l\rightarrow\g_{m,L^+}$). Moreover, consider the short 
exact sequence $1\rightarrow\l^1\rightarrow\l\rightarrow\g_{m,L^+}\rightarrow1$ (resp. $1\rightarrow\g_{m,L^+}\rightarrow\l\rightarrow\l^1\rightarrow1$). The 
associated long exact cohomology sequence yields an isomorphism $H^2(\q,L^1)\cong H^2(L^+,\l^1)\cong\ker(\Br(L)\stackrel{\core}{\rightarrow}\Br(L^+))$ (resp. 
$H^1(\q,L^1)\cong H^1(L^+,\l^1)\cong\ker(\Br(L^+)\stackrel{\rest}{\rightarrow}\Br(L))$). It follows that $H^2(\q,L^1)$ is isomorphic to a direct sum of copies of $\q/\z$ indexed 
by the set of places of $L^+$ which are split in $L$. Moreover, $H^1(\q,L^1)$ can be described by families $x_r\in\z/2\z$, indexed by the set of $L^+$-places $r$ which are 
inert in $L$, subject to $\infty>\Card(\{r\mid\,x_r\neq0\})\equiv0\pmod2$ while the canonical map from $H^1(\q,L^1)$ to $H^1(\q,H^1)$ can be recovered from the assignment
\begin{equation}
\label{parityI}
y_{i,v}=[H_{i,v}^+:L_r^+]x_r,
\end{equation}
where $i$ runs through $\{1,\dots,n\}$ and $v\mid r$ runs through the places of $H_i^+$ which are inert in $H_i$. We are now in the position to prove the lemma. The 
exactness of \eqref{parityIII} together with the aforementioned description of $H^2(\q,L^1)$ reveals that some locally trivial element of $H^1(\q,H^1/L^1)$ can be lifted 
to an element $c\in H^1(\q,H^1)$, which in turn is described by a family $y_{i,v}\in\z/2\z$ satisfying $\infty>\Card(\{v\mid\,y_{i,v}\neq0\})\equiv0\pmod2$ for each $i$. Since 
$c$ maps to a locally trivial element in $H^1(\q,H^1/L^1)$ we deduce the existence of a family $x_r\in\z/2\z$ satisfying \eqref{parityI} and $\infty>\Card(\{r\mid\,x_r\neq0\})$. 
If one of the degrees $[H_i^+:L^+]$ is odd we have $\sum_rx_r=\sum_vy_{i,v}=0$ because of $[H_i^+:L^+]\equiv\sum_{v\mid\,r}[H_{i,v}^+:L_r^+]\pmod2$, here notice 
that $[H_{i,v}^+:L_r^+]\equiv0\pmod2$ holds whenever a split place $v$ of $H_i^+$ lies over an inert place $r$ of $L^+$. It follows that $c$ is contained in the image of 
$H^1(\q,L^1)$ and we are done. If all of the degrees $[H_i^+:L^+]$ are even we do have to use our assumption on the existence of an inert place $r_0$ of $L^+$ allowing 
one and only one place $v_i$ of $H_i^+$ lying over it. In this case we may replace $x_{r_0}$ by $\sum_{r\neq r_0}x_r$ without changing the family $y_{i,v}$, due to 
$[H_{v_i}^+:L_{r_0}^+]=[H_i^+:L^+]\equiv0\pmod2$. Again we found a family satisfying \eqref{parityI} and $\infty>\Card(\{r\mid\,x_r\neq0\})\equiv0\pmod2$ and we are done.
\end{proof}

\begin{lem}
\label{parityIV}
Let $H$ be a CM algebra and let $L\subset H$ be a CM field:
$$(H^1/L^1)(\a^\infty)=(H^1/L^1)(\q)H^1(\a^\infty)/L^1(\a^\infty)$$
\end{lem}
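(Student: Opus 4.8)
The statement to prove is the surjectivity-type assertion
$$(H^1/L^1)(\a^\infty)=(H^1/L^1)(\q)\,H^1(\a^\infty)/L^1(\a^\infty),$$
i.e. that the quotient torus $T:=H^1/L^1$ has the property that every adelic point is the product of a rational point and an adelic point coming from $H^1$. First I would translate this into the vanishing of an appropriate cohomological obstruction. The exact sequence $1\to L^1\to H^1\to T\to 1$ of $\q$-tori gives, for every place (including the archimedean one) and adelically, long exact sequences; chasing the map $H^1(\a^\infty)\to T(\a^\infty)$ together with the connecting map $T(\a^\infty)\to H^1(\a^\infty,L^1)$ shows that the image of $H^1(\a^\infty)$ in $T(\a^\infty)$ is exactly the kernel of $T(\a^\infty)\to H^1(\a^\infty,L^1)$. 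So the displayed equality is equivalent to the statement that the composite
$$T(\q)\longrightarrow T(\a^\infty)\longrightarrow H^1(\a^\infty,L^1)$$
is surjective onto the subgroup of $H^1(\a^\infty,L^1)$ which is hit by $T(\a^\infty)$ (equivalently: every element of $T(\a^\infty)$ has, after multiplication by a suitable $T(\q)$, trivial image in $H^1(\a^\infty,L^1)$, and hence lifts to $H^1(\a^\infty)$).

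Next I would make the obstruction concrete. By the long exact sequence in Galois cohomology, the image of $T(\q)\to H^1(\q,L^1)$ is the kernel of $H^1(\q,L^1)\to H^1(\q,H^1)$, and there is the usual commutative square relating $H^1(\q,L^1)$, $H^1(\q,H^1)$ with their adelic counterparts $H^1(\a^\infty,L^1)$, $H^1(\a^\infty,H^1)$ (the $\infty$-component must be included or handled separately, but $H^1(\r,L^1)$, $H^1(\r,H^1)$ are finite $2$-groups and contribute nothing new beyond parity). The key point is then: given $t\in T(\a^\infty)$, let $c\in H^1(\a^\infty,L^1)$ be its boundary; since $t$ is adelic, $c$ is locally trivial away from finitely many places and its image in $H^1(\a^\infty,H^1)$ is trivial (because $t$ does lift locally everywhere — over a local field a torus-quotient map is surjective on points up to the Brauer obstruction, which for $L^1\to H^1$ is controlled exactly as in the proof of lemma \ref{parityII}). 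I would invoke the explicit description of $H^1(\q,L^1)$, $H^1(\q,H^1)$ via families $x_r\in\z/2\z$ and $y_{i,v}\in\z/2\z$ and the formula $y_{i,v}=[H_{i,v}^+:L_r^+]x_r$ exactly as in the proof of lemma \ref{parityII}; the arithmetic there shows that a globally-compatible family realizing a given locally trivial class in $H^1(\a^\infty,H^1/L^1)$ can be lifted to $H^1(\q,L^1)$, and this is precisely what forces $c$ to come from a global class, hence to be killed after twisting by an element of $T(\q)$.

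Concretely, the proof would run: (1) reduce, as above, to showing every $t\in T(\a^\infty)$ can be adjusted by $T(\q)$ so that its boundary in $H^1(\a^\infty,L^1)$ vanishes; (2) observe that this boundary $c$ is a locally trivial element of $H^1(\a^\infty,H^1/L^1)$ after pushing forward — no, more precisely $c$ already lives in $H^1(\a^\infty,L^1)$ and maps to $0$ in $H^1(\a^\infty,H^1)$, so $c$ lies in the image of $T(\a^\infty)$; (3) use the commutativity of the global-vs-local square and the long exact sequence $1\to H^1\otimes K^\times\to H^1\otimes(K\otimes\a)^\times\to H^1\otimes C_K\to1$ (as in the paragraph preceding lemma \ref{parityII}) to see that any locally-everywhere-trivial obstruction in $H^1(\q,H^1/L^1)$ lifts to a global class in $H^1(\q,H^1)$ described by a family $y_{i,v}$ with even support; (4) apply the numerical identity $y_{i,v}=[H_{i,v}^+:L_r^+]x_r$ and the parity bookkeeping from lemma \ref{parityII}'s proof to produce the requisite $x_r$-family, hence a global class in $H^1(\q,L^1)$ mapping to our $c$; (5) conclude that $t$ times the corresponding $T(\q)$-element lies in the image of $H^1(\a^\infty)$, which is the desired equality.

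\textbf{Main obstacle.} The delicate point is step (3)–(4): controlling whether a locally trivial element of $H^1(\q,H^1/L^1)$ actually lifts to a \emph{global} class in $H^1(\q,L^1)$, rather than merely to one in $H^1(\q,H^1)$ or to an adelic class. Here one genuinely needs the connecting map into $H^2(\q,L^1)$ and its identification with $\ker(\Br(L)\xrightarrow{\core}\Br(L^+))$, exactly as extracted in lemma \ref{parityII} — so in practice this lemma is best proved by repeating and slightly re-packaging the cohomological computation already carried out for $\sha(H^1/L^1)$ in lemma \ref{parityII}, now tracking the $\a^\infty$-points instead of the Tate–Shafarevich group. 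The archimedean place needs a line of separate (but trivial) attention since it is present in $(H^1/L^1)(\q)$-adjustments only through its effect on parities, and one should check it does not obstruct; I expect that to be routine given the $2$-torsion structure.
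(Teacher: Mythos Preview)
Your approach is genuinely different from the paper's, and the plan has a real gap at exactly the point you flag as the ``main obstacle''. You stay inside the short exact sequence $1\to L^1\to H^1\to T\to 1$ and reduce to showing that the boundary class $\partial(t)\in H^1(\a^\infty,L^1)$ lifts to a global class lying in $\ker\bigl(H^1(\q,L^1)\to H^1(\q,H^1)\bigr)$, proposing to supply this lift by ``repeating and slightly re-packaging'' the parity bookkeeping of lemma~\ref{parityII}. But that bookkeeping uses essentially the extra hypothesis that $L^+$ has a place inert in every $H_i^+$ and in $L$; lemma~\ref{parityIV} carries no such hypothesis. Without it, the archimedean parity obstruction in your step~(4) need not vanish: a finite-adelic class $c=(x_r)$ with $\sum_{r\text{ finite}}x_r$ odd forces any global lift $c'\in H^1(\q,L^1)$ to be nontrivial at some real place, and since $[H_{i,v}^+:L_r^+]=[\r:\r]=1$ there, the formula $y_{i,v}=[H_{i,v}^+:L_r^+]x_r$ makes the image of $c'$ in $H^1(\q,H^1)$ nontrivial. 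So the global lift landing in the kernel need not exist, and your argument cannot be closed by importing lemma~\ref{parityII}.

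The paper proceeds quite differently: it never touches $H^1(\cdot,L^1)$. It embeds $T=H^1/L^1$ into the larger quotient $H^\times/L^\times$ and lifts $x$ to $\tilde x\in(H\otimes\a^\infty)^\times$ by Hilbert~90 for $\Res_{L/\q}\g_m$, where the obstruction group vanishes outright. One then has $\tilde x\tilde x^*\in(L^+\otimes\a^\infty)^\times$, and the global input is the index-$2$ fact
\[
(L^+\otimes\a^\infty)^\times=\n_{L/L^+}\bigl((L\otimes\a^\infty)^\times\bigr)\,L^{+\times}
\]
from class field theory for $L/L^+$ (the missing parity being absorbed by the archimedean component, since $(L^+\otimes\r)^\times\not\subset\n_{L/L^+}(L\otimes\r)^\times$). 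This lets one arrange $\tilde x\tilde x^*=\lambda\in L^{+\times}$, after which the Hasse norm theorem for the quadratic extensions $H_i/H_i^+$ produces $h\in H^\times$ with $hh^*=\lambda$, giving the required $y\in T(\q)$. The decisive idea you are missing is this passage from $L^1$ to $L^\times$: replacing the norm-$1$ torus by the full Weil restriction kills the first cohomology and with it the archimedean parity bookkeeping that obstructs your route.
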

\begin{proof}
Let $x$ be an $\a^\infty$-valued element of $H^1/L^1$. Its image in the larger group $H^\times/L^\times$ 
allows a lift $\tilde x\in(H\otimes\a^\infty)^\times$, according to Hilbert 90. Observe that we have
$$(L^+\otimes\a^\infty)^\times=\n_{L/L^+}(L\otimes\a^\infty)L^{+\;\times},$$
which is due to $[(L^+\otimes\a)^\times/\n_{L/L^+}(L\otimes\a)^\times:L^{+\;\times}/\n_{L/L^+}L^\times]=2$, while $(L^+\otimes\r)^\times$ is not contained in 
$\n_{L/L^+}(L\otimes\r)^\times$. Therefore we may assume that $\tilde x\tilde x^*\in L^{+\;\times}$ upon multiplying $\tilde x$ with a suitable element in
$(L\otimes\a^\infty)^\times$. However, this implies that the totally positive element $\tilde x\tilde x^*$ lies in the image of $\n_{H/H^+}:H^\times\rightarrow H^{+\;\times}$, 
as this does hold locally. This results in a $\q$-valued element $y$ of $H^1/L^1$ such that $xy^{-1}$ allows a lift to $H^1(\a^\infty)$
\end{proof}

\subsubsection{Isocrystals of CM-type}
Let us introduce $F$-isocrystals in the generality we need: An $F$-isocrystal is a pair $(M,F)$ where $M\in\Ob_{\ect(\Spec K(\fc))}$ and $F:M\rightarrow M$ 
is a $F$-linear bijection. The class of $F$-isocrystals forms a $\q_p$-linear tannakian category which we denote by $\cry$. It possesses a natural 
$K(\fc)$-valued fiber functor $\omega^{K(\fc)}:\cry\rightarrow\ect(\Spec K(\fc));(M,F)\mapsto M$. Notice the fully faithful $\otimes$-functor:
\begin{equation}
\label{reizIV}
\ect(\Spec\q_p)\rightarrow\cry;V\mapsto K(\fc)\otimes_{\q_p}V
\end{equation}
of which the essential image consists of the $F$-isocrystals of slope $0$. As a consequence of Steinberg's theorem Rapoport and Richartz identify $B(G)$ 
with the set of isomorphism classes of $F$-isocrystal with $G$-structure, i.e. faithful exact $\q_p$-linear $\otimes$-functors from the representation category 
$\bRep_0(G)$ to $\cry$ (cf. \cite[Definition 3.3/Remark 3.4(i)]{rapoport}). In particular the twisted fiber functor $\omega_P$ of any locally trivial principal homogeneous 
space $P$ under $G$ over $\q_p$ determines an element of $B(G)$, moreover there is a commutative diagram (cf. \cite[Theorem 1.15]{rapoport}):
$$\begin{CD}
\pi_1(G)_{\Gal(\q_p^{ac}/\q_p)}@<{\gamma_G}<<B(G)\\
@AAA@AAA\\
(\pi_1(G)_{\Gal(\q_p^{ac}/\q_p)})_{tors}@>>>H^1(\Gal(\q_p^{ac}/\q_p),G(\q_p^{ac}))\\
\end{CD}$$
(here we think of $H^1(\Gal(\q_p^{ac}/\q_p),G(\q_p^{ac}))$ as the set of isomorphism classes of the groupoid $\ton(G)(\q)$). In the special 
case of a $\q_p$-torus $T$ the algebraic fundamental group $\pi_1(T)$ is simply the group of cocharacters while the lower horizontal arrow 
is simply the local Tate-Nakayama isomorphism, furthermore: In the toric case the map $\gamma_T:B(T)\rightarrow X_*(T)_{\Gal(\q_p^{ac}/\q_p)}$ is a bijection.

\subsubsection{Endomorphism algebras of $\mod p$ reductions}
\label{gausssum}
Let $H$ be a CM algebra acting faithfully on an isogeny class of complex abelian $\frac{[H:\q]}2$-folds, and let $\lambda:Y\rightarrow\check Y$ be a polarization whose Rosati 
involution stabilizes $H$. One associates a canonical cocharacter $$\mu\in X_*((\g_m\times H^1)/\{\pm1\})$$ to this situation. Let us write $\nu$ for the projection of $\mu$ onto 
the space of $\Gal(\q_p^{ac}/\q_p)$-invariants of $\q\otimes X_*((\g_m\times H^1)/\{\pm1\})$. We are interested in the $\mod p$-reduction of $Y$ together with these additional 
structures, and for that purpose we fix an embedding $K(\fc)\hookrightarrow\c$. It is known that $Y$ possesses good models over sufficiently large rings of algebraic integers, 
and hence its reduction $\Ybar/\fc$ is a well-defined isogeny class of abelian $\frac{[H:\q]}2$-folds over $\fc$. As $\End^0(Y)\subset\End^0(\Ybar)$, the reduction is of CM-type 
as well, and it canonically inherits a polarization $\lambda:\Ybar\rightarrow\check\Ybar$ too. From now on we assume that there exists a CM-field $L\subset H$, such that:
\begin{itemize}
\item[(i)]
$\nu$ is contained in $\q\otimes X_*((\g_m\times L^1)/\{\pm1\})$
\item[(ii)]
$H$ is unramified at $p$ and $\sha(H^1/L^1)$ is trivial.
\end{itemize}
The central theme of this paragraph is the study of the involutive $*$-algebra $\End_L^0(\Ybar)$, under the assumptions (i) and (ii). 
Let $\mu'$ be the image of $\mu$ in the quotient $H^1/L^1$. To prepare the setting of the result we need to fix more choices:
\begin{itemize}
\item
Let $P\in\ton(H^1/L^1)(\q)$ be of type $\mu'$.
\item
Let $M:\bRep_0((\g_m\times H^1)/\{\pm1\})\rightarrow\cry$ be a $F$-isocrystal with $(\g_m\times H^1)/\{\pm1\}$-structure 
whose invariant $\gamma(M)$ agrees with $\mu^{-1}$ as an element of $X_*((\g_m\times H^1)/\{\pm1\})_{\Gal(\q_p^{ac}/\q_p)}$. 
\item
Let $\sigma_p:M|_{\bRep_0(H^1/L^1)}\rightarrow K(\fc)\otimes\omega_P$ be a $\q_p$-linear $\otimes$-isomorphism, 
where the target on the right is the functor defined by composing $\q_p\otimes\omega_P$ with \eqref{reizIV}.
\item
Let $\sigma^{\infty,p}$ be a trivialization of $P$ over $\a^{\infty,p}$.
\end{itemize}
The existence of $\sigma_p$ is due to the assumption (i).

\begin{rem}
\label{weak} 
The assumptions above imply that the group of $\q_p$-valued points of $H^1/L^1$ agrees with $H^1(\q_p)/L^1(\q_p)$. Using the 
reductive $\z_{(p)}$-models $\gH^1$ and $\gL^1$ for $H^1$ and $L^1$ this can be seen as follows: Lang's trick gives the exactness of 
$$1\rightarrow\gL^1(\f_p)\rightarrow\gH^1(\f_p)\rightarrow\gH^1/\gL^1(\f_p)\rightarrow1,$$ 
given that the special fiber of $\gL^1$ is connected. We deduce the surjectivity of $\gH^1(\z_p)\rightarrow\gH^1/\gL^1(\z_p)$, since all involved groups are smooth. At last 
observe that $\gH^1/\gL^1(\z_p)$ agrees with the $\q_p$-valued points of the $\q_p$-anisotropic group $H^1/L^1$, which one might deduce from the Satake isomorphism.
\end{rem}

\begin{lem}
\label{reizV}
Let $L\subset H\subset\End^0(Y)$ satisfy the above properties (i) and (ii), where $Y$ is an isogeny class of complex abelian $\frac{[H:\q]}2$-folds, 
equipped with a polarization $\lambda$, and let $(P,M,\sigma_p,\sigma^{\infty,p})$ be as above. Then there exists a triple of isomorphisms:
\begin{eqnarray*}
&&\eta^{(0)}:\omega_P(\End_L^0(Y(\c)))\rightarrow\End_L^0(\Ybar)\\
&&\eta_p:M(H_1(Y(\c),\q))\rightarrow\d^0(\Ybar)\\
&&\eta^{\infty,p}:H_1(Y(\c),\a^{\infty,p})\rightarrow H_1^{\mathaccent19 et}(\Ybar,\a^{\infty,p})
\end{eqnarray*}
such that the following properties are fullfilled:
\begin{itemize}
\item[(i)]
The map $\eta_p$ (resp. $\eta^{\infty,p}$) preserves the $H$-action and the Weil-pairing up to a factor in $\q_p\backslash\{0\}$ (resp. $\a^{\infty,p\;\times}$)
\item[(ii)]
The diagram
$$\begin{CD}
\omega_P(\End_L^0(Y(\c)))@>{\eta^{(0)}}>>\End_L^0(\Ybar)\\
@AAA@AAA\\
H@>>>\End_L^0(Y)
\end{CD}$$
is commutative, and $\eta^{(0)}$ preserves $*$. 
\item[(iii)]
The diagram
$$\begin{CD}
\End_L^0(\Ybar)\otimes\d^0(\Ybar)@>>>\d^0(\Ybar)\\
@A{\eta^{(0)}\otimes\eta_p}AA@A{\eta_p}AA\\
\omega_P(\End_L^0(Y(\c)))\otimes M(H_1(Y(\c),\q))@>>>M(H_1(Y(\c),\q))
\end{CD}$$
is commutative, where the lower horizontal arrow is defined by using the canonical isomorphism $K(\fc)\otimes\omega_P(\End_L^0(Y(\c)))\cong M(\End_L^0(Y(\c)))$ 
arising from the evaluation of $\sigma_p$ on the $\bRep_0(H^1/L^1)$-object $\End_L^0(Y(\c))$.
\item[(iv)]
The diagram
$$\begin{CD}
\End_L^0(\Ybar)\otimes H_1^{\mathaccent19 et}(\Ybar,\a^{\infty,p})@>>>H_1^{\mathaccent19 et}(\Ybar,\a^{\infty,p})\\
@A{\eta^{(0)}\otimes\eta^{\infty,p}}AA@A{\eta^{\infty,p}}AA\\
\omega_P(\End_L^0(Y(\c)))\otimes H_1(Y(\c),\a^{\infty,p})@>>>H_1(Y(\c),\a^{\infty,p})
\end{CD}$$
is commutative, where the lower horizontal arrow is defined by using the canonical isomorphism 
$\a^{\infty,p}\otimes\omega_P(\End_L^0(Y(\c)))\cong\a^{\infty,p}\otimes\End_L^0(Y(\c))$ arising from the trivialization $\sigma^{\infty,p}$.
\end{itemize}
\end{lem}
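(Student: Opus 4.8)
The plan is to match the three realizations of $\Ybar$ to the corresponding realizations of $Y(\c)$, twisted by the torsor $P$ and the isocrystal $M$, and then to descend the resulting collection of local isomorphisms to a $\q$-rational one by the Hasse principle. Write $T:=(\g_m\times H^1)/\{\pm1\}$ and $T_1:=H^1/L^1$; the complex abelian variety $Y(\c)$ with its $H$-action and polarization $\lambda$ produces the Hodge cocharacter $\mu$ of $T$, so that $H_1(Y(\c),\q)$, $\End_L^0(Y(\c))$ and their tensor constructions are the values of a $\q$-rational fibre functor on the representation category of a subtorus of $T$, and $\mu'$ is the image of $\mu$ in $T_1$. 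Since $[H:\q]=2\dim Y$, the algebra $H$ is a maximal commutative subalgebra of $\End^0(Y(\c))$ and a trivial sub-representation of $\End_L^0(Y(\c))$ for $T_1$; hence $H$ sits canonically inside both $\omega_P(\End_L^0(Y(\c)))$ and $\End_L^0(\Ybar)$ (the latter via the reduction map), and property (ii) will be the assertion that $\eta^{(0)}$ identifies these two copies.

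First the $p$-adic datum. The inclusion $\End^0(Y(\c))\hookrightarrow\End^0(\Ybar)$ makes $\d^0(\Ybar)$, together with the Weil pairing attached to $\lambda$, into an $F$-isocrystal with $T$-structure over $\fc$; by the Shimura--Taniyama formula its class in $B(T)$ equals $\mu^{-1}$, which by the choice of $M$ is $\gamma(M)$. As $T$ is a torus, $\gamma:B(T)\to X_*(T)_{\Gal(\q_p^{ac}/\q_p)}$ is a bijection (cf.\ \cite[Theorem 1.15]{rapoport} and the remarks preceding it), so $M$ and the isocrystal with $T$-structure just described are $\otimes$-isomorphic; since its $\otimes$-automorphism group is $T(\q_p)$, we may choose this isomorphism so that, after identifying $K(\fc)\otimes\omega_P$ with the slope $0$ part, its restriction to $\bRep_0(T_1)$ becomes $\sigma_p$. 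That this restriction is isoclinic of slope $0$ is exactly assumption (i), namely that the Newton cocharacter $\nu$ of $\mu$ factors through $(\g_m\times L^1)/\{\pm1\}$. Evaluating on $H_1(Y(\c),\q)$ gives $\eta_p:M(H_1(Y(\c),\q))\stackrel{\cong}{\rightarrow}\d^0(\Ybar)$, which preserves the $H$-action and, up to the $\g_m$-factor of $T$ — i.e.\ up to a factor in $\q_p\backslash\{0\}$ — the Weil pairing; this is the assertion of (i) for $\eta_p$. Using faithfulness of the $\End_L^0$-action and Tate's theorem \cite{tate}, which identifies $\End_L^0(\Ybar)\otimes\q_p$ with the Frobenius-commuting $L$-linear endomorphisms of $\d^0(\Ybar)$, the pair $(\eta_p,\sigma_p)$ determines an isomorphism $\q_p\otimes\omega_P(\End_L^0(Y(\c)))\stackrel{\cong}{\rightarrow}\End_L^0(\Ybar)\otimes\q_p$ of $L$-algebras with involution rendering the $\q_p$-localisation of diagram (iii) commutative.

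Away from $p$, the good reduction of $Y$ provides, via smooth and proper base change and Artin's comparison theorem, a canonical $H$-equivariant isometry $H_1^{\mathaccent19 et}(\Ybar,\a^{\infty,p})\cong H_1(Y(\c),\a^{\infty,p})$; twisting it by the element of $T(\a^{\infty,p})$ supplied by the trivialisation $\sigma^{\infty,p}$ of $P$ produces $\eta^{\infty,p}$, which preserves the $H$-action and the Weil pairing up to $\a^{\infty,p\;\times}$ and, again by conjugation, yields a local form-isomorphism making the $\a^{\infty,p}$-localisation of diagram (iv) commute. It remains to glue the local isomorphisms so obtained — together with the evident ones at the remaining finite places, available because $H$, hence $T$, is unramified at $p$ so that $P$ may be represented unramified outside $p$ and $\infty$ — to a single $\q$-rational $*$-algebra isomorphism $\eta^{(0)}:\omega_P(\End_L^0(Y(\c)))\to\End_L^0(\Ybar)$. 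Both sides being $\q$-forms of the $L$-algebra with involution $\End_L^0(Y(\c))$, their discrepancy is a class in $H^1(\q,H^1/L^1)$, and by construction its localisations coincide with the local Tate--Nakayama classes of $P$ — which is precisely what ``$P$ is of type $\mu'$'' means, the contribution at $p$ coming from the Shimura--Taniyama computation above and the contribution at $\infty$ from the CM cocharacter $\mu$. Thus the discrepancy is everywhere locally trivial, and the Hasse principle $\sha(H^1/L^1)=1$, which holds by lemma \ref{parityII}, forces it to vanish; a $\q$-rational $\eta^{(0)}$ therefore exists, and by lemma \ref{parityIV} (together with remark \ref{weak} at $p$) the adelic normalisations of $\eta_p$ and $\eta^{\infty,p}$ can be arranged so as to be simultaneously compatible with one and the same such $\eta^{(0)}$. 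Then (ii)--(iv) hold by construction, while (i) was checked directly.

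The main obstacle is the cohomological bookkeeping in the final step: one must verify that the Galois-cohomology class classifying the polarised algebra $\End_L^0(\Ybar)$ is the class of a torsor ``of type $\mu'$'', which amounts to reconciling the Shimura--Taniyama formula at $p$ and the reflex-norm class of the CM cocharacter at $\infty$ with the respective local Tate--Nakayama classes of $P$, all the while keeping track of the two $\{\pm1\}$-quotients and of the three realizations at once. A secondary but genuine subtlety is that the normalisations of $\eta_p$ and $\eta^{\infty,p}$ imposed by $\sigma_p$ and $\sigma^{\infty,p}$ must be reconciled with a single $\q$-rational $\eta^{(0)}$, which is exactly the point at which the surjectivity assertion of lemma \ref{parityIV} (and its local analogue, remark \ref{weak}) is used.
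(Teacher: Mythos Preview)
Your argument is correct and follows essentially the same route as the paper's proof: construct $\eta_p$ via the Shimura--Taniyama/Reimann--Zink description of $\d^0(\Ybar)$ in $B(T)$, construct $\eta^{\infty,p}$ via the comparison isomorphism, establish the existence of a $\q$-rational $\eta^{(0)}$ by the Hasse principle for $H^1/L^1$, and then use lemma~\ref{parityIV} and remark~\ref{weak} to achieve the simultaneous compatibilities (iii) and (iv). Two small remarks: the triviality of $\sha(H^1/L^1)$ is part of hypothesis~(ii) of the lemma itself, so you need not invoke lemma~\ref{parityII}; and the paper makes the torsor whose local triviality is at stake more explicit (the set $C(K)$ of $*$-preserving $H$-compatible isomorphisms $K\otimes\Abar\to K\otimes\End_L^0(\Ybar)$), which clarifies why the $\ell$-adic Tate theorem is the input needed at finite $\ell\neq p$ and why positivity of the Rosati involution is the input at $\infty$.
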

\begin{proof}
The existence of an isomorphism $\eta^{\infty,p}$ satisfying property (i) only is clear, in fact the same is true for the existence $\eta_p$, which follows readily from 
the result \cite[Satz(1.6)]{reimann}, which describes $\d^0(\Ybar)$ as an element of $B((\g_m\times H^1)/\{\pm1\})$. Once $\eta^{\infty,p}$ and an isomorphism $\eta^{(0)}$ 
satisfying property (ii) alone, have been found, we can use the lemma \ref{parityIV} to adjust the pair $(\eta^{(0)},\eta^{\infty,p})$ in order to achieve the property (ii) together 
with the compatibility (iv). In view of $(\g_m\times H^1)/\{\pm1\}(\q_p)=\Aut(M)$ we can use remark \ref{weak} for an adjustment of $\eta_p$ in order to achieve the 
compatibility (iii). It remains to demonstrate the mere existence of $\eta^{(0)}$: Let us denote $\End_L^0(Y(\c))$ by $\tilde A$ and $\omega_P(\tilde A)$ by $\Abar$. Notice that 
$\Abar$ is an involutive $L$-algebra in a natural way, because $\omega_P$ is a $\otimes$-functor, and both the involution and the $L$-algebra structure of $\tilde A$ can 
be expressed by certain morphisms in $\bRep_0(H^1/L^1)$ between $\tilde A$ and its tensor powers, moreover $H=\omega_P(H)$ is naturally contained in $\Abar$, and it 
is preserved under $*$. Let $K$ be a $\q$-algebra. In the category of $K$-algebras with $K$-linear involution we want to consider the set $C(K)$ of commutative diagrams: 
$$\begin{CD}
K\otimes\Abar@>{\cong}>>K\otimes\End_L^0(\Ybar)\\
@AAA@AAA\\
K\otimes H@>>>K\otimes\End_L^0(Y)\\
\end{CD}$$
where all arrows, except the upper horizontal one, are the canonical ones. Notice that the natural $H^1/L^1$-action on $\Abar$ fixes $H$ elementwise, 
and hence it is clear that $C$ is a torsor under $H^1/L^1$, which satisfies the Hasse principle. The set of $p$-adic points on $C$ are the diagrams:
$$\begin{CD}
\q_p\otimes\Abar@>{\cong}>>\End_L(\d^0(\Ybar))\\
@AAA@AAA\\
\q_p\otimes H@>>>\q_p\otimes\End_L^0(Y)\\
\end{CD},$$
where we have used the Tate-conjecture to rewrite the upper right entry. From the existence of $\eta_p$ we can now deduce $C(\q_p)\neq\emptyset$. 
If we use the $\ell$-adic Tate-conjectures, we easily get points for the completions at all other finite primes. The triviality of $C$ follows 
once $*$ acts positively on $\r\otimes\Abar$. At the infinite place the result of the inner twist by our Tate-Nakayama class is simply the 
endomorphism ring of $H_1(Y(\c),\r)$, together with the involution that accompanies the positive definite pairing given by $\psi(h(\sqrt{-1})x,y)$.
\end{proof}

\subsubsection{Enriched metaunitary Shimura data}
\label{rich}
Assume that $(G,h)$ is a Shimura datum with coefficients in $L^+$ and that $h$ factors through a $\q_p\&\r$-elliptic, $\q_p$-unramified maximal $\q$-torus 
of the form $G\supset T=(\g_m\times\Res_{L^+/\q}T^1)/\{\pm1\}$. We fix an embedding $\iota_p:K(\fc)\rightarrow\c$, and we let $\nu\in\q\otimes X_*(\zen^G)$ 
be the projection of the cocharacter $\mu_h\in X_*(T)$ onto the space of $\Gal(\q_p^{ac}/\q_p)$-invariants of $\q\otimes X_*(T)$ (N.B.: $T_{\q_p}/\zen_{\q_p}^G$ 
contains no copy of $\g_{m,\q_p}$). Let $\mu'$ be the image of $\mu_h$ in $X_*(T/\zen^G)$. Eventually, we want to make the following choices:
\begin{itemize}
\item[(i)]
Let $P\in\ton(T/\zen^G)(\q)$ be of type $\mu'$.
\item[(ii)]
Let $M:\bRep_0(T)\rightarrow\cry$ be the $F$-isocrystal whose invariant $\gamma_T(M)$ agrees with $\mu^{-1}$ as an element of $X_*(T)_{\Gal(\q_p^{ac}/\q_p)}$.
\item[(iii)]
Let $\sigma_p:M|_{\bRep_0(T/\zen^G)}\rightarrow K(\fc)\otimes\omega_P$ be a $\q_p$-linear $\otimes$-isomorphism.
\item[(iv)]
Let $\sigma^{\infty,p}$ be a trivialization of $P$ over $\a^{\infty,p}$.
\end{itemize}
The existence of $P$ is due to the above assumptions on $T$. We have to introduce the important $\q$-group $I:=P\times^{T/\zen^G}G$. The 
isomorphism $\sigma_p$ induces a further $I(\q_p)\cong\Aut^\otimes(M|_{\bRep_0(G)})$ one. Let us come back to a $L$-metaunitary Shimura datum, 
$$\bT=(\bd^+,\{(V_i,\Psi_i,\rho_i,\bj_i)\}_{i\in\Lambda},\{(R_\pi,\iota_\pi)\}_{\pi\in\Pi})$$ 
for $(G,h)$. By a $T$-enrichment we mean further choices of maximal commutative $*$-invariant $L$-algebras $H_i$ in each $\End_L(V_i)$ such that:
\begin{itemize}
\item
The image of $T$ under $\varrho_i$ is contained in $(\g_m\times H_i^1)/\{\pm1\}$.
\item
Each $H_i$ is unramified at $p$ and $\sha(H_i^1/L^1)$ is trivial.
\end{itemize}
In the presence of a $T$-enrichment $\{H_i\}_{i\in\Lambda}$ we show that (i), (ii), (iii) and (iv) lead to a (homogeneously) polarized $\q$-isogeny class 
of abelian $L^\Lambda$-varieties of type $\bT$. In fact we will be able to describe its $\q$-group of self-quasi-isogenies, the isogeny class of its crystalline 
realization, and the canonical action of the former on the latter. Let $\Rbar\subset\c$ be the splitting field of the CM-algebra $H=\bigoplus_{i\in\Lambda}H_i$, 
so that the set of ring homomorphisms $H_{an}=\dot\bigcup_{i\in\Lambda}\Hom(H_i,\c)$ carries a natural left $\Gal(\Rbar/\q)$-action. We write 
$\thevar\in\Gal(\Rbar/\q)$ for the restriction of the absolute Frobenius on $K(\fc)$ to $\Rbar$, notice $\vartheta=\thevar|_R$. Finally, let $\bard^+$ 
(resp. $\jar_i$) denote the composition of the function $\bd^+$ (resp. $\bj_i$) with the natural one from $H_{an}$ (resp. $H_{i,an}$) to $L_{an}$.\\
Writing the $L$-valued skew-Hermitian pairing on $V_i$ in the usual form $\Psi_i=\tr_{H_i/L}\Psi_i'$ gives rise to skew-Hermitian $H_i$-modules 
$(V_i,\Psi_i')$ and thus to $H_i$-unitary representations $\rho'_i:T_{H_i^+}^1\rightarrow\UL(V_i/H_i,\Psi'_i)$, and it is easy to see that $(V_i,\Psi'_i,\rho'_i,\jar_i)$ 
is a $\thevar$-gauged $H_i$-unitary representation (in the sense of remark \ref{tricky}). Thus, we are allowed to apply the lemma \ref{twistingVII} to the skew-Hermitian Hodge 
$H_i$-modules $(V_i,\Psi_i',\varrho_{i,\r}\circ h)$, the outcome being some family of skew-Hermitian Hodge structures $(\tilde V_i,\tilde\Psi_i',\tilde h_i)$, again with coefficients in 
$H_i$. For each $i$ we fix a complex abelian variety $Y_i/\c$ with polarization $\lambda_i$ and Rosati-invariant $H_i$-action $\iota_i$, such that $H_1(Y_i(\c),\q)$ is isometric to 
$(\tilde V_i,\tr_{H_i/\q}\tilde\Psi_i',\tilde h_i)$. Let $\tilde\upsilon_i\in X_*((\g_m\times H^1)/\{\pm1\})$ be the associated cocharacters, as in subsubsection \ref{gausssum}. Let 
us fix a $\a^\infty$-valued point $\varepsilon_i$ on the $\ton(H_i^1)(\q)$-object $Q_i$ with $\omega_{Q_i}(V_i)=\tilde V_i$ and notice that
\begin{equation}
\label{contracted}
\Ob_{\ton(H_i^1/L^1)(\q)}\ni\tilde P_i:=\Homo_{T/\zen^G}(Q_i,P)
\end{equation} 
is a torsor of type $\tilde\upsilon_i'\in X_*(H_i^1/L^1)$. Consider compatible isomorphisms,
\begin{eqnarray*}
&&\tilde\eta_i^{(0)}:\omega_{\tilde P_i}(\End_L(\tilde V_i))\stackrel{\cong}{\rightarrow}\End_L^0(\Ybar_i)\\
&&\tilde\eta_{i,p}:M(\tilde V_i)\stackrel{\cong}{\rightarrow}\d^0(\Ybar_i)
\end{eqnarray*}
of which the existence is granted by applying lemma \ref{reizV} to the orthogonal direct sum of the $(Y_i,\lambda_i,\iota_i)$'s, which we will 
denote by $(Y,\lambda,\iota)$. We proceed by writing $\eta_i^{(0)}$ for the composition of $\tilde\eta_i^{(0)}$ with the canonical isomorphism from $\omega_P(\End_L(V_i))$ to 
$\omega_{\tilde P_i}(\End_L(\tilde V_i))$ while letting $\eta_i^{(1)}$ be the composition of $\tilde\eta_i^{(1)}$ with $M(\varepsilon_{i,p})$. In view of the theorem \ref{bekanntIII} 
it is now clear that one obtains a (homogeneously) polarized $\q$-isogeny class of $L^\Lambda$-abelian $\fc$-varieties of type $\bT$ from $(\Ybar,\lambda,\iota)$ if one puts 
$$y_\pi:=(\bigotimes_{i\in\pi}\eta_i^{(0)})\circ\omega_P(\iota_\pi),$$
and that $\varepsilon_i^{\infty,p}$ is a full level structure for it.

\begin{thm}
\label{aechz}
Fix $\vartheta$ and $p$ and let $(\bd^+,\{(V_i,\Psi_i,\rho_i,\bj_i)\}_{i\in\Lambda},\{(R_\pi,\iota_\pi)\}_{\pi\in\Pi}=\bT$ and $\{H_i\}_{i\in\Lambda}$ be a $T$-enriched 
$L$-metaunitary datum for a Shimura datum $(G,h)$ with coefficients in $L^+$. Let $(P,M,\sigma_p,\sigma^{\infty,p})$ be a quadruple as introduced at the 
beginning of this subsubsection. Then there exists a fully leveled and homogeneously polarized $\q$-isogeny class of $L^\Lambda$-abelian $\fc$-varieties 
$(\Ybar,\lambda,\iota,\{y_\pi\}_{\pi\in\Pi},\varepsilon^{\infty,p})$ of type $\bT$ satisfying the following:
\begin{itemize}
\item
There exists an isomorphism $\zeta^{(0)}$ from the twisted $\q$-group $I=P\times^{T/\zen^G}G$ to the group of self-quasi-isogenies $I_{\Ybar,\lambda,\iota,\{y_\pi\}_{\pi\in\Pi}}$
\item
There exists a $L^\Lambda$-linear similitude of isocrystals with $L^\Lambda$-operation:
$$\sum_{i\in\Lambda}\zeta_i^{(1)}=\zeta^{(1)}:M(V)\stackrel{\cong}{\rightarrow}\d^0(\Ybar),$$
such that
$$\begin{CD}
\End_L(M(V^\pi))@<<<R_\pi\\
@V{\bigotimes_{i\in\pi}\zeta_i^{(1)}}VV@V{y_\pi}VV\\
\End_L(\bigotimes_{i\in\pi}\d^0(\Ybar[\ge_i])(\frac{1-\Card(\pi)}2))@<{\d^0}<<\End_L^0(\dot\bigotimes_{i\in\pi}\Ybar[\ge_i])
\end{CD}$$
is commutative for every $\pi\in\Pi$.
\item
$\zeta^{(1)}$ carries the canonical $I$-action on $M$, which results from $\sigma_p$, to 
the $I$-action on $(\Ybar,\lambda,\iota,\{y_\pi\}_{\pi\in\Pi})$ that results from $\zeta^{(0)}$.
\end{itemize}
\end{thm}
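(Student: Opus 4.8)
The plan is to build the claimed $\fc$-isogeny class as an explicit CM object, exactly as prepared in the preceding discussion of subsubsection \ref{rich}, and then identify its group of self-quasi-isogenies and its crystalline realization by appealing to the CM-theoretic lemmas already in place. Concretely, I would take $(\Ybar,\lambda,\iota)$ to be the $\mod p$-reduction of the orthogonal direct sum of the complex abelian varieties $(Y_i,\lambda_i,\iota_i)$ constructed above from the skew-Hermitian Hodge $H_i$-modules $(\tilde V_i,\tilde\Psi_i',\tilde h_i)$ (the latter obtained by applying lemma \ref{twistingVII} to $(V_i,\Psi_i',\varrho_{i,\r}\circ h)$ using the $\vartheta$-gauge $\jar_i$), and I would equip it with the $R_\pi$-actions $y_\pi:=(\bigotimes_{i\in\pi}\eta_i^{(0)})\circ\omega_P(\iota_\pi)$ and the level structure $\varepsilon^{\infty,p}=\{\varepsilon_i^{\infty,p}\}_{i\in\Lambda}$ already written down. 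That this quadruple is of type $\bT$ is essentially the content of the paragraph preceding the theorem, which invokes theorem \ref{bekanntIII} to see that the $y_\pi$ are well-defined and $*$-preserving; I would just note that (P0)--(P4) translate into the cocharacter bookkeeping of subsubsection \ref{gausssum} via the identities $\bard^+=\bd^+\circ(H_{an}\to L_{an})$ and $\jar_i=\bj_i\circ(H_{i,an}\to L_{an})$, so the normalization conditions (N1)--(N3) carry over.

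Next I would produce $\zeta^{(1)}$. The $F$-isocrystal $M$ with $T$-structure has invariant $\gamma_T(M)=\mu_h^{-1}$, and by Reimann's description (as used in the proof of lemma \ref{reizV}, via \cite[Satz(1.6)]{reimann}) the isocrystal $\d^0(\Ybar)$ with its $H^\Lambda$-action corresponds to the same class in $B((\g_m\times H^1)/\{\pm1\})$ after restriction along $T\to(\g_m\times H^1)/\{\pm1\}$. Applying lemma \ref{reizV} to $(Y,\lambda,\iota)$ — legitimate because of the $T$-enrichment hypotheses, i.e. each $H_i$ unramified at $p$ with $\sha(H_i^1/L^1)$ trivial — yields compatible $\tilde\eta_i^{(0)}$, $\tilde\eta_{i,p}$, $\tilde\eta_i^{\infty,p}$; composing with the canonical comparison isomorphisms $\omega_P(\End_L(V_i))\cong\omega_{\tilde P_i}(\End_L(\tilde V_i))$ and $M(\varepsilon_{i,p})$ gives $\eta_i^{(0)}$, $\eta_i^{(1)}$. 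I set $\zeta^{(1)}:=\bigoplus_i\zeta_i^{(1)}$ with $\zeta_i^{(1)}:=\eta_i^{(1)}$; the $R_\pi$-compatibility diagram is then deduced from part (iii) of lemma \ref{reizV} applied to the individual $\eta_i^{(1)}$, tensored over $\pi$, together with theorem \ref{bekanntIII} and theorem \ref{bekanntIV} to identify $\d^0(\dot\bigotimes_{i\in\pi}\Ybar[\ge_i])$ with the tensor product $\bigotimes_{i\in\pi}\d^0(\Ybar[\ge_i])(\tfrac{1-\Card(\pi)}2)$ carrying the $y_\pi$-action.

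For the group isomorphism $\zeta^{(0)}:I=P\times^{T/\zen^G}G\xrightarrow{\cong} I_{\Ybar,\lambda,\iota,\{y_\pi\}_{\pi\in\Pi}}$, the strategy is to compare both sides place by place. Away from $p$, the level structure $\varepsilon^{\infty,p}$ supplies (via $i_{y,\eta}$ of proposition \ref{FOF} and the $\sigma^{\infty,p}$-twist) an $\a^{\infty,p}$-isomorphism of the relevant unitary-type groups, using that the $\rho_i$ realize $G$ as a closed subgroup (property (iv) of definition \ref{exampleIX}) and that the $R_\pi$-stabilizer pins down $G$ inside $\prod_i\GU(\V_i)$ (property (v)); this is the same argument as the Mori--Zarhin step in lemma \ref{Delignetrick}. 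At $p$, the $\sigma_p$-twist identifies $I_{\q_p}$ with $\Aut^\otimes(M|_{\bRep_0(G)})$, which by the Tate conjecture for the CM abelian variety $\Ybar$ over $\fc$ (applied as in the proof of lemma \ref{reizV}, part (iii)) is identified with $\q_p\otimes\End_L^0(\Ybar)^\times$-type automorphisms, i.e. with $j_{y,\zeta^{(1)}}$ into the twisted centralizer $J_{b_{y,\zeta^{(1)}}}$. Globally, the Hasse principle for the torsor $C$ of isomorphism-of-$*$-algebra diagrams — exactly the torsor under $H^1/L^1$ appearing in the proof of lemma \ref{reizV}, trivial by lemma \ref{parityII} — glues these local identifications into a $\q$-rational $\zeta^{(0)}$, and its compatibility with $\zeta^{(1)}$ (third bullet) is the statement that $\eta_i^{(1)}$ intertwines the two $I$-actions, which is the $p$-adic compatibility (iii) of lemma \ref{reizV} read through $\sigma_p$.

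The main obstacle I anticipate is not any single new ingredient but the bookkeeping needed to make the three twists $(P,\sigma_p,\sigma^{\infty,p})$ fit together coherently over $\Lambda$ and $\Pi$ simultaneously: one must check that the per-$i$ choices coming out of lemma \ref{reizV} can be made so that the tensor constructions $\dot\bigotimes_{i\in\pi}$ are respected for every $\pi\in\Pi$ at once, which is precisely where the adjustment lemmas \ref{parityIV} and remark \ref{weak} (surjectivity of $\gH^1(\z_p)\to\gH^1/\gL^1(\z_p)$) are needed to modify $(\eta_i^{(0)},\eta_i^{\infty,p})$ and $\eta_{i,p}$ without destroying the already-achieved compatibilities. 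Handling the central $\g_m$-factor (the multiplier/weight cocharacter) and the quotients by $\{\pm1\}$ and $\{\pm1\}^\Lambda$ correctly throughout is the place where sign and parity conditions (N2), and the parity hypothesis feeding lemma \ref{parityII}, are genuinely used; I would treat that carefully but expect no surprises, since the same parity phenomenon was already reconciled on the $p$-adic side in proposition \ref{weirdI}.
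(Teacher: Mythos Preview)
Your proposal is correct and follows essentially the same route as the paper: the paper does not supply a formal proof environment for theorem \ref{aechz} but treats it as a summary of the construction carried out in the paragraphs immediately preceding it (the application of lemma \ref{twistingVII}, the complex CM abelian varieties $Y_i$, the contracted torsors $\tilde P_i$ of \eqref{contracted}, the maps $\tilde\eta_i^{(0)},\tilde\eta_{i,p}$ from lemma \ref{reizV}, the compositions $\eta_i^{(0)},\eta_i^{(1)}$, and the formula $y_\pi=(\bigotimes_{i\in\pi}\eta_i^{(0)})\circ\omega_P(\iota_\pi)$), and you have reproduced exactly this construction together with the requisite bookkeeping.

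One small remark on emphasis: for the isomorphism $\zeta^{(0)}$ you propose a place-by-place comparison glued by the Hasse principle, but in the paper's setup this is more direct. The maps $\eta_i^{(0)}:\omega_P(\End_L(V_i))\stackrel{\cong}{\rightarrow}\End_L^0(\Ybar_i)$ are already $\q$-rational $*$-algebra isomorphisms compatible with the $H_i$-inclusions (property (ii) of lemma \ref{reizV}), and since $G^1$ is cut out inside $\prod_i\GL(V_i/L)$ by the $R_\pi$-stabilizer conditions (property (v) of definition \ref{exampleIX}) while $I_y^1$ is by definition the stabilizer of the $y_\pi$'s, the tensor identity $y_\pi=(\bigotimes_{i\in\pi}\eta_i^{(0)})\circ\omega_P(\iota_\pi)$ transports one stabilizer to the other, giving $\zeta^{(0)}$ over $\q$ in one stroke. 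The local arguments you sketch are then the verifications of the third bullet (compatibility with $\zeta^{(1)}$ via $\sigma_p$) rather than the construction of $\zeta^{(0)}$ itself.
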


\subsubsection{Automorphisms of lifts} 
\label{Pepsi}
Recall that $$\bT=(\bd^+,\{(V_i,\rho_i,\Psi_i,\bj_i)\}_{i\in\Lambda},\{(R_\pi,\iota_\pi)\}_{\pi\in\Pi})$$ 
is an unramified $L$-metaunitary Shimura datum for our Shimura datum with $L^+$-coefficients $(G,h)$. Throughout this subsubsection we 
fix a complete discretely valued $K(\f_{p^f})$-extension $N$ with algebraically closed residue field $l$. In the following result $\Gamma_{\tilde x}$ 
denotes the stabilizer of any $\tilde x\in\F_{\bT,\gp}(N)$ in the group $\I_0^{\mu_p}(N)\times G(\a^{\infty,p})$, which acts on $\F_{\bT,\gp}(N)$:

\begin{lem}
\label{Fanta}
For any $x\in\M_{\bT,\gp}(N)$ there exists a canonical reductive $\q$-group $K_x$ equipped with an embedding $i_x:K_{x,\a^{\infty,p}}\hookrightarrow G_{\a^{\infty,p}}$
and a compact open subgroup $\gK_x\subset K_x(\q_p)$ such that for every lift $\tilde x\in\F_{\bT,\gp}(N)$ of $x$ one has
\begin{equation}
\label{Cola}
\Gamma_{\tilde x}=(j_{\tilde x}\times i_x)(K_x(\q)\cap\gK_x),
\end{equation} 
where $j_{\tilde x}:K_{x,N}\rightarrow\I_{0,N}^{\mu_p}$ is another embedding which is canonically associated to $\tilde x$. Moreover, the assignments $x\mapsto(\gK_x,K_x,i_x)$ 
and $\tilde x\mapsto j_{\tilde x}$ are functorial in $N$, there exists a canonical inclusion $\zen^G\hookrightarrow\zen^{K_x}$ and $(K_x/\zen^G)(\r)$ is compact. 
\end{lem}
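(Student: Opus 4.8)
The plan is to construct $K_x$ by twisting the group $G$ along a suitable torsor, exactly parallel to the construction of $I_y$ in subsection \ref{overFq} and the group $I=P\times^{T/\zen^G}G$ of subsubsection \ref{rich}, but now working relatively over $N$ rather than over an algebraically closed field of characteristic $p$. The starting point is the bijection of theorem \ref{uniformizeII}: a point $x\in\M_{\bT,\gp}(N)$ reduces to a point $\bar x\in\Marb_{\bT,\gp}(l)$, which corresponds to a sextuple $(Y,\lambda,\iota,\{y_\pi\}_{\pi\in\Pi},\eta,\zeba)$, and to this we attach the $\q$-group $I_{\bar x}:=I_y$ together with its two canonical homomorphisms $i_{y,\eta}:I_{y,\a^{\infty,p}\otimes L^+}^1\rightarrow G^1_{\a^{\infty,p}\otimes L^+}$ and $j_{y,\zeta}:I_{y,\q_p}\rightarrow J_{b_{y,\zeta}}$. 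The first task is to single out, inside $I_y$, the subgroup that survives the passage to the lift $\tilde x$: by lemma \ref{obstX} the $p$-adic automorphism group of a lift $U\in\gG(W(\O_N))$ of the object $O\in\gG(W(l))$ over $l$ is governed by a $\q_p$-group $\hat\Gamma_U$ with a compact open subgroup $j_U^{-1}(w_0(\Aut(U)))$; I would take $K_x^1$ to be the $L^+$-form of the preimage in $I_y^1$ of the $\q_p$-rational subgroup $\hat\Gamma_U\subset J_{b_{y,\zeta}}$ cut out at each place $\gq\in S_p$ by lemma \ref{obstX} applied to the local displays, and set $K_x:=(\g_m\times\Res_{L^+/\q}K_x^1)/\{\pm1\}$, which is reductive because $\hat\Gamma_U$ is (its $\q_p$-points are Zariski dense by lemma \ref{obstX}). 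The embedding $i_x$ is then the restriction of $i_{y,\eta}$, the compact open $\gK_x\subset K_x(\q_p)$ is the one furnished by the second clause of lemma \ref{obstX} (i.e. $j_U^{-1}(w_0(\Aut(U)))$, which is independent of $N$), and $j_{\tilde x}$ is the restriction to $K_{x,N}$ of the $N$-rational embedding $j_U:\hat\Gamma_{U,N}\hookrightarrow U^0_{\mu^{-1},N}\subset\I_{0,N}^{\mu_p}$, again from lemma \ref{obstX}.

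With these definitions in place the formula \eqref{Cola} becomes a bookkeeping matter: an element $(h,g)\in\I_0^{\mu_p}(N)\times G(\a^{\infty,p})$ stabilizes $\tilde x\in\F_{\bT,\gp}(N)$ iff it stabilizes $x$ under the $G(\a^{\infty,p})$-action and simultaneously lifts to an automorphism of the $(\gG_p,\mu_p)$-display with its extra PEL- and tensor-structure; on the prime-to-$p$ side this is corollary \ref{gekuerzt} (the stabilizer of $x$ in $G(\a^{\infty,p})$ is $i_{y,\eta}(\gI_{y,\zeba})$, i.e. the image of $I_y(\q)\cap j_{y,\zeta}^{-1}(\gJ_{y,\zeba})$), and on the $p$-adic side one has to say that the stabilizer of $\tilde x$ inside $\I_0^{\mu_p}(N)$ is precisely $w_0(\Aut(U))$, which by the cartesian diagram in the proof of lemma \ref{obstX} is $j_U(j_U^{-1}(w_0(\Aut(U))))$, the image under $j_{\tilde x}$ of the compact open subgroup $\gK_x$. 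Intersecting these two descriptions over the diagonal copy of $I_y(\q)=K_x(\q)$ (using that $j_{y,\zeta,\gq}$ on the $\iota_\gq$-eigenspace matches the local $j_U$ by transport of structure, which is how $\F_{\bT,\gp}$ was glued together in subsubsection \ref{go}) yields exactly $(j_{\tilde x}\times i_x)(K_x(\q)\cap\gK_x)$. Functoriality in $N$ for $x\mapsto(\gK_x,K_x,i_x)$ and $\tilde x\mapsto j_{\tilde x}$ is inherited from the stated functoriality in $N$ of all the ingredients: the bijection of theorem \ref{uniformizeII}, the pair $(\hat\Gamma_U,j_U)$ and $\Aut(U)$ of lemma \ref{obstX}, and the compatibility $\Int^G(\gamma^p)^{-1}\circ i_{y,\eta}=i_{y,\eta\circ\gamma^p}$ resp. $\Int^G(\gamma_p)^{-1}\circ j_{y,\zeta}=j_{y,\zeta\circ\gamma_p}$. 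The canonical inclusion $\zen^G\hookrightarrow\zen^{K_x}$ comes from the fact that $\zen^G$ acts on everything by scalars, hence lies in $I_y$ and in every twist, and centralizes $K_x$; and the compactness of $(K_x/\zen^G)(\r)$ follows because $K_x^1$ is an inner form, over the totally real field $L^+$, of the inner form of a totally compact form that $G^1$ itself is assumed to be (as in \eqref{restrictI}), so $(K_x/\zen^G)_\r$ is a product of compact real groups — alternatively, it is the group of real points of the $\r$-group of self-quasi-isogenies of a polarized abelian variety, which is compact.

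\textbf{Main obstacle.} The serious point is not the prime-to-$p$ part (that is corollary \ref{gekuerzt} essentially verbatim) but the $p$-adic comparison: matching the abstract $\q_p$-group $\hat\Gamma_U$ produced by lemma \ref{obstX} — which is defined only after a choice of lift $U$ over $W(\O_N)$ — with the $L^+$-group $K_x^1$ that is supposed to be intrinsic to $x$ and in particular independent of the lift $\tilde x$. The lemma \ref{obstX} already asserts that $\hat\Gamma_U$ and $\Aut(U)$ do not change upon enlarging $N$, but one must also check that up to canonical isomorphism they do not depend on which lift of $\bar x$ one picks, only on $x$ itself (two lifts of $x$ over $N$ differ by an element of $D_{\P,\Spec\O_N}$, and one has to see that the induced automorphism groups are conjugate in a way that is canonical) — this is where the rigidity of quasi-isogenies (cf. \cite[Proposition 4.2.3(ii)]{hadi}), invoked in the proof of lemma \ref{obstIII}, does the work. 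The second delicate step is assembling the local groups $\hat\Gamma_U$ at the various $\gq\in S_p$ into a single $\q_p$-group sitting inside $J_{b_{y,\zeta}}$ compatibly with the decomposition $\gG_p\times_{\z_p}\z_{p^{\ldots}}=\prod_\gq\Res_{\ldots}\G_\gq$ and with the tensor constructions defining $\gB^{\bR_\gq}$; this is bookkeeping of the same flavour as the proof of proposition \ref{properI}, but it has to be done carefully enough to guarantee that the resulting $\q$-group descends to $L^+$ with the claimed inner-form property. I expect roughly a page for the construction and the verification of \eqref{Cola}, and the bulk of the effort to go into these two independence-of-choices checks.
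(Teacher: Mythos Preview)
Your approach is essentially the paper's: define $K_x$ as the largest $\q$-algebraic subgroup of $I_y$ whose base change to $\q_p$ lands in $j_{y,\zeta}^{-1}(\hat\Gamma_U)$, take $\gK_x$ to be the preimage of $\Aut(U)$, let $j_{\tilde x}$ come from the $N$-rational inclusion $j_U$ of lemma \ref{obstX}, and read off \eqref{Cola} from corollary \ref{gekuerzt}. Two minor corrections. First, your stated reason for reductivity is wrong --- Zariski density of $\hat\Gamma_U(\q_p)$ says nothing about reductivity of $\hat\Gamma_U$ or of $K_x$. The correct argument (which you gesture at in the last sentence) is that $(I_y/\zen^G)(\r)$ is compact by positivity of the Rosati involution, hence the same holds for the $\q$-subgroup $K_x\supset\zen^G$, and a linear $\q$-group with compact real points has trivial unipotent radical. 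Second, your ``second delicate step'' of assembling local groups at each $\gq\in S_p$ is unnecessary: the paper applies lemma \ref{obstX} once to the single reductive $\z_p$-group $\gG_p$ (which already packages all $\gq$-factors into one object, cf.\ subsubsection \ref{go}), obtaining a single $\hat\Gamma_U\subset J_{b_{y,\zeta}}$ without any patching.
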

\begin{proof}
Notice that $\M_{\bT,\gp}(N)=\M_{\bT,\gp}(\O_N)$, in fact we may assume $\tilde x\in\F_{\bT,\gp}(\O_N)$ too, because the content of the lemma 
depends only on the $\I_0^{\mu_p}(N)\times G(\a^{\infty,p})$-orbit of $\tilde x$ and $\F_{\bT,\gp}\rightarrow\M_{\bT,\gp}$ is smooth and surjective. 
A straightforward modification of remark \ref{verlaengert} and theorem \ref{uniformizeII} yields a $\I_0^{\mu_p}(\O_N)\times G(\a^{\infty,p})$-equivariant 
description of $\F_{\bT,\gp}(\O_N)$: We look at the set of data consisting of a fully leveled, homogeneously polarized $\q$-isogeny class of 
$L^\Lambda$-abelian $l$-varieties $(y,\eta)$ of type $\bT$ together with pairs $(U,\zeta)\in\gG_p(W(\O_N))\times\Iso(y)$ rendering the diagram
$$\begin{CD}
K(k)\otimes V
@<{\varrho(O{^F\mu_p(\frac1p)})\circ(F\otimes\id_V)}<<K(k)\otimes V\\
@V{\zeta}VV@V{\zeta}VV\\
\d^0(Y)@<F<<\d^0(Y)\\
\end{CD}$$
commutative, where $O\in\gG_p(W(l))$ is the $\mod\gm_N$-reduction of $U$. We look at the equivalence 
relation defined by $(U',\zeta')\thicksim(U,\zeta)$ if and only if there exists $h\in\gG_p(I(\O_N))$ such that
\begin{eqnarray}
\label{Aah}
&&U'=h^{-1}U\Phi^{\mu_p}(h)\\
\label{Ooh}
&&\zeta'=\zeta\circ\varrho(h_0)
\end{eqnarray}
holds, where $h_0\in\gG_p(I(l))$ denotes the $\mod\gm_N$-reduction of $h$. In fact, the former is a $\I^{\mu_p}(\O_N)\times G(\a^{\infty,p})$-set, 
so that the set of its $\thicksim$-classes acquires a natural $\I_0^{\mu_p}(\O_N)\times G(\a^{\infty,p})$-action, as $\gG_p(I(\O_N))$ is a normal 
subgroup of $\I^{\mu_p}(\O_N)$. Now recall $b_{y,\zeta}=O{^F\mu_p(\frac1p)}$ and that lemma \ref{obstX} constructs a $\q_p$-algebraic subgroup 
$$\hat\Gamma_U\subset J_{b_{y,\zeta}},$$ 
of which some compact open subgroup is canonically isomorphic to the subgroup $\Aut(U)$, which is in turn a subgroup of $\Aut(O)$. We define $K_x$ 
to be the largest $\q$-algebraic subgroup of $I_y$ of which the scalar extension to $\q_p$ is contained in $j_{y,\zeta}^{-1}(\hat\Gamma_U)$, please see 
to subsection \ref{overFq} for the definition of $j_{y,\zeta}:I_{y,\q_p}\rightarrow J_{b_{y,\zeta}}$. We define the compact open subgroup $\gK_x\subset K_x(\q_p)$ 
to be the inverse image of $\Aut(U)\subset\hat\Gamma_U(\q_p)$ under $j_{y,\zeta}$. The $N$-rational inclusion $j_{\tilde x}:K_{x,N}\rightarrow\I_{0,N}^{\mu_p}$ 
arises naturally from lemma \ref{obstX} and the proof of \eqref{Cola} is a straightforward modification of corollary \ref{gekuerzt}.
\end{proof}

Let us say that a $T$-enrichment $\{H_i\}_{i\in\Lambda}$ for our $L$-metaunitary datum $(\bd^+,\{(V_i,\Psi_i,\rho_i,\bj_i)\}_{i\in\Lambda},\{(R_\pi,\iota_\pi)\}_{\pi\in\Pi}=\bT$ 
is unramified at $p$ if there exist self-dual $\z_{(p)}\otimes\O_L$-lattices $\gV_{i,p}\subset V_i$ which enjoy the following properties:
\begin{itemize}
\item
The subgroup $U_p\subset G(\q_p)$, as defined in \eqref{twistingIII}, is hyperspecial.
\item
The subgroup $U_p\cap T(\q_p)\subset T(\q_p)$ is hyperspecial too.
\end{itemize}
Notice that unramified enrichments for $\bT$ exist if and only if $\bT$ is unramified, essentially because unramified reductive $\z_p$-groups are known to possess elliptic maximal 
$\z_p$-tori. So we may fix one and write $\gT_p=(\g_m\times\prod_{\gq\in S_p}\Res_{W(\f_{p^{r_\gq}})/\z_p}\T_\gq^1)/\{\pm1\}$ (resp. $\gG_p$) for the reductive $\z_p$-model 
of $T=(\g_m\times\Res_{L^+/\q}T^1)/\{\pm1\}$ (resp. $G$) corresponding to the hyperspecial group $U_p\cap T(\q_p)$ (resp. $U_p$), following the grammar of subsubsection 
\ref{steady}. Moreover, let us fix a quadruple $(P,M,\sigma_p,\sigma^{\infty,p})$ as introduced at the beginning of subsubsection \ref{rich}. According to theorem \ref{aechz} 
it gives rise to a homogeneously polarized $\q$-isogeny class of $L^\Lambda$-abelian $\fc$-varieties $y=(\Ybar,\lambda,\iota,\{y_\pi\}_{\pi\in\Pi})$ of type $\bT$ with a 
certain full level structure $\varepsilon^{\infty,p}$. Choose any banal $(\gT_p,\mu_p)$-display $\E$ over $W(\f_{p^f})$, for example the object represented by the neutral 
element $1_{\gT}\in\gT_p(W(\f_{p^f}))$ (N.B.: From the start our $W(\fc)$-rational cocharacter $\mu_p$ was required to factor through $\gT_p\times_{\z_p}W(\f_{p^f})$). 
Choose an isomorphism $\zeta$ from $\q\otimes M_{\E_{\fc}}$ to $M$, pretending that $\E$ was a $(\gG_p,\mu_p)$-display yields a $\mu_p$-fake integral structure 
$\zeba$ on $y$, in the sense of definition \ref{LEVEL}. According to theorem \ref{uniformizeII} the sextuple $(Y,\lambda,\iota,\{y_\pi\}_{\pi\in\Pi},\eta,\zeba)$ determines 
a point $\xi\in\Marb_{\bT,\gp}(\fc)$, which is clearly equipped with a $W(\fc)$-lift $x$ on $\M_{\bT,\gp}$, due to the presence of $\E_{W(\fc)}$, and a little bit of 
extra work shows that $x$ is already defined over the subring $\z_p^{sh}:=\bigcup_{s=1}^\infty W(\f_{p^s})$. We have a $2$-commutative diagram
$$\begin{CD}
\B(\gT_p,\mu_p)@<<<\Spf W(\fc)\\
@VVV@V{x}VV\\
\B(\gG_p,\mu_p)@<{\P^{(\infty)}}<<\M_{\bT,\gp}^{(\infty)}\\
\end{CD},$$
The following properties are fulfilled:
\begin{itemize}
\item
The bottom row is $G(\a^{\infty,p})$-equivariant.
\item
The top row is $\gT_p(\z_p)$-equivariant with respect to the trivial action on $\Spf W(\fc)$ and the natural action of $\gT_p(\z_p)$ on any $\B(\gT_p,\mu_p)$-object.
\item
The vertical map $x$ preserves the $T(\q)\cap\gT_p(\z_p)$-equivariance, with respect to the map to $G(\a^{\infty,p})$ given by $i_{y,\eta}$.
\end{itemize}

The following result combines lemma \ref{Fanta} with the scenario at hand:

\begin{cor}
\label{7up}
Under the assumptions and with the notations of lemma \ref{Fanta}, there exists $x\in\M_{\bT,\gp}(W(\fc))$ such that $K_x$ and $G$ have the same rank.
\end{cor}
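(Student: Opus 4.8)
\textbf{Proof plan for Corollary \ref{7up}.}

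The plan is to exhibit the point $x$ by running the entire construction of subsubsection \ref{Pepsi} with an optimal choice of the auxiliary torus $T$. First I would invoke the existence of a $\q_p\&\r$-elliptic, $\q_p$-unramified maximal $\q$-torus $T\subset G$ through which some $G(\q)$-conjugate of $h$ factors: the real ellipticity is a standard fact about Shimura data (the centralizer of a maximal compact torus in $G_\r$ is defined over $\q$ after conjugation), and unramifiedness at $p$ follows since $\gG_p$ is a reductive $\z_p$-group and such groups always contain elliptic maximal $\z_p$-tori, as remarked just before the statement. By construction $T=(\g_m\times\Res_{L^+/\q}T^1)/\{\pm1\}$ has the same rank as $G$. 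Next, I would choose a $T$-enrichment $\{H_i\}_{i\in\Lambda}$ of $\bT$ which is unramified at $p$ in the sense defined just above the corollary; this is possible because $\bT$ is unramified (each $H_i$ can be taken as a maximal commutative $*$-invariant $L$-subalgebra of $\End_L(V_i)$ containing the image of $T$, and Lemma \ref{parityII} guarantees the vanishing of $\sha(H_i^1/L^1)$ once we arrange the existence of the requisite inert place of $L^+$ — this is where the hypothesis that $p$ is inert in $L^+$, built into the running assumptions, does the work).

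Having fixed $T$ and the enrichment, I would pick a quadruple $(P,M,\sigma_p,\sigma^{\infty,p})$ as at the start of subsubsection \ref{rich} (the existence of $P$ of type $\mu'$ uses the ellipticity of $T$, exactly as stated there), and then apply Theorem \ref{aechz} to produce the homogeneously polarized $\q$-isogeny class $y=(\Ybar,\lambda,\iota,\{y_\pi\}_{\pi\in\Pi})$ of type $\bT$ over $\fc$, together with the full level structure $\varepsilon^{\infty,p}$, the isomorphism $\zeta^{(0)}:I=P\times^{T/\zen^G}G\stackrel{\cong}{\rightarrow}I_{y}$ onto its group of self-quasi-isogenies, and the crystalline similitude $\zeta^{(1)}$. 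Using the banal $(\gT_p,\mu_p)$-display $\E$ represented by $1_{\gT}$ — recall $\mu_p$ factors through $\gT_p\times_{\z_p}W(\f_{p^f})$ — and the isomorphism $\zeta$ from $\q\otimes M_{\E_\fc}$ to $M$, pretending that $\E$ is a $(\gG_p,\mu_p)$-display furnishes a $\mu_p$-fake integral structure $\zeba$ on $y$. By Theorem \ref{uniformizeII} the sextuple $(\Ybar,\lambda,\iota,\{y_\pi\}_{\pi\in\Pi},\varepsilon^{\infty,p},\zeba)$ corresponds to a point $\xi\in\Marb_{\bT,\gp}(\fc)$, and the presence of $\E_{W(\fc)}$ (an honest lift of $\zeba$) provides a $W(\fc)$-valued lift $x\in\M_{\bT,\gp}(W(\fc))$, which as noted descends to $\z_p^{sh}$.

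It remains to identify $K_x$. By Lemma \ref{Fanta}, for any lift $\tilde x\in\F_{\bT,\gp}(W(\fc))$ of $x$ the group $K_x$ is the largest $\q$-algebraic subgroup of $I_y$ whose base change to $\q_p$ lands in $j_{y,\zeta}^{-1}(\hat\Gamma_U)$, where $\hat\Gamma_U\subset J_{b_{y,\zeta}}$ is the group of Lemma \ref{obstX} attached to $U=1_{\gT}$ (the representative of $\E$). The key computation is that for this particular $U$ the $F$-centralizer $J_{b_{y,\zeta}}$ and the group $\hat\Gamma_U$ are as large as possible: since $b_{y,\zeta}$ lies in the torus $\gT_p(W(\fc))$ and $\mu_p$ factors through $\gT_p$, the slope homomorphism $\nu_{b_{y,\zeta}}$ is central in $T_{K(\fc)}$, so $J_{b_{y,\zeta}}$ contains $T_{\q_p}$, and one checks that $j_{y,\zeta}$ restricted to $T$ is an isomorphism onto a maximal torus of $J_{b_{y,\zeta}}$; moreover, because $\E$ is banal and $U=1$, the condition $b_{y,\zeta}{^F\mu_p(p)}\in\gG_p(W(\fc))$ holds on all of $T_{\q_p}$, so $\hat\Gamma_U$ contains $j_{y,\zeta}(T_{\q_p})$. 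Hence $K_x$ contains $\zeta^{(0)}(T)$, a maximal torus of $I_y$, and therefore $K_x$ has the same rank as $I_y$; but $I_y$ is an inner form of $G$ (this is the content of the realization of $I_y$ as $P\times^{T/\zen^G}G$ in Theorem \ref{aechz}, cf. also \cite[Proposition 1.12]{rapoport}), so $\rk K_x=\rk I_y=\rk G$, as desired.

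The main obstacle I anticipate is the rank computation in the last paragraph: one must verify carefully that with the trivial banal display $\E$ the centralizer $\hat\Gamma_U$ really does absorb the full torus $j_{y,\zeta}(T_{\q_p})$ — i.e. that no extra integrality obstruction at $p$ cuts it down — and that $\zeta^{(0)}$ genuinely carries $T$ to a maximal torus of $I_y$. Both reduce to the compatibility of $\zeta^{(0)}$ and $\zeta^{(1)}$ with the $T$-actions asserted in the third bullet of Theorem \ref{aechz}, combined with the explicit description of $\Aut(U)$ inside $\Aut(O)$ from Lemma \ref{obstX}; the bookkeeping is delicate because one is juggling the $\q$-group $I_y$, its $\q_p$-incarnation via $j_{y,\zeta}$, and the $W(\fc)$-level group $\I_0^{\mu_p}$ simultaneously, but no genuinely new ideas beyond those already deployed in Lemma \ref{Fanta} and subsubsection \ref{Pepsi} should be needed.
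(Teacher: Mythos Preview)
Your proposal is correct and follows exactly the construction the paper lays out in the paragraphs immediately preceding the corollary: choose an unramified $T$-enrichment, apply Theorem~\ref{aechz} to produce $y$, build $x$ from the banal $(\gT_p,\mu_p)$-display $\E$ via Theorem~\ref{uniformizeII}, and observe that $T$ sits inside $K_x$. The only difference is emphasis: the paper packages the rank computation into the three equivariance bullets preceding the corollary (the third bullet says that $T(\q)\cap\gT_p(\z_p)$ stabilizes $x$, and Zariski density then forces $T\subset K_x$ directly from the definition of $K_x$ in Lemma~\ref{Fanta}), whereas you unwind this by hand through $\hat\Gamma_U$ and $J_{b_{y,\zeta}}$; both routes are the same argument.
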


\section{Conclusions}
\label{wrap}

Let us choose a $\O_{ER}$-linear embedding $\iota_p:W(\fc)\hookrightarrow\c$ making $\c$ into a $W(\fc)$-algebra. We would like to describe the 
$G(\a^{\infty,p})$-space $\M_{\bT,\gp}(\c_{[\iota_p]})$ in the analytic category, together with the complexification of the pair $(\F_{\bT,\gp},\nabla_{\bT,\gp})$, 
again taking into account its $G(\a^{\infty,p})$-equivariance. Let us choose a base point $s_0\in\M_{\bT,\gp}(\c_{[\iota_p]})$, and let $M_{s_0}$ be the set of pairs 
$(s_1,H)$ where $H$ is a homotopy class of paths from $s_0$ to some $s_1\in\M_{\bT,\gp}(\c_{[\iota_p]})$. Let $\Delta_{s_0}$ be the set of pairs $(g,H)$ where 
$H$ is a homotopy class of paths from $s_0g$ to $s_0$ for some $g\in G(\a^{\infty,p})$. It is clear that $M_{s_0}$ is a simply connected complex manifold, while
$$(g_0,H_0)*(g_1,H_1):=(g_0g_1,(H_0g_1)*H_1)$$
defines a group law on $\Delta_{s_0}$, as the homotopy classes $H_0g_1$ of paths from $s_0(g_0g_1)$ to $s_0g_1$ and $H_1$ of paths 
from $s_0g_1$ to $s_0$ can be concatenated. The isotropy group $\Gamma_{s_0}$ of $s_0$ in $G(\a^{\infty,p})$ can be identified with the 
subgroup of elements $(g,H)\in\Delta_{s_0}$ for which $H$ is constant. The group $\Delta_{s_0}$ acts on $M_{s_0}$ from the left by means of 
$$(g_0,H_0)*(s_1,H):=(s_1g_0^{-1},(H_0*H)g_1^{-1}),$$
and $\Delta_{s_0}\backslash(M_{s_0}\times G(\a^{\infty,p}))$ can be identified with an open $G(\a^{\infty,p})$-invariant union of path-components 
of $\M_{\bT,\gp}(\c_{[\iota_p]})$. Let us write $G_\infty\subset G_\r$ for the algebraic subgroup which is generated by all non-compact almost simple factors of $G_\r$. Let $P$ be 
the image of $U_{\mu_p^{-1}}^0\times_{W(\f_{p^f}),\iota_p}\c$ in $G_{\infty,\c}^{ad}$. Upon having chosen a horizontal section $\tilde s$ of $\F_{\bT,\gp}\times^{\I_0^{\mu_p}}\gG_p$ 
over $M_{s_0}$, we obtain a canonical monodromy homomorphism 
$$\varphi_{\tilde s}:\Delta_{s_0}\rightarrow G_\infty^{ad}(\c)$$ 
and the locally biholomorphic $\Delta_{s_0}$-equivariant period map 
$$I_{\tilde s}:M_{s_0}\rightarrow G_\infty^{ad}(\c)/P(\c)=G(\c)/U_{\mu_p^{-1}}^0(\c_{[\iota_p]}),$$
in fact $\varphi_{\tilde s}$ factors through $G(\c)\rightarrow G_\infty^{ad}(\c)$, but we do not need that. Before we proceed we give the provisional outcome of these methods, 
$U_p^{ab}$ denotes the maximal compact subgroup of $G^{ab}(\q_p)$:

\begin{thm}
\label{uniformizeIV}
Let $\bT=(\{(V_i,\Psi_i,\rho_i,\bj_i)\}_{i\in\Lambda},\{(R_\pi,\iota_\pi)\}_{\pi\in\Pi})$ be an unramified $L$-metaunitary datum for a Shimura datum $(G,h)$ and fix an integral 
structure $\{\gV_{i,p}\}_{i\in\Lambda}$. Suppose that $x$ is a $W(\fc)$-rational point of $\M_{\bT,\gp}$ such that $K_x$ and $G$ have the same rank. The smallest open and 
closed $G(\a^{\infty,p})$-invariant subscheme of $\M_{\bT,\gp}\times_{W(\f_{p^f})}W(\fc)$ containing $x$, shall be denoted by $\M_{\bT,\gp}^*$.
\begin{itemize}
\item
The complexification $\M_{\bT,\gp}^*(\c_{[\iota_p]})$ is $G(\a^{\infty,p})$-equivariantly biholomorphically equivalent to the space 
$$\Delta^*\backslash(X^+\times G(\a^{\infty,p})),$$
for a discrete and cocompact subgroup $\Delta^*\subset\Aut(X^+)^+\times G(\a^{\infty,p})$.
\item
The image of $\Delta^*$ in $G^{ab}(\a^{\infty,p})$ is contained in
$$G^{ab}(\z_{(p)})^+:=G^{ab}(\q)\cap(G^{ab}(\r)^+\times U_p^{ab}\times G^{ab}(\a^{\infty,p})),$$ 
the intersection taking place in $G^{ab}(\a)$, and it has finite index in that group.
\end{itemize}
\end{thm}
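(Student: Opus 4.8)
The plan is to combine the uniformization description of the special fiber $\Marb_{\bT,\gp}(\fc)$ from theorem \ref{uniformizeII} with the crystalline-analytic comparison of subsection \ref{overFq} and the monodromy analysis built around the period map $I_{\tilde s}$. First I would fix the $W(\fc)$-rational point $x$ with $\rk K_x=\rk G$ furnished by corollary \ref{7up} and let $\M_{\bT,\gp}^*$ be the connected component-union it generates. Passing to the special fiber, $x$ reduces to a point $\xi$ of $\Marb_{\bT,\gp}(\fc)$ corresponding via theorem \ref{uniformizeII} to a sextuple $(Y,\lambda,\iota,\{y_\pi\},\eta,\zeba)$ which, by the enriched construction of subsubsection \ref{rich} and theorem \ref{aechz}, is the reduction of a CM abelian variety; its group of self-quasi-isogenies is the twisted $\q$-group $I=P\times^{T/\zen^G}G$, and by corollary \ref{gekuerzt} the stabilizer of $\xi$ in $G(\a^{\infty,p})$ is $i_{y,\eta}(\gI_{y,\zeba})$. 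The equal-rank hypothesis forces $I$ (hence $K_x$ on the generic fiber) to contain a maximal torus of $G$, which — together with the compactness of $(K_x/\zen^G)(\r)$ from lemma \ref{Fanta} — is exactly what is needed to make the $\q$-group acting arithmetically behave like a cocompact arithmetic lattice.

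Next I would establish the biholomorphic equivalence. Using the analytic description $\Delta_{s_0}\backslash(M_{s_0}\times G(\a^{\infty,p}))$ of an open $G(\a^{\infty,p})$-invariant union of path-components of $\M_{\bT,\gp}(\c_{[\iota_p]})$, and the period map $I_{\tilde s}:M_{s_0}\to G(\c)/U_{\mu_p^{-1}}^0(\c)$ with its monodromy $\varphi_{\tilde s}:\Delta_{s_0}\to G_\infty^{ad}(\c)$, I would invoke the rigidity/Griffiths-transversality style argument of \cite{varshavsky} (as advertised in the introduction) together with lemma \ref{maximalholonomy}, which guarantees the Dieudonn\'e connection $D_1$ has maximal holonomy on $G^{spc}$. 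Maximal holonomy, applied fiberwise over the special fiber, shows that the Zariski closure of the monodromy image is all of $G_\infty^{ad}$, so the image of $I_{\tilde s}$ is the full compact dual and $I_{\tilde s}$ identifies $M_{s_0}$ with the bounded symmetric domain $X^+$ sitting inside $G(\c)/U_{\mu_p^{-1}}^0(\c)$ as its open $G_\infty(\r)$-orbit. Pulling the $\Delta_{s_0}$-action across $\varphi_{\tilde s}$ and $I_{\tilde s}$ yields a discrete subgroup $\Delta^*\subset\Aut(X^+)^+\times G(\a^{\infty,p})$ with $\M_{\bT,\gp}^*(\c_{[\iota_p]})\cong\Delta^*\backslash(X^+\times G(\a^{\infty,p}))$; cocompactness of $\Delta^*$ follows from the properness of $_{K^p}\M_{\bT,\gp}$ established in subsubsection \ref{go} (via the ample line bundle $\omega_{q_0}(\chi)$ and corollary \ref{lift}), combined with the fact that the arithmetic group coming from $I$ is cocompact because $(K_x/\zen^G)(\r)$ is anisotropic.

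For the second bullet, I would track the abelian part: composing $\Delta^*\to G(\a^{\infty,p})$ with $G(\a^{\infty,p})\to G^{ab}(\a^{\infty,p})$, part (i) of proposition \ref{FOF} gives a canonical $L^+$-rational homomorphism $i_y^{ab}:I_y^{1\circ}\to G^{1ab}$ independent of the level structure, and together with the $p$-adic integrality built into the fake integral structure $\zeba$ (the stabilizer $\gJ_{y,\zeba}=J_{b_{y,\zeta}}(\q_p)\cap\gG_p(W(k))$ lies in the hyperspecial $U_p$) this shows the image of $\Delta^*$ in $G^{ab}(\a^{\infty,p})$ lands inside $G^{ab}(\z_{(p)})^+=G^{ab}(\q)\cap(G^{ab}(\r)^+\times U_p^{ab}\times G^{ab}(\a^{\infty,p}))$. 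Finite index then comes from a Tate-Nakayama / class-field-theoretic count — essentially lemma \ref{parityIV} and the structure of $H^1(\q,L^1)$ exploited in lemma \ref{parityII} — bounding the cokernel of $I_y^{1\circ}(\q)\to G^{ab}(\z_{(p)})^+$ by a finite Shafarevich-Tate-type obstruction, which by the triviality results of those lemmas vanishes or is finite.

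The main obstacle I expect is the middle step: proving that the image of the period map is the \emph{entire} compact dual $G(\c)/U_{\mu_p^{-1}}^0(\c)$ and that the monodromy is Zariski-dense in $G_\infty^{ad}$. The equicharacteristic maximal-holonomy statement of lemma \ref{maximalholonomy} is the engine, but transporting it from the formal/crystalline setting over $k[[t_1,\dots,t_d]]$ to the analytic monodromy of $\Delta_{s_0}$ acting on $M_{s_0}$ requires carefully matching the $p$-adic Dieudonn\'e connection with the complex Gauss-Manin connection through the point $x$, controlling both the generic Newton polygon (subsubsection \ref{twisted}) and the specialization behavior, and then upgrading density of a single local monodromy group to density of the global one via the arithmeticity that the equal-rank condition provides. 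This is precisely the point where the paper follows \cite{varshavsky} and \cite{nori} most closely, and where the bulk of the genuine work lies.
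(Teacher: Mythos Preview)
Your overall architecture is right: verify the hypotheses of Varshavsky's characterization theorem (appendix~\ref{characterization}) using lemma~\ref{maximalholonomy} for (M1), lemma~\ref{Fanta} for (M2), corollary~\ref{7up} for (M3), and ampleness of $\L$ for (M4), then read off the biholomorphism. But two points deserve correction.

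First, the sentence ``the image of $I_{\tilde s}$ is the full compact dual and $I_{\tilde s}$ identifies $M_{s_0}$ with $X^+$'' is internally inconsistent, and your implication ``Zariski-dense monodromy $\Rightarrow$ image is the compact dual'' is not how the argument runs. Zariski density (M1) alone gives nothing of the sort; what theorem~\ref{uniformizationI} actually does is use (M1)--(M3) to show that the closure $\overline{\varphi_{\tilde s}(\Delta_{s_0})}$ is the identity component of the real points of a real form $\bJ$ of $G_\infty^{ad}$, that $I_{\tilde s}$ is an \emph{injection}, and that $M_{s_0}$ is a $\bJ(\r)^+$-homogeneous Hermitian symmetric space. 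It is the \emph{boundedness} axiom (M4) --- supplied by the ample $\L$ --- that forces every factor to be of non-compact type, whence $M_{s_0}\cong X^+$. So the compact dual is only the ambient flag variety; the image of $I_{\tilde s}$ is $X^+$, and you should not attempt to prove surjectivity onto $G(\c)/P(\c)$.

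Second, your route to the containment in $G^{ab}(\z_{(p)})^+$ has a gap. Proposition~\ref{FOF}(i) only controls the image of the \emph{neutral component} $I_y^{1\circ}$ in $G^{1ab}$, so applying it to an arbitrary $\gamma\in\Delta_{s_0}$ requires first placing $\gamma$ inside a connected $\q$-subgroup. The paper handles this by the Margulis--Ihara trick: every $\gamma$ factors as $\gamma_1\cdots\gamma_n$ with each $\varphi_{\tilde s}(\gamma_i)$ elliptic regular semisimple (\cite[IX, Lemma~3.3]{margulis}); each $\gamma_i$ therefore fixes a point $s_i\in M_{s_0}$, hence lies in a maximal $\q$-torus $T_i\subset K_{s_i}$, and tori are connected, so proposition~\ref{FOF}(i) applies to each $\gamma_i$. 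Your appeal to lemmas~\ref{parityII} and~\ref{parityIV} for the finite-index statement is not what the paper does and is not obviously adequate; the finite index is rather a consequence of the discreteness and cocompactness of $\Delta^*$ together with the $\q$-rationality of $i_y^{ab}$ just established.
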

\begin{proof}
Let $s_0$ be the generic fiber of $x$ and let $\tilde s$ be as above. We show that one obtains a bounded period morphism 
$(\Delta_{s_0},M_{s_0},G_{\infty,\c}^{ad},P,\varphi_{\tilde s},I_{\tilde s})$ in the sense that all properties (M1), (M2), (M3), and (M4) of the part \ref{characterization} of the appendix 
are valid, (M3) follows immediately from corollary \ref{7up} and (M2) follows immediately from lemma \ref{Fanta}. Property (M1) holds because the monodromy group is already 
maximal in the formal category by lemma \ref{maximalholonomy}, and property (M4) holds because $\L$ is ample. Now recall our preferred choice of 
$h\in X^+$, and that part \ref{characterization} allows to choose $\tilde s$ in such a way that 
\begin{eqnarray*}
&&\varphi_{\tilde s}(\Delta_{s_0})\subset G_\infty^{ad}(\r)^+\\
&&I_{\tilde s}(s_0)=h
\end{eqnarray*}
hold, giving the requested bijection from the complexification of $\M_{\bT,\gp}^*$ to $\Delta_{s_0}\backslash(X^+\times G(\a^{\infty,p}))$, which sends 
the base point $s_0$ to $h$. From now on we will treat $\Delta_{s_0}$ as a subgroup of $G(\a^{\infty,p})$. Notice that every element of $\Delta_{s_0}$ 
can be written as $\gamma=\gamma_1\dots\gamma_n$ with elliptic regular semisimple elements $\varphi_{\tilde s}(\gamma_i)$, according to 
\cite[Chapter IX, Lemma 3.3]{margulis}. Following the method of Ihara we let $s_1,\dots,s_n$ be their fixed points on $M_{s_0}$, and consider corresponding 
maximal $\q$-tori $T_i\subset K_{s_i}$ containing $\gamma_i$. Now notice that we can apply part (i) of proposition \ref{FOF}, because tori are connected.
\end{proof}

We choose an auxiliary compact open subgroup of the form $K^p=\prod_{\ell\neq p}K_\ell\subset G(\a^{\infty,p})$. Spurred by the above result we continue with the study of the groups: 
\begin{eqnarray*}
&&\Delta_{s_0}^{der}:=\Delta_{s_0}\cap G^{der}(\a^{\infty,p})\\
&&\Delta_{s_0,n}^{der}:=\Delta_{s_0}^{der}\cap(\prod_{p\neq\ell\leq n}G(\q_\ell)\times\prod_{p\neq\ell>n}K_\ell)
\end{eqnarray*}
We consider the maps $\phi:\Delta_{s_0}\rightarrow G_\infty^{ad}(\r)\times G(\a^{\infty,p})$ (resp. 
$\phi_n:\Delta_{s_0}\rightarrow G_\infty^{ad}(\r)\times\prod_{p\neq\ell\leq n}G(\q_\ell)$) defined by the cartesian product of $\varphi_{\tilde s_0}$ with the natural inclusion 
into $G(\a^{\infty,p})$ (resp. projection to $\prod_{p\neq\ell\leq n}G(\q_\ell)$). For any place $v\in L_{an}^+\dot\cup(\Spec\O_{L^+}\backslash\{0\})$ we let $G_v^1$ be the algebraic 
group $G_{L_v^+}^1$ where $L_\iota^+=\r_{[\iota]}$ for any $\iota\in L_{an}^+$ while $L_\gm^+$ is the $\gm$-adic  completion for any maximal ideal $\gm\subset\O_{L^+}$. We are particularly interested in the sets of places:
\begin{eqnarray*}
&&S_n^p=\{\gm\in\Spec\O_{L^+}|p\notin\gm\ni n!\}\\
&&S^p=\{\gm\in\Spec\O_{L^+}|p\notin\gm\neq0\}
\end{eqnarray*}
For the discussion in the remainder of this section we need the following additional assumptions on $(G,h)$ and $\bT$:
\begin{itemize}
\item[(i)]
$p$ is inert in $L^+$ and $G^{1der}$ is a simply connected and absolutely almost simple group of type $B_l$, $C_l$ or $E_7$.
\item[(ii)]
There exist $i_1,i_2\in\Lambda$ such that at least one of the two equivalences $\rho_{i_1}\otimes(L\oplus\Ad^{G_L^{1ad}})\simeq\rho_{i_2}(\mbox{or }\bohr_{i_2})$ hold.
\end{itemize}
Here notice, that part (ii) of our proposition \ref{FOF} together with the above assumption (ii) implies a $\a^{\infty,p}$-version of the "property $f_1$" of 
\cite[Remark 3.8]{varshavskyII}. Let $\Sigma$ be the set of real embeddings of $L^+$ such that $G^1(\r_{[\iota]})$ is not compact. Following \cite[Step 2V]{varshavskyII} there exists a simply connected algebraic group $D$ over a number field $F$ together with 
\begin{itemize}
\item
embeddings $\iota_v:F\rightarrow L_v^+$ and 
\item
isomorphisms $\psi_v:D\times_{F,\iota_v}L_v^+\stackrel{\cong}{\rightarrow}G_v^{1der}$
\end{itemize}
for any $v\in\Sigma\dot\cup S^p$, such that the following holds for every $n$: The inverse image of the irreducible lattice
$$\phi_n(\Delta_{s_0,n}^{der})\subset G_\infty^{ad}(\r)\times\prod_{p\neq\ell\leq n}G^{der}(\q_\ell)$$
under the topological isomorphism
\begin{eqnarray*}
&&\psi_n:\prod_{v\in\Sigma}D^{ad}({L_v^+}_{[\iota_v]})\times\prod_{v\in S_n^p}D({L_v^+}_{[\iota_v]})\\
&&\rightarrow G_\infty^{ad}(\r)\times\prod_{v\in S_n^p}G_v^{1der}(L_v^+),
\end{eqnarray*}
which is induced by $\{\psi_v^{ad}\}_{v\in\Sigma}$ and $\{\psi_v\}_{v\in S_n^p}$ is a $S_n^p$-arithmetic lattice
$$\psi_n^{-1}(\phi_n(\Delta_{s_0,n}^{der}))\subset\prod_{v\in\Sigma}D^{ad}({L_v^+}_{[\iota_v]})\times\prod_{v\in S_n^p}D({L_v^+}_{[\iota_v]})$$
i.e. commensurable with the $S_n^p$-congruence subgroup of $\O_F[\prod_{p\neq\ell\leq n}\ell^{-1}]$-valued points of some $\O_F[\prod_{p\neq\ell\leq n}\ell^{-1}]$-form 
of $D$ (N.B.: Since $A_l$ is excluded, the map of places $v\mapsto\iota_v^{-1}(v)$ is an injection, but this is not essential, cf. \cite[step 2IV]{varshavskyII}). 
According to the method described in \cite[Step 2VI]{varshavskyII}, we can use the aforementioned property $f_1$ again to show that:
\begin{itemize}
\item
$F=L^+$
\item
$\iota_v$ is equal to the canonical embedding of $L^+$ into $L_v^+$ for each $v\in\Sigma\dot\cup S^p$
\end{itemize}

The observation \cite[Claim 2.5.5]{varshavskyI} and strong approximation for the simply connected 
algebraic group $D$ (e.g. \cite[Chapter II, Theorem(6.8)]{margulis}) yields a topological map: 
$$\psi:\prod_{\iota\in\Sigma}D^{ad}(\r_{[\iota]})\times D(\a^{\infty,p}\otimes L^+)\stackrel{\cong}{\rightarrow}G_\infty^{ad}(\r)\times G^{1der}(\a^{\infty,p}\otimes L^+)$$ 
One proves this along the lines of \cite[Step 2VII]{varshavskyII}. Using $\zen^D\cong\{\pm1\}$, Chebotarev and the exactness of 
$$1\rightarrow\zen^D(L^+)\rightarrow D(L^+)\rightarrow D^{ad}(L^+)\rightarrow H^1(L^+,\zen^D)\rightarrow H^1(L^+,D)$$
one infers $D(L^+)\zen^D(\a^{\infty,p})\supset\psi^{-1}(\phi(\Delta_{s_0}^{der}))$ and the latter is a $S^p$-arithmetic lattice. Lemma \ref{Fanta} reveals 
$D(L^+)\supset\psi^{-1}(\phi(\Delta_{s_0}^{der}))$. Under the above assumption (i) there are no outer forms of $G^{1der}$ and the inertness of $p$ in $L^+$ allows to 
deduce $D\cong G^{1der}$ from the Hasse principle for $G^{1ad}$, just as in \cite[Step 2X]{varshavskyII}. The proof of theorem \ref{uniformizationIII} is completed by the 
usual methods of \cite[Paragraph 6]{deligne4}, together with the known local cases of the congruence subgroup property, cf. \cite[Theorem 9.1]{rapinchuk}, \cite{sury}.

\begin{appendix}
\section{Some facts on group schemes}
\label{prov}

Recall, that every element $g$ in an abstract group $G$ gives rise to an inner automorphism, which is defined by sending any element $x\in G$ to its conjugate 
$gxg^{-1}$, and denoted by $\int^G(g)\in\Aut(G)$. Now let $X$ be a scheme and let $\G$ be an $X$-functor in groups. We write $\End(\G)$ for the class of 
functorial transformations $\G\rightarrow\G$ which preserve the group structure, and the subclass of invertible elements therein is denoted by $\Aut(\G)$. Again, the 
elements $g\in\G(R)$ give rise to natural elements $\Int^\G(g/R)\in\Aut(\G_R)$, because for varying $X$-morphisms $\phi:R\rightarrow S$ the families of inner 
automorphisms $\int^{\G(S)}(\G(\phi)(g))$ form invertible transformations from the group functor $\G_R$ to itself. By $\Endo(\G)$ we mean the monoidal $X$-functor 
$R\mapsto\End(\G_R)$, whenever it exists i.e. provided that each of the classes on the right hand-side is a set, and similarly for the group functor $\Auto(\G)$. 
Moreover, in case $\Auto(\G)$ is well-defined, there is an associated group structure-preserving functorial transformation $\Int^\G:\G\rightarrow\Auto(\G)$, which is 
given by $\Int_R^\G:\G(R)\rightarrow\Aut(\G_R);g\mapsto\Int^\G(g/R)$ on the set of points over $R\in\Ob_{\alg_X}$. We let $\zen^\G$ be the kernel of $\Int^\G$. 

\section{Some facts on Connections}
\label{vitamins}
Let $Z$ be a $B$-algebra, let $A$ be a $Z$-algebra, let $\pr_1,\pr_2:A\rightarrow A\otimes_ZA$ and 
$\pr_{12},\pr_{23},\pr_{13}:A\otimes_ZA\rightarrow A\otimes_ZA\otimes_ZA$ be the $Z$-algebra homomorphisms which are defined by:
\begin{eqnarray*}
&&\pr_1(x)=x\otimes1\\
&&\pr_2(x)=1\otimes x\\
&&\pr_{12}(x\otimes y)=x\otimes y\otimes1\\
&&\pr_{23}(x\otimes y)=1\otimes x\otimes y\\
&&\pr_{13}(x\otimes y)=x\otimes1\otimes y
\end{eqnarray*}
We need to introduce two related $Z$-algebras, namely $A\oplus\Omega_{A/Z}^1=:A'$, on which the multiplication law is given by
$$(x,\phi)(x',\phi')=(xx',x\phi'+x'\phi),$$
and $A\oplus\Omega_{A/Z}^1\oplus\Omega_{A/Z}^1\oplus\Omega_{A/Z}^2=:A''$ with multiplication law given by:
$$(x,\epsilon,\delta,\eta)(x',\epsilon',\delta',\eta')=(xx',x\epsilon'+x'\epsilon,x\delta'+x'\delta,x\eta'+x'\eta+\epsilon\wedge\delta'+\epsilon'\wedge\delta)$$
Their relevance stems from the surjections:
\begin{eqnarray*}
&&A^{\otimes_Z2}\twoheadrightarrow A';\,x\otimes y\mapsto(xy,xd_{A/Z}(y))\\
&&A^{\otimes_Z3}\twoheadrightarrow A'';\,x\otimes y\otimes z\mapsto(xyz,xd_{A/Z}(yz),xyd_{A/Z}(z),xd_{A/Z}(y)\wedge d_{A/Z}(z))
\end{eqnarray*}
For each $i\in\{1,2\}$ we denote the composition of the above surjection $A^{\otimes_Z2}\rightarrow A'$ with $\pr_i$ by $\peebar_i:A\rightarrow A'$, in fact 
$\peebar_2$ (resp. $\peebar_1$) is given by the well-known formula $x\mapsto x+d_{A/Z}(x)$ (resp. by $x\mapsto x$). 

\begin{prop}
\label{obstV}
Consider the $Z$-algebra homomorphism $\peebar_{12}$ (resp. $\peebar_{23}$ or $\peebar_{13}$) from $A'$ to $A''$ which is given 
by $(x,\phi)\mapsto(x,\phi,0,0)$ (resp. $(x,\phi)\mapsto(x,d_{A/Z}(x),\phi,d_{A/Z}(\phi))$ or $(x,\phi)\mapsto(x,\phi,\phi,0)$. Then
$$\begin{CD}
A'@>{\peebar_{ij}}>>A''\\
@AAA@AAA\\
A^{\otimes_Z2}@>{\pr_{ij}}>>A^{\otimes_Z3}\\
\end{CD}$$
is a commutative diagram, for any $1\leq i<j\leq3$.
\end{prop}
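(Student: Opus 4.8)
The plan is to verify the commutativity of the square in Proposition~\ref{obstV} directly on generators, exploiting the fact that both vertical arrows are the canonical surjections $A^{\otimes_Z 2}\twoheadrightarrow A'$ and $A^{\otimes_Z 3}\twoheadrightarrow A''$ introduced just before the statement, and that all four maps in the square are $Z$-algebra homomorphisms. Since $A^{\otimes_Z 2}$ is generated as a $Z$-algebra by elements of the form $x\otimes 1$ and $1\otimes y$ with $x,y\in A$, it suffices to check that the two composites $A^{\otimes_Z 2}\to A''$ agree on these generators for each of the three cases $(i,j)\in\{(1,2),(2,3),(1,3)\}$; by multiplicativity this forces agreement everywhere.

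First I would record the explicit effect of the left vertical arrow: $x\otimes y\mapsto (xy, x\,d_{A/Z}(y))$, so in particular $x\otimes 1\mapsto(x,0)$ and $1\otimes y\mapsto(y,d_{A/Z}(y))$. Similarly the right vertical arrow sends $x\otimes y\otimes z\mapsto\bigl(xyz,\,x\,d_{A/Z}(yz),\,xy\,d_{A/Z}(z),\,x\,d_{A/Z}(y)\wedge d_{A/Z}(z)\bigr)$, with the specializations $x\otimes 1\otimes 1\mapsto(x,0,0,0)$, $1\otimes y\otimes 1\mapsto(y,d_{A/Z}(y),0,0)$, and $1\otimes 1\otimes z\mapsto(z,0,d_{A/Z}(z),0)$. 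Then I would run through the three cases. For $(i,j)=(1,2)$: along $\pr_{12}$ a generator $x\otimes y$ goes to $x\otimes y\otimes 1$, then down to $(xy, x\,d_{A/Z}(y),0,0)$; along the other route $x\otimes y$ goes down to $(xy,x\,d_{A/Z}(y))$ and then $\peebar_{12}$ sends $(u,\phi)\mapsto(u,\phi,0,0)$, giving the same element. For $(i,j)=(1,3)$: $\pr_{13}$ sends $x\otimes y$ to $x\otimes 1\otimes y$, which maps down to $(xy,\,x\,d_{A/Z}(y),\,x\,d_{A/Z}(y),\,0)$ — here one uses $d_{A/Z}(1)=0$ so that the last component $x\,d_{A/Z}(1)\wedge d_{A/Z}(y)$ vanishes; on the other side $\peebar_{13}$ sends $(u,\phi)\mapsto(u,\phi,\phi,0)$, matching. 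For $(i,j)=(2,3)$: $\pr_{23}$ sends $x\otimes y$ to $1\otimes x\otimes y$, which goes down to $\bigl(xy,\,d_{A/Z}(xy),\,x\,d_{A/Z}(y),\,d_{A/Z}(x)\wedge d_{A/Z}(y)\bigr)$; on the other side, applying $\peebar_{23}$ to $(xy,x\,d_{A/Z}(y))$ gives $\bigl(xy,\,d_{A/Z}(xy),\,x\,d_{A/Z}(y),\,d_{A/Z}(x\,d_{A/Z}(y))\bigr)$, and since $d_{A/Z}(x\,d_{A/Z}(y)) = d_{A/Z}(x)\wedge d_{A/Z}(y)$ (the de Rham differential on the one-form $x\,d_{A/Z}(y)$), the two agree.

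The only genuinely delicate point — and the step I expect to be the main obstacle — is bookkeeping the differential-graded structure correctly: one must confirm that $\peebar_{23}$, which is defined on the level of $A'$ by the rule $(x,\phi)\mapsto(x,d_{A/Z}(x),\phi,d_{A/Z}(\phi))$, is well-defined, i.e.\ that the formula $d_{A/Z}(\phi)$ makes sense for an arbitrary element $\phi\in\Omega^1_{A/Z}$ (not just an exact one) and is compatible with the multiplication law on $A''$; and likewise that $\peebar_{13}$ is a ring homomorphism into $A''$ with its twisted multiplication. Both of these are routine applications of the Leibniz rule and the identity $d\circ d=0$, but they require writing $\Omega^1_{A/Z}$-elements as sums $\sum_k x_k\,d_{A/Z}(y_k)$ and checking compatibility termwise. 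Once the maps are confirmed to be well-defined $Z$-algebra homomorphisms, the generator-wise check above closes the argument. I would present the proof as: (1) reduce to generators by multiplicativity; (2) compute the two composites on $x\otimes 1$ and $1\otimes y$ in each of the three cases; (3) observe equality, invoking $d_{A/Z}(1)=0$, the Leibniz rule $d_{A/Z}(xy)=x\,d_{A/Z}(y)+y\,d_{A/Z}(x)$, and $d_{A/Z}(x\,d_{A/Z}(y))=d_{A/Z}(x)\wedge d_{A/Z}(y)$.
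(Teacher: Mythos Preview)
Your proof is correct. The paper states Proposition~\ref{obstV} without proof, treating it as a routine verification, and your direct computation on elements $x\otimes y$ (using the Leibniz rule, $d_{A/Z}(1)=0$, and $d_{A/Z}(x\,d_{A/Z}y)=d_{A/Z}x\wedge d_{A/Z}y$) is exactly the natural way to fill it in; your identification of the well-definedness of $\peebar_{23}$ and $\peebar_{13}$ as ring homomorphisms (via the graded Leibniz rule and the antisymmetry $\phi\wedge\phi'+\phi'\wedge\phi=0$) is the only nontrivial bookkeeping, and you handle it correctly.
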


In this optic the diagonal $A^{\otimes_Z2}\rightarrow A;\,x\otimes y\mapsto xy$ (resp. the two degeneracy homomorphisms 
$A^{\otimes_Z3}\rightarrow A^{\otimes_Z2};\,x\otimes y\otimes z\mapsto xy\otimes z$ or $x\otimes yz$) give rise to the homomorphisms 
$A'\rightarrow A;\,(x,\phi)\mapsto x$ (resp. $A''\rightarrow A';\,(x,\epsilon,\delta,\eta)\mapsto(x,\epsilon\mbox{ or }\delta)$). Observe that the intersection of the kernels 
of the two degeneracy maps is the ideal $\Omega_{A/Z}^2$. Now, let $P$ be a locally trivial principal homogeneous space for $\G$ over $A$. Let $\tilde\G=\Auto_\G(P)$ 
be the twist of $\G$, that is determined by $P$. Notice that $\tilde\G$ is an affine and flat group scheme over $A$, simply because it is fpqc locally isomorphic to $\G$.

\begin{defn}
\label{horizon}
\begin{itemize}
\item[(i)]
Write $P_i$ for the locally trivial principal homogeneous spaces for $\G$ over $A'$ which are obtained by 
pull-back along $\peebar_i$, for $1\leq i\leq2$. By a connection (relative to $Z$) one means an isomorphism 
\begin{equation*}
\alpha:P_2\rightarrow P_1
\end{equation*}
whose pull-back along the diagonal $A'\rightarrow A$ agrees with the identity section. We denote the set of all connections on $P$ by 
$\con_Z(P/A)$. If $P$ is equipped with a descent to $Z$, then we call the natural isomorphism from $P_2$ to $P_1$ the trivial connection.
\item[(ii)]
To any two connections $\alpha$ and $\alpha'$ one may consider the map
\begin{equation*}
\alpha'\circ\alpha^{-1}:P_1\rightarrow P_1,
\end{equation*}
which is seen to be a $A'$-valued automorphism of our locally trivial principal homogeneous space $P$, whose pull-back along the diagonal 
map $(x,\phi)\mapsto x$ agrees with identity section. The canonical element $\beta\in\tilde\G(\Omega_{A/Z}^1)$, which is thus obtained 
is called the difference of $\alpha'$ and $\alpha$ and it will be denoted by $\alpha'-\alpha$, and one also writes $\beta+\alpha:=\alpha'$.
\item[(iii)]
Write $\alpha_{ij}$ for the isomorphism between locally trivial principal homogeneous spaces for $\G$ over $A''$ which are 
obtained by pull-back of some $\alpha\in\con_Z(P/A)$ along the maps $\peebar_{ij}$ for $1\leq i<j\leq3$. The identities 
$\peebar_{13}\circ\peebar_2=\peebar_{23}\circ\peebar_2$ and $\peebar_{23}\circ\peebar_1=\peebar_{12}\circ\peebar_2$ allow us to consider the composition 
\begin{equation*}
\alpha_{12}\circ\alpha_{23}\circ\alpha_{13}^{-1},
\end{equation*}
which we are allowed to regard as a $A''$-valued automorphism of our locally trivial principal homogeneous space $P$, given that 
$\peebar_{13}\circ\peebar_1=\peebar_{12}\circ\peebar_1$, furthermore, since its pull-backs under the two degeneracy maps $(x,\epsilon,\delta,\eta)\mapsto(x,\epsilon\mbox{ or }\delta)$ agree with identity sections, we obtain a canonical element $\cur_Z(\alpha)\in\tilde\G(\Omega_{A/Z}^2)$, which is called the curvature of $\alpha$.
\end{itemize}
\end{defn}

\begin{rem}
The curvature map from $\con_Z(P/A)$ to $\tilde\G(\Omega_{A/Z}^2)$ as well as the summation from $\tilde\G(\Omega_{A/Z}^1)\times\con_Z(P/A)$ to $\con_Z(P/A)$ 
are $\tilde\G(A)$-equivariant with respect to the natural left actions defined on each of $\tilde\G(\Omega_{A/Z}^1)$, $\tilde\G(\Omega_{A/Z}^2)$ and $\con_Z(P/A)$. 
\end{rem}

Let $S$ be a $B$-scheme, let $X$ be a $S$-scheme, and let $P$ be a locally trivial principal homogeneous space for $\G$ over $X$. It is completely 
clear that the concepts of \ref{horizon} generalize immediately to the scheme theoretic setting: The set $\con_S(P/X)$ of $S$-connections still 
forms a $\tilde\G(X)$-set, which is empty or a $\tilde\G(X)$-equivariant principal homogeneous space under $\tilde\G(\Omega_{X/S}^1)$, 
and the curvature map reads $\cur_S:\con_S(P/X)\rightarrow\tilde\G(\Omega_{X/S}^2)$, and it is $\tilde\G(X)$-equivariant too.

\subsection{A remark on the Cartan-Maurer form}

Recall that a vector bundle on a scheme $S$ is an $\O_S$-module which is locally free of finite rank and we write $\ect(S)$ for the additive 
rigid $\otimes$-category of vector bundles on $S$. Recall that a $S$-connection for a vector bundle $\F$ on some smooth $S$-scheme $X$ is a 
$\O_S$-linear map $\nabla:\F\rightarrow\Omega_{X/S}^1\otimes_{\O_X}\F$ satisfying an analog of the Leibniz rule, namely:
\begin{equation}
\label{Keks}
\nabla(ax)=d_{X/S}(a)\otimes x+a\otimes\nabla(x)
\end{equation}
for any sections $a$ (resp. $x$) of $\O_X$ (resp. $\F$) over any open subset of $X$. Notice that the set of all $S$-connections for $\F$ forms a principal homogeneous space 
for the group of global sections of $\F\otimes_{\O_X}\check\F$-valued $1$-forms, unless it is empty. Every $S$-connection $\nabla$ gives rise to a further $\O_S$-linear map
$$\nabla_1:\Omega_{X/S}^1\otimes_{\O_X}\F\rightarrow\Omega_{X/S}^2\otimes_{\O_X}\F;\,\eta\otimes x\mapsto d_{X/S}(\eta)\otimes x-\eta\otimes\nabla(x)$$
and $\nabla$ is called integrable if $\nabla_1\circ\nabla=0$. We write $\yst_{X/S}$ (resp. $\mic_{X/S}$) for the additive rigid $\otimes$-category 
consisting of vector bundles on $X$ together with a connection (resp. integrable connection), please see to \cite{deligne2} for further details.\\
Suppose that $S$ is a $B$-scheme and let $\G$ be a smooth affine group over $B$. Let $P$ be a locally trivial principal homogeneous space for $\G$ 
over $X$ and recall that it gives rise to a twisted fiber functor $\omega_P$. Furthermore applying $\omega_P$ to the adjoint representation $\Ad^\G$ 
(together with the Lie-bracket from $\Ad^\G\otimes_B\Ad^\G$ to $\Ad^\G$) yields a Lie-$\O_X$-algebra $\tilde\gg=\Lie\tilde\G$. In the same vein every 
$\alpha\in\con_S(P/X)$ gives rise to $S$-connections $\nabla_\alpha(\rho)$ on each $\omega_P(\rho)\in\Ob_{\ect(X)}$, which are compatible in the sense that 
$$\bRep_0(\G)\rightarrow\yst_{X/S};\rho\mapsto(\omega_\alpha(\rho),\nabla_\alpha(\rho))$$ 
sets up a $\otimes$-functor (see e.g. \cite[VI.1.2.3.1]{rivano}). Moreover, applying $\omega_P$ to the $\bRep_0(\G)$-morphism 
$\rho^{der}:\gg\rightarrow\End_B(\omega^\G(\rho))$ yields a map, say $\omega_P(\rho^{der})$ from $\tilde\gg$ to 
$\End_B(\omega_P(\rho))$ and the formula $\nabla_{E+\alpha}(\rho)=\omega_P(\rho^{der})E+\nabla_\alpha(\rho)$ 
holds for all $E\in\Gamma(X,\Omega_{X/S}^1\otimes_{\O_X}\tilde\gg)$. The following formulae are well-known:
\begin{itemize}
\item
$\nabla_\alpha(\Ad^\G)\cur_S(\alpha)=0$
\item
$\cur_S(E+\alpha)=\frac{[E,E]}2+\nabla_\alpha(\Ad^\G)E+\cur_S(\alpha)$
\end{itemize}
At last we want to make these things more explicit for the case of the globally trivial principal homogeneous space $P=\G\times_BX$, in which case $\tilde\G$ and $\G\times_BX$ 
agree canonically, so that $\con_S(P/X)=\Omega_{X/S}^1\otimes_B\gg$ (by decreeing the neutral element to be the trivial connection), and the $\G(X)$-equivariant map 
$\cur_S:\Omega_{X/S}^1\otimes_B\gg\rightarrow\Omega_{X/S}^2\otimes_B\gg$ can be described by $E\mapsto\frac{[E,E]}2+ d_{X/S}(E)$. However, the left actions of 
some $g\in\G(X)$ on $\Omega_{X/S}^2\otimes_B\gg$ and $\Omega_{X/S}^1\otimes_B\gg$ are a little subtle: While the $g$-action on $\Omega_{X/S}^2\otimes_B\gg$ 
is simply given by $\id_{\Omega_{X/S}^2}\otimes\Ad^\G(g)$, it acts by means of $E\mapsto(\id_{\Omega_{X/S}^1}\otimes\Ad^\G(g))E-\eta^\G\circ g$ on 
$\Omega_{X/S}^1\otimes_B\gg$ where $\eta^\G\in\Omega_{\G/B}^1\otimes_B\gg$ is the canonical right-invariant Cartan-Maurer form, and $\eta^\G\circ g$ 
denotes the pull-back of the $1$-form $\eta^\G$ by means of $g:X\rightarrow\G$. We remark that the Cartan-Maurer form of $\GL(n)$ is $(dA)A^{-1}$.

\section{Gradations and Filtrations}
\label{cocha}
By a gradation of type $T$ of some torsion-free finitely generated $B$-module $\V$, we mean a direct sum $\V=\bigoplus_{l\in T}\V_l$ (N.B. we do not require that $\V_l\neq0$ for 
$l\in T$). Let us write $\GL(\V/B)$ (resp. $\underline\V$) for the  multiplicative (resp. additive) group functor, of which the $R$-valued points are the units in $\End_B(\V)\otimes_BR$ 
(resp. $\V\otimes_BR$). The group homomorphisms from the multiplicative group $\g_{m,B}$ to $\GL(\V/B)$, i.e. the cocharacters, are given by their respective weight-gradations 
of $\V$, which are of type $\z$, as the group $\z$-scheme $\g_m$ is given by the spectrum of $\z[x^{\pm1}]=\bigoplus_{l\in\z}\z x^l$. By slight abuse of language we will say 
that some cocharacter $\nu$ was of type $T\subset\z$ if and only if its weight-gradation would have that property and the unique cocharacter of type $\{1\}$ will be denoted 
by $\delta_{\V}$. Let us write $\a^1:=\Spec\z[x]$ for the affine line, and notice in particular, that a cocharacter is of type $\n_0$ if and only if it extends to a homomorphism 
\begin{equation*}
\a_B^1\rightarrow\underline{\V\otimes_B\check\V}, 
\end{equation*}
respecting the natural monoidal structures on either side. A cocharacter thus extended to $\a^1$ will be referred to as an effective one. We need to introduce a couple 
of operations for a cocharacter $\nu$ of the group $\GL(\V/B)$, say defined by a weight gradation $\V=\bigoplus_{l\in\z}\V_l$. There exists a canonical isomorphism
$$\GL(\V/B)\rightarrow\GL(\check\V/B);\,g\mapsto\check g,$$ 
where $\check\V=\Hom_B(\V,B)$. It follows that there is a canonically determined dual cocharacter $\check\nu:\g_{m,B}\rightarrow\GL(\check\V/B)$, and indeed one 
finds easily that the $\check\nu$-weight of $\check\V_l$ (regarded as a subspace of $\check\V$) is equal to $-l$. Let $H_0:\z\rightarrow\z$ be the Heaviside step function
\begin{equation*}
H_0(l):=\begin{cases}0&l\leq0\\1&l\geq1\end{cases},
\end{equation*}
consider the $B$-modules $\tilde\V_l:=\bigoplus_{H_0(m)=l}\V_m$, and let $H_0(\nu)$ be the cocharacter (of type $\{0,1\}$) which is defined 
by the weight-gradation $\V=\tilde\V_0\oplus\tilde\V_1$, i.e. $H_0(\nu)$ acts with weight $H_0(l)$ on each direct summand $\V_l$. Observe that 
the dual of $H_0(\nu\delta_\V)$ is equal to $\frac{H_0(\check\nu)}{\delta_{\check\V}}$, because $H_0(1-l)=1-H_0(l)$ holds for every integer $l$, 
the dual of $\delta_\V$ is $\delta_{\check\V}^{-1}$. By a filtration on a $B$-algebra $R$ one means a family of ideals $\{\ga_n\}_{n\in\z}$ with
\begin{itemize}
\item
$\ga_n\ga_m\subset\ga_{n+m}$
\item
$R=\ga_0=\ga_{-1}$
\end{itemize}
Equivalently, a filtration is given by a function 
$v:R\rightarrow\n_0\cup\{\infty\}$ with
\begin{itemize}
\item
$v(a+b)\geq\min\{v(a),v(b)\}$
\item
$v(ab)\geq v(a)+v(b)$
\end{itemize}
by means of $\ga_n=\{x\in R|v(x)\geq n\}$. We call $v$ the characteristic valuation of the filtration.

\subsection{Fiber Functors}
\label{ballast}
If $X$ is a scheme, we write $\ect(X)$ for the additive rigid $\otimes$-category of vector bundles on $X$, i.e. locally free $\O_X$-modules which are locally of finite type. 
We will also utilize the $B$-linear, additive, rigid $\otimes$-category $\bRep_0(\G)$ of 'representations', i.e. $\G-B$-modules which are finitely generated and projective 
over $B$, this has been introduced and studied in \cite[II.4.1.2.1]{rivano}, along with various functors defined thereon: There is a natural forgetful faithful fiber functor 
$$\omega^\G:\bRep_0(\G)\rightarrow\ect(\Spec B),$$ 
which takes a $\G-B$-module to its underlying $B$-module. Furthermore, for every locally trivial principal 
homogeneous space $P$ for $\G$ over some $B$-scheme $X$, there also exists a certain twisted fiber functor 
$$\omega_P:\bRep_0(\G)\rightarrow\ect(X).$$ 
This takes a representation $(\V,\rho)$ to $P\times^\G\V$, which is the locally free $\O_X$-module obtained by the usual extension of structure group (the map $\rho$ takes $P$ naturally 
to a locally trivial principal homogeneous space for $\GL(\V/B)$ over $X$, and in turn this defines a vector bundle of constant rank equal to $\rk_B\V$, cf. \cite[II.3.2.3.4]{rivano}). The 
functors $\omega^\G$ and $\omega_P$ are exact and respect the rigid $\otimes$-structure defined on their source and target. Moreover, the group $\G/B$ (resp. the locally trivial principal 
homogeneous space $P/X$) can be recovered from $\omega^\G$ (resp. $\omega_P$) by means of the natural isomorphisms $\G(R)\cong\Aut^\otimes(R\otimes_B\omega^\G)$ (resp. 
$P(R)\cong\Iso^\otimes(R\otimes_B\omega^\G,R\otimes_{\O_X}\omega_P)$), where $\Aut^\otimes$ (resp. $\Iso^\otimes$) denotes the $\otimes$-preserving subset of functorial isomorphisms 
from the $\ect(\Spec R)$-valued $\otimes$-functor $R\otimes_B\omega^\G$ to itself (resp. to $R\otimes_{\O_X}\omega_P$). Notice that every cocharacter $\mu:\g_{m,B}\rightarrow\G$ 
defines a gradation of type $\z$ on $\omega^\G$, \cite[Corollaire IV.1.2.2.2]{rivano}, i.e. compatible gradations $\omega^\G(\rho)=\bigoplus_{l\in\z}\omega_\mu^\G(l,\rho)$ 
as $\rho$ runs through $\bRep_0(\G)$. Following \cite[IV.2.1.3]{rivano}, we consider the associated closed subgroup functors
\begin{eqnarray}
&&U_\mu^m(R)=\{g\in\G(R)|\forall\rho,l_0:\\
\label{realIV}
&&(\rho(g)-1)(\omega_\mu^\G(l_0,\rho))\subset R\otimes_B\bigoplus_{l=m+l_0}^\infty\omega_\mu^\G(l,\rho)\},
\end{eqnarray}
for any $m\in\n_0$. If $\G$ is smooth, then $U_\mu^m$ is representable by (and will be identified with) a smooth subscheme, by \cite[Proposition IV.2.1.4.1]{rivano}. Since $U_\mu^0$ preserves the filtration $\{\bigoplus_{l=l_0}^\infty\omega_\mu^\G(l,\rho)|l_0\in\z\}$ it acts on each subquotient $\omega_\mu^\G(l_0,\rho)$, and it is useful to introduce the character: 
$$\chi_\mu(l_0,\rho):U_\mu^0\stackrel{\rho}{\rightarrow}\GL(\omega_\mu^\G(l_0,\rho)/B)\stackrel{\det}{\rightarrow}\g_{m,B}$$ 
Note that $\chi_\mu(l_0,\rho)|_{U_\mu^1}$ is trivial. It also follows that there is a canonical homomorphism
$$L_\mu:\a_B^1\rightarrow\Endo(U_\mu^0)$$
of multiplicative monoids which extends the interior $\g_{m,B}$-action $\Int^{U_\mu^0}\circ\mu$ on $U_\mu^0$. We would like to point out two variants of Saavedra's 
parabolic and unipotent subgroup constructions: First, if $I$ is any ideal in any $B$-algebra $R$, then we denote the inverse image $U_\mu^0(R/I)$ in $\G(R)$ by:
\begin{equation}
\label{realI}
\Gamma_\mu^{R,I}:=\G(R)\times_{\G(R/I)}U_\mu^0(R/I)
\end{equation}
Second, if $v:R\rightarrow\n_0\cup\{\infty\}$ is the valuation corresponding to a filtration $\ga_n\subset R$, then we introduce a family of subgroups of $\G(R)$ via
\begin{eqnarray*}
&&m\mapsto\hat U_\mu^m(R,v)=\{g\in\G(R)|\forall\rho,l_0:\\
&&(\rho(g)-1)(\omega_\mu^\G(l_0,\rho))\subset
\bigoplus_{l\in\z}\ga_{m+l_0-l}\otimes_B\omega_\mu^\G(l,\rho)\},
\end{eqnarray*}
which are going to serve as ``topological'' analogs to the groups $U_\mu^m$. Notice that $R=\ga_0=\ga_{-1}=\dots$, so that $U_\mu^m(R)$ is indeed 
contained in $\hat U_\mu^m(R,v)$. Note that $\G(Q)\cap\hat U_\mu^m(R,v)=\hat U_\mu^m(Q,v|_Q)$ holds for $B$-subalgebras $Q\subset R$.

\begin{lem}
\label{immersionI}
Let $i:\G\rightarrow\H$ be a closed immersion of affine and flat $B$-groups. Then $\hat U_\mu^m(R,v)=\{g\in\G(R)|i(g)\in\hat U_{i\circ\mu}^m(R,v)\}$ holds.
\end{lem}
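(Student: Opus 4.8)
\textbf{Proof proposal for Lemma \ref{immersionI}.}

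The plan is to reduce the statement to the defining inclusion property of the subgroup functors $\hat U_\bullet^m(-,v)$ by passing to a suitable common ambient $\GL$. First I would observe that the inclusion ``$\subset$'' is immediate: if $g\in\hat U_\mu^m(R,v)$ then by definition the congruence conditions involving the weight gradation $\omega_\mu^\G(l_0,\rho)=\omega_\mu^\G(l_0,\rho\circ i\text{-pulled-back})$ hold for all $\rho\in\bRep_0(\G)$; restricting attention to those representations of $\G$ that arise by restriction of representations of $\H$, one sees that $i(g)$ satisfies exactly the conditions defining $\hat U_{i\circ\mu}^m(R,v)$, because the cocharacter $i\circ\mu$ acts on $\omega^\H(\varrho)$ with the gradation that, after restriction along $i$, refines to the $\mu$-gradation on $\omega^\G(\varrho|_\G)$. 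So the content is the reverse inclusion: an element $g\in\G(R)$ whose image $i(g)$ lies in $\hat U_{i\circ\mu}^m(R,v)$ already lies in $\hat U_\mu^m(R,v)$.

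For this direction I would use that $\bRep_0(\G)$ is generated (as a rigid $\otimes$-category up to subquotients) by the restrictions of $\H$-representations, since $i$ is a closed immersion of affine flat $B$-groups; more precisely, every $\rho\in\Ob_{\bRep_0(\G)}$ is a subquotient of $\varrho|_\G$ for some $\varrho\in\Ob_{\bRep_0(\H)}$ (this is the standard Chevalley-type statement, valid over $B$ for flat affine group schemes of the kind considered in \cite{rivano}; cf. the discussion preceding Lemma \ref{immersionI} and the references in part \ref{ballast} of the appendix). The weight filtration $\{\bigoplus_{l\geq l_0}\omega_\mu^\G(l,\rho)\}$ on such a subquotient is induced from the weight filtration of $\varrho|_\G$, which in turn is the restriction of the $(i\circ\mu)$-weight filtration on $\omega^\H(\varrho)$. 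Hence the congruence condition ``$(\rho(g)-1)\omega_\mu^\G(l_0,\rho)\subset\bigoplus_l\ga_{m+l_0-l}\otimes\omega_\mu^\G(l,\rho)$'' for an arbitrary $\rho$ follows from the same condition for $\varrho|_\G$, by functoriality of the filtration under the morphisms exhibiting $\rho$ as a subquotient; and the latter is precisely the hypothesis that $i(g)\in\hat U_{i\circ\mu}^m(R,v)$ after identifying $\rho(g)$ on the subquotient with the action induced by $\varrho(i(g))$. The compatibility of the valuation-indexed ideals $\ga_n$ with taking sub- and quotient-modules (tensoring is exact on the relevant projective modules) is what makes the passage to subquotients harmless.

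The key steps, in order, are: (1) record the easy inclusion by restricting to the subcategory of $\G$-representations coming from $\H$; (2) invoke the subquotient-generation of $\bRep_0(\G)$ by $\{\varrho|_\G:\varrho\in\bRep_0(\H)\}$, which is where the closed-immersion and flatness hypotheses enter; (3) check that the $\mu$-weight gradation and its associated filtration are compatible with subquotients and are induced from the $(i\circ\mu)$-gradation on $\omega^\H$; and (4) deduce the congruence conditions for all $\rho\in\bRep_0(\G)$ from those for the generating $\varrho|_\G$, yielding the reverse inclusion. The main obstacle I expect is step (2)–(3): one must be careful that over a general base ring $B$ the required Tannakian generation statement and the exactness needed to propagate the filtration-compatibility through subquotients hold in the precise form needed; over a field this is classical, and over $B$ one leans on the flatness of $\G$ and $\H$ together with \cite[II.3.2.3.4, IV.1.2.2.2, IV.2.1.3]{rivano}. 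Once that is in place, the rest is a direct unwinding of the definition \eqref{realIV} of the subgroup functors together with the multiplicativity properties $\ga_n\ga_m\subset\ga_{n+m}$ of the filtration.
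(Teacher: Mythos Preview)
The paper states Lemma~\ref{immersionI} without proof, so there is no author's argument to compare against directly. Your proposal is the natural one and is essentially correct: the inclusion $\subset$ is immediate, and for $\supset$ you reduce to representations of the form $\varrho|_\G$ by invoking that for a closed immersion $i$ every object of $\bRep_0(\G)$ arises as a subquotient of such a restriction, after which the filtration condition propagates because the $\mu$-weight gradation on a subquotient is induced from that on the ambient representation and the ideals $\ga_n$ behave well under the splitting of the weight spaces. The only point that is not entirely formal is the one you yourself isolate in steps (2)--(3): over a general base $B$ one must ensure that the relevant subquotients remain in $\bRep_0(\G)$ (i.e.\ stay projective) so that the weight decompositions split as $B$-modules, making the intersection $(\omega^\G(\rho)\otimes R)\cap\bigoplus_l\ga_{m+l_0-l}\otimes\omega_\mu^\G(l,\varrho|_\G)$ equal to $\bigoplus_l\ga_{m+l_0-l}\otimes\omega_\mu^\G(l,\rho)$. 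For the applications in the paper ($B$ a discrete valuation ring, $\G$ and $\H$ smooth affine) this is unproblematic; your references to \cite[IV.2.1.3--2.1.4.1]{rivano} are the right ones.
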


\begin{thm}
\label{triangleI}
Let $v$ be the characteristic valuation of the filtration $\ga_n$ on a $B$-algebra $R$, where $\ga_1\subset\rad(R)$. Let $\T$ be a split maximal torus in a split reductive group scheme 
$\G$ over $B$ and let $\mu:\g_{m,B}\rightarrow\T$ be a cocharacter. Let us write $\{\alpha_1,\dots,\alpha_d\}$ for the subset of $\T$-roots of $\G$ which have positive $\mu$-weights, 
ordered such that the sequence of their $\mu$-weights $h_i:=<\alpha_i,\mu>$ is increasing. If $\U_i\subset\G$ denotes the root space corresponding to $\alpha_i$ one has
$$\hat U_{\mu^{-1}}^0(R,v)=U_{\mu^{-1}}^0(R)\prod_{i=1}^d\U_i(\ga_{h_i}),$$  
where the order of multiplication is according to increasing $i$.
\end{thm}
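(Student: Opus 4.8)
The plan is to prove Theorem~\ref{triangleI} by a reduction to the case of the unipotent radical of a Borel, followed by an explicit coordinate computation using the root-space decomposition. First I would observe that the statement only involves the subgroup $\G$ generated by $\T$ together with the root subgroups $\U_i$ (positive $\mu$-weight) and their opposites; since $U_{\mu^{-1}}^0$ is the parabolic $P_{\mu^{-1}}$ with Levi $Z_\G(\mu)$ and unipotent radical $\prod_{h_i>0}\U_i$ (in the $\mu^{-1}$-convention the roots of positive $\mu^{-1}$-weight), one has a semidirect product $U_{\mu^{-1}}^0 = Z_\G(\mu)\ltimes R_u(U_{\mu^{-1}}^0)$, and the factor $\prod_i \U_i(\ga_{h_i})$ on the right-hand side lies inside $R_u(U_{\mu^{-1}}^0)(R)$. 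The inclusion $U_{\mu^{-1}}^0(R)\subset \hat U_{\mu^{-1}}^0(R,v)$ is immediate from the remark right after the definition of $\hat U_\mu^m$ in part~\ref{ballast} (since $R=\ga_0=\ga_{-1}$), so it suffices to show that $\hat U_{\mu^{-1}}^0(R,v)$ is exactly the product set $U_{\mu^{-1}}^0(R)\cdot\prod_{i=1}^d\U_i(\ga_{h_i})$, and in particular that this product is a subgroup.

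The key computational step is to analyze the condition defining $\hat U_{\mu^{-1}}^0(R,v)$ on the tautological representation through which all of $\G$ embeds. Using Lemma~\ref{immersionI}, I may pick a closed immersion $\G\hookrightarrow\GL(\V/B)$ and a cocharacter-compatible basis of $\V$ diagonalizing $\mu$, so that the filtration condition becomes: the matrix of $g$ has its $(l,l')$-block (weights $l$ for rows via $\omega_{\mu^{-1}}^\G$, $l'$ for columns) congruent to the identity-or-zero pattern modulo $\ga_{l'-l}$ when $l<l'$ and unrestricted otherwise — more precisely $(\rho(g)-1)$ sends the weight-$l_0$ part into $\bigoplus_l \ga_{l_0-l}\otimes \omega^\G_{\mu^{-1}}(l)$. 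Writing $g = g_0\cdot u$ with $g_0\in U_{\mu^{-1}}^0(R)$ the ``block-upper-triangular'' part (which automatically satisfies the condition with the trivial valuation) and peeling off root-coordinates one weight-level at a time, starting from the lowest $h_i$, one sees inductively that $g$ satisfies the $\hat U$-condition if and only if the root-coordinate of $g$ along $\alpha_i$ lies in $\ga_{h_i}$ for each $i$ with $h_i>0$, while the $Z_\G(\mu)$-part and the negative-weight part are unconstrained (these contribute to $U_{\mu^{-1}}^0(R)$). The ordered product $\prod_{i=1}^d\U_i(\ga_{h_i})$ with increasing $h_i$ is forced by the Chevalley commutator formula: commutators $[\U_i(\ga_{h_i}),\U_j(\ga_{h_j})]$ land in root subgroups of weight $\geq h_i+h_j$ evaluated on $\ga_{h_i}\ga_{h_j}\subset \ga_{h_i+h_j}$, which keeps us inside the product set, and here the hypothesis $\ga_1\subset\rad(R)$ guarantees that the relevant geometric-series / Newton-iteration manipulations converge (the higher-weight corrections are topologically nilpotent), so that the product set is genuinely a subgroup and the factorization of an arbitrary element is well-defined and unique.

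The main obstacle I anticipate is the bookkeeping around the interaction of the Levi factor $Z_\G(\mu)$ with the root groups of nonzero weight — i.e. making the ``peel off one weight level at a time'' induction precise without a basis that is simultaneously adapted to $\T$ and to the full flag. I would handle this by first treating the split-torus, split-reductive situation where everything is defined over $B$ by Chevalley-group generators and relations: choose the ordering $\alpha_1,\dots,\alpha_d$ refining the $\mu$-grading (this is exactly the hypothesis), use the open-cell decomposition $U_{\mu^{-1}}^0 = Z_\G(\mu)\cdot \prod_i \U_i$ with $\U_i\cong\g_a$, and translate the filtration condition through $\omega^\G$ of the sum of the standard representations whose weights generate the character lattice. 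A secondary technical point is verifying that the definition of $\hat U^0_{\mu^{-1}}(R,v)$ via \emph{all} $\rho$ reduces to a single faithful $\rho$; this is precisely Lemma~\ref{immersionI} applied to a closed immersion into some $\GL(\V)$, combined with the fact that both sides of the asserted equality are visibly functorial in $R$ and compatible with the $\ga_1$-adic topology. Once the split case is done, no descent is needed since the statement is already formulated only for split $\G$ and split $\T$.
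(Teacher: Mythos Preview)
Your proposal has a sign confusion that runs through paragraphs 1 and 3. The roots $\alpha_i$ with $h_i=\langle\alpha_i,\mu\rangle>0$ have \emph{negative} $\mu^{-1}$-weight, so the root subgroups $\U_i$ lie in the opposite unipotent $U_\mu^1$, not in $R_u(U_{\mu^{-1}}^0)=U_{\mu^{-1}}^1$. Thus your claim that $\prod_i\U_i(\ga_{h_i})\subset R_u(U_{\mu^{-1}}^0)(R)$ is false, as is the ``open-cell decomposition $U_{\mu^{-1}}^0=Z_\G(\mu)\cdot\prod_i\U_i$'' in paragraph 3 (the Levi decomposition of $U_{\mu^{-1}}^0$ involves the roots of \emph{negative} $\mu$-weight). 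The whole content of the theorem is that $\hat U_{\mu^{-1}}^0(R,v)$ is \emph{strictly larger} than $U_{\mu^{-1}}^0(R)$ and leaks into $U_\mu^1$ in a filtration-controlled way. Your paragraph 2 does seem to have this right (``the $Z_\G(\mu)$-part and the negative-weight part are unconstrained''), so the error may be partly notational, but as written the framing in paragraphs 1 and 3 would lead you to try to prove $\hat U_{\mu^{-1}}^0(R,v)\subset U_{\mu^{-1}}^0(R)$, which is false.

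Once the orientation is corrected, your approach is essentially the paper's, though the paper uses $\ga_1\subset\rad(R)$ differently and more sharply. The paper does not invoke Chevalley commutators or any convergence: since the multiplication map $U_{\mu^{-1}}^0\times_B U_\mu^1\to\G$ is an \emph{open immersion} (the big cell), and since any $g\in\hat U_{\mu^{-1}}^0(R,v)$ reduces modulo $\ga_1$ to an element of $U_{\mu^{-1}}^0(R/\ga_1)$, the hypothesis $\ga_1\subset\rad(R)$ forces the $R$-point $g$ to land in this open subscheme, giving a factorization $g=g_0g_1$ with $g_0\in U_{\mu^{-1}}^0(R)$ and $g_1\in U_\mu^1(R)$ for free. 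The remaining induction on $i$ (your ``peel off one weight level at a time'') then uses only the inclusions $U_\mu^{h_i}(R)\cap\hat U_{\mu^{-1}}^0(R,v)\subset\G(\ga_{h_i})$ and $\U_i(\ga_{h_i})\subset\hat U_{\mu^{-1}}^0(R,v)$ together with the scheme-theoretic product decomposition $\prod_i\U_i\cong U_\mu^1$. There are finitely many root groups, so no Newton iteration or infinite series is at issue; your commutator machinery is unnecessary here.
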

\begin{proof}
Consider some $g\in\hat U_{\mu^{-1}}^0(R,v)$. Notice that the group multiplication identifies $U_{\mu^{-1}}^0\times_BU_\mu^1$ with an open subscheme of 
$\G$ (use the implication "b)$\Rightarrow$a)" in \cite[Th\'eor\`eme (17.9.1)]{egaiv} followed by the "d)$\Rightarrow$b)"-one in the result \cite[Th\'eor\`eme (17.11.1)]{egaiv}). 
According to $\ga_1\subset\rad(R)$, we are allowed to deduce that $\hat U_{\mu^{-1}}^0(R,v)$ is contained in (the $R$-valued points of) the image of 
$U_{\mu^{-1}}^0\times_BU_\mu^1$ in $\G$. This yields a unique decomposition $g=g_0g_1$, with $g_0\in U_{\mu^{-1}}^0(R)$ and $g_1\in U_\mu^1(R)$, 
in fact we have $g_1\in U_\mu^1(R)\cap\hat U_{\mu^{-1}}^0(R,v)$, as $U_{\mu^{-1}}^0(R)\subset\hat U_{\mu^{-1}}^0(R,v)$. It is well known that the map
$$\prod_{i=1}^d\U_i\rightarrow U_\mu^1;(u_1,\dots,u_d)\mapsto u_1\cdot\dots\cdot u_d$$ 
is an isomorphism. Again we find a decomposition $g_1=\prod_{i=1}^du_i$, where the order of multiplication is according to increasing $i$ and 
$u_i\in\U_i(R)$. It remains to check $u_i\in\U_i(\ga_{h_i})$, which we do by induction on $i$: To this end notice that we have the inclusions
$U_\mu^{h_i}(R)\cap\hat U_{\mu^{-1}}^0(R,v)\subset\G(\ga_{h_i})$, $\U_i(\ga_{h_i})\subset\hat U_{\mu^{-1}}^0(R,v)$, and $\U_i\subset U_\mu^{h_i}$, so that 
$u_i\cdot\dots\cdot u_d=u_{i-1}^{-1}\cdot\dots\cdot u_1^{-1}g_1$ implies $u_i\equiv\dots\equiv u_d\equiv1\mod\ga_{h_i}$.
\end{proof}

\section{Existence of metaunitary Shimura data}
\label{hilfssaetze}

Here is our raw material for sample poly-unitary Shimura data:

\begin{prop}
\label{triple}
Let $K$ be a field of characteristic $0$ and let $G_0$ be a connected reductive algebraic group over $K$ whose commutator subgroup $G_0^{der}$ is 
simply connected and whose adjoint group $G_0^{ad}$ is absolutely simple. Let $\rho_0$ be a linear representation of $G_0$ on a vector space $U_0$ 
of finite dimension over $K$. Assume that the restriction of $\rho_0$ to the center $\zen^{G_0}$ is faithful. Let us write $\gg_0^{der}$ (resp. $\gg_0$) for the 
Lie algebra of $G_0^{der}$ (resp. $G_0$), let $\rho'_0$ be the representation of $G_0$ on the space $U'_0=U_0\oplus\gg_0^{der}$ and let ${\rho'_0}^{der}$ 
be the corresponding representation of $\gg_0$ on $U'_0$. Let us write $U'_1$ for $U_1\oplus\gg_0^{der}$, where $U_1$ is the trivial one-dimensional 
representation of $G_0$, and fix an element $e\in U_1\backslash\{0\}$. Consider the $K$-subalgebra $K[a]\subset\End_{G_0}(U'_1\otimes_KU'_0)$ that is generated by 
$$(x_1+x'_1)\otimes(x_0+x'_0)\stackrel{a}{\mapsto}(x'_0\otimes x'_1+e\otimes{\rho'_0}^{der}(x'_1)(x_0+x'_0)),$$ 
where $x_i\in U_i$ and $x'_i\in\gg_0^{der}$ for $i\in\{0,1\}$. Write $G^0$ for the stabilizer in 
$\GL(U'_1/K)\times_K\GL(U'_0/K)$ of $\End_{G_0}(U'_1)$, $\End_{G_0}(U'_0)$, and $K[a]$. Then
\begin{equation}
\label{tripleI}
G^0=G_0\zen^{G^0}
\end{equation}
holds. More specifically, write $Z'_0$ for the center of $\End_{G_0}(U'_0)^\times$ and write $Z'\subset Z'_0$ for the subtorus consisting of elements whose 
action on $\gg_0^{der}$ is a scalar. Then $Z'$ is naturally contained in $\zen^{G^0}$, and indeed one has $\zen^{G^0}=\g_{m,K}\times_KZ'$, where the 
copy of $\g_{m,K}$ acts as scalars on $U'_1$. The rank of $\zen^{G^0}$ is equal to $1+\Card\{\text{isotypic components of }U'_0\}$ and it is connected.
\end{prop}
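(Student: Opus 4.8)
The plan is to reduce everything to a statement about the tensor representation $\rho'_1\otimes\rho'_0$ of the product group $G_0\times\GL(U'_0/K)$ and then exploit the subalgebra $K[a]$ as a ``coupling'' device that forces any element of $G^0$ to act via $G_0$ up to the center. First I would set up the coordinates: decompose $U'_0=U_0\oplus\gg_0^{der}$, note that $\End_{G_0}(U'_0)$ contains the idempotent projectors onto the isotypic components, so that any $(g_1,g_0)\in G^0(R)$ must preserve each isotypic piece of $U'_0$ and each isotypic piece of $U'_1=U_1\oplus\gg_0^{der}$; in particular $g_1$ preserves the line $U_1$ (it is a distinct isotypic component since $U_0$ does not contain the trivial representation once $\rho_0|_{\zen^{G_0}}$ is faithful — actually one should be slightly careful and use that $e\in U_1$ is singled out by $K[a]$), so $g_1$ acts as a scalar $\lambda$ on $U_1$ and by some $h\in\GL(\gg_0^{der})$ on $\gg_0^{der}$. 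The key computation is to write out the condition that $(g_1,g_0)$ commute with $a$: applying $g_1\otimes g_0$, then $a$, then $(g_1\otimes g_0)^{-1}$, and demanding the result equal $a$ on the two natural summands of $U'_1\otimes U'_0$. On the summand $\gg_0^{der}\otimes U'_0$ the term $x'_0\otimes x'_1$ forces $h$ and the $\gg_0^{der}$-block of $g_0$ to be transpose-inverse of each other, and on the summand $e\otimes(\cdots)$ the term $e\otimes{\rho'_0}^{der}(x'_1)(\cdots)$ forces $h$, conjugated suitably, to act on $\gg_0^{der}$ as an automorphism intertwining ${\rho'_0}^{der}$ — i.e. $h$ lies in the image of the adjoint action, hence (as $G_0^{ad}$ is absolutely simple and $G_0^{der}$ simply connected) $h=\Ad(\gamma)$ for a unique $\gamma\in G_0^{ad}(R)$, and a lift to $G_0^{der}$ exists after an fpqc cover.

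Once that is in hand, the strategy is to replace $(g_1,g_0)$ by $(g_1,g_0)\cdot(\rho'_1(\tilde\gamma),\rho'_0(\tilde\gamma))^{-1}$ for a lift $\tilde\gamma\in G_0$, reducing to the case $h=\id$ on $\gg_0^{der}$. Then the commutation-with-$a$ relation on the $e\otimes\gg_0^{der}$-part degenerates to the statement that $g_0$ acts on $U_0\oplus\gg_0^{der}$ commuting with all of ${\rho'_0}^{der}(\gg_0^{der})=\ad(\gg_0^{der})$ via the second projection, and — crucially — that the $U_0$-component of $g_0$ is pinned down by the way $a$ feeds $\gg_0^{der}$-vectors into $U_0$. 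Because $G_0^{der}$ acts trivially on $\gg_0^{der}$ only through $\ad$ and $\rho_0|_{\zen^{G_0}}$ is faithful, Schur's lemma applied isotypic-component-by-component shows $g_0$ is a scalar on each isotypic component of $U'_0$; bookkeeping which scalars are independent gives exactly $Z'_0$, and the constraint that the scalar on $\gg_0^{der}$ be compatible across the $\ad$-coupling cuts $Z'_0$ down to $Z'$. The scalar $\lambda$ on $U_1$ is then free, giving the extra $\g_{m,K}$. Assembling: every $R$-point of $G^0$ is, fppf-locally, a product of an $R$-point of $G_0$ (via $\tilde\gamma$) and an $R$-point of $\g_{m,K}\times_K Z'$; since $G_0$ is smooth this descends, yielding \eqref{tripleI} and $\zen^{G^0}=\g_{m,K}\times_K Z'$.

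For the numerical and connectedness assertions I would argue as follows. Connectedness of $\zen^{G^0}$: $\g_{m,K}$ is connected, and $Z'$ is a subtorus of the torus $Z'_0$ (it is visibly cut out by the linear condition ``acts by a scalar on $\gg_0^{der}$'', which defines a subtorus, not just a subgroup), hence connected; a product of connected groups is connected. For the rank: $\rk Z'_0=\Card\{\text{isotypic components of }U'_0\}$ because the center of a product of matrix algebras $\prod_j\Mat_{n_j}(D_j)$ is $\prod_j Z(D_j)$, a torus of rank equal to the number of factors (here one uses $K$ of characteristic $0$ so the $D_j$ are fields and their unit-group centers are one-dimensional split or not — but rank counts $K$-rank of $\Res$, so I should phrase this as: $\dim_K Z'_0 = $ number of simple factors; after $\otimes\Kbar$ the number of isotypic components is unchanged). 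Passing from $Z'_0$ to $Z'$ drops the rank by exactly $1$ if and only if $\gg_0^{der}$ is itself one of the isotypic components of $U'_0$, which it is (it appears as the summand $\gg_0^{der}$, and $U_0$ contributes the rest), so $\rk Z' = \Card\{\text{isotypic components of }U'_0\}-1$, whence $\rk\zen^{G^0}=1+\Card\{\text{isotypic components of }U'_0\}-1+1$... here I need to be careful: $\rk\zen^{G^0}=\rk\g_{m,K}+\rk Z'=1+(\Card-1)=\Card$, but the claim is $1+\Card$; the discrepancy is resolved by noting the claim's ``$\Card$'' refers to $U'_0$ after the $Z'$-reduction \emph{keeps} the $\gg_0^{der}$-scalar as a free parameter tied to $\lambda$ — so I would recount treating the pair $(\lambda,\text{scalar on }\gg_0^{der})$ as genuinely two-dimensional since the $a$-relation only constrains their \emph{ratio} through $e\otimes{\rho'_0}^{der}$, not each separately, giving back the missing $1$. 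The main obstacle I expect is precisely this last bookkeeping — getting the exact rank by correctly tracking which scalars are forced equal by $K[a]$ versus free — together with carefully justifying the uniqueness of $\gamma$ and the fppf-local lift, i.e. the transition from ``$h$ is an inner automorphism of $\gg_0^{der}$'' to ``$(g_1,g_0)$ differs from a point of $G_0$ by a central element'', which is where absolute simplicity of $G_0^{ad}$ and simple-connectedness of $G_0^{der}$ are really used.
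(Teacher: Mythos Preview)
Your overall architecture matches the paper's: pass to $\Kbar$, use the stabilization of $\End_{G_0}(U'_1)$ and $\End_{G_0}(U'_0)$ to put an element of $G^0$ in block form, then exploit commutation with $a$ to extract an automorphism of $\gg_0^{der}$, and finally reduce modulo $G_0$ to land in the center. But there is a genuine gap at the pivotal step.

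You write that the $a$-relation ``forces $h$, conjugated suitably, to act on $\gg_0^{der}$ as an automorphism intertwining ${\rho'_0}^{der}$ --- i.e.\ $h$ lies in the image of the adjoint action.'' The ``i.e.'' does not follow. What the $a$-relation gives you (in the paper's notation, with $g'_1$ for your $h$) is the pair of identities
\[
g_0\,\rho_0^{der}(x'_1)\,x_0=\rho_0^{der}\bigl(\tfrac{g'_1}{g_1}x'_1\bigr)(g_0x_0),\qquad
\tfrac{g'_0}{g_1}[x'_1,x'_0]=[\tfrac{g'_1}{g_1}x'_1,\tfrac{g'_0}{g_1}x'_0],
\]
which only says that $\frac{g'_1}{g_1}\in\Aut(\gg_0^{der})$ lifts to an automorphism $\alpha$ of $G_0^{der}$ with $\rho_0\circ\alpha\cong\rho_0$ via conjugation by $g_0$. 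For absolutely simple $G_0^{ad}$ with $G_0^{der}$ simply connected, $\Aut(G_0^{der})$ can be strictly larger than $G_0^{ad}$ (e.g.\ $\SL_n$ for $n\geq3$), and nothing you have written rules out $\alpha$ being outer. The paper closes this gap by a separate argument: stabilizing $\End_{G_0}(U'_0)$ forces $\rho_0(\alpha(z))=\rho_0(z)$ for every $z\in\zen^{G_0}\cap G_0^{der}$, faithfulness of $\rho_0|_{\zen^{G_0}}$ then gives $\alpha|_{\zen^{G_0^{der}}}=\id$, and one invokes the structure-theoretic fact (cf.\ \cite[Chapter~III, \S13, Exercise~5]{humphreys}) that the outer automorphism group of a simply connected absolutely almost simple group acts \emph{faithfully} on its center. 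This is exactly where the hypothesis on $\rho_0|_{\zen^{G_0}}$ is spent, and your outline never uses it for this purpose.

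Two smaller remarks. Your claim that the swap term makes $h$ and the $\gg_0^{der}$-block of $g_0$ ``transpose-inverse of each other'' is not right: the relation $g'_1\otimes g'_0=g'_0\otimes g'_1$ makes them \emph{proportional}. And your rank count is, as you yourself flag, tangled; the paper simply leaves this to the reader, so you are not missing a documented argument there, but the count you sketch does not yet add up.
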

\begin{proof}
The proposition and its proof are both similar to \cite[Lemma 7.3]{habil} and it is clear that we may assume $K$ to be algebraically closed. Fix an element of 
$G^0(K)$, according to the presence of $\End_{G_0}(U'_0)$ and $\End_{G_0}(U'_1)$, we can write $\left(\begin{matrix}g_0&0\\0&g'_0\end{matrix}\right)$ 
and $\left(\begin{matrix}g_1&0\\0&g'_1\end{matrix}\right)$ for the induced maps on $U'_0$ and $U'_1$. Notice that $g'_0$ and $g'_1$ are proportional, 
due to $g'_1\otimes g'_0=g'_0\otimes g'_1$, according to the presence of $a$. For the same reason we have
\begin{eqnarray*}
&&g_0\rho_0^{der}(x'_1)x_0=\rho_0^{der}(\frac{g'_1}{g_1}x'_1)(g_0x_0)\\
&&\frac{g'_0}{g_1}[x'_1,x'_0]=[\frac{g'_1}{g_1}x'_1,\frac{g'_0}{g_1}x'_0]
\end{eqnarray*}
for all $x_0\in U_0$ and $x'_0$, $x'_1\in\gg_0^{der}$. This implies the existence of an automorphism $\alpha$ of $G_0^{der}$ which satisfies $\alpha^{der}=\frac{g'_1}{g_1}$ and
\begin{equation}
\label{tripleII}
g_0\rho_0(g)g_0^{-1}=\rho_0(\alpha(g))
\end{equation} 
for all $g\in G_0^{der}(K)$. Using the presence of $\End_{G_0}(U'_0)$ again, we see that $\rho_0(\alpha(z))=\rho_0(z)$ holds for any $z\in\zen^{G_0}(K)\cap G_0^{der}(K)$. 
Using the faithfulness of $\rho_0\mid_{\zen^{G_0}}$ we deduce that $\alpha$ must be an inner automorphism, here notice that the group of outer automorphisms 
of a simply connected absolutely almost simple group acts faithfully on its center (cf. \cite[Chapter III, Section 13, Exercise 5]{humphreys}). Upon an adjustment 
we are allowed to assume $\alpha=\id_{G_0^{der}}$, so that $g'_1$ is equal to the multiplication by the scalar $g_1$ while $g_0$ commutes with the 
$G_0^{der}$-action according to \eqref{tripleII}. The presence of $\End_{G_0}(U'_0)$ allows to deduce that $g_0$ commutes with the $G_0$-action 
too. Consequently $g_0$ lies in the center of $\End_{G_0}(U_0)^\times$. This proves \eqref{tripleI} and the remaining assertions are left to the reader.
\end{proof}

Just as in subsection \ref{special} we let $L$ be a CM field and $(G,h)$ a Shimura datum with coefficients in its maximal totally 
real subfield $L^+$. The next two subsections address the existence of $L$-metaunitary Shimura data: In the sequel we say that 
$(\{(V_i,\Psi_i,\rho_i)\}_{i\in\Lambda},\{(R_\pi,\iota_\pi)\}_{\pi\in\Pi})$ is a $L$-multi-unitary collection for $(G,h)$ if and only if:

\begin{itemize}
\item
Each $(V_i,\Psi_i,\rho_i)$ satisfies the conditions (U1) and (U2), and
\item
$(\{(V_i,\Psi_i,\rho_i)\}_{i\in\Lambda},\{(R_\pi,\iota_\pi)\}_{\pi\in\Pi})$ is a $L$-multi-unitary collection for 
$(\g_{m,L^+}^\Lambda\times_{L^+}G^1)/\{\pm1\}^\Lambda$, in the sense of definition \ref{exampleIX}.
\end{itemize}
Without further notice we write $h_i:=\varrho_{i,\r}\circ h$ for the skew-Hermitian Hodge structure of weight $-1$ on $(V_i,\Psi_i)$ which is induced by the first of the two 
conditions. Moreover, we call $(\{(V_i,\Psi_i,\rho_i)\}_{i\in\Lambda},\{(R_\pi,\iota_\pi)\}_{\pi\in\Pi})$ unramified at some prime $p$ if there exist self-dual $\z_{(p)}\otimes\O_L$-lattices 
$$V_i\supset\gV_i=\gV_i^\perp:=\{x\in V_i|\Psi_i(x,\gV_i)\subset\z_{(p)}\otimes\O_L\}$$
such that the compact open subgroups $\UL(\gV_i/\z_{(p)}\otimes\O_L,\Psi_i)(\z_p\otimes\O_{L^+})=:U_i^1\subset\UL(V_i/L,\Psi_i)(\q_p\otimes\O_{L^+})$ and
\begin{equation}
\label{twistingII}
\bigcap_{i\in\Lambda}\rho_i^{-1}(U_i^1)=:U^1\subset G^1(\q_p\otimes L)
\end{equation}
are hyperspecial. The standard euclidean pairing on the $n$-dimensional $\q$-vector space $\q^n$ shall be denoted by $E_n(x,y):=x^ty$.

\begin{lem}
\label{polyV}
If $(\{(V_i,\Psi_i,\rho_i)\}_{i\in\Lambda},\{(R_\pi,\iota_\pi)\}_{\pi\in\Pi})$ is a $L$-multi-unitary collection for $(G,h)$, then the same is true for:
$$(\{(V_i^{\oplus8},E_8\otimes\Psi_i,\rho_i^{\oplus8})\}_{i\in\Lambda},\{(\Mat(8^{\Card(\pi)},R_\pi),\iota_\pi)\}_{\pi\in\Pi})$$
Moreover, the latter is unramified at some prime $p$ if and only if $L$ and $\Res_{L^+/\q}G_0$ are unramified at $p$.
\end{lem}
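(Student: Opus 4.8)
\textbf{Proof strategy for Lemma \ref{polyV}.}

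The plan is to reduce everything to the well-understood behaviour of the Hermitian construction $U_8:=\q^8$ equipped with $E_8$. First I would verify the hypotheses (U1) and (U2) for each $(V_i^{\oplus 8},E_8\otimes\Psi_i,\rho_i^{\oplus 8})$. Condition (U1) is immediate: since $\rho_i(w(-1))=-\id_{V_i}$ we get $\rho_i^{\oplus 8}(w(-1))=-\id_{V_i^{\oplus 8}}$. For (U2), note that $E_8\otimes\Psi_i$ is again a perfect skew-Hermitian pairing (the tensor of the symmetric positive-definite form $E_8$ with the skew-Hermitian $\Psi_i$), and the Hodge structure $\varrho_{i,\r}^{\oplus 8}\circ h$ on $\r\otimes V_i^{\oplus 8}=\r^8\otimes_\r(\r\otimes V_i)$ is just the eight-fold sum of $h_i$, hence is skew-Hermitian of the same type \eqref{restrictII}, and the positivity of $(\tr_{L/\q}\Psi_i)(h_i(\sqrt{-1})x,y)$ is preserved upon tensoring with the positive-definite $E_8$. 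In particular the Hodge numbers only get multiplied by $8$, so the type is unchanged.

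Next I would treat the multi-unitary collection axioms of Definition \ref{exampleIX}. The key algebraic point is that for each $\pi$ the $L$-algebra $\Mat(8^{\Card(\pi)},R_\pi)$ acts on $\bigotimes_{i\in\pi}V_i^{\oplus 8}=(\q^8)^{\otimes\pi}\otimes_\q\bigl(\bigotimes_{i\in\pi}V_i\bigr)$ by tensoring $\End_\q((\q^8)^{\otimes\pi})=\Mat(8^{\Card(\pi)},\q)$ with $\iota_\pi$; one equips it with the involution that is the transpose on the matrix factor and $*$ on $R_\pi$, which is positive since $E_8$ is positive-definite. Properties (i)--(iii) of Definition \ref{exampleIX} are then bookkeeping. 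The substantive points are (iv) and (v): closedness of $\rho_\Lambda^{\oplus 8}$ and the stabilizer condition. For (iv) one uses that $\GL(V_i^{\oplus 8}/L)\supset\GL(8)\times\GL(V_i/L)$ and that the image of $G^1$ lands in the $\GL(V_i/L)$-factor diagonally embedded into eight copies, so a closed immersion into $\prod\GU(V_i/L,\Psi_i)$ remains one after the operation; here I would invoke \cite[Corollary 1.3]{prasad} as in Remark \ref{hyperspecial} to reduce to the generic fibre. For (v), I would observe that the pointwise stabilizer of $\bigcup_\pi\Mat(8^{\Card(\pi)},R_\pi)$ inside $\prod_i\GU(V_i^{\oplus 8}/L,E_8\otimes\Psi_i)$ automatically centralizes $\Mat(8^{\Card(\pi)},\q)$, hence by the double-centralizer theorem is contained in $\prod_i(\g_m\times\GU(V_i/L,\Psi_i))/\{\pm1\}$ acting diagonally, and then the stabilizer of $\bigcup_\pi R_\pi$ there lies in the image of $\rho_\Lambda$ by hypothesis. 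I expect this step --- correctly matching the similitude centres across the tensor operation so that (v) really closes up --- to be the main obstacle, because one must be careful that the extra $\Mat(8,\q)$-symmetry does not enlarge the relevant stabilizer beyond $\prod_i\GU(V_i/L,\Psi_i)$.

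Finally, for the unramifiedness claim at $p$, I would argue that a self-dual $\z_{(p)}\otimes\O_L$-lattice $\gV_i\subset V_i$ yields the self-dual lattice $\z^8\otimes\gV_i\subset V_i^{\oplus 8}$ with respect to $E_8\otimes\Psi_i$, since the standard lattice $\z^8$ is self-dual for $E_8$; the stabilizer of $\z^8\otimes\gV_i$ in $\UL(V_i^{\oplus 8}/L,E_8\otimes\Psi_i)(\q_p\otimes\O_{L^+})$ contains the stabilizer of $\gV_i$ with the same reductive model, so taking the intersection \eqref{twistingII} produces the same group $U^1$ up to the hyperspecial $\GL_8$-factor. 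Hence $U^1$ for the new collection is hyperspecial if and only if it was for the old one, i.e.\ if and only if $G^1$ (equivalently $\Res_{L^+/\q}G_0$) and $L$ are unramified at $p$; the ``only if'' direction is clear since an unramified reductive group over $\q_p$ remains unramified after the operation and conversely a hyperspecial subgroup of the tensored group restricts to one of the original. This completes the proof.
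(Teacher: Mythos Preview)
Your treatment of the first assertion---that the eight-fold collection is again $L$-multi-unitary for $(G,h)$---is fine and essentially what the paper takes for granted. The substantive content of the lemma, however, lies in the unramifiedness statement, and there your argument has a genuine gap.

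You argue that a self-dual lattice $\gV_i\subset V_i$ gives a self-dual lattice $\z^8\otimes\gV_i\subset V_i^{\oplus 8}$. This is true but beside the point: the hypothesis is only that $L$ and $\Res_{L^+/\q}G_0$ are unramified at $p$, \emph{not} that the original collection is unramified at $p$. In particular there is no reason for self-dual $\O_L\otimes\z_{(p)}$-lattices in the $V_i$ to exist at all---the local skew-Hermitian forms $\Psi_{i,v}$ may fail to admit them. The whole purpose of the eight-fold amplification is to repair this defect. The paper's argument is Zarhin's trick: one first picks any $U^1$-invariant lattices $\gV_i$ (these exist once $G^1$ has a hyperspecial subgroup), so that $\gV_i\subset\gV_i^\perp\subset p^{-m}\gV_i$ for some $m$; then one chooses an even self-dual lattice $Z\subset\q^8$ containing a rank-$4$ direct summand $M$ with $E_8(M,M)\subset p^m\z$, and verifies that
\[
Z\otimes\gV_i+M\otimes\gV_i^\perp\subset V_i^{\oplus 8}
\]
is self-dual for $E_8\otimes\Psi_i$. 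This is the step you are missing, and without it the ``if'' direction of the unramifiedness claim does not go through.
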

\begin{proof}
We pick a hyperspecial subgroup $U^1\subset\Res_{L^+/\q}G^1(\q_p)$. Sufficiently small $U^1$-invariant $\z_{(p)}\otimes\O_L$-lattices $\gV_i\subset V_i$ for $i\in\Lambda$, 
and a sufficiently large integer $m$ would satisfy $\gV_i\subset\gV_i^\perp\subset p^{-m}\gV_i$. Now recall Zarhin's trick: Choose an even self-dual lattice $Z\subset\q^8$ 
containing a direct factor $M$ of rank $4$, such that $E_8(x,y)\equiv0\pmod{p^m}$ for any $x,y\in M$. Replacing $\gV_i$ by the $\z_{(p)}\otimes\O_L$-lattices
$$Z\otimes\gV_i+M\otimes\gV_i^\perp\subset V_i^{\oplus8}$$
proves the result.
\end{proof}

Let us fix an element $\vartheta\in\Gal(R/\q)$, where $R\subset\c$ denotes the splitting field of $L$. We have the following further criterion:

\begin{lem}
\label{polyIII}
Fix $(G,h)$ together with an $L$-multi-unitary collection $(\{(V_i,\Psi_i,\rho_i)\}_{i\in\Lambda},\{(R_\pi,\iota_\pi)\}_{\pi\in\Pi})$. 
We let $[a_{i,\iota},b_{i,\iota}]$ be the smallest intervals, such that the Hodge decomposition of $(V_i,\Psi_i,h_i)$ is of type 
\begin{equation}
\label{fourtyeight}
\{(-b_{i,\iota},b_{i,\iota}-1),\dots,(-a_{i,\iota},a_{i,\iota}-1)\},
\end{equation}
and we let $w_{i,\iota}:=b_{i,\iota}-a_{i,\iota}$ be their lengths. We assume $1\equiv\Card(\Lambda)\pmod 2$ and $\Pi=\{\Lambda\}\cup\{\{i\}|i\in\Lambda\}$ and that
\begin{equation}
\label{critical}
\sum_{i\in\Lambda}\lim_{N\to\infty}\frac{\sum_{k=1}^Nw_{i,\vartheta^k\circ\iota}}N<1,
\end{equation}
holds for every $\iota\in L_{an}$. Then there exist elements $n_i\in L^+\backslash\{0\}$ for each $i\in\Lambda$ and a $L$-metaunitary Shimura datum of the form 
$$(\bd^+,\{(V_i,n_i\Psi_i,\rho_i,\bj_i)\}_{i\in\Lambda},\{(R_\pi,\iota_\pi)\}_{\pi\in\Pi})$$ 
for $(G,\frac h{h_1})$, where $\s^1\cong\s/\g_{m,\r}\stackrel{h^1}{\rightarrow}(\Res_{L^+/\q}G^1)_\r$ is a central cocharacter.
\end{lem}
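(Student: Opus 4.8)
The statement is an existence result: starting from an $L$-multi-unitary collection satisfying the smallness hypothesis \eqref{critical}, we must produce a $\vartheta$-multidegree $\bd^+$, a choice of $\vartheta$-gauges $\bj_i$, and scaling factors $n_i\in L^+\setminus\{0\}$ such that the resulting data satisfy (P0)--(P4) of definition \ref{twistingI}. The plan is to build the combinatorial data first, then verify the Hodge-theoretic and arithmetic conditions, reducing the latter to lemma \ref{twistingVII}.

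\emph{Step 1: constructing $\bd^+$.} The inequality \eqref{critical} says that the average of $\sum_i w_{i,\iota}$ along each $\vartheta$-orbit is strictly less than $1$. On each cycle $\Theta\subset L_{an}$ this means the total ``Hodge width'' is strictly smaller than $\Card(\Theta)$, so one can distribute the widths $w_{i,\iota}$ into a $\vartheta$-multidegree: I would define $\bd^+$ cycle by cycle by the greedy/Euclidean procedure already used in subsection \ref{pivotalII} and in the passage from \eqref{shiftII}--\eqref{shiftI}, namely arrange that $|\bd^{-1}(\iota)|=\sum_{i}w_{i,\iota}$ holds after possibly enlarging by passing to $\vartheta^{-1}$-shifts, while keeping $\bd^+(\vartheta\circ\iota)\le\bd^+(\iota)+1$ and $\bd^+(\iota\circ*)=\bd^+(\iota)$; the condition $\bd^+\ge0$ is exactly what \eqref{critical} guarantees on each cycle. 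This forces the normalisation (N1) for the unique family $a_{i,\iota}\le b_{i,\iota}=1-a_{i,\iota\circ*}$ attached below.

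\emph{Step 2: the gauges $\bj_i$ and the family $\{a_{i,\iota}\}$.} Given $\bd^+$, for each $i$ I would choose $\bj_i:L_{an}\to\z$ satisfying (G1) and (G2) --- the existence is a counting exercise: (G2) requires a bijection between $\bd^{-1}(\iota)$ and the integer interval $[a_{i,\iota},b_{i,\iota}-1]$, which has the right cardinality by (N1), and (G1) $\bj_i(\iota\circ*)=-\bj_i(\iota)$ is compatible because $b_{i,\iota\circ*}-1=-a_{i,\iota}$. The family $\{a_{i,\iota}\}$ is then pinned down by the Hodge numbers of $(V_i,h_i)$ together with $(\bd^+,\bj_i)$, exactly as in the discussion preceding remark \ref{tricky}; the point of the scaling by $n_i$ (and of Zarhin-type tricks via lemma \ref{polyV} if needed) will be to adjust signatures/discriminants so that conditions (N2) and (N3) can be met simultaneously --- condition (N3) $\sum_{\iota\in\Theta}a_{i,\iota}=0$ is automatic from the weight-$(-1)$ structure on each cycle, and (N2) is the genuine parity constraint.

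\emph{Step 3: producing $(\tilde V_i,\tilde\Psi_i,\tilde h_i)$ and checking (P0)--(P4).} With $(\bd^+,\bj_i)$ in hand and (N1)--(N3), (G1)--(G2) verified, lemma \ref{twistingVII} applied to each skew-Hermitian Hodge structure $(V_i,n_i\Psi_i,h_i)$ yields $(\tilde V_i,\tilde\Psi_i,\tilde h_i)$ together with the local isometries at inert places demanded in (P0); the adelic isometry $\varepsilon_i^\infty$ is then assembled from these local isometries plus the trivial archimedean comparison, using that the two skew-Hermitian $L$-spaces have the same dimension and the same local invariants at inert finite places --- at split places Morita/Hilbert~90 makes the comparison automatic, and the residual archimedean discrepancy is precisely what (N2) was arranged to kill. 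Condition (P1) (that each $\tilde V^\pi$ has Hodge type $\{(-1,0),(0,-1)\}$) follows from formula \eqref{twistingIV}: the last sentence of (P0), which forces in each cycle some $\iota$ at which all but one $\tilde h_{i,\iota}^{p,q}$ vanish, is guaranteed by the greedy construction of $\bd^+$ together with (G2). Conditions (P2)--(P3) are inherited from the hypothesis that $(\{(V_i,\Psi_i,\rho_i)\}_i,\{(R_\pi,\iota_\pi)\}_\pi)$ is an $L$-multi-unitary collection (the scaling by $n_i$ does not change $\End_L$'s, so the stabiliser conditions (iii)--(v) of definition \ref{exampleIX} survive), and (P4) is the defining formula \eqref{twistingIV} of lemma \ref{twistingVII} read off on the Hodge numbers of $(V_i,\Psi_i,\varrho_{i,\r}\circ h)$.

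\textbf{Main obstacle.} The crux is Step 2--3's parity bookkeeping: one must choose the $\bj_i$ and the scalars $n_i$ so that \emph{all} of (N2), together with the existence of the global isometry $\varepsilon_i^\infty$ (equivalently, matching discriminants and Hasse invariants of $n_i\Psi_i$ and $\tilde\Psi_i$ at every place simultaneously), hold at once. This is a product-formula-type constraint across the archimedean and finite inert places of $L^+$, and reconciling it is exactly the role played by the parity condition (N2) --- the same condition that entered \eqref{weirdI} on the $p$-adic side. I expect the bulk of the work to be this diophantine/Galois-cohomological matching, which should be handled by a global reciprocity argument in the spirit of lemma \ref{parityII} (triviality of $\sha$ for $H^1/L^1$-type tori) combined with the freedom in rescaling by totally positive $n_i\in L^{+\times}$; the shift from $h$ to $h/h_1$ removes the central weight factor and is cosmetic. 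The remaining verifications are routine given the machinery already assembled.
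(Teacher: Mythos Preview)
Your outline tracks the paper's shape, but there is a genuine gap in Step~2 and a misdiagnosis of the main difficulty. The condition you do not address is (P1) for $\pi=\Lambda$: that the full tensor product $(\tilde V^\Lambda,\tilde\Psi^\Lambda,\tilde h^\Lambda)$ be of type $\{(-1,0),(0,-1)\}$. By \eqref{twistingIV} together with \eqref{exampleIII}--\eqref{exampleI}, this forces that for \emph{every} $\kappa\in L_{an}$ at least $\tfrac{\Card(\Lambda)-1}{2}$ of the indices $i$ satisfy $\bj_i(\kappa)<a_{i,\bd(\kappa)}$ and at least as many satisfy $\bj_i(\kappa)\ge b_{i,\bd(\kappa)}$. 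An arbitrary family of gauges $\bj_i$ satisfying only (G1)--(G2) will not achieve this. The paper's mechanism is to first construct a \emph{single aggregate gauge} $\bj$ for the total type with endpoints $a_\iota=\tfrac{1-n}{2}+\sum_i a_{i,\iota}$, then read off each $\bj_i(\kappa)$ by viewing $[a_\iota,b_\iota-1]$ as the concatenation of translates of the individual $[a_{i,\iota},b_{i,\iota}-1]$ and locating where $\bj(\kappa)$ falls; this concatenation formula is what simultaneously produces (G1)--(G2) and the balance condition above. You have supplied no substitute for it.

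Your ``main obstacle'' paragraph is also off-target: the paper uses neither lemma~\ref{parityII} nor any $\sha$-computation here. Conditions (N2) and (N3) are not obtained by a reciprocity argument, and (N3) is certainly not automatic from the weight being $-1$. Rather, after enlarging the intervals $[a_{i,\iota},b_{i,\iota}]$ to $[\tilde a_{i,\iota},\tilde b_{i,\iota}]$ so that (N1) holds for each $i$ individually (a step you elide), the paper performs an explicit translation by the family $a'_\iota\in\{0,1\}$ determined by a chosen semi-cycle $S$; this $a'_\iota$ satisfies (N1)--(N3) by direct inspection, and subtracting it yields $c_{i,\iota}=\tilde a_{i,\iota}-a'_\iota$ with $c_{i,\iota\circ*}=-c_{i,\iota}$. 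That translation is \emph{implemented} by the central cocharacter $h^1$ --- so the passage $h\rightsquigarrow h/h_1$ is the substance of enforcing (N2)--(N3), not a cosmetic normalization --- and the scalars $n_i$ merely fix the signatures via $(-1)^{c_{i,\iota}}\iota(n_i)>0$.
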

\begin{proof}
It does no harm to assume $\Lambda=\{1,\dots,n\}$, consider the numbers:
\begin{itemize}
\item
$a_\iota:=\frac{1-n}2+\sum_{i=1}^na_{i,\iota}$ 
\item
$b_\iota:=\frac{1-n}2+\sum_{i=1}^nb_{i,\iota}=1-a_{\iota\circ*}$
\item
$w_\iota:=\sum_{i=1}^nw_{i,\iota}=b_\iota-a_\iota$
\end{itemize}
Due to $\lim_{N\to\infty}\frac{\sum_{k=1}^Nw_{\vartheta^k\circ\iota}}N<1$ for each $\iota$, there does exist a $\vartheta$-gauge $(\bd^+,\bj)$ for the type 
$\{(-b_\iota,b_\iota-1),\dots,(-a_\iota,a_\iota-1)\}$, such that each cycle of $L_{an}$ contains at least one element $\kappa$ satisfying $\bj(\kappa)\notin[a_{\bd(\kappa)},b_{\bd(\kappa)}-1]$. We continue 
to ignore the conditions (N1), (N2) and (N3), but we retain $\bd^+$ and proceed with the construction of a family of $\vartheta$-gauges $(\bd^+,\bj_i)$ of the types \eqref{fourtyeight} satisfying:
\begin{itemize}
\item[(i)]
For every $\kappa\in L_{an}$, the cardinality of $\{i\in\Lambda|\bj_i(\kappa)<a_{i,\bd(\kappa)}\}$, and of $\{i\in\Lambda|\bj_i(\kappa)\geq b_{i,\bd(\kappa)}\}$ 
is at least $\frac{\Card(\Lambda)-1}2$.
\item[(ii)]
Each cycle of $L_{an}$ contains at least one element $\kappa$ which satisfies 
$\bj_i(\kappa)\notin[a_{i,\bd(\kappa)},b_{i,\bd(\kappa)}-1]$ for all $i\in\Lambda$.
\end{itemize}
Choose a disjoint union $L_{an}=S\circ*\cup S$, where we may require that each split cycle is entirely contained in one of $S$ or $S\circ*$. 
Consider some $\kappa$ with $\iota:=\bd(\kappa)\in S$. Thinking of $[a_\iota,b_\iota-1]$ as the concatenation of suitable translates of 
the intervals $[a_{1,\iota},b_{1,\iota}-1],\dots,[a_{n,\iota},b_{n,\iota}-1]$ leads to unique integers $l$ and $m$ with the properties: 
\begin{eqnarray*}
&&\frac{n-1}2+\bj(\kappa)=\sum_{i<m}b_{i,\iota}+l+\sum_{i>m}a_{i,\iota}\\
&&m\in[0,n+1]\\
&&l\in\begin{cases}{]-\infty,-1]}&m=0\\
{[a_{m,\iota},b_{m,\iota}-1]}&m\in[1,n]\\
{[0,\infty[}&m=n+1\end{cases}
\end{eqnarray*}
So that we may define, for every $i\in\Lambda$:
$$-\bj_i(\kappa\circ*)=\bj_i(\kappa):=
\begin{cases}a_{i,\iota}-1&(-1)^i(i-m)<0\\
l&i=m\\b_{i,\iota}&(-1)^i(i-m)>0\end{cases}.$$
Observe that we have $b_{i,\iota}-a_{i,\iota}\leq\Card(\bd^{-1}(\{\iota\}))$ for every $i$ and $\bj_i(\kappa)\in[a_{i,\bd(\kappa)}-1,b_{i,\bd(\kappa)}]$ 
for every $\kappa$ and in order to enforce the condition (N1) we simply modify the types according to
\begin{eqnarray}
&&\tilde a_{i,\iota}:=a_{i,\iota}-\Card(\{\kappa\mid\,\bd(\kappa)=\iota,\;\bj_i(\kappa)=a_{i,\iota}-1\})\\
&&\tilde b_{i,\iota}:=b_{i,\iota}+\Card(\{\kappa\mid\,\bd(\kappa)=\iota,\;\bj_i(\kappa)=b_{i,\iota}\})
\end{eqnarray}
An obvious change of the of $\bj_i$'s yields $\vartheta$-gauges $(\bd^+,\tilde\bj_i)$ of the types 
$\{(-\tilde b_{i,\iota},\tilde b_{i,\iota}-1),\dots,(-\tilde a_{i,\iota},\tilde a_{i,\iota}-1)\}$ still satisfying (i) and (ii). We enforce the conditions (N2) and (N3) by a translation, consider
$$a'_\iota:=\begin{cases}0&\bd(\iota)\in S\\1&\bd(\iota)\notin S\end{cases}$$
for all $\iota\in S$, which happens to satisfy not only (N1) but also (N2) and (N3). Let $c_i$ be the function $\iota\mapsto c_{i,\iota}:=\tilde a_{i,\iota}-a'_\iota$, 
and let $n_i\in L^+\backslash\{0\}$ satisfy $(-1)^{c_{i,\iota}}\iota(n_i)>0$. There exists a cocharacter $\gamma:\g_{m,\c}\rightarrow(\Res_{L^+/\q}G^1)_\c$ 
such that the weights of $\rho_i\circ\gamma$ are given by the family $c_i$. Since $c_{i,\iota}=-c_{i,\iota\circ*}$ holds, it is easy to see that $\gamma=\gamba^{-1}$ 
descends to a cocharacter $h^1:\s^1\rightarrow(\Res_{L^+/\q}G^1)_\r$ (N.B.: $\underbrace{L^1\times\dots\times L^1}_\Lambda$ 
is canonically contained in the center of $\Res_{L^+/\q}G^1$ and $\s$ maps canonically onto $\s^1$ by $z\mapsto z\zbar^{-1}$).
\end{proof}

In the next proposition we consider an adjoint Shimura datum of the form $(\Res_{L^+/\q}G_0,h_0)$, where we require $G_0=G_0^{ad}$ to be an absolutely simple reductive group 
over a totally real field $L^+$. Let $K$ be the smallest $L^+$-extension over which $G_0$ is an inner form of a split form. Observe that $K$ is a totally imaginary quadratic (resp. totally 
real) extension of $L^+$ if $G_0$ is of type $A_l$ with $l>1$, $D_l$ with $l$ odd or $E_6$ (resp. of type $A_1$, $B_l$, $C_l$, $D_l$ with $l$ even or $E_7$). We also choose a totally imaginary quadratic extension $L$ of our totally real field $L^+$, but we require that the CM field $L$ agrees with $K$ whenever the latter happens to be a CM field i.e. in the $A_2$, $A_3,\dots$, $D_3$, $D_5,\dots$ and $E_6$ cases. We have to introduce a provisional central extension $\Gbar\twoheadrightarrow G_0$, defined by a specific push-out diagram:
$$\begin{CD}
1@>>>\Cbar@>>>\Gbar@>>>\Gbar^{ad}@>>>1\\
@AAA@A{\zeta}AA@AAA@A{\cong}AA@AAA\\
1@>>>\zen^{\tilde G_0}@>>>\tilde G_0@>>>G_0@>>>1
\end{CD},$$
where $\tilde G_0$ is the simply-connected and semisimple covering group of $G_0$. In the 
$D_l$-case with $l$ even, we start with the description of $\Cbar$ being the qudratic twist:
\begin{equation*}
\Cbar:=\ker(\Res_{L/L^+}T_L\stackrel{\n_{L/L^+}}{\rightarrow} T),
\end{equation*}
where $T$ is the $L^+$-torus whose group of characters agrees with the lattice 
$$X^*(T)=\{\chi:\zen^{\tilde G_0}(\c)\rightarrow\z|0=\chi(1)=\sum_{g\neq1}\chi(g)\},$$
which is of rank $2$. Identifying the Klein four-group scheme $\zen^{\tilde G_0}$ with $T[2]\cong\Cbar[2]$ by decreeing the character $\chi$ to attain the value $(-1)^{\chi(g)}$ 
on an element $g\in\zen^{\tilde G_0}(\c)$ defines an inclusion $\zeta$ of $\zen^{\tilde G_0}$ into $\Cbar$. In the remaining cases we define $\Cbar$ to be algebraic torus:
\begin{equation*}
\l^1:=\ker(\Res_{L/L^+}\g_{m,L}\stackrel{\n_{L/L^+}}{\rightarrow}\g_{m,L^+})
\end{equation*}
If $G_0$ is of type $E_7$, $C_l$, $B_l$ or $A_1$ we have $\zen^{\tilde G_0}=\{\pm1\}$, and we let $\zeta$ be the inclusion of 
$\zen^{\tilde G_0}$ into $\l^1$, and otherwise we introduce $\zeta$ by observing that the theory of root data produces an isomorphism
$$Z^{\tilde G_0}\cong\begin{cases}\l^1[3]&E_6\\\l^1[4]&D_3,D_5,\dots\\\l^1[l+1]&A_l\end{cases}$$
which is canonical up to composition with the reciprocal function. Given that $(\Res_{L^+/\q}\Cbar)(\r)$ is connected, 
and that $h_0$ factors through the map $\s\rightarrow\s^1;z\mapsto z\zbar^{-1}$, there exists a diagram
$$\begin{CD}
\s@>{h_0}>>(\Res_{L^+/\q}G_0)_\r\\
@VVV@AAA\\
\s^1@>{\hbar}>>(\Res_{L^+/\q}\Gbar)_\r
\end{CD},$$
so that $(\Res_{L^+/\q}\Gbar,\hbar)$ is a Shimura datum too. Recall that $(G_0,h_0)$ is said to be of type $D_l^\r$ if all simple factors of $G_{0,\r}$ are 
of the form $\SO(2l-2,2)/\{\pm1\}$ or $\SO(2l)/\{\pm1\}$. In the following proposition the type $D_l^{\r\&\h}$ stands for data $(G_0,h_0)$ with $G_{0,\c}$ 
of type $D_l$, but $(G_0,h_0)$ not of type $D_l^\r$ (N.B.: $D_4^{\r\&\h}$ does not exist, because $\SO^*(8)/\{\pm1\}\cong\SO(6,2)/\{\pm1\}$). 

\begin{prop}
\label{polyII}
Let $(\Res_{L^+/\q}G_0,h_0)$ and $L$ be as above. If
$$\lim_{N\to\infty}\frac{\Card\{k\in\{1,\dots,N\}|G_0(\r_{[\vartheta^k\circ\iota]})\text{ is compact }\}}N>
\begin{cases}0&A_l\\\frac34&E_6,B_l,C_l,D_l^\r\\\frac45&E_7\\\frac{l+1}{l+3}&D_l^{\r\&\h},\,2\nmid l\\\frac{l+2}{l+4}&D_l^{\r\&\h},\,2\mid l\end{cases}$$
holds for all embeddings $\iota:L^+\rightarrow\r$, then there exists and a $L$-metaunitary Shimura datum for a Shimura datum of the form
$$((\g_m\times\Res_{L^+/\q}G^1)/\{\pm1\},h),$$ 
where $G^1\twoheadrightarrow G_0$ is a central extension such that $h$ maps to $h_0$, $G^{1der}$ is simply connected and $\zen^{G^1}$ 
is a connected $L^+$-torus which splits over the composite of $L$ with the smallest $L^+$-extension over which $G_0$ is an inner form of a split form. 
\end{prop}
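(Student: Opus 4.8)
The plan is to assemble the desired $L$-metaunitary Shimura datum by feeding suitable representations of the auxiliary group into Proposition~\ref{polyIII}, and the main work is to produce such representations with adequate control on their Hodge types. First I would set up the central extension $G^1 \twoheadrightarrow G_0$: starting from the provisional $\Gbar \twoheadrightarrow G_0$ constructed just before the proposition, I would define $G^1$ by pushing out along $\Cbar \hookrightarrow \Res_{L/L^+}\g_{m,L}$ (or the appropriate larger torus $T$) so that $\zen^{G^1}$ becomes a connected $L^+$-torus splitting over the composite of $L$ with $K$; since $(\Res_{L^+/\q}\Cbar)(\r)$ is connected and $h_0$ factors through $z \mapsto z\zbar^{-1}$, the cocharacter $\hbar$ lifts to some $h^1 : \s^1 \to (\Res_{L^+/\q}G^1)_\r$, and I set $G := (\g_m \times \Res_{L^+/\q}G^1)/\{\pm1\}$ with $h$ the composite giving back $h_0$ on the adjoint level. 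Here $G^{1der} = \tilde G_0$ is simply connected by construction.

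Next I would use Proposition~\ref{triple} as the source of a multi-unitary collection. The idea is: pick a faithful representation $\rho_0$ of $G_0$ (or rather of a suitable reductive group $G_0$ with the required center) whose Hodge weights one can compute from the explicit highest-weight combinatorics of $G_0^{ad}$ over $\r$; Proposition~\ref{triple} then produces a group $G^0 = G_0 \zen^{G^0}$ which is the stabilizer of a package of endomorphism algebras and a distinguished algebra $K[a]$. Taking $\Lambda$ to index an odd family of such representations $\rho_i$ (obtained e.g. by varying the minuscule or quasi-minuscule fundamental weights available in type $B_l$, $C_l$, $E_6$, $E_7$, $D_l$), one builds the collection $(\{(V_i,\Psi_i,\rho_i)\}, \{(R_\pi,\iota_\pi)\})$. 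The skew-Hermitian forms $\Psi_i$ come from the existence of an invariant pairing once one works over the CM field $L$; one uses the $R_\pi$'s as the endomorphism algebras furnished by Proposition~\ref{triple} (or matrix algebras over them), which makes property (v) of Definition~\ref{exampleIX} exactly the statement $G^0 = G_0\zen^{G^0}$. Lemma~\ref{polyV} then lets me enlarge each $V_i$ to $V_i^{\oplus 8}$ (Zarhin's trick) to reduce the unramifiedness-at-$p$ claim to the unramifiedness of $L$ and $\Res_{L^+/\q}G_0$, which is automatic in the intended application.

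The crucial quantitative input is the summed-slope inequality \eqref{critical} of Proposition~\ref{polyIII}. I would verify it case by case: for each root system type the Hodge numbers $h_{i,\iota}^{p,q}$ of $\rho_{i,\r} \circ h$ are governed by whether $G_0(\r_{[\iota]})$ is compact or not, so the interval length $w_{i,\iota}$ is $0$ at the compact places and bounded by an explicit constant (depending on the chosen representation and the type, e.g. $1$ for minuscule representations, larger for $E_7$ or the $D_l^{\r\&\h}$ cases) at the non-compact ones. The limiting average in \eqref{critical} then equals $\sum_i (\text{bound}) \cdot (1 - \delta_\iota)$, where $\delta_\iota$ is the Chebotarev density of $k$ with $G_0(\r_{[\vartheta^k \circ \iota]})$ compact; the hypothesis of the present proposition says precisely that $\delta_\iota$ exceeds the threshold $\tfrac34, \tfrac45, \tfrac{l+1}{l+3}, \tfrac{l+2}{l+4}$ (or merely $0$ in type $A_l$, where minuscule representations give $w_{i,\iota} \le 1$ and one can afford $\mathrm{Card}(\Lambda)$ growing) needed to force the sum below $1$. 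Once \eqref{critical} holds, Proposition~\ref{polyIII} produces scalars $n_i \in L^+\setminus\{0\}$ and a $\vartheta$-multidegree $\bd^+$ together with gauges $\bj_i$ so that $(\bd^+, \{(V_i, n_i\Psi_i, \rho_i, \bj_i)\}, \{(R_\pi,\iota_\pi)\})$ is an $L$-metaunitary datum for $(G, h/h_1)$; absorbing the central twist $h^1$ into $\zen^{G^1}$ (which is where connectedness of $\zen^{G^1}$ is used) gives the datum for $(G,h)$ itself.

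The main obstacle will be the bookkeeping that ties the abstract thresholds in the statement to the explicit representation theory — i.e. choosing, for each Dynkin type, a concrete (odd-sized) family of representations for which the per-place interval lengths $w_{i,\iota}$ are small enough that the density hypothesis is exactly sufficient, and simultaneously checking that Proposition~\ref{triple}'s hypotheses (faithfulness of $\rho_0$ on the center, the $K[a]$-stabilizer computation) are met so that property (v) of the multi-unitary collection holds. The $D_l^{\r\&\h}$ cases, where $\SO^*(2l)$-type factors force larger Hodge intervals and the threshold degenerates to $\tfrac{l+1}{l+3}$ or $\tfrac{l+2}{l+4}$, and the $E_7$ case with threshold $\tfrac45$, will require the most careful choice of $\rho_i$; I expect these to be handled exactly as in the orthogonal and exceptional computations of \cite{habil}, to which the proof of Proposition~\ref{triple} already refers. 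Everything else — the push-out construction of $G^1$, the descent of $h^1$ to $\s^1$, and the reduction of unramifiedness via Lemma~\ref{polyV} — is routine given the cited results.
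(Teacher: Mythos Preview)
Your overall architecture is right --- reduce to Lemma~\ref{polyIII} and use Proposition~\ref{triple} to secure property~(v) of the multi-unitary collection --- but the actual construction of the family $\{(V_i,\Psi_i,\rho_i)\}_{i\in\Lambda}$ is where your proposal goes off track, and this is essentially the whole content of the proof. You propose to build an odd family by ``varying the minuscule or quasi-minuscule fundamental weights''; this cannot work, since in types $B_l$, $C_l$, $E_7$ there is exactly one minuscule weight, and in any case such a family would neither match the setup of Proposition~\ref{triple} nor give back $G_0\zen^{G^0}$ as the joint stabilizer.

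What the paper does is take $\Lambda=\{1,2,3\}$ with a very specific triple. One first fixes a rank-one skew-Hermitian Hodge structure $(W,\Psi,h)$ of weight $-1$ (a CM-type for $L$, viewed as an $\l^1$-representation). With $W_0$ a single minuscule representation of $\Gbar_L$ whose central character is the identity on $\Cbar_L$ (in the even $D_l^{\r\&\h}$ case a direct sum of the two half-spin representations), $W_1$ the trivial one-dimensional representation, and $\gg_0=\Lie G_0$ with its negative Killing form, one sets
\[
V_1=W,\qquad V_2=(W_1\oplus L\otimes_{L^+}\gg_0)\otimes_L W,\qquad V_3=(W_0\oplus L\otimes_{L^+}\gg_0)\otimes_L W
\]
as unitary $\l^1\times_{L^+}\Gbar$-representations. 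The twist by $W$ is what makes everything weight $-1$ and skew-Hermitian; the summand $\gg_0$ inside $V_2$ and $V_3$ is exactly the $U'_i=U_i\oplus\gg_0^{der}$ of Proposition~\ref{triple}, so that proposition identifies the joint stabilizer of $\End_{G_0}(V_2)$, $\End_{G_0}(V_3)$ and the algebra $K[a]\subset\End_{G_0}(V_2\otimes V_3)$ as $G_0\zen^{G^0}$, delivering property~(v). The group $G^1$ and the description of $\zen^{G^1}$ then come from the explicit shape of $\zen^{G^0}$ in Proposition~\ref{triple}, not from a separate push-out.

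The thresholds now fall out mechanically. At a compact place all $w_{i,\iota}$ vanish; at a non-compact place $V_1$ contributes $0$, $V_2$ contributes $2$ (the adjoint has $\mu$-weights $\{-1,0,1\}$), and $V_3$ contributes $\max(2,\text{width of }W_0)$, where the width of $W_0$ is read off from the minuscule weight sets in \cite[Table 1.3.9]{deligne3}: it equals $1$ for $B_l,C_l,D_l^\r$, $2$ for $E_6$, $3$ for $E_7$, and $\tfrac{l-1}{2}$ resp.\ $\tfrac{l}{2}$ for $D_l^{\r\&\h}$ with $l$ odd resp.\ even. Hence $\sum_i w_{i,\iota}$ at a non-compact $\iota$ equals $4$, $5$, $\tfrac{l+3}{2}$, $\tfrac{l+4}{2}$ in the respective cases, and inequality~\eqref{critical} becomes exactly the density hypothesis in the statement. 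Lemma~\ref{polyV} is not used here; unramifiedness is not part of the claim of Proposition~\ref{polyII}.
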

\begin{proof}
In the $A_l$-case there is nothing to prove. Excluding the case $D_l$ with $l$ even for a while we let $\rho_0:\Gbar_L\rightarrow\GL(W_0/L)$ be a isotypic $L$-rational 
representation which is a direct sum of copies of minuscule irreducible representations each of whose restrictions to the center $\Cbar_L=\g_{m,L}$ agree with scalar multiplication. 
The sets of $\mu_{\hbar}$-weights of each eigenspace (to the embeddings $L\hookrightarrow\c$) can be read off from \cite[Table 1.3.9]{deligne3}, so these are translates of 
\begin{eqnarray*}
\{-\frac23,\frac13,\frac43\}&&E_6\\
\{-\frac 32,-\frac12,\frac12,\frac32\}&&E_7\\
\{-\frac12,\frac12\}&&B_l,C_l,D_l^\r\\
\{\frac{2-l}4,\dots,\frac l4\}&&D_l^{\r\&\h}
\end{eqnarray*}
According to \cite[Rapel 1.1.15(ii)]{deligne3} we may pick $\Gbar$-invariant positive $L$-Hermitian pairings $\Psi_0$ on $W_0$ and $\Psi_1$ on 
the trivial one-dimensional $\Gbar$-representation, which we wish to denote by $W_1$. Let $(W,\Psi,h)$ be an arbitrary skew-Hermitian Hodge 
structure of weight $-1$ and rank $1$, regarded as a unitary $\l^1$-representation of type $\{(-1,0),(0,-1)\}$ (N.B.: Every CM-type for $L$ endows 
$(\g_{m,\q}\times L^1)/\{\pm1\}$ with the structure of a Shimura datum). We continue with the following triple of unitary $\l^1\times_{L^+}\Gbar$-representations:
\begin{eqnarray*}
&&V_1=W\\
&&V_2=(W_1\oplus L\otimes_{L^+}\gg_0)\otimes_LW\\
&&V_3=(W_0\oplus L\otimes_{L^+}\gg_0)\otimes_LW
\end{eqnarray*}
where $\gg_0$ is the Lie algebra of $G_0$ and $L\otimes_{L^+}\gg_0$ is endowed with the negative Killing form. Finally, the $D_l^\r$-cases are similar to the 
$B_l$-one, and left to the reader, but it remains to do the $D_l^{\r\&\h}$-case with $l$ even. If $K=L^+$, the theory of root data produces an isomorphism
$$\Cbar\cong\l^1\times_{L^+}\l^1,$$ 
which is canonical up to a swap of the factors. So let us set $\rho_0$ to be the direct sum of (possibly several copies of) the two minuscule representations 
$\rho_0^+$ and $\rho_0^-$, with central characters being the two projections $\Cbar_L\rightarrow \g_{m,L}$. Again, the weight sets of each eigenspace (to the 
embeddings $L\hookrightarrow\c$) can be read off from \cite[Table 1.3.9]{deligne3}, so depending on $\mu_{\hbar}$ these are translates of one of the sets:
\begin{eqnarray*}
\{-\frac12,\frac12\}&&\\
\{\frac{2-l}4,\dots,\frac{l-2}4\}&&\\
\{-\frac l4,\dots,\frac l4\}&&
\end{eqnarray*}
In general $K$ is a totally real quadratic extension of $L^+$, $\Cbar$ is canonically isomorphic to $\Res_{K/L^+}\l^1$ and the argument is analogous.
\end{proof}

\section{Varshavsky's characterization method}
\label{characterization}

The paper \cite{varshavskyII} deals with characterizations of Shimura varieties, here are the objects under consideration:

\begin{itemize}
\item
Let $\Delta$ be a group and let $M$ be a separated, non-empty, connected, complex manifold with a holomorphic left $\Delta$-action.
\item
Let $G_1,\dots,G_z$ be connected, simple, non-abelian algebraic groups over $\c$, and let $P_i$ be a proper minuscule parabolic subgroup of $G_i$. Denote the associated 
irreducible symmetric Hermitian domains of compact type by $X_i:=G_i(\c)/P_i(\c)$, and write $G:=\prod_{i=1}^zG_i$ and $P:=\prod_{i=1}^zP_i$ and $X:=\prod_{i=1}^zX_i$.
\item
Let $\varphi:\Delta\rightarrow G(\c)$ be a group homomorphism, and let $I:M\rightarrow X$ be a 
locally biholomorphic $\Delta$-equivariant map (use $\varphi$ to define a left action of $\Delta$ on $X$)
\end{itemize}

A sextuple $(\Delta,M,G,P,\varphi,I)$ as above is called a period map if:

\begin{itemize}
\item[(M1)]
The image of $\Delta$ in $G(\c)$ is Zariski-dense.
\item[(M2)]
The image in $G(\c)$ of the stabilizer in $\Delta$ of any element in $M$ possesses a compact closure.
\item[(M3)]
There exists some $y_0\in M$ whose stabilizer in $\Delta$ contains some subgroup $\Delta_0$ the closure of whose image in $G(\c)$ is equal to 
$\prod_{i=1}^z\bT_i(\r)$, where each $\bT_i$ is a maximal compact torus in $\Res_{\c/\r}G_i$ (i.e. the $\r$-rank of $\bT_i$ is equal to the $\c$-rank of $G_i$).
\end{itemize}

Without any attempt of originality we wish to give a slight reformulation of \cite[p.89,p.92-94]{varshavsky} in the above axiomatic setup:

\begin{thm}[Varshavsky]
\label{uniformizationI}
Suppose that $(\Delta,M,G_i,P_i,\varphi,I)$ is a period map. Then there exist real forms $\bJ_i$ of $G_i$ for every $i\in\{1,\dots,z\}$, 
such that $\overline{\varphi(\Delta)}=\bJ(\r)^+$, where $\bJ:=\prod_{i=1}^z\bJ_i$. The locally biholomorphic map $I$ is actually 
an injection and the $\Delta$-action on $M$ can be extended to a continuous and transitive $\bJ(\r)^+$-action thereon.\\
Pick a base point $\tilde y\in M$, and assume for notational convenience that $I(\tilde y)$ is the canonical base point of $\prod_{i=1}^zX_i$, i.e. equal to 
$(1,\dots,1)$. Write $\bU_i:=P_i\cap\Pbar_i$, where $\Pbar_i$ denotes the complex conjugate of $P_i$ with respect to the real form $\bJ_i$. Consider the homogeneous 
spaces $M_i:=\bJ_i(\r)^+/\bU_i(\r)$, so that $M=\prod_{i=1}^zM_i$. Then for each $i\in\{1,\dots,z\}$ one and only one of the following alternatives hold:
\begin{itemize}
\item
$\bJ_i$ is compact, and $\bU_i$ is a maximal proper connected subgroup, 
\item
or $\bJ_i$ is not compact, and $\bU_i$ is a maximal compact subgroup,
\end{itemize}
and in any case $\bU_i$ has indiscrete center so that $M_i$ is a symmetric Hermitian domain of compact or of non-compact type.
\end{thm}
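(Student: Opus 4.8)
The plan is to deduce the statement from \cite[pp.~89, 92--94]{varshavsky} (see also \cite{nori}) after checking that the axioms (M1)--(M3) recast the hypotheses used there; I will indicate the three stages of the argument and where the essential work lies. The first stage analyses the closure $H:=\overline{\varphi(\Delta)}$ of $\varphi(\Delta)$ inside the real Lie group $G(\c)$. Its Lie algebra $\mathfrak{h}$ is $\Ad(H)$-stable, so by (M1) — Zariski density of $\varphi(\Delta)$, hence of $H$ — the complexification $\mathfrak{h}\otimes_\r\c$ is an $\Ad(G(\c))$-stable subalgebra, i.e. an ideal of $\Lie G_\c=\prod_i\Lie G_{i,\c}$; as each $G_i$ is simple and $H$ is Zariski dense, this ideal is all of $\Lie G_\c$, so $\mathfrak{h}$ is a real form of $\Lie G_\c$ or is bigger. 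By (M3) the connected group $\prod_i\bT_i(\r)$ lies in $H$; since it is a maximal compact torus of the whole product $G$, no ``diagonal'' real form of two isomorphic factors can occur and no factor can be too small, so $H^\circ$ respects the decomposition $G=\prod_iG_i$ and has $i$-th factor either all of $G_i(\c)$ or $\bJ_i(\r)^+$ for a real form $\bJ_i$ of $G_i$ of equal rank; the torus case of (M3) also furnishes the base point from which the orbit analysis starts.

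The second stage propagates the action. The $\Delta$-equivariance $I(\delta m)=\varphi(\delta)\,I(m)$ together with the local biholomorphy of $I$ recovers, locally on $M$, the biholomorphism $m\mapsto\delta m$ out of $\varphi(\delta)$, so the $\Delta$-action extends by continuity to $H$ and $I$ becomes $H$-equivariant with open image in the compact manifold $X$. If $H$ contained $G_i(\c)$ on some factor then, bundling the factors, $M$ would be a homogeneous space $G(\c)/H'$ with $\overline{H'}$ compact by (M2), while local biholomorphy of $I$ would force $\dim_\r\overline{H'}=\dim_\r P(\c)=2\dim_\c P$; since $\dim_\c P>\tfrac12\dim_\c G$ whereas a compact subgroup of $G(\c)$ has real dimension at most $\dim_\c G$, this is impossible. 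Hence $H^\circ=\prod_i\bJ_i(\r)^+=\bJ(\r)^+$, the $\Delta$-action on $M$ extends to a continuous $\bJ(\r)^+$-action, and one must still show that this action is transitive and that $I$ is injective onto the $\bJ(\r)^+$-orbit of $I(\tilde y)$ in $X$. This integration step — turning a priori merely discrete data on $M$ into a transitive Lie-group action with $I$ injective — is the main obstacle, and it is exactly where Varshavsky (after Nori) does the real work, by an analytic-continuation and monodromy argument using the connectedness of $M$, the compactness of $X$, and hypothesis (M2); I would quote \cite[pp.~89, 92--94]{varshavsky} for it.

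The third stage reads off the dichotomy. Normalising $I(\tilde y)=(1,\dots,1)\in\prod_iG_i(\c)/P_i(\c)$, the isotropy group of $I(\tilde y)$ in $\bJ_i(\r)^+$ is $\bJ_i(\r)^+\cap P_i(\c)$; since $\bJ_i(\r)$ is pointwise fixed by the complex conjugation $\sigma_i$ of the real form $\bJ_i$, any such element also lies in $\sigma_i(P_i(\c))=\Pbar_i(\c)$, so the isotropy equals $\bJ_i(\r)^+\cap(P_i\cap\Pbar_i)(\c)=\bU_i(\r)^+$ with $\bU_i:=P_i\cap\Pbar_i$, whence $M=\prod_iM_i$ with $M_i=\bJ_i(\r)^+/\bU_i(\r)$. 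Because $I$ is locally biholomorphic and (by the previous stage) injective onto the $\bJ(\r)^+$-orbit of $I(\tilde y)$, that orbit is open in $X$; factorwise this gives $\dim_\c(P_i\cap\Pbar_i)=2\dim_\c P_i-\dim_\c G_i$, i.e. $P_i\Pbar_i$ is open in $G_i$. For the minuscule — hence maximal — parabolic $P_i$ this, combined with the equal-rank constraint from (M3), leaves precisely the two possibilities of the statement: either $\bJ_i$ is compact, where $\Pbar_i=P_i$ so that $\bU_i=P_i$ is a maximal proper connected subgroup and $M_i=X_i$; or $\bJ_i$ is non-compact, where $\bU_i(\r)$ is a maximal compact subgroup and $M_i=\bJ_i(\r)^+/\bU_i(\r)$ is the bounded symmetric domain dual to $X_i$. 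In either case $\bU_i$ is one of the parabolics occurring in compact duals of irreducible Hermitian symmetric spaces, so the centre of $\bU_i(\r)$ contains a circle — the classical criterion, already built into the minuscularity of $P_i$ and the irreducibility of $X_i$ — which endows $M_i$ with the structure of an irreducible Hermitian symmetric space of compact or of non-compact type; collecting the factors finishes the proof.
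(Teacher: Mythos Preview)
Your overall strategy matches the paper's: both give a synopsis that defers the hard integration step to \cite{varshavsky}. But your synopsis diverges from the paper's in several places, and two of these are genuine gaps rather than stylistic choices.

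First, your stage-1 claim that ``$\mathfrak{h}\otimes_\r\c$ is an $\Ad(G(\c))$-stable subalgebra, i.e.\ an ideal of $\Lie G_\c$'' is not well-posed: $\mathfrak{h}$ sits inside $\Lie G$ viewed as a \emph{real} Lie algebra, and $\mathfrak{h}$ being a real subspace is not a Zariski-closed condition on $\Ad(g)$. The paper instead observes that $\mathfrak{h}\cap\sqrt{-1}\,\mathfrak{h}$ and $\mathfrak{h}+\sqrt{-1}\,\mathfrak{h}$ are complex subspaces of $\Lie G$, hence $G$-invariant by (M1), hence ideals; this is what forces the factor-by-factor dichotomy. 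The paper also invokes Hilbert's 5th problem to know $J=\overline{\varphi(\Delta)}$ is a Lie group in the first place, which you do not mention.

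Second, your stage-2 reasoning has a circularity. You write ``$M$ would be a homogeneous space $G(\c)/H'$ with $\overline{H'}$ compact by (M2)'': this uses transitivity of the $H$-action on $M$, which you have not yet established (you cite Varshavsky for it only afterwards), and it asserts compactness of the $H$-stabilizer while (M2) only controls $\Delta$-stabilizers. The paper avoids this by a different mechanism: it introduces $\tilde J=\overline{\text{image of }\Delta}$ inside $\Aut(M)$, proves that the induced $\tilde\varphi:\tilde J\to J$ is surjective and open (using (M3) to generate $\Lie J$ by conjugates of $\Lie\bT$), and then argues, following \cite[Lemma 3.6]{varshavsky}, that any Cartan subgroup of $J$ fixing a point in $I(M)$ must be compact. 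That single compactness fact simultaneously rules out the $\Res_{\c/\r}G_i$ case and shows that stabilizers in $\bJ(\r)^+$ contain Cartan subgroups, after which transitivity is a straightforward dimension count. Your dimension inequality $2\dim_\c P>\dim_\c G$ is correct once transitivity and compactness are in hand, but those are exactly the points at issue.

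Your stage 3 (the dichotomy via $\bU_i=P_i\cap\Pbar_i$) is fine and matches the paper's conclusion.
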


We give a synopsis of the proof: The solution to the 5th problem of Hilbert implies that $J:=\overline{\varphi(\Delta)}$ is a Lie-group. Note that $\c\otimes_\r\Lie J$ is semisimple, 
because $\Lie G$ is semisimple and both of $\Lie J\cap\sqrt{-1}\Lie J$ and $\Lie J+\sqrt{-1}\Lie J$, being $G$-invariant $\c$-subspaces of $\Lie G$ in view of (M1), are semisimple 
too. Moreover, there exists a semi-simple real algebraic group $\bJ\subset\Res_{\c/\r}G$ such that $\Lie\bJ=\Lie J$, as semisimple Lie-algebras are algebraic. Finally the existence 
of $\bT$ tells us that $\bJ=\prod_{i=1}^r\bJ_i$ where each single $\bJ_i$ contains $\bT_i$ and it is either a real form of $G_i$ or it is equal to $\Res_{\c/\r}G_i$. Let us write $\Aut(M)$ 
for the the homeomorphism group of $M$, and let us endow it with the compact-open topology. Let $\tilde J$ (resp. $\tilde T$) denote the closure of the image of $\Delta$ (resp. $\Delta_0$) 
in $\Aut(M)$. The group $\tilde T$ is compact because it fixes a point and preserves a suitable Riemannian metric, \cite[II, Theorem 1.2]{kobayashi}. It is straightforward to see that 
$\varphi$ extends to a continuous group homomorphism, say $\tilde\varphi:\tilde J\rightarrow J$, the kernel of which is a discrete subgroup of $\tilde J$, cf. \cite[Lemma 3.1]{varshavsky}. 
The following argument shows that $\tilde\varphi$ is surjective: We clearly have $\overline{\tilde\varphi(\tilde J)}=J$ and note also that $\tilde\varphi(\tilde T)=\bT(\r)$ because $\tilde T$ 
is compact. Now there exist elements $\gamma_1,\dots,\gamma_n\in\Delta$ such that $\Ad(\gamma_1)\Lie\bT+\dots+\Ad(\gamma_n)\Lie\bT=\Lie J$. It follows that the product 
$\tilde\varphi(\gamma_1\tilde T\gamma_1^{-1})\cdot\dots\cdot\tilde\varphi(\gamma_n\tilde T\gamma_n^{-1})$ contains an open neighborhood of the identity in $J$ and whence 
it follows that $\tilde\varphi(\tilde J)=J$. One shows the openness of $\tilde\varphi$ along the same lines.\\
If a Cartan subgroup $C\subset J$ fixes some point in the image of $M$ in $X$, then it must be compact. This can be shown as in \cite[Lemma 3.6]{varshavsky} using the property (M2) only,
 together with the fact that $\tilde J$ is a Lie-group, which is implied by the openness of $\tilde\varphi$. Finally two more facts follow easily from that observation: First, no 
$\bJ_i$ is equal to $\Res_{\c/\r}G_i$ so that $\bJ$ is actually a real form of $G$ and second, the stabilizer in $J$ of any point on $X$ has to contain some Cartan subgroup $C$ 
of $J$, this is because it is equal to the intersection of two complex mutually conjugate parabolics, namely the stabilizer in $G$ of that very point and its complex conjugate.\\
We note in passing that $(\tilde J,M,G,P,\tilde\varphi,I)$ is a period map too, by \cite[Proposition 3.5]{varshavsky}. One next establishes the transitivity of the $\tilde J$-action, 
which is accomplished by a simple dimension count (look at the stabilizers and notice that all of the maximal compact subgroups of $G$ act transitively on $G/P$). 

\begin{rem}
\label{boundedI}
Let us say that a period map $(\Delta,M,G,P,\varphi,I)$ is bounded if
\begin{itemize}
\item[(M4)]
there exist positive integers $p_j$ such that the pull-back of some line bundle of the form $\bigotimes_j\omega_j^{\otimes p_j}$ by 
means of $I$ is generated by its holomorphic global sections, where $\omega_j$ denotes the canonical line bundle on $X_i$.
\end{itemize}
It is clear that bounded period maps give rise to bounded symmetric Hermitian domains $M$.
\end{rem}

\end{appendix}


\begin{thebibliography}{1}

\bibitem{berthelot}
P. Berthelot, {\em Th\'eorie de Disudonn\'e Cristalline II}, Lecture Notes in Math. 930, Springer-Verlag 1982 

\bibitem{borovoi}
M. Borovoi, {\em Langlands' conjecture concerning conjugation of connected Shimura varieties}, Selected translations, Selecta Math. Soviet. 3(1983/84), p.3-39

\bibitem{bultel3}
O. B\"ultel, {\em PEL-Modulispaces without any points in characteristic $0$}, Z\"urich number theory seminar, 23.4.2004

\bibitem{habil}
O. B\"ultel, {\em Construction of Abelian varieties with given monodromy}, Geom. funct. anal. 15(2005), p.634-696

\bibitem{pappas}
O. B\"ultel, G. Pappas, {\em $(G,\mu)$-displays and Rapoport-Zink spaces}, Journal of the Inst. of Math. Jussieu 19(2020), p.1211-1257

\bibitem{hadi}
O. B\"ultel, H. Hedayatzadeh, {\em $(G,\mu)$-Windows and Deformations of $(G,\mu)$-Displays}, preprint arXiv: 2011.09163

\bibitem{sansuc}
J.-L. Colliot-Th\'el\`ene, J.-J. Sansuc, {\em Fibr\'es quadratiques et composantes connexes r\'elles}, Math. Ann. 244(1979), p.105-134

\bibitem{deJo}
A.J. de Jong, {\em Homomorphisms of Barsotti-Tate groups and crystals}, Invent. Math. 134(1998), p.301-333. Erratum: Invent. Math. 138(1999), p.225

\bibitem{deligne2}
P. Deligne, {\em Equations Diff\'erentielles `a Points Singuliers R\'eguliers}, Lecture Notes in Math. 163, Springer-Verlag 1970 

\bibitem{deligne4}
P. Deligne, {\em Travaux de Shimura}, S\'eminaire Bourbaki Expos\'es 382-399, Lecture Notes in Math. 244, Springer-Verlag 1971, p.123-165

\bibitem{deligne3}
P. Deligne, {\em Vari\'et\'es de Shimura: interpr\'etation modulaire, et techniques de construction de mod\'eles canonique}, Automorphic forms, 
representations, and $L$-functions, Proc. Sympos. Pure Math. 33,2 (ed Borel and Casselman, Amer. Math. Soc., Providence RI, 1979), p.247-290

\bibitem{deligne1}
P. Deligne, {\em Cristaux ordinaires et coordonn\'ees canoniques}, with the collaboration of L. Illusie, with an appendix by 
Nicholas M. Katz,Algebraic surfaces, Lecture Notes in Math. 868, Springer-Verlag, Berlin, New York, 1981, p.138-202

\bibitem{deligne5}
P. Deligne, {\em Hodge Cycles on abelian varieties}, Hodge Cycles, Motives, and Shimura varieties, Lecture Notes in Math. 900, Springer-Verlag, Berlin, Heidelberg, New York 1989

\bibitem{demazure}
M. Demazure, {\em Lectures on $p$-divisible groups}, Lecture Notes in Math. 302, Springer-Verlag, Berlin, New York, 1972

\bibitem{sga3}
M. Demazure, A. Grothendieck, {\em Sch\'emas en Groupes}, Lecture Notes in Math. 151, 152, 153, Springer-Verlag Berlin, Heidelberg, New York 1970

\bibitem{green}
M.J. Greenberg, {\em Schemata over local rings}, Ann. of Math. 73(1961), p.624-648.

\bibitem{egaii}
A. Grothendieck, J. Dieudonn\'e, {\em \'Etude globale \'el\'ementaire de quelques classes de morphismes}, Inst. Hautes \'Etudes Sci. Publ. Math. 8(1961)

\bibitem{egaiii}
A. Grothendieck, J. Dieudonn\'e, {\em \'Etude cohomologique des Faisceaux coh\'erents (Premi\`ere Partie)}, Inst. Hautes \'Etudes Sci. Publ. Math. 11(1961)

\bibitem{egaiv}
A. Grothendieck, J. Dieudonn\'e, {\em \'Etude locale des sch\'emas et des morphismes de sch\'emas}, Inst. Hautes \'Etudes Sci. Publ. Math. 20(1964), 24(1965), 28(1966), 32(1967)

\bibitem{hartshorne}
R. Hartshorne, {\em Algebraic Geometry}, Graduate Texts in Mathematics 52, Springer-Verlag New York 1977

\bibitem{humphreys}
J.E. Humphreys, {\em Introduction to Lie Algebras and Representation Theory}, Graduate Texts in Mathematics 9, Springer-Verlag New York 1972

\bibitem{katz1}
N. Katz, {\em Travaux de Dwork}, S\'eminaire Bourbaki Expos\'es 400-417, Lecture Notes in Math. 317, Springer-Verlag 1973, p.167-200

\bibitem{katz2}
N. Katz, {\em Serre-Tate Local Moduli}, Surfaces Alg\'ebriques, Springer Lecture Notes in Math 868, p.138-202

\bibitem{katz3}
N. Katz, {\em Exponential Sums and Differential Equations}, Annals of Mathematics Studies 124, Princeton University Press 1990

\bibitem{kazhdan}
D.A. Kazhdan, {\em On arithmetic varieties II}, Israel Jour. of Math. 44(1983), p.139-159 

\bibitem{kisin}
M. Kisin, {\em Integral canonical Models of Shimura varieties}, J. Th\'eor. Nombres Bordeaux  21(2009), p.301-312

\bibitem{kisin1}
M. Kisin, {\em Integral models for Shimura varieties of abelian type}, J. Amer. Math. Soc. 23 (2010), p.967-1012

\bibitem{kobayashi}
S. Kobayashi, {\em Transformation groups in differential geometry}, Springer-Verlag, Berlin 1972

\bibitem{kottwitz1}
R. Kottwitz, {\em Points on some Shimura varieties over finite fields}, Journal of the AMS 5(1992), p.373-444

\bibitem{kreidl}
M. Kreidl, {\em On $p$-adic lattices and Grassmannians}, Math. Z. 276(2014), p.859-888

\bibitem{langer2}
A. Langer, T. Zink, {\em De Rham-Witt Cohomology and Displays}, Documenta Mathematica 12(2007), p.147-191

\bibitem{langlands}
R. Langlands, {\em Some contemporary problems with origins in the Jugendtraum}, Mathematical developments arising 
from Hilbert problems, Proc. Sympos. Pure Math. 28,2 (ed Browder, Amer. Math. Soc. Providence RI, 1976), p.401-418

	
\bibitem{lau2}
E. Lau, {\em Higher frames and $G$-displays}, Algebra \& Number Theory 15(2021), p.2315-2355

\bibitem{margulis}
G.A. Margulis, {\em Discrete Subgroups of Semisimple Lie Groups}, Ergebnisse der Mathematik und ihrer Grenzgebiete, 
3. Folge, Band 17, Springer-Verlag, Berlin, Heidelberg, New York, London, Paris, Tokyo, Hong Kong, Barcelona, 1991

\bibitem{messing}
W. Messing, {\em The Crystals Associated to Barsotti-Tate Groups: with Applications to Abelian Schemes}, Springer Lecture Notes in Math. 264, Springer-Verlag 1972

\bibitem{meszin}
W. Messing, T .Zink, {\em De Jong's Theorem on Homomorphisms of p-divisible Groups}, unpublished preprint January 8, 2002

\bibitem{milne3}
J.S. Milne, {\em The action of an automorphism of $\c$ on a Shimura variety and its special points}, Arithmetic and Geometry, 
Vol I, p.239-265, Progress in Mathematics 35, Birkh\"auser Boston, MA, 1983

\bibitem{milne4}
J.S. Milne, {\em The points on a Shimura variety modulo a prime of good reduction},The zeta functions of Picard modular surfaces, p.151-253, 1992

\bibitem{milne2}
J.S. Milne, {\em Shimura varieties and motives}, Proc. Sympos. Pure Math. 55,2 (ed. Jannsen, Kleiman and Serre, Amer. Math. Soc., Providence RI, 1994), p.447-523

\bibitem{moonen}
B. Moonen, {\em Models of Shimura varieties in mixed characteristic}, Galois representations in arithmetic algebraic geometry 
(Durham, 1996), p.267-350, London Math. Soc. Lecture Note Ser. 254, Cambridge Univ. Press, 1998

\bibitem{mori}
L. Moret-Bailly, {\em Pinceaux de Vari\'et\'es ab\'eliennes}, Ast\'erisque 129(1985)

\bibitem{morita}
Y. Morita, {\em On potential good reduction of abelian varieties}, J. Fac. Sci. Univ. Tokyo Sekt. I A Math. 22(1975), p.437-447

\bibitem{moritaII}
Y. Morita, {\em Classification of a family of abelian varieties parametrized by Reduction modulo $\gP$ of a Shimura curve}, Proc. Japan Acad. Ser A, 56(1980), p.338-341

\bibitem{nori}
M. Nori, M. Raghunathan, {\em On conjugation of locally symmetric arithmetic varieties}, Proc. Indo-French 
Conference on Geometry (S.Ramanan, A.Beauville eds.), Hindustan Book Agency, 1993, p.111-122

\bibitem{norman}
P. Norman, {\em An algorithm for computing local moduli of abelian varieties}, Ann. Math. 101(1975), p.499-509

\bibitem{oort1}
F. Oort, M. van der Put, {\em A construction of an abelian variety with a given endomorphism algebra}, Comp. Math. 67(1988), p.103-120

\bibitem{rapinchuk}
G. Prasad, A.S. Rapinchuk, {\em Computation of the metaplectic kernel}, Inst. Hautes \'Etudes Sci. Publ. Math. 84(1996), p.91-187 

\bibitem{prasad}
G. Prasad, J.-K. Yu, {\em On quasi-reductive group schemes. With an appendix by Brian Conrad}, J. Algebraic Geom. 15(2006), p.507-549 

\bibitem{rapoport}
M. Rapoport, M. Richartz, {\em On the classification and specialization of $F$-isocrystals with additional structure}, Comp. Math. 103(1996), p.153-181

\bibitem{reimann}
H. Reimann, T. Zink, {\em Der Dieudonn\'emodul einer polarisierten abelschen Mannigfaltigkeit vom CM-Typ}, Ann. of Math. 128(1988), p.461-482


\bibitem{rivano}
N. Saavedra Rivano, {\em Cat\'egories tannakiennes}, Lecture Notes in Math. 265, Springer-Verlag Berlin Heidelberg New York 1972

\bibitem{stacks-project}
{\sc {Stacks Project Authors}, T.}
\newblock \textit{Stacks Project}.
\newblock \url{https://stacks.math.columbia.edu}, 2020.

\bibitem{sury}
B. Sury, {\em Congruence subgroup problem for anisotropic groups over semilocal rings}, Proc. Indian Acad. Sci. (Math. Sci.) 101(1991), p.87-110

\bibitem{tate}
J. Tate,  {\em Endomorphisms of abelian varieties over finite fields}, Invent. Math. 2(1966), p.134-144

\bibitem{varshavskyI}
Y. Varshavsky, {\em $p$-adic uniformization of unitary Shimura varieties}, Inst. Hautes \'Etudes Sci. Publ. Math. 87(1998), p.57-119 

\bibitem{varshavsky}
Y. Varshavsky, {\em $p$-adic uniformization of unitary Shimura varieties, II}, J. of Differential Geom. 49(1998), p.75-113

\bibitem{varshavskyII}
Y. Varshavsky, {\em On the characterization of complex Shimura varieties}, Selecta Math. 8(2002), p. 283-314

\bibitem{vasiu2}
A. Vasiu, {\em Integral canonical models of Shimura varieties of preabelian type}, Asian J. Math. 3(1999), p.401-518 

\bibitem{viehmann}
E. Viehmann, {\em Connected components of closed affine Deligne-Lusztig varieties}, Math. Ann. 340(2008), p.315-333

\bibitem{wedhorn}
T. Wedhorn, {\em Ordinariness in good reduction of Shimura varieties of PEL type}, Ann. Sci. de l'\'ENS 32(1999), p.575-618

\bibitem{zink3}
T. Zink, {\em \"Uber die schlechte Reduktion einiger Shimuramannigfaltigkeiten}, Compos. Math. 45(1982), p.15-107

\bibitem{zink4}
T. Zink, {\em Cartiertheorie kommutativer formaler Gruppen}, Teubner Texte zur Mathematik Band 68, Teubner 1984

\bibitem{zink1}
T. Zink, {\em Windows for Displays of $p$-divisible Groups}, Moduli of abelian varieties, Progress in Mathematics 195, Birkh\"auser Verlag 2001, p.491-518

\bibitem{zink2}
T. Zink, {\em The display of a formal $p$-divisible Group}, Cohomologies $p$-adiques et Applications Arithm\'etiques (I), Ast\'erisque 278(2002), p.127-248

\end{thebibliography}
\end{document}